\documentclass[reqno,11pt]{amsart}
\usepackage{amsmath,amsfonts,amssymb,amsxtra,latexsym,amscd,enumerate,amsthm,verbatim}
\allowdisplaybreaks
\usepackage{hyperref}
\usepackage{graphicx}
\usepackage{bbm}
\usepackage{physics}
\usepackage{bbm}
\usepackage{extarrows}
\usepackage{tikz}
\usetikzlibrary{arrows.meta}

\usepackage{hyperref}
\hypersetup{
    colorlinks=true,
    citecolor= blue,
    linkcolor=black,
    filecolor=magenta,
    urlcolor=cyan
    }

\usepackage[margin=1in]{geometry}
\numberwithin{equation}{section}

\usepackage{subcaption}

\newcommand{\R}{\mathbb{R}}
\newcommand{\E}{\mathcal{E}}
\newcommand{\A}{\mathcal{A}}
\newcommand{\B}{\mathcal{B}}

\newcommand{\T}{\mathbb{T}}

\newcommand{\Z}{\mathbb{Z}}

\newcommand{\eps}{\epsilon}

\numberwithin{equation}{section} %pour numeroter les equations par section

\newtheorem{assumptions}{Assumptions}[section]
\newtheorem{definition}{Definition}[section]
\newtheorem{lemma}{Lemma}[section]
\newtheorem{proposition}{Proposition}[section]
\newtheorem{corollary}{Corollary}[section]
\newtheorem{remark}{Remark}[section]

\newtheorem{theorem}{Theorem}[section]
\newtheorem{question}{Question}[section]
\newtheorem*{theorem*}{Theorem}
\newtheorem*{assumption*}{Assumption}

\newcommand{\p}{\partial}

\newcommand{\cO}{\mathcal O}

\usepackage[shortlabels]{enumitem}

\begin{document}

\title{Global well-posedness and regularity of the dynamical Prandtl Equation}

\author{Hao Jia}
\address{School of Mathematics, University of Minnesota, Minneapolis, Minnesota 55455, USA}

\email{jia@umn.edu}

\author{Zhen Lei}
\address{School of Mathematical Sciences; LMNS and Shanghai Key Laboratory for Contemporary Applied Mathematics, Fudan University, Shanghai 200433, P. R. China.}
\email{zlei@fudan.edu.cn}

\author{Cheng Yuan}
\address{School of Mathematical Sciences; LMNS and Shanghai Key Laboratory for Contemporary Applied Mathematics, Fudan University, Shanghai 200433, P. R. China.}\email{cyuan22@m.fudan.edu.cn}

%\thanks{HJ is supported in part by NSF grant DMS-1945179. }

\begin{abstract}
{\small}
In this paper, we study the dynamical Prandtl equation, which plays an important role in the study of the vanishing-viscosity limit of the Navier--Stokes equation near the boundary. Our focus is on the (Sobolev) well-posedness regime, where the initial and boundary data satisfy a crucial monotonicity condition. In this case, local classical solutions were constructed in the pioneering work of Oleinik \cite{O68}. A global weak solution was subsequently obtained by Xin and Zhang \cite{XZ04};  Xin, Zhang, and Zhao \cite{XZZ24} then proved uniqueness and established interior H\"older estimates in Crocco coordinates.

Using a precise description of the fundamental solution to the Kolmogorov equation in the upper half space, we first obtain the H\"older regularity of local weak solutions up to the boundary. We also provide a detailed proof of higher-order regularity estimates using the H\"older regularity, together with $W^{2,p}$-type Sobolev estimates and $H^{s}$-type hypoelliptic estimates, which are nontrivial. Up-to-boundary smoothness of solutions (in the Crocco coordinates) is important in order to conclude smoothness of the Prandtl solutions in the original physical variables, even in the interior. It is also physically significant for applications to Navier--Stokes equations when constructing globally defined high-precision approximate solutions where the dynamical Prandtl equation is essential.

\begin{comment}
Our first main result extends the H\"older regularity of local weak solutions of Xin-Zhang-Zhao to the boundary using a precise description of the fundamental solution to the Kolmogorov equation in the upper half space, and De Giorgi-Nash-Moser theory generalized to ultraparabolic equations with a boundary following the approach of  \cite{XZZ24} which introduced a key weak type Poincar\'e inequality. We also provide a detailed proof of higher order regularity estimates using the H\"older regularity, together with Sobolev and H\"ormander's hypoelliptic estimates, which are nontrivial. Up-to-boundary smoothness of solutions (in the Crocco coordinates) is important in order to conclude smoothness of the Prandtl solutions in the original physical variables, even in the interior. It is also physically significant for applications to Navier Stokes equations when constructing globally defined high precision approximate solutions where the dynamical Prandtl equation is essential.
\end{comment}

Using the smoothing estimates for local weak solutions, we then prove the global existence and regularity of classical solutions to the dynamical Prandtl equation under monotonicity assumptions, which was listed by Oleinik and Samokhin in \cite{OS99} as one of the open problems for the Prandtl equation. We also develop a self-contained local existence theory using weighted energy estimates and further expand the theory of global weak solutions. The main point is to incorporate all three types (polynomial, exponential and Gaussian) of asymptotic matching of the boundary layer with the outer flow, which is expected to be useful for applications to Navier--Stokes equations.

\end{abstract}

\maketitle
\setcounter{tocdepth}{1}
\pagestyle{plain}

\tableofcontents

\section{Introduction}\label{intro}
In this paper, we consider the unsteady Prandtl boundary-layer system for $(t,x,y)\in (0,T)\times (0,X)\times(0,+\infty)$,
\begin{equation}\label{generalprandtl}
\begin{cases}
\p_{t}u+u\p_{x}u+v\p_{y}u-\p_{y}^{2}u+\partial_xp(t,x)=0,\ \\
\p_{x}u+\p_{y}v=0,
\end{cases}
%\tag{PL}
\end{equation}
with prescribed initial and inflow data as well as boundary conditions
\begin{equation}\label{generalprandtl2}
    \begin{cases}
u|_{y=0}=v|_{y=0}=0,\quad \lim\limits_{y\to\infty}u(t,x,y)=U(t,x),\\
u|_{t=0}=u_{0}(x,y),\quad u|_{x=0}=u_{1}(t,y).
    \end{cases}
\end{equation}

The pressure field $p(t,x)$ and the outer stream speed $U(t,x)$ satisfy  Bernoulli's law
\begin{equation*}
\p_{t}U+U\p_{x}U+\p_{x}p\equiv 0.
\end{equation*}
For simplicity, we assume that $U \equiv 1$, which implies $\p_{x}p \equiv 0$. The case of general (smooth) $U$ may be considered under suitable assumptions such as $U>0$ and the ``favorable pressure" condition $\partial_xp\leq0$. We expect that corresponding versions of our main conclusions still hold in this case but the proof will become more complicated. 

Under this assumption, \eqref{generalprandtl} and \eqref{generalprandtl2} reduce to
\begin{equation}\label{prandtl}
\begin{cases}
\p_{t}u+u\p_{x}u+v\p_{y}u-\p_{y}^{2}u=0,\\
\p_{x}u+\p_{y}v=0,\\
u|_{y=0}=v|_{y=0}=0,\ \lim\limits_{y\to\infty}u(t,x,y)=1,\
u|_{t=0}=u_{0}(x,y),\ u|_{x=0}=u_{1}(t,y),
\end{cases}
\end{equation}
for $(t,x,y)\in(0,T)\times(0,X)\times(0,\infty)$. %Both $T$ and $X$ may be infinite. 

The Prandtl equation \eqref{generalprandtl} was proposed by Prandtl \cite{P1904} in 1904 to describe the dynamics inside a thin layer of  slightly viscous fluids near a physical boundary. The essential observation of Prandtl is that the fluid flow may be separated into two regions: the main flow region where viscosity can be neglected corresponding to the inviscid solution, and a thin boundary layer near the body's surface where viscosity plays a crucial role. The  transition from the boundary layer to the outer inviscid flow is encapsulated mathematically by the Prandtl equation \eqref{generalprandtl}.   The study of boundary layers and the associated Prandtl equations has since played a fundamental role in understanding fluid dynamics at high Reynolds numbers.

The rigorous analysis of \eqref{generalprandtl} was pioneered by Oleinik. Under suitable \textit{monotonicity} and regularity conditions, Oleinik established the first local well-posedness results for classical solutions, see the monograph \cite{OS99} by Oleinik and Samokhin. The monotonicity condition ($\partial_yu>0$ in our case) turns out to be crucial, and strong ill-posedness in Sobolev spaces can occur if the monotonicity condition is violated; see \cite{GD10}. We refer to Section \ref{sec:classical} for more discussions and references.

Given the local well-posedness of \eqref{prandtl} under suitable monotonicity assumptions, a natural question to ask is whether one can also prove global well-posedness. In \cite{OS99}, two types of quasi-global regularity results were obtained, assuming that either $X>0$ or $T>0$ is sufficiently small. A major open problem, raised already by Oleinik and Samokhin (Question 4, page 500) in \cite{OS99}, is whether one can remove the smallness on $T$ or $X$ and prove global classical solutions for arbitrary $T>0$ \emph{and} $X>0$; see for example  \cite{IV16,MaekawaMazzucato2018InviscidLimit,Xin2002SingularSolutions} for recent references to this problem. Besides its significance for \eqref{prandtl}, the global regularity of \eqref{prandtl} is also expected to be important for the study of the validity of global boundary-layer expansions for the Navier--Stokes equations since the dynamical Prandtl equation arises as the limiting equation in the vanishing viscosity limit of the Navier--Stokes equations.

Important progress was made by Xin and Zhang \cite{XZ04}, who proved the existence of global weak solutions to \eqref{prandtl} under general conditions on the initial and boundary data.
\begin{comment}
An important observation used in \cite{XZ04} is that under the Crocco transform, the Prandtl equation \eqref{prandtl} becomes an \textit{ultraparabolic quasilinear system} and exhibits a ``\emph{conservation law}" structure. \end{comment} 
 More recently in a remarkable work, Xin, Zhang and Zhao \cite{XZZ24} proved further that the weak solution is unique and H\"older continuous {\it away from the boundary}, from which higher regularity of the weak solutions in the interior was expected to follow (in the Crocco coordinates). The key tool in \cite{XZZ24} is the extension of De Giorgi-Nash-Moser's H\"older regularity estimates to the ultraparabolic setting, using the approach of Kruzkov \cite{K64} as well as a new weak Poincar\'e type inequality (see Lemma \ref{weakpoin}). 

The important remaining open problem is whether one can establish regularity \textit{up to the boundary $y=0$}, and apply such regularity estimates to construct global in time and space classical solutions to the Prandtl equation \eqref{prandtl}. We remark that up-to-boundary regularity does \textit{not} follow from the interior regularity argument directly, and the De Giorgi-Nash-Moser and higher regularity theory for the ultraparabolic equation with boundary are significantly more complicated than the case without boundary. Up-to-boundary regularity results for the Prandtl system are essential for the study of boundary layer expansions, since one needs to construct very precise approximate solutions to the Navier--Stokes system near the boundary in the vanishing viscosity limit, and these approximate solutions depend crucially on solving the Prandtl system, see e.g. \cite{GI21, gao2023steady,iyer2026global} for the stationary case. Moreover, in order to transform the regularity estimates in the Crocco coordinates back to the physical variables, up-to-boundary regularity is essential even if one is only interested in interior regularity ($t>0, x>0$ and $y>0$), due to the nonlocal nature of the Crocco transform, see \eqref{croccotrans}, \eqref{invertcrocco} and Remark \ref{re:impreg1}.

Our goal in the paper is to address this problem and prove the existence and smoothness of global classical solutions to \eqref{prandtl}. We will also provide an essentially self-contained treatment of the local well-posedness theory and prove existence of globally defined unique weak solutions for quite general initial and boundary conditions. In particular, we allow the asymptotic behavior of $u(t,x,y)\to 1$ as $y\to \infty$ to include three distinct rates: \textit{polynomial}, \textit{exponential} and \textit{Gaussian}. This is important when considering physical regimes not covered by earlier works which require either polynomial or exponential matching rates in our setting (see e.g. \cite{OS99,MW15,AWXY15}), including perturbations of the Blasius flow which has a Gaussian matching rate and is the unique (up to scaling) self-similar solution to the steady Prandtl equation. We expect such general results to be useful when studying stability of boundary layers in the Navier--Stokes setting. 

\subsection{Main Results}
In this section, we present our three main results: (1) up-to-boundary smoothness of local weak solutions; (2) global existence and regularity of classical solutions for sufficiently smooth initial and boundary conditions; and (3) global existence and smoothness of weak solutions under more general assumptions on the initial and boundary conditions. We also provide a new self--contained proof of existence of local classical solutions using novel quasi-linear weighted energy estimates, see Section \ref{local}. Since in our approach, as in \cite{O68,OS99,XZ04,XZZ24}, the Crocco transform plays an essential role, we start with the introduction of the Crocco transform and the transformed system from \eqref{prandtl}.

\subsubsection{The Crocco Transform} A key difficulty in the analysis of the Prandtl system \eqref{prandtl} is the term $v\p_{y}u$. Indeed, notice that by the divergence--free condition,
\begin{equation*}
v(t,x,y)=-\int_{0}^{y}(\p_{x}u)(t,x,s)ds,
\end{equation*}
which costs \emph{one derivative} on $u$ in the $x$ direction and this loss of derivative cannot be easily recovered from the equation due to the lack of dissipation in $x$. %For steady states of \eqref{prandtl}, we can use the von Mises transform to eliminate this term (see \cite{OS99}). However, this method is not applicable in the dynamic case due to inherent structure of the equation is broken. 

To overcome this difficulty, we use the Crocco transform, which was originally introduced by Crocco \cite{C41} in 1941 for the study of laminar gases. Remarkably, the Crocco transform converts the original system \eqref{prandtl} into a quasilinear ultraparabolic equation. 
\begin{definition}[\bf The Crocco Transform]
\label{defcrocco}

Assume that $u(t,x,y)$ defined in $(0,T)\times(0,X)\times[0,\infty)$ is a continuous, strictly monotone (in the $y$ variable) function with
\begin{equation}\label{ybdc}
    u(t,x,0)=0,\quad \lim_{y\to\infty}u(t,x,y)=1.
\end{equation}
Then the following coordinate transform is a well-defined continuous bijection from $(0,T)\times(0,X)\times[0,\infty)$ onto $(0,T)\times(0,X)\times[0,1)$:
\begin{equation}\label{croccotrans}
\tau=t,\quad
\xi=x,\quad
\eta=u(t,x,y).
\end{equation}
Furthermore, under the assumption that $u(t,x,y)$ is continuously differentiable, we define a new unknown
\begin{equation}
    w(\tau,\xi,\eta):=(\p_{y}u)(t,x,y).
\end{equation}
\end{definition}
We have the following basic properties of the Crocco transform.
\begin{lemma}\label{basiccrocco} Assume that $u(t,x,y)$ is continuously differentiable, and satisfies the strict monotonicity condition $\p_{y}u>0$ on $(0,T)\times(0,X)\times[0,\infty)$ and the boundary condition \eqref{ybdc}. Then the following statements hold:
\begin{itemize}
    \item[(1)]$w(\tau,\xi,\eta)$ is continuous and positive in $(0,T)\times(0,X)\times[0,1)$, and we can represent $u(t,x,y)$ on $(0,T)\times(0,X)\times[0,\infty)$ by
    \begin{equation}\label{invertcrocco}
        y=\int_{0}^{u(t,x,y)}\frac{d\eta}{w(t,x,\eta)}.
    \end{equation}
    \item[(2)] Jacobian matrices of the Crocco transform are 
    \begin{equation}
    \frac{\partial(\tau,\xi,\eta)}{\partial(t,x,y)}=
    \begin{pmatrix}
        1&0&0\\
        0&1&0\\
        \p_{t}u&\p_{x}u&\p_{y}u
    \end{pmatrix},\quad
    \frac{\partial(t,x,y)}{\partial(\tau,\xi,\eta)}=
    \begin{pmatrix}
        1&0&0\\
        0&1&0\\
        -\frac{\p_{t}u}{\p_{y}u}&-\frac{\p_{x}u}{\p_{y}u}&\frac{1}{\p_{y}u}
    \end{pmatrix}.
    \end{equation}
In particular, if $u(t,x,y)$ is twice continuously differentiable, then
\begin{equation}
    \p_{\tau}w=\p_{ty}u-\frac{\p_{t}u}{\p_{y}u}\p_{yy}u,\ \p_{\xi}w=\p_{xy}u-\frac{\p_{x}u}{\p_{y}u}\p_{yy}u,\ \p_{\eta}w=\frac{\p_{yy}u}{\p_{y}u}.
\end{equation}
\end{itemize}
\end{lemma}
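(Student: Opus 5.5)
The plan is to treat the Crocco map $\Phi(t,x,y)=(t,x,u(t,x,y))$ as a $C^1$ change of variables with invertible Jacobian and to apply the inverse function theorem fiberwise in $y$.

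First, fix $(t,x)$. Since $\p_y u>0$ and $u$ is continuous with $u(t,x,0)=0$ and $u\to1$ as $y\to\infty$, the map $y\mapsto u(t,x,y)$ is a strictly increasing bijection from $[0,\infty)$ onto $[0,1)$. Here $1$ is not attained, by strict monotonicity. Hence $\Phi$ is a bijection onto $(0,T)\times(0,X)\times[0,1)$. Denote the inverse in the last variable by $y=Y(\tau,\xi,\eta)$. The Jacobian of $\Phi$ is the lower-triangular matrix in (2), and its determinant $\p_y u$ is positive. The inverse function theorem, applied at interior points and with one-sided versions at $y=0$ (or after extending $u$ to $y<0$ in a $C^1$ way that preserves $\p_y u>0$ locally), then shows that $Y$ is $C^1$. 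Consequently $\Phi^{-1}$ is $C^1$, and its Jacobian is the inverse matrix. Inverting the block lower-triangular matrix gives exactly the second matrix in (2). This amounts to solving for the last row: $\p_\eta Y=1/\p_y u$, $\p_\tau Y=-\p_t u/\p_y u$ and $\p_\xi Y=-\p_x u/\p_y u$, which one can also obtain by differentiating the identity $u(\tau,\xi,Y(\tau,\xi,\eta))=\eta$.

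For (1), continuity of $w=\p_y u\circ\Phi^{-1}$ follows because it is a composition of continuous maps, and positivity is inherited from $\p_y u>0$. For \eqref{invertcrocco}, substitute $\eta=u(t,x,s)$, $d\eta=\p_y u(t,x,s)\,ds$, in the integral. Since $w(t,x,u(t,x,s))=\p_y u(t,x,s)$, this gives
\[
\int_0^{u(t,x,y)}\frac{d\eta}{w(t,x,\eta)}=\int_0^y ds=y.
\]
Equivalently, $\p_\eta Y=1/w$ and $Y(\tau,\xi,0)=0$.

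For the derivative formulas, take $u\in C^2$. Then $w(\tau,\xi,\eta)=\p_y u(\tau,\xi,Y(\tau,\xi,\eta))$, and the chain rule gives $\p_\tau w=\p_{ty}u+\p_{yy}u\,\p_\tau Y$, and similarly for $\xi$ and $\eta$. Inserting the last row of the inverse Jacobian yields the three stated identities.

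The only delicate point is the regularity of $\Phi^{-1}$ up to the boundary $\eta=0$ (i.e. $y=0$), since the inverse function theorem is usually stated on open sets. I would handle this with the implicit function theorem applied to $F(\tau,\xi,\eta,y)=u(\tau,\xi,y)-\eta$, using a local $C^1$ extension of $u$ across $y=0$. Alternatively, one can argue directly that difference quotients of $Y$ converge, using the mean value theorem together with the continuity and positivity of $\p_y u$.
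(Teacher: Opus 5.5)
Your proposal is correct. The paper states this lemma without proof as a basic property of the Crocco transform, and your argument is the routine verification it relies on: fiberwise strict monotonicity gives a bijection onto $[0,1)$, the implicit function theorem gives $C^1$ regularity of $Y$, the substitution $\eta=u(t,x,s)$ gives \eqref{invertcrocco}, and the chain rule gives the derivative formulas. For the boundary $y=0$, the odd reflection $u(t,x,-y):=-u(t,x,y)$ is a concrete $C^1$ extension with $\p_y u>0$, because $u(t,x,0)\equiv 0$ forces $\p_t u=\p_x u=0$ on $\{y=0\}$.
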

Under the \emph{strict monotonicity condition} ($\p_{y}u>0$)  and suitable regularity assumptions, the unsteady Prandtl system \eqref{prandtl} is equivalent to the following quasilinear initial--boundary problem for $w=\p_{y}u$ in  Crocco coordinates for $(\tau,\xi,\eta)\in(0,T)\times(0,X)\times (0,1)$,
\begin{equation}\label{prandtlcroccosec1}
\begin{cases}
\p_{\tau}w+\eta\p_{\xi}w-w^{2}\p_{\eta}^{2}w=0,\\
w|_{\tau=0}=w_{0}(\xi,\eta),\quad w|_{\xi=0}=w_{1}(\tau,\eta),\quad w|_{\eta=1}=w\p_{\eta}w|_{\eta=0}=0.
\end{cases}
\end{equation}
In the above, $w_{0},w_{1}$ are defined by $u_{0}, u_{1}$ in \eqref{prandtl} as
\begin{equation}
w_{0}(\xi,\eta)=\p_{y}u_{0}(\xi,y^{\ast}),\,\,
w_{1}(\tau,\eta)=\p_{y}u_{1}(\tau,y^{\ast\ast}),\,\,
u_{0}(\xi,y^{\ast})=u_{1}(\tau,y^{\ast\ast})=\eta.
\end{equation}
We remark that the boundary condition on $\{\eta=0\}$ in \eqref{prandtlcroccosec1} is equivalent to the usual Neumann boundary condition $\p_{\eta}w|_{\eta=0}=0$ whenever we consider positive solutions of \eqref{prandtlcroccosec1}, corresponding to the strict monotonicity assumption.

%In order to understand the origin Prandtl system \eqref{prandtl}, one may turn to research the vorticity equation for $w=\p_{y}u$ in the new coordinate \eqref{prandtlcroccosec1}. We will see that \eqref{prandtlcroccosec1} satisfies a \emph{nonlinear hypoelliptic structure} (see the next subsection).

\begin{remark} The strict monotonicity condition $\partial_yu>0$ is not just a technical condition needed for defining the Crocco transform to eliminate loss of derivatives. In fact, the unsteady Prandtl system may become severely ill-posed due to loss of derivatives in Sobolev spaces without this condition, and the solution may develop a finite-time singularity, see for example \cite{GD10,GN11,LY17,GM15,EE97,IVF17,CGIM22} and references therein for more discussions on this important point.
 
 Heuristically, the monotonicity condition stabilizes \eqref{prandtl} in the following two senses:
 (1) The ``vorticity field" has a definite sign near the boundary of the viscous fluid;
 (2) The shear stress never vanishes near the boundary of the viscous fluid.
 
\end{remark}

%We emphasize that since Prandtl's equation is originally formulated in physical coordinate, it is more nature to impose assumptions on data within the physical coordinate system \eqref{prandtl}, while we need the transformed system \eqref{prandtl} to intermediate.

%For the first main result, we state the up--to--physical boundary smoothness effect  of the weak solution to \eqref{prandtl}, with weak initial regularity assumptions. We point that we can obtain the smoothness effect until the terminal time $t=T$, it is indeed natural since dynamic Prandtl system \eqref{prandtl} is a forward evolution equation.

\subsubsection{Up-to-boundary regularity of local weak solutions}
We start with regularity results for local weak solutions.
\begin{theorem}[\bf Up--to--boundary Smoothing Effect]
\label{mainone}
Fix $T>0$ and $X>0$. Let $u(t,x,y)$ be a function defined in $(0,T]\times(0,X)\times[0,\infty)$ satisfying \eqref{ybdc} and the following assumptions.
\begin{itemize}

  \item[(I)] (Regularity and Monotonicity) $u\in W^{1,1}_{\rm loc}((0,T]\times(0,X)\times[0,\infty))$ and is continuous on $(0,T]\times(0,X)\times[0,\infty)$ with $\ \p_{y}u\in W^{1,1}_{\rm loc}((0,T]\times(0,X)\times[0,\infty))$, and $0<c_D\leq \partial_yu\leq C_D$ on any $D\Subset(0,T]\times(0,X)\times[0,\infty)$ for suitable positive constants $c_D<C_D$.  In addition, assume that
    \begin{equation*}
        \bigg\{\p_{y}\left(\frac{\p_{t}u}{\p_{y}u}\right),\ \p_{y}\left(\frac{\p_{x}u}{\p_{y}u}\right),\ \p_{y}^{3}u\bigg\}\subset L^{1}_{loc}((0,T]\times(0,X)\times[0,\infty)).
        \end{equation*}
    \item[(II)] Set 
    \begin{equation}\label{introv}
        v(t,x,y):=-\int_0^y\partial_xu(t,x,y')dy'. 
    \end{equation}
   Then $u, v$ satisfy the Prandtl equation in the sense of distributions
   \begin{equation}\label{prandtlweak}
\begin{cases}
\p_{t}u+u\p_{x}u+v\p_{y}u-\p_{y}^{2}u=0,\\
\p_{x}u+\p_{y}v=0,\\
u|_{y=0}=v|_{y=0}=0,\
\end{cases}
\end{equation}
for $(t,x,y)\in(0,T)\times(0,X)\times(0,\infty)$.

\end{itemize}
Then we have $u(t,x,y)\in C^{\infty}((0,T]\times(0,X)\times[0,\infty))$. More precisely, for any $D\Subset (0,T]\times(0,X)\times[0,\infty)$, and for any $m\in\mathbb{N}$, we have
\begin{equation}\label{smooeffesti}
    \|(u,v)\|_{C^{m}(D)}\lesssim 1,%j%,\quad \text{for any}\,\, D\Subset (0,T]\times(0,X)\times[0,\infty),
\end{equation}
where the implied constants depend on quantitative versions of the bounds appearing in the regularity assumptions in (I).
\end{theorem}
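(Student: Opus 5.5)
The proof will go through the Crocco transform. By assumption (I) and Lemma \ref{basiccrocco}, $w(\tau,\xi,\eta)=\p_y u(t,x,y)$ is well defined, continuous and positive on $(0,T]\times(0,X)\times[0,1)$. On any compact subdomain $\tilde D$ (the image of $D$) it satisfies $c_D\le w\le C_D$. The $L^1_{loc}$ assumptions on $\p_y(\p_t u/\p_y u)$, $\p_y(\p_x u/\p_y u)$ and $\p_y^3u$ are exactly what is needed to make sense of $\p_\tau w$, $\p_\xi w$ and $\p_\eta(w\p_\eta w)$ as locally integrable functions. With these in hand, the distributional Prandtl system \eqref{prandtlweak} turns into the statement that $w$ is a local weak solution of
\[
\p_\tau w+\eta\p_\xi w-w^2\p_\eta^2 w=0,\qquad w\p_\eta w|_{\eta=0}=0 .
\]
We take it in divergence-type form, $\p_\tau w+\eta\p_\xi w=\p_\eta(w^2\p_\eta w)-2w(\p_\eta w)^2$, or equivalently for $1/w$, $\p_\tau(1/w)+\eta\p_\xi(1/w)=\p_\eta^2 w$. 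We also have the local energy bound $\p_\eta w\in L^2_{loc}$ up to $\eta=0$. The first step is to verify all of this carefully, including a weak Neumann condition obtained by testing against functions that do not vanish at $\eta=0$. The key inputs are $v|_{y=0}=0$ and $u|_{y=0}=0$: evaluating the equation at $y=0$ gives $\p_y^2u|_{y=0}=0$, which is the Neumann condition $\p_\eta w=0$.

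Since $w$ is bounded above and below on $\tilde D$, the equation is a Kolmogorov-type ultraparabolic equation with bounded measurable (in fact continuous) coefficient $a=w^2$. The second step is Hölder continuity of $w$ up to $\eta=0$. The plan is to reflect evenly across $\eta=0$, which is compatible with the Neumann condition, but the drift $\eta\p_\xi$ becomes $|\eta|\p_\xi$ after reflection, so the group structure is lost. For this reason I would instead run De Giorgi–Nash–Moser directly in half-cylinders adapted to the Kolmogorov scaling $(r^2,r^3,r)$. For the $L^2\to L^\infty$ step I would use the fundamental solution of $\p_\tau+\eta\p_\xi-\p_\eta^2$ in the upper half space with Neumann condition, which gives explicit Gaussian bounds via the image method adapted to the drift. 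The oscillation decay step then comes from the weak Poincaré inequality (Lemma \ref{weakpoin}) in the Kruzkov approach, adapted to half-cylinders. This yields $w\in C^\alpha$ up to $\eta=0$, and I expect it to be the main obstacle: the kinetic transport term does not commute with reflection, and the Harnack/measure-to-pointwise step needs the boundary fundamental solution estimates.

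Once $w\in C^\alpha$, the coefficient $w^2$ is Hölder. Freezing coefficients and using $W^{2,p}$ estimates for the Kolmogorov operator with Neumann boundary gives $\p_\eta^2 w$ and $(\p_\tau+\eta\p_\xi)w$ in $L^p_{loc}$. Hypoelliptic $H^s$ estimates of Hörmander/Bouchut type then transfer regularity to $\p_\xi$ and $\p_\tau$ separately. Tangential derivatives $\p_\xi,\p_\tau$ preserve the Neumann condition, so we differentiate, apply the same estimates to the difference quotients, and bootstrap in Schauder-type kinetic Hölder spaces. For the $\eta$-derivatives we use the equation itself, $\p_\eta^2w=w^{-2}(\p_\tau+\eta\p_\xi)w$. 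This yields $w\in C^\infty$ on $\tilde D$ up to $\eta=0$.

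Finally we return to physical variables. Formula \eqref{invertcrocco} gives $y$ as a smooth function of $(t,x,u)$ with $\p_u y=1/w>0$, so by the implicit function theorem $u$ is smooth up to $y=0$. Smoothness of $w$ down to $\eta=0$ is essential here because the integral in \eqref{invertcrocco} starts at $0$. The velocity $v$ is then smooth by \eqref{introv}, and tracking constants through each step gives \eqref{smooeffesti}.
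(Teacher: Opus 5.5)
There is a genuine gap in the boundary H\"older step, which you yourself flag as the main obstacle. The architecture otherwise matches the paper: Crocco transform, De Giorgi--Nash--Moser for $1/w$ up to $\eta=0$, $W^{2,p}_\eta$ bounds, a tangential hypoelliptic bootstrap, $\eta$-derivatives from the equation, then inverse Crocco.

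\textbf{The fundamental solution and positivity.} There is no image method for $\p_\tau+\eta\p_\xi-\p_\eta^2$ with Neumann condition. Reflecting $\eta\mapsto-\eta$ flips the sign of the drift, so reflected whole-space kernels do not produce the Neumann kernel, and $\Gamma$ has no explicit form. More seriously, since $\eta\ge0$ the transport moves mass only toward $+x$. Hence $\Gamma(t,x,y;t_0,x_0,y_0)\equiv0$ for $x<x_0$, and no Gaussian lower bound can hold, in contrast with the whole-space kernel, which is positive everywhere.

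The paper obtains the upper bounds needed for the $L^2\to L^\infty$ step from the decomposition $\Gamma=\widetilde\Gamma+y_0^{-4}V(\cdot)$. There $V$ is controlled by Nash's inequality, enhanced dissipation and an iterated cutoff in $x$ (Propositions \ref{ptwise}, \ref{highptwise}). Strict positivity is proved only for $x>x_0$ (Proposition \ref{posi}), using local positivity from the decomposition, a dyadic iteration of the reproducing formula, Nirenberg's strong maximum principle and Hopf's lemma.

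Consequently the Kruzkov/weak-Poincar\'e argument cannot be run on centered half-cylinders. The dynamic average $H$ is bounded by $\lambda_0<1$ only at points lying to the right of the set where $w$ vanishes (Lemma \ref{hcontrol}, via Corollary \ref{positive}). If that set lies to the right of a centered target cylinder, $H$ gains nothing there. The expansion of positivity therefore gives a lower bound only on a cylinder shifted by $\tfrac{99}{100}\theta^3$ in $x$ (Lemma \ref{oscillation}), and the oscillation decay must be organized around shifted balls (Corollary \ref{osciesti}). ``Adapted to half-cylinders'' does not supply this one-sided geometry.

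\textbf{Two later steps are asserted rather than available.}

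First, the $W^{2,p}_\eta$ step. A weak solution only has $\p_\eta^2w\in L^1_{\rm loc}$, and the second derivatives of the potentials are Calder\'on--Zygmund operators, which are unbounded on $L^1$. So freezing coefficients yields nothing unless $\p_\eta^2(w\chi)\in L^p$ is already known. The paper works with the divergence form for $v=1/w$ and builds half-space $L^p$ bounds for the potentials (Lemma \ref{constantcoeff}, with enhanced dissipation for $p=2$). It then solves the localized problem by a contraction in a higher-integrability space and identifies the fixed point with $w\chi$ by uniqueness in the energy class $\p_\eta W\in L^2$. This is done in two stages: first $W^{1,p}_\eta$ and $C^\beta_*$ with $\beta$ close to $1$, then $W^{2,p}_\eta$ using $\p_\eta a\in L^q$.

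Second, the higher-regularity bootstrap. Kinetic Schauder theory is not available for the half-space Neumann problem: $\Gamma$ lacks the whole-space symmetries, and the operator gains only a fraction of a derivative in $(\tau,\xi)$. The paper instead bootstraps in $\widetilde H^{ks}$ via Littlewood--Paley energy estimates and the $1/3$ hypoelliptic gain (Proposition \ref{highhypo}). The paraproduct and commutator terms there require $\|\p_\eta w\|_{L^\infty}$, obtained from $C^\alpha$ and $W^{2,p}_\eta$ through Lemma \ref{aniinterpolation}. Your plan never identifies this bound as the input that starts the bootstrap.

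Also, correct a sign: $\p_\tau(1/w)+\eta\p_\xi(1/w)=-\p_\eta^2w$, i.e.\ $\p_\tau v+\eta\p_\xi v-\p_\eta(w^2\p_\eta v)=0$. This form, not the $w$-form with the extra $-2w(\p_\eta w)^2$, is the one to which De Giorgi--Nash--Moser applies.
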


\begin{remark}The regularity assumptions in (I) are natural under the Crocco transform of \eqref{prandtl}. Indeed, to prove the up--to--boundary regularity in the physical coordinate system \eqref{prandtl}, the crucial step is to obtain the up--to--boundary regularity in the transformed system \eqref{prandtlcroccosec1}. Our assumptions in Theorem \ref{mainone} translate to the assumptions that the Crocco transform is well-defined, and the solution after transformation satisfies
    \begin{equation}\label{regularassume}
        \begin{cases}
            \big\{\p_{\tau}w,\p_{\xi}w,\p_{\eta}w,\p_{\eta}^{2}w\big\}\subseteq L^{1}_{loc},\\
            w(\tau,\xi,\eta)\sim 1\ \text{on every compact subdomain,}\\
            \p_{\eta}w|_{\eta=0}=0,\ \text{in the sense of trace.}
        \end{cases}
    \end{equation}
The conditions in \eqref{regularassume} are the natural regularity assumptions on weak solutions to \eqref{prandtlcroccosec1}.

We remark also that the $L^1$ type estimates on derivatives of $u$ are consequences of bounds obtained from a priori estimates of \eqref{prandtlcroccosec1} when the initial and boundary conditions are just regular enough for the existence of weak solutions (see Assumptions \ref{dataassum3} below for the precise conditions). In this case, the gain of higher--order regularity is more challenging. Even for elliptic equations, similar difficulties of working with $L^1$ (as opposed to more commonly assumed $L^2$) type weak solutions exist, see for example Brezis \cite{brezis2008conjecture} and Serrin \cite{serrin1964pathological}.

\end{remark}

The proof of Theorem \ref{mainone} relies crucially on the corresponding results for the transformed Prandtl equation \eqref{prandtlcroccosec1}, as follows.
\begin{theorem}\label{th:smo1}
    Fix $T>0$ and $X>0$. Let $w(\tau,\xi,\eta)$ be a function defined in $(0,T]\times(0,X)\times[0,1)$ satisfying the following assumptions.
    \begin{itemize}
        \item [(I)] $w\in W^{1,1}_{\rm loc}\cap W^{2,1}_{\eta,\,{\rm loc}}((0,T]\times(0,X)\times[0,1))$ solves the first equation in \eqref{prandtlcroccosec1} in the sense of  distributions. Furthermore, $w$ satisfies the Neumann boundary condition of \eqref{prandtlcroccosec1} on $\{\eta=0\}$ in the trace sense.

        \item[(II)] $0<c_\Omega\leq w\leq C_\Omega$ on any $\Omega\Subset (0,T]\times(0,X)\times[0,1)$ for suitable positive constants $c_\Omega<C_\Omega$.
    \end{itemize}
Then we have $w(\tau,\xi,\eta)\in C^{\infty}((0,T]\times(0,X)\times[0,1))$. More precisely, for any $\Omega\Subset(0,T]\times(0,X)\times[0,1)$, and any $m\in\mathbb{N}$, we have
\begin{equation}
    \|w\|_{C^{m}(\Omega)}\lesssim_{m,\Omega}1,%\,\,\,\, \text{for any}\ \Omega\Subset(0,T]\times(0,X)\times[0,1),
\end{equation}
where the implied constants depend on quantitative versions of the bounds appearing in the above regularity assumptions.
\end{theorem}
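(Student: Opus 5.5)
We prove Theorem \ref{th:smo1} in three stages: H\"older continuity up to the boundary, then $W^{2,p}$-type estimates, then a hypoelliptic bootstrap. Since the statement is local, fix $\Omega\Subset(0,T]\times(0,X)\times[0,1)$. Cover $\Omega$ by finitely many small kinetic cylinders
$$Q_r(z_0)=\{(\tau,\xi,\eta):\tau_0-r^2<\tau\le\tau_0,\ |\xi-\xi_0-(\tau-\tau_0)\eta_0|<r^3,\ |\eta-\eta_0|<r\},$$
intersected with $\{\eta\ge0\}$. These cylinders are adapted to the Kolmogorov operator $\partial_\tau+\eta\partial_\xi-a\partial_\eta^2$, and the cylinders at $\tau_0=T$ are one-sided in time, which is why the estimates reach the terminal time.

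\emph{Step 1: H\"older continuity up to $\eta=0$.} Write the equation in divergence form. Dividing by $w^2$ and setting $\phi=-1/w$ (or $\phi=w^{-1}$, up to sign) gives
$$\partial_\tau\phi+\eta\partial_\xi\phi=\partial_\eta^2 w.$$
We treat $w$ as solving the linear equation $\partial_\tau w+\eta\partial_\xi w-a\,\partial_\eta^2w=0$ with $a=w^2\in[c_\Omega^2,C_\Omega^2]$ merely bounded measurable. The Neumann condition $\partial_\eta w|_{\eta=0}=0$ lets us reflect evenly across $\eta=0$. The reflection is not exact, however, because the transport term $\eta\partial_\xi$ changes sign under $\eta\mapsto-\eta$. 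For this reason we work directly in the half space and use the explicit fundamental solution of the Kolmogorov operator $\partial_\tau+\eta\partial_\xi-\partial_\eta^2$ with Neumann condition on $\{\eta>0\}$, as announced in the abstract. With it we carry out the Kruzkov/De Giorgi scheme of \cite{XZZ24} in boundary cylinders:
\begin{itemize}
\item[(a)] an $L^1$–$L^\infty$ (or $L^p$–$L^\infty$) local maximum principle for sub- and supersolutions, using Caccioppoli inequalities in $\eta$ (boundary terms vanish by Neumann) combined with the fundamental-solution representation to gain integrability in $(\tau,\xi)$;
\item[(b)] the weak Poincar\'e inequality, Lemma \ref{weakpoin}, adapted to half cylinders, giving a measure-to-pointwise lemma;
\item[(c)] oscillation decay $\operatorname{osc}_{Q_{r/2}}w\le(1-\theta)\operatorname{osc}_{Q_r}w$.
\end{itemize}
Iterating (c) yields $w\in C^\alpha$ up to $\eta=0$. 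Away from $\eta=0$ this is just the interior result of \cite{XZZ24}.

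\emph{Step 2: $W^{2,p}$ estimates.} Once $w\in C^\alpha$, the coefficient $a=w^2$ is H\"older in the kinetic metric. Freezing coefficients and using kinetic Calder\'on–Zygmund/Schauder estimates for the half-space Neumann Kolmogorov operator, again via its explicit kernel, gives
$$\partial_\eta^2w,\quad (\partial_\tau+\eta\partial_\xi)w\in L^p_{\rm loc}\quad\text{for all }p<\infty,$$
and then kinetic Schauder regularity $C^{2+\alpha}_{\rm kin}$ up to the boundary.

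\emph{Step 3: higher regularity.} Tangential derivatives are handled first. Difference quotients in $\xi$ (translation invariant) and $\tau$ (with cutoff) satisfy linearized equations of the same form, which preserve the Neumann condition. Schauder estimates then give arbitrary regularity in $\tau,\xi$. Normal derivatives are recovered from the equation,
$$\partial_\eta^2w=w^{-2}(\partial_\tau+\eta\partial_\xi)w,$$
and iterating this identity gives all $\eta$-derivatives. Gains in $\xi$ that do not come directly from Schauder follow from H\"ormander-type $H^s$ hypoelliptic estimates, using the commutator $[\partial_\eta,\partial_\tau+\eta\partial_\xi]=\partial_\xi$, localized with cutoffs that are even in $\eta$. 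One issue here is that a $\xi$-derivative does not commute with the vector field in a way that keeps the boundary data clean; $\partial_\eta$ of $\partial_\xi w$ should still vanish at $\eta=0$, which we must verify. A concrete mechanism for the $\xi$-gain is the bound $\|\partial_\xi\|\lesssim\|\partial_\eta\|^{1/3}$-type interpolation. Each step yields bounds depending only on $c_\Omega, C_\Omega$ and the local norms, which gives the quantitative statement.

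\emph{Main obstacle.} The hardest step is Step 1: the local boundedness and oscillation decay in half cylinders, starting only from $W^{1,1}$ regularity. Because reflection fails, the weak Poincar\'e argument must be redone with the half-space Neumann Kolmogorov kernel. We would first try to show that this kernel has Gaussian-type kinetic upper and lower bounds comparable to the whole-space kernel, and to transport these bounds through the De Giorgi iteration.
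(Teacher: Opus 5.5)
Your architecture (divergence form via $1/w$, a Kruzkov/weak-Poincar\'e H\"older estimate in half cylinders, $W^{2,p}_\eta$ bounds, then a hypoelliptic bootstrap in $\xi$) matches the paper's, but there are genuine gaps.

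\textbf{Step 1.} The key lemma you propose for the step you correctly call hardest is false. The Neumann fundamental solution $\Gamma$ of $\partial_\tau+\eta\partial_\xi-\partial_\eta^2$ on $\{\eta>0\}$ is not explicit. It also admits no lower bound comparable to the whole-space kernel. Because $\eta\ge0$, transport moves mass only in the $+\xi$ direction, so $\Gamma(\tau,\xi,\eta;\tau_0,\xi_0,\eta_0)\equiv0$ for $\xi<\xi_0$. Being smooth away from the pole, $\Gamma$ then vanishes to infinite order on $\{\xi=\xi_0\}$, whereas $\widetilde\Gamma>0$ everywhere for $\tau>\tau_0$. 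Consequently the measure-to-pointwise step cannot be run in centered cylinders.

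What replaces your Gaussian lower bound in the paper:
\begin{enumerate}
\item Strict positivity of $\Gamma$ exactly on $\{\tau>\tau_0,\ \xi>\xi_0\}$ (Proposition \ref{posi}), proved via scaling, the reproducing formula and strong maximum principles, and made quantitative on compact sets (Corollary \ref{positive}).
\item An expansion of positivity whose measure hypothesis sits on an upstream cylinder and whose lower bound holds only on a cylinder shifted by about $\theta^3$ in $+\xi$ (Lemma \ref{oscillation}).
\item An oscillation decay organized around these shifted cylinders (Corollary \ref{osciesti}).
\end{enumerate}
Even the upper bounds you need for local boundedness are not Gaussian: the paper obtains $\Gamma\le\widetilde\Gamma+C\|z-\zeta\|^{-4}$ via the Fourier transform in $\xi$ and enhanced dissipation (Propositions \ref{ptwise}, \ref{enhanced}). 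Note also that the De Giorgi machinery must be run on the divergence-form equation $\partial_\tau v+\eta\partial_\xi v-\partial_\eta(w^2\partial_\eta v)=0$ for $v=1/w$. It does not apply to the non-divergence form with merely measurable $a=w^2$, as your Step 1 states.

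\textbf{Step 2.} Freezing coefficients yields only an a priori estimate. Absorbing $(a-a(z_0))\partial_\eta^2w$ requires $\partial_\eta^2w$ to lie already in some $L^p$ with $p>1$. You only know $\partial_\eta^2w\in L^1$, where the singular integrals are unbounded and uniqueness of such weak solutions is not automatic. The paper proceeds in two stages:
\begin{enumerate}
\item It proves $\partial_\eta v\in L^p$ and $v\in C^\beta_\ast$ for the divergence-form equation, by a contraction in $L^2\cap L^p$ combined with uniqueness in the energy class $\partial_\eta v\in L^2$ (Proposition \ref{sobolevesti}).
\item Only then, with $a\in C^\beta_\ast$ for all $\beta<1$ and $\partial_\eta a\in L^q$, does it obtain $\partial_\eta^2v\in L^p$ (Proposition \ref{sob1}).
\end{enumerate}
The $L^2$ boundedness behind these singular integrals comes from enhanced dissipation (Lemma \ref{h2esti}), not from an explicit kernel.

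\textbf{Step 3.} Your $C^{2+\alpha}_{\rm kin}$ Schauder theory up to the Neumann boundary is an unproved black box in this quasilinear, weak-solution setting. The paper deliberately avoids it, since it would need a H\"older well-posedness theory and H\"older spaces adapted to the non-symmetric half-space kernel. Even granted, it gains only $(2+\alpha)/3$ of a derivative in $\xi$, so plain difference quotients in $\xi$ do not bootstrap. The paper's replacement is Proposition \ref{highhypo}, which uses:
\begin{enumerate}
\item Littlewood--Paley decomposition in $\xi$;
\item commutator estimates requiring $\|\partial_\eta w\|_{L^\infty}$, obtained from the H\"older and $W^{2,p}_\eta$ bounds via Lemma \ref{aniinterpolation};
\item the $1/3$-gain of Lemma \ref{tangentialimprove1} for divergence-form forcing, which again rests on the half-space kernel estimates.
\end{enumerate}
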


\begin{remark}\label{re:impreg1}
    As mentioned in the introduction, the up-to-boundary regularity for weak solutions to \eqref{prandtlcroccosec1} is essential for obtaining regularity of the dynamical Prandtl equation \eqref{prandtl} in the physical variables, even in the interior. This can be seen from the inverse Crocco transform \eqref{invertcrocco}, which is nonlocal and depends crucially on the behavior of $w$ near $\eta=0$. Indeed, with only interior regularity results, the best we can obtain for $w$ (from scaling considerations) would be that $\partial_\xi w= O(\eta^{-3})$ as $\eta \to0+$. Then the transform $(t,x,u)\to y$ given by \eqref{invertcrocco} can be bounded at best by
    \begin{equation}\label{re:impreg2}
        |\partial_xy(t,x,u)|=\bigg|\int_0^u\frac{\partial_\xi w(t,x,\eta)}{w^2(t,x,\eta)}d\eta\bigg|\lesssim \bigg|\int_0^u \eta^{-3}d\eta\bigg|,
    \end{equation}
    which diverges. 
\end{remark}

\begin{comment}
We point that The various regularity theories established in Section \ref{fundamental} to Section \ref{higher} are mainly to solve the regularity issue near the \emph{boundary} $\{\eta=0\}$, and these established tools are adoptable to the interior estimate without extra difficulty. Indeed, one may obtain the interior smoothness effect in $(0,T)\times(0,X)\times(0,1)$ of \eqref{prandtlcroccosec1} by a relative simpler way which is sketched below:
\begin{itemize}
    \item[Step 1:] By \cite{XZZ24}, we have $w(\tau,\xi,\eta)$ is locally H\"older continuous in the interior $(0,T)\times(0,X)\times(0,1)$; Next, combining the continuity of $w(\tau,\xi,\eta)$ with the interior estimate in \cite{BC96b}, we have $\p_{\eta}^{2}w\in L^{p}_{loc}$ in $(0,T)\times(0,X)\times(0,1)$ for all $p<\infty$.
    \item[Step 2:] Bootstrapping the regularity by slight modifying the argument in \cite{HS20}, or applying the method of hypoelliptic regularity improvement stated in Section \ref{higher} (the explicit expression of $\mathcal{L}_{0}$ in whole space simplifies several key estimates).
\end{itemize}
For self--contained, we will prove both boundary regularity and interior regularity rigorously during the proof of Theorem \ref{mainone}.
\end{comment}
Theorem \ref{mainone} provides the main tool to prove the global well-posedness and regularity of classical solutions. The proof of Theorem \ref{mainone}, based on the proof of Theorem \ref{th:smo1}, is given in Section \ref{potmainone}. 

\subsubsection{Global classical solutions}
Our second important goal is to establish global-in-$(t,x)$ well-posedness (in Sobolev spaces) of classical solutions to \eqref{prandtl} for sufficiently regular initial and boundary data that satisfy suitable monotonicity conditions, and thereby provide a positive answer to the fundamental question raised by Oleinik and Samokhin \cite{OS99}.

We first state our main assumptions on boundary and initial data.
\begin{assumptions}\label{dataassum}
Assume the initial data $u_{0}(x,y)$ and $u_{1}(t,y)$ in \eqref{prandtl} satisfy the following hypotheses:
\begin{itemize}
    \item[(H1)] (Asymptotic behavior) $u_{0}, u_{1}$ are continuous on $[0,\infty)\times[0,\infty)$,  strictly monotone in $y$,  and satisfy:
    \begin{equation}\label{matchingrate}
    \begin{split}
        &u_{i}|_{y=0}=0,\,\, \lim_{y\to\infty}u_{i}=1,\quad i\in\{0,1\},\\
        &F(u_{0})\lesssim_{X} \p_{y}u_{0}\lesssim_{X}F(u_{0}),\ \forall\, X>0,\,\,(x,y)\in[0,X]\times[0,\infty),\\
        &F(u_{1})\lesssim_{T} \p_{y}u_{1}\lesssim_{T}F(u_{1}),\ \forall\, T>0,\,\, (t,y)\in[0,T]\times[0,\infty),
    \end{split}
    \end{equation}
    for some matching rate $F(u)\in\Big\{(1-u)\log^{\frac{1}{2}}(\frac{10}{1-u}),(1-u)^{m}\,\,{\rm with}\,\,m\in[1,3/2]\Big\}.$
 %   In addition, we assume the ``{\it finiteness of %displacement}" condition:
%    \begin{equation}\label{displacement}
 %   \begin{split}
%    &\int_{0}^{\infty}(1-u_{0}(x,y))dy\lesssim_{X}1,\ %\forall x\in[0,X],\\
%    &\int_{0}^{\infty}(1-u_{1}(t,y))dy\lesssim_{T}1,\ %\forall
%t\in[0,T].
%\end{split}
%\end{equation}
    \item[(H2)] (Regularity and compatibility condition) There exists a function $u^{(0)}(t,x,y)$ defined on $[0,\infty)^3$ such that (i) $u^{(0)}|_{t=0}\equiv u_0$, $u^{(0)}|_{x=0}=u_1$; (ii) $u^{(0)}$ satisfies the matching rate $$F(u^{(0)})\lesssim_{T,X} \p_{y}u^{(0)}\lesssim_{T,X}F(u^{(0)}),$$ for all $(t,x,y)\in[0,T]\times[0,X]\times[0,\infty)$ and any $T>0, X>0$; (iii) for any integers $\alpha_1, \alpha_2, \alpha_3\ge0$ with $2\alpha_1+2\alpha_2+\alpha_3\leq 9$, 
    \begin{equation}\label{regular}
\partial_t^{\alpha_1}\partial_x^{\alpha_2}\partial_y^{\alpha_3}u^{(0)}(t,x,y) \in C([0,\infty)^{3}),
    \end{equation}
    and the compatibility condition on $\{t=0\}$ and $\{x=0\}$ is verified, in the sense that for any integers $\alpha_1, \alpha_2, \alpha_3\ge0$ with $2\alpha_1+2\alpha_2+\alpha_3\leq9$, $\partial_t^{\alpha_1}\partial_x^{\alpha_2}\partial_y^{\alpha_3}u^{(0)}$ satisfies the differential constraints imposed by the equation \eqref{prandtl} on $\{t=0\}$, $\{x=0\}$, $\{y=0\}$.

    In addition, we assume the following bounds for any $T>0, X>0$,
    \begin{equation}\label{oneorderassume}
            (B1):\bigg|\p_{y}\Big(\frac{\p_{t}u^{(0)}(t,x,y)}{\p_{y}u^{(0)}(t,x,y)}\Big)\bigg|+\bigg|\p_{y}\Big(\frac{\p_{x}u^{(0)}(t,x,y)}{\p_{y}u^{(0)}(t,x,y)}\Big)\bigg|+\left|\p_{y}\left(\frac{\p_{y}^{2}u^{(0)}(t,x,y)}{\p_{y}u^{(0)}(t,x,y)}\right)\right|\lesssim_{T,X}1,\\
    \end{equation}
for all $(t,x,y)\in[0,T]\times[0,X]\times[0,\infty)$.
\begin{equation}\label{higherassume}
\begin{split}
(B2):&\sum_{2\leq\alpha_{1}+\alpha_{2}+\alpha_{3}\leq 4}(1-u^{(0)}(t,x,y))^{\frac{1}{1000}}\\
&\qquad\qquad\times\left|(\p_{y}u^{(0)})^{2\alpha_{3}-1}\cdot (\mathcal{T}_{1}^{[u^{(0)}]})^{\alpha_{1}}(\mathcal{T}_{2}^{[u^{(0)}]})^{\alpha_{2}}(\mathcal{T}_{3}^{[u^{(0)}]})^{2\alpha_{3}}\p_{y}u^{(0)}(t,x,y)\right|\lesssim_{T,X} 1,
\end{split}
\end{equation}
for all $(t,x,y)\in[0,T]\times[0,X]\times[0,\infty)$.

In the above, the directional derivatives $\mathcal{T}_{i}^{[h]}$ are defined as
\begin{equation}\label{eq:goodderivative}
\mathcal{T}_{1}^{[h]}:=\p_{t}-\frac{\p_{t}h}{\p_{y}h}\p_{y},\quad\mathcal{T}_{2}^{[h]}:=\p_{x}-\frac{\p_{x}h}{\p_{y}h}\p_{y},\quad\mathcal{T}_{3}^{[h]}:=\frac{1}{\p_{y}h}\cdot\p_{y}
\end{equation}
\end{itemize}
\end{assumptions}
\begin{remark}
    We note that for solutions $u$ to \eqref{prandtl} and in Crocco coordinates $$\mathcal{T}^{[u]}_{1}=\p_{\tau},\quad\mathcal{T}^{[u]}_{2}=\p_{\xi},\quad\mathcal{T}^{[u]}_{3}=\p_{\eta}.$$ 
\end{remark}
\begin{remark}
As is well known, suitable regularity and compatibility conditions on $u_0, u_1$ are necessary to construct classical solutions to \eqref{prandtl}. In our convention, the differential constraints imposed by \eqref{prandtl} on $u^{(0)}$ are a system of differential identities {\it assuming} that $u^{(0)}$ satisfies \eqref{prandtl}, restricted to the boundary. Using Whitney's extension theory, our assumptions can be made explicit, see Remark \ref{descriptioncompati}.
%See Appendix \ref{compatiappendix} for the more detailed constraints required for $u_0, u_1$ so that such $u^{(0)}$ can be chosen. 
Our formulation of the regularity and compatibility conditions can be relaxed.
We adopt these assumptions for the sake of clarity and simplicity. Our main goal is to obtain global classical solutions (all terms appearing in the equation \eqref{prandtl} are continuous functions in the closure of the domain) and we have not tried to optimize the regularity assumptions on the initial and inflow data. 
\end{remark}

\begin{remark}
We remark that the asymptotic conditions \eqref{matchingrate} allow $u$ to converge to the outer flow $U\equiv 1$ with three distinct and natural rates: algebraic, exponential and Gaussian. Indeed, if $1-u\sim y^{-\alpha}$ for some $\alpha\ge2$ and $\partial_yu\sim y^{-\alpha-1}$, then we have $\partial_yu\sim (1-u)^{\frac{\alpha+1}{\alpha}}$ where $\frac{\alpha+1}{\alpha}\leq 3/2$ and \eqref{matchingrate} are satisfied. Similar computations can be made for the case when $1-u\sim e^{-cy}$, $\partial_yu\sim e^{-cy}$ and the case when $1-u\sim e^{-cy^2}$ and $\partial_yu\sim ye^{-cy^2}$. In particular, crucially, the Blasius profile is included, which is an important self-similar steady state of \eqref{prandtl} and plays an essential role in the study of asymptotics as $x\to\infty$ for the steady Prandtl equation.

The restriction in \eqref{matchingrate} that $m\leq 3/2$ (equivalently $\alpha\ge2$ above) is imposed solely for simplicity and is not essential. We expect that our main conclusions hold provided $1\leq m<2$.% or $m=2$ with an additional ``finiteness of displacement" condition of the type $\int_0^\infty (1-h(s,y))\, dy\lesssim 1$ for $h\in\{u_0, u_1\}$ with the corresponding $s\in\{t,x\}$. 
\end{remark}

\begin{remark}
 
    The assumption (B2) is a natural extension of (B1) for higher order derivatives, except for the factor $(1-u^{(0)})^{\frac{1}{1000}}$. This factor is crucial to incorporate the Blasius profile.  We note also that the specific power $\frac{1}{1000}$ is not important and is fixed here for concreteness. 
\end{remark}

The assumptions for data $(u_{0},u_{1})$ in Assumptions \ref{dataassum} have concise formulations for the transformed Prandtl system
\eqref{prandtlcroccosec1}, which we present as follows.
\begin{assumptions}\label{dataassumecrocco}
    Assume the initial data $w_{0}(\xi,\eta)$ and $w_{1}(\tau,\eta)$ in \eqref{prandtlcroccosec1} satisfy the following hypotheses:
\begin{itemize}
    \item[(H1)] (Vanishing rate) $w_{0},w_{1}$ are positive on $[0,\infty)\times[0,1)$, and $w_{i}|_{\eta=1}=0$ with the vanishing rate:
    \begin{equation}\label{matchingrate2}
    \begin{split}
        &F(\eta)\lesssim_{X} w_{0}\lesssim_{X}F(\eta),\ \forall\, X>0,\,\,(\xi,\eta)\in[0,X]\times[0,1),\\
        &F(\eta)\lesssim_{T} w_{1}\lesssim_{T}F(\eta),\ \forall\, T>0,\,\, (\tau,\eta)\in[0,T]\times[0,1),
    \end{split}
    \end{equation}
    for some vanishing rate $F(\eta)\in\big\{(1-\eta)\log^{\frac{1}{2}}\left(\frac{10}{1-\eta}\right),(1-\eta)^{m}\,\,{\rm with}\,\,m\in[1,3/2]\big\}$.
    %In addition, we assume the ``{\it finiteness of %displacement}" condition:
    %\begin{equation}\label{displacement2}
    %\begin{split}
    %&\int_{0}^{1}\frac{1-\eta}{w_{0}}d\eta\lesssim_{X}1,\ %\forall \xi\in[0,X],\\
    %&\int_{0}^{1}\frac{1-\eta}{w_{1}}d\eta\lesssim_{T}1,\ %\forall \tau\in[0,T].
%\end{split}
%\end{equation}
    \item[(H2)] (Regularity and compatibility condition) There exists a function $w^{(0)}(\tau,\xi,\eta)$ defined on $[0,\infty)^2\times[0,1)$ such that (i) $w^{(0)}|_{\tau=0}\equiv w_0$, $w^{(0)}|_{\xi=0}=w_1$; (ii) $w^{(0)}$ satisfies the vanishing rate $$F(\eta)\lesssim_{T,X} w^{(0)}\lesssim_{T,X}F(\eta),$$ for all $(\tau,\xi,\eta)\in[0,T]\times[0,X]\times[0,1)$ and any $T>0, X>0$; and (iii)  for any integers $\alpha_1, \alpha_2, \alpha_3\ge0$ with $2\alpha_1+2\alpha_2+\alpha_3\leq 8$, 
    \begin{equation}\label{regular2}
\partial_\tau^{\alpha_1}\partial_\xi^{\alpha_2}\partial_\eta^{\alpha_3}w^{(0)}(\tau,\xi,\eta) \in C([0,\infty)^{2}\times[0,1)),
    \end{equation}
    and the compatibility condition on $\{\tau=0\}$ and $\{\xi=0\}$ is verified, in the sense that  for any integers $\alpha_1, \alpha_2, \alpha_3\ge0$ with $2\alpha_1+2\alpha_2+\alpha_3\leq8$, $\partial_\tau^{\alpha_1}\partial_\xi^{\alpha_2}\partial_\eta^{\alpha_3}w^{(0)}$ satisfies the differential constraints imposed by the equation \eqref{prandtlcroccosec1} at $\xi=0$, $\tau=0$, and $\eta=0$.  
    
    In addition, we have the following bounds for any $T>0, X>0$,
    \begin{equation}\label{oneorderassume2}
            (B1):\left|\frac{\p_{\tau}w^{(0)}(\tau,\xi,\eta)}{w^{(0)}(\tau,\xi,\eta)}\right| + \left|\frac{\p_{\xi}w^{(0)}(\tau,\xi,\eta)}{w^{(0)}(\tau,\xi,\eta)}\right|+\left|w^{(0)}(\tau,\xi,\eta)\p_{\eta}^{2}w^{(0)}(\tau,\xi,\eta)\right|   \lesssim_{T,X}1,
    \end{equation}
for all $(\tau,\xi,\eta)\in[0,T]\times[0,X]\times[0,1)$;
\begin{equation}\label{higherassume2}
(B2):\sum_{2\leq\alpha_{1}+\alpha_{2}+\alpha_{3}\leq 4}(1-\eta)^{\frac{1}{1000}}\left|(w^{(0)})^{2\alpha_{3}-1}\p_{\tau}^{\alpha_{1}}\p_{\xi}^{\alpha_{2}}\p_{\eta}^{2\alpha_{3}}w^{(0)}(\tau,\xi,\eta)\right|\lesssim_{T,X}1,\\
\end{equation}
for all $(\tau,\xi,\eta)\in[0,T]\times[0,X]\times[0,1)$.
\end{itemize}
\end{assumptions}
With the above assumptions, we are now ready to present our main result on the global well-posedness of the dynamical Prandtl equation \eqref{prandtl}.

\begin{theorem}[\bf Global Well-posedness for Classical Solutions]
\label{maintwo}
Under Assumption \ref{dataassum}, the dynamical Prandtl equation \eqref{prandtl} admits a unique \emph{global} classical solution $u(t,x,y)$ in $[0,+\infty)^{3}$ with the following properties:
\begin{itemize}
    \item [(P1)] $u$ is continuously differentiable in $[0,+\infty)^{3}$, and satisfies the bounds \eqref{globalbound}, and the initial--boundary conditions in \eqref{prandtl} in the classical sense. Furthermore, $\partial_tu, \partial_xu$, $\p_{y}u, \partial_y^2u$ are continuous in $[0,+\infty)^{3}$, and together with the associated $v$ as in \eqref{introv}, $u$ satisfies the Prandtl equation \eqref{prandtl} in the classical sense. 
    \item[(P2)] $u(t,x,y)$ is strictly monotone in $y$ and satisfies for any $T>0, X>0$ and suitable $c_{T,X}\in(0, 1), \,C_{T,X}\in(1,\infty)$ the pointwise bounds
    \begin{equation}\label{solutionmatching}
       c_{T,X}F\big(u(t,x,y)\big)\leq \p_{y}u(t,x,y)\leq C_{T,X} F\big(u(t,x,y)\big),\quad \forall (t,x,y)\in [0,T]\times[0,X]\times[0,\infty).\\
    \end{equation}
    \item[(P3)] $u(t,x,y)$ is smooth up to the boundary $y=0$ for $t>0$ and $x>0$, that is
    \begin{equation*}
            u(t,x,y)\in C^{\infty}\left((0,\infty)\times(0,\infty)\times[0,+\infty)\right).
    \end{equation*}
    \begin{comment}
    and satisfying the uniform estimates up to $t=0$ and $x=0$:
    $$\p_{t}u,\p_{x}u,\p_{y}u,\p_{t}\p_{y}u,\p_{x}\p_{y}u,\p_{y}^{2}u,\p_{y}^{3}u\in C^{k-2,\alpha}(\overline{D})$$
    for some $\alpha>0$, satisfying Prandtl equation\eqref{prandtl} in the classical way. 
    \item[(P3)] For any  $0<T,X<\infty$ we have
    \begin{equation}\label{physicalbv}
        \p_{t}u,\p_{x}u\in L^{\infty}_{t}L^{1}_{x,y}(\Omega_{T,X}).
    \end{equation}
\end{comment}
\end{itemize}
\end{theorem}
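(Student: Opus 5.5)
The plan is to work in Crocco coordinates throughout and to derive Theorem \ref{maintwo} from three ingredients: a local classical theory, a global a priori estimate, and a continuation argument closed by the smoothing of Theorem \ref{th:smo1}. The transformed problem \eqref{prandtlcroccosec1} has data $w_0,w_1$, and Assumption \ref{dataassum} translates into Assumption \ref{dataassumecrocco} with extension $w^{(0)}$.

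\textbf{Step 1 (local classical solutions).} Set $w=w^{(0)}+\tilde w$ and prove local existence on a region $[0,T_0]\times[0,X_0]\times[0,1)$ by weighted quasilinear energy estimates. The estimates use weights built from $F(\eta)$ to handle the degeneracy at $\eta=1$, and derivatives $\partial_\tau^{\alpha_1}\partial_\xi^{\alpha_2}\partial_\eta^{\alpha_3}$ with parabolic counting $2\alpha_1+2\alpha_2+\alpha_3\le 8$. They are supported by the compatibility conditions and the bounds (B1)--(B2). Because the transport term $\eta\partial_\xi$ has $\eta\ge0$, $\xi$ behaves like a second time variable. It is therefore natural to grow the domain in both $\tau$ and $\xi$.

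\textbf{Step 2 (global a priori bounds).} On any $[0,T]\times[0,X]\times[0,1)$, prove the two-sided bound $c_{T,X}F(\eta)\le w\le C_{T,X}F(\eta)$ by the maximum principle. The comparison functions are $e^{\pm K(\tau+\xi)}$ times $F(\eta)$ plus corrections, and they are checked to be sub- and supersolutions of $\partial_\tau+\eta\partial_\xi-w^2\partial_\eta^2$. The restriction $m\le3/2$ and the logarithmic case enter here, because $w^2F''$ must be controlled by $F$ near $\eta=1$. The Neumann condition at $\eta=0$ is handled by adding a term like $\epsilon(1-\eta)^2$-type flux corrections. Next, bound the first-order quantities $\partial_\tau w/w$, $\partial_\xi w/w$ and $w\partial_\eta^2 w$, again by maximum principle arguments on the differentiated equations; this is the global analogue of (B1). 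Together these yield \eqref{solutionmatching}, i.e.\ (P2), and they keep the equation uniformly parabolic in $\eta$ on compact sets.

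\textbf{Step 3 (continuation).} Let $(T^\ast,X^\ast)$ be maximal such that the classical solution exists. Near $\tau=0$ or $\xi=0$, the local theory of Step 1 together with the a priori bounds of Step 2 controls the solution. Away from these faces, the bounds verify hypothesis (II) of Theorem \ref{th:smo1}, so $w$ is smooth up to $\eta=0$ with quantitative $C^m$ bounds. The weighted higher norms near $\eta=1$ are propagated by the Step 1 energy estimates, now with coefficient bounds that stay finite. These estimates allow restarting the local theory from slices $\tau=T^\ast-\delta$ and $\xi=X^\ast-\delta$, which contradicts maximality. Uniqueness follows from an $L^1$ or weighted $L^2$ contraction for the difference of two solutions of \eqref{prandtlcroccosec1}, or from the uniqueness result of \cite{XZZ24}.

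\textbf{Step 4 (back to physical variables).} Invert via \eqref{invertcrocco}. Up-to-boundary regularity near $\eta=0$, combined with $w\sim F(\eta)$ near $\eta=1$ (so that $\int^1 d\eta/w=\infty$), gives a bijection onto $y\in[0,\infty)$ together with (P1). Property (P3) then follows from Theorem \ref{mainone}.

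\textbf{Main obstacle.} The hardest part is Step 2 together with the energy propagation near $\eta=1$ for the Gaussian/Blasius rate, where the degeneracy $w^2\sim(1-\eta)^2\log$ is sharp. To handle it I would first build explicit barriers of the form $F(\eta)(1+A(1-\eta)^{1/1000})e^{K(\tau+\xi)}$ and exploit the $(1-\eta)^{1/1000}$ gain already built into (B2).
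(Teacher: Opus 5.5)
The gap is in the second half of your Step 2, and Step 3 depends on it. You claim that $\partial_\tau w/w$, $\partial_\xi w/w$ and $w\partial_\eta^2w$ can be bounded on an arbitrary $[0,T]\times[0,X]\times[0,1)$ by the maximum principle applied to the differentiated equations. Set $g_1=\partial_\tau w/w$ and $g_2=\partial_\xi w/w$. The equation gives $w\partial_\eta^2 w=g_1+\eta g_2$, and differentiating \eqref{prandtlcroccosec1} yields
\begin{equation*}
\partial_\tau g_i+\eta\,\partial_\xi g_i-w^2\partial_\eta^2 g_i-2w\,\partial_\eta w\,\partial_\eta g_i=2\,(g_1+\eta g_2)\,g_i,\qquad i=1,2.
\end{equation*}
The zeroth-order coefficient is the unknown itself, so this is a Riccati-type system. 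A comparison argument only controls it on a $\tau$- or $\xi$-range whose length shrinks as the data grow. This is exactly why Lemma \ref{bootstrap} needs $X\le 1/(200M)$ and $X\ll_T M^{-4}$, or the analogous smallness of $T$. The paper says explicitly, after Lemma \ref{highenergyesti} and in the remark following \eqref{globalbound}, that the weighted higher-order energy cannot be closed for arbitrary $T$ and $X$. It also says that the behavior of $\partial_\tau w,\partial_\xi w$ as $\eta\to1^-$ is open under these general matching rates. Without a global bound on $\mathcal N_1$ in \eqref{n1}, and on the slice quantities \eqref{n0k}, \eqref{nbdry} (whose weights are singular at $\eta=1$), restarting the weighted local theory at $\tau=T^\ast-\delta$ or $\xi=X^\ast-\delta$ has no input. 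Theorem \ref{th:smo1} cannot supply this input, since it only acts on compact subsets of $\{\eta<1\}$.

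The missing idea is that global existence is not obtained by continuing the classical theory at all. The paper first constructs a global weak solution (Proposition \ref{existweaksol}, Theorem \ref{mainthree}) as a limit of the nondegenerate approximations \eqref{approsys}, which are globally solvable. It uses only bounds that hold for all $T,X$:
\begin{itemize}
\item the two-sided barrier bounds;
\item a weighted energy bound on $\partial_\eta w$;
\item an $L^\infty_\tau L^1_{\xi,\eta}$ bound on $(1-\eta)\nabla_{\text{tan}}w/w^2$.
\end{itemize}
The last bound comes from the conservation form $\partial_\tau(1/w)+\eta\partial_\xi(1/w)+\partial_\eta^2 w=0$. One tests the equation for $\nabla_{\text{tan}}(1/w)$ against $\mathrm{sgn}(\nabla_{\text{tan}}(1/w))(1-\eta)$; the diffusion term has a good sign, and no smallness is needed. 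The smoothing theory behind Theorem \ref{th:smo1}, applied uniformly in $\epsilon$, then gives $C^k$ bounds on compact subsets of $(0,\infty)^2\times[0,1)$ that depend only on local two-sided bounds for $w$. Since $\eta<1$ corresponds to bounded $y$, this is all that (P1) and (P3) need away from the faces, and no information on $\partial_{\tau,\xi}w$ near $\eta=1$ is ever used. The local classical solutions of Theorem \ref{localsolcrocco} (local in $\xi$ for any $T$, local in $\tau$ for any $X$) only cover the strips $[0,T]\times[0,X_T]$ and $[0,T_X]\times[0,X]$. They are identified with the weak solution through the weighted $L^1$ uniqueness of Proposition \ref{uniweaksol}, and this patching replaces your continuation. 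Finally, the uniqueness result of \cite{XZZ24} assumes $w\sim 1-\eta$, so it does not cover the Gaussian or polynomial rates. Proposition \ref{uniweaksol} deals with the unbounded factor $w_1w_2/(1-\eta)^2$ by splitting at $\eta=y_0$ and running Gr\"onwall with a $\log\frac{1}{1-y_0}$ loss.
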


The proof of Theorem \ref{maintwo} is the main contribution of our paper. The proof relies crucially on the up-to-boundary regularity of local weak solutions (Theorem \ref{mainone}). To prove Theorem \ref{maintwo}, we also establish a self-contained proof of local-in-time existence of classical solutions to \eqref{prandtl} and to the associated approximate system used to construct the solution. This framework seems efficient in obtaining quantitative weighted estimates which are useful in capturing the behavior of solutions in the limit $y\to\infty$. 

%By virtue of an illustrating analysis in the previous subsection (see Figure \ref{globalsolfigure}), it is suggested to start with establishing the global weak solution theory and local classical solution theory on \eqref{prandtl} in suitable functional frameworks, respectively. 

We now present the corresponding global well-posedness result for the transformed Prandtl equation \eqref{prandtlcroccosec1}.
\begin{theorem}\label{thm:gcs}
  Under Assumption \ref{dataassumecrocco}, the transformed dynamical Prandtl equation \eqref{prandtlcroccosec1} admits a unique \emph{global} classical solution $w(\tau,\xi,\eta)$ in $[0,+\infty)\times[0,+\infty)\times[0,1)$ with the following properties.
\begin{itemize}
    \item [(P1)] $w(\tau,\xi,\eta)$ is  continuous on $[0,+\infty)\times[0,+\infty)\times[0,1)$, satisfies the bounds \eqref{globalboundcrocco}, and assumes the initial--boundary conditions in \eqref{prandtlcroccosec1} in the classical sense. Furthermore, $\partial_\tau w, \partial_\xi w, w^{2}\partial_\eta^2w$ are continuous in $[0,+\infty)\times[0,+\infty)\times[0,1)$, and the transformed Prandtl equation \eqref{prandtlcroccosec1} is satisfied in the classical sense. 
    \item[(P2)] $w(\tau,\xi,\eta)$ is strictly positive in $[0,\infty)\times[0,\infty)\times[0,1)$, and satisfies for any $T>0, X>0$ and suitable $c_{T,X}\in(0, 1), \,C_{T,X}\in(1,\infty)$ the pointwise estimate
    \begin{equation}\label{solutionmatchingcrocco}
      c_{T,X}F(\eta)\leq  w(\tau,\xi,\eta)\leq C_{T,X} F(\eta),\ \forall (\tau,\xi,\eta)\in [0,T]\times[0,X]\times[0,1).\\
    \end{equation}
    \item[(P3)] $w(\tau,\xi,\eta)$ is smooth up to the boundary $\eta=0$ for $\tau>0$ and $\xi>0$, that is
    \begin{equation*}
            w(\tau,\xi,\eta)\in C^{\infty}\left((0,\infty)\times(0,\infty)\times[0,1)\right).
    \end{equation*} 
\end{itemize}
\end{theorem}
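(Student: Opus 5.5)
The proof combines four ingredients: a local classical theory, a weighted a priori bound, the global weak solution theory, and Theorem \ref{th:smo1}. I would build the global classical solution by continuation, using a priori control that does not degenerate in time or space.

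First, I would establish local-in-$(\tau,\xi)$ existence of a classical solution to \eqref{prandtlcroccosec1} from Assumption \ref{dataassumecrocco}. I would write $w=w^{(0)}+\tilde w$ and run quasilinear weighted energy estimates on a regularized (parabolic in $\xi$, or truncated near $\eta=1$) approximate system. The weights are powers of $F(\eta)$ together with the factor $(1-\eta)^{1/1000}$. They are chosen so that the quantities in (B1)–(B2), namely $\partial_\tau w/w$, $\partial_\xi w/w$, $w\partial_\eta^2 w$ and their higher analogues, are propagated.

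Second, I would prove the global pointwise bound \eqref{solutionmatchingcrocco} by maximum-principle comparison. Near $\eta=1$ the barriers are $c e^{-\lambda\tau-\lambda\xi}F(\eta)$ and $Ce^{\lambda\tau+\lambda\xi}F(\eta)$. For the three admissible rates one checks that $F^2F''$ is bounded: for $F=(1-\eta)^m$ it equals $m(m-1)(1-\eta)^{3m-2}$, and the logarithmic case is also bounded. Hence $\lambda$ can be chosen large so that these functions are super- and subsolutions of the operator $\partial_\tau+\eta\partial_\xi-w^2\partial_\eta^2$. The Neumann condition at $\eta=0$ is handled by a small perturbation of the barriers, for example multiplying by $(1\pm\epsilon\eta^2)$-type factors. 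Together with the first-order bounds (B1), which are obtained by differentiating the equation and applying the maximum principle to $\partial_\tau w/w$ and $\partial_\xi w/w$, this gives existence of a global weak solution satisfying Theorem \ref{th:smo1}(I)–(II) on every compact subset. Uniqueness then follows by an $L^1$-contraction (Kruzkov-type) argument in the spirit of \cite{XZZ24}, using the conservation form of the equation for $1/w$.

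Third, I would show regularity. Theorem \ref{th:smo1} immediately gives (P3), that is, smoothness for $\tau,\xi>0$ up to $\eta=0$. For (P1), continuity of $\partial_\tau w$, $\partial_\xi w$ and $w^2\partial_\eta^2 w$ up to $\tau=0$ and $\xi=0$ comes from a continuation argument. Let $(T^*,X^*)$ be maximal such that the classical solution, with the weighted norms from the first step, exists on $[0,T^*)\times[0,X^*)$. The weighted norms stay bounded near the initial and inflow boundaries by the local theory and compatibility. In the interior region they stay bounded by the smoothing estimates of Theorem \ref{th:smo1}, which hold on compact sets away from $\tau=0$ and $\xi=0$, plus a separate treatment near $\eta=1$. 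Consequently the local theory can be restarted from data on $\{\tau=\tau_0\}$ and $\{\xi=\xi_0\}$, which contradicts maximality.

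The main obstacle is the region near $\eta=1$. There $w$ degenerates, Theorem \ref{th:smo1} gives nothing uniform, and the weighted higher-order bounds (B2), with the weight $(1-\eta)^{1/1000}$ needed for Gaussian decay, must be propagated globally rather than just locally. I would attempt this by deriving linear (hence globally controlled, at most exponentially growing) inequalities for the good derivatives $\partial_\tau^{\alpha_1}\partial_\xi^{\alpha_2}(w^{2\alpha_3-1}\partial_\eta^{2\alpha_3}w)$. This would use the maximum principle with barrier $e^{\lambda(\tau+\xi)}(1-\eta)^{-1/1000}$, exploiting the lower-order bounds already established so that the nonlinear terms become coefficients.
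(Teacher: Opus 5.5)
Your argument depends on propagating the first-order bounds (B1), $|\partial_\tau w/w|+|\partial_\xi w/w|+|w\partial_\eta^2 w|\lesssim_{T,X}1$, and then the weighted bounds (B2), globally in $(\tau,\xi)$ and uniformly up to $\eta=1$ by the maximum principle. You use this both to produce the weak solution and, in your last paragraph, to make the equations for the good derivatives linear. That step does not go through. Differentiating the equation, $f\in\{w_\tau,w_\xi\}$ solves $\partial_\tau f+\eta\partial_\xi f-w^2f_{\eta\eta}-2ww_{\eta\eta}f=0$, and the equation itself gives $ww_{\eta\eta}=(w_\tau+\eta w_\xi)/w$. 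Hence $g=w_\tau/w$ satisfies $(\partial_\tau+\eta\partial_\xi)g-w^2g_{\eta\eta}-2ww_\eta g_\eta=2(g+\eta h)g$, where $h=w_\xi/w$, and $h$ satisfies the same equation with right-hand side $2(g+\eta h)h$. The zeroth-order coefficient is therefore the very quantity you are trying to bound, which is a Riccati-type nonlinearity. Comparison closes only when $T$ or $X$ is small. This is exactly Lemma \ref{bootstrap}, which needs $X\ll_T M^{-4}$. The paper also states that the energy estimates of Lemma \ref{highenergyesti} cannot be closed for large $T$ and $X$, and it leaves the behaviour of $\partial_\tau w,\partial_\xi w$ as $\eta\to1^-$ open. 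Your continuation scheme restarts the local theory from data on $\{\tau=\tau_0\}$ and $\{\xi=\xi_0\}$ with these weighted norms controlled up to $\eta=1$, where Theorem \ref{th:smo1} gives nothing, so the scheme is circular. For the same reason, the $W^{1,1}_{\rm loc}$ input for Theorem \ref{th:smo1} cannot come from (B1). In the paper it comes from the weighted $L^1$ bound on $\nabla_{\rm tan}(1/w)$, obtained by testing the conservation-form equation against $\mathrm{sgn}(\nabla_{\rm tan}(1/w))\,(1-\eta)$ in an approximation scheme that preserves this structure.

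The missing idea is that no global control near $\eta=1$ is needed, because (P1) only asks for continuity on $[0,\infty)^2\times[0,1)$. The paper takes the unique global weak solution of Proposition \ref{existweaksol}, which is already smooth on $(0,\infty)^2\times[0,1)$. It then proves classical existence in \emph{strips}: for every $T$ on $[0,T]\times[0,X_T]\times[0,1)$, and for every $X$ on $[0,T_X]\times[0,X]\times[0,1)$ (Theorem \ref{localsolcrocco}, via the bootstrap of Lemma \ref{bootstrap}, where only one variable is small). Proposition \ref{uniweaksol} identifies both strip solutions with the weak solution. The two strips together with $\{\tau>0,\ \xi>0\}$ cover $[0,T]\times[0,X]\times[0,1)$, which gives (P1) without any continuation argument. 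A local theory that is small in both $\tau$ and $\xi$, as in your first step, does not reach the full edges $\{\tau=0\}$ and $\{\xi=0\}$, so smallness in only one variable is essential.

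A smaller point concerns your barriers. For convex $F=(1-\eta)^m$ with $m>1$, the supersolution condition for $Ce^{\lambda(\tau+\xi)}F$ is $\lambda(1+\eta)F\ge w^2F''$, which involves $w^2/(1-\eta)^2$, not $F^2F''$. In the quasilinear form, where $w$ equals the barrier at the touching point, the condition is $\lambda(1+\eta)>C^2e^{2\lambda(\tau+\xi)}FF''$, and no $\lambda$ satisfies it when $T+X$ is large. You first need $w\lesssim 1-\eta$ from the linear barrier $C(1-\eta)$, as in Step 4 of the proof of Theorem \ref{localsolcrocco}.
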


\begin{remark}
We record the following weighted bounds that both the global weak solutions and classical solutions are required to satisfy, in order to guarantee uniqueness. In the Crocco coordinates for any $T>0, X>0$, we have
\begin{equation}\label{globalboundcrocco}
\begin{cases}\displaystyle
    (1-\eta)^{-\frac{1}{2}}\log^{-\frac{3}{2}}\left(\frac{10}{1-\eta}\right)\cdot\p_{\eta}w\in L^{2}([0,T]\times[0,X]\times[0,1)),\\
    \displaystyle\frac{(\p_{\tau},\p_{\xi})w}{w^{2}}\cdot(1-\eta)\in L^{\infty}_{\tau}L^{1}_{\xi,\eta}([0,T]\times[0,X]\times[0,1)),\\
    \p_{\eta\eta}w\cdot(1-\eta)\in L^{\infty}_{\tau}L^{1}_{\xi,\eta}([0,T]\times[0,X]\times[0,1)).
\end{cases}
\end{equation}
In the physical coordinates (when $y\to \infty$) for any $T>0, X>0$ the corresponding bounds are
\begin{equation}\label{globalbound}
    \begin{cases}\displaystyle
&\p_{t}u,\p_{x}u\in L^{\infty}_{t}L^{1}_{x,y}([0,T]\times[0,X]\times[0,\infty)),\\
&\displaystyle\frac{(\mathcal{T}^{[u]}_{1},\mathcal{T}^{[u]}_{2})\p_{y}u}{\p_{y}u}\cdot(1-u)\in L^{\infty}_{t}L^{1}_{x,y}([0,T]\times[0,X]\times[0,\infty)),\\
        &\displaystyle\frac{\p_{y}^{2}u}{(\p_{y}u)^{\frac{1}{2}}}\cdot(1-u)^{-\frac{1}{2}}\log^{-\frac{3}{2}}\left(\frac{10}{1-u}\right)\in L^{2}_{t,x,y}([0,T]\times[0,X]\times[0,\infty)),\\
       &\displaystyle\frac{\p_{y}^{2}u}{\p_{y}u}\cdot(1-u)^{\frac{1}{2}}\log^{-1}\left(\frac{10}{1-u}\right)\in L^{\infty}_{t}L^{2}_{x,y}([0,T]\times[0,X]\times[0,\infty)),\\
        &\displaystyle\frac{\p_{y}^{3}u}{\p_{y}u}\cdot (1-u)\log^{-2}\left(\frac{10}{1-u}\right)\in L^{\infty}_{t}L^{1}_{x,y}([0,T]\times[0,X]\times[0,\infty)).
    \end{cases}
    \end{equation}
\end{remark}

\begin{remark}It is an interesting question to investigate more refined behavior of $\partial_t\partial_yu, \partial_x\partial_yu$ (and correspondingly $\partial_\tau w, \partial_\xi w$) as $y\to\infty$ (and correspondingly as $\eta\to1-$). Under our general assumptions \eqref{matchingrate}, this seems to be a difficult question. See however Lemma \ref{bootstrap} below which shows that, under natural conditions, we can expect $\partial_\tau w, \partial_\xi w$ to be controlled by $w$ at least for a short time. If we impose the stronger assumption that $w\sim (1-\eta)$, then similar results can be proved quite straightforwardly using the smoothing estimates (see Theorem \ref{th:smo1} and its interior version) and rescaling arguments. The case of a general polynomial matching rate seems also manageable but the analysis is more complicated. The case of a Gaussian matching rate is more challenging. We plan to address this problem in a future work. 
    
\end{remark}

\subsubsection{Global existence, uniqueness and regularity of weak solutions}
Finally, we turn to the global weak solution of \eqref{prandtl}. The theory of weak solutions is of independent interest, since the required regularity and compatibility conditions on the initial and boundary data $u_0, u_1$ are much weaker than those for the classical solutions and since it provides the natural setting for the smoothing estimates to apply. 

We first state our main assumptions on boundary and initial data needed for the global well-posedness theory for weak solutions.
\begin{assumptions}\label{dataassum3}
Assume the initial data $u_{0}(x,y)$ and inflow data $u_{1}(t,y)$ in \eqref{prandtl} satisfy the following hypotheses:
\begin{itemize}
    \item[(H1)] (Asymptotic behavior) $u_{0}, u_{1}$ are continuous on $[0,\infty)\times[0,\infty)$,  strictly monotone in $y$,  and satisfy:
    \begin{equation}\label{matchingrate3}
    \begin{split}
        &u_{i}|_{y=0}=0,\,\, \lim_{y\to\infty}u_{i}=1,\quad i\in\{0,1\},\\
        &(1-u_{0})^{3/2}\lesssim_{X} \p_{y}u_{0}\lesssim_{X}(1-u_{0})\log^{\frac{1}{2}}\left(\frac{10}{1-u_{0}}\right),\ \forall\, X>0,\,\,(x,y)\in[0,X]\times[0,\infty),\\
        &(1-u_{1})^{3/2}\lesssim_{T} \p_{y}u_{1}\lesssim_{T}(1-u_{1})\log^{\frac{1}{2}}\left(\frac{10}{1-u_{1}}\right),\ \forall\, T>0,\,\, (t,y)\in[0,T]\times[0,\infty).
    \end{split}
    \end{equation}
 %   In addition, we assume the ``{\it finiteness of %displacement}" condition:
%    \begin{equation}\label{displacement}
 %   \begin{split}
%    &\int_{0}^{\infty}(1-u_{0}(x,y))dy\lesssim_{X}1,\ %\forall x\in[0,X],\\
%    &\int_{0}^{\infty}(1-u_{1}(t,y))dy\lesssim_{T}1,\ %\forall
%t\in[0,T].
%\end{split}
%\end{equation}
    \item[(H2)] (Regularity and compatibility condition) There exists a function $u^{(0)}(t,x,y)$ defined on $[0,\infty)^3$ such that (i) $u^{(0)}|_{t=0}\equiv u_0$, $u^{(0)}|_{x=0}=u_1$; (ii) $u^{(0)}$ satisfies the matching rate $$(1-u^{(0)})^{3/2}\lesssim_{T,X} \p_{y}u^{(0)}\lesssim_{T,X}(1-u^{(0)})\log^{\frac{1}{2}}\left(\frac{10}{1-u^{(0)}}\right),$$ for all $(t,x,y)\in[0,T]\times[0,X]\times[0,\infty)$ and any $T>0, X>0$; and (iii) for any integers $\alpha_1, \alpha_2, \alpha_3\ge0$ with $2\alpha_1+2\alpha_2+\alpha_3\leq 3$, 
    \begin{equation}\label{regular3}
\partial_t^{\alpha_1}\partial_x^{\alpha_2}\partial_y^{\alpha_3}u^{(0)}(t,x,y) \in C([0,\infty)^{3}),
    \end{equation}
    and the compatibility condition on $\{t=0\}$ and $\{x=0\}$ is verified, in the sense that  for any integers $\alpha_1, \alpha_2, \alpha_3\ge0$ with $2\alpha_1+2\alpha_2+\alpha_3\leq3$, $\partial_t^{\alpha_1}\partial_x^{\alpha_2}\partial_y^{\alpha_3}u^{(0)}$ satisfies the differential constraints imposed by the equation \eqref{prandtl} on $\{t=0\}$, $\{x=0\}$, $\{y=0\}$.
    In addition, we have the following bounds for any $T>0, X>0$,
\begin{equation}\label{higherassume3}
\begin{split}
&\sup_{t\in[0,T]}\sum_{\alpha+\beta+\gamma\leq 1}\int_{0}^{X}\int_{0}^{\infty}(\p_{y}u^{(0)})^{2\gamma -1}|(\mathcal{T}_{1}^{[u^{(0)}]})^{\alpha}(\mathcal{T}_{2}^{[u^{(0)}]})^{\beta}(\mathcal{T}_{3}^{[u^{(0)}]})^{2\gamma}\p_{y}u^{(0)}(t,x,y)|\cdot(1-u^{(0)})dxdy\\
&\qquad\qquad\lesssim_{T,X}1,\\
&\sup_{x\in[0,X]}\sum_{\alpha+\beta+\gamma\leq 1}\int_{0}^{T}\int_{0}^{\infty}(\p_{y}u^{(0)})^{2\gamma -1}|(\mathcal{T}_{1}^{[u^{(0)}]})^{\alpha}(\mathcal{T}_{2}^{[u^{(0)}]})^{\beta}(\mathcal{T}_{3}^{[u^{(0)}]})^{2\gamma}\p_{y}u^{(0)}(t,x,y)|\cdot(1-u^{(0)})dtdy\\
&\qquad\qquad\lesssim_{T,X}1,
\end{split}
\end{equation}
In the above the indices $\alpha, \beta,\gamma$ are nonnegative integers. Recall that the directional derivatives $\mathcal{T}_{i}^{[h]}$ are defined in \eqref{eq:goodderivative}.
\end{itemize}
\end{assumptions}

\begin{remark}
    We note that the conditions \eqref{higherassume3} are considerably weaker than \eqref{oneorderassume}-\eqref{higherassume}, and hence apply to a much wider class of initial and inflow data. 
    
\end{remark}

The assumptions for data $(u_{0},u_{1})$ in Assumptions \ref{dataassum3} have concise formulations for the transformed Prandtl system
\eqref{prandtlcroccosec1}, which we present as follows.
\begin{assumptions}\label{dataassumecrocco4}
    Assume the initial data $w_{0}(\xi,\eta)$ and $w_{1}(\tau,\eta)$ in \eqref{prandtlcroccosec1} satisfy the following hypotheses:
\begin{itemize}
    \item[(H1)] (Vanishing rate) $w_{0},w_{1}$ are positive on $[0,\infty)\times[0,1)$, and $w_{i}|_{\eta=1}=0$ with the vanishing rate:
    \begin{equation}\label{matchingrate4}
    \begin{split}
        &(1-\eta)^{3/2}\lesssim_{X} w_{0}\lesssim_{X}(1-\eta)\log^{\frac{1}{2}}\left(\frac{10}{1-\eta}\right),\ \forall\, X>0,\,\,(\xi,\eta)\in[0,X]\times[0,1),\\
        &(1-\eta)^{3/2}\lesssim_{T} w_{1}\lesssim_{T}(1-\eta)\log^{\frac{1}{2}}\left(\frac{10}{1-\eta}\right),\ \forall\, T>0,\,\, (\tau,\eta)\in[0,T]\times[0,1).
    \end{split}
    \end{equation}
    %In addition, we assume the ``{\it finiteness of %displacement}" condition:
    %\begin{equation}\label{displacement2}
    %\begin{split}
    %&\int_{0}^{1}\frac{1-\eta}{w_{0}}d\eta\lesssim_{X}1,\ %\forall \xi\in[0,X],\\
    %&\int_{0}^{1}\frac{1-\eta}{w_{1}}d\eta\lesssim_{T}1,\ %\forall \tau\in[0,T].
%\end{split}
%\end{equation}
    \item[(H2)] (Regularity and compatibility condition) There exists a function $w^{(0)}(\tau,\xi,\eta)$ defined on $[0,\infty)^2\times[0,1)$ such that (i) $w^{(0)}|_{\tau=0}\equiv w_0$, $w^{(0)}|_{\xi=0}=w_1$; (ii) $w^{(0)}$ satisfies the vanishing rate $$(1-\eta)^{3/2}\lesssim_{T,X} w^{(0)}\lesssim_{T,X}(1-\eta)\log^{\frac{1}{2}}\left(\frac{10}{1-\eta}\right),$$ for all $(\tau,\xi,\eta)\in[0,T]\times[0,X]\times[0,1)$ and any $T>0, X>0$; and (iii) for any integers $\alpha_1, \alpha_2, \alpha_3\ge0$ with $2\alpha_1+2\alpha_2+\alpha_3\leq 2$, 
    \begin{equation}\label{regular4}
\partial_\tau^{\alpha_1}\partial_\xi^{\alpha_2}\partial_\eta^{\alpha_3}w^{(0)}(\tau,\xi,\eta) \in C([0,\infty)^{2}\times[0,1)),
    \end{equation}
    and the compatibility condition on $\{\tau=0\}$ and $\{\xi=0\}$ is verified, in the sense that  for any integers $\alpha_1, \alpha_2, \alpha_3\ge0$ with $2\alpha_1+2\alpha_2+\alpha_3\leq2$, $\partial_\tau^{\alpha_1}\partial_\xi^{\alpha_2}\partial_\eta^{\alpha_3}w^{(0)}$ satisfies the differential constraints imposed by the equation \eqref{prandtlcroccosec1} at $\xi=0$, $\tau=0$, and $\eta=0$.  
    
    In addition, we have the following bounds for any $T>0, X>0$,
\begin{equation}\label{higherassume4}
\begin{split}
&\sup_{\tau\in[0,T]}\sum_{\alpha+\beta+\gamma \leq 1}\int_{0}^{X}\int_{0}^{1}(w^{(0)})^{2\gamma-2}|\p_{\tau}^{\alpha}\p_{\xi}^{\beta}\p_{\eta}^{2\gamma}w^{(0)}(\tau,\xi,\eta)|(1-\eta)d\xi d\eta\lesssim_{T,X} 1,\\
&\sup_{\xi\in[0,X]}\sum_{\alpha+\beta+\gamma \leq 1}\int_{0}^{T}\int_{0}^{1}(w^{(0)})^{2\gamma-2}|\p_{\tau}^{\alpha}\p_{\xi}^{\beta}\p_{\eta}^{2\gamma}w^{(0)}(\tau,\xi,\eta)|(1-\eta)d\tau d\eta\lesssim_{T,X} 1.
\end{split}
\end{equation}
In the above the indices $\alpha, \beta,\gamma$ are nonnegative integers.
\end{itemize}
\end{assumptions}

Our main results for the global well-posedness of weak solutions are as follows.
\begin{theorem}[\bf Global Well-posedness for Weak Solutions]
\label{mainthree}
Assume that Assumption \ref{dataassum3} holds. Then the Prandtl equation \eqref{prandtl} has a unique \emph{global} weak solution $u(t,x,y)$ with the following properties:
\begin{enumerate}
    \item $u\in C^{\infty}((0,\infty)\times(0,\infty)\times[0,\infty))$, and together with the associated $v$ as in \eqref{introv} satisfies the first two equations of \eqref{prandtl} and the no--slip condition $u|_{y=0}=0$ in the classical sense;
    
    \item $u(t,x,y)$ is strictly monotone in $y$, and we have the pointwise bounds for some $0<c<C$ (depending on $T,X$)
    \begin{equation}\label{solutionmatching2}
    \begin{split}
        &c(1-u)^{3/2}\leq\p_{y}u\leq C(1-u)\log^{\frac{1}{2}}\left(\frac{10}{1-u}\right),\ \forall (t,x,y)\in [0,T]\times[0,X]\times[0,\infty),\\
        & c\cdot e^{-Cy^{2}}\leq 1-u\leq C\cdot(y+1)^{-2},\ \forall(t,x,y)\in [0,T]\times[0,X]\times[0,\infty);\\
        \end{split}
    \end{equation}
    
    \item For any $T>0,X>0$, $u$ satisfies \eqref{globalbound};
    
    \item $u(t,x,y)$ satisfies the initial and boundary conditions on $\{t=0\}$ and $\{x=0\}$ in the sense of the boundary trace for $W^{1,1}$ functions.
\end{enumerate}
\end{theorem}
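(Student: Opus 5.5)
Work in Crocco coordinates: prove the corresponding statement for \eqref{prandtlcroccosec1} under Assumption \ref{dataassumecrocco4}, then transfer back via \eqref{invertcrocco}. For existence, regularize the data: mollify $w^{(0)}$ and modify it near $\eta=1$ and near the corners $\{\tau=0\}\cap\{\xi=0\}$, $\{\eta=0\}$ so that the regularized data satisfy Assumption \ref{dataassumecrocco} (with a fixed admissible rate $F$, e.g.\ $F(\eta)=(1-\eta)$, or an interpolating rate). The (B1), (B2) bounds for the regularized data may depend on the regularization parameter $\epsilon$, but the weak bounds \eqref{higherassume4} and the two-sided vanishing rates in \eqref{matchingrate4} must hold uniformly in $\epsilon$.

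Theorem \ref{thm:gcs} then gives global classical solutions $w^\epsilon$. Next derive $\epsilon$-uniform a priori estimates.
\begin{itemize}
\item \textbf{Barrier bounds.} Use comparison functions of the form $c(1-\eta)^{3/2}e^{-Ct}$ (a subsolution) and $C(1-\eta)\log^{1/2}(10/(1-\eta))$ with suitable time growth (a supersolution). Checking that $-w^2\partial_\eta^2$ of these profiles has the right sign near $\eta=1$ is what fixes the endpoints $3/2$ and $\log^{1/2}$.
\item \textbf{Weighted $L^1$ bounds.} Obtain weighted $L^\infty_\tau L^1$ bounds on $\partial_\tau w/w^2$, $\partial_\xi w/w^2$ and $\partial_\eta^2 w$ with weight $(1-\eta)$, together with the $L^2$ bound on $\partial_\eta w$, as in \eqref{globalboundcrocco}. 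Use the conservation-law structure: $\partial_\tau(1/w)+\eta\partial_\xi(1/w)+\partial_\eta^2 w=0$. Then differentiate in $\tau$ and $\xi$, and test with $\operatorname{sgn}$-type functions times $(1-\eta)$ (Kruzhkov/$L^1$-contraction style). The boundary terms at $\tau=0$, $\xi=0$ are controlled by \eqref{higherassume4}; the Neumann condition kills the terms at $\eta=0$, and the weight kills those at $\eta=1$.
\end{itemize}

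Pass to the limit using these uniform bounds. By Theorem \ref{th:smo1} applied locally (the lower/upper barriers give assumption (II) on compact subsets of $(0,T]\times(0,X)\times[0,1)$), the family $w^\epsilon$ is uniformly bounded in every $C^m$ on compact subsets away from $\{\tau=0\}\cup\{\xi=0\}$, up to $\eta=0$. Arzel\`a--Ascoli yields a limit $w$ that is smooth there and solves the equation classically. The weighted $W^{1,1}$-type bounds give compactness in $W^{1,1}_{\rm loc}$ up to $\tau=0$, $\xi=0$, so traces are attained, yielding item (4). The $L^1$ contraction estimate for differences also shows that the whole family converges, not just a subsequence.

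Transfer back to physical coordinates via \eqref{invertcrocco}. Since $w$ is smooth up to $\eta=0$ (essential, by Remark \ref{re:impreg1}), $y(t,x,u)=\int_0^u d\eta/w$ is smooth and strictly increasing in $u$, so $u$ is smooth on $(0,\infty)^2\times[0,\infty)$. Alternatively, apply Theorem \ref{mainone} directly.
\begin{itemize}
\item The first line of \eqref{solutionmatching2} is the barrier bound rewritten.
\item The second line follows by integrating $dy=d\eta/w$. With $w\lesssim(1-\eta)\log^{1/2}$, one gets $y\gtrsim \log^{1/2}(1/(1-u))$, giving the Gaussian lower bound. With $w\gtrsim(1-\eta)^{3/2}$, one gets $y\lesssim(1-u)^{-1/2}$, giving the $(y+1)^{-2}$ upper bound.
\item The bounds \eqref{globalbound} are the change of variables of \eqref{globalboundcrocco}, using $dy=d\eta/w$.
\end{itemize}

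Uniqueness: two weak solutions satisfying \eqref{globalbound} are mapped to Crocco solutions in the class \eqref{globalboundcrocco}. Test the difference of the conservation-form equations for $1/w_1-1/w_2$ against $\operatorname{sgn}(w_2-w_1)(1-\eta)$-type weights. The weighted bounds justify the integrations by parts near $\eta=1$ and $\eta=0$, giving $L^1$ contraction, hence $w_1=w_2$.

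The main obstacle I expect is the uniform weighted estimates near $\eta=1$ for data that mix rates between $(1-\eta)^{3/2}$ and the Gaussian rate. The boundary terms from the weight $(1-\eta)$ and the degenerate coefficient $w^2$ must be shown to vanish or be controlled using only the barriers, and I would carefully choose cutoffs $\chi(\eta)$ approaching $1$ with logarithmic profiles to absorb the $\log$ losses.
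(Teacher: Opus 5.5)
\textbf{There is a genuine gap: your existence step is circular.} You obtain the approximants $w^\epsilon$ from Theorem \ref{thm:gcs}, but in this paper global classical solutions are \emph{derived from} the global weak theory. Theorem \ref{thm:gcs} (like Theorem \ref{maintwo}) is proved by taking the weak solution of Proposition \ref{existweaksol} and upgrading it near $\{\tau=0\}\cup\{\xi=0\}$ using the local theory (Theorem \ref{localsolcrocco}) and uniqueness. Proposition \ref{existweaksol} is exactly the Crocco-coordinate existence statement behind Theorem \ref{mainthree}.

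Global classical existence is the open problem the paper sets out to solve. The paper offers no independent route to it: Oleinik's theory and Theorem \ref{localsolcrocco} are only local in $\tau$ or in $\xi$.

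What your plan is missing is an approximating problem whose \emph{global} solvability can be proved from scratch, while keeping the structure your uniform bounds rely on. The paper uses the non-degenerate system \eqref{approsys}, $\partial_t w^\epsilon+(y+\epsilon)\partial_x w^\epsilon-(w^\epsilon+\epsilon)^2\partial_y^2 w^\epsilon=0$. Its diffusion no longer degenerates at $y=1$, and the transport speed stays positive up to the wall. Crucially, $v^\epsilon=(w^\epsilon+\epsilon)^{-1}$ still satisfies the conservation law $\partial_t v^\epsilon+(y+\epsilon)\partial_x v^\epsilon+\partial_y^2 w^\epsilon=0$. Hence your barrier and weighted-$L^1$ arguments go through uniformly in $\epsilon$.

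Its global existence (Appendix \ref{proofapproxia}) is itself a substantial step:
\begin{itemize}
\item local existence via energy estimates, with cutoffs $\chi_k$ that make the normal-derivative boundary terms reducible to lower order;
\item a continuation criterion $\int_0^T\|\partial_y w\|_{L^\infty}^2\,dt<\infty$;
\item closing that criterion with the smoothing estimates: the up-to-boundary regularity near $y=0$, and Proposition \ref{bdwy} near $y=1$, using $w\lesssim 1-y$ and rescaling.
\end{itemize}
You also need data modified to satisfy the compatibility conditions of the approximate system (Appendix \ref{compatiappendix}). Since the equation now changes with $\epsilon$, your data-contraction argument no longer gives convergence of the whole family. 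Instead you pass to a subsequence and identify the limit by uniqueness.

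\textbf{Smaller points.}
\begin{itemize}
\item Uniform $L^1$ bounds on derivatives give BV compactness, not $W^{1,1}$ compactness. Attaining the traces at $\tau=0$ and $\xi=0$ therefore needs a separate no-concentration argument. The paper gets equi-integrability near these faces from the inequality $\partial_t A^\epsilon+\partial_x B^\epsilon\le 0$, where $A^\epsilon,B^\epsilon$ are the weighted $y$-integrals of $|\nabla_{\text{tan}}v^\epsilon|$.
\item The fourth bound in \eqref{globalbound} is an $L^\infty_t$ bound on $|w_\eta|^2w^{-1}(1-\eta)\log^{-2}$. It is not a change of variables of \eqref{globalboundcrocco}, where $\partial_\eta w$ is controlled only in spacetime $L^2$; it needs the entropy identity \eqref{entropy1}.
\item Your lower barrier needs a factor such as $e^{3\eta}$. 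Without it, $\partial_\eta(w-\text{barrier})>0$ at the Neumann wall, which does not rule out a negative minimum there.
\end{itemize}

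Your uniqueness argument, by contrast, works in the class of the theorem. With lower rate $(1-\eta)^{3/2}$, the quantity $|1/w_1-1/w_2|(1-\eta)$ is integrable, and $\partial_\eta^2(1-\eta)=0$ gives a true $L^1$ contraction. The paper instead uses the weight $(1-\eta)^\lambda$ with large $\lambda$ and a splitting/Gronwall argument, so as to cover lower rates $(1-\eta)^k$ for any $k\ge2$.
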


We present the corresponding version of Theorem \ref{mainthree} for the transformed Prandtl equation \eqref{prandtlcroccosec1} as follows.
\begin{theorem}\label{thm:gws}
Assume that Assumption \ref{dataassumecrocco4} holds. Then the transformed Prandtl equation \eqref{prandtlcroccosec1} has a unique \emph{global} weak solution $w(\tau,\xi,\eta)$ with the following properties:
\begin{enumerate}
    \item $w\in C^{\infty}((0,\infty)\times(0,\infty)\times[0,1))$, satisfies the first equation in \eqref{prandtlcroccosec1} and the Neumann condition $\p_{\eta}w|_{\eta=0}=0$ in the classical sense;
    
    \item $w(\tau,\xi,\eta)$ is strictly positive in $[0,\infty)\times[0,\infty)\times[0,1)$, and $w|_{\eta=1}=0$ with the vanishing rates 
    \begin{equation}\label{solutionmatchingcrocco4}
        (1-\eta)^{3/2}\lesssim_{T,X}w(\tau,\xi,\eta)\lesssim_{T,X}(1-\eta)\log^{\frac{1}{2}}\left(\frac{10}{1-\eta}\right),\ \forall (\tau,\xi,\eta)\in [0,T]\times[0,X]\times[0,1);\\
    \end{equation}
    
    \item For any $T>0,X>0$, $w$ satisfies \eqref{globalboundcrocco};
    
    \item $w(\tau,\xi,\eta)$ satisfies the initial and boundary conditions on $\{\tau=0\}$ and $\{\xi=0\}$ in the sense of the boundary trace for $W^{1,1}$ functions.
\end{enumerate}
\end{theorem}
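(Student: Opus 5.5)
The plan is to build the global weak solution of \eqref{prandtlcroccosec1} by approximation and compactness, get uniqueness from an $L^1$-contraction argument, and then obtain smoothness from Theorem \ref{th:smo1}.

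\textbf{Approximation.} First I regularize the problem. I mollify the data $w^{(0)}$ (preserving the vanishing rates and compatibility). I also replace the degenerate coefficient $w^2$ near $\eta=1$ by a truncated version, for instance by working on $\eta\in[0,1-\delta]$ with a Dirichlet condition taken from $w^{(0)}$. This gives approximate problems that have global classical solutions $w_\delta$, via the local theory (Section \ref{local}) combined with continuation. Continuation is possible because the truncated equation is uniformly parabolic in $\eta$ and transport in $(\tau,\xi)$, so local existence and a priori bounds propagate.

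\textbf{Uniform estimates.} I then derive estimates uniform in $\delta$ and in the mollification.
\begin{enumerate}[(i)]
\item Pointwise bounds \eqref{solutionmatchingcrocco4}, from comparison with explicit barriers $c(1-\eta)^{3/2}e^{-K\tau}$ and $C(1-\eta)\log^{1/2}(10/(1-\eta))e^{K\tau}$. For each barrier I check the sub/supersolution inequality for $\partial_\tau+\eta\partial_\xi-w^2\partial_\eta^2$. The exponents $3/2$ and $\log^{1/2}$ are exactly the borderline cases that make the diffusion term have the right sign.
\item Weighted $L^1$ bounds on $\partial_\tau w/w^2$, $\partial_\xi w/w^2$ and $\partial_\eta^2 w$ with weight $(1-\eta)$, i.e. \eqref{globalboundcrocco}. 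For these I use the conservation-law structure: the equation for $-1/w$ is $\partial_\tau(1/w)+\eta\partial_\xi(1/w)+\partial_\eta^2 w=0$. Differentiating in $\tau$ and $\xi$ gives linear equations for $\partial_\tau w/w^2$ and $\partial_\xi w/w^2$, and I would run a Kato-type $L^1$ inequality on these, with weight $(1-\eta)$ to handle $\eta\to1$. Their initial and inflow values are controlled by \eqref{higherassume4}.
\item The $L^2$ bound on the weighted $\partial_\eta w$, from multiplying by $\log$-type weights.
\end{enumerate}

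\textbf{Passing to the limit and uniqueness.} The uniform bounds give $W^{1,1}_{loc}$ compactness by Helly/BV arguments. The limit $w$ satisfies the equation distributionally, the Neumann trace condition, and assumes the data in the $W^{1,1}$ trace sense. For uniqueness, I take two weak solutions $w,\tilde w$ with the bounds \eqref{globalboundcrocco} and consider $1/w-1/\tilde w$, which satisfies
\[
\partial_\tau(1/w-1/\tilde w)+\eta\partial_\xi(1/w-1/\tilde w)+\partial_\eta^2(w-\tilde w)=0.
\]
Testing against $\mathrm{sgn}(w-\tilde w)$ times a cutoff near $\eta=1$ (Kruzkov style), the Neumann condition removes the boundary term at $\eta=0$. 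The cutoff error near $\eta=1$ vanishes thanks to the weighted bounds, so the $L^1$ difference is nonincreasing and hence zero.

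\textbf{Regularity and expected obstacle.} Theorem \ref{th:smo1} applies directly: the lower and upper bounds on compact subsets of $[0,1)$ follow from \eqref{solutionmatchingcrocco4}, and the $W^{1,1}_{loc}\cap W^{2,1}_{\eta,loc}$ regularity comes from \eqref{globalboundcrocco}. This yields $w\in C^\infty((0,\infty)^2\times[0,1))$ and the classical Neumann condition. I expect the main obstacle to be the uniform weighted $L^1$ estimates near $\eta=1$ in the Gaussian/log case. There the weight $(1-\eta)$ and the degeneracy of $w^2$ must be balanced carefully, both in the derivative equations and in justifying the cutoff limits in the uniqueness argument. My first attempt would be to choose the cutoff $\chi(\eta)$ with $|\chi''|\lesssim (1-\eta)^{-2}$ supported where $1-\eta\sim\epsilon$, and absorb the resulting error using $w\lesssim(1-\eta)\log^{1/2}$.
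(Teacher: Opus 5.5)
Your outline has the same architecture as the paper's proof (Propositions \ref{existweaksol} and \ref{uniweaksol}, glued in $T,X$ as in the proof of Theorem \ref{mainthree}), and the barrier, log-weighted energy and Kato-type $L^1$ steps are the right ones. However, three steps do not go through as written.

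\textbf{The approximate problems.} The claim that the truncated problems are globally solvable by local theory plus continuation ``because the equation is uniformly parabolic in $\eta$'' skips the hardest point. For this quasilinear ultraparabolic equation, continuation needs an a priori bound on $\|\p_\eta w\|_{L^\infty}$ (cf.\ \eqref{finalhighenergy0.1}). The maximum principle does not give it: the $\eta$-differentiated equation contains $\p_\xi w$, whose own equation has the zeroth-order coefficient $2w\p_\eta^2 w$. In the paper this bound (Proposition \ref{bdwy}) comes from the full up-to-boundary smoothing theory of Sections \ref{fundamental}--\ref{higher}. Near the Dirichlet wall it is obtained by a rescaling that uses $0\le w\le A(1-y)$, i.e.\ the homogeneous condition; your artificial wall $\{\eta=1-\delta\}$ carries nonzero data and has no such structure. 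Section \ref{local} cannot be cited instead: it treats the degenerate problem with $w|_{\eta=1}=0$, gives existence only for small $T$ or small $X$, and is itself built on the global solvability of \eqref{approsyss}.

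Two further problems with the approximation. Mollification does not preserve the nonlinear corner compatibility conditions; this is the purpose of Appendix \ref{compatiappendix}. Truncation also destroys the boundary signs the paper uses at $\{y=1\}$, namely $\p_y w^\eps|_{y=1}\le 0$ in \eqref{energyidee} and $\nabla_{\text{tan}}w^\eps|_{y=1}=0$ in \eqref{bvesti3}. Your estimates (ii)--(iii) then acquire uncontrolled fluxes of $\p_\eta w_\delta$ and $\p_\eta\nabla_{\text{tan}}w_\delta$ at $\eta=1-\delta$. This is why the paper regularizes via \eqref{approsys} on the full interval.

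\textbf{Compactness and smoothness.} Helly compactness from weighted $L^1$ bounds yields only a BV limit, in which $\p_\tau w,\p_\xi w,\p_\eta^2 w$ may be measures. Then $w^2\p_\eta^2 w$ need not be a function, and the hypotheses $W^{1,1}_{\rm loc}\cap W^{2,1}_{\eta,{\rm loc}}$ of Theorem \ref{th:smo1} are not verified. The paper instead applies the smoothing estimates to the approximate solutions uniformly in $\eps$ (Step 5 of the proof of Proposition \ref{existweaksol}, giving \eqref{wepsilonlocalsmooth}). The limit is then smooth on $(0,\infty)^2\times[0,1)$, the weighted bounds pass by lower semicontinuity, and equi-integrability of $\p_\tau w^\eps,\p_\xi w^\eps$ near $\{\tau=0\}$ and $\{\xi=0\}$ gives the $W^{1,1}$ traces.

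\textbf{Uniqueness.} Take $\chi\equiv 1$ for $1-\eta\ge 2\eps$, $\chi\equiv 0$ for $1-\eta\le\eps$, and $|\chi''|\lesssim\eps^{-2}$. Kato's inequality leaves the error $\iint|w-\tilde w|\,|\chi''|\,d\xi\,d\eta$. The bound $|w-\tilde w|\lesssim(1-\eta)\log^{1/2}\frac{10}{1-\eta}$ controls it only by $O(\log^{1/2}\frac1\eps)$, which does not vanish; it is $O(1)$ even when $w,\tilde w\sim 1-\eta$. Writing $|w-\tilde w|=|\frac1w-\frac1{\tilde w}|\,w\tilde w$ instead produces the Gronwall factor $w\tilde w/(1-\eta)^2\lesssim\log\frac{10}{1-\eta}$, which is unbounded. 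So ``the $L^1$ difference is nonincreasing'' cannot be concluded.

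The missing idea is the one in Proposition \ref{uniweaksol}. Use the weight $(1-\eta)^\lambda$ with $\lambda$ large (the paper takes $\lambda=2k+10$) and split the $\eta$-integral at $\eta_0$. For $F(\tau)=\iint|\frac1w-\frac1{\tilde w}|(1-\eta)^\lambda\,d\xi\,d\eta$ this gives $F'\le C_1\log\frac{1}{1-\eta_0}\,F+C_2(1-\eta_0)^{\lambda-1}$. Hence $F(\tau)\le C_2\tau(1-\eta_0)^{\lambda-1-C_1\tau}\to 0$ as $\eta_0\to1$ whenever $\tau<(\lambda-1)/C_1$, and one iterates in time.

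A minor point: your lower barrier $b=c(1-\eta)^{3/2}e^{-K\tau}$ has $\p_\eta b|_{\eta=0}<0$, which is the wrong sign for comparison at the Neumann wall. The paper uses $ce^{-Mt}e^{3y}(1-y)^{3/2}$ instead.
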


\begin{remark} As discussed in the introduction, the well-posedness theory for weak solutions of \eqref{prandtl} was first established in \cite{XZ04} and \cite{XZZ24} under the assumption that the convergence rate to the outer--flow is exactly exponential.  More precisely, it is required that $\p_{y}u\sim 1-u$ as $y\to+\infty$. In Theorem \ref{mainthree}, we extend the weak solution theory to more general settings. 

\end{remark}
\begin{remark}
   The (unique) global weak solution to \eqref{prandtl} in Theorem \ref{mainthree} is constructed through the transformed Prandtl equation \eqref{prandtlcroccosec1} as the first step, using crucially the conservation law type structures of the equation.
   
   The solution above is weak only in the sense of its behavior near $t=0$ or $x=0$, but otherwise is globally smooth up to the boundary $y=0$. Conversely, given any weak solution of \eqref{prandtl} with relatively low regularity requirements, we can obtain the global up--to--boundary $y=0$ smoothness using Theorem \ref{mainone}. 
\end{remark}

\begin{remark}
As in our theory of classical solutions,  the compatibility conditions which can be made explicit (see Remark \ref{descriptioncompati}) as well as regularity assumptions on the initial and inflow boundary data are adopted for the sake of conciseness, and are not sharp. However, the specific form of our assumptions using extended functions turns out to be convenient for the construction of compatible data for the approximation systems \eqref{approsys} and \eqref{approsyss}. This construction is quite nontrivial and plays an essential role in both the global weak solution theory and the local classical solution theory.  We refer to Appendix \ref{compatiappendix} for more details.
\end{remark}

\subsection{Related work on the Prandtl System}
The well-posedness theory for the Prandtl system \eqref{prandtl} plays an important role in the study of boundary layers. In this section we briefly review some earlier results on well-posedness (and related ill-posedness results when the monotonicity condition does not hold), and due to the vast literature, we focus only on those results most relevant to our work. 

\subsubsection{The steady Prandtl equation.} We start with the steady Prandtl equation (i.e., $u$ is time independent in \eqref{prandtl}), which remarkably can be viewed as an evolution equation in the $x$ direction. Using the von Mises transform, the uniqueness of the steady Prandtl equation was proved by Nickel \cite{Ni58}, and the global-in-$x$ well-posedness result on steady solutions to \eqref{prandtl} was proved by Oleinik \cite{OS99}. More recently, the propagation of higher regularity was established in Guo and Iyer \cite{GI21} and instant smoothing of steady solutions for positive $x$ was proved in Wang and Zhang \cite{WZ21}. When pressure is adverse, global existence may fail; in general only local existence is guaranteed. The breakdown of solutions is known as \emph{boundary layer separation}, a phenomenon justified mathematically in \cite{DM19} and \cite{SWZ21}. See also the formation of reversal flow in the steady case \cite{IMreversal}.
 
In \cite{Blas1908}, Blasius introduced an important special solution to the steady Prandtl equation known as the \textit{Blasius profile}, which is unique up to rescaling. More precisely, let $f$ be the unique solution to the differential equation
\begin{equation}\label{blasiusode}
\begin{cases}
f''' + \frac{1}{2}ff'' = 0, \quad \text{in} \ \mathbb{R}^{+},\\
f(0) = f'(0) = 0, \quad f'(\infty) = 1.
\end{cases}
\end{equation}
Define the vector field $[\bar{u}, \bar{v}]$ on $\mathbb{R}^{+} \times \mathbb{R}^{+}$ as
\begin{equation}\label{blasius}
[\bar{u}, \bar{v}] = \left[f'(\eta),\, \frac{1}{2\sqrt{x+1}}\left(\eta f'(\eta) - f(\eta)\right)\right], \quad \eta = \frac{y}{\sqrt{x+1}}.
\end{equation}
Then the pair $[\bar{u},\bar{v}]$ is a steady solution of the Prandtl equation \eqref{prandtl} with a self--similar structure, which is called the Blasius profile.

The Blasius profile plays an important role in the steady Prandtl equation since it is the global attractor of steady solutions in the limit $x\to+\infty$. This was first proved by Serrin in the seminal work \cite{Se67}, which showed that under natural assumptions any solution $u$ of the steady Prandtl equation converges to the Blasius profile
\begin{equation*}
    \lim_{x\to\infty}\|u(x,y)-\bar{u}(x,y)\|_{L^{\infty}(y\in\R^+)}=0.
\end{equation*}
After Serrin's work, Peletier \cite{P72} proved an $L^{1}$-type convergence result from the solution $u$ to the Blasius profile $\bar{u}$ as $x\to\infty$. More quantitative convergence rates were obtained in Iyer \cite{Iy20} and Wang and Zhang \cite{WZ23}, and the optimal convergence rate was recently established by the authors \cite{JLY25} (see also the refinement in Gao and Zhao \cite{GZ25}). Let us also mention that under suitable  structural and regularity assumptions, the stability of steady solutions under the flow of the unsteady Prandtl system was proved in Guo, Wang and Zhang \cite{GWZ23}.

In the steady case, the validity of boundary layer expansions has been demonstrated in recent years, see \cite{GI21}, \cite{gao2023steady}, \cite{GX26}, and \cite{iyer2026global}. 

\subsubsection{Well-posedness theory for the dynamical Prandtl system}\label{sec:classical} The first well-posedness result for classical solutions to the dynamical Prandtl system \eqref{prandtl} was established by Oleinik. We refer to the monograph \cite{OS99} by Oleinik and Samokhin for details. Under suitable monotonicity and regularity conditions, local--in--time or local--in--$x$ well-posedness of \eqref{prandtl} was proved. In other words, either $X>0$ or $T>0$ has to be taken sufficiently small to ensure global existence in the other variable ($T$ or $X$).  

\begin{comment}
More precisely, the following statements hold.
\begin{proposition}[\bf Oleinik]
\label{oleinik}
Oleinik's assumptions:
\begin{itemize}
    \item[(1)] The initial data and inflow data $(u_{0},u_{1})$ in continuously differentiable \eqref{prandtl} is continuously differentiable and strictly monotone, satisfying the following condition on the matching rate to the outer flow $U\equiv 1$:
    \begin{equation}\label{matchrate}
    (1-u_{i})^{k}\lesssim \p_{y}u_{i}\lesssim 1-u^{i},\ i=0,1
    \end{equation}
    for some $k\geq 1$.
    \item[(2)] In Crocco coordinate, the transformed initial data $w_{0}(\xi,\eta)$ and the inflow data $w_{1}(\tau,\eta)$ belong to $C^{10}$, and satisfy the compatibility conditions on the system \eqref{prandtlcroccosec1} (corresponding the compatibility conditions on the original system \eqref{prandtl}).
\end{itemize}

Then, the Prandtl system in Crocco coordinate \eqref{prandtlcroccosec1} exists an unique classical solution either $X$ being arbitrary and $T$ depending on the data $(w_{0},w_{1})$, or $T$ being arbitrary and $X$ depending on the data $(w_{0},w_{1})$. 
\end{proposition}
\begin{remark}By representing $u$ via \eqref{invertcrocco} and defining the divergence--free counterpart $v$ via
\begin{equation*}
    v(t,x,y)=\frac{-\p_{t}u-u\p_{x}u+\p_{y}^{2}u}{\p_{y}u},
\end{equation*}
the local--in--$t$ or local--in--$x$ theory of classical solutions also holds for Prandtl equation in original physical coordinate \eqref{prandtl}.
\end{remark} \end{comment}
In Oleinik's work, the Crocco transform plays an important role. For the purposes of applications to the Navier--Stokes system, it is also desirable to have proofs without the use of the Crocco transform. 
Under the strict monotonicity  condition, Alexandre-Wang-Xu-Yang \cite{AWXY15} and Masmoudi-Wong \cite{MW15} independently proved local well-posedness results in Sobolev spaces in the original physical coordinates, by introducing new methods that capture subtle cancellation mechanisms in the original Prandtl system \eqref{prandtl} to overcome the loss of derivative. See also \cite{XZ17,LWX15} on various improvements in the lifespan of existence and smoothing estimates in Sobolev spaces under additional assumptions.

The strict monotonicity condition $\p_{y}u>0$ turns out to be crucial and not merely a technical assumption to ensure well-posedness. Indeed, without this assumption, the Prandtl system \eqref{prandtl} may become severely ill-posed in Sobolev spaces due to high frequency instabilities in $x$. The instability, which already occurs at the linearized level, was discovered by Gerard-Varet and Dormy \cite{GD10} (see also refinements in \cite{LY17}). The instability result was extended to the nonlinear setting by Guo and Nguyen \cite{GN11}.

Due to the strong ill-posedness in the absence of the monotonicity condition, the best one can expect is to prove local well-posedness of \eqref{prandtl} in the analytic or Gevrey setting. In the celebrated work \cite{SC98a,SC98b}, using the Cauchy--Kowalewski theorem, Sammartino and Caflisch proved for analytic solutions the local well-posedness of \eqref{prandtl} and established the validity of the vanishing viscosity limit for the Navier--Stokes equation. The analyticity requirement can be somewhat relaxed, see e.g.  \cite{LCS03,GM15}. Under a convexity assumption,  Gerard-Varet and Masmoudi \cite{GM15} proved the local well-posedness of \eqref{prandtl} in Gevrey space. The Gevrey regularity index was subsequently improved by Li and Yang \cite{LY20}. Later, the structural convexity assumption was removed by Dietert and Gerard-Varet \cite{DG19}. In the analytic or Gevrey setting, the longer time well-posedness of \eqref{prandtl} was also studied  in the literature. We refer to Zhang and Zhang \cite{ZZ16}, Ignatova and Vicol \cite{IV16}, Paicu and Zhang \cite{PZ21}, Wang, Wang and Zhang \cite{WWZ24} for the precise results and references therein for more discussions. 

Global well-posedness for the Prandtl equation \eqref{prandtl} without any monotonicity or pressure condition may not hold, even for analytic data. The finite-time singularity formation for the dynamical Prandtl equation was first found using numerical simulations by van Dommelen and Shen \cite{VS80}, and studied mathematically by E and Engquist \cite{EE97}, Kukavica, Vicol and Wang \cite{IVF17}, Collot, Ghoul, Ibrahim and Masmoudi \cite{CGIM22}.

In the unsteady case, the validity of boundary layer expansions (locally in time) has been studied in recent years, beginning with Sammartino--Caflisch \cite{SC98b} (see also Wang--Wang--Zhang \cite{WWZ17}) in the analytic setting, and Maekawa \cite{M14} in the Sobolev setting under the assumption that the vorticity is supported away from the boundary. Gerard-Varet, Maekawa and Masmoudi \cite{DMM18} proved the Gevrey stability near monotonic and concave Prandtl expansion solutions of shear type. Near unstable shear flows, the $L^{\infty}$ instability of unsteady boundary layer expansion was proved by Grenier and Nguyen \cite{GN19}.

\subsubsection{The theory of weak solutions to the Prandtl equation}
Under the strict monotonicity condition, the transformed Prandtl system \eqref{prandtlcroccosec1} exhibits a \emph{conservation law} type structure. Assuming that $w_0, w_1\sim 1-\eta$, Xin and Zhang \cite{XZ04} demonstrated the global--in--$(\tau,\xi)$ existence of a $BV\cap L^{\infty}$ weak solution to \eqref{prandtlcroccosec1}, which then yields a global weak solution to \eqref{prandtl}. More recently in \cite{XZZ24}, Xin, Zhang and Zhao provided a direct $BV$--estimate to give a new proof of the global existence of weak solutions to the Prandtl equation in the Crocco coordinates \eqref{prandtlcroccosec1}, and established the uniqueness and H\"older regularity of such weak solutions {\it away from the boundary}. In principle at least, in Crocco coordinates, higher regularity and smoothness of the weak solution in the interior are expected to follow from the H\"older estimates using hypoelliptic smoothing arguments, although the detailed argument turns out to be somewhat nontrivial (see Section \ref{ingredient} below for more discussion on this point).

\subsection{Main ideas of the proof}\label{ingredient}
%\subsubsection{Structure of presented paper}
Our proof was inspired in part by the work \cite{XZ04, XZZ24}. However, extending the regularity results up to the boundary presents several new difficulties.
In this section, we briefly explain these difficulties and our main ideas in the proofs of Theorem \ref{mainone}--Theorem \ref{mainthree}.

\subsubsection{Regularity of the fundamental solution to the Kolmogorov equation in the half space}
The first major difficulty is that the fundamental solution to the Kolmogorov equation (which can be viewed as the ``linearized" version of \eqref{prandtlcroccosec1}) is no longer explicitly given and lacks key symmetries in the half space (with zero Neumann boundary condition), in contrast to the whole space case (see \eqref{wholespace}). The formula for the fundamental solution plays an important role in the analysis of weak solutions to \eqref{prandtlcroccosec1} since it encodes the hypoelliptic smoothing effect of the Kolmogorov equation in a simple form, and the positivity property which is essential for implementing the De Giorgi-Nash-Moser theory (using the approach of Kruzkov \cite{K64}) is transparent. 

To explain our approach to overcome this difficulty, let us take the fundamental solution  $\Gamma(z;\zeta)=\Gamma(t,x,y;t_{0},x_{0},y_{0})$ to the Kolmogorov operator in the half--space, where for ease of notation we denote $z=(t,x,y), \,\zeta=(t_0,x_0,y_0)$. Hence $\Gamma$ satisfies
\begin{equation}\label{eqfundamentalsec1}
\begin{cases}
\p_{t}\Gamma(z;\zeta)+y\p_{x}\Gamma(z;\zeta)-\p_{y}^{2}\Gamma(z;\zeta)=0,\,\, t>t_{0}, \,x\in\R,\,y\in\R^{+},\\
\p_{y}\Gamma|_{y=0}=0,\quad
\Gamma|_{t=t_{0}}=\delta(x-x_{0},y-y_{0}).
\end{cases}
\end{equation}

Thanks to the translation invariance of \eqref{eqfundamentalsec1}, we can assume that $t_0=0$ and $x_0=0$. To prove regularity of $\Gamma(z;\zeta)$ for $z\neq \zeta$, as expected the most dangerous situation occurs when the boundary effect is significant, i.e., when $z$ and $w$ are close to the boundary. After rescaling, we focus on the behavior of the fundamental solution $\Gamma(z;\zeta)$ in the region
{\small \begin{equation*}
    \ \ \ \ (t,x,y)\in\Omega:=\Big\{\frac{1}{2}\leq t \leq 1,\ 0\leq y\leq 1,\ |x|\leq 1\Big\}\bigcup \Big\{\frac{1}{2}\leq|x|\leq 1,\ 0\leq y\leq 1, 0\leq t\leq 1\Big\},
\end{equation*}}
where $\zeta = (0,0,y_{0})$ with $y_{0}\ll 1$.

In this ``near boundary" region, the H\"ormander--type hypoelliptic interior estimates are not directly applicable. Instead, we take the Fourier transform in $x$ for \eqref{eqfundamentalsec1} and obtain the following heat equation with an oscillation term on the Fourier side
\begin{equation}\label{ftxsection1}
\begin{cases}
\p_{t}\widehat{\Gamma}+2\pi \mathrm{i} y\xi\widehat{\Gamma}-\p_{y}^{2}\widehat{\Gamma}=0,\\
\widehat{\Gamma}|_{t=0}=\delta(y-y_{0}),\quad \p_{y}\hat{\Gamma}|_{y=0}=0.
\end{cases}
\end{equation}
After suitable localization in the physical space, the regularity bounds of $\Gamma$ follow from quantitative decay in $|\xi|$ of $\widehat{\Gamma}(t,\xi,y)$.

Notice that large $|\xi|$ corresponds to strong oscillations in \eqref{ftxsection1}, which leads to ``enhanced dissipation" of $\widehat{\Gamma}$. Using the now well-known \emph{enhanced dissipation} theory (see Proposition \ref{enhanced}), we get that
\begin{equation}\label{enhancedsection1}
    \|\widehat{\Gamma}(t,\xi,y;\zeta)\|_{L^{2}(y\in\R^+)}\lesssim e^{-c|\xi|^{\frac{2}{3}}(t-s)}\|\widehat{\Gamma}(s,\xi,y;\zeta)\|_{L^{2}(y\in\R^+)},\,\,\forall t>s>0.
\end{equation}
Precise quantitative decay in $|\xi|$ of $\widehat{\Gamma}(t,\xi,y)$ for $1/2\leq t\leq1$ then follows by combining \eqref{enhancedsection1} with a Nash--type inequality on the $L^{2}_y$ norm of $\widehat{\Gamma}(t,\xi,y)$ for \eqref{ftxsection1} (from the a priori $L^1_y$ bound).

The case of $|x|\gtrsim 1$ is more complicated and we handle this case using the following observation. Let $\chi(x)$ be a given cut--off function located in $|x|\gtrsim 1$. Then $\Gamma_{1}(z;\zeta):=\Gamma(z;\zeta)\chi(x)$ satisfies the bound
\begin{equation}\label{gammaonebdsection1}
\int_{0}^{10}\int_{0}^{\infty}|\widehat{\Gamma}_{1}(t,\xi,y;\zeta)|^{2}\phi^{2}(y)\,dydt\lesssim (1+|\xi|)^{-\frac{4}{3}},
\end{equation}
where $\phi$ is a suitable explicit weight (see \eqref{eqphi}). The estimate \eqref{gammaonebdsection1} follows from applying \eqref{enhancedsection1} to the equation for $\widehat{\Gamma}_{1}$, with suitable energy estimates to handle the growth--in--$y$ in the oscillation term. As a consequence, the regularity in $|x|\gtrsim 1$ (corresponding to the decay in $|\xi|$ after suitable localization) can be obtained from an iteration argument. For more details, we refer to subsection \ref{subsec2.2}.
    
  \subsubsection{Positivity property of the fundamental solution.} As remarked above, besides regularity, to prove the H\"older continuity of the weak solution, the \emph{positivity property of the fundamental solution} is also essential in the approach of Kruzkov \cite{K64} and \cite{XZZ24}, to expand the positivity of the weak solution.

    Due to the one--sided transport effect of the transport operator $\p_{t}+y\p_{x}$ since $y\geq 0$, the fundamental solution $\Gamma(z;\zeta)$ vanishes identically if $x<x_{0}$. To apply the crucial ``expansion of positivity" estimate (see Lemma \ref{oscillation}), it is crucial to show that $\Gamma(z,\zeta)$ is indeed positive whenever $x>x_{0}$. This turns out to be quite nontrivial, despite strong evidence for its validity from physical intuition.

    In order to show the positivity rigorously, a natural idea is to compare $\Gamma$ with the fundamental solution in the whole space $\widetilde{\Gamma}$ with the same source. However, these two fundamental solutions are substantially different near the boundary. For example, $\widetilde{\Gamma}$ may be positive for all $x$ while $\Gamma$ vanishes for $x<x_0$, and as we shall see below $\Gamma(z;\zeta)$ contains an additional type of singularity besides that for $\widetilde{\Gamma}$ as $z\to \zeta$.

To resolve this issue, our key new observation is that positivity can be obtained through the anisotropic scaling property (where $x\sim L^3$, $y\sim L$ and $t\sim L^2$ for a suitable length scale $L$) and the reproducing formula of the fundamental solution, using also a form of the maximum principle due to Nirenberg \cite{N53}.  
    Let us briefly go over the key ideas. Preliminary pointwise bounds imply that for some $M\gg1$ we have the following local positivity
    \begin{equation}
    \Gamma(t,x,y;0,0,y_{0})>0,\quad \forall \,0<t\leq 1,\ x\ge M t^{\frac{3}{2}},\ y,\,y_{0}>0.
    \end{equation}
    In view of the reproducing formula (see \eqref{eqreproduce}), we have the following positivity for $t_{N}=2^{-N}$ ($N\in\Z\cap[1,\infty)$) and $x\gg 2^{-\frac{3}{2}N}$, 
    \begin{equation}\label{baseineq0}
    \begin{split}
    \Gamma(t_{N-1},x,y;0,0,y_{0})&=\int_{0}^\infty\int_{0}^x\Gamma(t_{N},x-\xi,y;0,0,\eta)\Gamma(t_{N},\xi,\eta;0,0,y_{0})\,d\xi d\eta\\
    &\geq \int_{0}^\infty\int_{ 2^{-3N/2}M}^{x- 2^{-3N/2}M}\Gamma(t_{N},x-\xi,y;0,0,\eta)\Gamma(t_{N},\xi,\eta;0,0,y_{0})\,d\xi d\eta>0.
    \end{split}
    \end{equation}

Iterating \eqref{baseineq0} $N$ times, we obtain that
\begin{equation}\label{posisec1}
\Gamma(1,x,y;0,0,y_{0})>0,\quad \forall\, x\gtrsim 2^{-\frac{1}{2}N},\,\, y>0,\,\,y_{0}>0,
\end{equation}
which implies the positivity of $\Gamma(z;\zeta)$ if $x>x_{0}$ since $N\gg 1$ is arbitrary. For more details, we refer to subsection \ref{subsec2.3}.
    
\subsubsection{ H\"older estimates and higher regularity of weak solutions} To demonstrate the up--to--boundary smoothing effect of the Prandtl system (Theorem \ref{mainone}), it suffices to show the up--to--boundary smoothness for weak solutions to the transformed Prandtl system \eqref{prandtlcroccosec1} in the Crocco coordinates. The main task is to show that for a positive weak solution $w$ to \eqref{prandtlcroccosec1} (which can be reformulated in divergence form by considering the equation for $\frac{1}{w}$, see \eqref{vdivereq}) in $(0,T)\times(0,X)\times(0,1)$ with the regularity
    \begin{equation}\label{eqreg0}
        w\sim_\epsilon 1\quad \text{and}\quad  w\in W^{1,1}\cap W^{2,1}_{y},\quad \text{in }(\epsilon,T)\times(\epsilon,X)\times[0,1-\epsilon)
    \end{equation}
    for all $0<\epsilon\ll1$, $w$ is smooth up to the boundary. More precisely,
    \begin{equation}
        w\in C^{\infty}((0,T]\times(0,X)\times[0,1)).
    \end{equation}
The regularity assumption \eqref{eqreg0} is motivated by the regularity property of the global weak solutions constructed below. We establish the smoothness of weak solutions in two main steps. In the first key step, we prove H\"older estimates using De Giorgi-Nash-Moser theory adapted to the ultraparabolic setting. This has been done in the interior case without boundary by Xin, Zhang and Zhao \cite{XZZ24} using the approach of Kruzkov \cite{K64} (see also Golse, Imbert, Mouhot and Vasseur \cite{GIMV19} using the  De Giorgi approach \cite{D57}). A key original idea in \cite{XZZ24} is to use a weak type Poincar\'e  inequality, see Lemma \ref{weakpoin}, that overcomes the lack of dissipation in the $x$ direction. Our proof is inspired by \cite{XZZ24}, and we also benefited from the use of the interior Moser iterations in \cite{PP04} by Pascucci and Polidoro and the presentation of the weak Poincar\'e inequality in \cite{GI23} by Guerand and Imbert. 

The main new difficulty, as discussed above, is to deal with the boundary effects in the half space (with zero Neumann boundary condition). Due to the one-sided positivity (in $x$) of the fundamental solution, we need to adapt in a nontrivial way several steps in the argument, as the propagation of positivity works only in one direction in $x$. As a consequence, for example in the key lemma \ref{oscillation} the lower bound is established in an ultraparabolic ball only after a significant shift in $x$. We refer to Section \ref{continuity} for the detailed proof.  

%For instance, due to the effect of one--side transport whenever boundary is present, one may only obtain the improvement of the oscillation in the shrinking cube with \emph{translated center}. 
    
%   Considering interior smoothing effect for kinetic equation \cite{HS20} and more early the Hilbert's 19th problem for elliptic/parabolic equation \cite{V16}, one may \emph{expect} the smoothing effect for \eqref{prandtlcroccosec1} may rely on two key steps: (i)H\"older continuity of the weak solution; (ii) Improving the regularity by suitable Schauder--type estimate like \eqref{schauder_whole_space}.

 To obtain higher regularity of the solution, a natural idea is to apply the Schauder theory based on the H\"older estimates we just obtained. This, however, is not straightforward. The first main difficulty is that our equations are quasilinear and it seems difficult to ``regularize" the equation to directly apply the a priori bounds from Schauder's theory, at least for the weak solutions. To resolve this, one would need to develop a comprehensive well-posedness theory of \eqref{prandtlcroccosec1} in H\"older spaces, which is interesting but nontrivial. A second difficulty is that our hypoelliptic operator does not gain one full derivative of regularity in $(\tau,\xi)$, which makes it harder to gain higher regularity. Moreover, the fundamental solution to the Kolmogorov equation in the half space lacks key symmetries that are available in the whole space. As a consequence, the H\"older space that we use needs to be very specifically designed, see e.g. \eqref{w2pdesire2}--\eqref{newholder}, which makes it technically complicated to use Schauder's theory.

%However, various crucial difficulties come in inevitably .First and foremost, in the techniqune level, the Schauder--type estimate for Kolmogorov operator $\mathcal{L}_{0}$ \emph{does not improve one full derivative} for $t,x$ variable, such that we can not bootstrap the regularity directly. More seriously, since we aim to obtain the regularity improvement up--to--boundary, several technique obstacles also involve:\begin{itemize}\item To state the Schauder theory for Kolmogorov equation $\mathcal{L}_{0}$, the underlying functional (H\"older) spaces are necessary to match some key symmetric properties of Kolmogorov operator. \item For existed interior Schauder--type estimates for $\mathcal{L}_{0}$ in literature(see \cite{},\cite{}), the explicit expression for the fundamental solution in whole space is useful. Moreover, the symmetry property $\widetilde{\Gamma}(z;\zeta)=\widetilde{\Gamma}(\zeta^{-1}\circ z;0)$ is quite essential.\item However, in half--space, the Galilean invariance for Kolmogorov equation is \emph{not} valid. Also, both the explicit expression and the symmetry under Galilean coordinate transform of fundamental solution (in half--space) do not hold. Hence, Schauder--type estimate for $\mathcal{L}_{0}$ seems to be not valid any more in half--space.\end{itemize}}

  Instead, we use the divergence structure of the equation \eqref{prandtlcroccosec1}, and for simplicity consider the following model equation in the half--space with zero Neumann boundary condition, 
     \begin{equation}\label{divergencesec1}
         \begin{cases}
             \p_{t}g+y\p_{x}g-\p_{y}(a(t,x,y)\cdot\p_{y}g) = h,\ (t,x,y)\in\R^{2}\times\R^{+},\\
             \p_{y}g|_{y=0}=0.
         \end{cases}
     \end{equation}
In the above, we assume that $a(t,x,y)$ is ``elliptic", i.e., $c\leq a\leq \frac{1}{c}$ for some $c\in(0,1)$. Note that by energy estimates we have
\begin{equation}\label{hypoe1}
    \|\p_{y}g\|_{L^{2}_{t,x,y}}\lesssim \|g\|_{L^{2}_{t,x,y}}+\|h\|_{L^{2}_{t,x}H^{-1}_{y}}.
\end{equation}
Inspired by H\"ormander's seminal work on hypoellipticity \cite{H67}, we can expect that regularity in $y$ obtained by an energy estimate may be \emph{transferred} to (fractional) regularity in the $x$ variable. Indeed using regularity bounds for the fundamental solution $\Gamma$ in Section \ref{fundamental}, we can effectively exploit this hypoelliptic smoothing mechanism in the half--space to obtain roughly the bounds
\begin{equation}\label{hypoe2}
\||D_{x}|^{\frac{1}{3}}g\|_{L^{2}}+\|\langle y\rangle ^{-1}|D_{t}|^{\frac{1}{3}}g\|_{L^{2}}\lesssim \|g\|_{L^{2}_{t,x}H^{1}_{y}}+\|h\|_{L^{2}_{t,x}H^{-1}_{y}}.
\end{equation}
See \cite{B02} for related work. 

In view of the bounds \eqref{hypoe1}--\eqref{hypoe2} for solutions to \eqref{divergencesec1}, it is natural to use a bootstrap argument to obtain higher regularity and ultimately smoothness, by taking fractional-order derivatives in \eqref{divergencesec1}. This approach indeed works, and the main technical part is to quantify precisely the required regularity on the coefficient $a$ so that various commutators that appear when taking derivatives may be bounded, and simultaneously that these regularity assumptions match what can be obtained from \eqref{prandtlcroccosec1} since $a$ depends on the solution.  Crucially, to initiate the bootstrap argument, besides the standard bootstrap assumptions, we need also a bound on $\|\p_{y}w\|_{L^{\infty}}$ which appears in controlling one of the commutator terms. We refer to Section \ref{higher} for details.

  \subsubsection{Closing the crucial $\|\partial_yw\|_{L^\infty}$ bounds and completion of smoothing estimates for weak solutions}  To obtain the crucial bound on $\|\partial_yw\|_{L^\infty}$, using an anisotropic interpolation lemma (see Lemma \ref{aniinterpolation}), it suffices to obtain $W^{2,p}_{y}$ estimates for the solution of \eqref{prandtlcroccosec1} for arbitrary $1<p<\infty$.

            The interior $W^{2,p}_y$ estimate for a variable-coefficient Kolmogorov operator was established in \cite{BC96b}, using the theory of singular integral operators (SIOs) on homogeneous spaces as well as an  explicit expression and key symmetries of the fundamental solution in the whole--space. The singularity property of the fundamental solution in the half space is more complicated, and proving the corresponding $L^p$ estimates is technically more difficult. In particular, we need to use the enhanced dissipation estimates (see Proposition \ref{enhanced} for the details) when establishing $L^2$ boundedness of certain SIOs. Another  difficulty is that, to begin with, $\p_{y}^{2}w$ belongs to $L^1$ only, and as a consequence the standard perturbation method (for instance that in \cite{BC96b}) does not directly apply due to the lack of $L^1$ boundedness of SIOs. To treat this difficulty, our main strategy is to (i) view the weak solution to \eqref{divergencesec1} as the solution to a constant-coefficient Kolmogorov type equation with a right hand side by suitable perturbation and localization arguments; (ii) design a contraction mapping scheme in a higher integrability space; and (iii) prove a uniqueness result. The combination of (i)-(iii) finally yields higher integrability of the weak solution. To implement this idea, we need to adapt our regularity assumptions on the coefficient $a$ to strike the right balance between the minimum required regularity for steps (i)-(iii) to work, and the need to obtain them from the relation between $a$ and the solution itself in our applications when $a$ is given in terms of $w$. We refer to Section \ref{sobolev} for the full details.

\subsubsection{Local well-posedness (LWP) for classical solutions.} To prove the global-in-$(t,x)$ well-posedness for classical solutions of the Prandtl boundary layer equation \eqref{prandtl}, local well-posedness (LWP) for classical solutions is a necessary first step. The LWP for classical solutions has been addressed in \cite{O68,OS99}. We revisit this theory using a novel \emph{quasilinear weighted energy estimate} on the transformed Prandtl equation \eqref{prandtlcroccosec1}, with the aim of obtaining both local-in-$t$ and local-in-$x$ well-posedness results, as well as (crucially) allowing a wide range of matching rates of the boundary layer flow to the outer Euler flow. %To avoid confusion, we note that here local-in-$t$ but global-in-$x$ wellposedness refers to the wellposedness result that for any $X>0$, there exists a sufficiently small $T>0$ depending on $X$ such that \eqref{prandtlcroccosec1} is wellposed for $t\in[0,T]$ and $x\in[0,X]$. 

More precisely, we use the weight $w^{-2}$ in our energy functionals, which could be quite singular as $\eta\to1-$. Using the weighted energy estimate, we obtain a Beale-Kato-Majda type regularity criterion: the energy functional can be bounded as long as $$\mathcal{N}(w):=\sup_{\tau\leq T,\,\xi\le X,\eta\in[0,1)}\left |\frac{\p_{\tau,\xi} w(\tau,\xi,\eta)}{w(\tau,\xi,\eta)}\right|$$ stays bounded, see Lemma \ref{highenergyesti}. 
%Formally, we obtain the energy bounds for the tangential derivatives of $w$:\begin{equation}\label{localenergysec1}\sup_{\tau\in[0,T]}\bigg\|\frac{\p^{\leq k}w}{w}\bigg\|_{L^{2}_{\xi,\eta}}^{2}+\sup_{\xi\in[0,X]}\bigg\|\sqrt{\eta}\frac{\p^{\leq k}w}{w}\bigg\|_{L^{2}_{\tau,\eta}}^{2}+\|\p^{\leq k}\p_{\eta}w\|^{2}_{L^{2}_{\tau,\xi,\eta}}\leq \text{Nonlinear Terms},\end{equation}where $\partial^{\leq k}:=\sum_{1\leq \alpha+\beta\leq k}\p^{\alpha}_{\tau}\p^{\beta}_{\xi}$. To proceed, by careful calculations for non--linear terms, more precisely we have\begin{equation}\label{localenergysec11}\begin{split}&\sup_{\tau\in[0,T]}\bigg\|\frac{\p^{\leq k}w}{w}\bigg\|_{L^{2}_{\xi,\eta}}^{2}+\sup_{\xi\in[0,X]}\bigg\|\sqrt{\eta}\frac{\p^{\leq k}w}{w}\bigg\|_{L^{2}_{\tau,\eta}}^{2}+\|\p^{\leq k}\p_{\eta}w\|^{2}_{L^{2}_{\tau,\xi,\eta}}\\&\quad\quad\lesssim \left\|\frac{\p w}{w}\right\|_{L^{\infty}_{\tau,\xi,\eta}}\times \int_{0}^{T}\int_{0}^{X}\int_{0}^{1}\left|\frac{\p^{\leq k}w}{w}\right|^{2}d\tau d\xi d\eta.\end{split}\end{equation}
 To close the energy estimate, it is therefore sufficient to control $\mathcal{N}(w)$. %However, since we allow rather general behavior of $w$ near $\eta=1$ (determined by the matching rates in physical coordinates), hence we do not have certain regularity information from the energy norm. Furthermore, due to the degeneracy of the equation \eqref{prandtlcroccosec1},  we may not have  strong controls to the normal derivative from the energy norm with respect to tangential derivatives. 
Due to the strongly singular weight in $\mathcal{N}(w)$ as $\eta\to1-$ and the lack of control on derivatives $\partial_\eta\partial_\tau w$ and $\partial_\eta\partial_\xi w$ from the energy functional, this turns out to be quite nontrivial. Our main idea is to distinguish two regions $\eta\leq 1/2$ and $\eta\ge1/2$ and apply different strategies in each region. 

In the region where $\eta\leq\frac{1}{2}$, we estimate $\mathcal{N}(w)$ using the \emph{energy functional} by carefully adapted anisotropic Sobolev embeddings, since in this region the weight $w\sim 1$ and therefore is not singular. 

In the region where $\eta\geq\frac{1}{2}$, the weight is very singular and the energy functional alone is not sufficient to bound $\mathcal{N}(w)$. Instead, we estimate $\mathcal{N}(w)$ using also the equation \eqref{prandtlcroccosec1} and the \emph{comparison principle} between $\partial_\tau w$ (or similarly $\partial_\xi w$) and a suitable modification of $w$ involving a large constant $M>1$. 

The key technical complication is that $M$ depends also on the energy functional to be controlled, which creates a nontrivial set of constraints. Fortunately, by assuming suitable smallness of $X$ (or $T$), we can select parameters that satisfy all the constraints and prove the local well-posedness result. We refer to Section \ref{local} for more details. 

\subsubsection{Global weak solutions}
In Section \ref{basic} we revisit the theory of global weak solutions to \eqref{prandtl} developed in \cite{XZ04, XZZ24}. The global theory of weak solutions is still useful in applications even though we have global classical solutions, since we need less regularity and compatibility conditions on the initial and boundary data for weak solutions.  Moreover, the key a priori bounds which are obtained based on the conservation structure of \eqref{prandtlcroccosec1} provide additional control even for classical solutions. In particular, we emphasize that the spacetime bounds \eqref{twdecay0.1} on $\partial_\tau w$ when the boundary condition is time independent suggest convergence of solutions to \eqref{prandtl} to steady states, under more general conditions than those considered in \cite{GWZ23}, and are expected to be useful for studying asymptotic stability of \eqref{prandtl}. 

While our arguments are broadly similar to those in \cite{XZ04,XZZ24}, our main results significantly generalize the class of initial and boundary data that are allowed for proving existence and uniqueness of weak solutions, to incorporate all three natural rates: polynomial, exponential and Gaussian convergence to outer Euler flow as $y\to\infty$. Another aspect that is worth mentioning is that we use the smoothness results for weak solutions (Theorem \ref{mainone}) to work with weak solutions with $\partial_y^2w\in L^1_{\rm loc}$ rather than $BV$ regularity in \cite{XZ04,XZZ24}. This simplifies some of our arguments such as the proof of uniqueness. 

\subsubsection{Global well-posedness of the approximation system} In Appendix \ref{proofapproxia}, we provide a self-contained proof of the global well-posedness of classical solutions to an approximation system to \eqref{prandtlcroccosec1}, see \eqref{approsys} (and \eqref{equationappendix}). %which was also used in the earlier work \cite{XZZ24} as a key tool to construct global weak solutions. 
\eqref{equationappendix} is simpler than \eqref{prandtlcroccosec1} in the sense that the degeneracy of the diffusion coefficient is removed. As a consequence, we no longer need to use sophisticated weighted energy estimates to prove the local well-posedness. The key advantage of using \eqref{approsys} to approximate \eqref{prandtlcroccosec1} (compared with adding a diffusion term, for example) is that the conservation law structure is preserved. However, it turns out that proving the full regularity of the approximation system is not as straightforward as one might expect, since the approximation system \eqref{approsys} is still a quasilinear ultraparabolic system despite the fact that it is no longer degenerate. To establish global well-posedness, we have to come up with a new energy functional, see \eqref{energynormappendix}-\eqref{dissi}, to deal with boundary effects for higher order regularity estimates. We also need to use the smoothing estimates from Theorem \ref{mainone} to close the key regularity controlling quantity.  

Another important issue is to construct approximate data satisfying compatibility conditions of relatively high order, from the original initial and inflow data. Notice that due to the change of equations, the original data in general do not satisfy the compatibility conditions of \eqref{approsys}. Instead, we must carefully modify the data to ensure compatibility and simultaneously preserve important bounds such as \eqref{oneorderassume2}-\eqref{higherassume2}. The construction turns out to be surprisingly subtle, due to degeneracies of the data as $\eta\to 1-$ and to the presence of constraints from three distinct boundary intersections $\{\tau=0, \eta=0\}$, $\{\xi=0, \eta=0\}$ and $\{\tau=0, \xi=0\}$. We present the detailed construction in Appendix \ref{compatiappendix}, which is of independent interest.

%This is based on two key observations: (i) the value of $|\p w|$ on the $\{\eta=\frac{1}{2}\}$ can be estimated by energy norms; (ii) The constant $M$ can be chosen relatively explicitly depending on the energy functional using anisotropic Sobolev embeddings. By a delicate analysis, we are able to ensure that all involved estimates can be only relied on one of $T$ or $X$ by a series of careful analysis, therefore we can conclude both local--in--$t$ and local--in--$x$ well-posedness. For complete details, kindly consult Section \ref{local}.

\subsection{Organization of the paper}

The rest of the paper is structured as follows.

In Section \ref{fundamental}, we study properties of the fundamental solution to the Kolmogorov equation in the half--space. %For our applications, we also prove the optimal positivity of the fundamental solution by virtue of novel iterations.

In Section \ref{continuity}, we prove up--to--boundary H\"older continuity for weak solutions to a linear ultraparabolic equation (see \eqref{diverform}) with a bounded coefficient. 
%of $\mathcal{L}_{0}^{a}=0$ with $a$ merely bounded. We adopt the approach in the interior estimate case (without boundary) with subtle modifications due to the effect of \emph{one--sided transport}.

In Section \ref{sobolev}, we develop $W^{1,p}_y$ and $W^{2,p}_y$ estimates for weak solutions to a linear ultraparabolic equation (see \eqref{nondiver}) with a H\"older (and specifically adapted higher order regularity) coefficient.
%we first show the $W^{-1,p}_{y}\to W^{1,p}_{y}$ improvement and then $L^{p}\to W^{2,p}_{y}$ improvement for the forced equation $\mathcal{L}_{0}^{a}w=h$ for H\"older continuous coefficient $a(t,x,y)$. The crucial step is to obtain the continuity of certain singular integral operators via delicate estimates of integral kernels.

In Section \ref{higher}, we establish optimal higher-order regularity estimates for a forced ultraparabolic equation (see \eqref{diverformsectionSIX}) with the coefficient and the force in suitable Sobolev spaces. %Novel energy estimates and hypoelliptic regularity improvement estimates associated with fractional regularity are involved.
As an application, we complete the proof of Theorem \ref{mainone} (together with that of Theorem \ref{th:smo1}). %Starting at low initial regularity requirement, we bootstrap the regularity of the solution by employing the quasilinear structure and various applicable regularity estimates stated in previous sections.

In Section \ref{basic}, we revisit and generalize the theory of global weak solutions to \eqref{prandtl} and complete the proof of Theorem \ref{mainthree} (together with that of Theorem \ref{thm:gws}).

%). We adopt the argument in \cite{XZZ24} but allowing more general matching rates. By virtue of BdRT in our first part, the constructed (unique) weak solution is indeed smooth expect near $t=0$ and $x=0$. A uniqueness result for larger class of solutions is also obtained, which is fluently used in our argument.

In Section \ref{local}, we give a new proof of the existence of local classical solutions to \eqref{prandtl} based on quasi--linear weighted energy estimates. %We derive a priori estimates by so called quasilinear weighted energy estimates, choosing the weight depends on the solution itself, which is new in the local theory of Prandtl equation. Various anisotropic embeddings are involved to close the estimates and gain the compactness.
We then prove the global-in-$(t,x)$ well-posedness for classical solutions in Sobolev spaces (Theorem \ref{maintwo} together with the corresponding Theorem \ref{thm:gcs} in the Crocco coordinates) by combining the global weak solution theory and local classical solution theory.

In Appendix \ref{compatiappendix}, we formulate the compatibility conditions for the approximation system and present the construction of approximating data in detail.

In Appendix \ref{proofapproxia}, we prove the global well-posedness of the approximation system \eqref{approsys}, which is the fundamental step to construct the global weak solution and the local classical solution. %Our BdRT is employed to prevent the finite time singularity of the approximation system.

In Appendix \ref{pre}, we present various analysis tools that were used throughout the article, including Schur's test, Littlewood--Paley dyadic decomposition, interpolations, and enhanced dissipation.

\subsection{Notations} In this article, we adopt the following notational conventions:

\begin{itemize}
\item[(1)] We use $A\lesssim$ ($\gtrsim$) $B$ to denote $A\leq$ ($\geq$) $cB$ for some absolute constant $c$. The notation $A\sim B$ means $A\lesssim B$ and $A\gtrsim B$. Additionally, $A\lesssim_{\star}$ ($\gtrsim_{\star}$) $B$ implies $A\leq$ ($\geq$) $cB$ with $c$ being continuously dependent on a specific quantity $\star$, i.e., $c=c(\star)$. 

\item[(2)] The notation $\p_{l}w$ denotes the partial derivative of $w$ with respect to the variable $l$, and $w_{l}:=\p_{l}w$. 

\item[(3)] $|D_{x}|^{s}$ and $\langle D_{x}\rangle^{s}$ denote the non--local differential operators with respect to $x$, defined using the Fourier transform as
\begin{equation}
    |D_{x}|^{s}f:=\mathcal{F}_{x}^{-1}(|\xi|^{s}\mathcal{F}_{x}(f)),\ \langle D_{x}\rangle^{s} f:=\mathcal{F}_{x}^{-1}(\langle \xi\rangle^{s}\mathcal{F}_{x}(f)),
\end{equation}
where $\langle\cdot\rangle$ is the classical Japanese bracket: $\langle a \rangle=(|a|^{2}+1)^{\frac{1}{2}}$.

\item[(4)] $\mathrm{sgn}(z)$ denotes the standard sign-function:
\begin{equation}
    \mathrm{sgn}(z)=
    \begin{cases}
        1,\ z>0,\\
        0,\ z=0,\\
        -1,\ z<0.
    \end{cases}
\end{equation}

\item[(5)] Notations of domains:
\begin{itemize}
\item [(5.a)] We denote $\mathbb{H}:=\R_{t}\times \R_{x}\times \R^{+}_{y}$ and $\mathbb{H}^-:=\R^{-}_{t}\times\R_{x}\times\R^{+}_{y}$; 

\item[(5.b)] For $z_{0}=(t_{0},x_{0},0)\in \p\mathbb{H}$ and $r_{0}>0$, denote
$$\mathcal{B}_{r_{0}}^{+}(z_{0}):= (t_{0}-r_{0}^{2},t_{0}]\times (x_{0}-r_{0}^{3},x_{0}+r_{0}^{3})\times [0,r_{0}),\ \mathcal{B}_{r_{0}}^{+}:=\mathcal{B}_{r_{0}}^{+}(0,0,0);$$ 

\item[(5.c)] For fixed $y_{0}>0$, denote the strip $$\mathcal{S}_{y_{0}}:=(-\infty,0]\times\R\times[0,y_{0}).$$
\end{itemize}

\end{itemize}

%section 3

\section{The fundamental solution to the Kolmogorov operator in the half space}\label{fundamental}
In this section, we study the basic properties of the fundamental solution for the linear Kolmogorov  partial differential operator
\begin{equation}
\mathcal{L}_{0}=\p_{t}+y\p_{x}-\p_{y}^{2}
\end{equation}
in the upper half space with the Neumann boundary condition.
Clearly, $\mathcal{L}_{0}$ is hypoelliptic (see e.g. \cite{H67,LP93}). As a consequence, all distributional solutions $u$ of $\mathcal{L}_{0}u=0$ are smooth in the interior of the domain. 

The fundamental solution $\Gamma(z;w)=\Gamma(t,x,y;t_{0},x_{0},y_{0})$ for $\mathcal{L}_{0}$ with pole at $w=(t_{0},x_{0},y_{0})\in \R\times\R\times(0,\infty)$ and zero Neumann boundary condition satisfies
\begin{equation}\label{eqfundamental}
\begin{cases}
\p_{t}\Gamma(z;w)+y\p_{x}\Gamma(z;w)-\p_{y}^{2}\Gamma(z;w)=0,\ t>t_{0},x\in\R,y\in\R^{+},\\
\p_{y}\Gamma|_{y=0}=0,\,\,\,
\Gamma|_{t=t_{0}}=\delta(x-x_{0},y-y_{0}).
\end{cases}
\end{equation}

%Recall in the whole space case, the fundamental solution satisfies an ``invariance property" under the full Galilean transformations (see e.g. \cite{BC96b,PP04}). This property implies a single--variable structure of the fundamental solution and an explicit analytic expression for the fundamental solution. In our setting with nontrivial boundary conditions, it seems to be difficult to obtain an explicit formula or even a simplified representation such as certain single--variable structure for the fundamental solution, which is one of the  primary difficulties of analyzing the fundamental solution quantitatively.

 For $z,w$ in $ \mathbb{H}:=\R_{t}\times \R_{x}\times \R_{y}^{+}$, we introduce the following geometric notations, adapted to the scaling property of $\mathcal{L}_0$. 
\begin{itemize}
\item[(1)] 
Dilation: for $z=(t,x,y)\in\mathbb{H}$, and $\lambda>0$, define
\begin{equation}
D(\lambda)z:=\left(\frac{t}{\lambda^{2}},\frac{x}{\lambda^{3}},\frac{y}{\lambda}\right)\in\mathbb{H}.
\end{equation}
\item[(2)] 
Distance: for $z=(t,x,y)\in\mathbb{H}, w=(t_{0},x_{0},y_{0})\in\mathbb{H}$ and $z\neq w$, let $\|z-w\|:=\rho$ be the unique positive root of 
\begin{equation}
\frac{(t-t_{0})^{2}}{\rho^{4}}+\frac{(x-x_{0})^{2}}{\rho^{6}}+\frac{(y-y_{0})^{2}}{\rho^{2}}=1.
\end{equation}

\end{itemize}

\begin{remark} One can check that the metric norm $\|\cdot\|$ satisfies the triangle inequality, using the inequality that for positive numbers $a, b, \alpha, \beta$ and $\gamma\ge2$,
\begin{equation}\label{eq:dist}
    \frac{(\alpha+\beta)^2}{(a+b)^\gamma}\leq \frac{a}{a+b}\frac{\alpha^2}{a^\gamma}+\frac{b}{a+b}\frac{\beta^2}{b^\gamma}. 
\end{equation}
Furthermore, the Lebesgue measure of the ball with radius $r$ is given by 
\begin{equation}\label{volume}
\left|\left\{z\in\mathbb{H}, \|z\|<r\right\}\right|\sim r^{6}.
\end{equation}
In particular, we can see that $\mathbb{H}$ equipped with the metric $d(z,w)=\|z-w\|$ and the standard Lebesgue measure is a \emph{homogeneous space}; see Section 1 in \cite{BC96a}.
\end{remark}

Note that $\Gamma$ is non--negative for all $t>t_{0}$ by the maximum principle. We shall also use the following conservation of mass property
\begin{equation}\label{consermass}
\iint_{\R\times\R^{+}}\Gamma(t,x,y;t_{0},x_{0},y_{0})\,dx dy\equiv 1,\ \forall t>t_{0},\ x_{0}\in\R,\ y_{0}>0.
\end{equation}

\begin{comment}
Here, the initial condition in \eqref{eqfundamental} means
\begin{equation}
\Gamma(t,x,y;t_{0},x_{0},y_{0})\rightharpoonup_{\text{weak--star}} \delta(x-x_{0},y-y_{0}),\ \text{as}\ t\to t_{0}+.
\end{equation}
\end{comment}

\subsection{General properties of the fundamental solution}
We begin with some basic symmetry properties of the fundamental solution $\Gamma(z;w)$, which are consequences of the invariance of $\mathcal{L}_{0}$ under the dilation $D(\lambda)$ and translation in $(t,x)$.
\begin{proposition}\label{proper}
For $\lambda>0$ and $\tau,\xi\in\mathbb{R}$, the following identities hold:
\begin{align}
&\Gamma(z;w)=\lambda^{-4}\Gamma(D(\lambda)z, D(\lambda)w).\\
&\Gamma(z;w)=\Gamma\left(z-(\tau,\xi,0);w-(\tau,\xi,0)\right).
\end{align}
\end{proposition}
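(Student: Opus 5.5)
The plan is to derive both identities from the uniqueness of the fundamental solution to \eqref{eqfundamental}, within the class of nonnegative solutions with the mass property \eqref{consermass} or suitable growth bounds, once we check that the two transformed kernels solve the same problem. Fix the pole $w=(t_0,x_0,y_0)$.

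\textbf{Translation identity.} Set $\widetilde\Gamma(z):=\Gamma(z-(\tau,\xi,0);w-(\tau,\xi,0))$. The coefficients of $\mathcal{L}_0$ depend only on $y$, and the shift leaves $y$ unchanged, so $\mathcal{L}_0$ commutes with translations in $(t,x)$. Hence:
\begin{itemize}
\item $\mathcal{L}_0\widetilde\Gamma=0$ for $t>t_0$;
\item the Neumann condition at $y=0$ is preserved;
\item at $t=t_0$ the initial datum becomes $\delta(x-\xi-(x_0-\xi),\,y-y_0)=\delta(x-x_0,y-y_0)$.
\end{itemize}
Uniqueness then gives $\widetilde\Gamma=\Gamma(\cdot;w)$.

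\textbf{Dilation identity.} Set $\widetilde\Gamma(z):=\lambda^{-4}\Gamma(D(\lambda)z;D(\lambda)w)$, and write $(t',x',y')=(t/\lambda^2,x/\lambda^3,y/\lambda)$. By the chain rule,
\[
\partial_t=\lambda^{-2}\partial_{t'},\qquad y\partial_x=\lambda y'\cdot\lambda^{-3}\partial_{x'}=\lambda^{-2}y'\partial_{x'},\qquad \partial_y^2=\lambda^{-2}\partial_{y'}^2 .
\]
Therefore $\mathcal{L}_0\widetilde\Gamma=\lambda^{-6}(\mathcal{L}_0\Gamma)(D(\lambda)z;D(\lambda)w)=0$ for $t>t_0$, since $t>t_0$ is equivalent to $t'>t_0'$. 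The Neumann condition is preserved because $\partial_y=\lambda^{-1}\partial_{y'}$ and $y=0$ corresponds to $y'=0$. For the initial datum, the map $(x,y)\mapsto(x/\lambda^3,y/\lambda)$ has Jacobian $\lambda^{-4}$, so $\delta(x'-x_0',y'-y_0')=\lambda^{4}\delta(x-x_0,y-y_0)$, and the prefactor $\lambda^{-4}$ cancels this factor exactly. Equivalently, one checks that $\widetilde\Gamma$ still has total mass one by the same change of variables, which is consistent with \eqref{consermass}. Uniqueness again gives $\widetilde\Gamma=\Gamma(\cdot;w)$.

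\textbf{Main obstacle.} The only nontrivial point is the uniqueness statement invoked above, namely that the fundamental solution is uniquely determined by \eqref{eqfundamental}, given nonnegativity and finite mass. I would take this as implicit in the construction of $\Gamma$. Alternatively, one can avoid it by working with the Fourier representation \eqref{ftxsection1}: that problem is a well-posed heat equation in $y$ with a potential, and the scaling and translation covariance can be checked directly on $\widehat\Gamma$.
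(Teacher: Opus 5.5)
Your proposal is correct and follows the paper's route. The paper gives no separate proof: it presents both identities as direct consequences of the invariance of $\mathcal{L}_0$, the Neumann condition and the normalized initial datum under $D(\lambda)$ and $(t,x)$-translations, and it leaves uniqueness of the fundamental solution implicit. Your chain-rule computation fills in those omitted details, including the factor $\lambda^{4}$ from the delta function that cancels the prefactor $\lambda^{-4}$.
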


We record the following duality property for the fundamental solution $\Gamma(z;w)$. 

\begin{lemma}\label{lm:h2}
Assume that $z=(t,x,y)\in\mathbb{H},\, w=(t_0,x_0,y_0)\in \mathbb{H}$, $z\neq w$. Then we have the following conservation of mass in the dual variable
\begin{equation}\label{dualmassconser}
\iint_{\R\times\R^{+}}\Gamma(t,x,y;t_{0},x_{0},y_{0})\,dx_{0}dy_{0}\equiv 1,\ \forall\ t>t_{0},x\in\R,y\in\R^{+}.
\end{equation}

Moreover, in the dual variable, $\Gamma(z;w)$ satisfies the equation
\begin{equation}\label{dualeq}
\mathcal{L}_{0}^{*}\Gamma(z;w):=-(\p_{t_{0}}+y_{0}\p_{x_{0}}+\p_{y_{0}}^{2})\Gamma(t,x,y;t_{0},x_{0},y_{0})\equiv 0,\ \text{for}\ t_{0}<t.
\end{equation}
\end{lemma}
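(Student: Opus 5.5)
The plan is to prove both statements of Lemma \ref{lm:h2} through the adjoint problem. For fixed $z=(t,x,y)$, view $\Gamma(z;\cdot)$ as a function of the pole $w=(t_0,x_0,y_0)$ with $t_0<t$. Introduce the backward problem for the formal adjoint of $\mathcal{L}_0$ with the Neumann condition,
\begin{equation*}
-(\p_{s}+y'\p_{x'}+\p_{y'}^{2})\Gamma^{*}=0,\quad s<t,\qquad \p_{y'}\Gamma^{*}|_{y'=0}=0,\qquad \Gamma^{*}|_{s=t}=\delta(x'-x,y'-y).
\end{equation*}
Reversing time $s\mapsto -s$ and reflecting $x'\mapsto -x'$ turns this into a forward Kolmogorov problem of exactly the form \eqref{eqfundamental}. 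So $\Gamma^*$ exists, is nonnegative, and has the same regularity as $\Gamma$ away from its pole.

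\textbf{Step 1: identify $\Gamma^*$ with $\Gamma$.} For $t_0<s<t$, consider
\begin{equation*}
I(s):=\iint \Gamma(s,x',y';t_0,x_0,y_0)\,\Gamma^{*}(s,x',y';t,x,y)\,dx'dy'.
\end{equation*}
Differentiate in $s$ and use both equations. Integrating by parts in $x'$ makes the transport terms cancel; the boundary terms vanish by decay in $x'$. Integrating by parts twice in $y'$ cancels the diffusion terms. The boundary terms at $y'=0$ are $\Gamma\,\p_{y'}\Gamma^*-\Gamma^*\p_{y'}\Gamma$, and both vanish by the Neumann conditions. Hence $I$ is constant. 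Letting $s\to t^-$ gives $I=\Gamma(z;w)$, and letting $s\to t_0^+$ gives $I=\Gamma^*(t_0,x_0,y_0;t,x,y)$.

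\textbf{Step 2: deduce the two claims.} Since $\Gamma^*$ solves the adjoint equation in its first variables, \eqref{dualeq} follows. For \eqref{dualmassconser}, note that $\iint\Gamma\,dx_0dy_0=\iint \Gamma^*(t_0,\cdot;z)\,dx_0dy_0$. This is the total mass of a solution of the time-reversed Kolmogorov problem with Neumann boundary condition, so it equals $1$ by \eqref{consermass} applied after the transformation above. One can also see this directly: the derivative of the mass in $t_0$ is $\iint(y_0\p_{x_0}+\p_{y_0}^2)\Gamma^*$, which vanishes by integrating in $x_0$ and by the Neumann condition.

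\textbf{Main obstacle.} The delicate step is justifying the integrations by parts and the two limits in Step 1. This requires decay of $\Gamma$ and its derivatives as $|x'|,y'\to\infty$, uniformly for $s$ in compact subintervals, and weak-$*$ convergence to the delta at both endpoints. I would obtain the decay by comparing with the explicit whole-space kernel via its even reflection in $y$, using the maximum principle. I would handle the limits by testing against smooth compactly supported functions, using the smoothness of $\Gamma$ away from its pole and the continuity of $\Gamma^*$ near $(t,x,y)$.
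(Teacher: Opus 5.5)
Your argument is correct, but it takes a different route from the paper. The paper argues by duality against test functions. For $\varphi\in C_c^\infty(\overline{\mathbb{H}})$ with $\partial_y\varphi|_{y=0}=0$, it sets $h=(-\partial_t-y\partial_x-\partial_y^2)\varphi$ and writes $\varphi(w)$ in two ways: as $\int\Gamma(z;w)h(z)\,dz$, which is the defining property of $\Gamma$, and as $\int\Gamma^*(w;z)h(z)\,dz$, which is the solution formula for the backward adjoint Neumann problem. Comparing the two gives $\Gamma(z;w)=\Gamma^*(w;z)$, with the existence and mass conservation of $\Gamma^*$ taken for granted.

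You instead construct $\Gamma^*$ explicitly as $\Gamma^*(s,x',y';t,x,y)=\Gamma(-s,-x',y';-t,-x,y)$. Its existence, positivity, regularity and unit mass are then inherited from $\Gamma$ and \eqref{consermass}. You identify the two kernels through the conserved pairing $I(s)$ rather than by varying a test function. Your version is more self-contained. The paper tacitly needs that functions $h$ of this form determine a kernel, and that the backward problem is uniquely solvable in a class containing $\varphi$. Your route also exposes the symmetry $\Gamma(\tau,\xi,y;0,0,y_0)=\Gamma(\tau,\xi,y_0;0,0,y)$. The paper's version is shorter because it absorbs the analytic justification into the weak definition of the fundamental solution.

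One correction concerns your plan for the technical step. Even reflection in $y$ is not a usable comparison device here, because $y\partial_x$ is odd under $y\mapsto -y$. The image sum $\widetilde\Gamma(z;w)+\widetilde\Gamma(z;\bar w)$, with $\bar w=(t_0,x_0,-y_0)$, does solve $\mathcal{L}_0=0$ in $\{y>0\}$. However, its $\partial_y$ at $y=0$ is positive for $x\gg1$ and negative for $x\ll-1$, so the maximum principle gives no one-sided bound.

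You do not actually need decay. By one-sided transport (the observation at the start of subsection \ref{subsec2.3}), $\Gamma(s,\cdot;w)$ vanishes for $x'<x_0$ and $\Gamma^*(s,\cdot;z)$ vanishes for $x'>x$. So $\Gamma\Gamma^*$ is supported in $x_0\le x'\le x$, and the $x'$-integration by parts produces no boundary terms. In $y'$, insert a cutoff $\psi_R(y')$. After one integration by parts, where the Neumann conditions remove the $y'=0$ term, the error is at most $R^{-1}\big(\sup|\partial_{y'}\Gamma|\,\|\Gamma^*\|_{L^1}+\|\Gamma\|_{L^1}\sup|\partial_{y'}\Gamma^*|\big)$. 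This is finite for $s$ in compact subintervals of $(t_0,t)$ by Propositions \ref{ptwise} and \ref{highptwise} together with \eqref{consermass}. Letting $R\to\infty$ shows that $I$ is constant.
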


\begin{proof}
  We present the (well-known) argument for completeness. For any $\varphi \in C_c^\infty(\overline{\mathbb{H}})$ with $\partial_y\varphi|_{y=0}=0$, we have 
  \begin{equation}\label{dualeq1}
      \begin{split}
          \varphi(t_0,x_0,y_0)=&\int_{\mathbb{H}}(\partial_t+y\partial_x-\partial_y^2)\Gamma(z;w) \varphi(t,x,y) dz\\
          =& \int_{\mathbb{H}} \Gamma(z;w)(-\partial_t-y\partial_x-\partial_y^2)\varphi(t,x,y)dz.
      \end{split}
  \end{equation}
  Define for $(t,x,y)\in\mathbb{H}$,
  \begin{equation}\label{dualeq2}
    h(t,x,y):=(-\partial_t-y\partial_x-\partial_y^2)\varphi(t,x,y).
  \end{equation}
  Define $\Gamma^\ast(t_0,x_0,y_0;t,x,y)$ as the fundamental solution to $-\partial_{t_0}-y_0\partial_{x_0}-\partial_{y_0}^2$ on $\mathbb{H}$ with zero Neumann boundary condition, i.e.,
  \begin{equation}\label{dualeq3}
      \begin{split}
          (-\partial_{t_0}-y_0\partial_{x_0}-\partial_{y_0}^2)\Gamma^\ast(t_0,x_0,y_0;t,x,y)=&\,\delta(t_0-t,x_0-x,y_0-y),\\
          \partial_{y_0}\Gamma^\ast(t_0,x_0,y_0;t,x,y)|_{y_0=0}=&\,0.
      \end{split}
  \end{equation}
  It follows from the identity \eqref{dualeq2} that 
  \begin{equation}\label{dualeq3.1}
      \varphi(t_0,x_0,y_0)=\int_{\mathbb{H}} \Gamma^\ast(t_0,x_0,y_0;t,x,y)h(t,x,y)\,dtdxdy.
  \end{equation}
  Comparing \eqref{dualeq1} and \eqref{dualeq3.1}, we conclude that
  \begin{equation}\label{dualeq4}
      \Gamma(t,x,y;t_0,x_0,y_0)=\Gamma^\ast(t_0,x_0,y_0;t,x,y). 
  \end{equation}
  The desired conclusions follow from \eqref{dualeq4} and \eqref{dualeq3}.
\end{proof}

\subsection{Regularity estimates for the fundamental solution}\label{subsec2.2}
We now turn to the pointwise estimates of $\Gamma$. Assume that $z=(t,x,y)\in \mathbb{H}, \,\,w=(t_{0},x_{0},y_{0})\in\mathbb{H}$ and $z\neq w$. Let $\widetilde{\Gamma}(z,w)$ be the fundamental solution to $\mathcal{L
}_0$ in the whole space. $\widetilde{\Gamma}(z,w)$ admits the following explicit formula
\begin{equation}\label{wholespace}
\widetilde{\Gamma}(z,w)=
\begin{cases}
\frac{\sqrt{3}}{2\pi(t-t_{0})^{2}}\exp\left\{-\frac{(y-y_{0})^{2}}{4(t-t_{0})}-\frac{3}{4(t-t_{0})^{3}}(2(x-x_{0})-(t-t_{0})(y+y_{0}))^{2}\right\},\ &t> t_{0},\\
0,\quad &t<t_{0}.
\end{cases}
\end{equation}
\begin{proposition}[\bf Pointwise Bounds]
\label{ptwise}
Assume that $z=(t,x,y)\in \mathbb{H}, \,\,w=(t_{0},x_{0},y_{0})\in\mathbb{H}$ and $z\neq w$. Then $\Gamma(z;w)$ can be decomposed as
\begin{equation}\label{greendecom}
\Gamma(z;w)=\widetilde{\Gamma}(z,w)+y_{0}^{-4}V\Big(\frac{t-t_{0}}{y_{0}^{2}},\frac{x-x_{0}}{y_{0}^{3}},\frac{y}{y_{0}}\Big)
\end{equation}
for some smooth function $V(t,x,y)\in C^\infty\big(\overline{\mathbb{H}}\big)$. Moreover, we have the large scale pointwise bounds
\begin{equation}\label{potwise}
|\Gamma(z;w)|\lesssim \|z-w\|^{-4}, \quad {\rm for}\,\, y_{0}\leq 10\|z-w\|.
\end{equation}
\end{proposition}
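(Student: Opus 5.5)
By the translation invariance and dilation identities of Proposition \ref{proper}, I would reduce to a pole of the form $w=(0,0,y_0)$ and then to $y_0=1$. I would set
\[
V(t,x,y):=\Gamma\big(t,x,y;0,0,1\big)-\widetilde{\Gamma}\big((t,x,y),(0,0,1)\big).
\]
The explicit formula \eqref{wholespace} shows that $\widetilde\Gamma$ obeys the same scaling $\widetilde\Gamma(z,w)=\lambda^{-4}\widetilde\Gamma(D(\lambda)z,D(\lambda)w)$. Taking $\lambda=y_0$ then gives \eqref{greendecom} directly from the definition of $V$. So the content of the decomposition is that $V\in C^\infty(\overline{\mathbb H})$.

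To prove smoothness of $V$, I would observe the following. $V$ vanishes for $t<0$. It solves $\mathcal L_0V=0$ in $\mathbb H$ with zero initial data, since both $\Gamma$ and $\widetilde\Gamma$ carry the same $\delta$ at the pole $(0,0,1)$. Its inhomogeneous Neumann datum is $\partial_yV|_{y=0}=-\partial_y\widetilde\Gamma|_{y=0}$. The pole sits at distance $1$ from $\{y=0\}$, so this datum is smooth in $(t,x)$, Gaussian-decaying, and vanishes to infinite order as $t\to0^+$. I would subtract a lift $g=y\,\chi(y)\,\partial_y\widetilde\Gamma(t,x,0)$, with $\chi$ a cutoff equal to $1$ near $0$ and supported in $y<1/2$. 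Then $\widetilde V:=V+g$ solves $\mathcal L_0\widetilde V=f$ with zero Neumann condition, where $f$ is smooth, rapidly decaying and vanishes to infinite order at $t=0$.

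For this problem I would run energy estimates with polynomial weights in $y$ to handle the transport term. The tangential derivatives $\partial_t$ and $\partial_x$ commute with $\mathcal L_0$ and preserve the Neumann condition, so they give $\partial_t^a\partial_x^b\widetilde V$ and $\partial_y\partial_t^a\partial_x^b\widetilde V$ in $L^2$ for all $a,b$. Normal derivatives are then recovered from the equation via $\partial_y^2\widetilde V=\partial_t\widetilde V+y\partial_x\widetilde V-f$, inductively in the number of $y$-derivatives. Local Sobolev embedding then gives $V\in C^\infty(\overline{\mathbb H})$.

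For the bound \eqref{potwise}, set $\rho=\|z-w\|$ and apply the dilation with $\lambda=\rho$. This reduces to proving $\Gamma(z';w')\lesssim1$ uniformly over $\|z'-w'\|=1$ with $y_0'\le10$. Compactness in $y_0'$ fails as $y_0'\to0$, since the $V$-part is singularly rescaled there, so the proof must use only $\Gamma\ge0$ and mass conservation \eqref{consermass}, \eqref{dualmassconser}. I would split into two cases.

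\emph{Case $t'-t_0'\ge c_0$.} Here I would prove the on-diagonal bound $\sup_{x,y}\Gamma(t,x,y;w)\lesssim(t-t_0)^{-2}$. I would combine the $L^1_{x,y}$ mass bound with a Nash-type inequality in $(x,y)$ that exploits the hypoelliptic gain in $x$, namely the enhanced-dissipation/Fourier-in-$x$ argument sketched in the introduction. This gives an $L^1\to L^2$ bound, which duality (Lemma \ref{lm:h2}) and the reproducing formula upgrade to $L^1\to L^\infty$.

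\emph{Case $t'-t_0'<c_0$.} Then $|x'-x_0'|$ or $|y'-y_0'|$ is $\gtrsim1$, so a box $Q$ of size $\sim c$ around $z'$ stays away from the pole. On $Q$, $\Gamma$ solves $\mathcal L_0\Gamma=0$ with zero Neumann data, and its $L^1(Q)$ norm is $\lesssim1$ by \eqref{consermass}. I would then apply a local up-to-boundary estimate $\|u\|_{L^\infty(Q/2)}\lesssim\|u\|_{L^1(Q)}$ for such solutions. This estimate comes from the cut-off version of the tangential energy and hypoelliptic estimates above; cutoffs in $(t,x,y)$ produce only lower-order commutators, since $\partial_x$ commutes with $y$. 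To pass from $L^1$ to $L^2$ I would use the same Nash argument.

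The main obstacle is this local $L^1\to L^\infty$ regularity up to the boundary, uniform in the pole position. Even reflection is unavailable because $y\partial_x$ is odd under $y\mapsto-y$. So the hypoelliptic gain in $x$ near $y=0$ has to be extracted by hand through the Fourier/enhanced-dissipation route, and that is where I would concentrate the effort.
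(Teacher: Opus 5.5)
Your derivation of \eqref{greendecom} and of the smoothness of $V$ follows the paper: you lift the Neumann datum $-\partial_y\widetilde\Gamma|_{y=0}$, commute $\partial_t,\partial_x$ through $\mathcal L_0$, and recover normal derivatives from the equation. The paper uses the maximum principle where you use energy estimates, which is immaterial. Your case $t'-t_0'\ge c_0$ is also sound, and it is a cleaner alternative to the paper's Subcase 2.2. Nash plus enhanced dissipation give $\|\Gamma(s,\cdot;w)\|_{L^2_{x,y}}\lesssim (s-t_0)^{-1}$ uniformly in $w$. The same argument for the dual problem, using \eqref{dualmassconser} and Lemma \ref{lm:h2}, gives the same bound in $(x_0,y_0)$. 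Cauchy--Schwarz in the reproducing formula then yields $\Gamma(z;w)\lesssim (t-t_0)^{-2}$, with no derivative bounds or Sobolev embedding needed.

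The gap is the case $t'-t_0'<c_0$, which carries all the difficulty of the proposition. You reduce it to a local up-to-boundary estimate $\|u\|_{L^\infty(Q/2)}\lesssim\|u\|_{L^1(Q)}$, but that estimate is not available at this stage: in the paper, Lemma \ref{localbd} is a consequence of this proposition via Proposition \ref{operatorbd}. The one concrete step you give for it, passing from $L^1$ to $L^2$ by the same Nash argument, fails as stated.
\begin{itemize}
\item The global bound $\|\Gamma(s)\|_{L^2_{x,y}}\lesssim (s-t_0)^{-1}$ is not square integrable at $s=t_0$. In this case the time window of $Q$ reaches down to, or arbitrarily close to, $t_0$, so you get no uniform $L^2(Q)$ bound.
\item A localized Nash or energy argument produces cutoff terms $y\Gamma\,\partial_x\chi$, $\Gamma\,\partial_y^2\chi$, $\partial_y\Gamma\,\partial_y\chi$. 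These require exactly the $L^2$ control you are after, on a larger set. In particular, $\int\|\partial_y\widehat\Gamma(s)\|_{L^2_y}^2\,ds$ diverges at $s=t_0$.
\end{itemize}
The missing idea is the paper's Subcase 2.3.
\begin{itemize}
\item Cut off in $x$ only, with support away from $x_0$. Then $\Gamma\chi(x)$ has zero initial data and forcing $y\chi'(x)\Gamma$, with no $\partial_y\Gamma$.
\item Bound this forcing not by $\|\Gamma(s)\|_{L^2_{x,y}}$ but by the weaker global quantity $\sup_\xi\|\widehat\Gamma(s,\xi,\cdot)\|_{L^2_y}\lesssim (s-t_0)^{-1/4}$ from \eqref{ftl2bd}. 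Its singularity is integrable in time.
\item Absorb the factor $y$ with the weight $\phi(y)\sim(1+y)^{-1}$.
\item Duhamel and Proposition \ref{enhanced} then give $(1+|\xi|)^{-4/3}$ decay. Nested cutoffs upgrade this to arbitrarily many $x$-derivatives, and the one-dimensional heat equation at fixed $x$ gives the $(t,y)$-regularity.
\end{itemize}
Your case split is also heavier than it needs to be. For $y_0'\ge 1/100$ the decomposition with $V$ locally bounded already gives \eqref{potwise} (the paper's Case 1). For $y_0'\le 1/100$, separation in $y$ puts $z'$ at height $\gtrsim 1/2$, where interior hypoelliptic estimates apply. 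So only the $x$-separated, near-boundary configuration needs the argument above.
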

\begin{proof} By translation invariance (see Proposition \ref{proper}), we can assume without loss of generality that $z=(t,x,y)$ and $w=(0,0,y_{0})$.

\begin{comment}
Indeed, we can view $\Gamma$ as a \emph{Green function}. i.e.
\begin{equation}
\begin{split}
\Gamma(z;w)=&\text{Fundamental solution in the whole space}\ \R_{t}\times \R_{x}\times \R_{y}\\
 &\quad\quad + \text{correction due to the boundary condition}.
 \end{split}
\end{equation}
\end{comment}

We can write
\begin{equation}\label{decom}
\begin{split}
\Gamma(z;w)&=y_{0}^{-4}\Gamma(D(y_{0})z; 0,0,1)\\
&=y_{0}^{-4}\big(\widetilde{\Gamma}(D(y_{0})z; 0,0,1)+(\Gamma-\widetilde{\Gamma})(D(y_{0})z; 0,0,1)\big)\\
&=\widetilde{\Gamma}(z,w)+y_{0}^{-4}(\Gamma-\widetilde{\Gamma})(D(y_{0})z; 0,0,1).
\end{split}
\end{equation}

Let 
$$V(t,x,y)=(\Gamma-\widetilde{\Gamma})(t,x,y;0,0,1).$$ 
By \eqref{eqfundamental} and \eqref{wholespace}, $V$ satisfies the  equation 
\begin{equation}\label{Vequa}
\begin{cases}
\p_{t}V+y\p_{x}V-\p_{y}^{2}V=0,\quad t>0,x\in\mathbb{R},y>0;\\
\p_{y}V|_{y=0}=\displaystyle\frac{\sqrt{3}}{2\pi t^{2}}\Big(\frac{1}{t}-\frac{3x}{t^{2}}\Big)\exp\left\{-\frac{1}{4t}-\frac{3}{t^{3}}(x-\frac{t}{2})^{2}\right\}=:g(t,x),\\
V|_{t=0}=0.
\end{cases}
\end{equation}

\textbf{Step 1: smoothness of $V$.}
We first prove that the solution $V(t,x,y)$ in \eqref{Vequa} and all its derivatives are uniformly bounded for $t\ge0, x\in\R, y\ge0$. Decompose $V(t,x,y)$ as
\begin{equation}
V(t,x,y)=\mathcal{V}(t,x,y)+g(t,x)\sqrt{1+t}\cdot\chi\Big(\frac{y}{\sqrt{1+t}}\Big),
\end{equation}
where $\chi(z)\in C_{c}^{\infty}([0,1))$ and $\chi'(0)=1$. By the expression of $g(t,x)$ in \eqref{Vequa} and simple calculations, we have
\begin{equation*}
\left|\nabla_{(t,x,y)}^{k}\Big(g(t,x)\sqrt{1+t}\cdot\chi\Big(\frac{y}{\sqrt{1+t}}\Big)\Big)\right|\lesssim_{k} t^{-2}e^{-\frac{1}{10t}},\ \forall k\geq 0.
\end{equation*}
Thus we only need to show $\mathcal{V}$ and its derivatives are bounded.

Thanks to \eqref{Vequa} and $g|_{t=0}\equiv 0$, we see that $\mathcal{V}(t,x,y)$ satisfies the following equation
\begin{equation}\label{Vequ}
\begin{cases}
\p_{t}\mathcal{V}+y\p_{x}\mathcal{V}-\p_{y}^{2}\mathcal{V}=\mathcal{G}(t,x,y),\quad t>0,x\in\mathbb{R},y>0;\\
\p_{y}\mathcal{V}|_{y=0}=0,\mathcal{V}|_{t=0}=0,\\
\end{cases}
\end{equation}
where
\begin{equation}
\begin{split}
\mathcal{G}(t,x,y)=&\,\frac{1}{2}g(t,x)\frac{y}{1+t}\chi'\Big(\frac{y}{\sqrt{1+t}}\Big)+g(t,x)(1+t)^{-\frac{1}{2}}\chi''\Big(\frac{y}{\sqrt{1+t}}\Big)\\
&-\Big[g_{t}(t,x)\sqrt{1+t}+\frac{1}{2}g(t,x)(1+t)^{-\frac{1}{2}}+yg_{x}(t,x)\sqrt{1+t}\Big]\chi\Big(\frac{y}{\sqrt{1+t}}\Big).
\end{split}
\end{equation}
By the standard estimate for the transport--diffusion equation, we have
\begin{equation}\label{boundnessofv}
|\mathcal{V}(t,x,y)|\leq \int_{0}^{t}\|\mathcal{G}(s)\|_{L^{\infty}_{x,y}}ds\lesssim \min\{t,1\}.
\end{equation}
We can also obtain the bounds for the higher--order derivatives of $V$ using the same approach, by taking $t,x$ derivatives in \eqref{Vequ} and using \eqref{boundnessofv} to control the $t,x$ derivatives, and then using equation \eqref{Vequ} to control the $y$ derivatives.

\textbf{Step 2: Proof of \eqref{potwise}}.  By the translation invariance of $\Gamma$, we can assume $w=(0,0,y_{0})$ for some $y_{0}>0$; furthermore by the dilation invariance of $\Gamma$, we have
\begin{equation*}
\Gamma(z;w)=\|z-w\|^{-4}\Gamma(D(\|z-w\|)z, D(\|z-w\|)w).
\end{equation*}
Hence, we only need to show that
\begin{equation}\label{goal}
\sup_{\|z-w\|=1}|\Gamma(z;w)|\lesssim 1,\ \text{uniformly in}\  y_{0}\in(0,10].
\end{equation}

We consider two separate cases.

\textit{Case 1: $10\geq y_{0}>\frac{1}{100}$.}  \eqref{decom} implies that
\begin{equation}
\Gamma(t,x,y;0,0,y_{0})=\widetilde{\Gamma}(t,x,y;0,0,y_{0})+y_{0}^{-4}V\left(D(y_{0})z\right).
\end{equation}

By \eqref{wholespace} and \eqref{boundnessofv}, we obtain that
\begin{equation}\label{wholespacebd}
\sup_{\|z-w\|=1, y_{0}\leq 10}|\widetilde{\Gamma}(z,w)|\lesssim 1.
\end{equation}

Hence, \eqref{goal} follows from \eqref{boundnessofv} and \eqref{wholespacebd}.

\textit{Case 2: $y_{0}\leq \frac{1}{100}$.} This case is more complicated since the source is close to the boundary and the bounds on $V$ from the last step are not sufficient for our purposes. Notice that if $y_{0}\leq \frac{1}{100}$ and $\|z-(0,0,y_{0})\|=1$, then at least one of the following three cases must happen: 
(i) $(t,x,y)\in\Omega_{1}:=\left\{\frac{1}{2}\leq y\leq 2,\ 0\leq t\leq 1,\ |x|\leq 1\right\}$; (ii) $(t,x,y)\in\Omega_{2}:=\left\{\frac{1}{2}\leq t \leq 1,\ 0\leq y\leq 1,\ |x|\leq 1\right\}$;
(iii) $(t,x,y)\in\Omega_{3}:=\left\{\frac{1}{2}\leq|x|\leq 1,\ 0\leq y\leq 1, 0\leq t\leq 1\right\}$.

We distinguish three subcases depending on (i) - (iii). 
 
 \textit{Subcase 2.1: $(t,x,y)\in\Omega_{1}.$}
By the non-negativity and conservation of mass, we have  
\begin{equation}\label{l1conser}
\int_{\R\times\R^{+}}\Gamma(z;w)\,dxdy\equiv 1,\quad \text{for}\,\, t>0,y_{0}>0,
\end{equation}
Hence, in this case, the desired bound  \eqref{goal} follows from the uniform bound \eqref{l1conser} and the interior estimate for the hypoelliptic operator (Proposition 3.2 in \cite{H67}) since $y\ge1/2$.

\textit{Subcase 2.2: $(t,x,y)\in\Omega_{2}$.} Taking the  Fourier transform in the $x$--variable, and denoting
\begin{equation*}
\widehat{\Gamma}(t,\xi,y;0,0,y_{0})=\int_{-\infty}^{\infty}\Gamma(t,x,y;0,0,y_{0})e^{-2\pi i x\xi}dx,
\end{equation*}
 $\widehat{\Gamma}$ satisfies the equation for $y\ge0, t\ge0$,
\begin{equation}\label{ftx}
\begin{cases}
\p_{t}\widehat{\Gamma}+2\pi \mathrm{i} y\xi\widehat{\Gamma}-\p_{y}^{2}\hat{\Gamma}=0,\\
\widehat{\Gamma}|_{t=0}=\delta(y-y_{0}),\ \p_{y}\hat{\Gamma}|_{y=0}=0.
\end{cases}
\end{equation}
By \eqref{l1conser}, we have
\begin{equation}\label{l1conserft}
\sup_{t>0,\,\xi\in\mathbb{R}}\int_{0}^{\infty}|\widehat{\Gamma}(t,\xi,y;0,0,y_0)|\,dy\lesssim 1.
\end{equation}

Multiplying \eqref{ftx} by the complex conjugate of $\widehat{\Gamma}$, integrating in $y$, and taking the real part, we obtain the following energy inequality:
\begin{equation}\label{diffineq}
\frac{1}{2}\frac{d}{dt}\|\widehat{\Gamma}(t,\xi,y;w)\|^2_{L^{2}_{y}}+\|\p_{y}\widehat{\Gamma}(t,\xi,y;w)\|^2_{L^{2}_{y}}\leq 0.
\end{equation}
By Nash's inequality (see \cite{N58}) 
\begin{equation}
\int_{0}^{\infty}|F(y)|^{2}dy\lesssim \left(\int_{0}^{\infty}|\p_{y}F|^{2}dy\right)^{\frac{1}{3}}\left(\int_{0}^{\infty}|F(y)|dy\right)^{\frac{4}{3}},
\end{equation}
and \eqref{l1conserft}, we have
\begin{equation}\label{nash}
\int_{0}^{\infty}|\widehat{\Gamma}(t,\xi,y;w)|^{2}dy\lesssim \left(\int_{0}^{\infty}|\p_{y}\widehat{\Gamma}(t,\xi,y;w)|^{2}dy\right)^{\frac{1}{3}}.
\end{equation}

Combining \eqref{diffineq} and \eqref{nash}, we conclude that there exists a constant $C$ such that for all $\xi\in\mathbb{R}$ and $y_{0}>0$,
\begin{equation}\label{ftl2bd}
\|\widehat{\Gamma}(t,\xi,y;w)\|_{L^{2}_{y}}\leq  Ct^{-\frac{1}{4}}.
\end{equation}

\begin{comment}
Next, let $\widetilde{\Gamma}(t,\xi,y)=\hat{\Gamma}(t+\frac{1}{3},\xi,y)$, then we have
\begin{equation}\label{ftx2}
\begin{cases}
\p_{t}\widetilde{\Gamma}+2\pi \mathrm{i} y\xi\widetilde{\Gamma}-\p_{y}^{2}\widetilde{\Gamma}=0,\\
\widetilde{\Gamma}|_{t=0}=\hat{\Gamma}|_{t=\frac{1}{3}},\ \p_{y}\hat{\Gamma}|_{y=0}=0.
\end{cases}
\end{equation}
\end{comment}

By \eqref{ftl2bd} and enhanced dissipation estimates in Proposition \ref{enhanced}, we have for $t\ge 1/3,\, y_0>0$ and some $c>0$,
\begin{equation}\label{gerveyesti1}
\|\widehat{\Gamma}(t,\xi,y;w)\|_{L^{2}_{y}}\lesssim e^{-c|\xi|^{\frac{2}{3}}t}.
\end{equation}
By \eqref{gerveyesti1} and the standard energy estimate of the heat equation, denoting $D:=\{t\ge1/2, y\ge0\}$, we also have 
\begin{equation}\label{gerveyesti2}
\|\p_{t}\widehat{\Gamma}(t,\xi,y;w)(1+y)^{-1}\|_{L^2(D)}+\|\p_{y}^{2}\widehat{\Gamma}(t,\xi,y;w)(1+y)^{-1}\|_{L^2(D)}\lesssim e^{-c|\xi|^{\frac{2}{3}}}.
\end{equation}
The desired bound \eqref{goal} in this case then follows from \eqref{gerveyesti1}, \eqref{gerveyesti2}, Fourier inversion and Sobolev inequality. 

\textit{Subcase 2.3: $(t,x,y)\in\Omega_{3}$.} This is perhaps the most complicated case since the bound \eqref{ftl2bd} is not very effective for small $t>0$. To overcome this difficulty, let $\chi(x)$ be a smooth cutoff function in the $x$ variable such that
\begin{equation*}
\mathrm{supp}\,\chi\subset\{x:1/10\leq |x|\leq 3/2\},\ \chi(x)\equiv 1\ \text{in}\ \{x: 1/5\leq |x|\leq 5/4\}.
\end{equation*}
Let $\Gamma_{1}(t,x,y;0,0,y_{0})=\Gamma(t,x,y;0,0,y_{0})\chi(x)$. Then $\Gamma_{1}$ satisfies the following equation
\begin{equation}\label{gamma1eq}
\begin{cases}
\p_{t}\Gamma_{1}+y\p_{x}\Gamma_{1}-\p_{y}^{2}\Gamma_{1}=y\chi'(x)\Gamma,\\
\Gamma_{1}|_{t=0}=0,\ \p_{y}\Gamma_{1}|_{y=0}=0.
\end{cases}
\end{equation}
Taking the Fourier transform in $x$, we get that
\begin{equation}\label{gamma1fteq}
\begin{cases}
\p_{t}\widehat{\Gamma}_{1}+2\pi\mathrm{i}y\xi\widehat{\Gamma}_{1}-\p_{y}^{2}\widehat{\Gamma}_{1}=y\,\widehat{\chi'\Gamma},\\
\widehat{\Gamma}_{1}|_{t=0}=0,\ \p_{y}\widehat{\Gamma}_{1}|_{y=0}=0.
\end{cases}
\end{equation}
Let
\begin{equation}\label{eqphi}
\phi(y)=\frac{1}{1+y}-\frac{1}{2(1+y)^{2}}.
\end{equation}
Clearly, $\phi'(0)=0$ and $\phi(y)\sim (1+y)^{-1}$.

Multiplying \eqref{gamma1fteq} by the complex conjugate of 
$\widehat{\Gamma}_{1}\phi^{2}(y)$, integrating and taking the real part, we get the following energy identity
\begin{equation}\label{ftenergyid}
\begin{split}
&\frac{1}{2}\frac{d}{dt}\left(\int_{0}^{\infty}|\widehat{\Gamma}_{1}(t,\xi,y;w)|^{2}\phi^{2}(y)dy\right)+\int_{0}^{\infty}|\p_{y}\widehat{\Gamma}_{1}(t,\xi,y;w)|^{2}\phi^{2}(y)dy\\
&+2\mathrm{Re}\left(\int_{0}^{\infty}\p_{y}\widehat{\Gamma}_{1}(t,\xi,y;w)\cdot\overline{\widehat{\Gamma}_{1}}(t,\xi,y;w)\phi(y)\phi'(y)dy\right)\\
&=\mathrm{Re}\left(\int_{0}^{\infty}y\,\widehat{\chi'\Gamma}(t,\xi,y;w)\overline{\widehat{\Gamma}_{1}}(t,\xi,y;w)\phi^{2}(y)dy\right).
\end{split}
\end{equation}
Notice from \eqref{ftl2bd} that
\begin{equation}
\begin{split}
&\left|\int_{0}^{\infty}y\,\widehat{\chi'\Gamma}(t,\xi,y;w)\overline{\widehat{\Gamma}_{1}}(t,\xi,y;w)\phi^{2}(y)dy\right|\lesssim t^{-\frac{1}{4}} \|\widehat{\Gamma}_{1}(t,\xi,y;w)\phi(y)\|_{L^{2}_{y}}.
\end{split}
\end{equation}
%\\&\lesssim  \|\widehat{\Gamma}_{1}(t,\xi,y;w)\phi(y)\|_{L^{2}_{y}}\|\widehat{\Gamma}(t,\xi,y;w)\|_{L^{2}_{y}}
Therefore, by \eqref{ftenergyid} and Gronwall's inequality, for any $T>0$ we have
\begin{equation}\label{ftenergybd}
\sup_{t\in[0,T]}\left(\int_{0}^{\infty}|\widehat{\Gamma}_{1}(t,\xi,y;w)|^{2}\phi^{2}(y)\,dy\right)+\int_{0}^{T}\int_{0}^{\infty}|\p_{y}\widehat{\Gamma}_{1}(t,\xi,y;w)|^{2}\phi^{2}(y)\,dydt\lesssim_T1.
\end{equation}

Set $\Gamma^{*}=\widehat{\Gamma}_{1}(t,\xi,y;w)\phi(y)$. Then we have 
\begin{equation}\label{gammastareq}
\begin{cases}
\p_{t}\Gamma^{*}+2\pi \mathrm{i}y\xi\Gamma^{*}-\p_{y}^{2}\Gamma^{*}=y\phi(y)\widehat{\chi'\Gamma}-2\p_{y}\widehat{\Gamma}_{1}\phi'(y)-\widehat{\Gamma}_{1}\phi''(y),\\
\Gamma^{*}|_{t=0}=0,\ \p_{y}\Gamma^{*}|_{y=0}=0.
\end{cases}
\end{equation}
If $|\xi|\geq 1$, by Duhamel's principle and the enhanced dissipation estimate (Proposition \ref{enhanced}), we get that
\begin{equation}\label{eq:que1}
\|\Gamma^{*}(t,\xi,y;w)\|_{L^{2}_{y}}\lesssim \int_{0}^{t}e^{-c|\xi|^{\frac{2}{3}}(t-s)}\left\|y\phi(y)\widehat{\chi'\Gamma}-2\p_{y}\widehat{\Gamma}_{1}\phi'(y)-\widehat{\Gamma}_{1}\phi''(y)\right\|_{L^{2}_{y}}ds,
\end{equation}
for some $c>0$. 

Using the bounds \eqref{ftl2bd} and \eqref{ftenergybd}, together with the pointwise inequality $|\phi'|+|\phi''|\lesssim|\phi|$, we obtain that
\begin{equation}\label{gammaonebd}
\int_{0}^{T}\int_{0}^{\infty}|\widehat{\Gamma}_{1}(t,\xi,y;w)|^{2}\phi^{2}(y)\,dydt\lesssim_{T} (1+|\xi|)^{-\frac{4}{3}}.
\end{equation}

We next set $\Gamma_{2}=\Gamma(t,x,y;0,0,y_{0})\chi_{2}(x)$, where $\chi_{2}$ is a standard smooth cutoff function with
\begin{equation*}
\mathrm{supp}\,\chi_{2}\subset\{x:1/5\leq|x|\leq 5/4\},\ \chi_{2}(x)\equiv 1\ \text{in}\ \{1/4\leq|x|\leq 6/5\}.
\end{equation*}
Then $\Gamma_{2}$ satisfies
\begin{equation}\label{gamma2eq}
\begin{cases}
\p_{t}\Gamma_{2}+y\p_{x}\Gamma_{2}-\p_{y}^{2}\Gamma_{2}=y\chi_{2}'(x)\Gamma_{1},\\
\Gamma_{2}|_{t=0}=0,\ \p_{y}\Gamma_{2}|_{y=0}=0,
\end{cases}
\end{equation}
and correspondingly,
\begin{equation}\label{gamma2fteq}
\begin{cases}
\p_{t}\widehat{\Gamma}_{2}+2\pi\mathrm{i}y\xi\,\widehat{\Gamma}_{2}-\p_{y}^{2}\widehat{\Gamma}_{2}=y\,\widehat{\chi_{2}'\Gamma_{1}},\\
\widehat{\Gamma}_{2}|_{t=0}=0,\ \p_{y}\widehat{\Gamma}_{2}|_{y=0}=0.
\end{cases}
\end{equation}
By \eqref{gammaonebd} and the fact that $\widehat{\chi_{2}}\in\mathcal{S}_{\xi}(\R)$, we have
\begin{equation}\label{gammaonebddd}
\begin{split}
        &\int_{0}^{T}\int_{0}^{\infty}|\widehat{\chi_{2}'\Gamma_{1}}(t,\xi,y)|^{2}\phi^{2}(y)dydt\\
        &\quad\lesssim\int_{0}^{T}\int_{0}^{\infty}\left|\int_{-\infty}^{\infty}\lambda\cdot\widehat{\chi_{2}}(\lambda)\widehat{\Gamma_{1}}(t,\xi-\lambda,y)d\lambda\right|^{2}\phi^{2}(y)dydt\\
        &\quad\lesssim \|\lambda^{2}\widehat{\chi_{2}}(\lambda)\|_{L^{1}_{\lambda}}\cdot\int_{0}^{T}\int_{0}^{\infty}\int_{-\infty}^{\infty}|\widehat{\chi_{2}}(\lambda)|\cdot|\widehat{\Gamma_{1}}(t,\xi-\lambda,y)|^{2}\phi^{2}(y)d\lambda dydt\\
        &\quad\lesssim\int_{-\infty}^{\infty}|\widehat{\chi_{2}}(\lambda)|\cdot(1+|\xi-\lambda|)^{-\frac{4}{3}}d\lambda\lesssim (1+|\xi|)^{-\frac{4}{3}}.
\end{split}
\end{equation}
By the same argument as in the derivation of  \eqref{gammaonebd} and  using \eqref{gammaonebddd}, we obtain 
\begin{equation}\label{gammatwobd}
\int_{0}^{T}\int_{0}^{\infty}|\widehat{\Gamma}_{2}(t,\xi,y;w)|^{2}\phi^{4}(y)\,dydt\lesssim _{T}(1+|\xi|)^{-\frac{8}{3}}.
\end{equation}

By induction, for any $k\in\mathbb{N}^{+}$, we obtain that
\begin{equation}\label{gammakbd}
\int_{0}^{T}\int_{0}^{\infty}|\widehat{\Gamma}_{k}(t,\xi,y;w)|^{2}\phi^{2k}(y)\,dydt\lesssim _{k,T}(1+|\xi|)^{-\frac{4k}{3}},
\end{equation}
 where $\Gamma_{k}=\Gamma(t,x,y;0,0,y_{0})\chi_{k}(x)$, with
 \begin{equation}
 \mathrm{supp}\,\chi_{k}\subset\{x:\chi_{k-1}(x)=1\},\ \text{and}\ \chi_{k}(x)\equiv 1\ \text{in}\ \{2/5\leq |x|\leq 11/10\}\ \text{for all}\ k\in\mathbb{N}^{+},\,k\ge2.
 \end{equation}
 As a consequence, we have
 \begin{equation}\label{gammaphybd}
 \int_{0}^{T}\int_{0}^{\infty}|(\p_{x}^{k}\Gamma)(t,x,y;0,0,y_{0})|^{2}(1+|y|)^{-8k}dydt\lesssim_{k,T}1,\ \forall k\in\mathbb{N},\ \frac{1}{2}\leq|x|\leq 1.
 \end{equation}
 
 Finally, for fixed $\frac{1}{2}\leq |x|\leq 1$, $\Gamma=\Gamma(t,x,y;0,0,y_{0})$ satisfies the following one-dimensional inhomogeneous heat equation
 \begin{equation}
 \begin{cases}
 \p_{t}\Gamma-\p_{y}^{2}\Gamma=-y\p_{x}\Gamma,\ t> 0, y>0,\\
 \Gamma|_{t=0}=0,\ \p_{y}\Gamma|_{y=0}=0.
 \end{cases}
 \end{equation}
 By \eqref{gammaphybd} and properties of the heat equation, for any $\frac{1}{2}\leq|x|\leq 1$, we have
 \begin{equation}\label{gammaphysbd}
 \int_{0}^{T}\int_{0}^{1}\left(|\Gamma|^{2}+|\p_{t}\Gamma|^{2}+|\p_{y}^{2}\Gamma|^{2}\right)dydt\lesssim_{T} 1.
 \end{equation}
Therefore, the desired bound \eqref{goal} in this case follows from \eqref{gammaphybd}, \eqref{gammaphysbd} and Sobolev inequalities. This completes the proof of Proposition \ref{ptwise}.

\end{proof}

Similar arguments also imply the following lemma. 
\begin{lemma}\label{lm:h1}
Assume that $z=(t,x,y)\in\mathbb{H},\, w=(t_0,x_0,y_0)\in \mathbb{H}$, $z\neq w$. The following higher regularity estimates for $\Gamma$ hold for any $\alpha, \beta, \gamma\in \Z\cap[0,\infty)$ and $y_{0}\leq10 \|z-w\|$,
\begin{equation}\label{higherordertxy}
|\p_{t}^{\alpha}\p_{x}^{\beta}\p_{y}^{\gamma}\Gamma(z;w)|\lesssim_{\alpha,\beta,\gamma} \|z-w\|^{-4-(2\alpha+3\beta+\gamma)}.
\end{equation}
\end{lemma}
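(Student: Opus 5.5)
The plan is to follow the structure of the proof of Proposition \ref{ptwise} and upgrade every $L^\infty$ bound to a $C^k$ bound. By the translation invariance in $(t,x)$ and the dilation identity of Proposition \ref{proper}, applying $\p_t^\alpha\p_x^\beta\p_y^\gamma$ to $\Gamma(z;w)=\lambda^{-4}\Gamma(D(\lambda)z;D(\lambda)w)$ with $\lambda=\|z-w\|$ produces exactly the factor $\lambda^{-4-(2\alpha+3\beta+\gamma)}$. It therefore suffices to show
\begin{equation*}
\sup_{\|z-w\|=1}|\p_t^\alpha\p_x^\beta\p_y^\gamma\Gamma(z;w)|\lesssim_{\alpha,\beta,\gamma}1,\qquad w=(0,0,y_0),\ y_0\le 10.
\end{equation*}
As before, I split into Case 1 ($y_0>1/100$) and Case 2 ($y_0\le 1/100$, with subcases $\Omega_1,\Omega_2,\Omega_3$).

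In Case 1, I use the decomposition \eqref{greendecom}. Derivatives of $\widetilde\Gamma$ on $\{\|z-w\|=1\}$ are bounded directly from the explicit formula \eqref{wholespace}, since the Gaussian factor kills all negative powers of $t-t_0$. For the correction term, Step 1 of the proof of Proposition \ref{ptwise} already gives uniform bounds on all derivatives of $V$. Since $y_0\in(1/100,10]$, the rescaling factors $y_0^{-4-(2\alpha+3\beta+\gamma)}$ are harmless.

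In Subcase 2.1 ($y\ge 1/2$), I combine the $L^1$ bound \eqref{l1conser} with Hörmander's interior hypoelliptic estimates, which control all derivatives in terms of the $L^1$ norm on a slightly larger interior region. The only point to check is that this region stays at distance $\gtrsim 1$ from the pole, which can be arranged by shrinking the neighborhood.

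In Subcase 2.2, I multiply \eqref{gerveyesti1} by powers of $\xi$. The enhanced-dissipation decay $e^{-c|\xi|^{2/3}}$ absorbs any polynomial factor $|\xi|^{\beta}$, which handles $\p_x^\beta$. For $\p_t$ and $\p_y$ derivatives, I differentiate the equation \eqref{ftx} in $t$, or equivalently iterate \eqref{gerveyesti2}. I then trade $\p_y^2$ for $\p_t+2\pi\mathrm{i}y\xi$ through the equation, with the Neumann condition preserved for even numbers of $y$-derivatives and odd ones controlled by interpolation. This yields weighted $L^2$ bounds on all derivatives on $\{t\ge 1/2\}$, and Sobolev embedding finishes the subcase.

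In Subcase 2.3, the bound \eqref{gammaphybd} already controls every $\p_x^k\Gamma$ in weighted $L^2$. I then bootstrap in $t$ and $y$ using the heat equation $\p_t\Gamma-\p_y^2\Gamma=-y\p_x\Gamma$ with zero data at $t=0$ and the Neumann condition. Applying $\p_x^k$ and then parabolic regularity repeatedly gives $L^2$ bounds on $\p_t^\alpha\p_x^\beta\p_y^\gamma\Gamma$ in $\{1/2\le|x|\le1,\ 0\le y\le 1,\ 0\le t\le T\}$, with slightly nested cutoffs, and Sobolev embedding gives pointwise bounds.

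I expect this last subcase to be the main obstacle. Near $t=0$ the source $y\p_x\Gamma$ must be shown to vanish to high order, so that the $t$-derivatives remain bounded up to $t=0$. I would argue this from the fact that $\Gamma$ and all its $x$-derivatives are supported, or rapidly small, away from $|x|\ge 1/2$ for small $t$, by finite-speed-type localization: the region $|x|\ge 1/2$ with $t\to0$ is at unit anisotropic distance, so the cutoff arguments give rapid decay there. If that fails, I would instead rerun the Fourier iteration \eqref{gammakbd} for $\p_t^j\Gamma_k$ directly.
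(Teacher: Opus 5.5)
Your overall plan is the paper's: the paper proves Lemma \ref{lm:h1} only by saying that the argument of Proposition \ref{ptwise} carries over. Your scaling reduction, Case~1 and Subcases~2.1--2.2 are exactly that argument with derivatives added.

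One small correction in Subcase~2.2: $\partial_y^2\widehat{\Gamma}$ does \emph{not} inherit the Neumann condition, since $\partial_y^3\widehat{\Gamma}|_{y=0}=2\pi\mathrm{i}\xi\,\widehat{\Gamma}|_{y=0}$. You do not need it, because trading $\partial_y^2$ for $\partial_t+2\pi\mathrm{i}y\xi$ only requires energy estimates for the tangential derivatives $\partial_t^j\widehat{\Gamma}$, and those do satisfy it.

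The real issue is the one you flag in Subcase~2.3, and neither remedy you propose works as stated.
\begin{itemize}
\item There is no finite-speed localization for $\mathcal{L}_0$: by Proposition \ref{posi}, $\Gamma(t,x,y;0,0,y_0)>0$ for all $x>0$ and all $t>0$. Rapid smallness of $\Gamma$ on $\{|x|\ge 1/2,\ t\to0^+\}$ is true, but nothing in the paper provides it and it would need its own proof.
\item Rerunning \eqref{gammakbd} verbatim for $\partial_t^j\Gamma_k$ fails at the first step. The source $y\chi_1'\partial_t^j\Gamma$ requires control of $\partial_t^j\Gamma$ on $\mathrm{supp}\,\chi_1'$ near $t=0$, whereas the Nash bound \eqref{ftl2bd} controls only $\widehat{\Gamma}$ itself.
\end{itemize}

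The missing observation is one you already use implicitly in Subcase~2.1, when you apply interior hypoelliptic estimates at points with $t\approx 0$: extend $\Gamma$ by zero to $t<0$. The identity \eqref{dualeq1} then shows that $\mathcal{L}_0\Gamma=0$ holds in the sense of distributions, with the Neumann condition, on the whole region $\{|x|>1/10\}$, including across $t=0$. This is because the pole sits at $x=0$.

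Consequently the energy identities for $\widehat{\Gamma}_k$ and the fixed-$x$ heat equation $\partial_t\Gamma-\partial_y^2\Gamma=-y\partial_x\Gamma$ can be run on $t\in(-1,T]$ with zero data at $t=-1$. Since $\partial_t$ is tangential and preserves the Neumann condition, your bootstrap then goes through:
\begin{itemize}
\item start from the $x$-derivative bounds \eqref{gammaphybd};
\item apply maximal $L^2$ regularity to $\partial_t^j\partial_x^k\Gamma$ inductively in $j$, using $\partial_t^{j}\partial_x^{k+1}\Gamma$ as the source;
\item recover $y$-derivatives from the equation and interpolation.
\end{itemize}
This yields bounds on $\partial_t^\alpha\partial_x^\beta\partial_y^\gamma\Gamma$ that are uniform through $t=0$. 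No vanishing-order or compatibility condition at $t=0$ ever has to be checked.
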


We can extend the regularity bounds in Lemma \ref{lm:h1} to all variables $(z,w)$, as a consequence of the translation and dilation invariance of $\Gamma(z;w)$, the regularity bounds for $(t,x,y)$ variables \eqref{higherordertxy}, and the dual equation \eqref{dualeq}. %The proof is direct, hence we omit it.

\begin{proposition}[\bf Higher--order Pointwise Bounds]\label{highptwise}
Assume $z=(t,x,y)$, $w=(t_{0},x_{0},y_{0})$ and $z\neq w$. Then
\begin{equation}\label{highpotwise}
|\p_{t}^{\alpha_{1}}\p_{t_{0}}^{\alpha_{2}}\p_{x}^{\beta_{1}}\p_{x_{0}}^{\beta_{2}}\p_{y}^{\gamma_{1}}\p_{y_{0}}^{\gamma_{2}}\Gamma(z;w)|\lesssim \|z-w\|^{-4-\left[2(\alpha_{1}+\alpha_{2})+3(\beta_{1}+\beta_{2})+(\gamma_{1}+\gamma_{2})\right]},
\end{equation}
for $y_{0}\leq 10\|z-w\|$.
\end{proposition}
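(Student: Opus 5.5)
Throughout, we use the translation invariance in $(t,x)$ from Proposition \ref{proper} together with the identity
\[
\Gamma(z;w)=\Gamma(z-(t_0,x_0,0);0,0,y_0).
\]
Differentiating this identity converts derivatives in $t_0,x_0$ into derivatives in $t,x$:
\[
\p_{t_0}\Gamma=-\p_t\Gamma,\qquad \p_{x_0}\Gamma=-\p_x\Gamma .
\]
Hence any mixed derivative $\p_{t}^{\alpha_{1}}\p_{t_{0}}^{\alpha_{2}}\p_{x}^{\beta_{1}}\p_{x_{0}}^{\beta_{2}}\p_{y}^{\gamma_{1}}\p_{y_{0}}^{\gamma_{2}}\Gamma$ equals, up to sign, $\p_t^{\alpha_1+\alpha_2}\p_x^{\beta_1+\beta_2}\p_y^{\gamma_1}\p_{y_0}^{\gamma_2}\Gamma$. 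The only genuinely new ingredient is therefore the $y_0$-derivative. Translation does not act in $y_0$, so we must use the dual equation \eqref{dualeq} instead.

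By dilation invariance we normalize $\|z-w\|=1$. The scaling $\Gamma(z;w)=\lambda^{-4}\Gamma(D(\lambda)z,D(\lambda)w)$ gives exactly the homogeneity $-4-[2\alpha+3\beta+\gamma]$, with $y_0$ weighted like $y$. It then suffices to bound all derivatives uniformly on $\{\|z-w\|=1,\ y_0\le10\}$. For $y_0$-derivatives we use the identity (dual equation \eqref{dualeq})
\[
\p_{y_0}^2\Gamma=-\p_{t_0}\Gamma-y_0\p_{x_0}\Gamma=\p_t\Gamma+y_0\p_x\Gamma .
\]
Iterating it, every even number $2k$ of $y_0$-derivatives becomes a combination of $t,x$ derivatives with polynomial coefficients in $y_0$; each $\p_{y_0}$ falling on $y_0$ lowers the order. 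Since $y_0\le10$, these coefficients are bounded, and Lemma \ref{lm:h1} then controls everything. The $y$-derivatives commute with this identity, so $\gamma_1$ is harmless.

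The main obstacle is an odd number of $y_0$-derivatives: the identity reduces $\p_{y_0}^{2k+1}$ only to $\p_{y_0}$ applied to $t,x$ derivatives, so a single $\p_{y_0}$ remains. We will handle it by interpolation in the $y_0$ variable. Fix $z$ and consider $f(y_0):=\p^\ast\Gamma(z;t_0,x_0,y_0)$, where $\p^\ast$ is any fixed combination of $t,x,y$ derivatives. The Neumann condition for the dual problem gives $\p_{y_0}f|_{y_0=0}=0$, and by the above $f''$ is bounded. Integrating $f''$ from $0$ then gives $|f'(y_0)|\le y_0\sup|f''|$, which is bounded for $y_0\le 10$. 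If the segment from $0$ to $y_0$ passes near the singularity $z$, we instead use the standard Landau-type interpolation $|f'|\lesssim \|f\|_{L^\infty}^{1/2}\|f''\|_{L^\infty}^{1/2}+\|f\|_{L^\infty}$ on a $y_0$-interval of length $\sim1$ on which $\|z-w'\|\gtrsim1$. Such an interval exists after possibly shrinking the normalization constant, using the triangle inequality for $\|\cdot\|$ and that the ball measure is $\sim r^6$.

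The remaining regime, where $y_0$ is comparable to $\|z-w\|$ but the point is near the boundary of the allowed region, is handled by the decomposition \eqref{greendecom}. There $\widetilde\Gamma$ is explicit, and the correction $y_0^{-4}V(\cdot)$ with smooth bounded $V$ has all derivatives bounded once $y_0\gtrsim \|z-w\|/100$. Derivatives in $y_0$ of the rescaled argument produce only powers $y_0^{-1}\lesssim 1$ in normalized units.
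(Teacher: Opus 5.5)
Your proposal is correct and follows essentially the same route as the paper: translation trades $\partial_{t_0},\partial_{x_0}$ for $\partial_t,\partial_x$, dilation normalizes $\|z-w\|=1$, the dual equation \eqref{dualeq} removes pairs of $y_0$-derivatives, and a one-dimensional interpolation in $y_0$ handles the leftover odd derivative. Your use of the dual Neumann condition $\partial_{y_0}\Gamma|_{y_0=0}=0$ for small $y_0$, and of \eqref{greendecom} when $y_0\gtrsim\|z-w\|$, makes explicit what the paper leaves to ``the argument in Proposition \ref{ptwise}''. One small correction: iterating $\partial_{y_0}^2\Gamma=\partial_t\Gamma+y_0\partial_x\Gamma$ also produces terms with a single $\partial_{y_0}$, since $\partial_{y_0}^2(y_0 g)=2\partial_{y_0}g+y_0\partial_{y_0}^2 g$. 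So the even case is not purely reduced to $t,x$-derivatives, and it must be closed together with your odd-case bound, e.g.\ by induction on the total order.
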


%\begin{comment}
\begin{proof}The proof is similar to that of Proposition \ref{ptwise}. For completeness we outline the proof below.

By the argument in Proposition \ref{ptwise}, we need to show that for $y_{0}\leq 10\|z-w\|$,
\begin{equation}\label{highgoal}
\sup_{\|z-w\|=1}\left|\p_{t}^{\alpha_{1}}\p_{t_{0}}^{\alpha_{2}}\p_{x}^{\beta_{1}}\p_{x_{0}}^{\beta_{2}}\p_{y}^{\gamma_{1}}\p_{y_{0}}^{\gamma_{2}}\Gamma(z;w)\right|\lesssim 1.
\end{equation}
By Proposition \ref{proper}, we only need to consider the case $\alpha_{2}=\beta_{2}=0$. Since $\mathcal{L}_{0}$ commutes with $\p_{t}$ and $\p_{x}$, we can obtain \eqref{highgoal} by similar arguments as in Proposition \ref{ptwise} when $\gamma_{1}=\gamma_{2}=0$.

Next, notice that
\begin{equation}
(\p_{t}+y\p_{x}-\p_{y}^{2})\Gamma(z;w)=(\p_{t_{0}}+y_{0}\p_{x_{0}}+\p_{y_{0}}^{2})\Gamma(z;w)=0,\quad z\neq w.
\end{equation}
Hence \eqref{highgoal} holds when $\gamma_{1}$ and $\gamma_{2}$ are even. 

When one of $\gamma_{i} (i=1,2)$ is odd, we can interpolate in the $y$--direction and apply \eqref{highgoal} when $\gamma_{i} (i=1,2)$ are both even. Finally, we can interpolate again to handle the case when $\gamma_{i} (i=1,2)$ are both odd. The proof of Proposition \ref{highptwise} is then complete.
\end{proof}
%\end{comment}

\begin{remark} By Proposition \ref{highptwise}, if $t\neq t_{0}$ or $x\neq x_{0}$, the trace
\begin{equation}
\Gamma(t,x,y;t_0,x_0,y_{0})|_{y_{0}=0}
\end{equation}
is well defined.
Moreover, using \eqref{consermass} and \eqref{highpotwise}, we obtain that
\begin{equation}
\iint_{x\in\R,\,y\in\R^{+}}\Gamma(t,x,y;t_0,x_0,0)\,dxdy\equiv 1,\ \forall t>t_0,x_0\in\R,
\end{equation}
which is useful below in demonstrating the (strict) positivity of $\Gamma$.
\end{remark}

For applications below, we establish the following potential estimates as a consequence of the pointwise bound on $\Gamma(z,w)$.
\begin{proposition}[\bf Potential Estimate]\label{operatorbd} Assume $f(w)\in L^{2}(\mathbb{H})$. Then we have
\begin{align}
\left\|\int \Gamma(z;w)f(w)dw\right\|_{L^{6}_{z}}+\left\|\int (\p_{y},\p_{y_{0}})\Gamma(z;w)f(w)dw\right\|_{L^{3}_{z}}\lesssim \|f\|_{L^{2}(\mathbb{H})}.
\end{align}
\end{proposition}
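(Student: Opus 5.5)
The plan is to treat $\int\Gamma(z;w)f(w)\,dw$ as a fractional-integral (Riesz potential) operator on the homogeneous space $(\mathbb{H},\|\cdot\|,dz)$, whose homogeneous dimension is $Q=6$ by \eqref{volume}. The claimed exponents match the Hardy--Littlewood--Sobolev scaling for this dimension. A kernel bounded by $\|z-w\|^{-(Q-\alpha)}$ maps $L^2\to L^q$ with $\frac1q=\frac12-\frac{\alpha}{Q}$. For $\Gamma$ we have $Q-\alpha=4$, so $\alpha=2$ and $q=6$. For $(\partial_y,\partial_{y_0})\Gamma$ we have $Q-\alpha=5$, so $\alpha=1$ and $q=3$. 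So the whole proof reduces to two ingredients: a global pointwise bound on the kernels, and an HLS inequality on homogeneous spaces.

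\textbf{Step 1: global pointwise bounds.} We need
\[
|\Gamma(z;w)|\lesssim \|z-w\|^{-4},\qquad |(\partial_y,\partial_{y_0})\Gamma(z;w)|\lesssim\|z-w\|^{-5}
\]
for all $z\neq w$, not only when $y_0\le 10\|z-w\|$. The case $y_0\le10\|z-w\|$ is exactly \eqref{potwise} together with Proposition \ref{highptwise} (take $\gamma_1+\gamma_2=1$). For $y_0>10\|z-w\|$, the pole is far from the boundary compared with the distance $\|z-w\|$, so we use the decomposition \eqref{greendecom}.

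\begin{itemize}
\item The whole-space part $\widetilde\Gamma$ is explicit in \eqref{wholespace}, and a direct computation gives $|\widetilde\Gamma|\lesssim\|z-w\|^{-4}$ and $|\partial_{y},\partial_{y_0}\widetilde\Gamma|\lesssim\|z-w\|^{-5}$.
\item The correction $y_0^{-4}V(D(y_0)(z-w))$ is handled with Step 1 of Proposition \ref{ptwise}. Since $V$ and all its derivatives are bounded, it contributes at most $y_0^{-4}\le\|z-w\|^{-4}$.
\item For the derivatives of the correction, $\partial_y$ contributes $y_0^{-5}\|\partial_yV\|_\infty\lesssim\|z-w\|^{-5}$. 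The derivative $\partial_{y_0}$ falls both on the prefactor and on the arguments of $V$. This produces terms such as $y_0^{-5}\,(t/y_0^2)\,\partial_tV$, which need $t\,\partial_tV$, $x\,\partial_xV$ and $y\,\partial_yV$ to be bounded.
\end{itemize}

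I expect the last point to follow from the rapid decay of $V$ as $t\to0$ (see \eqref{boundnessofv}) combined with the large-scale bounds already proved. Alternatively one can avoid $\partial_{y_0}$ of the correction entirely by using the translation/dual structure from Lemma \ref{lm:h2}.

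\textbf{Step 2: HLS on a homogeneous space.} Set $K_\alpha(z,w)=\|z-w\|^{-(6-\alpha)}$ for $\alpha\in\{1,2\}$. We use the standard splitting at a radius $R$: the near part $\int_{\|z-w\|<R}|f|K_\alpha\,dw\lesssim R^{\alpha}Mf(z)$, where $M$ is the Hardy--Littlewood maximal function, which is bounded on $L^2$ of a homogeneous space by Coifman--Weiss. The far part is bounded by Hölder's inequality, $\lesssim \|f\|_{L^2}\,\|K_\alpha\mathbf 1_{\|z-w\|\ge R}\|_{L^2_w}\lesssim R^{\alpha-3}\|f\|_2$, using \eqref{volume} and $2(6-\alpha)>6$. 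Optimizing in $R$ gives $|I_\alpha f(z)|\lesssim (Mf(z))^{1-\alpha/3}\|f\|_2^{\alpha/3}$. Taking $L^q$ norms with $q=2/(1-\alpha/3)$, i.e.\ $q=6$ for $\alpha=2$ and $q=3$ for $\alpha=1$, yields the proposition. The $\partial_{y_0}$ term is treated identically, since the kernel bound is symmetric in the roles of $z$ and $w$.

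\textbf{Main obstacle.} The genuinely delicate point is Step 1 in the regime $y_0\gg\|z-w\|$ for the $\partial_{y_0}$ derivative, where the bounds on $V$ have to be uniform and weighted. If the weighted boundedness of $V$ fails, the fallback is the reflection idea. Far from the boundary, $\Gamma$ differs from $\widetilde\Gamma$ only by a solution of $\mathcal L_0$ which is smooth at the scale $y_0$. Interior hypoelliptic estimates (Proposition 3.2 in \cite{H67}) on a ball of radius $\sim y_0$ then bound its derivatives by $y_0^{-5}\le\|z-w\|^{-5}$.
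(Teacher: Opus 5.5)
\textbf{Gap in Step 1.} You claim a ``direct computation'' from \eqref{wholespace} gives $|\widetilde\Gamma(z,w)|\lesssim\|z-w\|^{-4}$ and $|(\p_y,\p_{y_0})\widetilde\Gamma(z,w)|\lesssim\|z-w\|^{-5}$ for all $z\neq w$. This is false, and so is the global bound $|\Gamma(z;w)|\lesssim\|z-w\|^{-4}$ that you feed into Step 2.

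The explicit kernel concentrates along the Galilean-transported direction $x-x_0\approx(t-t_0)y_0$. It is therefore governed by the group quasi-distance $d_G(z,w):=\|(t-t_0,\,x-x_0-(t-t_0)y_0,\,y-y_0)\|$, not by the plain difference $\|z-w\|$ used in the paper. The two are comparable only when $y_0\lesssim\|z-w\|$, which is exactly the regime of \eqref{potwise}.

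Concretely, take $w=(0,0,1)$ and $z=(s,s,1)$ with $s\to0^+$. Then $\|z-w\|\sim s^{1/3}$, so $y_0=1>10\|z-w\|$. On the other hand \eqref{wholespace} gives $\widetilde\Gamma(z,w)=\frac{\sqrt3}{2\pi}s^{-2}$, and the correction term in \eqref{greendecom} is bounded. Hence $\Gamma(z;w)\|z-w\|^{4}\gtrsim s^{-2/3}\to\infty$. The same example breaks the claimed $\|z-w\|^{-5}$ bound for $\p_y\widetilde\Gamma$. So the reduction to a single fractional integral on $(\mathbb H,\|\cdot\|)$ is not available.

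\textbf{What is needed.} Keep $\widetilde\Gamma$ separate, as the paper does:
$0\le\Gamma\le\widetilde\Gamma+C\|z-w\|^{-4}$ and $|(\p_y,\p_{y_0})\Gamma|\le|(\p_y,\p_{y_0})\widetilde\Gamma|+C\|z-w\|^{-5}$.

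For the correction, the regime $y_0>10\|z-w\|$ that you flagged as the main obstacle is actually harmless. There $|t-t_0|/y_0^2\le10^{-2}$, $|x-x_0|/y_0^3\le10^{-3}$ and $y/y_0\in[9/10,11/10]$. So the arguments of $V$ stay in a compact set, and boundedness of $\nabla V$ (Step 1 of Proposition \ref{ptwise}) already gives $|\p_{y_0}(y_0^{-4}V(\cdots))|\lesssim y_0^{-5}\lesssim\|z-w\|^{-5}$. No weighted bounds on $V$ are needed.

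The $\widetilde\Gamma$ part must instead be handled by the whole-space potential estimate; the paper cites \cite{PP04}, Corollary 2.2. That estimate rests on $|\widetilde\Gamma|\lesssim d_G(z,w)^{-4}$ and $|(\p_y,\p_{y_0})\widetilde\Gamma|\lesssim d_G(z,w)^{-5}$.

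Your Hedberg argument in Step 2 is correct and can be run on each piece separately:
\begin{itemize}
\item For the remainder, use the plain quasi-distance. This replaces the paper's weak Young inequality with $\|\cdot\|^{-4}\in L^{3/2,\infty}$ and $\|\cdot\|^{-5}\in L^{6/5,\infty}$.
\item For $\widetilde\Gamma$, extend $f$ by zero to $\R^3$ and use $d_G$. Its balls also have Lebesgue measure $\sim r^6$, because Galilean translations preserve Lebesgue measure.
\end{itemize}
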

\begin{proof}By Proposition \ref{ptwise} and Proposition \ref{highptwise} we have
\begin{equation}
\begin{split}
&0\leq \Gamma(z;w)\leq \widetilde{\Gamma}(z;w)+C_{1}\|z-w\|^{-4},\\
&|(\p_{y},\p_{y_{0}})\Gamma(z;w)|\leq |(\p_{y},\p_{y_{0}})\widetilde{\Gamma}(z;w)|+C_{2}\|z-w\|^{-5}.
\end{split}
\end{equation}

Recalling \eqref{volume}, we have
\begin{equation*}
\|\cdot\|^{-4}\in L^{\frac{3}{2},\infty},\ \|\cdot\|^{-5}\in L^{\frac{6}{5},\infty},
\end{equation*}
where $L^{p,q}$ is the classical Lorentz space with respect to the Lebesgue measure.

Therefore, Proposition \ref{operatorbd} follows from the potential estimate in the whole space (see e.g. \cite{PP04}, Corollary 2.2), as well as Young's inequality. 
\end{proof}

\subsection{Positivity property of the fundamental solution}\label{subsec2.3}
In this section we prove the positivity of the fundamental solution in suitable regions. In contrast to the fundamental solution $\widetilde{\Gamma}(t,x,y;t_0,x_0,y_0)$ in the whole space, $\Gamma(t,x,y;t_0,x_0,y_0)$ is not positive everywhere even for $t>t_0$. Indeed, by an elementary $L^{1}$ estimate, we can obtain that
\begin{equation}
\int_0^\infty\int_{-\infty}^{x_{0}-\epsilon}\Gamma(t,x,y;t_0,x_0,y_{0})\,dxdy \equiv 0,\ \forall \epsilon>0,\ y_{0}\in\R^{+}, \,\,t>t_0.
\end{equation}
Hence, $\Gamma(z;w)$ vanishes for all $x<x_{0}$. This is of course consistent with our physical intuition, as in the half space the term $\p_{t}+y\p_{x}$ transports quantities to the right but not to the left, resulting in the propagation of positivity only to the right. 

\begin{comment}
(See figure \ref{fig:one-side-transport}). 
\begin{figure}[htbp]
    \centering % 使图片居中
    
    \begin{tikzpicture}[>=Stealth]
        % 只画第一象限坐标轴（无刻度）
        \draw[->] (0,0) -- (5,0) node[right] {$x$};
        \draw[->] (0,0) -- (0,5) node[above] {$t$};

        % 右上的箭头
        \draw[->, thick] (2,2) -- (4,4);
        % 文字标注放在箭头正上方
        \node[align=left, font=\small, above] at (3,4) {transport direction: $(y,1)$};
    \end{tikzpicture}
    
    \caption{one-side transport} % 添加标题
    \label{fig:one-side-transport} % 可选的标签，便于引用
\end{figure}
\end{comment}
Nevertheless, after taking this effect into consideration, we are still able to demonstrate that $\Gamma(z;w)$ is positive provided $t> t_{0}$ and $x>x_{0}$. This positivity property is essential in proving the H\"older regularity of weak solutions below, using De Giorgi-Nash-Moser techniques.

\begin{proposition}[\bf Positivity]
\label{posi}
Assume that $z=(t,x,y)\in\overline{\mathbb{H}}$, $w=(t_{0},x_{0},y_{0})\in\overline{\mathbb{H}}$ with $z\neq w$. For any $t>t_{0}, x>x_{0}, y\ge 0, y_{0}\ge0$, we have $\Gamma(z;w)>0$. 
\end{proposition}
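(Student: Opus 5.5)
By the translation invariance in Proposition \ref{proper}, we may take $t_0=0$, $x_0=0$. By the dilation invariance, it then suffices to prove $\Gamma(1,x,y;0,0,y_0)>0$ for every $x>0$, $y\ge0$, $y_0\ge0$. The argument has three steps: a local positivity statement in a region where $x$ is large compared with $t^{3/2}$, propagation of that positivity to all $x>0$ through the reproducing (Chapman--Kolmogorov) formula, and a strong maximum principle to move from almost-everywhere positivity to pointwise positivity.

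\textbf{Step 1: local positivity.} The goal is to find $M\gg1$ such that
\begin{equation*}
\Gamma(t,x,y;0,0,y_0)>0\quad\text{whenever } 0<t\le1,\ x\ge Mt^{3/2},\ y,y_0\ge0 .
\end{equation*}
By scaling, this reduces to showing $\Gamma(1,x,y;0,0,y_0)>0$ for $x\ge M$.

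\emph{Case $y_0\gtrsim1$.} The decomposition \eqref{greendecom} gives $\Gamma=\widetilde\Gamma+y_0^{-4}V(\cdot)$. The term $\widetilde\Gamma$ is explicitly positive. One has to check that the correction $V$ (with $V|_{t=0}=0$ and the Gaussian-small boundary flux $g$) is dominated by $\widetilde\Gamma$ in the relevant region. This is done by comparing decay rates, or by applying a comparison argument to $\mathcal V$ using the $e^{-1/(10t)}$ smallness.

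\emph{Case $y_0\ll1$.} Here $w$ is close to the boundary, and I would argue differently. Mass conservation \eqref{consermass} together with the upper bounds of Proposition \ref{ptwise} shows that a definite fraction of the unit mass sits in a bounded box $\{x\in[c,C],\,y\le C\}$ at time $1/2$. From there, one propagates forward using Nirenberg's strong maximum principle \cite{N53} for $\mathcal L_0$ along admissible (Hörmander) paths: paths with $\dot x=y\ge0$, moving freely in $y$ and forward in $t$. Because $\dot x=y$ can be made large, these paths reach every point with $x$ beyond the support of the mass. This gives positivity for $x\ge M$ at $t=1$.

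\textbf{Step 2: propagation via the reproducing formula.} I would use
\begin{equation*}
\Gamma(2s,x,y;0,0,y_0)=\int_0^\infty\!\!\int_0^x\Gamma(s,x-\xi,y;0,0,\eta)\,\Gamma(s,\xi,\eta;0,0,y_0)\,d\xi\, d\eta ,
\end{equation*}
where the $\xi$-integral is restricted to $[0,x]$ because of the one-sided support $\Gamma=0$ for $x<x_0$. Take $s=t_N=2^{-N}$ and $x>2M t_N^{3/2}$. Then there is a set of $(\xi,\eta)$ of positive measure on which both factors are positive, by Step 1. Hence positivity holds at time $t_{N-1}$ for all $x>2Mt_N^{3/2}$.

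Iterating, at each stage the positive region at time $t_{k-1}$ is $\{x> a_{k-1}\}$ with $a_{k-1}$ at most the sum of the two thresholds at time $t_k$. Unwinding the iteration, the threshold at time $1$ is bounded by
\begin{equation*}
\sum_k 2^{N-k}M\,2^{-3k/2}\ \lesssim\ M\,2^{-N/2}.
\end{equation*}
Since $N$ is arbitrary, this yields $\Gamma(1,x,y;0,0,y_0)>0$ for all $x>0$, at least for almost every $y$.

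\textbf{Step 3: pointwise positivity.} To upgrade to every $y\ge0$, including the boundary $y=0$ and the endpoint $y_0=0$ (which is defined through the trace remark after Proposition \ref{highptwise}), I would use the smoothness from Lemma \ref{lm:h1}. Since $\Gamma\ge0$ is a solution, the strong maximum principle of \cite{N53} applies, with Hopf's lemma under the Neumann condition at $y=0$; by the duality in Lemma \ref{lm:h2}, the same reasoning handles $y_0=0$.

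\textbf{Main obstacle.} I expect Step 1 to be the hardest part: the quantitative local positivity for sources near the boundary, uniform in $y_0\to0$. There the explicit comparison with $\widetilde\Gamma$ fails. I would first attempt a barrier/subsolution construction on a box $\{x\ge M\}$ fed by the mass bound, and fall back on Nirenberg's admissible-path argument if the barrier does not close.
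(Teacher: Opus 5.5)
\textbf{There is a genuine gap in Step~1, the uniform local positivity.} It is not a side issue: Step~2 needs Step~1 uniformly in the source height. In the first doubling step, the factor $\Gamma(t_N,\xi,\eta;0,0,y_0)$ with $y_0>0$ fixed becomes, after rescaling to unit time, a kernel whose source sits at height $y_0 2^{N/2}\to\infty$.

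\emph{Case $y_0\gtrsim1$.} Here you need $y_0^{-4}|V(D(y_0)z)|<\widetilde\Gamma(z;w)$ on all of $\{t=1,\ x\ge M,\ y\ge0\}$. All that is known about $V$ is boundedness, plus, through $\mathcal V$, a factor $e^{-cy_0^2}$ when the rescaled time $y_0^{-2}$ is small. By contrast, $\widetilde\Gamma(1,x,y;0,0,y_0)$ is of size roughly $e^{-3x^2}$ when, e.g., $y,y_0=O(1)$ and $x\ge M$. Neither ``comparing decay rates'' nor the $e^{-1/(10t)}$ smallness gives the domination you need.

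\emph{Case $y_0\ll1$.} Here you propagate positivity forward along paths with $\dot x=y$. That is not what Nirenberg's principle \cite{N53} provides. What it yields here, and what the paper uses, is propagation in the diffusion variables only: $y$, and $y_0$ via \eqref{dualeq}, at frozen $(t,x)$. Propagation along the drift $\partial_t+y\partial_x$ is Bony's maximum principle \cite{Bo69}. If you allow Bony's principle, the proposition follows directly: mass conservation and a first-moment bound give a positive point $(s,\xi,\eta)$ with $\xi\lesssim sy_0+s^{3/2}$, and forward propagation gives $\Gamma(1,x,y;0,0,y_0)>0$ for every $x>\xi$. That would make your $\widetilde\Gamma$-comparison and Step~2 superfluous. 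Also, \eqref{potwise} alone does not localize the mass, since $\|z-w\|^{-4}$ is not integrable over $\{x\ge R\}$ at fixed $t$.

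\textbf{The missing idea} is to evaluate \eqref{greendecom} at the transport-aligned point $y=y_0=x/t$. There the exponent in \eqref{wholespace} vanishes, so $\widetilde\Gamma=\frac{\sqrt3}{2\pi t^2}$. The correction is at most $\|V\|_{L^\infty}y_0^{-4}=\|V\|_{L^\infty}t^4x^{-4}$. Hence $\Gamma(t,x,x/t;0,0,x/t)>0$ whenever $x\ge Mt^{3/2}$, with no case distinction in $y_0$. The strong maximum principle in $y_0$ (for the dual equation), and then in $y$, spreads this to all $y,y_0>0$; only diffusion-direction propagation is needed.

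With this Step~1, your doubling argument is the paper's. The threshold after $N$ doublings is $2^N\cdot M2^{-3N/2}=M2^{-N/2}$; the sum $\sum_k2^{N-k}M2^{-3k/2}$ you display is of size $2^NM$ and does not give this. Your treatment of $y=0$ and $y_0=0$ by Hopf's lemma, for $\mathcal L_0$ and for the dual operator (which also carries a Neumann condition), is an acceptable alternative to the paper's reproducing-formula argument for $y_0=0$.
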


\begin{proof}
By translation invariance, we can assume that $t_{0}=x_{0}=0$. We first note that $\Gamma(z;w)$ is smooth away from the initial time by Proposition \ref{ptwise} and Proposition \ref{highptwise}, and $\Gamma(z;w)$ is non--negative as proven previously. Next we will exclude the following four cases one by one:
\begin{equation*}
\begin{split}
&\mathrm{Case}\, (\alpha): \quad\Gamma(\bar{t},\bar{x},\bar{y};0,0,\bar{y}_{0})=0,\ \text{for some}\ \bar{t}>0,\bar{x}>0,\bar{y}>0,\bar{y}_{0}>0,\\
&\mathrm{Case}\, (\beta):\quad \Gamma(\bar{t},\bar{x},\bar{y};0,0,0)=0,\ \text{for some}\ \bar{t}>0,\bar{x}>0,\bar{y}>0,\\
&\mathrm{Case}\, (\gamma): \quad\Gamma(\bar{t},\bar{x},0;0,0,\bar{y}_{0})=0,\ \text{for some}\ \bar{t}>0,\bar{x}>0,\bar{y}_{0}>0,\\
&\mathrm{Case}\, (\theta): \quad \Gamma(\bar{t},\bar{x},0;0,0,0)=0,\ \text{for some}\ \bar{t}>0,\bar{x}>0.
\end{split}
\end{equation*}

We divide the proof into several steps.

\textit{Step 1: excluding case ($\alpha$).} By the dilation invariance of $\Gamma(\cdot,\cdot)$ (Proposition \ref{proper}), we can assume that $\bar{t}=1$. Our first important observation is that there exists a universal constant $M>0$ such that
\begin{equation}\label{eq:po1}
\Gamma(t,x,xt^{-1};0,0,xt^{-1})>0,
\end{equation}
for all $0<t\leq 1$ and $x\geq Mt^{\frac{3}{2}}$. 

To see \eqref{eq:po1}, we recall the decomposition \eqref{greendecom}. By \eqref{wholespace}, we have
\begin{equation*}
\begin{split}
\Gamma(t,x,xt^{-1};0,0,xt^{-1})&\geq\widetilde{\Gamma}(t,x,xt^{-1};0,0,xt^{-1})-\|V\|_{L^{\infty}}\cdot x^{-4}t^{4}\\
    &=\frac{\sqrt{3}}{2\pi t^{2}}-\|V\|_{L^{\infty}}\cdot x^{-4}t^{4}
\end{split}
\end{equation*}
Hence \eqref{eq:po1} holds for all $0<t\leq 1$ and $x\geq Mt^{\frac{3}{2}}$. 

Applying the strong maximum principle due to Nirenberg \cite{N53} in the $y$--direction as well as in the $y_{0}$--direction (by considering the dual equation of $\Gamma(\cdot,w)$ in the $w$--variable), we have
\begin{equation}\label{positivedomain}
\Gamma(t,x,y;0,0,y_{0})>0,\ \forall\ 0<t\leq 1,\ x>Mt^{\frac{3}{2}},\ y>0,\ y_{0}>0.
\end{equation}

Next, define the dyadic time sequence
\begin{equation*}
t_{k}=2^{-k},\ \ k\geq 0.
\end{equation*}
By \eqref{positivedomain}, taking an integer $N\gg1$, we have
\begin{equation}
\Gamma(t_{N},x,y;0,0,y_{0})>0,\ \forall\ x>M\cdot 2^{-\frac{3}{2}N},\ y>0,\ y_{0}>0.
\end{equation}
By the reproducing formula and the translation invariance in time of the fundamental solution, for any $x>2M\cdot 2^{-\frac{3}{2}N}$, and $y>0,y_{0}>0$, we have
\begin{equation}\label{eqreproduce}
\begin{split}
\Gamma(t_{N-1},x,y;0,0,y_{0})&=\int_{0}^\infty\int_{0}^x\Gamma(t_{N},x-\xi,y;0,0,\eta)\Gamma(t_{N},\xi,\eta;0,0,y_{0})\,d\xi d\eta\\
&\geq \int_{0}^\infty\int_{ 2^{-3N/2}M}^{x- 2^{-3N/2}M}\Gamma(t_{N},x-\xi,y;0,0,\eta)\Gamma(t_{N},\xi,\eta;0,0,y_{0})\,d\xi d\eta\\
&>0.
\end{split}\end{equation}

Repeating the iteration procedure for $N$ times, we have
\begin{equation}
\Gamma(1,x,y;0,0,y_{0})>0,\ \forall x>M\cdot 2^{-\frac{1}{2}N},\ y,y_{0}>0,
\end{equation}
contradiction with case ($\alpha$), provided that $N\gg 1$ is chosen sufficiently large.

\textit{Step 2: excluding case ($\beta$).} If case ($\beta$) happens, then by the reproducing formula of the fundamental solution, we obtain that for all $0<\tau<\bar{t}$,
\begin{equation*}
0=\Gamma(\bar{t},\bar{x},\bar{y};0,0,0)=\int_{0<\xi<\bar{x}}\int_{\eta>0}\Gamma(\bar{t}-\tau,\bar{x}-\xi,\bar{y};0,0,\eta)\Gamma(\tau,\xi,\eta;0,0,0)\,d\xi d\eta.
\end{equation*}
Since the case ($\alpha$) is already excluded, we have for all $0<\tau<\bar{t},0<\xi<\bar{x}$ and $\eta>0$ that $\Gamma(\tau,\xi,\eta;0,0,0)=0$.

By the dilation invariance of $\Gamma$, it follows that
\begin{equation*}
\Gamma(t,x,y;0,0,0)\equiv 0,\ \forall t>0,x>0,y>0,
\end{equation*}
which contradicts the fact that
\begin{equation*}
\int_{0}^{\infty}\int_{0}^{\infty}\Gamma(t,x,y;0,0,0)\,dxdy=\int_{-\infty}^{\infty}\int_{0}^{\infty}\Gamma(t,x,y;0,0,0)\,dxdy\equiv 1,\ \forall t>0.
\end{equation*}

\textit{Step 3: excluding case ($\gamma$).} Notice that
\begin{equation*}
\p_{y}\Gamma|_{y=0}\equiv 0.
\end{equation*}
Case $\gamma$ can then be ruled out using the classical Hopf's boundary-point lemma for a degenerate elliptic operator on non-characteristic boundary points, see Lemma 7.1.7 in \cite{Ta88} and also the reference therein. 

\textit{Step 4: excluding case ($\theta$).} It follows from Step 2 and Step 3 that  
\begin{equation*}
\Gamma(\bar{t},\bar{x},0;0,0,0)=\int_{0<\xi<\bar{x}}\int_{\eta>0}\Gamma(\frac{\bar{t}}{2},\bar{x}-\xi,0;0,0,\eta)\Gamma(\frac{\bar{t}}{2},\xi,\eta;0,0,0)d\xi d\eta>0.
\end{equation*}
Combining Steps 1 -- 4, the proof of Proposition \ref{posi} is then complete.
\end{proof}

\begin{remark}
An alternative approach to prove Proposition \ref{posi} may be obtained by applying a more sophisticated and powerful maximum principle for hypoelliptic and ultraparabolic operators due to Bony \cite{Bo69} (see also \cite{Ta88} and \cite{AE04}). Our proof has the advantage of being more direct and elementary in this specific case.
\end{remark}

The following localized quantitative positivity estimate is a consequence of Proposition \ref{posi}, and is important for proving regularity of weak solutions to a hypoelliptic equation in the half space in Section \ref{continuity}.
\begin{corollary}\label{positive}
Assume $z=(t,x,y), w=(t_{0},x_{0},y_{0})\in\overline{\mathbb{H}}$, and $r>0$. Let $0<\alpha<\beta,\ \delta\geq\gamma>\kappa>0, \iota>0$ and $M>0$. If $\|z\|,\|w\|\leq Mr$, and 
\begin{equation*}
t\geq -\alpha r^{2},\ t_{0}\leq -\beta r^{2},\ \gamma r^{3}\leq x\leq \delta r^{3}, |x_{0}|\leq\kappa r^{3},\ 0\leq y\leq \iota r,\ 0\leq y_{0}\leq \iota r,
\end{equation*}
then
\begin{equation}
\Gamma(z;w)\gtrsim_{\alpha,\beta,\gamma,\delta,\kappa,\iota,M}r^{-4}.
\end{equation}
\end{corollary}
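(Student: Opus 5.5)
The plan is to reduce to $r=1$ by scaling and then obtain the uniform lower bound from strict positivity, continuity and compactness. By Proposition \ref{proper},
\[
\Gamma(z;w)=r^{-4}\Gamma\big(D(r)z;D(r)w\big),
\]
and the constraints on $z,w$ are invariant under $D(r)$ in the sense that $D(r)z,D(r)w$ satisfy the same inequalities with $r=1$. It therefore suffices to show that $\Gamma(z;w)\geq c>0$ uniformly over the set $K$ of pairs $(z,w)$ with $\|z\|,\|w\|\le M$ and
\[
t\ge -\alpha,\quad t_0\le -\beta,\quad \gamma\le x\le\delta,\quad |x_0|\le\kappa,\quad 0\le y,y_0\le\iota .
\]

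\textbf{Compactness and positivity.} The set $K$ is closed and bounded in $\overline{\mathbb{H}}\times\overline{\mathbb{H}}$, hence compact. On $K$ we have the strict separations
\[
t-t_0\ge\beta-\alpha>0,\qquad x-x_0\ge\gamma-\kappa>0,
\]
so $z\neq w$ and $\|z-w\|$ is bounded below by a constant depending only on the parameters. By Proposition \ref{posi}, $\Gamma(z;w)>0$ at every point of $K$, including the boundary cases $y=0$ or $y_0=0$. If $\Gamma$ is continuous on $K$, it attains a positive minimum $c=c(\alpha,\beta,\gamma,\delta,\kappa,\iota,M)$. Scaling back then gives $\Gamma(z;w)\gtrsim r^{-4}$.

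\textbf{Main obstacle: continuity up to $y_0=0$ and $y=0$.} I would get this from Proposition \ref{highptwise}. The first derivatives in all six variables are bounded by powers of $\|z-w\|^{-1}$, which is bounded on $K$. These bounds require $y_0\le 10\|z-w\|$. On $K$ we have $\|z-w\|\ge (\beta-\alpha)^{1/2}$ and $y_0\le\iota$, so the condition holds whenever $\iota\le 10(\beta-\alpha)^{1/2}$.

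Otherwise I would use the decomposition \eqref{greendecom}: $\widetilde\Gamma$ is smooth when $t>t_0$, and $y_0^{-4}V(D(y_0)(z-w))$ is smooth for $y_0$ bounded away from $0$. This is a genuine concern only when $y_0\to 0$, and there the condition $y_0\le10\|z-w\|$ is automatic. Splitting $K$ into the parts $y_0\le\epsilon$ and $y_0\ge\epsilon$ therefore gives local Lipschitz continuity of $\Gamma$ on all of $K$, with values at $y_0=0$ understood as the trace, as in the remark following Proposition \ref{highptwise}.

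Combining the continuity with compactness and Proposition \ref{posi} completes the argument. The constant depends only on the listed parameters because $K$ does.
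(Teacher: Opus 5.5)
Your proposal is correct and follows the paper's proof. You rescale by $D(r)$ via Proposition \ref{proper}, then take the minimum of $\Gamma$ over the compact parameter set; it is positive by Proposition \ref{posi} and attained by the continuity from Propositions \ref{ptwise} and \ref{highptwise}. Your splitting into $y_0\le\epsilon$ and $y_0\ge\epsilon$ simply spells out the continuity step that the paper asserts in one line.

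One harmless slip: in \eqref{greendecom} the correction term is $y_0^{-4}V\big(D(y_0)(t-t_0,x-x_0,y)\big)$, whose third argument is $y/y_0$ rather than $(y-y_0)/y_0$. This does not affect smoothness when $y_0$ is bounded away from $0$.
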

\begin{proof}
If $(z,w)$ satisfies the assumptions, then by dilation invariance (see Proposition \ref{proper}),
\begin{equation}
\Gamma(z;w)\geq r^{-4}\min_{(Z,W)\in\mathcal{Q}}\Gamma(Z;W),
\end{equation}
where $\mathcal{Q}$ is the compact subset defined by
\begin{equation*}
\begin{split}
\mathcal{Q}=\Big\{Z=(t,x,y),\ W=(t_{0},x_{0},y_{0}):\,\, &\|Z\|\leq M, \|W\|\leq M, \\
&t\geq-\alpha,t_{0}\leq-\beta,\gamma\leq x\leq \delta, |x_{0}|\leq \kappa, 0\leq y,y_{0}\leq \iota \Big\}.
\end{split}
\end{equation*}
By Proposition \ref{posi}, $\Gamma(Z;W)$ is positive in $\mathcal{Q}$. By Proposition \ref{ptwise} and Proposition \ref{highptwise}, $\Gamma(Z;W)$ is continuous in $Q$. Hence, the continuity of $\Gamma(Z;W)$ on the compact set $Q$ implies that the positive minimum is attained. The proof of Corollary \ref{positive} then follows.
\end{proof}

%section 4

\section{H\"older regularity of weak solutions to ultraparabolic equations in the half space}\label{continuity}
In this section, we prove that weak solutions that satisfy an ultraparabolic equation in divergence form are H\"older continuous \emph{up to the boundary}. The corresponding H\"older regularity in the interior of such solutions was proved in \cite{XZZ24}. Up-to-boundary H\"older estimates are crucial to establish the boundary regularity of weak solutions of the dynamical Prandtl equation (in the coordinate system introduced by Crocco). 

For $z_{0}=(t_{0},x_{0},0)\in \p\mathbb{H}$ and $r_{0}>0$, define the up--to--boundary ultraparabolic ball
\begin{equation}\label{eq:paball}
\mathcal{B}_{r_{0}}^{+}(z_{0}):= (t_{0}-r_{0}^{2},t_{0}]\times (x_{0}-r_{0}^{3},x_{0}+r_{0}^{3})\times [0,r_{0})
\end{equation}
and we denote $\mathcal{B}_{r_{0}}^{+}:=\mathcal{B}_{r_{0}}^{+}(0,0,0)$ for brevity.

Our main result of this section is the following proposition.
\begin{proposition}\label{holder}
Let $z_{0}=(t_{0},x_{0},0)\in \p\mathbb{H}$ and $r_{0}>0$. Assume that $w(t,x,y)\in L^{\infty}\cap W^{1,1}\cap W^{2,1}_{y}(\mathcal{B}_{r_{0}}^{+}(z_{0}))$ is a weak solution (in the sense of distributions) to
\begin{equation}\label{diverform}
\begin{cases}
\p_{t}w+y\p_{x}w-\p_{y}(a\p_{y}w)=0,\ \text{ in}\ \mathcal{B}_{r_{0}}^{+}(z_{0}),\\
\p_{y}w|_{y=0}=0,
\end{cases}
\end{equation}
where $a(t,x,y)\in L^{\infty}(\mathcal{B}_{r_{0}}^{+}(z_{0}))$ satisfies the uniform ellipticity condition for $(t,x,y)\in \mathcal{B}_{r_{0}}^{+}(z_{0})$,
\begin{equation}\label{elliptic}
c\leq a(t,x,y)\leq c^{-1}
\end{equation}
for some $c\in(0,\frac{1}{2}]$. Then we have $w(t,x,y)\in C^{\alpha}(\mathcal{B}_{\frac{r_{0}}{2}}^{+}(z_{0}))$ and the bounds
\begin{equation}\label{holderesti}
 \|w\|_{C^{\alpha}(\mathcal{B}_{\frac{r_{0}}{2}}^{+}(z_{0}))}\lesssim_{c,r_{0}}\|w\|_{L^{\infty}(\mathcal{B}_{r_{0}}^{+}(z_{0}))},
\end{equation}
for some $\alpha>0$ independent of the solution $w$.
\end{proposition}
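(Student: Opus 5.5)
The plan is to run Kruzkov's version of the De Giorgi--Nash--Moser scheme, as in \cite{XZZ24}, directly in the half-space balls $\mathcal{B}^+_r(z_0)$. The Neumann condition $\p_y w|_{y=0}=0$ means every test function needs no boundary term at $y=0$, so all energy identities look like interior ones. The only genuinely new input is the one-sided geometry of the fundamental solution $\Gamma$ of $\mathcal{L}_0$ in $\mathbb{H}$. By translation and the dilation $D(\lambda)$, which preserve the class of equations \eqref{diverform} with the same ellipticity constant $c$, we reduce to $z_0=0$ and $r_0=1$. The H\"older bound \eqref{holderesti} then follows from a standard oscillation decay: there are $\theta\in(0,1)$ and a fixed shift $h>0$ such that
\[
\operatorname{osc}_{\mathcal{B}^+_{r/K}(\tilde z)} w\le \theta\,\operatorname{osc}_{\mathcal{B}^+_{r}(z)}w,
\]
where $\tilde z$ is $z$ translated in $x$ by $hr^3$. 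Interior points with $y\gg r$ are covered by the interior result of \cite{XZZ24}. Because the ball is shifted in $x$, the iteration has to be organized so that the base point of each smaller ball is reached through a chain of shifted balls. This is harmless, since the $x$-shift $hr^3$ scales like $r^3$ and therefore sums geometrically.

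\textbf{Step 1: local boundedness (Moser iteration).} For a subsolution we test the equation against $\varphi^2 (w-k)_+^{p-1}$. The cutoff $\varphi$ has $\p_y\varphi=0$ near $y=0$ and is supported away from the parabolic boundary of $\mathcal{B}^+_r$, excluding $y=0$. This yields a Caccioppoli inequality that controls $\p_y$ of the truncation in $L^2$. To gain integrability we represent $\varphi (w-k)_+^{p/2}$ through $\Gamma$: since $\mathcal{L}_0 (\varphi v)$ equals a $\p_y$-divergence plus lower-order terms, Proposition~\ref{operatorbd} (the $L^2\to L^6$ and $\p_y\Gamma$: $L^2\to L^3$ bounds, the latter also in the dual variable $y_0$) gives the gain $L^2\to L^{2\kappa}$ with $\kappa>1$. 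Iterating as in \cite{PP04} gives $\sup_{\mathcal{B}^+_{r/2}} w_+\lesssim \|w_+\|_{L^2(\mathcal{B}^+_r)}$, and also the corresponding $L^p\to L^\infty$ bound for all small $p>0$.

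\textbf{Step 2: expansion of positivity (the main obstacle).} Suppose $0\le w\le 1$ is a supersolution in $\mathcal{B}^+_r$ with $|\{w\ge 1/2\}\cap Q^-|\ge \mu|Q^-|$ for a lower sub-box $Q^-$. We want $w\ge \delta(\mu)$ on a later box $Q^+$ that is shifted to the right in $x$. Following Kruzkov we consider $\log$-type quantities $G(w)=\log^+\big(1/(w+\epsilon)\big)$, which are subsolutions. The key step combines (a) the weak Poincar\'e inequality (Lemma~\ref{weakpoin}), which must be verified in the half space with Neumann data and is proved via a representation through $\Gamma$, and (b) the quantitative lower bound of Corollary~\ref{positive}. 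That corollary only holds for $x-x_0\ge\gamma r^3>0$ and $t-t_0\ge(\beta-\alpha)r^2$, which forces $Q^+$ to sit strictly to the right of $Q^-$ and explains the shift $h$. Concretely, one writes $w\ge \int\Gamma(z;\zeta)\,(\text{source from }\mathcal{L}_0 \text{ of a localized } w)\,d\zeta$ minus error terms involving $\p_y\Gamma$ paired against $(a-1)\p_y w$ and cutoff commutators. The positivity $r^{-4}$ from Corollary~\ref{positive}, integrated over the set $\{w\ge1/2\}\cap Q^-$ of measure $\gtrsim\mu r^6$, gives an $O(\mu)$ lower bound. The errors are controlled by the Caccioppoli bounds together with the $L^2$ potential estimates of Proposition~\ref{operatorbd}, using a small-constant bookkeeping argument. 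I expect the delicate point to be this balance: the error terms involving $\p_y\Gamma$ near $y=0$ must be small compared with the positivity gain. To secure this, I would apply Step 1 to $G(w)$ to convert a smallness-in-measure statement into pointwise control.

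\textbf{Step 3: oscillation decay and conclusion.} Set $M=\sup w$ and $m=\inf w$ on $\mathcal{B}^+_r$. One of $w-m$ and $M-w$ is at least $(M-m)/2$ on half of $Q^-$ in measure. Applying Step 2 to that function and normalizing yields $\operatorname{osc}_{Q^+}w\le(1-\delta/2)\operatorname{osc}w$. We iterate over dyadic scales, with the shifted centers chosen so that they converge back to the target point, and combine this with the interior estimate of \cite{XZZ24} and the scaling reduction. The result is $|w(z)-w(z')|\lesssim \|z-z'\|^\alpha\|w\|_{L^\infty}$ on $\mathcal{B}^+_{1/2}$, which gives \eqref{holderesti}.
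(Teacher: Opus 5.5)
Your Steps 1 and 3 are sound and match the paper: Moser iteration through the $\Gamma$-representation and Proposition~\ref{operatorbd} as in Lemma~\ref{localbd}, and the oscillation reduction with the larger ball placed to the left of the target point as in Corollary~\ref{osciesti}. The gap is in Step~2, which is the heart of the proof. The mechanism you describe ``concretely'' is perturbative. You write $\mathcal{L}_0(w\psi)\ge w\,\mathcal{L}_0\psi-2\partial_yw\,\partial_y\psi+\psi\,\partial_y\big((a-1)\partial_yw\big)$ and represent $w\psi$ through $\Gamma$. The resulting error terms, such as $\int\partial_\eta\Gamma\,(a-1)\psi\,\partial_\eta w$ and $\int\Gamma\,\partial_\eta w\,\partial_\eta\psi$, are not small:
\begin{itemize}
\item $a$ is only measurable with $c\le a\le c^{-1}$, so $a-1$ is of size $O(1)$.
\item Everything is scale invariant, so Caccioppoli only gives $\|\partial_yw\|_{L^2}\lesssim\|w\|_{L^2}$ on comparable cylinders.
\item Proposition~\ref{operatorbd} controls these terms only in $L^3$ or $L^6$, not pointwise.
\end{itemize}
These errors are therefore of the same order as the $O(\mu)$ gain from Corollary~\ref{positive}, and no choice of cutoff or scale produces the ``small constant'' you need. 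A perturbation off $\mathcal{L}_0$ of this kind cannot give expansion of positivity for merely measurable coefficients. Your fallback, applying Step~1 to $G(w)$ to turn smallness in measure into pointwise control, does not say where any smallness comes from.

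The missing idea is Kruzkov's mechanism, in which the smallness is relative rather than absolute. Take the logarithmic subsolution $V=\log^+\big(h/(w+h^2)\big)\le\log(1/h)$. It satisfies $\partial_tV+y\partial_xV-\partial_y(a\partial_yV)+a(\partial_yV)^2\le0$. The extra term $a(\partial_yV)^2$ gives $\|\partial_yV\|_{L^2}^2\lesssim\log(1/h)$, while $V$ itself has size $\log(1/h)$. Now:
\begin{itemize}
\item Apply the weak Poincar\'e inequality (Lemma~\ref{weakpoin}) to $V$, not to $w$. The error $(V-H)^+$ is then only $O\big((\log(1/h))^{1/2}\big)$ in $L^2$.
\item Use the positivity of $\Gamma$ not to bound $w$ from below, but to show that the dynamic average satisfies $H\le\lambda_0\log(1/h)$ with $\lambda_0<1$ (Lemma~\ref{hcontrol}).
\item That bound requires $V=0$ on a fixed fraction of \emph{every} time slice in a late time window. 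This comes from the space-time measure hypothesis by a separate log-energy argument (Lemma~\ref{measureesti}), a step absent from your outline.
\item Apply local boundedness to the subsolution $(V-\lambda_0\log(1/h))^+$. This gives $\sup V\le\lambda_0\log(1/h)+C(\log(1/h))^{1/2}$ on the shifted ball.
\end{itemize}
For $h$ small this yields $V\le\lambda'\log(1/h)$ with $\lambda'<1$, hence $w\ge h^{1+\lambda'}-h^2>0$ there. This is the lower bound that Step~3 needs.
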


\begin{comment}
\begin{definition}\label{defofweaksol}
We say that $w\in L^{\infty}\cap W^{1,1}\cap W^{2,1}_{y}(\mathcal{B}_{r_{0}}^{+}(z_{0}))$ is the weak solution of \eqref{diverform}, provided
\begin{itemize}
    \item[(i)] For any test function $\phi(t,x,y)\in C_{c}^{1}(\mathcal{B}_{r_{0}}^{+}(z_{0}))$, we have
\begin{equation}
    \iiint_{\mathcal{B}_{r_{0}}^{+}(z_{0})}(\p_{t}w+y\p_{x}w)\cdot\phi dtdxdy+\iiint_{\mathcal{B}_{r_{0}}^{+}(z_{0})}a\cdot\p_{y}w\cdot\p_{y}\phi dtdxdy=0.
\end{equation}
\item[(ii)] $w$ satisfies the Neumann boundary condition $\p_{y}w|_{y=0}$ in the sense of boundary trace.
\end{itemize}
\end{definition}
\end{comment}
We note that the Neumann boundary condition $\p_{y}w|_{y=0}$ is well defined in the sense of boundary trace, since $w\in W_{y}^{2,1}$. We remark also that the right hand side of \eqref{holderesti} can be replaced by $\|w\|_{L^{1}(\mathcal{B}_{r_{0}}^{+}(z_{0}))}$ in view of \eqref{onetoinfty}, although this is not important for our purposes.

The rest of the section is devoted to the proof of Proposition \ref{holder}. 

The proof of Proposition \ref{holder} consists of two main steps. In the first step (subsection \ref{localboundness}), we prove a Moser--type local pointwise bound, see Lemma \ref{localbd}, which shows that a nonnegative subsolution of \eqref{diverform} can be controlled by the $L^{2}$ norm of the subsolution in a larger domain. In the second main step (subsection \ref{ExpansionofPositivity}), we prove an ``expansion of positivity" estimate (Lemma \ref{oscillation}) for the non--negative weak solution of \eqref{diverform}. This estimate implies that the ``local oscillations" of the weak solution shrink quantitatively in smaller ultraparabolic balls, and by translation and scaling invariance of these estimates, ultimately leads to H\"older continuity of weak solutions to \eqref{diverform}.

In contrast to the uniformly parabolic case, the lack of diffusion in the $x$--direction precludes the use of Poincar\'e inequalities for fixed $t$. To overcome this difficulty, we follow the approach  in \cite{XZZ24}, which is based on studying the log--transform of the weak solution (introduced in \cite{K64}), as well as the simplified presentation in \cite{CC21}. A key original idea in \cite{XZZ24} is the introduction of a weak form of the Poincar\'e inequality (Lemma \ref{weakpoin}) for nonnegative subsolutions and a careful analysis of a new dynamic average of the solution. 

In our case of boundary estimates, the transport operator $\p_{t}+y\p_{x}$ only produces the transport effect to the right. As a result, the positivity of the solution may only expand along the positive $x$ direction. Thus, we need significant modifications of the standard scheme in the proof of Lemma \ref{oscillation}.

We now turn to the detailed proof. First we note that if $w(t,x,y)$ satisfies \eqref{diverform}, then 
\begin{equation*}
\widetilde{w}(t,x,y)=w(t_{0}+r_{0}^{2}t,x_{0}+r_{0}^{3}x,r_{0}y), \quad \widetilde{a}(t,x,y)=a(t_{0}+r_{0}^{2}t,x_{0}+r_{0}^{3}x,r_{0}y)
\end{equation*}
satisfy
\begin{equation}
\begin{cases}
\p_{t}\widetilde{w}+y\p_{x}\widetilde{w}-\p_{y}(\widetilde{a}\p_{y}\widetilde{w})=0,\ \text{in}\ \mathcal{B}_{1}^{+}.\\
\p_{y}\widetilde{w}|_{y=0}=0.
\end{cases}
\end{equation}
Therefore, without loss of generality, we assume that $t_{0}=x_{0}=0$ and $r_{0}=1$. We allow the implied constants for the inequalities in this section to depend on the ellipticity constant $c$. 

\subsection{Local Boundedness}\label{localboundness}
In this subsection, we prove a Moser--type local boundedness result for the equation \eqref{diverform}.

\begin{lemma}\label{localbd}
Assume that $w$ is a nonnegative subsolution of \eqref{diverform} in $\mathcal{B}_{1}^{+}$. Then there exists an $l>0$ such that for any $0<r_{2}<r_{1}\leq 1$, we have
\begin{equation}\label{localbdesti}
{\rm ess}\sup_{\mathcal{B}_{r_{2}}^{+}}w\lesssim (r_{1}-r_{2})^{-l}\|w\|_{L^{2}(\mathcal{B}_{r_{1}}^{+})}.
\end{equation}
\eqref{localbdesti} also holds for a weak solution of \eqref{diverform}.
\end{lemma}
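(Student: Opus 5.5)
We prove Lemma \ref{localbd} by Moser iteration, in the style of Pascucci--Polidoro \cite{PP04}, replacing the whole-space fundamental solution by the half-space Neumann fundamental solution $\Gamma$ of Section \ref{fundamental}. The Neumann condition $\p_y w|_{y=0}=0$ lets us integrate by parts in $y$ without boundary terms, so the half space causes no new difficulty here. The one-sided transport only enters through positivity, which is not needed for local boundedness.

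\textbf{Step 1 (energy/Caccioppoli estimate).} For $p\ge 1$, a cutoff $\psi$ supported in $\mathcal{B}^+_{r_1}$ (supported away from the parabolic boundary in $t$ and $x$, but \emph{not} vanishing at $y=0$), with $\psi\equiv1$ on $\mathcal{B}^+_{\rho}$ and $|\p_y\psi|\lesssim (r_1-\rho)^{-1}$, $|(\p_t+y\p_x)\psi|\lesssim (r_1-\rho)^{-3}$, we test the subsolution inequality with $w^{2p-1}\psi^2$. This is justified for nonnegative subsolutions in $W^{1,1}\cap W^{2,1}_y\cap L^\infty$, via truncation if needed. The boundary term $a\p_y w\, w^{2p-1}\psi^2|_{y=0}$ vanishes by the Neumann condition. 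Ellipticity \eqref{elliptic} then gives, with $v=w^p$,
\[
\|\p_y(v\psi)\|_{L^2}^2\lesssim p^2(r_1-\rho)^{-6}\|v\|_{L^2(\mathcal{B}^+_{r_1})}^2 .
\]
The same computation shows that $g:=v\psi$ satisfies
\[
\mathcal{L}_0 g=\p_y(F_1)+F_0
\]
in $\mathbb{H}$, with zero Neumann data, where $\|F_1\|_{L^2}+\|F_0\|_{L^2}\lesssim p(r_1-\rho)^{-3}\|v\|_{L^2(\mathcal{B}^+_{r_1})}$, up to the standard sign issue for subsolutions. Specifically, $F_1=(1-a)\p_y g+(\text{cutoff terms})$, the $a\p_y$ part being rewritten as $\p_y$ of an $L^2$ function, and $F_0$ collects terms like $v(\p_t+y\p_x)\psi$ and $\p_y v\,\p_y\psi$.

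\textbf{Step 2 (gain of integrability).} We represent
\[
g(z)=\int \Gamma(z;\zeta)\big(F_0-\p_{y_0}F_1\big)(\zeta)\,d\zeta .
\]
For the subsolution case, $g\le$ this representation by the maximum principle, using $\Gamma\ge0$. We integrate by parts in $y_0$; no boundary term appears at $y_0=0$ because $\Gamma$ satisfies the dual Neumann condition from \eqref{dualeq4}. Then $g\le\int\Gamma F_0+\int \p_{y_0}\Gamma\, F_1$. Proposition \ref{operatorbd} yields
\[
\|g\|_{L^3}\lesssim p(r_1-\rho)^{-3}\|v\|_{L^2(\mathcal{B}^+_{r_1})} .
\]
Hence
\[
\|w\|_{L^{3p}(\mathcal{B}^+_\rho)}\le \big(Cp(r_1-\rho)^{-3}\big)^{1/p}\|w\|_{L^{2p}(\mathcal{B}^+_{r_1})},
\]
a gain by the factor $3/2$.

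\textbf{Step 3 (iteration).} We take $p_k=(3/2)^k$ and radii $\rho_k=r_2+(r_1-r_2)2^{-k}$. The product $\prod_k (Cp_k 2^{3k}(r_1-r_2)^{-3})^{1/(2p_k)}$ converges, to $C(r_1-r_2)^{-l}$ with $l=3\sum_k 1/(2p_k)$. Letting $k\to\infty$ gives \eqref{localbdesti}. For a weak solution $w$ of either sign, we apply the result to $w^+$ and $w^-$, which are nonnegative subsolutions by the usual convexity argument.

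\textbf{Main obstacle.} The delicate step is Step 2: making the representation formula rigorous and the comparison legitimate in the half space with a variable coefficient $a$. I would first write $\p_y(a\p_y g)=\p_y^2 g+\p_y((a-1)\p_y g)$, so that the variable-coefficient part becomes an $L^2$ divergence term controlled by Step 1; this is exactly what the $\p_{y_0}\Gamma$ bound in Proposition \ref{operatorbd} handles. The justification of $g\le$ potential for subsolutions would then follow from the weak maximum principle for $\mathcal{L}_0$ with Neumann data, applied to compactly supported-in-$(t,x)$ functions.
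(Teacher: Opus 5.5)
Your proposal is correct and follows the paper's proof essentially step for step. The steps are the same: a Caccioppoli estimate, then representation of the cut-off subsolution through the Neumann fundamental solution $\Gamma$ (using only $\Gamma\ge 0$), then the potential bounds of Proposition~\ref{operatorbd} giving the $L^2\to L^3$ gain, and finally Moser iteration with exponents $(3/2)^k$. The paper phrases the iteration as a reverse H\"older inequality applied to the subsolutions $w^{(3/2)^n}$, which is equivalent to your testing with $w^{2p-1}\psi^2$.

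One justification should be fixed. When you move $\partial_{y_0}$ off $F_1$, the boundary term is $\Gamma F_1|_{y_0=0}$. It vanishes because $F_1|_{y=0}=0$, by the Neumann condition on $w$ together with $\partial_y\psi|_{y=0}=0$, and not because of the dual Neumann condition on $\Gamma$. The dual Neumann condition is what makes the representation formula $g=\int\Gamma\,\mathcal{L}_0 g$ itself valid.
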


\begin{proof}
We remark that the proof of Lemma \ref{localbd} is similar to \cite{PP04} (the interior estimate case). For completeness, we give a self--contained proof below.

\textit{Step 1: energy estimates.} For $0<r_{2}<r_{1}\leq 1$, choose a smooth cut--off function $\chi(t,x,y)=\chi_{1}(t)\chi_{2}(x)\chi_{3}(y)$ such that
\begin{equation}
\begin{split}
&\mathrm{supp}(\chi_{1})\subset (-r_{1}^{2},0],\ \chi_{1}(t)\equiv 1\ \text{for}\ t\in[-r_{2}^{2},0],\\
&\mathrm{supp}(\chi_{2})\subset (-r_{1}^{3},r_{1}^{3}),\ \chi_{2}(x)\equiv 1\ \text{for}\ x\in[-r_{2}^{3},r_{2}^{3}],\\
&\mathrm{supp}(\chi_{3})\subset [0,r_{1}),\ \chi_{3}(y)\equiv 1\ \text{for}\ y\in[0,r_{2}].
\end{split}
\end{equation}
Multiplying \eqref{diverform} by $w\chi^{2}$ and integrating on $\mathcal{B}_{r_{1}}^{+}$, we obtain that
\begin{equation}
\begin{split}
&\iint_{[-r_{1}^{3},r_{1}^{3}]\times[0,r_{1}]}|w|^{2}(0,x,y)\chi^{2}dxdy+2\iiint_{\mathcal{B}_{r_{1}}^{+}}a(t,x,y)|\p_{y}w|^{2}\chi^{2}dtdxdy\\
&\leq 2\iiint_{\mathcal{B}_{r_{1}}^{+}}|w|^{2}\chi(\p_{t}\chi+y\p_{x}\chi)\,dtdxdy-4\iiint_{\mathcal{B}_{r_{1}}^{+}}a(t,x,y)w\p_{y}w\cdot \chi\p_{y}\chi \,dtdxdy.
\end{split}
\end{equation}
Direct calculations show that
\begin{equation}\label{caccioppoli}
\iiint_{\mathcal{B}_{r_{2}}^{+}}|\p_{y}w|^{2}\chi^{2}dtdxdy\lesssim(r_{1}-r_{2})^{-3}\iiint_{\mathcal{B}_{r_{1}}^{+}}|w|^{2}dtdxdy.
\end{equation}

\textit{Step 2: Sobolev--type inequality.} Recall the fundamental solution $\Gamma(\cdot,\cdot)$ for the operator $\mathcal{L}_{0}=\p_{t}+y\p_{x}-\p_{y}^{2}$ with the Neumann boundary condition (see Section \ref{fundamental}). For almost every $(t,x,y)\in \mathcal{B}_{r_{2}}^{+}$, we have
\begin{equation}
\begin{split}
w(t,x,y)=(w\chi)(t,x,y)&=\iiint_{\mathcal{B}_{r_{1}}^{+}}\Gamma(t,x,y;\tau,\xi,\eta)\left(\p_{\tau}+\eta\p_{\xi}-\p_{\eta}^{2}\right)(w\chi)\,d\tau d\xi d\eta\\
&=\iiint_{\mathcal{B}_{r_{1}}^{+}}\Gamma(t,x,y;\tau,\xi,\eta)\left(\p_{\tau}+\eta\p_{\xi}\right)(w\chi)\,d\tau d\xi d\eta\\
&\quad+\iiint_{\mathcal{B}_{r_{1}}^{+}}\p_{\eta}\Gamma(t,x,y;\tau,\xi,\eta)\p_{\eta}(w\chi)\,d\tau d\xi d\eta:=\mathcal{T}+\mathcal{D}.
\end{split}
\end{equation}
Moreover, we have by integration by parts,
\begin{equation}
\begin{split}
&\mathcal{T}\leq \iiint_{\mathcal{B}_{r_{1}}^{+}}\Gamma(t,x,y;\tau,\xi,\eta)w(\tau,\xi,\eta)\left(\p_{\tau}+\eta\p_{\xi}\right)\chi\,d\tau d\xi d\eta\\
&\quad -\iiint_{\mathcal{B}_{r_{1}}^{+}}a(\tau,\xi,\eta)\p_{\eta}\Gamma(t,x,y;\tau,\xi,\eta)\cdot \p_{\eta}w(\tau,\xi,\eta)\cdot\chi(\tau,\xi,\eta)d\tau d\xi d\eta\\
&\quad-\iiint_{\mathcal{B}_{r_{1}}^{+}}a(\tau,\xi,\eta)\Gamma(t,x,y;\tau,\xi,\eta)\cdot \p_{\eta}w(\tau,\xi,\eta)\cdot\p_{\eta}\chi(\tau,\xi,\eta)d\tau d\xi d\eta=:I_1+I_2+I_3,
\end{split}
\end{equation}
and
\begin{equation}
\begin{split}
\mathcal{D}&=\iiint_{\mathcal{B}_{r_{1}}^{+}}\p_{\eta}\Gamma(t,x,y;\tau,\xi,\eta)\p_{\eta}w(\tau,\xi,\eta)\cdot \chi(\tau,\xi,\eta)d\tau d\xi d\eta\\
&\qquad+\iiint_{\mathcal{B}_{r_{1}}^{+}}\p_{\eta}\Gamma(t,x,y;\tau,\xi,\eta)w(\tau,\xi,\eta)\cdot \p_{\eta}\chi(\tau,\xi,\eta)d\tau d\xi d\eta=:I_4+I_5.
\end{split}
\end{equation}
In the above calculation, we have used the boundary condition of $w$ and the assumption that $w$ is a subsolution of \eqref{diverform}.

By Proposition \ref{operatorbd}, we have the following bounds 
\begin{equation}\label{eq:hIs}
\begin{split}
&\|I_{1}\|_{L^{6}(\mathbb{H})}\lesssim (r_{1}-r_{2})^{-3}\|w\|_{L^{2}(\mathcal{B}_{r_{1}}^{+})},\quad
\|I_{2}\|_{L^{3}(\mathbb{H})}\lesssim \|\p_{y}w\|_{L^{2}(\mathcal{B}_{r_{1}}^{+})},\\
&\|I_{3}\|_{L^{6}(\mathbb{H})}\lesssim (r_{1}-r_{2})^{-1}\|\p_{y}w\|_{L^{2}(\mathcal{B}_{r_{1}}^{+})},\quad
\|I_{4}\|_{L^{3}(\mathbb{H})}\lesssim \|\p_{y}w\|_{L^{2}(\mathcal{B}_{r_{1}}^{+})},\\
&\|I_{5}\|_{L^{3}(\mathbb{H})}\lesssim (r_{1}-r_{2})^{-1}\|w\|_{L^{2}(\mathcal{B}_{r_{1}}^{+})}.
\end{split}
\end{equation}
It follows from \eqref{eq:hIs} that 
\begin{equation}
\|w\|_{L^{3}(\mathcal{B}_{r_{2}}^{+})}\leq\|w\chi\|_{L^{3}(\mathbb{H})}\lesssim (r_{1}-r_{2})^{-3}\big(\|w\|_{L^{2}(\mathcal{B}_{r_{1}}^{+})}+\|\p_{y}w\|_{L^{2}(\mathcal{B}_{r_{1}}^{+})}\big).
\end{equation}

\textit{Step 3: A reverse H\"older's inequality.} For fixed $0<r_{2}<r_{1}\leq 1$, by Step 1 and Step 2, we obtain that
\begin{equation}\label{reverseholder}
\begin{split}
\|w\|_{L^{3}(\mathcal{B}_{r_{2}}^{+})}&\lesssim (r_{1}-r_{2})^{-3}\Big(\|w\|_{L^{2}(\mathcal{B}_{\frac{r_{1}+r_{2}}{2}}^{+})}+\|\p_{y}w\|_{L^{2}(\mathcal{B}_{\frac{r_{1}+r_{2}}{2}}^{+})}\Big)\lesssim (r_{1}-r_{2})^{-\frac{9}{2}}\|w\|_{L^{2}(\mathcal{B}_{r_{1}}^{+})}.
\end{split}
\end{equation}

\textit{Step 4: Moser's Iteration.} Set
\begin{equation}
r^{(n)}=r_{2}+2^{-n}(r_{1}-r_{2}),\ \sigma_{n}=\left(3/2\right)^{n},\ w_{n}=w^{\sigma_{n}}.
\end{equation}
One can check that $w_{n}$ are also nonnegative subsolutions of \eqref{diverform}. By \eqref{reverseholder}, we have
\begin{equation}
\|w_{n}\|_{L^{3}(\mathcal{B}_{r^{(n+1)}}^{+})}\lesssim (r^{(n)}-r^{(n+1)})^{-\frac{9}{2}}\|w_{n}\|_{L^{2}(\mathcal{B}_{r^{(n)}}^{+})}.
\end{equation}
Equivalently,
\begin{equation}
\|w\|_{L^{2\sigma_{n+1}}(\mathcal{B}_{r^{(n+1)}}^{+})}\lesssim (r^{(n)}-r^{(n+1)})^{-\frac{9}{2\sigma_{n}}}\|w\|_{L^{2\sigma_{n}}(\mathcal{B}_{r^{(n)}}^{+})}.
\end{equation}
Sending $n\to\infty$, we get
\begin{equation}
\begin{split}
{\rm ess}\sup_{\mathcal{B}_{r_{2}}^{+}}w&\lesssim\left(\prod_{k=0}^{\infty}(r^{(k)}-r^{(k+1)})^{-\frac{9}{2\sigma_{k}}}\right)\|w\|_{L^{2}(\mathcal{B}_{r_{1}}^{+})}\lesssim (r_{1}-r_{2})^{-l}\|w\|_{L^{2}(\mathcal{B}_{r_{1}}^{+})},
\end{split}
\end{equation}
for some $l>0$. The proof of Lemma \ref{localbd} is now complete.
\end{proof}

\begin{remark}By Lemma \ref{localbd}, we obtain that
\begin{equation}
\begin{split}
{\rm ess}\sup_{\mathcal{B}_{r_{2}}^{+}}w&\lesssim (r_{1}-r_{2})^{-l}\|w\|_{L^{2}(\mathcal{B}_{r_{1}}^{+})}\leq  (r_{1}-r_{2})^{-l}\|w\|_{L^{1}(\mathcal{B}_{r_{1}}^{+})}^{1/2}\cdot\big({\rm ess}\sup_{\mathcal{B}_{r_{1}}^{+}}w\big)^{1/2}\\
&\leq  \frac{1}{2}\big({\rm ess}\sup_{\mathcal{B}_{r_{1}}^{+}}w\big)+(r_{1}-r_{2})^{-2l}\|w\|_{L^{1}(\mathcal{B}_{r_{1}}^{+})}.
\end{split}
\end{equation}
By applying a standard iteration, we can also obtain the following $L^{1}-L^{\infty}$ estimates:
\begin{equation}\label{onetoinfty}
{\rm ess}\sup_{\mathcal{B}_{r_{2}}^{+}}w\lesssim (r_{1}-r_{2})^{-\widetilde{l}}\|w\|_{L^{1}(\mathcal{B}_{r_{1}}^{+})},
\end{equation}
for some $\widetilde{l}>0$ independent of $0<r_2<r_1$.
\end{remark}
\subsection{Expansion of Positivity}\label{ExpansionofPositivity}
In this subsection, we establish the following ``expansion of positivity estimate" for weak solutions to equation \eqref{diverform}.
\begin{lemma}\label{oscillation}
Assume that $w$ is a nonnegative weak solution to \eqref{diverform} in $\mathcal{B}_{1}^{+}$. Then there exist two constants $0<\theta<\frac{1}{100}$, and $h^{*}>0$, such that if
\begin{equation}\label{measure}
\left|\left\{(t,x,y)\in\mathcal{B}_{\theta}^{+}: w(t,x,y)\geq 1\right\}\right|\geq \frac{1}{2}|\mathcal{B}_{\theta}^{+}|,
\end{equation}
then we have
\begin{equation}\label{lowerbd}
w(t,x,y)\geq h^{*}, \quad{\rm for\,\, a.e.}\,\, (t,x,y)\in\mathcal{B}_{\frac{\theta^{2}}{2}}^{+}\left(0,99\theta^{3}/100,0\right). 
\end{equation}
\end{lemma}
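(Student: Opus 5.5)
The plan follows the Kruzkov log-transform scheme of \cite{XZZ24}, adapted to the half space. Our positivity input is Proposition~\ref{posi}, through its quantitative form Corollary~\ref{positive}, which only holds to the right of the source.

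We work with $w_\epsilon=w+\epsilon$ and set $G=\log^{+}\big(h/w_\epsilon\big)$ for a small parameter $h$ to be fixed. Since $-\log$ is convex and decreasing, $G$ is a nonnegative subsolution of \eqref{diverform}. More precisely, $g=-\log w_\epsilon$ satisfies
\[
\p_t g+y\p_x g-\p_y(a\p_y g)=-a|\p_y g|^2\le 0,
\]
and the Neumann condition is preserved. Lemma~\ref{localbd} then bounds $\sup G$ on a small ball by $\|G\|_{L^2}$ on a larger ball. It therefore suffices to show that $\|G\|_{L^2}$ on a suitable box containing the target ball $\mathcal{B}^{+}_{\theta^2/2}(0,99\theta^3/100,0)$ is $\lesssim \|G\|_{L^1}$-small compared with $\log(h^{-1})$, up to a constant. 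Then $\sup G\le \tfrac12\log(1/h)$ for $h$ small, which gives $w\ge h^{1/2}=:h^*$. The scaling with $\theta$ has to be arranged so that the larger ball in Lemma~\ref{localbd} fits inside $\mathcal{B}^{+}_1$. This is why the target ball has radius $\theta^2/2$ and sits at $x\approx\theta^3$: it lies to the right of $\mathcal{B}^{+}_\theta$ and is still well inside $\mathcal{B}^{+}_1$.

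To get the $L^1$ or $L^2$ smallness of $G$, we use two facts. First, testing the equation for $g$ with $\chi^2$ gives the weighted energy bound $\iint a|\p_y g|^2\chi^2\lesssim 1$, uniformly in $h$ and $\epsilon$. Second, the measure hypothesis \eqref{measure} says $G=0$, i.e.\ $w_\epsilon\ge h$, on half of $\mathcal{B}^{+}_\theta$. These two facts feed into the weak Poincar\'e inequality (Lemma~\ref{weakpoin}, which we use in its half-space version). Its proof goes through the representation formula with the Neumann fundamental solution $\Gamma$, as in Step~2 of Lemma~\ref{localbd}. For $z$ in the later, right-shifted box and $\zeta$ in the earlier box $\mathcal{B}^{+}_\theta$, Corollary~\ref{positive} gives $\Gamma(z;\zeta)\gtrsim\theta^{-4}$. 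We choose the parameters there as $t_0\le-\beta\theta^2$, $|x_0|\le\kappa\theta^3$, and $x\ge\gamma\theta^3$ with $\gamma>\kappa$. It is exactly this requirement $x>x_0$ that forces the shift to $99\theta^3/100$. Writing $G(z)$ as a $\Gamma$-average of its past values plus a term controlled by $\|\p_y g\|_{L^2}$ through Proposition~\ref{operatorbd}, the positivity of $\Gamma$ transports the information ``$G=0$ on half of $\mathcal{B}^{+}_\theta$'' to an average bound for $G$ on the later box. Combined with the energy bound, this gives $\|G\|_{L^1(\text{box})}\lesssim 1$, independent of $h$. Since $G$ is a subsolution, Lemma~\ref{localbd} together with the $L^1$–$L^\infty$ version \eqref{onetoinfty} yields $\sup G\le C_0$ on $\mathcal{B}^{+}_{\theta^2/2}(0,99\theta^3/100,0)$. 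Choosing $h=e^{-2C_0}$ gives $w\ge h e^{-C_0}$, uniformly as $\epsilon\to0$.

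The main obstacle is this transport step, i.e.\ the half-space weak Poincar\'e inequality with its one-sided geometry. In the interior argument of \cite{XZZ24} the ``good set'' can lie in any direction. Here both the boundary terms from integrating by parts in $\eta$ and the region where $\Gamma$ is bounded below must be handled carefully. The boundary terms vanish thanks to the Neumann conditions on $w$ and $\Gamma$. The lower-bound region is only to the right, and the time ordering and $x$-shift must be compatible with Corollary~\ref{positive}. I would first carry out this estimate for $G$ with explicit cutoffs $\chi_1(t)\chi_2(x)\chi_3(y)$. The time cutoff is placed so that the earlier time slab is $[-\theta^2,-\beta\theta^2]$ and the target times are $\ge-\theta^4/4$. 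Taking $\theta$ small keeps all the constants of Corollary~\ref{positive} fixed.
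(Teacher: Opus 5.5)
Your architecture is the paper's: a log-transform subsolution, the weak Poincar\'e inequality with the dynamic average $H$, Corollary~\ref{positive} forcing the shift to $x\approx\frac{99}{100}\theta^3$, then Lemma~\ref{localbd}. The gap is in the quantitative step that is supposed to close the argument, namely the claim $\iint a|\partial_y g|^2\chi^2\lesssim 1$ uniformly in $h$ and $\epsilon$.

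\begin{itemize}
\item Testing the equation for $g$ against $\chi^2$ leaves the term $\iiint g\,(\partial_t+y\partial_x)\chi^2$. This is controlled only by the supremum of the log-function. For your $G=\log^+(h/(w+\epsilon))$ that supremum is $\log(h/\epsilon)$, which is unbounded as $\epsilon\to0$. The paper's bound \eqref{vybound} is $\lesssim\log\frac1h$, not $O(1)$.
\item Positivity of $\Gamma$ does not give an $h$-independent bound on the dynamic average either. Off the good set, $G$ may be as large as its supremum. What you actually get is Lemma~\ref{hcontrol}: $H\le\lambda_0\sup G$ with a fixed $\lambda_0<1$.
\item Hence neither $\|G\|_{L^1(\mathrm{box})}\lesssim1$ nor ``$\|G\|_{L^2}$ small compared with $\log\frac1h$'' is available. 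At this stage $G$ may sit near $\lambda_0\log\frac1h$ on the whole box.
\item Applying Lemma~\ref{localbd} to $G$ itself would let the Moser constant swallow the gain $\lambda_0<1$.
\end{itemize}
(Minor slip: $\sup G\le\frac12\log\frac1h$ gives $w+\epsilon\ge h^{3/2}$, not $h^{1/2}$.)

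The missing idea is to tie the floor to $h$ and play the fractional gain $\lambda_0<1$ against an error of size $\sqrt{\log(1/h)}$.
\begin{itemize}
\item Take $V=\log^+(h/(w+h^2))$. Then $0\le V\le\log\frac1h$ with no auxiliary regularization.
\item The energy estimate gives $\iiint|\partial_yV|^2\chi^2\lesssim\log\frac1h$.
\item Lemmas~\ref{weakpoin}--\ref{hcontrol} then give $\|(V-\lambda_0\log\frac1h)^+\|^2_{L^2}\lesssim\log\frac1h$ on $\mathcal{B}^+_{\theta^2}(0,\frac{99}{100}\theta^3,0)$.
\item Apply Lemma~\ref{localbd} to the subsolution $(V-\lambda_0\log\frac1h)^+$. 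This gives $\sup V\le\lambda_0\log\frac1h+C(\theta)(\log\frac1h)^{1/2}$ on the half-size ball.
\item Since $\lambda_0<1$, this forces $w+h^2\ge h^{1+\lambda_0}e^{-C\sqrt{\log(1/h)}}\ge2h^2$ for $h$ small, i.e.\ $w\ge h^*:=h^2$.
\end{itemize}
With a floor $\epsilon$ independent of $h$, $\sup G$ is not comparable to $\log\frac1h$ and the argument does not close uniformly as $\epsilon\to0$.

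Using the space-time good set directly, instead of the slice-wise Lemma~\ref{measureesti}, can be made to work, with two adjustments:
\begin{itemize}
\item First discard the part with $t_0>-\beta\theta^2$ and the part with $|x_0|>\kappa\theta^3$ for some $\kappa<\frac{99}{100}$. The hypothesis only gives $|x_0|<\theta^3$, which is incompatible with the requirement $\kappa<\gamma$ in Corollary~\ref{positive}.
\item Then choose $\psi$ with $(\partial_t+y\partial_x)\psi\ge0$ everywhere and $\gtrsim\theta^{-2}$ on the remaining slab.
\end{itemize}
The paper instead propagates the measure information to time slices on $|x|<\frac{49}{50}\theta^3$ via Lemma~\ref{measureesti}.
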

\begin{remark}
We note that in Lemma \ref{oscillation}, there is a significant shift in the $x$ direction in the domain for the bounds in \eqref{lowerbd}. This shift is needed since the operator $\p_{t}+y\p_{x}$ only provides a one--sided transport effect, in sharp contrast to the case when there is no boundary. 
\end{remark}

\begin{comment}
\begin{figure}[htbp]
    \centering
    
    \begin{tikzpicture}[>=Stealth, scale=1.5]
        % 坐标轴（代替大正方形的左边和下边）
        \draw[->, thick] (0,0) -- (4.1,0) node[right] {$x$};  % x轴
        \draw[->, thick] (0,0) -- (0,4.1) node[above] {$t$};  % t轴
        
        % 大正方形的右边和上边（完成正方形）
        \draw[thick] (0,4) -- (4,4);  % 上边
        \draw[thick] (4,0) -- (4,4);  % 右边
        
        % 大正方形内部右上角的小正方形（贴紧上边界，向左移动）
        \fill[gray!60] (2.8,3.2) rectangle (3.8,4);
        \draw[thick] (2.8,3.2) rectangle (3.8,4);
       % \node[font=\small, white] at (3.3,3.6);
        
        % 大正方形内部下方的灰色不规则图形（去掉右下角的）
        % 第一个不规则图形（左下）
        \fill[gray!60] (0.3,0.3) -- (0.5,1.2) -- (1.5,1.5) -- (1.8,0.8) -- (1.2,0.4) -- cycle;
        
        % 第二个不规则图形（中下）
        \fill[gray!60] (1.8,0.5) -- (2.2,1.3) -- (3.0,1.1) -- (2.8,0.4) -- (2.3,0.3) -- cycle;
        
        % 从彩色区域指向小正方形的箭头（黑色）
        \draw[->, thick, black] (1.5,1.2) to[out=60, in=-120] (3.1,3.3);
        
        % 文字标注（与箭头分开，黑色）
        \node[align=center, font=\small, black] at (1.5,2.8) {regularity expansion};
        
        % 右下角的颜色说明（调整位置）
        \fill[gray!60] (4.5,0) rectangle (5,0.5);
        \draw (4.5,0) rectangle (5,0.5);
        \node[align=left, right] at (5.1,0.25) {: positivity};
    \end{tikzpicture}
    
    \caption{Positivity Expansion}
    \label{fig:expansion-positivity}
\end{figure}
\end{comment}

Lemma \ref{oscillation} follows from the following series of lemmas.

\begin{lemma}[\bf Weak Poincar\'e Inequality]\label{weakpoin}
Assume that $w$ is a nonnegative subsolution of \eqref{diverform} in $\mathcal{B}_{1}^{+}$. Let $0<\theta<1/4$ and $\psi(t,x,y)$ be a cut--off function compactly supported in $\mathcal{B}_{\frac{1}{2}}^{+}(0,0,0)$ with $\psi|_{\mathcal{B}_{\theta^{2}}^{+}(0,\frac{99}{100}\theta^{3},0)}\equiv 1$ and $\p_{y}\psi|_{y=0} = 0$. Define $H$ as the solution of
\begin{equation}\label{auxieq}
\begin{cases}
\p_{t}H+y\p_{x}H-\p_{y}^{2}H=w\cdot(\p_{t}\psi+y\p_{x}\psi-\p_{y}^{2}\psi),\ t>-1,x\in\mathbb{R},y\in\mathbb{R}^{+},\\
H|_{t=-1}=0,\ \p_{y}H|_{y=0}=0.
\end{cases}
\end{equation}
Then we have the bounds
\begin{equation}\label{weakpo}
\iiint_{\mathcal{B}_{\theta^{2}}^{+}(0,\frac{99}{100}\theta^{3},0)}|(w-H)^{+}|^{2}\,dtdxdy\leq C\iiint_{\mathcal{B}_{\frac{1}{2}}^{+}}|\p_{y}w|^{2}dtdxdy,
\end{equation}
where $C=C(\theta,\psi)$.
\end{lemma}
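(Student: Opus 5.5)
The plan is to write $w\psi$ through the fundamental solution $\Gamma$ of $\mathcal{L}_0$ with Neumann condition (Section \ref{fundamental}). We compare it with $H$, and the key point is that $w\psi-H$ is driven only by terms involving $\p_y w$.

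Since $\p_y\psi|_{y=0}=0$ and $\p_y w|_{y=0}=0$, a direct computation gives
\[
\mathcal{L}_0(w\psi)=\psi\,\mathcal{L}_0 w+w\,\mathcal{L}_0\psi-2\p_y w\,\p_y\psi .
\]
The subsolution property gives $\mathcal{L}_0 w\le \p_y((a-1)\p_y w)$ in the weak sense. Hence $G:=w\psi-H$ satisfies $G|_{t=-1}=0$, $\p_yG|_{y=0}=0$, and
\[
\mathcal{L}_0 G\le \p_y\big((a-1)\psi\p_y w\big)-(a-1)\p_y w\,\p_y\psi-2\p_y w\,\p_y\psi=:\p_y F_1+F_2 .
\]
Both $F_1$ and $F_2$ are pointwise bounded by $C|\p_y w|$ and supported in $\mathcal{B}^+_{1/2}$, with constants depending on the ellipticity $c$ and on $\psi$. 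When integrating by parts against $\Gamma$, the boundary term at $y=0$ vanishes because $\p_y w|_{y=0}=0$. We then use $\Gamma\ge0$ and the comparison principle for the Neumann problem. The cleanest route is a Duhamel representation, justified by approximation using $w\in W^{1,1}\cap W^{2,1}_y$. This gives, for a.e. $z$,
\[
G(z)\le \int \Gamma(z;\zeta)F_2(\zeta)\,d\zeta-\int \p_{y_0}\Gamma(z;\zeta)F_1(\zeta)\,d\zeta .
\]

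Next we use $\psi\equiv1$ on $\mathcal{B}^+_{\theta^2}(0,\tfrac{99}{100}\theta^3,0)$, so $G=w-H$ there and $(w-H)^+\le G^+$. Both integrals on the right are then estimated in $L^2$ of this small box. By Proposition \ref{operatorbd}, the first integral is in $L^6(\mathbb{H})$ and the second in $L^3(\mathbb{H})$, each with norm $\lesssim \|F_i\|_{L^2}\lesssim \|\p_y w\|_{L^2(\mathcal{B}^+_{1/2})}$. Since the box has finite measure, Hölder's inequality turns these into $L^2$ bounds with constants depending on $\theta$. Squaring yields \eqref{weakpo} with $C=C(\theta,\psi)$.

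The main obstacle is the rigorous justification of the representation inequality for a merely weak subsolution. We must handle the one-sided inequality with the positive kernel, the Neumann trace terms, and the fact that $H$ is defined on all of $\{t>-1\}$ while $w$ lives only in $\mathcal{B}_1^+$. Because $\psi$ is compactly supported, $w\psi$ and all source terms extend by zero, which resolves the last point. For the first two, we would mollify in $(t,x)$, keeping the Neumann condition. We would then test the inequality for $G$ against $\Gamma(z;\cdot)$ using the dual equation \eqref{dualeq} and Lemma \ref{lm:h2}, and pass to the limit with Proposition \ref{operatorbd}. If the pointwise representation is awkward, we would instead prove $G\le G_1$ by the weak maximum principle. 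Here $G_1$ solves $\mathcal{L}_0G_1=\p_yF_1+F_2$ with zero data, and it is given by the same potential formula; the maximum principle is applied to $(G-G_1)^+$ via an energy estimate.
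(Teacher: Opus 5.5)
Your argument is correct, but it takes a different route from the paper. The paper does not pass through the fundamental solution here. It derives the same inequality $\mathcal{L}_0 G\le \psi\,\p_y((a-1)\p_y w)-2\p_y w\,\p_y\psi$ for $G=w\psi-H$, and then tests it directly with $G^+$ over $[-\frac14,0]\times\mathbb{R}\times\mathbb{R}^+$. The divergence term is integrated by parts; its trace vanishes because $\p_y w|_{y=0}=0$, and the $\p_y G$ trace vanishes by the Neumann condition. The term involving $\|\p_y G^+\|_{L^2}$ is absorbed by Young's inequality. The term $\|\p_y w\|_{L^2}\|G^+\|_{L^2_{t,x,y}}$ is absorbed using $G^+\equiv 0$ at $t=-\frac14$ and $\|G^+\|_{L^2_{t,x,y}}\le \frac12\sup_t\|G^+(t)\|_{L^2_{x,y}}$. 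This gives $\sup_t\|G^+(t)\|^2_{L^2_{x,y}}+\|\p_y G^+\|^2_{L^2}\lesssim \|\p_y w\|^2_{L^2(\mathcal{B}^+_{1/2})}$, and restricting to the box where $\psi\equiv 1$ yields \eqref{weakpo}.

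Your proof already contains an energy estimate of exactly this type, namely the comparison $G\le G_1$ via $(G-G_1)^+$. Testing $G$ itself by $G^+$ therefore makes $G_1$, the positivity of $\Gamma$, the Duhamel justification and Proposition~\ref{operatorbd} unnecessary. It also gives the bound on the whole half-space, with a constant depending only on $c$ and $\psi$.

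What your route buys is higher integrability. You get $(w-H)^+$ in $L^3$ on the box, with the $F_2$ contribution even in $L^6$, controlled by the same right-hand side. This is the Sobolev-type gain used in Step~2 of Lemma~\ref{localbd}, but it is not needed for this lemma.
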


\begin{remark}Notice that
\begin{equation}
H(t,x,y)=\int_{-\frac{1}{4}}^{t}\int_{-\infty}^{\infty}\int_{0}^{\infty}\Gamma(t,x,y;\tau,\xi,\eta)w(\tau,\xi,\eta)(\p_{\tau}\psi+\eta\p_{\xi}\psi-\p_{\eta}^{2}\psi)d\tau d\xi d\eta.
\end{equation}
 For each $z\in\mathcal{B}_{\theta^{2}}^{+}(0,\frac{99}{100}\theta^{3},0)$, we have
\begin{equation}
\int_{-\frac{1}{4}}^{t}\int_{-\infty}^{\infty}\int_{0}^{\infty}\Gamma(t,x,y;\tau,\xi,\eta)(\p_{\tau}\psi+\eta\p_{\xi}\psi-\p_{\eta}^{2}\psi)d\tau d\xi d\eta=\psi(z)=1.
\end{equation}
Therefore, $H(z)$ can be viewed as a ``dynamic average" of $w$. %It is worth comparing with the classical Poincar\'e inequality.
\end{remark}

\begin{proof}[Proof of Lemma \ref{weakpoin}]One can check that
\begin{equation}\label{subsoleq}
\p_{t}(w\psi-H)+y\p_{x}(w\psi-H)-\p_{y}^{2}(w\psi-H)\leq \p_{y}\left((a-1)\p_{y}w\right)\psi-2\p_{y}w\p_{y}\psi.
\end{equation}
Then \eqref{weakpo} follows from testing \eqref{subsoleq} by $(w\psi-H)^{+}$ and integrating over $t\in[-\frac{1}{4},0], x\in\mathbb{R}, y\in\mathbb{R}^{+}$. For more details we refer to Lemma 3.1 in \cite{GI23}.
\end{proof}

The next lemma shows that the upper bound on $H$ can be explicitly controlled by the upper bound of $w$ by choosing a suitable cut--off function $\psi$.

\begin{lemma}\label{hcontrol}There exist a constant $0<\theta<1/4$ sufficiently small and a cut--off function $\psi$ satisfying the assumptions in Lemma \ref{weakpoin}, such that for each nonnegative weak subsolution $w$ of \eqref{diverform} with the property that
\begin{equation}\label{slidevanish}
\left|\left\{(x,y):\,w(t,x,y)=0,\ |x|<\frac{49}{50}\theta^{3},\ 0<y<\theta\right\}\right|\geq\frac{1}{100}\theta^{4},\ \text{for a.e.}\ -\frac{1}{100}\theta^{2}<t<0,
\end{equation}
we have
\begin{equation}
{\rm ess\ sup}_{(t,x,y)\in\mathcal{B}_{\theta^{2}}^{+}(0,\frac{99}{100}\theta^{3},0)}H(t,x,y)\leq \lambda_{0}\,{\rm ess}\sup_{\mathcal{B}_{1}^{+}}w.
\end{equation}
In the above, $H(t,x,y)$ is as defined in \eqref{auxieq} and the constant $0<\lambda_{0}<1$ is independent of $w$ and $\theta$.
\end{lemma}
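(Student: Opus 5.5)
We will work with the representation of $H$ as a potential against the fundamental solution,
\begin{equation*}
H(z)=\iiint\Gamma(z;\zeta)\,w(\zeta)\,\mathcal{L}_{0}\psi(\zeta)\,d\zeta,
\end{equation*}
and exploit the identity $\iiint\Gamma(z;\zeta)\mathcal{L}_0\psi(\zeta)\,d\zeta=\psi(z)=1$ for $z$ in the target box $Q^*:=\mathcal{B}_{\theta^{2}}^{+}(0,\frac{99}{100}\theta^{3},0)$. We normalize $\sup_{\mathcal{B}_1^+}w=1$.

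The cut-off is built as $\psi=\psi_1+\psi_2$ in the spirit of the Kruzkov/Guerand--Imbert construction.
\begin{itemize}
\item[(a)] $\psi_2$ is a fixed cut-off supported in $\mathcal{B}_{1/2}^+$ with $\psi_2\equiv1$ on a neighborhood of $\overline{\mathcal{B}_{\theta}^{+}}$ extended to the right, so that it contains both $Q^*$ and the box $\{|x|<\frac{49}{50}\theta^3,0<y<\theta,-\frac{1}{100}\theta^2<t<0\}$.
\item[(b)] $\psi_1$ is chosen so that $\mathcal{L}_0\psi$ has a \emph{negative} part that is concentrated, with a definite amount of mass, inside the set where $w$ is forced to vanish.
\end{itemize}
A more concrete version of (b): take $\psi=\Phi-\Phi_0$, where $\Phi(z)=\iiint\Gamma(z;\zeta)g(\zeta)\,d\zeta$ with $g\ge0$ smooth and supported in the slab $\{-\frac{1}{100}\theta^2<t<-\frac{1}{200}\theta^2,\ |x|<\frac{49}{50}\theta^3,\ 0<y<\theta\}$, and correct it by a cut-off so that it is compactly supported, equals $1$ on $Q^*$, and has $\partial_y\psi|_{y=0}=0$ (automatic from $\Gamma$ and from $y$-even cut-offs). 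Then $\mathcal{L}_0\psi=g+(\text{error supported near }\partial\mathcal{B}_{1/2}^+)$, up to sign conventions.

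Split
\begin{equation*}
H(z)=\int\Gamma(z;\zeta)\,w\,(\mathcal{L}_0\psi)^+\,d\zeta-\int\Gamma(z;\zeta)\,w\,(\mathcal{L}_0\psi)^-\,d\zeta .
\end{equation*}
Using $0\le w\le1$, we get
\begin{equation*}
H(z)\le \int\Gamma(z;\zeta)(\mathcal{L}_0\psi)^+\,d\zeta-\int\Gamma(z;\zeta)\,w\,(\mathcal{L}_0\psi)^-\,d\zeta=1-\int\Gamma(z;\zeta)(1-w)(\mathcal{L}_0\psi)^-\,d\zeta,
\end{equation*}
where the equality uses $\int\Gamma(\mathcal{L}_0\psi)^+-\int\Gamma(\mathcal{L}_0\psi)^-=\psi(z)=1$. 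It therefore suffices to arrange that $(\mathcal{L}_0\psi)^-\ge c_\theta>0$ on the vanishing box $A:=\{-\frac{1}{100}\theta^2<t<0,\ |x|<\frac{49}{50}\theta^3,\ 0<y<\theta\}$, and to show that
\begin{equation*}
\int_{\{w=0\}\cap A}\Gamma(z;\zeta)\,d\zeta\ \ge\ c'>0
\end{equation*}
uniformly in $z\in Q^*$. The sign of $\mathcal{L}_0\psi$ on $A$ is achieved by letting $\psi$ increase along the transport direction there. Concretely, take $\psi=\chi(x)\eta(t)\zeta(y)$-type pieces so that $\mathcal{L}_0\psi=\partial_t\psi+y\partial_x\psi-\partial_y^2\psi$ is positive where $\psi$ ramps up; after flipping the roles one gets the negative part. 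It is possible the correct orientation is instead to put the vanishing set inside the positive part and use $w\le1$ differently; I would fix signs by checking the model case $w\equiv\mathbf 1_{A^c}$.

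The key lower bound is the quantitative positivity Corollary \ref{positive}. For $z=(t,x,y)\in Q^*$ we have $t\ge-\theta^4\ge -\alpha r^2$ and $x\in[\frac{99}{100}\theta^3-\theta^6,\frac{99}{100}\theta^3+\theta^6]$. For $\zeta\in A$ restricted to the earlier half $t_0\le-\frac{1}{200}\theta^2$ (hypothesis \eqref{slidevanish} still gives measure $\gtrsim\theta^6$ of zeros there), we have $|x_0|<\frac{49}{50}\theta^3$. Hence $x>x_0$ with a gap, and with $r=\theta$, $\kappa=\frac{49}{50}<\gamma\approx\frac{99}{100}$, we get $\Gamma(z;\zeta)\gtrsim\theta^{-4}$. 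Integrating over a zero set of measure $\gtrsim\theta^{2}\cdot\theta^4$ gives a contribution $\gtrsim\theta^2 c_\theta$. We must keep this bounded below independent of $w$, and the construction of $\psi$ is adjusted so that $c_\theta\sim\theta^{-2}$, matching the natural size of $\partial_t\psi$ on a time layer of width $\theta^2$. This yields $H\le1-c''=:\lambda_0$ with $c''$ independent of $\theta$ once the scaling is consistent.

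The main obstacle is the simultaneous design of $\psi$ with three properties: $\psi=1$ on the \emph{shifted} box $Q^*$, $\partial_y\psi|_{y=0}=0$, and $(\mathcal{L}_0\psi)^-$ of size $\theta^{-2}$ on $A$ while $\psi$ remains supported in $\mathcal{B}_{1/2}^+$ (so that the far-field part of $\mathcal{L}_0\psi$ does not spoil the sign bookkeeping). The one-sided transport, namely that $\Gamma$ vanishes for $x<x_0$, is exactly why the vanishing box is placed to the left of $Q^*$, and why the $\theta$-independence of $\lambda_0$ relies on the scale-invariant constant in Corollary \ref{positive}.
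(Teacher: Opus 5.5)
There is a genuine gap. It starts with a sign error in your key display. For $z\in Q^*=\mathcal{B}_{\theta^{2}}^{+}(0,\frac{99}{100}\theta^{3},0)$ the identity $\int\Gamma(z;\zeta)\mathcal{L}_0\psi\,d\zeta=\psi(z)=1$ gives $\int\Gamma(\mathcal{L}_0\psi)^+=1+\int\Gamma(\mathcal{L}_0\psi)^-$, not $1-\ldots$. So your chain of inequalities actually yields
\begin{equation*}
H(z)\le 1+\int\Gamma(z;\zeta)\,(1-w)\,(\mathcal{L}_0\psi)^-\,d\zeta,
\end{equation*}
whose right-hand side is at least $1$, so this is useless. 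The correct identity is
\begin{equation*}
H(z)=1-\int\Gamma(z;\zeta)(1-w)(\mathcal{L}_0\psi)^+\,d\zeta+\int\Gamma(z;\zeta)(1-w)(\mathcal{L}_0\psi)^-\,d\zeta .
\end{equation*}
Two consequences follow. First, the zero set must sit where $\mathcal{L}_0\psi$ is \emph{positive}, i.e.\ where $\psi$ ramps up along $\partial_t+y\partial_x$, as it must between $\{t\le-\theta^2\}$ and $Q^*$; "flipping the roles" cannot work. Second, and this is the missing idea, the last term is not free. You must show that $\sup_{z\in Q^*}\int\Gamma(z;\cdot)(\mathcal{L}_0\psi)^-$ is strictly smaller than the gain produced by the zero set. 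Your sign error hid exactly this requirement. Your proposed $\psi$ (a potential of $g$ corrected by cut-offs) also leaves the size and sign of the cut-off commutator terms uncontrolled, so this bound is not available from your construction.

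The paper closes this with an anisotropic cut-off, e.g.\ $\psi=\chi(x^2/\theta-t/\theta^2)\chi(y)$ with a fixed $\chi$. It is designed so that:
\begin{itemize}
\item $\partial_t\psi+y\partial_x\psi\ge0$ everywhere, and $\gtrsim\theta^{-2}$ on the slab $(-\frac{1}{100}\theta^2,-\frac{1}{200}\theta^2)\times\{|x|<\theta^3\}\times[0,\theta)$;
\item $\psi\equiv0$ for $t\le-\theta^2$;
\item $|\partial_y\psi|+|\partial_y^2\psi|\lesssim1$, because the $y$-profile lives on scale $1$, not $\theta$.
\end{itemize}
Normalize $\sup w=1$ and write $H=\psi-P+E$, where $\mathcal{L}_0P=(\partial_t\psi+y\partial_x\psi)(1-w)$ and $\mathcal{L}_0E=(1-w)\partial_y^2\psi$. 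Then $P\ge c_0$ on $Q^*$ by precisely your positivity argument, which is correct: apply Corollary \ref{positive} with $r=\theta$ on the earlier half of the slab, where $|x_0|<\frac{49}{50}\theta^3$ lies strictly to the left of $Q^*$. The remaining term satisfies $|E|\le 2\int_{-\theta^2}^{0}\iint\Gamma\,|\partial_\eta^2\psi|\lesssim\theta^2$ by the dual mass conservation \eqref{dualmassconser}. Hence $H\le 1-c_0+c_1\theta^2\le\lambda_0<1$ once $\theta$ is small. The essential mechanism is this scale separation: the transport derivative costs $\theta^{-2}$ and is harmless because it only enters with the favorable sign. The diffusion term, which carries the unfavorable sign, is $O(1)$ and lives on a time window of length $\theta^2$.
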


\begin{comment}
\begin{remark}For the classical Poincar\'e inequality, $H$ is the standard average. i.e.
\begin{equation*}
H=\int_{X}u(\bm{x})\,d\mu(x),
\end{equation*}
where $\mu$ is a probability measure on $X$. Clearly we have
\begin{equation*}
H\leq \lambda_{0}{\rm ess}\sup_{\bm{x}\in X}u(\bm{x}),\ \text{for some}\ 0<\lambda_{0}<1,
\end{equation*}
provided
\begin{equation*}
\mu\left(\{\bm{x}:u(\bm{x})=0\}\right)\geq (1-\lambda_{0})\mu(X),
\end{equation*}
which is sightly weaker than the temporal--slide vanishing condition \eqref{slidevanish}.
\end{remark}
\end{comment}

\begin{proof} Let $\psi(t,x,y)$ be a cut--off function in $\mathcal{B}^+_{\frac{1}{2}}(0,0,0)$ such that
\begin{itemize}
\item[(i)] $\psi$ is compactly supported in $\mathcal{B}_{\frac{1}{2}}^{+}(0,0,0)$, and $\psi|_{\mathcal{B}_{\theta^{2}}^{+}(0,\frac{99}{100}\theta^{3},0)}\equiv 1$,$\p_{y}\psi|_{y=0}\equiv 0$;

\item[(ii)] $\psi(t,x,y)\equiv 0$ if $t\leq-\theta^{2}$;

\item[(iii)] $|\p_{y}\psi,\p_{y}^{2}\psi|\lesssim 1$, $\p_{t}\psi+y\p_{x}\psi$ is non--negative, and

\begin{equation}
\p_{t}\psi+y\p_{x}\psi\gtrsim \frac{1}{\theta^{2}},\ \forall t\in (-\frac{1}{100}\theta^{2},-\frac{1}{200}\theta^{2}),\ |x|<\theta^{3}, 0\leq y<\theta.
\end{equation}
\end{itemize}
As a concrete example, we can choose
\begin{equation}
\psi(t,x,y)=\chi\left(\frac{x^{2}}{\theta}-\frac{t}{\theta^{2}}\right)\chi(y),\ \mathrm{supp}(\chi)\subset [0,\frac{1}{10}), \,\chi|_{[0, \frac{1}{1000}]}\equiv 1,\,-\chi'\gtrsim 1 \,{\rm for}\,y\in[\frac{1}{500},\frac{1}{50}],
\end{equation}
with $\theta\in(0,1)$ sufficiently small.

Without loss of generality, we can assume that ${\rm ess}\sup_{\mathcal{B}_{1}^{+}}w=1$. By the superposition principle, we can decompose
\begin{equation}
H(t,x,y)=\psi(t,x,y)-P(t,x,y)+E(t,x,y),
\end{equation}
where $P$ satisfies
\begin{equation}
\begin{cases}
\p_{t}P+y\p_{x}P-\p_{y}^{2}P=(\p_{t}\psi+y\p_{x}\psi)(1-w),t>-\frac{1}{4},\ x\in\mathbb{R},\ y\in\mathbb{R}^{+},\\
P|_{t=-\frac{1}{4}}=0,\ \,\p_{y}P|_{y=0} = 0,
\end{cases}
\end{equation}
and $E$ satisfies
\begin{equation}
\begin{cases}
\p_{t}E+y\p_{x}E-\p_{y}^{2}E=(1-w)\p_{y}^{2}\psi, t>-\frac{1}{4},\ x\in\mathbb{R},\ y\in\mathbb{R}^{+},\\
E|_{t=-\frac{1}{4}}=0,\ \p_{y}E|_{y=0} = 0.
\end{cases}
\end{equation}

We first consider the $P$ term. Write
\begin{equation}
P(t,x,y)=\int_{-\frac{1}{4}}^{t}\int_{-\infty}^{\infty}\int_{0}^{\infty}\Gamma(t,x,y;\tau,\xi,\eta)(\p_{\tau}\psi+\eta\p_{\xi}\psi)(1-w) (\tau,\xi,\eta)\,d\eta d\xi d\tau.
\end{equation}
By the properties of $\psi$, denoting $z=(t,x,y), \,z^\ast=(\tau,\xi,\eta)$, for each $(t,x,y)\in\mathcal{B}_{\theta^{2}}^{+}(0,\frac{99}{100}\theta^{3},0)$, we have
\begin{equation}
P(t,x,y)\gtrsim \int_{-\frac{1}{100}\theta^{2}}^{-\frac{1}{200}\theta^{2}}\iint_{\left\{w(\tau,\xi,\eta)=0,\ |\xi|<\frac{49}{50}\theta^{3},\ 0<\eta<\theta\right\}}\Gamma(z;z^\ast)\theta^{-2}d\tau d\xi d\eta.
\end{equation}
By Corollary \ref{positive} and \eqref{slidevanish}, for each  $(t,x,y)\in\mathcal{B}_{\theta^{2}}^{+}(0,\frac{99}{100}\theta^{3},0)$, we have
\begin{equation}
P(t,x,y)\gtrsim \int_{-\frac{1}{100}\theta^{2}}^{-\frac{1}{200}\theta^{2}}\iint_{\left\{w(\tau,\xi,\eta)=0,\ |\xi|<\frac{49}{50}\theta^{3},\ 0<\eta<\theta\right\}}\theta^{-6}d\tau d\xi d\eta\geq c_{0}>0.
\end{equation}

We now turn to the bounds on $E$. By the property of the fundamental solution \eqref{dualmassconser},
\begin{equation}
\begin{split}
E&\leq 2\int_{-\theta^{2}}^{0}\iint_{\R\times\R^{+}}\Gamma(t,x,y;\tau,\xi,\eta)|\p_{\eta}^{2}\psi|d\tau d\xi d\eta\\
&\lesssim \int_{-\theta^{2}}^{0}\iint_{\R\times\R^{+}}\Gamma(t,x,y;\tau,\xi,\eta)d\tau d\xi d\eta\leq c_{1}\theta^{2}.
\end{split}
\end{equation}
Hence, for each $(t,x,y)\in \mathcal{B}_{\theta^{2}}^{+}(0,\frac{99}{100}\theta^{3},0)$, we have
\begin{equation}
H(t,x,y)\leq 1-c_{0}+c_{1}\theta^{2}\leq \lambda_{0}<1,
\end{equation}
provided that $\theta\in(0,1)$ is chosen sufficiently small.
\end{proof}

We also need the following measure estimate. This type of estimate can be traced back to the early work \cite{K64} for uniformly parabolic equations, where the classical Poincar\'e inequality applies.
\begin{lemma}\label{measureesti}Let $0<\theta<1$. Assume that $w$ is a nonnegative weak solution of \eqref{diverform} in $\mathcal{B}_{1}^{+}$ with
\begin{equation}
\left|\left\{(t,x,y)\in\mathcal{B}_{\theta}^{+}:\,w(t,x,y)\geq 1\right\}\right|\geq \frac{1}{2}|\mathcal{B}_{\theta}^{+}|.
\end{equation}
Then there exists a constant $h_{0}:=h_0(c,\theta)\in (0,1)$ such that 
\begin{equation}
\big|\big\{(x,y):\,w(t,x,y)\geq h_{0},\ |x|<\beta^{3}\theta^{3},\ 0<y<\beta\theta\big\}\big|\geq\frac{1}{100}\theta^{4},\ \text{for a.e.}\ -\frac{1}{100}\theta^{2}<t<0,
\end{equation}
where $\beta:=(\frac{49}{50})^{\frac{1}{3}}$.
\end{lemma}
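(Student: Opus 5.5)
The plan is Kruzkov's log-transform argument, adapted to the half space; the Neumann condition means the boundary terms simply drop out. First I use the measure hypothesis to find many good time slices. Since $|\mathcal{B}_\theta^+|=2\theta^6$ and at least half of it has $w\ge1$, Fubini in $t\in(-\theta^2,0]$ shows that the set of times $t$ with
\[
\big|\{(x,y)\in(-\theta^3,\theta^3)\times[0,\theta): w(t,x,y)\ge1\}\big|\ge \tfrac12\theta^4
\]
has measure $\gtrsim\theta^2$. The slice $\{|x|<\beta^3\theta^3,\,0<y<\beta\theta\}$ misses at most a fraction $1-\beta^4$ of the full slice, and this fraction is small ($\beta^4\approx0.973$). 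So on such times we still get $|\{w\ge1\}\cap\text{smaller slice}|\ge(\tfrac12-0.03)\theta^4$. The task is to propagate this forward in time, with a loss, to every $t\in(-\theta^2/100,0)$.

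Second, consider $v=\log^+\big(\tfrac{1}{w+h}\big)$ for a small $h$, or Moser's variant $G(w)=\max(-\log(w+h),0)$; this is a subsolution-type quantity. Test the equation with $G'(w)\chi^2$ and take $\chi=\chi(x,y)$, independent of $t$, with $\p_y\chi|_{y=0}=0$, $\chi\equiv1$ on the smaller slice and supported in $(-\theta^3,\theta^3)\times[0,\theta)$ (a slight enlargement, adjusted with $\beta$). Using convexity of $G$ and the identity $G''=G'^2$ for the logarithm, one gets
\[
\frac{d}{dt}\iint G(w)\chi^2\,dxdy+c\iint|\p_y G(w)|^2\chi^2\lesssim \iint G(w)\,|y\p_x\chi^2|+\iint|\p_y\chi|^2 .
\]
Because $w\in W^{1,1}\cap W^{2,1}_y\cap L^\infty$ and $G$ is Lipschitz for $w\ge0$, this computation is justified, and the boundary term $a\p_yw\,G'\chi^2|_{y=0}$ vanishes. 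The transport term is the one not present in the parabolic case. It is harmless because $|y\p_x\chi|\lesssim\theta\cdot\theta^{-3}$ while $G\le\log(1/h)$. Integrating over a time interval of length $\lesssim\theta^2$ therefore gives a bound of the form
\[
\sup_t \iint G\chi^2\le \iint G(t_*)\chi^2+C\theta^{4}\big(1+\log(1/h)\,\theta^{2}\theta^{-2}\big),
\]
where $t_*$ is a good time chosen early, in $(-\theta^2,-\theta^2/100)$. One must check that such a $t_*$ exists; this follows because the good times have measure comparable to $\theta^2$, after $\theta^2/100$ is removed.

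Third, at $t_*$ one has $G\le\log(1/(1+h))\le0$, i.e. $G=0$, on half the slice, and $G\le\log(1/h)$ elsewhere. Hence $\iint G(t_*)\chi^2\le(\tfrac12+0.03)\,|\text{slice}|\log(1/h)+C\theta^4$. At any later $t$, on the set where $w<h_0$ with $h_0=\sqrt h$, one has $G\ge\log\frac{1}{2\sqrt h}\approx\tfrac12\log(1/h)$. Chebyshev then bounds the measure of $\{w<h_0\}$ in the smaller slice by roughly $\big(\tfrac{0.53\log(1/h)+C}{0.5\log(1/h)}\big)\theta^4$.

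The main obstacle is that this ratio exceeds $1$. A straight Chebyshev bound is too lossy, because the constants $\tfrac12$ and $\tfrac12$ compete. To fix this I plan to use the dissipation term in the $y$-direction, as in the classical Kruzkov--Ladyzhenskaya argument. Alternatively, I would replace $\log$ by the sharper function $G(w)=\log^+\frac{1}{w+h}$ with the threshold $h_0=h^{\epsilon}$, taking $\epsilon$ small. Then $G\ge(1-\epsilon)\log(1/h)$ on $\{w<h_0\}$, while the data side stays at $0.53\log(1/h)$. The resulting bound $|\{w<h_0\}|\le\frac{0.53}{1-\epsilon}\theta^4+o(1)\theta^4$ leaves at least $\tfrac{1}{100}\theta^4$ of the slice (of area $2\beta^4\theta^4$) where $w\ge h_0$, for $h$ small depending on $c,\theta$. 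That gives the claim, with $h_0=h_0(c,\theta)$.
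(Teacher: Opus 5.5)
Your route is the paper's: Kruzkov's transform $\log^+\frac{1}{w+h'}$ with a regularization $h'$ slightly below the threshold (the paper takes $h'=h^{101/100}$ and threshold $h$), an early good slice $\tau^*\in(-\theta^2,-\theta^2/100)$, a mass inequality with a cutoff, and Chebyshev. The gap is the transport term. You call it harmless, and then carry it into the Chebyshev step only as $C\theta^4$. It is not lower order. Since $0\le G\le\log(1/h)$ and the total variation of $\chi^2$ in $x$ is at least $2$, your two-sided bound on $\iint G\,|y\,\partial_x(\chi^2)|$ is of size about $\theta^2\log(1/h)$ per unit time. Over a window of length up to $\theta^2$ it therefore adds about $\theta^4\log(1/h)$, the same order as the leading term.

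With that estimate the count does not close. Write $C_r=\{|x|<r^3,\,0<y<r\}$. Even for the best initial slice, $|\{w(\tau^*)<1\}\cap C_\theta|\le\frac{50}{99}|C_\theta|=\frac{100}{99}\theta^4$, you get $\sup_t\iint G\chi^2\approx 2\,\theta^4\log(1/h)$. This exceeds $|C_{\beta\theta}|\log(1/h)=2\beta^4\theta^4\log(1/h)\approx 1.95\,\theta^4\log(1/h)$, so Chebyshev gives no lower bound on $|\{w\ge h_0\}\cap C_{\beta\theta}|$.

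The missing idea is the one-sided sign of the transport. Since $G\ge0$, $y\ge0$ and $\partial_x(\chi^2)\le0$ for $x>0$, the outflow side has a good sign and can be dropped. Only the inflow side counts, contributing at most $(t-\tau^*)\int_0^\theta y\,dy\,\log(1/h)\le\frac12\theta^4\log(1/h)$. This is exactly what the paper does: it uses a cutoff in $y$ only, drops the outflow at $x=\theta^3$, and bounds the inflow at $x=-\theta^3$ by $\frac14\beta^{-4}|C_{\beta\theta}|\log h^{-101/100}$. The total $\bigl(\frac{50}{99}+\frac14\bigr)\beta^{-4}\cdot\frac{101}{100}\approx 0.78<\frac{99}{100}$ then closes the argument.

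Two smaller slips affect the same bookkeeping. First, your Step 1 threshold $\frac12\theta^4$ is only a quarter of $|C_\theta|=2\theta^4$, not the half you use in Step 3. With only a quarter, even the one-sided transport bound gives $\frac32\theta^4+\frac12\theta^4=2\theta^4>|C_{\beta\theta}|$ when $t-\tau^*$ is close to $\theta^2$. You therefore need the paper's mean-value selection of $\tau^*$ with $|\{w(\tau^*)\ge1\}\cap C_\theta|\ge\frac{49}{99}|C_\theta|$. Second, the threshold must be $h_0=h^{1-\epsilon}$, not $h^{\epsilon}$: on $\{w<h^{\epsilon}\}$ you only get $G\ge\log\frac{1}{h^{\epsilon}+h}\approx\epsilon\log(1/h)$. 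Finally, the $y$-dissipation plays no role here (there is no Poincar\'e inequality in $x$ to exploit), and it is simply discarded.
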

\begin{proof} The proof is essentially the same as that for Proposition 4.2 in \cite{XZZ24} and we present the details here for the reader's convenience. 

For $h\in(0,\frac{1}{2})$ and $r\in(0,\theta)$, define 
\begin{equation}
    \begin{split}
        &C_{r}:=\left\{(x,y):|x|<r^{3},\ 0<y<r\right\},\\
        &C_{r,h}(t):=\left\{(x,y)\in C_{r},\ w(t,x,y)\geq h\right\},\\
        &\mu_{r}(t):=\left|\left\{(x,y)\in C_{r},\ w(t,x,y)\geq 1\right\}\right|.
    \end{split}
\end{equation}
We introduce the auxiliary function
\begin{equation}
     \widetilde{w}(t,x,y):=\log^{+}\frac{1}{w+h^{\frac{101}{100}}}.
\end{equation}
One can check that
\begin{equation}\label{tildew}
    \begin{cases}
        \p_{t}\widetilde{w}+y\p_{x}\widetilde{w}-\p_{y}(a\p_{y}\widetilde{w})+a(\p_{y}\widetilde{w})^{2}\leq 0,\\
        \p_{y}\widetilde{w}|_{y=0}=0,\\
        0\leq\widetilde{w}\leq\log h^{-\frac{101}{100}}.
    \end{cases}
\end{equation}
Let $\chi(y)$ be a smooth cut--off function such that
\begin{equation*}
\chi(y)\equiv 1,\ 0\leq y\leq\beta\theta,\quad
\chi(y)\equiv 0,\ y\geq \theta,\quad
|\chi'(y)|\leq \frac{2}{(1-\beta)\theta}.
\end{equation*}

Multiplying \eqref{tildew} by $\chi^{2}(y)$ and integrating in $(\tau,t)\times C_{\theta}$ where $-\theta^{2}<\tau<t<0$, we obtain the inequality
\begin{equation}\label{massesti}
    \begin{split}
        &\int_{C_{\theta}}\chi^{2}(y)\widetilde{w}(t,x,y)dxdy+\frac{c}{2}\int_{\tau}^{t}\int_{C_{\theta}}\chi^{2}|\p_{y}\widetilde{w}|^{2}dsdxdy\\
        &\leq\int_{C_{\theta}}\chi^{2}(y)\widetilde{w}(\tau,x,y)dxdy+\frac{16}{c\beta^{4}(1-\beta)^{2}}|C_{\beta\theta}|-\int_{\tau}^{t}\int_{0}^{\theta}\chi^{2}y\widetilde{w}\big|_{x=-\theta^{3}}^{x=\theta^{3}}\,dyds.
    \end{split}
\end{equation}
We first estimate the last term on the right hand side. Thanks to $0\leq\widetilde{w}\leq\log h^{-\frac{101}{100}}$,
\begin{equation*}
    -\int_{\tau}^{t}\int_{0}^{\theta}\chi^{2}y\widetilde{w}\big|_{x=-\theta^{3}}^{x=\theta^{3}}dyds\leq\big(\log h^{-\frac{101}{100}}\big)(t-\tau)\int_{0}^{\theta}ydy\leq \frac{1}{4}\beta^{-4}\big(\log h^{-\frac{101}{100}}\big)|C_{\beta\theta}|.
\end{equation*}

On the other hand, notice that
\begin{equation*}
    \begin{split}
        \int_{-\theta^{2}}^{-\frac{1}{100}\theta^{2}}\mu_\theta(t)dt&=\int_{-\theta^{2}}^{0}\mu_\theta(t)dt-\int_{-\frac{1}{100}\theta^{2}}^{0}\mu_\theta(t)dt\\
        &\geq \frac{1}{2}|\mathcal{B}_{\theta}^{+}|-\frac{1}{100}\theta^{2}|C_{\theta}|=\frac{49}{100}\theta^{2}|C_{\theta}|.
    \end{split}
\end{equation*}
By the mean-value theorem, there exists $\tau^{\ast}\in(-\theta^{2},-\frac{1}{100}\theta^{2})$, such that
\begin{equation*}
    \mu_\theta(\tau^{\ast})\geq \frac{49}{99}|C_{\theta}|.
\end{equation*}
As a consequence,
\begin{equation*}
\int_{C_{\theta}}\widetilde{w}(\tau^\ast,x,y)dxdy=\int_{C_{\theta}\cap\{w<1\}}\widetilde{w}(\tau^\ast,x,y)dxdy\leq\frac{50}{99}\big(\log h^{-\frac{101}{100}}\big)|C_{\theta}|.
\end{equation*}
By virtue of the mass estimate \eqref{massesti}, we have
\begin{equation}
    \begin{split}
        &\big(\log\frac{1}{h+h^{\frac{101}{100}}}\big)|C_{\beta\theta}\setminus C_{\beta\theta,h}(t)|\leq \int_{C_{\beta\theta}}\widetilde{w}(t,x,y)dxdy\leq\int_{C_{\theta}}\chi^{2}\widetilde{w}dxdy\\
        &\leq\Big(\frac{50}{99}\beta^{-4}+\frac{1}{4}\beta^{-4}+\frac{16}{c\beta^{4}(1-\beta)^{2}\log h^{-\frac{101}{100}}}\Big)\big(\log h^{-\frac{101}{100}}\big)|C_{\beta\theta}|.
    \end{split}
\end{equation}
Note that
\begin{equation*}
    \lim_{h\to 0+}\Big(\frac{50}{99}\beta^{-4}+\frac{1}{4}\beta^{-4}+\frac{16}{c\beta^{4}(1-\beta)^{2}\log h^{-\frac{101}{100}}}\Big)\cdot\frac{\log h^{-\frac{101}{100}}}{\log(h+h^{\frac{101}{100}})^{-1}}<\frac{99}{100},
\end{equation*}
hence the conclusion holds by choosing $h\in(0,\frac{1}{2})$ sufficiently small.
\end{proof}

Now we are ready to prove Lemma \ref{oscillation}.
\begin{proof}[Proof of Lemma \ref{oscillation}] Define
%\footnote{We can also replace $V$ by the $G$--function introduced in \cite{K64} (See also \cite{GI23}, \cite{WZ24}).}
\begin{equation}
V(t,x,y)=\log^{+}\left(\frac{h}{w+h^{2}}\right),
\end{equation}
where $0<h<h_{0}(c,\theta)$ will be determined later. By Lemma \ref{measureesti}, we have
\begin{equation}\label{slidevanishing}
\Big|\Big\{V(t,x,y)=0,\ |x|<\frac{49}{50}\theta^{3},\ 0<y<\theta\Big\}\Big|\geq\frac{1}{100}\theta^{4},\ \text{for a.e.}\ -\frac{1}{100}\theta^{2}<t<0.
\end{equation}
Notice that
\begin{equation}\label{logeq}
\begin{cases}
\p_{t}V+y\p_{x}V-\p_{y}(a(t,x,y)\p_{y}V)+a(t,x,y)(\p_{y}V)^{2}\leq0,\ \text{in}\  \mathcal{B}_{1}^{+},\\
\p_{y}V|_{y=0}=0.
\end{cases}
\end{equation}
Thus $V$ is a nonnegative subsolution of \eqref{diverform}. By Lemma \ref{weakpoin} and Lemma \ref{hcontrol}, for a suitable $0<\lambda_{0}<1$, we have
\begin{equation}\label{Vestimat}
\iiint_{\mathcal{B}_{\theta^{2}}^{+}(0,\frac{99}{100}\theta^{3},0)}\big|(V-\lambda_{0}\log\frac{1}{h})^{+}\big|^{2}dtdxdy\lesssim \iiint_{\mathcal{B}_{\frac{1}{2}}^{+}}|\p_{y}V|^{2}dtdxdy.
\end{equation}
To estimate the right hand side of \eqref{Vestimat}, let $\chi(\cdot)$ be a standard cut--off function supported in $(-1,1)$ with $\chi|_{[-\frac{1}{2},\frac{1}{2}]}\equiv 1$. Testing \eqref{logeq} by $\chi(t)\chi(x)\chi^{2}(y)$, we have for some $c\in(0,1)$,
\begin{equation}
\begin{split}
&c\iiint_{\mathcal{B}_{1}^{+}}|\p_{y}V|^{2}\chi(t)\chi(x)\chi^{2}(y)\,dtdxdy\leq \iiint_{\mathcal{B}_{1}^{+}}a(t,x,y)|\p_{y}V|^{2}\chi(t)\chi(x)\chi^{2}(y)dtdxdy\\
&\leq\left|\iiint_{\mathcal{B}_{1}^{+}}V(t,x,y)\left(\chi'(t)\chi(x)\chi^{2}(y)+y\chi(t)\chi'(x)\chi^{2}(y)\right)dtdxdy\right|\\
&\quad\quad\quad\ +2\iiint_{\mathcal{B}_{1}^{+}}a(t,x,y)\p_{y}V(t,x,y)\chi(t)\chi(x)\chi(y)\chi'(y)dtdxdy\\
&\leq \left|\iiint_{\mathcal{B}_{1}^{+}}V(t,x,y)\left(\chi'(t)\chi(x)\chi^{2}(y)+y\chi(t)\chi'(x)\chi^{2}(y)\right)dtdxdy\right|\\
&\quad\quad\quad\ +\frac{c}{2}\iiint_{\mathcal{B}_{1}^{+}}|\p_{y}V|^{2}\chi(t)\chi(x)\chi^{2}(y)dtdxdy+c^{-3}\iiint_{\mathcal{B}_{1}^{+}}\chi(t)\chi(x)|\chi'(y)|^{2}dtdxdy.
\end{split}
\end{equation}
Hence,
\begin{equation}\label{vybound}
\iiint_{\mathcal{B}_{1}^{+}}|\p_{y}V|^{2}\chi(t)\chi(x)\chi^{2}(y)dtdxdy\leq C(c)\log\left(\frac{1}{h}\right),
\end{equation}
provided that $h\in(0,1)$ is sufficiently small. 

Combining \eqref{Vestimat} and \eqref{vybound}, and applying Lemma \ref{localbd} (with a suitable rescaling), 
%(or applying \eqref{onetoinfty} for $[(V-\lambda_{0}\log\frac{1}{h})^{+}]^{2}$) , 
we can conclude that there exists a positive constant $C(\theta)$ such that
\begin{equation}\label{eq:hol3}
{\rm ess\ sup}_{\mathcal{B}_{\frac{\theta^{2}}{2}}^{+}(0,\frac{99}{100}\theta^{3},0)}V(t,x,y)\leq \lambda_{0}\log\frac{1}{h}+C(\theta)\left(\log\frac{1}{h}\right)^{\frac{1}{2}},
\end{equation}
for any $h\leq h(\theta)$. \eqref{eq:hol3} implies that
\begin{equation}
{\rm ess\ inf}_{\mathcal{B}_{\frac{\theta^{2}}{2}}^{+}(0,\frac{99}{100}\theta^{3},0)}w(t,x,y)\geq h^{*},
\end{equation}
for some $h^{*}=h^{*}(c,\theta)$. This completes the proof of Lemma \ref{oscillation}.
\end{proof}

The following corollary is a direct consequence of Lemma \ref{oscillation}.
\begin{corollary}\label{osciesti}
Assume that $w$ is a weak solution to \eqref{diverform} in $\mathcal{B}_{2}^{+}(0,0,0)$. Then there exist $\bar{\alpha}\in(0,1)$ and $\delta\in(0,1)$ independent of $w$ such that
\begin{equation}
\mathrm{osc}_{\mathcal{B}_{\delta}^{+}(0,0,0)}w\leq\bar{\alpha}\,(\mathrm{osc}_{\mathcal{B}_{2}^{+}(0,0,0)}w).
\end{equation}
\end{corollary}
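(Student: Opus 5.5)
The plan is to use the standard De Giorgi--Kruzkov dichotomy, with Lemma \ref{oscillation} supplying the positivity step. Two simple facts about \eqref{diverform} are used throughout: it is linear with no zero-order term, and the ball $\mathcal{B}^+_r$ is invariant under the scaling $(t,x,y)\mapsto(r^2t,r^3x,ry)$. Consequently both $w-m$ and $M-w$ are nonnegative weak solutions with the same Neumann condition, where
\[
M:=\sup_{\mathcal{B}_2^+}w,\qquad m:=\inf_{\mathcal{B}_2^+}w,\qquad \omega:=M-m .
\]
Since $\omega$ is translation invariant, we may assume $\omega>0$ and normalize, replacing $w$ by $2(w-m)/\omega$, so that $0\le w\le 2$ on $\mathcal{B}_2^+$. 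Restricting to $\mathcal{B}_1^+\subset\mathcal{B}_2^+$ puts us in the setting of Lemma \ref{oscillation}, with the constants $\theta$ and $h^*$ from that lemma.

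\textbf{Dichotomy.} On $\mathcal{B}_\theta^+$, either $|\{w\ge1\}|\ge\frac12|\mathcal{B}_\theta^+|$, or $|\{2-w\ge1\}|\ge\frac12|\mathcal{B}_\theta^+|$, since $\{w\ge1\}\cup\{w\le1\}$ covers the ball. In the first case Lemma \ref{oscillation} applied to $w$ gives $w\ge h^*$ a.e. on $Q:=\mathcal{B}^+_{\theta^2/2}(0,\frac{99}{100}\theta^3,0)$. In the second case it applied to $2-w$ gives $w\le 2-h^*$ there. In either case $\mathrm{osc}_Q w\le 2-h^*$, which in the original variables reads $\mathrm{osc}_Q w\le(1-h^*/2)\,\omega$.

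\textbf{Main obstacle: the shifted ball.} The conclusion concerns $Q$, which is shifted in $x$ and does not contain a ball centered at the origin. I will remove the shift by first applying the whole argument at a translated base point. For a small parameter $\rho$ to be fixed, set $z_1:=(0,-\frac{99}{100}\theta^3\rho^3,0)$. Apply the dichotomy to the rescaled function
\[
w_\rho(t,x,y):=w\bigl(\rho^2t,\;\rho^3x-\tfrac{99}{100}\theta^3\rho^3,\;\rho y\bigr).
\]
This requires $\mathcal{B}^+_\rho(z_1)\subset\mathcal{B}_2^+$, which holds as soon as $\rho\le1$, because the $x$-shift is then at most $\theta^3<1$. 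The conclusion for $w_\rho$ concerns the rescaled image of $Q$ in the original variables. That set is $\mathcal{B}^+_{\rho\theta^2/2}(0,0,0)$: the $x$-center becomes $-\frac{99}{100}\theta^3\rho^3+\frac{99}{100}\theta^3\rho^3=0$, and the radius becomes $\rho\theta^2/2$. Taking $\rho=1$ and $\delta:=\theta^2/2$, this yields
\[
\mathrm{osc}_{\mathcal{B}^+_\delta}w\le\bar\alpha\,\mathrm{osc}_{\mathcal{B}_2^+}w,\qquad \bar\alpha:=1-\frac{h^*}{2}.
\]

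\textbf{Remaining checks.} The constants $\bar\alpha$ and $\delta$ depend only on the ellipticity constant $c$, through $\theta$ and $h^*$. Finally, the normalized functions inherit the regularity $L^\infty\cap W^{1,1}\cap W^{2,1}_y$ and the boundary condition $\partial_yw|_{y=0}=0$, so Lemma \ref{oscillation} applies to them legitimately.
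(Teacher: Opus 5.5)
Your argument is correct and is essentially the paper's proof. Both translate the base point by $-\frac{99}{100}\theta^3$ in $x$, so that the shifted target ball of Lemma \ref{oscillation} lands at the origin (this is the paper's $\mathcal{B}^S_r$), and both apply the lemma to whichever of the normalized $w-m$ or $M-w$ is at least $1$ on half of the (shifted) $\mathcal{B}_\theta^+$.

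The one difference is the normalization. You take $M,m$ over all of $\mathcal{B}_2^+$, so nonnegativity on the shifted $\mathcal{B}_1^+$ is automatic. The paper takes them over the small shifted ball $\mathcal{B}^S_\theta$ and adds an $h^\ast/2$ buffer, which forces an extra case where the normalized functions become negative on $\mathcal{B}^S_1$. Your version avoids that case and is cleaner; the auxiliary parameter $\rho$ is unnecessary, since you end up taking $\rho=1$.
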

\begin{proof}
For $0<r\leq 1$, $x_0\in \R$, we denote
\begin{equation}\label{shiftball}
    \mathcal{B}^S_r(0,x_0,0):= \mathcal{B}^+_r(0,x_0-\frac{99}{100}\theta^3,0),\quad \mathcal{B}^S_r:=\mathcal{B}^S_r(0,0,0).
\end{equation}
Define
\begin{equation*}
M={\rm ess}\sup_{\B_{\theta}^{S}}w,\quad m={\rm ess}\inf_{\mathcal{B}_{\theta}^{S}}w,
\end{equation*}
where $\theta>0, h^{\ast}>0$ are the same constants as in Lemma \ref{oscillation}. We assume that $M\neq m$, since otherwise the desired conclusion is trivial since $\mathcal{B}_{\delta}^{+}(0,0,0)\subset \B_{\theta}^{S}$. Define then
\begin{equation*}
    \ w^{+}=2\frac{w-m}{M-m}+\frac{h^{\ast}}{2},\quad w^{-}=2\frac{M-w}{M-m}+\frac{h^{\ast}}{2}.
\end{equation*}

\textit{Case 1: At least one of ${\rm ess}\inf_{\mathcal{B}_{1}^{S}} w^{+}$ and ${\rm ess}\inf_{\mathcal{B}_{1}^{S}}w^{-}$ is negative.} Assume without loss of generality that ${\rm ess}\inf_{\mathcal{B}_{1}^{S}} w^{+}$ is negative. It follows that
\begin{equation*}
{\rm ess}\inf_{\mathcal{B}_{1}^{S}}w\leq m-\frac{h^{\ast}}{4}(M-m).
\end{equation*}
Hence for sufficiently small $\theta>0$,
\begin{equation*}
\mathrm{osc}_{\mathcal{B}_{2}^{+}}w\geq\mathrm{osc}_{\mathcal{B}_{1}^{S}}w\geq \big(1+\frac{h^{\ast}}{4}\big)\cdot\mathrm{osc}_{\mathcal{B}_{\theta}^{S}}w\geq \big(1+\frac{h^{\ast}}{4}\big)\cdot\mathrm{osc}_{\mathcal{B}_{\theta^2}^{+}}w.
\end{equation*}

\textit{Case 2: Both $w^{+}$ and $w^{-}$ are non--negative almost everywhere in $\mathcal{B}_{1}^{S}$.} Clearly, at least one of the two satisfies \eqref{measure}. Without loss of generality, assume that $w^{+}$ satisfies \eqref{measure}. Then Lemma \ref{oscillation} implies that
\begin{equation}
w\geq \frac{h^{\ast}}{4}(M-m)+m,\quad a.e. \ \text{in}\ \mathcal{B}_{\frac{\theta^{2}}{2}}^{S}\big(0,\frac{99}{100}\theta^{3},0\big)=\mathcal{B}_{\frac{\theta^{2}}{2}}^{+}.
\end{equation}
It follows that
\begin{equation}\label{oscesti1}
\mathrm{osc}_{\mathcal{B}_{\frac{\theta^{2}}{2}}^{+}}w\leq M-\big(\frac{h^{\ast}}{4}(M-m)+m\big)=\big(1-\frac{h^{\ast}}{4}\big)(M-m)\leq\big(1-\frac{h^{\ast}}{4}\big)\mathrm{osc}_{\B_{2}^{+}}w.
\end{equation}
Hence combining both cases above, the conclusion of Corollary \ref{osciesti} follows.
\end{proof}

\subsection{Proof of Proposition \ref{holder}}\label{pfholder}

The proof of Proposition \ref{holder} is based on combining the interior estimates for system \eqref{diverform} without boundary (see \cite{XZZ24}) and Corollary \ref{osciesti}. Let $z=(t,x,y),\zeta=(\tau,\xi,\eta)\in\mathcal{B}_{\frac{1}{2}}^{+}$, and set $\kappa=\|z-\zeta\|\ll 1$. Without loss of generality we assume that $t\geq\tau$.

Let $z_{0}=(t,x,0)$ be the projection of $z$ on $\{y=0\}$. Notice that
\begin{equation}
|w(z)-w(\zeta)|\leq\mathrm{osc}_{\mathcal{B}_{y+\kappa}^{+}(z_{0})}w.
\end{equation}
By Corollary \ref{osciesti}, we have for some constant $c_0>0$,
\begin{equation}\label{bdosci}
\mathrm{osc}_{\mathcal{B}_{y+\kappa}^{+}(z_{0})}w\leq \bar{\alpha}^{M}\mathrm{osc}_{\mathcal{B}_{1/4}^{+}(z_{0})}w,\,\ {\rm where}\,\,M=\max \big\{k\in\Z: k\leq c_0\log(y+\kappa)/(8\log(\delta/2))\big\}.
\end{equation}

On the other hand, if $y\gg \kappa>0$, by the interior estimate for the equation \eqref{diverform} without boundary stated in \cite{XZZ24} (see also \cite{Z08}, \cite{WZ11}, \cite{WZ24}), as well as standard translation and rescaling arguments, we have
\begin{equation}
|w(z)-w(\zeta)|\lesssim \kappa^{\gamma}y^{-\gamma}\|w\|_{L^{\infty}(\mathcal{B}_{1}^{+})},
\end{equation}
for some $0<\gamma<1$.

Therefore, when $\kappa\leq y^{2}$, we have 
\begin{equation}
|w(z)-w(\zeta)|\lesssim \kappa^{\frac{\gamma}{2}}\|w\|_{L^{\infty}(\mathcal{B}_{1}^{+})}.
\end{equation}

In the opposite case $\kappa\geq y^{2}$, by \eqref{bdosci}, we have 
\begin{equation}
\begin{split}
|w(z)-w(\zeta)|&\leq\mathrm{osc}_{\mathcal{B}_{y+\kappa}^{+}(z_{0})}w \lesssim\bar{\alpha}^{M}\|w\|_{L^{\infty}(\mathcal{B}_{1}^{+})}\lesssim \kappa^{c_0\frac{\log{\overline{\alpha}}}{\log(\delta/2)}}\|w\|_{L^{\infty}(\mathcal{B}_{1}^{+})}.
\end{split}
\end{equation}
This completes the proof of Proposition \ref{holder}.

\begin{comment}
\begin{remark} One may generalize the conclusions stated in this section to weak solution of \eqref{diverform} with slightly weaker regularity. For example, we can generalize to the weak solution of \eqref{diverform} with
\begin{equation*}
w\in L^{\infty}\cap BV,\ \p_{y}w\in BV_{y}
\end{equation*}
by modifying the procedures in this section but need a few tools for bounded variation functions (such as approximation, integration by parts, an so on; we refer to \cite{E92} or \cite{V67}). 

What's more, by making full use of the regularity effect of the operator $\mathcal{L}_{0}$, one may also generalize the regularity assumption for the weak solution of \eqref{diverform} in other types of functional spaces.
\end{remark}

\end{comment}
Finally, we formulate the analogue of Proposition \ref{holder} in the interior estimate case (without boundary), which was initially proved in \cite{Zhang2005} (see also \cite{GIMV19}). For $z_{0}=(t_{0},x_{0},y_{0})\in\R^{3}$ we adopt the notation 
\begin{equation*}
    \mathcal{B}_{r_{0}}(z_{0}):=(t_{0}-r_{0}^{2},t_{0}]\times(x_{0}-r_{0}^{3},x_{0}+r_{0}^{3})\times (y_{0}-r_{0},y_{0}+r_{0}).
\end{equation*}
Using a Galilean transform, it suffices to consider the case $y_0=0$.
\begin{proposition}\label{interiorholder}
Let $z_{0}=(t_{0},x_{0},0)\in \R^3$ and $r_{0}>0$. Assume that $w(t,x,y)\in L^{\infty}\cap W^{1,1}\cap W^{2,1}_{y}(\mathcal{B}_{r_{0}}(z_{0}))$ is a weak solution to
\begin{equation}\label{interiordiverform}
\p_{t}w+y\p_{x}w-\p_{y}(a\p_{y}w)=0,\ \text{in}\ \mathcal{B}_{r_{0}}(z_{0}),
\end{equation}
where $a(t,x,y)\in L^{\infty}(\mathcal{B}_{r_{0}}(z_{0}))$ satisfies the uniform ellipticity condition for $(t,x,y)\in \mathcal{B}_{r_{0}}(z_{0})$,
\begin{equation}\label{interiorelliptic}
c\leq a(t,x,y)\leq c^{-1},
\end{equation}
for some $c\in(0,\frac{1}{2}]$. Then we have $w(t,x,y)\in C^{\alpha}(\mathcal{B}_{\frac{r_{0}}{2}}(z_{0}))$ and the bounds
\begin{equation}\label{interiorholderesti}
 \|w\|_{C^{\alpha}(\mathcal{B}_{\frac{r_{0}}{2}}(z_{0}))}\lesssim_{c,r_{0}}\|w\|_{L^{\infty}(\mathcal{B}_{r_{0}}(z_{0}))},
\end{equation}
for some $\alpha>0$ independent of the solution $w$. 
\end{proposition}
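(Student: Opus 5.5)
The plan is to run the same two-step De Giorgi--Nash--Moser scheme as for Proposition \ref{holder}, now in the full space, where the lack of a boundary removes the one-sided transport difficulty. First, I would use the Galilean invariance of $\mathcal{L}_0$. The map $(t,x,y)\mapsto(t,x+vt,y+v)$ preserves $\p_t+y\p_x$ and $\p_y$, and it keeps the class of coefficients satisfying \eqref{interiorelliptic}. This justifies centering at $y_0=0$. The rescaling $w(t_0+r_0^2t,x_0+r_0^3x,r_0y)$ then reduces the statement to $r_0=1$, $z_0=0$. In place of $\Gamma$ I would use the explicit whole-space kernel $\widetilde\Gamma$ from \eqref{wholespace}. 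It is strictly positive for $t>t_0$ and obeys the bounds $|\p^\alpha\widetilde\Gamma|\lesssim\|z-w\|^{-4-|\alpha|}$, which play the role of Propositions \ref{ptwise}, \ref{highptwise} and \ref{operatorbd}. Consequently, the quantitative lower bound of Corollary \ref{positive} holds without the restriction $x>x_0$.

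Second, I would prove local boundedness exactly as in Lemma \ref{localbd}. The steps are the Caccioppoli inequality \eqref{caccioppoli}, the representation $w\chi=\widetilde\Gamma*\mathcal{L}_0(w\chi)$ with the $L^2\to L^6$ and $L^2\to L^3$ potential estimates, the reverse H\"older inequality, and then Moser iteration. No boundary terms appear, since $\chi$ is now compactly supported in $y$ as well.

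Third, I would prove expansion of positivity. This uses the weak Poincar\'e inequality (Lemma \ref{weakpoin}) with $H$ defined through $\widetilde\Gamma$, the bound on $H$ (Lemma \ref{hcontrol}), the measure estimate for $\log^+\frac{1}{w+h^{101/100}}$ (Lemma \ref{measureesti}), and the log-transform argument. Together these give $w\ge h^*$ on a smaller cylinder. Since $\widetilde\Gamma$ is positive in both $x$-directions, the target cylinder may be taken centered at the origin, with no shift $\frac{99}{100}\theta^3$. One still needs a positive time lag, which the cutoff $\psi$ already provides. The oscillation decay of Corollary \ref{osciesti} then follows by the same case analysis. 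Iterating it on cylinders $\mathcal{B}_{\delta^k}(z)$ around every point $z\in\mathcal{B}_{1/2}$, each recentered by a Galilean shift, gives $|w(z)-w(\zeta)|\lesssim\|z-\zeta\|^\alpha\|w\|_{L^\infty}$.

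The step I expect to need the most care is Lemma \ref{hcontrol} in the whole-space setting. There one must check that the lower bound $P\ge c_0$ comes from $\widetilde\Gamma\gtrsim\theta^{-4}$ on the relevant compact configuration, and that the error $E\lesssim\theta^2$ still follows from the dual mass conservation of $\widetilde\Gamma$. Both facts are immediate from the explicit Gaussian formula. The remaining point is that the Galilean recentering changes the cylinders only by a bounded distortion at each scale, which is controlled by the quasi-triangle inequality \eqref{eq:dist}.
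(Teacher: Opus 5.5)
The paper gives no self-contained proof of this statement. Beyond the remark that a Galilean transform reduces to $y_0=0$, it quotes the interior result from \cite{Zhang2005} (see also \cite{GIMV19}), and in the proof of Proposition \ref{holder} it uses the same interior estimate from \cite{XZZ24}.

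Your proposal instead reconstructs the result by running the scheme of Section \ref{continuity} with $\widetilde\Gamma$ in place of $\Gamma$. That scheme is: Caccioppoli, potential estimates and Moser iteration; the weak Poincar\'e inequality with the dynamic average $H$; the log-transform measure estimate; and oscillation decay. This is essentially the Kruzkov-type argument of \cite{XZZ24}, and it is correct in substance. The two-sided positivity of $\widetilde\Gamma$ does remove the $x$-shift of Lemma \ref{oscillation}, and the points you flag in Lemma \ref{hcontrol} are indeed immediate from the Gaussian formula. What your route buys is self-containedness. What it costs is that you must handle the Galilean (non-commutative) group structure yourself, which the citation hides, and two of your justifications get this wrong.

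First, the claimed bound $|\partial^\alpha\widetilde\Gamma(z;\zeta)|\lesssim\|z-\zeta\|^{-4-|\alpha|}$ is false for the coordinate-difference norm of Section \ref{fundamental} once the pole has $y$-coordinate away from $0$. For the pole $\zeta=(0,0,1)$ and $z=(s,s,1)$ one gets $\widetilde\Gamma=\frac{\sqrt3}{2\pi s^2}$, whereas $\|z-\zeta\|^{-4}\sim s^{-4/3}$. This is exactly why Proposition \ref{ptwise} requires $y_0\le 10\|z-w\|$. The correct bounds hold in the invariant quasi-distance $\|\zeta^{-1}\circ z\|$, where $\zeta^{-1}\circ z=(t-\tau,\,x-\xi-(t-\tau)\eta,\,y-\eta)$, with homogeneous weights on the derivatives. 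The $L^2\to L^6$ and $L^2\to L^3$ potential estimates then follow from Young's inequality on the homogeneous group (\cite{PP04}, Corollary 2.2), not from a Euclidean convolution argument.

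Second, Galilean recentering is not a bounded distortion at each scale, and \eqref{eq:dist} does not address it: that inequality is only a triangle inequality for coordinate differences. Around a point with $y_z\neq0$, the sheared cylinder of radius $r$ has $x$-width $r^3+r^2|y_z|$, which is not comparable to $r^3$ as $r\to0$. What the iteration on recentered cylinders actually yields is $|w(z)-w(\zeta)|\lesssim\|\zeta^{-1}\circ z\|^{\alpha}\|w\|_{L^\infty}$ for $\tau\le t$. To pass to the norm in the statement, use $\|\zeta^{-1}\circ z\|\lesssim\|z-\zeta\|^{2/3}$ for $z,\zeta\in\mathcal{B}_{1/2}$; this holds because the $x$-component is at most $\|z-\zeta\|^3+\|z-\zeta\|^2|\eta|$. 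The only cost is lowering the exponent to $2\alpha/3$, which is harmless since the statement asks only for some $\alpha>0$.
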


% section 5

\section{Sobolev estimates for weak solutions to ultraparabolic equations in the half space}\label{sobolev}
In this section, we establish $L^{p}$--type Sobolev regularity estimates for the following linear inhomogeneous ultraparabolic equation with rough diffusion coefficients:
\begin{equation}\label{nondiver}
\begin{cases}
\p_{t}w+y\p_{x}w-\partial_y(a(t,x,y)\p_{y}w)=\partial_yf(t,x,y)+g(t,x,y),\ \text{in}\ \B_{r_{0}}^{+},\\
\p_{y}w|_{y=0}=0,
\end{cases}
\end{equation}
where $a(t,x,y)$ is assumed to be H\"older continuous with exponent $\alpha>0$, satisfying $c\leq a(t,x,y)\leq c^{-1}$ for some $c\in(0,1/2)$ on $\overline{\mathcal{B}_{r_{0}}^{+}}$, throughout the section.

Our goal is to prove a $W^{2,p}_y$ estimate \emph{up to the boundary $y=0$} for weak solutions $w\in L^{\infty}\cap W^{1,1}\cap W^{2,1}_{y}(\B_{r_{0}}^{+})$. We note that the interior $W^{2,p}_{y}$ estimates for system \eqref{nondiver} were established in \cite{BC96b}. The main new difficulties in our case are (i) the lack of an explicit formula for the fundamental solution to the Kolmogorov equation in the upper half space, and (ii) the very weak regularity assumption on the solution $w$ for obtaining $W^{2,p}_y$ bounds. Due to the limited regularity on $w$, we need to work with somewhat nonstandard regularity assumptions on the coefficient $a$.

Our first main result in this section is the following $W^{1,p}_{y}$ regularity estimate.
\begin{proposition}\label{sobolevesti}
Fix $r_0\in(0,1]$. Assume that $w\in L^{\infty}\cap W^{1,1}\cap W^{2,1}_{y}(\B_{r_{0}}^{+})$ satisfies \eqref{nondiver} with $g\in L^p(\B_{r_{0}}^{+})$ and $f\in W_y^{1,1}\cap L^{p}(\B_{r_{0}}^{+})$ for some $2\leq p<\infty$, and $f|_{y=0}=0$. Then there exists $r^{\ast}\in(0,\frac{r_{0}}{4}]$ depending only on the diffusion coefficient $a(t,x,y)$ such that
\begin{equation}\label{w2pdesire}
\|\p_{y}w\|_{L^{p}(\B_{r^\ast}^{+})}\lesssim _{p,r_0,c,[a]_{C^{\alpha}}} \|w\|_{L^{p}(\B_{r_0}^{+})}+\|f\|_{L^{p}(\B_{r_0}^{+})}+\|g\|_{L^{p}(\B_{r_0}^{+})}.
\end{equation}

Furthermore, when $p>6$, we also have
\begin{equation}\label{w2pdesire2}
    [w]_{C^{\beta}_{\ast}(\mathcal{B}_{r}^{+})}\lesssim_{p,r,c,[a]_{C^{\alpha}}}\|w\|_{L^{p}(\B_{2r}^{+})}+\|f\|_{L^{p}(\B_{2r}^{+})}+\|g\|_{L^{p}(\B_{2r}^{+})},\ \forall r\leq r^{\ast}.
\end{equation}
    In the above, $\beta=1-\frac{6}{p}$, and $C^{\beta}_{\ast}$ is the anisotropic H\"older space associated with $\mathcal{L}_{0}$ with the semi--norm in a domain $\Sigma\subset\mathbb{H}^{-}$:
\begin{equation}\label{newholder}
\begin{split}
    [w]_{C^{\beta}_{\ast}(\Sigma)}&:=\sup_{\delta\in(0,\frac{1}{2}],\,z\in\Sigma,\,\zeta\in\Sigma_{z,\delta}}\Big|\frac{w(\zeta)-w(z)}{\delta^{\beta}}\Big|.
    \end{split}
\end{equation}
where $z=(t_{0},x_{0},y_{0})$, and $\Sigma_{z,\delta}:=\big\{(t_{0},x_0+\delta^3,y_0), (t_0,x_0,y_0+\delta),(t_{0}+\delta^2,x_{0}+y_{0}\delta^2,y_{0})\big\}\cap \Sigma$.
\end{proposition}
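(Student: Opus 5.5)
We will prove Proposition \ref{sobolevesti} by freezing the coefficient at a boundary point, representing a localized version of $w$ through the half-space fundamental solution $\Gamma$ of Section \ref{fundamental}, and closing a perturbative fixed-point argument in $L^p$.

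\textbf{Step 1: localization and freezing.} Fix a cut-off $\chi$ adapted to $\B_{2r}^{+}$ with $\p_y\chi|_{y=0}=0$, and set $a_0:=a(0,0,0)$. After the rescaling $y\mapsto a_0^{-1/2}y$, $x\mapsto a_0^{-1/2}x$ we may take $a_0=1$. The function $u=w\chi$ then satisfies
\[
\mathcal{L}_0 u=\p_y\big((a-1)\p_y u\big)+\p_y(f\chi)+F,
\]
with Neumann data. The remainder $F$ collects the terms $w(\p_t+y\p_x)\chi$, $a\p_yw\,\p_y\chi$, $f\p_y\chi$, $g\chi$ and the commutator $\p_y(a w\p_y\chi)$; none of these involves two derivatives of $w$. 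Using $f|_{y=0}=0$ and integrating by parts against $\Gamma$, we obtain
\[
\p_y u=\mathcal{K}\big[(a-1)\p_y u\big]+\mathcal{K}[f\chi]+\mathcal{R},
\qquad
\mathcal{K}h(z):=-\int\p_y\p_{y_0}\Gamma(z;\zeta)h(\zeta)\,d\zeta,
\]
where $\mathcal{R}$ is an integral of $\p_y\Gamma$ against $F$. By Proposition \ref{highptwise} the kernel $\p_y\Gamma$ is bounded by $\|z-\zeta\|^{-5}\in L^{6/5,\infty}$. Young's inequality therefore bounds $\mathcal{R}$ in $L^p$ by $\|w\|_{L^p}+\|\p_yw\|_{L^{q}}+\|f\|_{L^p}+\|g\|_{L^p}$ on the larger ball, with $q<p$.

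\textbf{Step 2: $L^p$ boundedness of $\mathcal{K}$.} This is the main obstacle. We split the kernel as $\Gamma=\widetilde\Gamma+y_0^{-4}V(\cdot)$ using Proposition \ref{ptwise}.

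For the whole-space part, the reflection $\widetilde\Gamma(z;\zeta)+\widetilde\Gamma(z;\bar\zeta)$, with $\bar\zeta=(t_0,x_0,-y_0)$, is handled by the classical singular-integral theory for Kolmogorov operators on homogeneous spaces (as in \cite{BC96b}).

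For the remainder we proceed as follows. The estimates \eqref{highpotwise} give Calder\'on--Zygmund size and smoothness bounds $|\nabla\p_y\p_{y_0}\Gamma|\lesssim\|z-\zeta\|^{-6-d}$, measured in the homogeneous metric, for pairs with $y_0\le10\|z-\zeta\|$. In the complementary region $y_0>10\|z-\zeta\|$ the kernel coincides with the whole-space one up to a smooth error. The homogeneous space here is $\mathbb{H}$ with the quasi-distance $\|\cdot\|$ and volume $r^6$.

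$L^2$ boundedness will be proved on the Fourier side in $x$. For each frequency $\xi$, the operator $h\mapsto\p_y\int\Gamma\,\p_{y_0}h$ is the solution map $\p_y(\p_t+2\pi i y\xi-\p_y^2)^{-1}\p_y$ with Neumann condition. Testing the equation with the solution gives the energy bound $\|\p_y u\|_{L^2_{t,y}}\le\|h\|_{L^2_{t,y}}$ uniformly in $\xi$, and Plancherel in $x$ then gives $L^2(\mathbb{H})$ boundedness. If uniformity in $\xi$ causes trouble near the boundary, we will invoke the enhanced dissipation of Proposition \ref{enhanced} as in Subsection \ref{subsec2.2}. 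Calder\'on--Zygmund theory on homogeneous spaces then yields $L^p$ boundedness for $1<p<\infty$.

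\textbf{Step 3: contraction and uniqueness.} By Proposition \ref{holder}'s setting, $|a-1|\lesssim[a]_{C^\alpha}(2r)^{\alpha}$ on the support of $\chi$. Choosing $r^\ast$ small makes $\|\mathcal{K}[(a-1)\,\cdot\,]\|_{L^p\to L^p}<1/2$, so the identity of Step 1 has a unique solution in $L^p$. A priori we only know $\p_y u\in L^1$, however, and $\mathcal{K}$ is not bounded on $L^1$. We therefore prove uniqueness in a weak space. The solution $v\in L^p$ produced by the contraction and $\p_y u$ both satisfy the same linear equation. Their difference $d$ lies in $L^1$, and $d=\mathcal{K}[(a-1)d]$. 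Since $\mathcal{K}$ is bounded $L^1\to L^{1,\infty}$, and also $L^q\to L^q$ for $q$ slightly above $1$, we argue by duality instead of directly in $L^1$. The adjoint $\mathcal{K}^{\ast}$ has the same structure by Lemma \ref{lm:h2}, so $\mathcal{K}^\ast[(a-1)\,\cdot\,]$ is a contraction on $L^{p'}$ and on $L^\infty$-type BMO spaces. Pairing $d$ with such test functions forces $d=0$. This gives the a priori $L^p$ bound in \eqref{w2pdesire}. Terms with $\|\p_yw\|_{L^q}$, $q<p$, are absorbed by iterating in $p$: we start from $p=2$, which follows from the energy estimate \eqref{caccioppoli}, and use nested balls.

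\textbf{Step 4: H\"older bound \eqref{w2pdesire2}.} For $p>6$ we write $u=\int\Gamma\,\mathcal{L}_0u$, with $\mathcal{L}_0u$ in the form $\p_y G_1+G_2$ where $G_1,G_2\in L^p$ by Step 3. For each increment $\zeta\in\Sigma_{z,\delta}$, we split the integral into the region $\|\cdot-z\|<2\delta$ and its complement. On the near region, H\"older's inequality with kernels $\|\cdot\|^{-5}\in L^{p'}$ gives the bound $\delta^{1-6/p}$, since $p'<6/5$. On the far region, the mean value theorem along the three directions defining $\Sigma_{z,\delta}$ uses $|\p_x\p_{y_0}\Gamma|\lesssim\|\cdot\|^{-8}$, $|\p_y\p_{y_0}\Gamma|\lesssim\|\cdot\|^{-6}$ and $|(\p_t+y\p_x)\p_{y_0}\Gamma|\lesssim\|\cdot\|^{-7}$. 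Each gives the same power $\delta^{\beta}$ with $\beta=1-6/p$.
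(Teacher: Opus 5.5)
Your architecture matches the paper's: localize, freeze $a$ at the boundary point, pose a fixed-point problem for the $\partial_{y_0}\Gamma$-potential of $(a-1)\widetilde\chi\,\partial_y(\cdot)$, and get a contraction from $\|(a-1)\widetilde\chi\|_{L^\infty}\lesssim[a]_{C^\alpha}r^\alpha$. You then identify the fixed point with $w\chi$, bootstrap the exponent through the lower-order potential bounds, and finish with the H\"older estimate. Your Step 2 is \eqref{sob6} of Lemma~\ref{constantcoeff}: the energy identity for $p=2$, then Calder\'on--Zygmund theory after peeling off the whole-space kernel. Your Step 4 is Lemma~\ref{potentialembed}.

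The step that fails as written is the identification in Step 3, and the duality argument you give there does not close. A function in $L^1$ cannot be paired with BMO, which is the dual of $H^1$, not of $L^1$. It also cannot be paired with $L^{p'}$ test functions. Moreover, for $d\in L^1$, $\mathcal{K}[(a-1)d]$ is only in $L^{1,\infty}$. So the adjoint identity $\langle \mathcal{K}[(a-1)d],\psi\rangle=\langle d,(a-1)\mathcal{K}^\ast\psi\rangle$ cannot be justified for the non-$L^\infty$ functions $\psi$ that inverting $I-(a-1)\mathcal{K}^\ast$ produces. Nothing in the argument forces $d=0$.

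The premise is also wrong, and correcting it removes the difficulty. Since $w\in L^\infty$, $\partial_y^2w\in L^1$ and $\partial_yw|_{y=0}=0$, integrating $|\partial_yw|^2\chi^2$ by parts in $y$ gives $\partial_yw\in L^2_{\rm loc}$. The energy estimate then yields $\|\partial_yw\|_{L^2(\B^+_{7r_0/8})}\lesssim\|w\|_{L^p}+\|f\|_{L^p}+\|g\|_{L^p}$, which is \eqref{sobadd0.1}; you already use this to start your bootstrap at $p=2$.

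So run the contraction in $L^2\cap L^p$ rather than in $L^p$ alone. The map $v\mapsto\mathcal{K}[(a-1)\widetilde\chi v]$ has norm $\lesssim\|(a-1)\widetilde\chi\|_{L^\infty}$ on both spaces, with the $L^2$ bound coming from your own energy identity. The difference $d=\partial_yu-v$ then lies in $L^2$ and satisfies $\|d\|_{L^2}\le\frac12\|d\|_{L^2}$, so it vanishes. This is exactly how the paper closes, using the space $X$ of \eqref{x2}, first exponent $p_1=\min\{p,3\}$, and $p_{j+1}=\min\{p,6p_j/(6-p_j)\}$, passing through $6-\delta$ when $p_j=6$.

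Some smaller repairs are also needed:
\begin{itemize}
\item The term $\partial_y(aw\,\partial_y\chi)$ cannot be placed in $\mathcal{R}$ and handled by Young's inequality, because $\partial_ya$ is not assumed to exist. Integrate by parts, using $\partial_y\chi|_{y=0}=0$, so that it enters as $\mathcal{K}[aw\,\partial_y\chi]$, which Step 2 bounds in $L^p$ by $\|w\|_{L^p}$.
\item The coefficient must be truncated as $(a-1)\widetilde\chi$, so that the fixed-point problem is posed on all of $\mathbb{H}^-$ with a small coefficient.
\item The bound $|\partial_y\Gamma|\lesssim\|z-\zeta\|^{-5}$ holds only when $\eta\le10\|z-\zeta\|$. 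Elsewhere you must treat the whole-space kernel separately, as in Proposition~\ref{operatorbd}.
\item For the same reason, in Step 4 the transport increment $(t_0+\delta^2,x_0+y_0\delta^2,y_0)$ sits at $\|\cdot\|$-distance about $(y_0\delta^2)^{1/3}\gg\delta$ from $z$ when $y_0\gg\delta$. Your near/far splitting at radius $2\delta$ therefore must also use the Galilean-translated distance, as in the regions $D_{21},\dots,D_{24}$ in the proof of \eqref{differentialquo}.
\end{itemize}
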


\begin{comment}
\begin{remark}In particular, if $f\equiv 0$, then \eqref{w2pdesire} holds for all $1<p<\infty$.
\end{remark}
\end{comment}

For our applications below, we also need the following $W^{2,p}_y$ estimates.
\begin{proposition}\label{sob1}
Fix $r_0\in(0,1]$. Let $a\in C^{\beta}_{\ast}(\mathcal{B}_{r_{0}}^{+})$ for all $0<\beta<1$ and $\partial_ya\in L^q(\mathcal{B}_{r_{0}}^{+})$ for any $1<q<\infty$. Assume that $w\in L^{\infty}\cap W^{1,1}\cap W^{2,1}_{y}(\B_{r_{0}}^{+})$ satisfies \eqref{nondiver} with $g\in L^p(\B_{r_{0}}^{+})$ for some $3< p<\infty$, and $f\equiv0$. Then there exists $r^{\ast}\in(0,\frac{r_{0}}{4}]$ such that
\begin{equation}\label{sob2}
\|\p_{y}^2w\|_{L^{p}(\B_{r^\ast}^{+})}\lesssim _{p,r_0,c,a} \|w\|_{L^{\infty}(\B_{r_0}^{+})}+\|\partial_yw\|_{L^{p}(\B_{r_0}^{+})}+\|g\|_{L^{p}(\B_{r_0}^{+})}.
\end{equation}
In the above, $r^\ast$ and the implied constant depend on $a$ through the H\"older norm $\|a\|_{C^{\beta}_{\ast}}$ and $\|\partial_ya\|_{L^q(\mathcal{B}_{r_0}^+)}$ for $\beta$ close to $1$ and sufficiently large $q>1$ determined by $p$.
\end{proposition}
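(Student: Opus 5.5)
The plan is to reduce Proposition \ref{sob1} to a constant-coefficient problem by freezing the coefficient at a boundary point. We then treat the error terms perturbatively in $L^p$, using singular integral bounds for the half-space fundamental solution $\Gamma$ of Section \ref{fundamental}.

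\textbf{Reduction to non-divergence form.} Since $f\equiv0$ and $\partial_ya\in L^q$, the equation can be rewritten as
\begin{equation*}
\p_t w+y\p_xw-a\p_y^2w=g+\p_ya\,\p_yw=:\tilde g .
\end{equation*}
Because $\p_yw\in L^p$ and $\p_ya\in L^q$ with $q$ large, H\"older gives $\tilde g\in L^{p'}$ for every $p'<p$. If needed, we first bootstrap $\p_yw\in L^{p}$ for larger $p$ via Proposition \ref{sobolevesti}; that proposition applies since $a\in C^\beta_\ast$ implies $a$ is H\"older. Iterating over finitely many exponents then gives $\tilde g\in L^p$.

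\textbf{Localization and freezing.} Fix a cutoff $\chi$ adapted to $\B^+_{2r}$ with $\p_y\chi|_{y=0}=0$, and set $a_0=a(0,0,0)$. After the harmless rescaling $y\mapsto y/\sqrt{a_0}$ (with $x$ scaled compatibly), $v=w\chi$ solves
\begin{equation*}
\mathcal L_0 v=(a-a_0)\p_y^2v+\chi\tilde g+w(\p_t+y\p_x)\chi-2a\p_yw\p_y\chi-aw\p_y^2\chi,\qquad \p_yv|_{y=0}=0 .
\end{equation*}
Representing $v$ through $\Gamma$ and differentiating twice in $y$ gives $\p_y^2v=T[(a-a_0)\p_y^2v]+T[F]$, where $T$ is the operator with kernel $\p_y^2\Gamma(z;\zeta)$ (understood as a principal value) and $F$ collects the lower-order terms. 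On $\B_{2r}^+$ we have $|a-a_0|\lesssim r^\beta$. So if $T$ is bounded on $L^p(\mathbb H)$, the map $h\mapsto T[(a-a_0)h]+T[F]$ is a contraction on $L^p$ for $r=r^\ast$ small.

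The main obstacle is that a priori we only know $\p_y^2w\in L^1$, so the fixed point $h$ cannot be identified with $\p_y^2v$ directly. I would resolve this with a uniqueness argument. Both $\p_y^2v$ and the contraction fixed point $h$ are solutions, so their difference $e$ satisfies $e=T[(a-a_0)e]$ with $e\in L^1$. Testing against the adjoint problem, whose solutions are smooth and bounded by duality in $L^{p'}$ with $p'$ large, should show $e=0$. Alternatively, we can mollify in $(t,x)$, which commutes with $\mathcal L_0$ up to a commutator in $a$ that is controlled by the $C^\beta_\ast$ norm, and pass to the limit.

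\textbf{$L^p$ boundedness of $T$.} This is the key analytic input. Split $\Gamma=\widetilde\Gamma+y_0^{-4}V(D(y_0)\cdot)$ as in \eqref{greendecom}. The whole-space part is covered by the Bramanti--Cerutti theory \cite{BC96b}. For the boundary correction, Proposition \ref{highptwise} gives the standard Calder\'on--Zygmund size bound $|\p_y^2\Gamma|\lesssim\|z-\zeta\|^{-6}$ together with H\"ormander-type smoothness bounds in both variables on the homogeneous space $(\mathbb H,\|\cdot\|)$. The remaining ingredient is $L^2$ boundedness. We obtain it from Fourier analysis in $x$: apply the enhanced dissipation estimate (Proposition \ref{enhanced}) together with energy estimates for $\p_t+2\pi\mathrm{i}y\xi-\p_y^2$, which give $\|\p_y^2u\|_{L^2}\lesssim\|\mathcal L_0u\|_{L^2}$ with the Neumann condition. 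The Coifman--Weiss extrapolation then yields $L^p$ boundedness for $1<p<\infty$.

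Collecting the estimates, the bound on $\|\p_y^2w\|_{L^p(\B^+_{r^\ast})}$ follows. The terms in $F$ are controlled by $\|w\|_{L^\infty}$, $\|\p_yw\|_{L^p}$ and $\|g\|_{L^p}$, with constants depending on $\|a\|_{C^\beta_\ast}$ and $\|\p_ya\|_{L^q}$.
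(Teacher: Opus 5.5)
The gap is in the identification step. You correctly flag it as the main obstacle, but you do not actually resolve it.

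\textbf{Why your two routes fail.} Your contraction runs in $L^p$ at the level of second derivatives. So the difference $e=\partial_y^2v-h$ is only known to lie in $L^1$, and the relation $e=T[be]$ with $b=(a-a_0)\widetilde\chi$ gives nothing, because $T$ is only of weak type $(1,1)$.

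The duality argument does not close either. To pair with $e\in L^1$ you need a \emph{bounded} solution of the adjoint problem $\psi=\phi+bT^\ast\psi$. But $T^\ast$ does not map $L^\infty$ into $L^\infty$, and $L^{p'}$ bounds for finite $p'$ are useless against an $L^1$ function. Making the Neumann series converge inside $L^\infty$ would require Schauder-type estimates for the half-space Kolmogorov operator with Neumann condition. These are exactly what the paper avoids, since the half-space fundamental solution lacks the symmetries used in whole-space Schauder theory.

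The $(t,x)$-mollification route fails for a related reason. Mollifying in $(t,x)$ gives no regularity in $y$, and the commutator $[\varphi_\delta\ast,a]\partial_y^2w$ is still only in $L^\infty_{t,x}L^1_y$. You cannot shift the $(t,x)$-derivatives from $w$ onto $1/a$, because $a$ is merely $C^\beta_\ast$. So $\partial_y^2w_\delta\in L^p$ is not known qualitatively, and the a priori estimate cannot be applied to $w_\delta$.

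\textbf{The missing idea.} Prove uniqueness at a regularity level $w$ already has, namely $\partial_yw\in L^2$. The paper keeps the divergence form and writes the perturbation as $\partial_y\big((a-1)\widetilde\chi\,\partial_y(w\chi)\big)$. The perturbation operator is then $\mathcal T^\ast[v]=-\int\partial_\eta\Gamma\,(a-1)\widetilde\chi\,\partial_\eta v$, which satisfies the elementary energy bound \eqref{sob6}. This gives $\|\partial_y(W-V)\|_{L^2}\lesssim\|a-1\|_{L^\infty(\mathrm{supp}\,\widetilde\chi)}\|\partial_y(W-V)\|_{L^2}$, which is legitimate because $\partial_yW\in L^2$ a priori.

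To close the contraction in divergence form, the paper works in $Y=W^{2,2}_y\cap W^{2,p}_y\cap L^\infty$. The term $\partial_ya\,\partial_yv$ is handled by $\|\partial_yv\|_{L^{2p}}\lesssim\|v\|_Y$, which is one-dimensional Gagliardo--Nirenberg between $L^\infty$ and $W^{2,p}_y$. The $L^\infty$ component is propagated using $\Gamma(z;\cdot)\in L^{p'}_{\mathrm{loc}}$, which is where $p>3$ enters.

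This also makes your Step 1 unnecessary. As written it does not work anyway. Proposition \ref{sobolevesti} with $g\in L^p$ only returns $\partial_yw\in L^p$. The extra integrability you need for $\partial_ya\,\partial_yw\in L^p$ would have to come from the fractional-integration gain of $\partial_y\Gamma$, which appears in that proof but not in its statement.
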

\begin{remark} The formulation of Propositions \ref{sobolevesti}--\ref{sob1} is adapted specifically to our applications to  the dynamical Prandtl equation below. More precisely, we will first apply Proposition \ref{sobolevesti} to get bounds on the first derivative in $y$ and improved H\"older estimates of the solution, which then allow us to apply Proposition \ref{sob1} to obtain control on the second derivative in $y$ of the solution, since in \eqref{prandtlcroccosec1}, the diffusion coefficient depends on the solution itself.%, it is natural to assume more regularity of $a(t,x,y)$ in Proposition \ref{sob1}.

The restriction to $p>3$ is for technical simplicity and is sufficient for our applications below. The case $p\leq 3$ can also be considered, but some of our arguments need suitable adjustments, see e.g. \eqref{sobadd3.3}.     
\end{remark}

\subsection{Estimates on volume potentials}

The following regularity estimates for \emph{volume potentials} associated with $\Gamma$ play an essential role in the proof of Propositions \ref{sobolevesti} and \ref{sob1}.

Let $g\in C_{0}^{\infty}(\mathbb{H}^{-})$ and define the following volume potentials
\begin{equation}\label{sob3}
w(t,x,y)=\int_{-\infty}^{t}\int_{-\infty}^{\infty}\int_{0}^{\infty}\Gamma(t,x,y;\tau,\xi,\eta)g(\tau,\xi,\eta)d\tau d\xi d\eta,
\end{equation}
\begin{equation}\label{sob4}
w^\ast(t,x,y)=\int_{-\infty}^{t}\int_{-\infty}^{\infty}\int_{0}^{\infty}\partial_\eta\Gamma(t,x,y;\tau,\xi,\eta)g(\tau,\xi,\eta)d\tau d\xi d\eta.
\end{equation}
\begin{lemma}[\bf H\"older estimates]
\label{potentialembed}
For $w$ and $w^{\ast}$ defined above, we have
\begin{equation}
\begin{cases}
\|w\|_{C^{\alpha}_{\ast}(\mathbb{H}^{-})}\lesssim \|g\|_{L^{p}(\mathbb{H}^{-})},\ \text{for}\ p\in(3,6),\ \alpha = 2-\frac{6}{p},\\
\|w^{\ast}\|_{C^{\tilde{\alpha}}_{\ast}(\mathbb{H}^{-})}\lesssim \|g\|_{L^{p}(\mathbb{H}^{-})},\ \text{for}\ p>6,\ \tilde{\alpha} = 1-\frac{6}{p}.
\end{cases}
\end{equation}
\end{lemma}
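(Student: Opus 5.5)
We estimate, for each of the three directions in the definition of $C^{\alpha}_{\ast}$, the increment $w(\zeta)-w(z)$ with $\zeta\in\Sigma_{z,\delta}$ directly from the kernel representation \eqref{sob3}--\eqref{sob4}, using the pointwise bounds of Proposition \ref{ptwise} and Proposition \ref{highptwise}. Fix $z$, $\delta\in(0,1/2]$ and $\zeta$ one of the three shifted points; note that in each case $\|\zeta-z\|\lesssim\delta$ (for the time shift $(t_0+\delta^2,x_0+y_0\delta^2,y_0)$ this is exactly the Galilean-type shift along the transport, which is why this direction is chosen instead of a pure time shift; we expect the relevant quantity there to be $\|\cdot\|$ measured after following the characteristic, and we will use the mean value theorem along the curve $s\mapsto(t_0+s^2,x_0+y_0s^2,y_0)$, whose derivative is $(\p_t+y\p_x)$ applied at scale $s$). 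Write
\[
w(\zeta)-w(z)=\int\big(\Gamma(\zeta;\omega)-\Gamma(z;\omega)\big)g(\omega)\,d\omega
\]
and split $\mathbb{H}^{-}$ into the near region $N=\{\omega:\|z-\omega\|\le 2K\delta\}$ and the far region $F=N^{c}$, $K$ a large constant.

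In the near region we simply bound each term separately. By \eqref{greendecom} and \eqref{potwise}, together with the explicit Gaussian \eqref{wholespace}, we have $\Gamma(z;\omega)\lesssim\widetilde\Gamma(z;\omega)+\|z-\omega\|^{-4}$, and the whole-space part is also $\lesssim\|z-\omega\|^{-4}$ in the relevant sense (it is this bound, uniform in $y_0$, that we will use; for $y_0>10\|z-\omega\|$ we use the decomposition \eqref{greendecom} with the smooth bounded $V$ rescaled, which still gives $\|z-\omega\|^{-4}$). Hölder's inequality and \eqref{volume} give $\int_N\|z-\omega\|^{-4}|g|\lesssim \|g\|_{L^p}\,(\int_{\|\cdot\|\le 2K\delta}\|\cdot\|^{-4p'})^{1/p'}\lesssim\delta^{2-6/p}\|g\|_{L^p}$, finite precisely when $4p'<6$, i.e. $p>3$. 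For $w^\ast$ the kernel is $\p_\eta\Gamma\lesssim\|z-\omega\|^{-5}$ and we get $\delta^{1-6/p}$, requiring $5p'<6$, i.e. $p>6$. The same bound holds with $z$ replaced by $\zeta$ since $N\subset\{\|\zeta-\omega\|\le 3K\delta\}$.

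In the far region we use the mean value theorem along the segment (resp. curve) from $z$ to $\zeta$. For the $x$-shift the derivative is $\delta^3\p_x\Gamma\lesssim\delta^3\|z-\omega\|^{-7}$; for the $y$-shift $\delta\,\p_y\Gamma\lesssim\delta\|z-\omega\|^{-5}$; for the transport shift the derivative is $2s(\p_t+y\p_x)\Gamma=2s\,\p_y^2\Gamma$, which is $\lesssim\delta^2\|z-\omega\|^{-6}$, where we use $\mathcal L_0\Gamma=0$ to trade the bad $y\p_x$ term (with unbounded $y$) for $\p_y^2\Gamma$ — this is the key reason for the special direction. Intermediate points stay at distance $\sim\|z-\omega\|$ from $\omega$ since $K$ is large, by the quasi-triangle inequality for $\|\cdot\|$. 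Then, with $k$ the homogeneity gain ($3,1,2$ respectively), $\delta^{k}\int_F\|z-\omega\|^{-4-k}|g|\lesssim\delta^{k}\|g\|_{L^p}(\int_{\|\cdot\|>2K\delta}\|\cdot\|^{-(4+k)p'})^{1/p'}\lesssim\delta^{2-6/p}\|g\|_{L^p}$, convergent since $(4+k)p'>6$ always. For $w^\ast$ the same argument with one extra $\p_{y_0}$ derivative (Proposition \ref{highptwise} controls mixed derivatives) yields $\delta^{1-6/p}$.

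The main obstacle is the validity of the pointwise bounds in the regime $y_0>10\|z-\omega\|$ (source far from the boundary relative to the scale), which \eqref{potwise} and \eqref{highpotwise} exclude; there we plan to use \eqref{greendecom}: the whole-space part $\widetilde\Gamma$ satisfies the needed derivative bounds by its explicit formula (for which the Galilean shift direction is exactly the invariant one, avoiding factors of $y$), while the correction $y_0^{-4}V(\cdot/y_0)$ with $V$ smooth and bounded has derivatives $\lesssim y_0^{-4-k}\le\|z-\omega\|^{-4-k}$. Finally, $g\in C_0^\infty$ ensures all integrals converge and differentiation under the integral is justified.
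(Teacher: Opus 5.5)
\textbf{The gap: you measure everything with the wrong distance.} Your scheme is the right kind of argument: kernel differences, H\"older in a near region, mean value theorem plus homogeneity in a far region, and $(\p_t+y\p_x)\Gamma=\p_y^2\Gamma$ for the transport shift. It fails because every region and every pointwise bound is expressed through the plain distance $\|z-\omega\|$. The whole-space part of $\Gamma$ is not controlled by that distance. By \eqref{wholespace}, $\widetilde\Gamma(z;\omega)$ depends only on the Galilean-shifted point $(t-t_0,\,x-x_0-(t-t_0)y_0,\,y-y_0)$, so its singularity sits along the characteristic through $\omega$, not at $\omega$. Since $y$ is unbounded on $\mathbb{H}^-$, the two distances are not comparable when $y_0\gg\|z-\omega\|$.

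Concretely, take $z=(t,x,Y)$, $\omega=(t-\delta^2,\,x-\delta^2Y,\,Y)$ and $\zeta=(t,x+\delta^3,Y)$ with $Y\gg K^3\delta$. Then $\|z-\omega\|\approx(\delta^2Y)^{1/3}\gg K\delta$, so $\omega$ lies in your far region, yet
\[
\widetilde\Gamma(z;\omega)=\frac{\sqrt3}{2\pi}\,\delta^{-4},\qquad \widetilde\Gamma(\zeta;\omega)=\frac{\sqrt3}{2\pi}\,e^{-3}\delta^{-4},
\]
while the correction term in \eqref{greendecom} is $O(Y^{-4})$. Hence $|\Gamma(\zeta;\omega)-\Gamma(z;\omega)|\sim\delta^{-4}$ on a set of $\omega$ of measure $\sim\delta^6$ inside $F$. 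Your far-region bound would instead give $\delta^3\|z-\omega\|^{-7}\approx(\delta/Y)^{7/3}\delta^{-4}$. The same example has further consequences:
\begin{itemize}
\item It refutes the claim ``$\widetilde\Gamma\lesssim\|z-\omega\|^{-4}$ uniformly in $y_0$'' used in your near region.
\item For the transport shift, $\|\zeta-z\|\approx(y\delta^2)^{1/3}$ is not $\lesssim\delta$ when $y\gg\delta$, so $N\subset\{\|\zeta-\omega\|\le 3K\delta\}$ also fails.
\end{itemize}
Your last paragraph correctly locates the difficulty in the regime $y_0>10\|z-\omega\|$, but then asserts exactly these incorrect bounds for $\widetilde\Gamma$.

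\textbf{The missing idea: work with two quasi-distances.} The paper refers this lemma to the proof of \eqref{differentialquo}. There the integration region is split according to both $\|(T-t,X-x,Y-y)\|$ and the Galilean distance $\|(T-t,X-x-(T-t)Y,Y-y)\|$, giving the four regions $D_{21}$--$D_{24}$. In the mixed regions $D_{23},D_{24}$ one uses \eqref{greendecom}: $\widetilde\Gamma$ is estimated from its explicit formula in the Galilean distance, and $y^{-4}V(\cdot)$ by the mean value theorem on $V$, using that the source height is bounded below there.

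Equivalently, you can repair your scheme by writing $\Gamma=\widetilde\Gamma+R$ globally and treating the pieces separately.
\begin{itemize}
\item \emph{Whole-space part.} With the Kolmogorov group law $(a,b,c)\circ(s,\xi,\eta)=(a+s,\,b+\xi+sc,\,c+\eta)$ one has $\widetilde\Gamma(z;\omega)=G(\omega^{-1}\circ z)$. The three points of $\Sigma_{z,\delta}$ are $z\circ(0,\delta^3,0)$, $z\circ(0,0,\delta)$ and $z\circ(\delta^2,0,0)$, and $\omega\mapsto\omega^{-1}\circ z$ preserves Lebesgue measure. So your two-region computation, including the $\p_y^2$ trick, goes through for $\widetilde\Gamma$ (and for $\p_{y_0}\widetilde\Gamma$) with $\|\omega^{-1}\circ z\|$ in place of $\|z-\omega\|$.
\item \emph{Remainder $R$.} Derivatives of $R$ of anisotropic order $k$ are bounded by $\max\{y_0,\|z-\omega\|\}^{-4-k}$. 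This follows from Proposition~\ref{highptwise} plus comparability of the two distances when $y_0\lesssim\|z-\omega\|$, and from the rescaled smooth $V$ otherwise. That is what your plain-distance argument needs, once you check for the transport shift that intermediate points $z'$ satisfy $\max\{y_0,\|z'-\omega\|\}\gtrsim\|z-\omega\|$ on $F$.
\end{itemize}
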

\begin{lemma}[\bf Sobolev estimates]
\label{constantcoeff}  
For $w$ and $w^{\ast}$ defined above, we have
\begin{equation}\label{sob5}
\|\p_{y}^{2}w\|_{L^{p}(\mathbb{H}^{-})}\lesssim_{p} \|g\|_{L^{p}(\mathbb{H}^{-})},\ \forall\ 1<p<\infty,
\end{equation}
\begin{equation}\label{sob6}
  \|\p_{y}w^\ast\|_{L^{p}(\mathbb{H}^{-})}\lesssim_{p} \|g\|_{L^{p}(\mathbb{H}^{-})},\ \forall\ 1<p<\infty.
\end{equation}
\end{lemma}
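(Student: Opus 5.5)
We prove Lemma \ref{constantcoeff} by viewing $g\mapsto \p_y^2 w$ and $g\mapsto \p_y w^\ast$ as singular integral operators on the homogeneous space $(\mathbb{H},\|\cdot\|,dz)$. Their kernels are $K(z;\zeta)=\p_y^2\Gamma(z;\zeta)$ and $K^\ast(z;\zeta)=\p_y\p_\eta\Gamma(z;\zeta)$, understood in the principal-value sense. The plan has three steps: first prove $L^2$ boundedness, then verify the standard size and H\"ormander smoothness conditions on the kernels, and finally apply Calder\'on--Zygmund theory on homogeneous spaces (as in \cite{BC96a,BC96b}). This gives weak $(1,1)$ bounds and, by Marcinkiewicz interpolation, all $1<p\le 2$. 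The range $2\le p<\infty$ follows by duality: the adjoint kernels are of the same type, since by Lemma \ref{lm:h2} the roles of $(z,\zeta)$ are exchanged via the dual operator $\mathcal{L}_0^\ast$, which is again a Kolmogorov operator with Neumann condition.

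\emph{Step 1 ($L^2$ bound).} For $w$, take the Fourier transform in $x$. Then $\widehat w(t,\xi,y)$ solves $\p_t\widehat w+2\pi \mathrm{i} y\xi\widehat w-\p_y^2\widehat w=\widehat g$ with Neumann condition and zero data at $t=-\infty$.

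The goal is $\|\p_y^2\widehat w\|_{L^2_{t,y}}\lesssim\|\widehat g\|_{L^2_{t,y}}$ uniformly in $\xi$. The energy estimate gives only $\|\p_y\widehat w\|$, so I would test with $-\p_y^2\overline{\widehat w}$. Integration by parts (boundary terms vanish because $\p_y\widehat w|_{y=0}=0$) yields
\begin{equation*}
\tfrac12\tfrac{d}{dt}\|\p_y\widehat w\|^2+\|\p_y^2\widehat w\|^2\le |\langle \widehat g,\p_y^2\widehat w\rangle| + 2\pi|\xi|\,\big|\langle \widehat w,\p_y\widehat w\rangle\big|.
\end{equation*}
The commutator term $2\pi|\xi|\,|\langle \widehat w,\p_y\widehat w\rangle|$ is controlled by the enhanced dissipation bound (Proposition \ref{enhanced}) through Duhamel: $\|\widehat w(t)\|_{L^2_y}\lesssim\int e^{-c|\xi|^{2/3}(t-s)}\|\widehat g(s)\|\,ds$. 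This gives $\|\widehat w\|_{L^2_{t,y}}\lesssim |\xi|^{-2/3}\|\widehat g\|$. Combined with $\|\p_y\widehat w\|^2\lesssim\|\widehat w\|\,\|\widehat g\|$, one gets $\|\p_y\widehat w\|_{L^2_{t,y}}\lesssim|\xi|^{-1/3}\|\widehat g\|$.

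Hence $|\xi|\,\|\widehat w\|\,\|\p_y\widehat w\|\lesssim \|\widehat g\|^2$, which closes the estimate, and Plancherel in $x$ gives $L^2$ boundedness. For $w^\ast$ the same argument applies with a forcing of divergence type; the energy estimate now controls $\p_y w^\ast$ directly by $\|g\|_{L^2}$. More precisely, $w^\ast$ solves $\mathcal{L}_0 w^\ast=-\p_y g$ in the dual sense, so no commutator term arises.

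\emph{Step 2 (kernel bounds).} Proposition \ref{highptwise} gives $|K|+|K^\ast|\lesssim\|z-\zeta\|^{-6}$, which is the critical size since balls have volume $r^6$ by \eqref{volume}. It also gives gradient bounds gaining $\|z-\zeta\|^{-2},\|z-\zeta\|^{-3},\|z-\zeta\|^{-1}$ in $t,x,y$ respectively, valid when $\eta\le 10\|z-\zeta\|$. In the complementary regime $\eta\gg\|z-\zeta\|$, we use the decomposition \eqref{greendecom}. The whole-space part $\widetilde\Gamma$ satisfies the classical Kolmogorov kernel estimates of \cite{BC96b}. The correction $\eta^{-4}V(D(\eta)(z-\zeta))$ is smooth and, in that regime, is bounded with all derivatives by $\eta^{-6}\lesssim\|z-\zeta\|^{-6}$ times a decaying factor, which is harmless.

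These bounds yield the H\"ormander condition: for $\|z-z'\|\le\epsilon\|z-\zeta\|$ we have $|K(z;\zeta)-K(z';\zeta)|\lesssim \|z-z'\|^{\gamma}\|z-\zeta\|^{-6-\gamma}$, using the quasi-triangle inequality for $\|\cdot\|$. The same holds in the $\zeta$ variable, using \eqref{highpotwise} for the $t_0,x_0,y_0$ derivatives.

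\emph{Main obstacle.} The hardest part is Step 1. Without translation symmetry in $y$ there is no explicit symbol, and the factor $|\xi|$ from the transport term must be absorbed exactly by enhanced dissipation at the scale $|\xi|^{-2/3}$.

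A secondary issue is that the pole may lie near the boundary, where $\Gamma$ does not depend only on $\zeta^{-1}\circ z$. The CZ theory must therefore be used in its nonconvolution form on homogeneous spaces, with both-variable smoothness as verified in Step 2.
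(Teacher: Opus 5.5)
Your Step 1 is fine and is essentially the paper's Lemma \ref{h2esti} together with the energy identity for $w^\ast$. The gap is in Step 2.

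\textbf{The kernel bounds fail in the metric you chose.} The size and smoothness bounds you need on $(\mathbb{H},\|\cdot\|,dz)$ are false for the full kernel when $\eta\gg\|z-\zeta\|$. The estimates of \cite{BC96b} for $\widetilde\Gamma$ are stated in the Galilean quasi-distance $\|(t-\tau,\,x-\xi-(t-\tau)\eta,\,y-\eta)\|$, not in $\|z-\zeta\|$. The two stop being comparable once $|(t-\tau)\eta|\gg\|z-\zeta\|^{3}$.

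Concretely, take $\zeta=(0,0,1)$ and $z=(s,s,1)$ with $0<s\ll1$.
\begin{itemize}
\item Here $y=\eta$ and $2(x-\xi)-(t-\tau)(y+\eta)=0$. So \eqref{wholespace} gives $\p_y^2\widetilde\Gamma(z,\zeta)=-\frac{\sqrt3}{\pi}s^{-3}$, and likewise $\p_y\p_\eta\widetilde\Gamma(z,\zeta)=-s^{-1}\widetilde\Gamma\sim s^{-3}$.
\item The correction term in \eqref{greendecom} contributes only $O(1)$ to $\p_y^2\Gamma$, since $\eta=1$ and $V$ has bounded derivatives.
\item But $\|z-\zeta\|\sim s^{1/3}$, so $\|z-\zeta\|^{-6}\sim s^{-2}$.
\end{itemize}
Hence $|K(z;\zeta)|\lesssim\|z-\zeta\|^{-6}$ fails, and so does the corresponding bound for $K^\ast$. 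The Calder\'on--Zygmund theorem on $(\mathbb{H},\|\cdot\|,dz)$ therefore cannot be applied to the whole operator. This is exactly why Proposition \ref{highptwise} is restricted to $y_0\le 10\|z-w\|$.

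\textbf{How the paper avoids this.} It never treats the operator as a single Calder\'on--Zygmund operator in this metric. After a time truncation $\chi((t-\tau)/\epsilon)$, whose complement costs only $\epsilon^{1/2}$, it splits the kernel with $\chi(\eta/\|z-\zeta\|)$ into three pieces.
\begin{itemize}
\item $w_{11}$ lives where $\eta\lesssim\|z-\zeta\|$. There Proposition \ref{highptwise} gives genuine Calder\'on--Zygmund bounds in $\|\cdot\|$.
\item $w_{12}$ is the $\widetilde\Gamma$-part for $\eta\gtrsim\|z-\zeta\|$. It is handled not by kernel bounds but through the equation $\mathcal{L}_0w_{12}=g_{12}$ on $\mathbb{R}^3$, using the whole-space $W^{2,p}$ estimate of \cite{BC96b} with $g_{12}$ controlled by Schur's test.
\item $w_{13}$ is the $V$-part. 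There $\eta^{-6}\lesssim\|z-\zeta\|^{-6}$ really does hold.
\end{itemize}
Because the pieces are treated separately, their $L^2$ bounds must be produced separately. The paper gets $T_{11}$ from Lemma \ref{h2esti} applied to the equation satisfied by $w_{11}$, and $T_{13}$ from $T_{13}=T_1-T_{11}-T_{12}$.

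\textbf{A possible repair of your global approach.} You could instead run your argument in the Galilean quasi-distance $d(z,\zeta)$. It is comparable to $\|z-\zeta\|$ when $\eta\lesssim\|z-\zeta\|$, and $\eta\gtrsim d(z,\zeta)$ when $\eta\ge10\|z-\zeta\|$, so the near-boundary bounds and the $V$-bounds transfer. You would then still have to verify that $(\mathbb{H},d,dz)$ is of homogeneous type and check the smoothness conditions along Galilean translations in both variables. None of this is in your proposal.
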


\begin{comment}
By a density argument, the above inequality still holds if we only assume that $g\in L^{p}(\mathbb{H}),\, 1<p<\infty$. {\color{red}Warning: For $g\in L^{p}$, the expression above may not well-defined? Unless $g$ has compacted supported with $t$.}
\end{comment}
\begin{remark}
Lemma \ref{potentialembed} and Lemma \ref{constantcoeff} provide global bounds for the following linear equation (where we assume $g_2|_{y=0}\equiv0$)
\begin{equation}\label{constanteq}
\begin{cases}
\p_{t}w+y\p_{x}w-\p_{y}^{2}w=g_1+\partial_yg_2,\ \text{in}\ \mathbb{H},\\
\p_{y}w|_{y=0}=0.
\end{cases}
\end{equation}
\end{remark}

We assume Lemma \ref{potentialembed} and Lemma \ref{constantcoeff} momentarily and first present the proof of Proposition \ref{sobolevesti} and Proposition \ref{sob1}. The proof of Lemma \ref{potentialembed} is similar to that of \eqref{differentialquo} in the next section; hence we omit it. The details of Lemma \ref{constantcoeff} will be given in subsections \ref{polc1} and \ref{polc2} below. Without loss of generality we shall assume that $a(0,0,0)=1$. The general case follows by a suitable rescaling.

\subsection{Proof of Proposition \ref{sobolevesti}}

We first give the proof of Proposition \ref{sobolevesti}. We can normalize 
$$\|w\|_{L^{p}(\B_{r_0}^{+})}+\|f\|_{L^{p}(\B_{r_0}^{+})}+\|g\|_{L^{p}(\B_{r_0}^{+})}=1.$$
Assume that $w\in L^{\infty}\cap W^{1,1}_{t,x}\cap W^{2,1}_{y}$ (hence $\p_{y}w\in L^{2}$ by interpolation) is a weak solution to
\eqref{nondiver}. Standard energy estimates imply that 
\begin{equation}\label{sobadd0.1}
    \|\partial_yw\|_{L^2(\mathcal{B}^+_{7r_0/8})}\lesssim_{r_0,c} 1.
\end{equation}

Let $\chi(t,x,y)$ be a smooth cut--off function such that $\chi|_{\mathcal{B}_r^+}\equiv1$, $\partial_y\chi|_{y=0}=0$ and $\chi\in C_c^\infty(\mathcal{B}_{2r}^+)$, for some $r\in(0,7r_0/16)$ to be determined below. We also choose a smooth cut--off function $\widetilde{\chi}\in C_c^\infty(\mathcal{B}_{4r}^+)$ with $\widetilde{\chi}\equiv 1$ on $\mathcal{B}_{2r}^+$. We have
\begin{equation}\label{sobadd1}
\begin{cases}
\p_{t}(w\chi)+y\p_{x}(w\chi)-\p_{y}^{2}(w\chi)=\p_{y}\big((a-1)\widetilde{\chi}\p_{y}(w\chi)\big)+h(t,x,y),\ in\ \mathbb{H}^{-},\\
\p_{y}(w\chi)|_{y=0}=0.
\end{cases}
\end{equation}

In the above, $h$ is given by
\begin{equation}\label{sobadd2}
h(t,x,y)=w(\p_{t}\chi+y\p_{x}\chi)-a\p_{y}w\p_{y}\chi-\p_{y}(aw\p_{y}\chi)+\partial_y(\chi f)+\chi g-\partial_y\chi f.
\end{equation} 
Define the mapping $\mathcal{T}$ as
\begin{equation}\label{sobadd2.9}
    \mathcal{T}[v]:=-\int_{-\infty}^{t}\iint_{\R\times\R^{+}}\p_{\eta}\Gamma(t,x,y;\tau,\xi,\eta)(a-1)\widetilde{\chi} \p_{\eta}v\, d\tau d\xi d\eta,
\end{equation}
and the inhomogeneous term $H$ as
\begin{equation}\label{sobadd3}
\begin{split}
H(t,x,y):=&\int_{-\infty}^{t}\iint_{\R\times\R^{+}}\p_{\eta}\Gamma(t,x,y;\tau,\xi,\eta)\big[-\chi f+aw\p_{\eta}\chi\big]\, d\tau d\xi d\eta\\
&+\int_{-\infty}^{t}\iint_{\R\times\R^{+}}\Gamma(t,x,y;\tau,\xi,\eta) \big[w(\p_{\tau}\chi+\eta\p_{\xi}\chi)-a\p_{\eta}w\p_{\eta}\chi+\chi g-\partial_\eta\chi f\big]d\tau d\xi d\eta.
\end{split}
\end{equation}
Set $p_1:=\min\{p, 3\}$.
By Lemma \ref{constantcoeff} and \eqref{sobadd0.1}, we have
\begin{equation}\label{sobadd7}
    \|\mathcal{T}[v]\|_{L^{p_1}\cap L^2(\mathbb{H}^-)}+\|\partial_y\mathcal{T}[v]\|_{L^{p_1}\cap L^2(\mathbb{H}^-)}\lesssim \|(a-1)\widetilde{\chi}\|_{L^\infty(\mathbb{H}^-)} \|\partial_\eta v\|_{L^{p_1}\cap L^2(\mathbb{H}^-)}.
\end{equation}
The inhomogeneous term $H$ satisfies the bound
\begin{equation}\label{sobadd8}
    \|\partial_yH\|_{L^2\cap L^{p_1}(\mathbb{H}^-)}+\|H\|_{L^2\cap L^{p_1}(\mathbb{H}^-)}\lesssim_{r} 1.
\end{equation}
In the above, for the contribution of $a\,\partial_\eta w\,\partial_\eta\chi$ to $H$, we also use Proposition \ref{operatorbd}: by \eqref{sobadd0.1}, its source belongs to $L^2$, and hence the corresponding potential and its $y$-derivative belong to $L^6$ and $L^3$, respectively; interpolation with the corresponding $L^2$ bounds yields \eqref{sobadd8}, since $p_1\leq 3$.

It follows from \eqref{sobadd1} that $W:=w\chi$ satisfies on $\mathbb{H}^-$ the integral equation
\begin{equation}
\begin{split}
W&=\mathcal{T}[W]+H.
\end{split}
\end{equation}

Let
\begin{equation}\label{x2}
X=\left\{u(t,x,y)\in L^{2}\cap L^{p_1}(\mathbb{H}^{-}),\ \p_{y}u\in L^{2}\cap L^{p_1}(\mathbb{H}^{-})\right\},
\end{equation}
with the norm
\begin{equation}\label{X1}
\|u\|_X:=\|u\|_{L^{2}\cap L^{p_1}(\mathbb{H}^{-})}+\|\partial_yu\|_{L^{2}\cap L^{p_1}(\mathbb{H}^{-})}.
\end{equation}
If $r<7r_0/8$ is chosen suitably small, then $\|(a-1)\widetilde{\chi}\|_{L^\infty(\mathbb{H}^-)}$ can be made small enough such that \eqref{sobadd7} implies 
\begin{equation}
\|\mathcal{T}[u_{1}]-\mathcal{T}[u_{2}]\|_{X}\leq \lambda_{0}\|u_{1}-u_{2}\|_{X},\ \text{for\ some}\ \lambda_{0}<1.
\end{equation}
By the fixed-point theorem, there exists a unique $V\in X$ such that $V=\mathcal{T}[V]+H$. Recall that $W=\mathcal{T}[W]+H$. 
Hence by Lemma \ref{constantcoeff},
\begin{equation}
\|\p_{y}(W-V)\|_{L^{2}}=\|\p_{y}(\mathcal{T}[W]-\mathcal{T}[V])\|_{L^{2}}\leq \lambda_0\|\p_{y}(W-V)\|_{L^{2}}.
\end{equation}
It follows that $\p_{y}W=\p_{y}V\in L^{p_1}$, and therefore $\partial_yw\,\chi\in L^{p_1}(\mathbb{H}^-)$. 

We may now repeat the argument with a cutoff supported in a smaller half-ball. If \(p_j<6\), set
\[
p_{j+1}:=\min\left\{p,\frac{6p_j}{6-p_j}\right\}.
\]
If the iteration reaches \(p_j=6\), then \(L^6\subset L^{6-\delta}\) on a smaller half-ball for every \(\delta>0\), and one further subcritical iteration yields
\[
\partial_y w\in L^{\frac{6(6-\delta)}{\delta}}.
\]
Choosing \(\delta>0\) sufficiently small so that \(6(6-\delta)/\delta\ge p\) completes the bootstrap, and we conclude the first part of Proposition \ref{sobolevesti}. 

Next we show the extra H\"older continuity estimate whenever $p>6$. Recall that $W=\mathcal{T}[W]+H$. By Lemma \ref{potentialembed},
\begin{equation}\label{holderW}
    \begin{split}
        &\|\mathcal{T}[W]\|_{C^{1-\frac{6}{p}}_{\ast}(\mathbb{H}^{-})}\lesssim_{p} \|(a-1)\widetilde{\chi} \p_{\eta}W\|_{L^{p}(\mathbb{H}^{-})},\\
        &\|H\|_{C^{1-\frac{6}{p}}_{\ast}(\mathbb{H}^{-})}\lesssim_{p}\left(\|-\chi f+aw\p_{\eta}\chi\|_{L^{p}}+\|w(\p_{\tau}\chi+\eta\p_{\xi}\chi)-a\p_{\eta}w\p_{\eta}\chi+\chi g-\partial_\eta\chi f\|_{L^{\frac{6p}{6+p}}}\right).
    \end{split}
\end{equation}
Hence, \eqref{w2pdesire2} follows from \eqref{w2pdesire} and \eqref{holderW}.
\subsection{Proof of Proposition \ref{sob1}}

We now turn to the proof of Proposition \ref{sob1}. Our idea is similar to that in the proof of Proposition \ref{sobolevesti}, using localization, well-posedness of the localized equation in a higher regularity space, and uniqueness to improve the regularity of $w$. We again normalize
$$\|w\|_{L^{\infty}(\B_{r_0}^{+})}+\|\partial_yw\|_{L^{p}(\B_{r_0}^{+})}+\|g\|_{L^{p}(\B_{r_0}^{+})}=1.$$
%For clarity, we present the argument only for $p>3$. When $2\leq p\leq 3$, the same contraction argument applies after dropping the $L^\infty$ component from the space $Y$ in \eqref{sobadd3.3}; the pointwise estimate \eqref{eqW2p3} is then unnecessary.

Let $\chi,\, \widetilde{\chi}$ be the same smooth cutoff functions as in \eqref{sobadd1}. Then
\begin{equation}\label{sobadd10}
\begin{cases}
\p_{t}(w\chi)+y\p_{x}(w\chi)-\p_{y}^{2}(w\chi)=\partial_y((a-1)\widetilde{\chi}\p_{y}(w\chi))+h^\ast(t,x,y),\ in\ \mathbb{H}^{-},\\
\p_{y}(w\chi)|_{y=0}=0.
\end{cases}
\end{equation}
In the above, $h^\ast$ is given by (since $f=0$)
\begin{equation}\label{sobadd11}
h^\ast(t,x,y)=w(\p_{t}\chi+y\p_{x}\chi)-a\p_{y}w\p_{y}\chi-\p_{y}(aw\p_{y}\chi)+\chi g.
\end{equation} 
\begin{comment}
We can construct a H\"older continuous coefficient $a^\ast$ defined on $\mathbb{H}^-$ such that $a^\ast\equiv a-1$ on $\rm{supp}\,\chi$ and satisfies for any $q\in(1,\infty)$,
\begin{equation}\label{sobadd10.1}
    \|a^\ast\|_{L^\infty(\mathbb{H}^-)}\lesssim \|(a-1)\widetilde{\chi}\|_{L^\infty(\mathbb{H}^-)},\quad \|\partial_ya^\ast\|_{L^q(\mathbb{H}^-)}\lesssim_q \|\partial_ya\|_{L^q(\mathcal{B}_{r_0}^+)}+\|a\|_{L^\infty(\mathcal{B}_{r_0}^+)}.
\end{equation}
\end{comment}

We then define the mapping $\mathcal{T}^\ast$ as
\begin{equation}\label{sobadd2.91}
    \mathcal{T}^\ast[v]:=\int\limits_{-\infty}^{t}\iint_{\R\times\R^{+}}\Gamma(t,x,y;\tau,\xi,\eta)\p_\eta\big((a-1)\widetilde{\chi} \p_{\eta}v\big)\, d\tau d\xi d\eta,
\end{equation}
and the inhomogeneous term $H^\ast$ as (with $z=(t,x,y)$, $\zeta=(\tau,\xi,\eta)$ and $f\equiv0$)
\begin{equation}\label{sobadd3.00}
\begin{split}
H^\ast(t,x,y):=\int\limits_{-\infty}^{t}\iint_{\R\times\R^{+}}\Gamma(z;\zeta)h^\ast(\tau,\xi,\eta)\, d\tau d\xi d\eta.
\end{split}
\end{equation}

\begin{comment}
We decompose the mapping $\mathcal{T}^\ast=\mathcal{T}^\ast_1+\mathcal{T}^\ast_2$ where
\begin{equation}\label{sobadd3.01}
    \begin{split}
        &\mathcal{T}^\ast_1[v]:=\int\limits_{-\infty}^{t}\iint_{\R\times\R^{+}}\Gamma(t,x,y;\tau,\xi,\eta)(a-1)\widetilde{\chi} \p^2_{\eta}v\, d\tau d\xi d\eta,\\
        &\mathcal{T}^\ast_2[v]:=\int\limits_{-\infty}^{t}\iint_{\R\times\R^{+}}\Gamma(t,x,y;\tau,\xi,\eta)\p_\eta\big((a-1)\widetilde{\chi}\big) \p_{\eta}v\, d\tau d\xi d\eta.
    \end{split}
\end{equation}
\end{comment}

Define the space 
\begin{equation}\label{sobadd3.2}
    Y:= W^{2,2}_{y}\cap W^{2,p}_{y}\cap L^{\infty}(\mathbb{H}^{-}), 
\end{equation}
with the norm
\begin{equation}\label{sobadd3.3}
\|v\|_Y:=\sum_{\alpha\in\{0,1,2\}}\|\partial_y^\alpha v\|_{L^{2}\cap L^{p}(\mathbb{H}^-)}+\|v\|_{L^{\infty}(\mathbb{H}^{-})}.
\end{equation}
Notice that the standard interpolation inequality shows that
\begin{equation*}
    \|\p_{y}v\|_{L^{2p}(\mathbb{H}^{-})}\lesssim \|v\|_{Y}.
\end{equation*}
Applying Lemma \ref{constantcoeff}, we have
\begin{equation}\label{eqW2p1}
    \begin{split}
        \|\p_{y}^{2}\mathcal{T}^{\ast}[v]\|_{L^{2}\cap L^{p}}&\lesssim_{p}\left\|(a-1)\widetilde{\chi}\p_{y}^{2}v+\p_{y}\big((a-1)\widetilde{\chi}\big) \p_{y}v\right\|_{L^{2}\cap L^{p}}\\
        &\lesssim\left(\|\p_{y}a\|_{L^{2p}(\mathrm{supp}(\widetilde{\chi}))}+r^{\frac{3}{p}-1}\|a-1\|_{L^{\infty}(\mathrm{supp}(\widetilde{\chi}))}\right)\cdot\|v\|_{Y}\\
        &\leq \left(\|\p_{y}a\|_{L^{2p}(\mathrm{supp}(\widetilde{\chi}))}+r^{\frac{3}{p}+\beta-1}[a]_{C^{\beta}_{\ast}(\mathcal{B}_{r_{0}}^{+})}\right)\cdot\|v\|_{Y} \leq \frac{1}{10}\|v\|_{Y},
    \end{split}
\end{equation}
by choosing $r\leq \frac{r_{0}}{4}$ suitably small.

Applying Schur's test (Lemma \ref{schur}), we have
\begin{equation}\label{eqW2p2}
        \|\mathcal{T}^{\ast}[v]\|_{W^{1,2}_{y}\cap W^{1,p}_{y}}\lesssim \left\|(a-1)\widetilde{\chi}\p_{y}^{2}v+\p_{y}\big((a-1)\widetilde{\chi}\big) \p_{y}v\right\|_{L^{2}\cap L^{p}}\leq \frac{1}{10}\|v\|_{Y}.
\end{equation}

By H\"older's inequality and Proposition \ref{ptwise}, we also obtain that
\begin{equation}\label{eqW2p3}
        |\mathcal{T}^{\ast}[v](z)|\leq \|\Gamma(z;\cdot)\|_{L^{p'}(\mathrm{supp}(\widetilde{\chi})\cap \{\tau<t\})}\cdot \left\|(a-1)\widetilde{\chi}\p_{y}^{2}v+\p_{y}\big((a-1)\widetilde{\chi}\big) \p_{y}v\right\|_{L^{p}}\leq \frac{1}{10}\|v\|_{Y}.
\end{equation}

For the non--homogeneous term $H^{\ast}$, using Lemma \ref{constantcoeff} and the compact support property of $\chi$, we also have
\begin{equation}\label{sobadd3.5}
    \|H^\ast\|_{Y}\lesssim_r1. 
\end{equation}

In view of \eqref{eqW2p1}-\eqref{sobadd3.5} and using the contraction mapping theorem, we can conclude that there is a unique $V\in Y$ such that $V=\mathcal{T}^{\ast}[V]+H^{\ast}$. It follows from equations \eqref{sobadd10} and \eqref{sobadd11} that $W=w\chi$ also satisfies $W=\mathcal{T}^\ast[W]+H^\ast$. Choosing $r\in(0,\frac{r_{0}}{4}]$ sufficiently small, by \eqref{sob6} and the fact that $\p_{y}W|_{y=0}=\p_{y}V|_{y=0}=0$, we have
\begin{equation}
\|\p_{y}(W-V)\|_{L^{2}}\lesssim \|(a-1)\|_{L^{\infty}(\mathrm{supp}(\widetilde{\chi}))}\cdot \|\p_{y}(W-V)\|_{L^{2}}\leq\frac{1}{2}\|\p_{y}(W-V)\|_{L^{2}}.
\end{equation}
Hence $\p_{y}W=\p_{y}V$. We conclude \eqref{sob2} since $V\in Y$.

\subsection{Proof of Lemma \ref{constantcoeff},\ $p=2$}\label{polc1}
We turn to the proof of Lemma \ref{constantcoeff}. By integration by parts, it follows from \eqref{sob4} that $w^\ast$ satisfies \eqref{constanteq} with $g_1=0, \,\,g_2=-g$. Then \eqref{sob6} follows by multiplying \eqref{constanteq} with $w^{\ast}$ and integrating by parts. 

It remains to prove the estimate \eqref{sob5} for $p=2$, which is implied by the following Lemma:

\begin{lemma}\label{h2esti}
Assume that $w$ is the ancient mild solution to
\begin{equation}\label{constantequ}
\begin{cases}
\p_{t}w+y\p_{x}w-\p_{y}^{2}w=g\in C_{0}^{\infty}(\mathbb{H}),\\
\p_{y}w|_{y=0}=0,
\end{cases}
\end{equation}
with $\p_{y}w,\p_{y}^{2}w\in L^{2}(\mathbb{H})$. Then we have
\begin{equation}
\|\p_{y}^{2}w\|_{L^{2}(\mathbb{H})}\lesssim \|g\|_{L^{2}(\mathbb{H})}.
\end{equation}
In particular, Lemma \ref{constantcoeff} holds for $p=2$.
\end{lemma}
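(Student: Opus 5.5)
Since $g$ is smooth and compactly supported, the plan is to reduce the bound to an $L^2$ estimate that is uniform in the Fourier variable and then apply Plancherel. Take the Fourier transform in $(t,x)$, with dual variables $(\sigma,k)$. The function $\widehat{w}(\sigma,k,y)$ then solves the ODE in $y$
\begin{equation*}
2\pi\mathrm{i}\sigma\widehat{w}+2\pi\mathrm{i}k y\widehat{w}-\p_{y}^{2}\widehat{w}=\widehat{g},\qquad \p_y\widehat w|_{y=0}=0 .
\end{equation*}
By Plancherel it suffices to prove, uniformly in $(\sigma,k)$, that $\|\p_y^2\widehat w\|_{L^2_y(\R^+)}\lesssim\|\widehat g\|_{L^2_y(\R^+)}$. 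Equivalently, I need $\|(\sigma+ky)\widehat w\|_{L^2_y}\lesssim\|\widehat g\|_{L^2_y}$, since $\p_y^2\widehat w$ equals $\widehat g$ minus that term.

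First I test the ODE against $\overline{\widehat w}$. The Neumann condition kills the boundary term, and taking the real part gives $\|\p_y\widehat w\|^2\le \|\widehat g\|\,\|\widehat w\|$. I am not sure this alone controls $\|\widehat w\|$ without a spectral gap, so next I test against $\overline{\p_y^2\widehat w}$ and integrate by parts once:
\begin{equation*}
\mathrm{Re}\int 2\pi\mathrm{i}(\sigma+ky)\widehat w\,\overline{\p_y^2\widehat w}\,dy=-\mathrm{Re}\int 2\pi\mathrm{i}(\sigma+ky)|\p_y\widehat w|^2dy-\mathrm{Re}\int 2\pi\mathrm{i}k\,\widehat w\,\overline{\p_y\widehat w}\,dy .
\end{equation*}
The boundary term vanishes because $\p_y\widehat w(0)=0$. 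The first term on the right is purely imaginary, so its real part drops. This yields
\begin{equation*}
\|\p_y^2\widehat w\|^2\le \|\widehat g\|\,\|\p_y^2\widehat w\|+2\pi|k|\,\|\widehat w\|\,\|\p_y\widehat w\| .
\end{equation*}

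So everything reduces to the commutator term $|k|\,\|\widehat w\|\,\|\p_y\widehat w\|$, and this is the step I expect to be the main obstacle. The natural tool is the enhanced dissipation / resolvent bound for $\mathrm{i}(\sigma+ky)-\p_y^2$ on the half-line with Neumann condition, namely the stationary form of Proposition~\ref{enhanced}. The scaling guess is $\|\widehat w\|\lesssim|k|^{-2/3}\|\widehat g\|$ and $\|\p_y\widehat w\|\lesssim|k|^{-1/3}\|\widehat g\|$. With these, $|k|\,\|\widehat w\|\,\|\p_y\widehat w\|\lesssim\|\widehat g\|^2$, which closes the estimate. The bound on $\|\p_y\widehat w\|$ follows from the bound on $\|\widehat w\|$ via the first energy identity.

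If $|k|$ is small relative to $|\sigma|^{3/2}$, the resolvent gain comes instead from $|\sigma|$ through the imaginary part: test against $\mathrm{i}\,\mathrm{sgn}(\sigma)\overline{\widehat w}$ localized where $|\sigma+ky|\gtrsim|\sigma|$. If the stationary resolvent version is not directly available, I would derive it from the semigroup decay in Proposition~\ref{enhanced} by writing the resolvent as the Laplace transform $\int_0^\infty e^{-2\pi\mathrm{i}\sigma s}e^{-s(2\pi\mathrm{i}ky-\p_y^2)}\widehat g\,ds$. The decay $e^{-c|k|^{2/3}s}$ then gives $\|\widehat w\|\lesssim|k|^{-2/3}\|\widehat g\|$ uniformly in $\sigma$, which is exactly what is needed.

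Finally I would combine the two regimes and integrate over $(\sigma,k)$ via Plancherel. The a priori assumption $\p_y w,\p_y^2 w\in L^2$ justifies the integrations by parts and the identification of the ancient mild solution with the Fourier solution. The case $k=0$ is the trivial heat resolvent and is handled by the same energy identity.
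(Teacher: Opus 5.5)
Your proposal is correct and is essentially the paper's argument: the paper also Fourier transforms (in $x$ only), obtains $\|\widehat w\|_{L^2_{t,y}}\lesssim |\xi|^{-2/3}\|\widehat g\|_{L^2_{t,y}}$ from Proposition~\ref{enhanced} via Duhamel and Young's inequality in $t$, and then tests against $-\overline{\p_y^2\widehat w}$, which reduces everything to the same commutator term $|\xi|\,\|\widehat w\|\,\|\p_y\widehat w\|$. Your $L^2_{t,y}$ bound is the same one by Plancherel in $t$: the uniform-in-$\sigma$ resolvent bound you need is exactly \eqref{opbound} in the proof of Proposition~\ref{enhanced}, so your separate small-$|k|$ regime is unnecessary, and the only other difference is that the paper controls $\|\p_y\widehat w\|$ by the Neumann interpolation $\|\p_y\widehat w\|\lesssim\|\widehat w\|^{1/2}\|\p_y^2\widehat w\|^{1/2}$ and absorbs, whereas your first energy identity $\|\p_y\widehat w\|^2\le\|\widehat g\|\,\|\widehat w\|$ works equally well.
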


\begin{proof}
Taking the Fourier transform with respect to the $x$--variable, we get
\begin{equation}\label{fteq}
\begin{cases}
\p_{t}\widehat{w}(t,\xi,y)+i2\pi y\xi\widehat{w}(t,\xi,y)-\p_{y}^{2}\widehat{w}(t,\xi,y)=\widehat{g\,}(t,\xi,y),\\
\p_{y}\widehat{w}|_{y=0}=0.
\end{cases}
\end{equation}

We need to prove that for any $\xi\in\R$,
\begin{equation}\label{fourieresti}
\|\p_{y}^{2}\widehat{w}\|_{L^{2}_{t,y}}\lesssim \|\widehat{g\,}\|_{L^{2}_{t,y}}.
\end{equation}
 Lemma \ref{h2esti} follows from \eqref{fourieresti} and Plancherel's identity.

Notice that \eqref{fourieresti} is obvious when $\xi=0$ by the standard energy estimates for the one-dimensional heat equation. 

We assume without loss of generality $\xi>0$, and apply the enhanced dissipation estimate (Proposition \ref{enhanced}) and Duhamel's principle. It follows that for some $\delta_0>0$,
\begin{equation}
\|\widehat{w}(t,\xi,\cdot)\|_{L^{2}_{y}}\lesssim \int_{-\infty}^{t}e^{-\delta_0\xi^{\frac{2}{3}}(t-s)}\|\widehat{g\,}(s,\xi,\cdot)\|_{L^{2}_{y}}ds.
\end{equation}
By Young's inequality, we conclude that
\begin{equation}\label{enhancebound}
\|\widehat{w}(\cdot,\xi,\cdot)\|_{L^{2}_{t,y}}\lesssim \xi^{-\frac{2}{3}}\|\widehat{g\,}(\cdot,\xi,\cdot)\|_{L^{2}_{t,y}}.
\end{equation}

Next we turn to the $H^{2}_{y}$ estimate. Testing \eqref{fteq} by $-\overline{\p_{y}^{2}\widehat{w}}$, integrating by parts and taking the real part, we have (with $D:=\R\times[0,\infty)$)
\begin{equation}
\iint_{D} |\p_{y}^{2}\widehat{w}|^{2}dtdy\lesssim\xi\iint_D |\widehat{w}||\p_{y}\widehat{w}|\,dtdy+\iint_D |\widehat{g\,}|^{2}dtdy.
\end{equation}
Thanks to $\p_{y}\widehat{w}|_{y=0}=0$, applying the interpolation inequality in $y$--direction and combining with the enhanced dissipation estimate \eqref{enhancebound}, we obtain that
\begin{equation}\label{enhancedl2added}
\begin{split}
\iint_D |\p_{y}^{2}\widehat{w}|^{2}dtdy&\lesssim \xi \|\widehat{w}\|_{L^{2}_{t,y}}\|\p_{y}\widehat{w}\|_{L^{2}_{t,y}}+\|\widehat{g\,}\|_{L^{2}_{t,y}}^{2}\lesssim \xi \|\widehat{w}\|_{L^{2}_{t,y}}^{\frac{3}{2}}\|\p_{y}^{2}\widehat{w}\|_{L^{2}_{t,y}}^{\frac{1}{2}}+\|\widehat{g\,}\|_{L^{2}_{t,y}}^{2}\\
&\leq \frac{1}{2}\|\p_{y}^{2}\widehat{w}\|_{L^{2}_{t,y}}^{2}+C\big(\xi^{\frac{4}{3}}\|\widehat{w}\|_{L^{2}_{t,y}}^{2}+\|\widehat{g\,}\|_{L^{2}_{t,y}}^{2}\big)\leq\frac{1}{2}\|\p_{y}^{2}\widehat{w}\|_{L^{2}_{t,y}}^{2}+C\|\widehat{g\,}\|_{L^{2}_{t,y}}^{2}.
\end{split}
\end{equation}
The proof of Lemma \ref{h2esti} follows from \eqref{enhancedl2added} by integrating in $\xi$.
\end{proof}

\subsection{Proof of Lemma \ref{constantcoeff},\ $1<p<\infty$.}\label{polc2}
In this subsection we consider the general case when $1<p<\infty$ and complete the proof of Lemma \ref{constantcoeff}. We present the detailed proof only for the bound \eqref{sob5}, since the proof for \eqref{sob6} follows similarly. 
\begin{proof}[Proof of  Lemma \ref{constantcoeff}]
Let $p'$ be the dual exponent of $p$, that is $\frac{1}{p}+\frac{1}{p'}=1$. By duality, we need to prove that, for any $\phi(t,x,y)\in C_{0}^{\infty}(\mathbb{H})$,
\begin{equation}\label{dualgoal}
\left|\int_{\mathbb{H}}(\p_{y}^{2}w)(t,x,y)\phi(t,x,y)\,dxdydt\right|\lesssim \|g\|_{L^{p}(\mathbb{H})}\|\phi\|_{L^{p'}(\mathbb{H})},
\end{equation}
where with $z=(t,x,y), \,\zeta=(\tau,\xi,\eta)$,
\begin{equation}\label{expression}
w(z) = \int_{-\infty}^{t}\int_{-\infty}^{\infty}\int_{0}^{\infty}\Gamma(z;\zeta)g(\zeta)\,d\eta d\xi d\tau.
\end{equation}

Due to the singularity of the fundamental solution near the source, we cannot take $\p_{y}^{2}$ into the integrand in \eqref{expression} directly. To overcome this technical difficulty, for any $\epsilon>0$, we decompose
\begin{equation}\label{singudecom}
\begin{split}
w(z)&=\int_{-\infty}^{t}\int_{-\infty}^{\infty}\int_{0}^{\infty}\chi\left(\frac{t-\tau}{\epsilon}\right)\Gamma(z;\zeta)g(\zeta)\, d\eta d\xi d\tau\\
&\quad+\int_{-\infty}^{t}\int_{-\infty}^{\infty}\int_{0}^{\infty}\left[1-\chi\left(\frac{t-\tau}{\epsilon}\right)\right]\Gamma(z;\zeta)g(\zeta)\,d\eta d\xi d\tau\\
&=:w_{1}(z)+w_{2}(z),
\end{split}
\end{equation}
where $\chi(\cdot)$ is a standard smooth cut--off function on the line (supported in the far field), satisfying
$
\chi(\sigma)=
0\,\,{\rm for}\,\, \sigma\leq 5\,\,{\rm and}\,\, \chi(\sigma)=
1\,\,{\rm for}\,\, \sigma\geq 10.
$

Hence,
\begin{equation}
\begin{split}
\left|\int_{\mathbb{H}}(\p_{y}^{2}w)(t,x,y)\phi(t,x,y)\,dxdydt\right|&\leq \left|\int_{\mathbb{H}}(\p_{y}^{2}w_{1})(t,x,y)\phi(t,x,y)\,dxdydt\right|\\
&\quad+\left|\int_{\mathbb{H}}(\p_{y}^{2}w_{2})(t,x,y)\phi(t,x,y)\,dxdydt\right|\\
&=:A_{g,\phi}+B_{g,\phi}.
\end{split}
\end{equation}

\textit{Estimates of $B_{g,\phi}$.} For the term $B_{g,\phi}$, integrating by parts and applying Proposition \ref{highptwise} and Schur's test (Lemma \ref{schur}), we get that
\begin{equation}\label{bgphibd}
B_{g,\phi}\leq \|\p_{y}w_{2}\|_{L^{p}}\|\p_{y}\phi\|_{L^{p'}}\lesssim \epsilon^{\frac{1}{2}}\|g\|_{L^{p}}\|\p_{y}\phi\|_{L^{p'}}.
\end{equation}

\textit{Estimates of $A_{g,\phi}$.} Recall that the fundamental solution $\Gamma(z;\zeta)=\Gamma(t,x,y;\tau,\xi,\eta)$ satisfies different estimates for $\eta\gtrsim \|z-\zeta\|$ and $\eta\lesssim \|z-\zeta\|$. To estimate $A_{g,\phi}$, using \eqref{greendecom} we decompose $w_{1}(z)$ into three parts,
\begin{equation}\label{decomw1}
\begin{split}
w_{1}(z)=&\int_{-\infty}^{t}\int_{-\infty}^{\infty}\int_{0}^{\infty}\chi\left(\frac{t-\tau}{\epsilon}\right)\left[1-\chi\left(\frac{\eta}{\|z-\zeta\|}\right)\right]\Gamma(z;\zeta)g(\zeta) \,d\eta d\xi d\tau\\
&+\int_{-\infty}^{t}\int_{-\infty}^{\infty}\int_{0}^{\infty}\chi\left(\frac{t-\tau}{\epsilon}\right)\chi\left(\frac{\eta}{\|z-\zeta\|}\right)\widetilde{\Gamma}(z;\zeta)g(\zeta)\,d\eta d\xi d\tau\\
&+\int_{-\infty}^{t}\int_{-\infty}^{\infty}\int_{0}^{\infty}\chi\left(\frac{t-\tau}{\epsilon}\right)\chi\left(\frac{\eta}{\|z-\zeta\|}\right)\eta^{-4}V\Big(\frac{t-\tau}{\eta^{2}},\frac{x-\xi}{\eta^{3}},\frac{y}{\eta}\Big)g(\zeta)\,d\eta d\xi d\tau\\
=&w_{11}(z)+w_{12}(z)+w_{13}(z).
\end{split}
\end{equation}

Notice from the definition that $w_{12}(z)$ can be extended from $\mathbb{H}$ to $\R^3$. 
By direct computation, $w_{12}(z)$ satisfies the equation
\begin{equation}\label{w12eq}
\begin{split}
&\p_{t}w_{12}+y\p_{x}w_{12}-\p_{y}^{2}w_{12}=g_{12}(z),\ (t,x,y)\in\R^{3},\\
&g_{12}(z)=\int_{-\infty}^{t}\int_{-\infty}^{\infty}\int_{0}^{\infty}\mathcal{K}_{12}(z,\zeta)g(\zeta)d\zeta.
\end{split}
\end{equation}
In the above we have used the notation that
\begin{equation}\label{kernelk12}
\begin{split}
\mathcal{K}_{12}=&\frac{1}{\epsilon}\chi'\big(\frac{t-\tau}{\epsilon}\big)\chi\big(\frac{\eta}{\|z-\zeta\|}\big)\widetilde{\Gamma}(z;\zeta)-\chi\big(\frac{t-\tau}{\epsilon}\big)\chi'\big(\frac{\eta}{\|z-\zeta\|}\big)\widetilde{\Gamma}(z;\zeta)\frac{\eta (\p_{t}+y\p_{x})\|z-\zeta\|}{\|z-\zeta\|^{2}}\\
&+2\chi\big(\frac{t-\tau}{\epsilon}\big)\chi'\big(\frac{\eta}{\|z-\zeta\|}\big)\p_{y}\widetilde{\Gamma}(z;\zeta)\frac{\eta}{\|z-\zeta\|^{2}}\p_{y}\|z-\zeta\|\\
&-\chi\big(\frac{t-\tau}{\epsilon}\big)\widetilde{\Gamma}(z;\zeta)\bigg[\chi''\big(\frac{\eta}{\|z-\zeta\|}\big)\eta^{2}\|z-\zeta\|^{-4}(\p_{y}\|z-\zeta\|)^{2}\\
&+2\chi'\big(\frac{\eta}{\|z-\zeta\|}\big)\eta\|z-\zeta\|^{-3}(\p_{y}\|z-\zeta\|)^{2}-\chi'\big(\frac{\eta}{\|z-\zeta\|}\big)\eta\|z-\zeta\|^{-2}(\p_{y}^{2}\|z-\zeta\|)\bigg].
\end{split}
\end{equation}
By the $W^{2,p}_{y}$ estimate for the operator $\mathcal{L}_{0}=\p_{t}+y\p_{x}-\p_{y}^{2}$ in the whole space \cite{BC96b}, we have
\begin{equation}\label{wholespacew2p}
\|\p_{y}^{2}w_{12}(z)\|_{L^{p}}\lesssim_{p} \|g_{12}(z)\|_{L^{p}}\lesssim \|g\|_{L^{p}}.
\end{equation}
We have employed Schur's test in the last inequality, as well as the explicit expression of $\widetilde{\Gamma}(z;\zeta)$.

Next, we need to estimate $w_{11}(z)$ and $w_{13}(z)$. Let
\begin{equation}
\begin{split}
&T_{11}(g)(z):=\p_{y}^{2}w_{11}(z)=\int_{-\infty}^{t}\int_{-\infty}^{\infty}\int_{0}^{\infty}\mathcal{K}_{11}(z,\zeta)g(\zeta)\,d\zeta,\\
&\mathcal{K}_{11}(z,\zeta)=\chi\big(\frac{t-\tau}{\epsilon}\big)\p_{y}^{2}\left\{\left[1-\chi\left(\frac{\eta}{\|z-\zeta\|}\right)\right]\Gamma(z;\zeta)\right\}.
\end{split}
\end{equation}
We need to deduce that $T_{11}$ is bounded on $L^{p}(\mathbb{H})$.

First we show that $T_{11}$ is bounded on $L^{2}(\mathbb{H})$. To this end, notice that $w_{11}(z)$ satisfies the following equation
\begin{equation}\label{w11eq}
\begin{cases}
\p_{t}w_{11}+y\p_{x}w_{11}-\p_{y}^{2}w_{11}=:\widetilde{g}(t,x,y),\\
\p_{y}w_{11}|_{y=0}=0,
\end{cases}
\end{equation}
where $\widetilde{g}$ is given by
\begin{equation}
\widetilde{g}(t,x,y)=\int_{-\infty}^{t}\int_{-\infty}^{\infty}\int_{0}^{\infty}\widetilde{\mathcal{K}}_{11}(z,\zeta)g(\zeta)d\zeta,
\end{equation}
with
\begin{equation}
\begin{split}
\widetilde{\mathcal{K}}_{11}(z,\zeta)=&\frac{1}{\epsilon}\chi'\big(\frac{t-\tau}{\epsilon}\big)\left[1-\chi\big(\frac{\eta}{\|z-\zeta\|}\big)\right]\Gamma(z;\zeta)\\
&+\chi\big(\frac{t-\tau}{\epsilon}\big)\chi'\big(\frac{\eta}{\|z-\zeta\|}\big)\Gamma(z;\zeta)\frac{\eta (\p_{t}+y\p_{x})\|z-\zeta\|}{\|z-\zeta\|^{2}}\\
&-2\chi\big(\frac{t-\tau}{\epsilon}\big)\chi'\big(\frac{\eta}{\|z-\zeta\|}\big)\p_{y}\Gamma(z;\zeta)\frac{\eta}{\|z-\zeta\|^{2}}(\p_{y}\|z-\zeta\|)\\
&+\chi\big(\frac{t-\tau}{\epsilon}\big)\Gamma(z;\zeta)\bigg[\chi''\big(\frac{\eta}{\|z-\zeta\|}\big)\eta^{2}\|z-\zeta\|^{-4}(\p_{y}\|z-\zeta\|)^{2}\\
&\quad\quad+2\chi'\big(\frac{\eta}{\|z-\zeta\|}\big)\eta\|z-\zeta\|^{-3}(\p_{y}\|z-\zeta\|)^{2}\\
&\quad\quad-\chi'\big(\frac{\eta}{\|z-\zeta\|}\big)\eta\|z-\zeta\|^{-2}(\p_{y}^{2}\|z-\zeta\|)\bigg].
\end{split}
\end{equation}
By Lemma \ref{h2esti}, equation \eqref{w11eq}, and Schur's test, we have
\begin{equation}
\|T_{11}(g)\|_{L^{2}(\mathbb{H})}\lesssim \|\widetilde{g}\|_{L^{2}(\mathbb{H})}\lesssim \|g\|_{L^{2}(\mathbb{H})},
\end{equation}
which means $T_{11}$ is bounded from $L^{2}(\mathbb{H})$ to $L^{2}(\mathbb{H})$.

On the other hand, by Proposition \ref{ptwise} and Proposition \ref{highptwise}, we have the following pointwise bound and continuity bounds of the kernel $\mathcal{K}_{11}(z,\zeta)$, which means the kernel satisfies H\"ormander's condition of the non--convolution type of SIO in the homogeneous metric space $\mathbb{H}$ equipped with the metric $d(z,\zeta)=\|z-\zeta\|$ and the Lebesgue measure.
\begin{equation}\label{bdkernel}
\begin{split}
&|\mathcal{K}_{11}(z,\zeta)|\lesssim \frac{1}{\|z-\zeta\|^{6}},\ z\neq \zeta,\\
&|\mathcal{K}_{11}(z,\zeta)-\mathcal{K}_{11}(z_{0},\zeta)|\lesssim \frac{\|z_{0}-z\|}{\|z_{0}-\zeta\|^{7}},
\end{split}
\end{equation}
provided  $ \|z-z_{0}\|\leq c\|\zeta-z_{0}\|$; and 
\begin{equation}\label{bdkernell}
|\mathcal{K}_{11}(z,\zeta)-\mathcal{K}_{11}(z,\zeta_{0})|\lesssim \frac{\|\zeta_{0}-\zeta\|}{\|z-\zeta_{0}\|^{7}},
\end{equation}
provided   $ \|\zeta-\zeta_{0}\|\leq c\|z-\zeta_{0}\|,$
for some $0<c<1$.

Notice that $T_{11}$ is bounded from $L^{2}(\mathbb{H})$ to $L^{2}(\mathbb{H})$. Hence, by \eqref{bdkernel}, \eqref{bdkernell}, and Theorem 4.7 in \cite{BC96a} (see also \cite{CG70}, \cite{CW71}, \cite{St70}, \cite{St93}), we have that $T_{11}$ is bounded from $L^{p}(\mathbb{H})$ to $L^{p}(\mathbb{H})$ for any $p\in(1,\infty)$. 

Finally, we estimate $w_{13}(z)$. As before, define
\begin{equation}\label{4.60}
 \begin{split}
&T_{13}(g)(z):=\p_{y}^{2}w_{13}(z)=\int_{-\infty}^{t}\int_{-\infty}^{\infty}\int_{0}^{\infty}\mathcal{K}_{13}(z,\zeta)g(\zeta)\,d\zeta,\\
&\mathcal{K}_{13}(z,\zeta)=\chi\left(\frac{t-\tau}{\epsilon}\right)\eta^{-4}\p_{y}^{2}\left[\chi\left(\frac{\eta}{\|z-\zeta\|}\right)V\left(\frac{t-\tau}{\eta^{2}},\frac{x-\xi}{\eta^{3}},\frac{y}{\eta}\right)\right].
\end{split}
\end{equation}
First, notice with $T_{1}(g)(z)=\p_{y}^{2}w_{1}(z),\, T_{12}(g)(z)=\partial_y^2w_{12}(z)$ that
\begin{equation*}
\|T_{13}\|_{L^{2}\to L^{2}}\leq\|T_{1}\|_{L^{2}\to L^{2}}+\|T_{11}\|_{L^{2}\to L^{2}}+\|T_{12}\|_{L^{2}\to L^{2}}.
\end{equation*}

Similarly to the $L^{2}$ boundedness of $T_{11}$, we can deduce that $T_{1}$ is bounded on $L^{2}(\mathbb{H})$ by employing Lemma \ref{h2esti} again. The $L^{2}$ boundedness of $T_{12}$ follows from \eqref{wholespacew2p}.

Thus, $T_{13}$ is bounded on $L^{2}(\mathbb{H})$. 
%Furthermore, by direct calculation, the kernel $\mathcal{K}_{13}$ also satisfies \eqref{bdkernel}. Therefore, $T_{13}$ is bounded on $L^{p}(\mathbb{H})$ for any $p\in(1,\infty)$.
Moreover, differentiating the expression for $K_{13}$ in \eqref{4.60} with respect to either kernel variable and using the preceding estimates for $V$, we obtain
\[
\left|K_{13}(z,\zeta)-K_{13}(z,\zeta_0)\right|
 \lesssim
 \frac{\|\zeta-\zeta_0\|}{\|z-\zeta_0\|^{7}},
 \qquad
 \|\zeta-\zeta_0\|\leq \frac12\|z-\zeta_0\|,
\]
and, analogously,
\[
\left|K_{13}(z,\zeta)-K_{13}(z_0,\zeta)\right|
 \lesssim
 \frac{\|z-z_0\|}{\|z_0-\zeta\|^{7}},
 \qquad
 \|z-z_0\|\leq \frac12\|z_0-\zeta\|.
\]
Thus, together with the size estimate as in \eqref{bdkernel}, $K_{13}$ satisfies the standard kernel conditions for a nonconvolution Calderón--Zygmund operator, and hence $T_{13}$ is bounded on $L^p$ for every $1<p<\infty$.

In summary, we conclude that the operator $T_{1}$ is bounded on $L^{p}(\mathbb{H})$ for any $p\in(1,\infty)$. By H\"older's inequality, we have
\begin{equation}\label{agphibound}
A_{g,\phi}\leq \|T_{1}g\|_{L^{p}(\mathbb{H})}\|\phi\|_{L^{p'}(\mathbb{H})}\lesssim \|g\|_{L^{p}(\mathbb{H})}\|\phi\|_{L^{p'}(\mathbb{H})}.
\end{equation}
Combining with \eqref{bgphibd}, we obtain \eqref{dualgoal} by the arbitrariness of $\epsilon>0$. Therefore we complete the proof of Lemma \ref{constantcoeff}.
\end{proof}
We end this section by stating statements analogous to Propositions \ref{sobolevesti} and \ref{sob1} for interior estimates. Since we have the explicit expression \eqref{wholespace} for the fundamental solution, the proofs are similar to those above and indeed simpler; hence we omit them. Here we adopt the notation
\begin{equation*}
    \mathcal{B}_{r_{0}}:=(-r_{0}^{2},0]\times(-r_{0}^{3},r_{0}^{3})\times (-r_{0},r_{0}),
\end{equation*}
and the diffusion coefficient $a(t,x,y)$ is assumed to be H\"older continuous with exponent $\alpha>0$, satisfying $c\leq a(t,x,y)\leq c^{-1}$ for some $c\in(0,1/2)$ on $\overline{\mathcal{B}_{r_{0}}}$.
\begin{comment}
\begin{remark} We note that Lemma \ref{constantcoeff} can be easily generalized to the following version, which provides bounds on ancient solutions: For any $g\in C_{0}^{\infty}(\mathbb{H})$, and $\tau_{0}\in\R$, we have
\begin{equation}\label{ancientw2p}
\|\p_{y}^{2}w\|_{L^{p}((-\infty,\tau_{0}]\times\R\times\R^{+})}\lesssim_{p} \|g\|_{L^{p}((-\infty,\tau_{0}]\times\R\times\R^{+})},\ \forall 1<p<\infty,
\end{equation}
where $w$ is the volume potential for $g$.

\textcolor{red}{I am confused by the following remark. It looks like our proposition is for $\mathcal{B}_{r}^{+}$.}
With a sight modification of our argument in this section, we can also prove the parabolic version of Proposition \ref{sobolevesti}, i.e. replace $\mathcal{B}_{r}$ by $\mathcal{B}_{r}^{+}$ (Recall the definition of $\mathcal{B}_{r}^{+}$ in Section \ref{continuity}).
\end{remark}
\end{comment}
\begin{proposition}\label{interiorsobolevesti}
Fix $r_0\in(0,1]$. Assume that $w\in L^{\infty}\cap W^{1,1}\cap W^{2,1}_{y}(\B_{r_{0}})$ satisfies the inhomogeneous ultraparabolic equation in $\eqref{nondiver}$ with $g\in L^p(\B_{r_{0}})$ and $f\in W_y^{1,1}\cap L^{p}(\B_{r_{0}})$ for some $3< p<\infty$. Then there exists $r^{\ast}\in(0,\frac{r_{0}}{4}]$ depending only on the diffusion coefficient $a(t,x,y)$ such that
\begin{equation}\label{interiorw2pdesire}
\|\p_{y}w\|_{L^{p}(\B_{r^\ast})}\lesssim _{p,r_0,c,[a]_{C^{\alpha}}} \|w\|_{L^{p}(\B_{r_0})}+\|f\|_{L^{p}(\B_{r_0})}+\|g\|_{L^{p}(\B_{r_0})}.
\end{equation}

Furthermore, when $p>6$, we also have
\begin{equation}\label{interiorw2pdesire2}
    [w]_{C^{\beta}_{\ast}(\mathcal{B}_{r})}\lesssim_{p,r,c,[a]_{C^{\alpha}}}\|w\|_{L^{p}(\B_{2r})}+\|f\|_{L^{p}(\B_{2r})}+\|g\|_{L^{p}(\B_{2r})},\ \forall r\leq r^{\ast}.
\end{equation}
    In the above, $\beta=1-\frac{6}{p}$, and $C^{\beta}_{\ast}$ is the anisotropic H\"older space associated with $\mathcal{L}_{0}$ with the seminorm in a domain $\Sigma\subset(-\infty,0)\times\R\times\R$:
\begin{equation}
\begin{split}
    [w]_{C^{\beta}_{\ast}(\Sigma)}&:=\sup_{\delta\in(0,\frac{1}{2}],\,q_1\in\Sigma,\,q_2\in\Sigma_{q_{1},\delta}}\Big|\frac{w(q_2)-w(q_1)}{\delta^{\beta}}\Big|,
    \end{split}
\end{equation}
where $\Sigma_{q_{1},\delta}$ is defined in the same way as \eqref{newholder}.
\end{proposition}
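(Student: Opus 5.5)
The plan is to repeat the proof of Proposition \ref{sobolevesti}, now in the whole space, with the half-space fundamental solution $\Gamma$ replaced by the explicit kernel $\widetilde{\Gamma}$ from \eqref{wholespace}. We translate and rescale so that $a(0,0,0)=1$; by Galilean invariance the center may be taken with $y_0=0$. We normalize $\|w\|_{L^p(\B_{r_0})}+\|f\|_{L^p(\B_{r_0})}+\|g\|_{L^p(\B_{r_0})}=1$. Since $w\in L^\infty\cap W^{2,1}_y$, interpolation gives $\p_y w\in L^2$. A Caccioppoli energy estimate (the analogue of \eqref{caccioppoli}, with no boundary term) then gives $\|\p_y w\|_{L^2(\B_{7r_0/8})}\lesssim 1$.

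Next we localize. We choose $\chi\in C_c^\infty(\B_{2r})$ with $\chi\equiv1$ on $\B_r$, and $\widetilde\chi\in C_c^\infty(\B_{4r})$ with $\widetilde\chi\equiv1$ on $\B_{2r}$. Then $W=w\chi$ solves
\[
\mathcal{L}_0 W=\p_y\big((a-1)\widetilde\chi\,\p_y W\big)+h
\]
on $(-\infty,0)\times\R^2$, where $h$ is given by the same formula as \eqref{sobadd2}. Representing $W$ with $\widetilde\Gamma$ gives $W=\widetilde{\mathcal T}[W]+\widetilde H$, with $\widetilde{\mathcal T},\widetilde H$ defined as in \eqref{sobadd2.9}--\eqref{sobadd3} using $\p_\eta\widetilde\Gamma$ and $\widetilde\Gamma$.

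The needed linear whole-space estimates are as follows.
\begin{itemize}
\item $\|\p_y\int\p_\eta\widetilde\Gamma\, G\|_{L^q}\lesssim_q\|G\|_{L^q}$ and $\|\p_y^2\int\widetilde\Gamma\, g\|_{L^q}\lesssim_q \|g\|_{L^q}$ for $1<q<\infty$; these are the whole-space $W^{2,p}_y$ theory of \cite{BC96b}.
\item The potential bounds $L^2\to L^6$ for $\widetilde\Gamma$ and $L^2\to L^3$ for $\nabla_y\widetilde\Gamma$, from \cite{PP04}.
\item The H\"older bounds of Lemma \ref{potentialembed} for $\widetilde\Gamma$.
\end{itemize}
These are simpler than in the half-space case, because the explicit Gaussian form and the Galilean symmetry of $\widetilde\Gamma$ make the kernel conditions direct.

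We then run the fixed-point argument in the space $X$ of \eqref{x2}, with $p_1=\min\{p,3\}$. Taking $r$ small makes $\|(a-1)\widetilde\chi\|_{L^\infty}\lesssim r^\alpha[a]_{C^\alpha}$ small, so $\widetilde{\mathcal T}$ is a contraction. This produces $V\in X$ with $V=\widetilde{\mathcal T}[V]+\widetilde H$. The $L^2$ uniqueness estimate
\[
\|\p_y(W-V)\|_{L^2}\le\lambda_0\|\p_y(W-V)\|_{L^2},\qquad \lambda_0<1,
\]
then forces $\p_y W=\p_y V\in L^{p_1}$.

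We bootstrap the exponent on shrinking cylinders via $p_{j+1}=\min\{p,6p_j/(6-p_j)\}$, handling the critical step $p_j=6$ by dropping to $6-\delta$ exactly as in the proof of Proposition \ref{sobolevesti}. This reaches $p$ after finitely many steps and gives \eqref{interiorw2pdesire}. For $p>6$, we apply the whole-space version of Lemma \ref{potentialembed} to $W=\widetilde{\mathcal T}[W]+\widetilde H$, as in \eqref{holderW}. This gives the $C^{1-6/p}_\ast$ bound \eqref{interiorw2pdesire2}, with the cutoff at scale $2r$ controlled by the $L^p$ norms on $\B_{2r}$.

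The main obstacle I expect is the bookkeeping in $\widetilde H$. The term $a\,\p_y w\,\p_y\chi$ is only known to lie in $L^2$ at the start, so its potential has to be placed in $L^2\cap L^{p_1}$ by the Lorentz-space potential estimates and interpolation. This is why the first step uses $p_1\le3$. After that, each bootstrap stage must feed the improved $\p_y w$ integrability back into $\widetilde H$. Everything else transfers verbatim, and more easily, since no boundary-layer correction $V$ from \eqref{greendecom} appears.
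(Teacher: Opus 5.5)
Your proposal is correct and follows the argument the paper intends. The paper omits the proof of Proposition~\ref{interiorsobolevesti}, saying only that one repeats the proof of Proposition~\ref{sobolevesti} with the explicit whole-space kernel $\widetilde{\Gamma}$ from \eqref{wholespace}; you do exactly that, with the same localization, the same contraction in $X$ with $p_1=\min\{p,3\}=3$, the same $L^2$ uniqueness step and exponent bootstrap, and the whole-space analogue of Lemma~\ref{potentialembed} for the H\"older bound. One step you and the paper both leave implicit: \eqref{interiorw2pdesire2} uses data only on $\B_{2r}$, so you need $\p_y w\in L^p$ on the support of the cutoff, which follows from \eqref{interiorw2pdesire} applied on finitely many small $(t,x)$-translated, Galilean-shifted cylinders inside $\B_{2r}$ after slightly shrinking the support of $\chi$.
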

\begin{comment}
\begin{remark}In particular, if $f\equiv 0$, then \eqref{w2pdesire} holds for all $1<p<\infty$.
\end{remark}
\end{comment}

\begin{proposition}\label{interiorsob1}
Fix $r_0\in(0,1]$. Let $a\in C^{\beta}_{\ast}(\mathcal{B}_{r_{0}})$ for all $0<\beta<1$ and $\partial_ya\in L^q(\mathcal{B}_{r_{0}})$ for any $1<q<\infty$. Assume that $w\in L^{\infty}\cap W^{1,1}\cap W^{2,1}_{y}(\B_{r_{0}})$ satisfies the first equation in $\eqref{nondiver}$ with $g\in L^p(\B_{r_{0}})$ for some $3< p<\infty$, and $f\equiv0$. Then there exists $r^{\ast}\in(0,\frac{r_{0}}{4}]$ such that
\begin{equation}\label{interiorsob2}
\|\p_{y}^2w\|_{L^{p}(\B_{r^\ast})}\lesssim _{p,r_0,c,a} \|w\|_{L^{\infty}(\B_{r_0})}+\|\partial_yw\|_{L^{p}(\B_{r_0})}+\|g\|_{L^{p}(\B_{r_0})}.
\end{equation}
In the above, $r^\ast$ and the implied constant depend on $a$ through the H\"older norm $\|a\|_{C^{\beta}_{\ast}}$ and $\|\partial_ya\|_{L^q(\mathcal{B}_{r_0})}$ for $\beta$ close to $1$ and sufficiently large $q>1$ determined by $p$.
\end{proposition}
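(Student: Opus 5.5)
The plan is to mirror the proof of Proposition \ref{sob1}, replacing the half-space fundamental solution $\Gamma$ by the explicit whole-space kernel $\widetilde{\Gamma}$ from \eqref{wholespace}. The relevant estimates for $\widetilde{\Gamma}$ are the whole-space $W^{2,p}_y$ estimate of \cite{BC96b}, $\|\p_y^2\widetilde w\|_{L^p}\lesssim_p\|g\|_{L^p}$ for $1<p<\infty$, where $\widetilde w$ is the whole-space volume potential. We also need the whole-space analogues of Proposition \ref{operatorbd} and Lemma \ref{potentialembed}. These follow from the Gaussian-type bounds $|\p^{\alpha}\widetilde\Gamma(z;\zeta)|\lesssim\|z-\zeta\|^{-4-|\alpha|_{\rm hom}}$, which hold without any restriction on the pole. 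By a Galilean change of variables $(t,x,y)\mapsto(t,x-y_0t,y-y_0)$ we may center the ball at $y=0$. Rescaling, we may also normalize $a(0,0,0)=1$ and
$$\|w\|_{L^\infty(\mathcal{B}_{r_0})}+\|\p_yw\|_{L^p(\mathcal{B}_{r_0})}+\|g\|_{L^p(\mathcal{B}_{r_0})}=1.$$

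Next we localize. Take $\chi\in C_c^\infty(\mathcal{B}_{2r})$ with $\chi\equiv1$ on $\mathcal{B}_r$, and $\widetilde\chi\in C_c^\infty(\mathcal{B}_{4r})$ with $\widetilde\chi\equiv1$ on $\mathcal{B}_{2r}$. No Neumann condition is needed now. Then $W=w\chi$ solves, on $(-\infty,0]\times\R^2$,
$$\p_tW+y\p_xW-\p_y^2W=\p_y\big((a-1)\widetilde\chi\,\p_yW\big)+h^\ast,$$
with $h^\ast=w(\p_t\chi+y\p_x\chi)-a\p_yw\p_y\chi-\p_y(aw\p_y\chi)+\chi g$. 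Writing this in Duhamel form gives $W=\mathcal{T}^\ast[W]+H^\ast$, where $\mathcal{T}^\ast$ and $H^\ast$ are defined as in \eqref{sobadd2.91}--\eqref{sobadd3.00} with $\widetilde\Gamma$ in place of $\Gamma$.

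The contraction is run in the space
$$Y=W^{2,2}_y\cap W^{2,p}_y\cap L^\infty\big((-\infty,0]\times\R^2\big).$$
The $L^2\cap L^p$ norm of $\p_y^2\mathcal{T}^\ast[v]$ is controlled by the whole-space $W^{2,p}_y$ estimate applied to the source $(a-1)\widetilde\chi\,\p_y^2v+\p_y\big((a-1)\widetilde\chi\big)\p_yv$. This gives the factor $\|\p_ya\|_{L^{2p}(\mathrm{supp}\,\widetilde\chi)}+r^{3/p+\beta-1}[a]_{C^\beta_\ast}$, exactly as in \eqref{eqW2p1}, using the interpolation bound $\|\p_yv\|_{L^{2p}}\lesssim\|v\|_Y$. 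Both terms become small as $r\to0$: the first by absolute continuity of $|\p_ya|^{2p}$ on shrinking sets, and the second by taking $\beta$ close to $1$. The lower-order parts of the $Y$-norm are handled by Schur's test and by Hölder's inequality with $\widetilde\Gamma(z;\cdot)\in L^{p'}$ on bounded sets. The latter requires $p'<3/2$, which is why $p>3$ is assumed, and it reproduces \eqref{eqW2p2}--\eqref{eqW2p3}. The bound $\|H^\ast\|_Y\lesssim_r1$ follows from the same whole-space estimates together with the compact support of $\chi$. Note that the term $-\p_y(aw\p_y\chi)$ contains $\p_ya$, which is in $L^p$ by hypothesis.

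Contraction then yields a unique $V\in Y$ with $V=\mathcal{T}^\ast[V]+H^\ast$. The remaining step is to identify $V$ with $W$, and this is the main obstacle, because a priori we only know $\p_y^2W\in L^1$. For this we use the $L^2$ energy bound for the whole-space operator, the analogue of \eqref{sob6}:
$$\|\p_y(W-V)\|_{L^2}\lesssim\|a-1\|_{L^\infty(\mathrm{supp}\,\widetilde\chi)}\|\p_y(W-V)\|_{L^2}.$$
To apply it we first need $\p_yW\in L^2$. This holds because $w\in L^\infty$ and $\p_yw\in L^p$ with $p>3$ (alternatively, it follows from the Caccioppoli-type energy estimate). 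With $r$ small the constant is less than $1$, so $\p_yW=\p_yV$. Since $W$ and $V$ both decay, this forces $W=V$, and hence $\p_y^2w\in L^p(\mathcal{B}_{r})$ with the bound \eqref{interiorsob2} at $r^\ast=r$.
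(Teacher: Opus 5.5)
Your proposal is correct and is the argument the paper intends. The paper omits this proof, saying only that the localization, contraction in $Y$, and $L^2$-uniqueness steps of Proposition \ref{sob1} go through, more simply, with the explicit kernel \eqref{wholespace}; that is what you carry out. Two small corrections, neither of which affects the argument: (i) the bound $|\p^{\alpha}\widetilde\Gamma(z;\zeta)|\lesssim\|z-\zeta\|^{-4-|\alpha|_{\rm hom}}$ is false without a restriction on the pole for the unshifted quasi-distance (take $y=\eta\neq0$ fixed and $x-\xi=(t-\tau)\eta$ with $t-\tau\to0^{+}$), and holds instead for the Galilean-shifted displacement $(t-\tau,\,x-\xi-(t-\tau)y,\,y-\eta)$ as in Case (I) of the proof of Lemma \ref{tangentialimprove1}, though the facts you actually use ($\widetilde\Gamma(z;\cdot)\in L^{p'}_{\rm loc}$ for $p'<3/2$, the Schur bounds, and the estimate of \cite{BC96b}) are unaffected; (ii) to get the stated dependence of $r^\ast$ on $\|\p_ya\|_{L^q}$ rather than on $a$ itself, replace absolute continuity by H\"older's inequality, $\|\p_ya\|_{L^{2p}(\mathcal{B}_{4r})}\lesssim r^{\frac{3}{p}-\frac{6}{q}}\|\p_ya\|_{L^q(\mathcal{B}_{r_0})}$ with $q>2p$.
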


%section 6
\section{Higher regularity for inhomogeneous ultraparabolic equations}\label{higher}
In this section, we revisit the following inhomogeneous ultraparabolic equation in divergence form
\begin{equation}\label{diverformsectionSIX}
\begin{cases}
\p_{t}w+y\p_{x}w-\p_{y}(a(t,x,y)\p_{y}w)=f(t,x,y),\ \text{in}\ \mathcal{B}^+_{1}:=(-1,0]\times(-1,1)\times[0,1)\\
\p_{y}w|_{y=0}=0.
\end{cases}
\end{equation}
Recall that in Section \ref{continuity}, we proved that when $f\equiv 0$ and $a(t,x,y)$ is uniformly elliptic, weak solutions to \eqref{diverformsectionSIX} are H\"older continuous in $\mathcal{B}^+_{r_{0}},\ r_{0}<1$. In this section, we explore the higher regularity properties of the solution to \eqref{diverformsectionSIX}, provided that the diffusion coefficient $a(t,x,y)$ is uniformly elliptic, and $a, f$ satisfy suitable regularity assumptions.

Denote $$\mathcal{S}_{y_0}:=(-\infty,0]\times\R\times[0,y_0)$$ for $y_0\in(0,1)$.  After localization in $t$ and $x$, it is convenient to work on the following equation in a strip 
\begin{equation}\label{maineqsection6}
\begin{cases}
\p_{t}w+y\p_{x}w-\p_{y}(a(t,x,y)\partial_yw)=f(t,x,y),\ (t,x,y)\in \mathcal{S}_{1},\\
\p_{y}w|_{y=0}=0.
\end{cases}
\end{equation}

This section is divided into two parts:
\begin{itemize}
\item[(1)] \underline{Zeroth--order hypoelliptic estimate:} In this part, we establish a key regularity result in the spirit of hypoelliptic smoothing for weak solutions to \eqref{maineqsection6} for all $0<y_{0}<1$. More precisely, we shall prove that 
\begin{equation}\label{l2hypo}
\||D_{x}|^{\frac{1}{3}}w\|_{L^{2}(\mathcal{S}_{y_{0}})}+\|\p_{y}w\|_{L^{2}(\mathcal{S}_{y_{0}})}\lesssim_{c,y_{0}}\|w\|_{L^{2}(\mathcal{S}_{1})}+\|f\|_{L^{2}(\mathcal{S}_{1})}.
\end{equation}
At this stage, we \emph{only} assume that $a(t,x,y)$ is uniformly elliptic (i.e., $a$ has positive finite lower and upper bounds). 
\item[(2)] \underline{Higher--order hypoelliptic estimate:} In the second part, we  show (optimal) higher regularity estimates for the solution to \eqref{maineqsection6}, provided that $a(t,x,y)$ and $f(t,x,y)$ satisfy precisely quantified regularity assumptions adapted to the desired regularity for the solution that we obtain. In particular, the weak solution of \eqref{maineqsection6} is shown to be smooth \emph{up to the boundary}, if $a(t,x,y)$ and $f(t,x,y)$ are smooth up to the boundary.
\end{itemize}

\begin{remark}
It is worth noting that for a smooth diffusion coefficient and a smooth source term in the equation \eqref{maineqsection6}, the interior smoothness (away from the boundary) of the solution was already established in the seminal work of H\"ormander \cite{H67} (see also certain extensions in \cite{B02}). The key ingredient in \cite{H67} is the interior regularity estimate \eqref{l2hypo}, which is proven when the coefficient is smooth. 

However, in order to generalize the regularity theory of the linear equation to the quasilinear equation (i.e. the diffusion coefficient may depend on the solution itself), it is important to weaken the regularity assumptions on the coefficient and source term as much as possible, in order to bootstrap the regularity of the solution step by step. Such considerations are crucial in our applications below in the study of smoothness properties of  solutions to the dynamical Prandtl equation.
\end{remark}
In this section, we adopt the notations that $$\mathbb{H}^-:=(-\infty,0)\times\R\times\R^{+}$$ and that for $s>0$, 
\begin{equation}
\|v\|_{\widetilde{H}^{s}(\mathcal{S}_{y_{0}})}^{2}:=\int_{0}^{y_{0}}\|\langle D_x\rangle^{s}v(\cdot,\cdot,y)\|_{L^2(\R^-_t\times \R_x)}^{2}dy.
\end{equation}

\subsection{Zeroth--order hypoelliptic estimate} Our main result in this section consists of the following bounds for equation \eqref{maineqsection6}.

\begin{proposition}\label{zerohypo}
Assume that $c\leq a(t,x,y)\leq c^{-1}$ for some $c\in(0,\frac{1}{2}]$, $f(t,x,y)\in L^{2}(\mathcal{S}_{1})$, and 
\begin{equation*}
w(t,x,y)\in L^{\infty}(\mathcal{S}_{1})\cap W^{1,1}(\mathcal{S}_{1})\cap W^{2,1}_{y}(\mathcal{S}_{1})
\end{equation*}
is a weak solution to \eqref{maineqsection6}. Then for any $y_{0}<1$, we have
\begin{equation}
\|w\|_{\widetilde{H}^{\frac{1}{3}}(\mathcal{S}_{y_{0}})}+\|\p_{y}w\|_{L^{2}(\mathcal{S}_{y_{0}})}\lesssim_{c,y_{0}} \|w\|_{L^{2}(\mathcal{S}_{1})}+\|f\|_{L^{2}(\mathcal{S}_{1})}.
\end{equation}
\end{proposition}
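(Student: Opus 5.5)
We would prove Proposition \ref{zerohypo} by localizing in $y$, using an energy estimate for $\p_y w$, and then converting this $y$-regularity into $\frac13$ of an $x$-derivative through the fundamental solution $\Gamma$ of Section \ref{fundamental}, in the spirit of the averaging-lemma and H\"ormander argument.

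\textbf{Step 1 (Caccioppoli in $y$).} Fix $y_0<y_1<1$ and a cutoff $\chi(y)$ with $\chi\equiv1$ on $[0,y_0]$, $\chi'(0)=0$ and $\operatorname{supp}\chi\subset[0,y_1)$. Test \eqref{maineqsection6} against $w\chi^2$ and integrate over $\mathcal{S}_1$. The boundary terms vanish because of the Neumann condition. The term $\int y\p_x(w^2)\chi^2$ vanishes after integration in $x$, and the time-boundary term at $t=0$ has a good sign. This gives
\[
\|\p_y w\|_{L^2(\mathcal S_{y_1})}\lesssim_c \|w\|_{L^2(\mathcal S_1)}+\|f\|_{L^2(\mathcal S_1)} .
\]
To justify integrating over the unbounded region $x\in\R$, $t\in(-\infty,0]$, we would first work with truncated integrals and pass to the limit. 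The regularity $w\in L^\infty\cap W^{1,1}\cap W^{2,1}_y$ makes these integrations by parts legitimate, as in Section \ref{continuity}.

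\textbf{Step 2 (Reduction to a constant-coefficient problem in $\mathbb H^-$).} Set $W=w\chi$ and extend it by zero for $y\ge y_1$. It solves
\[
\p_tW+y\p_xW-\p_y^2W=\p_y G+F \quad\text{in }\mathbb H^-,\qquad \p_yW|_{y=0}=0,
\]
where $G=(a-1)\p_y w\,\chi - w\chi'$ and $F=f\chi-a\,\p_y w\,\chi'$. After Step 1, both $G$ and $F$ lie in $L^2(\mathbb H^-)$ with $G|_{y=0}=0$. Since the equation only holds for $t\le0$, we would also cut off for $t\le -T$ with a time cutoff, or equivalently regard $W$ as an ancient solution whose source is supported in $t\le 0$. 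The commutator this produces is of the same type as $F$.

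\textbf{Step 3 (Fractional $x$-regularity for the Kolmogorov equation in the half space).} This is the main step. Take the Fourier transform in $x$ as in Lemma \ref{h2esti}. For each frequency $\xi$, $\widehat W$ solves
\[
\p_t\widehat W+2\pi i y\xi\widehat W-\p_y^2\widehat W=\p_y\widehat G+\widehat F
\]
with the Neumann condition. Duhamel's formula combined with the enhanced dissipation estimate of Proposition \ref{enhanced} handles the $\widehat F$ term and gives
\[
\|\widehat W(\cdot,\xi,\cdot)\|_{L^2_{t,y}}\lesssim |\xi|^{-2/3}\|\widehat F\|_{L^2_{t,y}},
\]
which is more than the needed $|\xi|^{-1/3}$ gain. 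The divergence term $\p_y\widehat G$ is the main obstacle, because enhanced dissipation in $L^2$ does not directly absorb a $y$-derivative.

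For this term we would split into a dissipation part and an interpolation part. The energy identity gives
\[
\|\p_y\widehat W\|_{L^2_{t,y}}\lesssim \|\widehat W\|_{L^2_{t,y}}+\|\widehat G\|_{L^2_{t,y}}+\|\widehat F\|_{L^2_{t,y}} .
\]
Next we would use the hypocoercive commutator structure $[\p_y,\,y\p_x]=\p_x$, realized either through a multiplier such as $\operatorname{Re}\langle i\,\mathrm{sgn}(\xi)|\xi|^{-1/3}\p_y\widehat W,\widehat W\rangle$, or through the dual estimate built from enhanced dissipation for the adjoint semigroup applied to $\p_y$ of the data. This should give
\[
|\xi|^{1/3}\|\widehat W\|_{L^2_{t,y}}\lesssim \|\widehat G\|_{L^2_{t,y}}+\|\widehat F\|_{L^2_{t,y}}+\|\p_y\widehat W\|_{L^2_{t,y}} .
\]
In the duality formulation, the adjoint solution with $L^2$ data $h$ obeys $\|\p_y(\cdot)\|_{L^2}\lesssim |\xi|^{-1/3}\|h\|$ once one uses enhanced dissipation together with the energy identity. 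The $\widehat F$ term enters with an even better power of $|\xi|$, and the low frequencies $|\xi|\le1$ are controlled by $\|W\|_{L^2}$ directly.

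\textbf{Step 4 (Conclusion).} Combine the bound from Step 3 with the $\p_y$ bound from Step 1, integrate in $\xi$, and apply Plancherel. Since $W=w$ on $\mathcal S_{y_0}$, this gives $\|w\|_{\widetilde H^{1/3}(\mathcal S_{y_0})}\lesssim \|w\|_{L^2(\mathcal S_1)}+\|f\|_{L^2(\mathcal S_1)}$, which together with Step 1 proves the proposition. We expect the main difficulty to be closing the $|\xi|^{1/3}$ gain for the divergence-form source $\p_y G$ in Step 3 with only $L^2$ control of $G$, in the half space where the boundary term from the Neumann condition must be tracked. We would first try the duality route via Proposition \ref{enhanced}, and fall back on the explicit commutator multiplier if it does not close.
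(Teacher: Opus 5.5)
Your proposal is correct, but for the divergence-form term it takes a genuinely different and shorter route than the paper.

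\textbf{Where you agree with the paper.} Step 1 is the energy estimate \eqref{high1}. Your treatment of $\widehat F$ is the paper's use of Lemma \ref{h2esti} with \eqref{enhancebound} for $W_1$. Your single $\p_y G+F$ splitting is equivalent to the paper's $W=W_1+W_2$.

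\textbf{How the paper handles $\p_y G$.} The paper works in physical space. It writes $W_2=-\int\p_y\Gamma\,F$ and proves the difference-quotient bound \eqref{differentialquo}. For this it splits the integral into the four regions $D_{21},\dots,D_{24}$ and uses the Green decomposition \eqref{greendecom} together with the pointwise bounds of Proposition \ref{highptwise}. That only gives the $B^{1/3}_{2,\infty}$-type bound \eqref{besovesti}. The paper then upgrades it to $\widetilde H^{1/3}$ by applying $\widetilde\Delta_j$ to the equation, see \eqref{refinebesovesti}.

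\textbf{Your duality route closes as you indicate.} Let $\phi$ solve the backward adjoint problem $-\p_t\phi-2\pi i y\xi\phi-\p_y^2\phi=h$ for $t\le 0$, with $\phi(0)=0$ and $\p_y\phi|_{y=0}=0$. Proposition \ref{enhanced}, applied after time reversal and complex conjugation, gives $\|\phi\|_{L^2_{t,y}}\lesssim|\xi|^{-2/3}\|h\|_{L^2_{t,y}}$. The energy identity gives $\|\p_y\phi\|_{L^2_{t,y}}^2\le\mathrm{Re}\langle h,\phi\rangle\lesssim|\xi|^{-2/3}\|h\|_{L^2_{t,y}}^2$. Both Neumann conditions and $G|_{y=0}=0$ remove all boundary terms, so $\langle\widehat W,h\rangle=-\langle\widehat G,\p_y\phi\rangle+\langle\widehat F,\phi\rangle$. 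Hence $\|\widehat W(\cdot,\xi,\cdot)\|_{L^2_{t,y}}\lesssim|\xi|^{-1/3}\|\widehat G\|_{L^2_{t,y}}+|\xi|^{-2/3}\|\widehat F\|_{L^2_{t,y}}$. So the multiplier fallback is unnecessary, and you do not need $\|\p_y\widehat W\|$ on the right of your intermediate inequality.

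\textbf{What each approach buys.} Yours is economical: it uses only enhanced dissipation, the energy identity and Plancherel, and it avoids the near-boundary kernel analysis entirely. Because it is a bound at each fixed frequency, it directly gives the localized estimate the paper extracts in \eqref{refinebesovesti} and reuses in Lemma \ref{nonlocaltangential}. The paper's argument gives a real-space estimate resting only on pointwise bounds for $\p_y\Gamma$, the same machinery as Lemma \ref{potentialembed}. That is the natural starting point for $L^p$ or H\"older analogues, where Plancherel is unavailable.

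The justifications you flag are standard and at the same level of rigor as the paper's own use of \eqref{Wexpress}. The time cutoff produces an $L^2$ commutator bounded uniformly in $T$. Uniqueness in the energy class makes the pairing legitimate.
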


\begin{remark}
As in the interior estimate, the regularity index $s=\frac{1}{3}$ is optimal.
\end{remark}

Proposition \ref{zerohypo} follows from the following two lemmas.

\begin{lemma}
Under the assumptions in Proposition \ref{zerohypo}, for any $y_{0}<1$ we have
\begin{equation}\label{high1}
\|\p_{y}w\|_{L^{2}(\mathcal{S}_{y_{0}})}\lesssim_{c,y_{0}} \|w\|_{L^{2}(\mathcal{S}_{1})}+\|f\|_{L^{2}(\mathcal{S}_{1})}.
\end{equation}
\end{lemma}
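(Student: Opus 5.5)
The plan is a standard Caccioppoli-type energy estimate with a cutoff only in $y$. Since the domain $\mathcal{S}_1=(-\infty,0]\times\R\times[0,1)$ is already unbounded in $t$ and $x$, no cutoff is needed in $(t,x)$. Fix $y_0<1$ and choose $\chi=\chi(y)\in C^\infty([0,1))$ with $\chi\equiv1$ on $[0,y_0]$, $\chi\equiv0$ near $y=1$, $\chi'(0)=0$, and $|\chi'|\lesssim(1-y_0)^{-1}$. I would test \eqref{maineqsection6} against $w\chi^2$ and integrate over $\mathcal{S}_1$. The regularity $w\in L^\infty\cap W^{1,1}\cap W^{2,1}_y$ makes every product integrable after the usual truncation. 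In particular $\partial_y w\in L^2_{loc}$ by the interpolation remark used before \eqref{sobadd0.1}. To make the $t,x$ integrations by parts rigorous, I would first test with an additional cutoff $\phi_R(t)\psi_R(x)$ and then let $R\to\infty$.

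The transport term yields
\[
\iiint(\partial_t+y\partial_x)w\, w\chi^2=\tfrac12\iint w^2(0,x,y)\chi^2\,dxdy\ge 0,
\]
up to the errors from $\phi_R,\psi_R$. Since $\chi$ depends only on $y$, the $x$-transport produces no term other than the $\psi_R'$ error, which is bounded by $R^{-1}\|w\|_{L^2}^2$ on a region where $y\le1$. The $t$-cutoff error is similar, because $w\in L^2(\mathcal{S}_1)$. The diffusion term integrates by parts in $y$ with no boundary contribution at $y=0$, thanks to the Neumann condition $\partial_yw|_{y=0}=0$ in the trace sense, and none at $y=1$ since $\chi$ vanishes there. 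This gives
\[
\iiint a|\partial_yw|^2\chi^2+2\iiint a\,\partial_yw\, w\,\chi\chi' .
\]

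Using $a\ge c$ and absorbing the cross term by Cauchy--Schwarz ($2ab\le \tfrac c2 a^2\cdots$), together with $\iiint f w\chi^2\le \|f\|_{L^2}\|w\|_{L^2}$, I obtain
\[
\tfrac c2\|\chi\partial_yw\|_{L^2}^2\lesssim_c (1-y_0)^{-2}\|w\|^2_{L^2(\mathcal{S}_1)}+\|f\|^2_{L^2(\mathcal{S}_1)}.
\]
This is exactly \eqref{high1}.

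The only real obstacle is the justification step. Because $w$ is only $W^{1,1}$ and not $H^1$, I need $w\partial_t w$ and $w\,\partial_x w$ to be integrable and the boundary trace of $w^2$ at $t=0$ to make sense. Both follow from $w\in L^\infty\cap W^{1,1}$, since $w\partial_tw\in L^1$ and $w^2\in W^{1,1}$ has a trace. The $R\to\infty$ limit then uses $w\in L^2(\mathcal{S}_1)$; otherwise the right side is infinite and the lemma is trivial.
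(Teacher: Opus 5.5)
Your proof is correct and is essentially the paper's argument: the paper also tests \eqref{maineqsection6} against $w\,\eta^{2}(y)$, with a cutoff depending on $y$ alone, and integrates by parts. The Neumann condition kills the boundary term at $y=0$, and the cutoff vanishes near $y=1$. Your extra truncation in $(t,x)$ and your remarks on the integrability of $w\,\partial_t w$ and $w\,\partial_x w$ only make explicit the justification the paper leaves implicit.
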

\begin{proof}
 \eqref{high1} follows from standard energy estimates, by choosing a smooth cut--off function $\eta(y)$ such that $\eta(y)|_{[0,y_{0}]}\equiv 1$ and $\eta(y)$ is supported in $[0,1)$ and then testing \eqref{maineqsection6} against
$w(t,x,y)\eta^{2}(y)$, using integration by parts.
\end{proof}

\begin{lemma}\label{tangentialimprove1}
Under the assumptions in Proposition \ref{zerohypo}, for any $y_{0}<y_1<1$ we have
\begin{equation}
\|w\|_{\widetilde{H}^{\frac{1}{3}}(\mathcal{S}_{y_{0}})}\lesssim_{c,y_{0},y_1} \|w\|_{L^{2}(\mathcal{S}_{y_1})}+\|\p_{y}w\|_{L^{2}(\mathcal{S}_{y_1})}+\|f\|_{L^{2}(\mathcal{S}_{y_1})}.
\end{equation}
\end{lemma}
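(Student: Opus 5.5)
We localize in $y$ and move the variable coefficient to the right-hand side, so that only the constant-coefficient Kolmogorov operator $\mathcal{L}_0$ with Neumann condition is left on the left. Then we use the $x$-Fourier transform and the enhanced dissipation estimate (Proposition \ref{enhanced}) in the same way as in the proof of Lemma \ref{h2esti}.

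\textbf{Localization.} Fix $y_0<y_2<y_1$ and a cutoff $\eta(y)$ with $\eta\equiv1$ on $[0,y_0]$, $\operatorname{supp}\eta\subset[0,y_2)$ and $\eta'(0)=0$. Since $a$ is merely bounded, we keep the divergence structure. Rewrite \eqref{maineqsection6} as
\begin{equation*}
\p_t w+y\p_xw-\p_y^2 w=\p_y\big((a-1)\p_yw\big)+f .
\end{equation*}
Multiplying by $\eta$, the function $W:=w\eta$ satisfies, with Neumann condition at $y=0$,
\begin{equation*}
\mathcal{L}_0W=\p_y F+G,\qquad F:=\eta(a-1)\p_yw-\eta' w,\qquad G:=\eta f-\eta'(a-1)\p_yw-\eta'\p_yw .
\end{equation*}
Both $F$ and $G$ lie in $L^2(\mathbb{H}^-)$, with norms bounded by the right-hand side of the lemma. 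Moreover $F|_{y=0}=0$ because of the Neumann condition on $w$ and $\eta'(0)=0$. The time variable runs over $(-\infty,0]$. To get an equation on the whole half-line in time, we can also multiply by a cutoff in $t$ and absorb the resulting $\chi'(t)w$ term into $G$. Alternatively, we simply solve forward in $t$ from $-\infty$ and test only on $t\le 0$, since the equation is causal.

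\textbf{Frequency estimate.} After the Fourier transform in $x$, for each fixed $\xi\neq0$ we have
\begin{equation*}
\p_t\widehat W+2\pi i y\xi\widehat W-\p_y^2\widehat W=\p_y\widehat F+\widehat G .
\end{equation*}
We want
\begin{equation*}
|\xi|^{1/3}\|\widehat W\|_{L^2_{t,y}}\lesssim \|\widehat F\|_{L^2_{t,y}}+\|\widehat G\|_{L^2_{t,y}} .
\end{equation*}
Summing this over $\xi$ and adding the trivial low-frequency part $\|W\|_{L^2}$ gives the $\widetilde H^{1/3}$ bound. The $G$ part is easy. Duhamel's principle with enhanced dissipation gives $\|\widehat W_G\|_{L^2_{t,y}}\lesssim|\xi|^{-2/3}\|\widehat G\|$, exactly as in \eqref{enhancebound}, and this is better than what we need.

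\textbf{Main obstacle: the $\p_y F$ term.} Enhanced dissipation acts on $L^2_y$ data, while $\p_y\widehat F$ lies only in $H^{-1}_y$. We therefore interpolate between two estimates for the solution $U$ of $\widehat{\mathcal{L}}_0U=\p_y\widehat F$:
\begin{itemize}
\item The energy estimate gives $\|U\|_{L^2_tH^1_y}\lesssim\|\widehat F\|_{L^2}$.
\item A duality argument gives $\|U\|_{L^2_tH^{-1}_y}\lesssim|\xi|^{-2/3}\|\widehat F\|$. Concretely, we pair $U$ with the adjoint problem, which is backward in time and satisfies the same enhanced dissipation bound. The adjoint solution with $L^2$ data is bounded in $L^2_tH^1_y$ by $|\xi|^{-1/3}$, using the bound $\|\p_yV\|^2\lesssim\|\phi\|\|V\|$ together with $\|V\|\lesssim|\xi|^{-2/3}\|\phi\|$.
\end{itemize}
This should yield $\|U\|_{L^2_{t,y}}\lesssim|\xi|^{-1/3}\|\widehat F\|$ directly. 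A simpler route is to test the adjoint equation: we get $\langle U,\phi\rangle=-\langle \widehat F,\p_yV\rangle$ and $\|\p_yV\|_{L^2}\lesssim |\xi|^{-1/3}\|\phi\|$, which is precisely the needed bound. The adjoint problem also carries the Neumann condition, so no boundary terms appear, because $F|_{y=0}=0$.

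The delicate points are making the duality rigorous for the half-line weak solution with the given $W^{1,1}$ regularity, and checking that $V$ inherits the Neumann condition. Once these are settled, the claimed estimate follows by combining the two parts with Plancherel and using $\eta\equiv1$ on $[0,y_0]$.
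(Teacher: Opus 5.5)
Your proof is correct, but for the divergence-form piece it follows a genuinely different route from the paper.

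\textbf{What the paper does.} It splits $W=w\eta^2=W_1+W_2$ and treats $W_1$, the $L^2$-forced part, exactly as you treat $G$, via Lemma \ref{h2esti} and \eqref{enhancebound}. The term $W_2$, with $\mathcal{L}_0W_2=\p_yF$, is handled entirely in physical space. The paper represents $W_2$ through $\p_y\Gamma$ as in \eqref{Wexpress}. It then proves the difference-quotient bound \eqref{differentialquo} by a four-region case analysis, using the kernel bounds of Proposition \ref{highptwise} and the decomposition \eqref{greendecom}. Finally it upgrades the resulting $B^{1/3}_{2,\infty}$-type estimate \eqref{besovesti} to $\widetilde H^{1/3}$ by re-applying it to $\widetilde\Delta_j$-localized data, as in \eqref{refinebesovesti}--\eqref{zerow2}.

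\textbf{What you do instead.} You stay on the Fourier side in $x$ and dualize.
\begin{itemize}
\item For the backward adjoint solution $V$ with data $\phi$, Duhamel plus Proposition \ref{enhanced} gives $\|V\|\lesssim|\xi|^{-2/3}\|\phi\|$.
\item The energy identity gives $\|\p_yV\|^2\le\|\phi\|\|V\|$.
\item The condition $F|_{y=0}=0$, together with the Neumann conditions, removes every boundary term in $y$.
\end{itemize}
Hence $|\langle\widehat W,\phi\rangle|\le\bigl(|\xi|^{-1/3}\|\widehat F\|+|\xi|^{-2/3}\|\widehat G\|\bigr)\|\phi\|$.

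\textbf{What your route buys.} It needs no kernel estimates and no Besov-to-Sobolev step. It handles $F$ and $G$ in a single pairing, so you never define $W_1,W_2$ as separate ancient solutions and need no representation or uniqueness claim. The $t\to-\infty$ boundary term vanishes along a sequence simply because $W\in L^2$. The bound also holds frequency by frequency. Since $x$-multipliers commute with $\mathcal{L}_0$, it gives \eqref{refinebesovesti} immediately, and that is the form actually used in Lemma \ref{nonlocaltangential}.

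\textbf{What the paper's route buys.} It reuses the machinery of Section \ref{fundamental} and yields a physical-space H\"older-type estimate of the same kind as Lemma \ref{potentialembed}. That form adapts to settings where Plancherel is unavailable.

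\textbf{Two small clean-ups.}
\begin{itemize}
\item Your first bullet claims an $L^2_tH^{-1}_y$ bound with factor $|\xi|^{-2/3}$, to be followed by interpolation. Your ``concretely'' computation does not prove that bound; it already proves the $L^2$ bound directly. Drop the interpolation bullet.
\item The time cutoff is unnecessary. Pose the adjoint problem on $(-\infty,0]$ with terminal condition $V(0)=0$.
\end{itemize}
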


\begin{proof}
Fix a smooth cut--off function $\eta(y)$ such that $\eta(y)|_{[0,y_{0}]}\equiv 1$ and $\eta(y)$ is supported in $[0,y_1)$. Let $W=w\eta^{2}$. It is easy to check that $W$ satisfies the following equation in $\mathbb{H}^-$:
\begin{equation}\label{Weq}
\begin{split}
&\p_{t}W+y\p_{x}W-\p_{y}^{2}W=\eta^{2}f+\eta^{2}\p_{y}\left((a-1)\p_{y}w\right)-4\eta\eta'\p_{y}w-\left(2\eta\eta''+2(\eta')^{2}\right)w,\\
&\p_{y}W|_{y=0}=0.
\end{split}
\end{equation}
By the superposition principle, $W=W_{1}+W_{2}$, where on $\mathbb{H}^-$, $W_1$ is the solution to 
\begin{equation}\label{W1eq}
\begin{split}
&\p_{t}W_{1}+y\p_{x}W_{1}-\p_{y}^{2}W_1=\eta^{2}f-4\eta\eta'\p_{y}w-\left(2\eta\eta''+2(\eta')^{2}\right)w-2\eta\eta'(a-1)\p_{y}w,\\
&\p_{y}W_{1}|_{y=0}=0,\\
\end{split}
\end{equation}
and $W_2$ is the solution to
\begin{equation}\label{W2eq}
\begin{split}
&\p_{t}W_{2}+y\p_{x}W_{2}-\p_{y}^{2}W_{2}=\p_{y}\left((a(t,x,y)-1)\eta^{2}\p_{y}w\right)\\
&\p_{y}W_{2}|_{y=0}=0.
\end{split}
\end{equation}
It suffices to prove that for $h\in\{W_1, W_2\}$ (where $S_j$ is the standard Littlewood-Paley low-frequency cutoff, see Appendix \ref{littlewood} and \eqref{besovesti} below),
\begin{equation}\label{high2}
\|(Id-S_{10})h\|_{\widetilde{H}^{\frac{1}{3}}(\mathcal{S}_{y_{0}})}\lesssim_{c,y_{0},y_1} \|w\|_{L^{2}(\mathcal{S}_{y_1})}+\|\p_{y}w\|_{L^{2}(\mathcal{S}_{y_1})}+\|f\|_{L^{2}(\mathcal{S}_{y_1})}.
\end{equation}

\textit{Step 1: Proof of \eqref{high2} for $h=W_{1}$.} By Lemma \ref{h2esti} and \eqref{enhancebound}, we have
\begin{equation}\label{estiw1}
\begin{split}
&\left\|\p_{y}^{2}W_{1}\right\|_{L^{2}(\mathbb{H}^-)}+\left\||D_{x}|^{\frac{2}{3}}W_{1}\right\|_{L^{2}(\mathbb{H}^-)}\\
&\lesssim\left\|\eta^{2}f(t,x,y)-4\eta(y)\eta'(y)\p_{y}w-\left(2\eta\eta''+2(\eta')^{2}\right)w-2\eta(y)\eta'(y)(a(t,x,y)-1)\p_{y}w\right\|_{L^{2}(\mathbb{H}^-)}\\
&\lesssim_{c,y_{0},y_1} \|w\|_{L^{2}(\mathcal{S}_{y_1})}+\|\p_{y}w\|_{L^{2}(\mathcal{S}_{y_1})}+\|f\|_{L^{2}(\mathcal{S}_{y_1})}.
\end{split}
\end{equation}
We then complete the proof of \eqref{high2} in the case $h=W_1$. 

\medskip

\textit{Step 2: Proof of \eqref{high2} for $h=W_{2}$.} The analysis of $W_{2}$ is more difficult due to the extra derivative in the force term. 

Write
\begin{equation}\label{specificF}
F(t,x,y)=\left(a(t,x,y)-1\right)\p_{y}w\cdot\eta^{2}.
\end{equation}
Then $W_{2}$ satisfies the following linear system with the force term in divergence form
\begin{equation}\label{toysystem}
\begin{cases}
    \p_{t}W_{2}+y\p_{x}W_2-\p_{y}^{2}W_{2}=\p_{y}F,\\
    \p_{y}W_{2}|_{y=0}=0.
\end{cases}
\end{equation}
We claim that for any $0<|\delta|<1$, we have
\begin{equation}\label{differentialquo}
\begin{split}
&\left\|W_{2}(T,X+\delta,Y)-W_{2}(T,X,Y)\right\|_{L^{2}(\mathbb{H}^-)}\lesssim |\delta|^{\frac{1}{3}}\|F\|_{L^{2}(\mathbb{H}^-)}.
\end{split}
\end{equation}
For \eqref{differentialquo} we do not need $F$ to be given in the specific form \eqref{specificF}. In fact, we only need suitable decay conditions at infinity so that $W_2$ admits the integral formula \eqref{Wexpress} below. 
\begin{comment}
By the support property of $w$, we also have the following bound
\begin{equation}\label{differentialquo0.1}
\|W_2(T,X,Y)\|_{L^2(\mathbb{H}^-)}\lesssim \|F\|_{L^2(\mathbb{H}^-)}.
\end{equation}
\end{comment}
We assume \eqref{differentialquo} momentarily and complete the proof of \eqref{high2} for $h=W_2$. Recall that $\Delta_j$ denotes the standard Littlewood-Paley projection in $x$, localized to the frequency  $|\xi|\sim 2^j$ (for more details, see Appendix \ref{pre}). As a consequence of \eqref{differentialquo}, we obtain the following estimate for the linear system \eqref{toysystem} for all $j\geq 10$,
\begin{equation}\label{besovesti}
\sup_{j\geq 10}2^{\frac{1}{3}j}\|\Delta_{j}W_{2}\|_{L^{2}((-\infty,0]\times\R\times[0,10])}\lesssim\|F\|_{L^{2}(\mathbb{H}^-)}.
\end{equation}
Since
\begin{equation*}
\Delta_{j}=\Delta_{j}\widetilde{\Delta}_{j},\qquad\widetilde{\Delta}_{j}=\sum_{|l|\leq 2}\Delta_{j+l},
\end{equation*}
applying $\widetilde{\Delta}_{j}$ to \eqref{toysystem} and using the linear estimate \eqref{besovesti} once again, we obtain
\begin{equation}\label{refinebesovesti}
    2^{\frac{1}{3}j}\|\Delta_{j}W_{2}\|_{L^{2}((-\infty,0]\times\R\times[0,10])}\lesssim\sum_{|l|\leq 2}\|\Delta_{j+l}F\|_{L^{2}(\mathbb{H}^{-})}.
\end{equation}
Therefore,
\begin{equation}\label{zerow2}
    \sum_{j\geq 10}2^{\frac{2}{3}j}\|\Delta_{j}W_{2}\|_{L^{2}((-\infty,0]\times\R\times[0,10])}^{2}\lesssim\sum_{j}\|\Delta_{j}F\|_{L^{2}(\mathbb{H}^-)}^{2}\sim \|F\|_{L^{2}(\mathbb{H}^-)}^{2}.
\end{equation}
\eqref{high2} then follows from \eqref{estiw1} and \eqref{zerow2}. 

To complete the proof, it remains to prove the  bounds \eqref{differentialquo}. Without loss of generality we assume $\delta>0$ for convenience.

In view of \eqref{W2eq} and $F|_{y=0}=0$, for $(T,X,Y)\in\mathbb{H}^-$ we can write
\begin{equation}\label{Wexpress}
W_{2}(T,X,Y)=-\int_{\mathbb{H}^-}(\p_{y}\Gamma)(T,X,Y;t,x,y)F(t,x,y)dtdxdy,
\end{equation}
where $\Gamma$ is the fundamental solution of the operator $\mathcal{L}_{0}$ on the upper half space with the zero Neumann boundary condition from Section \ref{fundamental}.

We have
\begin{equation}\label{decomw2}
\begin{split}
&|W_{2}(T,X+\delta,Y)-W_{2}(T,X,Y)|\\
&\leq \int_{\mathbb{H}^-}|(\p_{y}\Gamma)(T,X+\delta,Y;t,x,y)-(\p_{y}\Gamma)(T,X,Y;t,x,y)|\cdot |F(t,x,y)|\,dtdxdy\\
&\leq W_{21}+W_{22}+W_{23}+W_{24}.
\end{split}
\end{equation}
In the above, for $j\in\{1,2,3,4\}$,
\begin{equation}\label{decomw2''}
W_{2j}:=\int_{D_{2j}}|(\p_{y}\Gamma)(T,X+\delta,Y;t,x,y)-(\p_{y}\Gamma)(T,X,Y;t,x,y)|\cdot |F(t,x,y)|\,dtdxdy,
\end{equation}
\begin{equation*}
D_{21}:= \big\{(t,x,y)\in\mathbb{H}^-:\,\|(T-t,X-x,Y-y)\|\geq 5\delta^{\frac{1}{3}}, \|(T-t,X-x-(T-t)Y,Y-y)\|\geq 5\delta^{\frac{1}{3}}\big\},
\end{equation*}
\begin{equation*}
D_{22}:= \big\{(t,x,y)\in\mathbb{H}^-:\,\|(T-t,X-x,Y-y)\|\leq 20\delta^{\frac{1}{3}}, \|(T-t,X-x-(T-t)Y,Y-y)\|\leq 20\delta^{\frac{1}{3}}\big\},
\end{equation*}
\begin{equation*}
D_{23}:= \big\{(t,x,y)\in\mathbb{H}^-:\,\|(T-t,X-x,Y-y)\|\ge 20\delta^{\frac{1}{3}}, \|(T-t,X-x-(T-t)Y,Y-y)\|\leq 5\delta^{\frac{1}{3}}\big\},
\end{equation*}
\begin{equation*}
D_{24}:= \big\{(t,x,y)\in\mathbb{H}^-:\,\|(T-t,X-x,Y-y)\|\leq 5\delta^{\frac{1}{3}}, \|(T-t,X-x-(T-t)Y,Y-y)\|\ge 20\delta^{\frac{1}{3}}\big\}.
\end{equation*}

We need to show that for $j\in\{1,2,3,4\}$,
\begin{equation}\label{high3}
    \left\|W_{2j}\right\|_{L^{2}(\mathbb{H}^-)}\lesssim |\delta|^{\frac{1}{3}}\|F\|_{L^{2}(\mathbb{H}^-)}.
\end{equation}
We consider the terms $W_{2j}$ case by case. 

\medskip

\textit{Case (I): the bound \eqref{high3} for $W_{21}$.} By the mean-value theorem and Proposition \ref{highptwise}, for suitable $\theta\in(0,1)$, we have
\begin{equation}
\begin{split}
&|(\p_{y}\Gamma)(T,X+\delta,Y;t,x,y)-(\p_{y}\Gamma)(T,X,Y;t,x,y)|\lesssim \delta|(\p_{x}\p_{y}\Gamma)(T,X+\theta\delta,Y;t,x,y)|\\
&\lesssim  \delta\left(\|(T-t,X+\theta\delta-x,Y-y)\|^{-8}+\|(T-t,X+\theta\delta-x-(T-t)Y,Y-y)\|^{-8}\right)\\
&\lesssim \delta\left(\|(T-t,X-x,Y-y)\|^{-8}+\|(T-t,X-x-(T-t)Y,Y-y)\|^{-8}\right),
\end{split}
\end{equation}
provided that
\begin{equation*}
\|(T-t,X-x,Y-y)\|\geq 5\delta^{\frac{1}{3}},\quad \|(T-t,X-x-(T-t)Y,Y-y)\|\geq 5\delta^{\frac{1}{3}}.
\end{equation*}
Therefore, by Young's inequality,
\begin{equation}
\|W_{21}\|_{L^{2}_{(T,X,Y)}(\mathbb{H}^-)}\lesssim\delta\left(\int_{\|z\|\geq 5\delta^{\frac{1}{3}}}\|z\|^{-8}dtdxdy\right)\times\|F\|_{L^{2}(\mathbb{H}^-)}\lesssim\delta^{\frac{1}{3}}\|F\|_{L^{2}(\mathbb{H}^-)}.
\end{equation}

\medskip

\textit{Case (II): the bound \eqref{high3} for $W_{22}$.} By \eqref{greendecom}, \eqref{wholespace}, and Proposition \ref{highptwise}, we have
\begin{equation}
\begin{split}
&|(\p_{y}\Gamma)(T,X+\delta,Y;t,x,y)-(\p_{y}\Gamma)(T,X,Y;t,x,y)|\\
&\lesssim \|(T-t,X+\delta-x,Y-y)\|^{-5}+\|(T-t,X-x,Y-y)\|^{-5}\\
&\quad+\|(T-t,X+\delta-x-(T-t)Y,Y-y)\|^{-5}+\|(T-t,X-x-(T-t)Y,Y-y)\|^{-5}.
\end{split}
\end{equation}
Hence, applying Young's inequality again, we have
\begin{equation}
\|W_{22}\|_{L^{2}_{(T,X,Y)}(\mathbb{H}^-)}\lesssim\Big(\int_{\|z\|\leq 25\delta^{\frac{1}{3}}}\|z\|^{-5}dtdxdy\Big)\times\|F\|_{L^{2}(\mathbb{H}^-)}\lesssim\delta^{\frac{1}{3}}\|F\|_{L^{2}(\mathbb{H}^-)}.
\end{equation}

\medskip

\textit{Case (III): the bound \eqref{high3} for $W_{23}$.} In $D_{23}$, we use the decomposition \eqref{greendecom}. Set
\begin{equation}\label{high4}
  p:= \Big(\frac{T-t}{y^{2}},\frac{X+\delta-x}{y^{3}},\frac{Y}{y}\Big), \qquad q:= \Big(\frac{T-t}{y^{2}},\frac{X-x}{y^{3}},\frac{Y}{y}\Big).
\end{equation}

By the Green decomposition \eqref{greendecom}, we get that
\begin{equation}
\begin{split}
W_{23}&\lesssim\int_{D_{23}}|(\p_{y}\widetilde{\Gamma})(T,X+\delta,Y;t,x,y)-(\p_{y}\widetilde{\Gamma})(T,X,Y;t,x,y)|\cdot |F(t,x,y)|\,dtdxdy\\
&+\int_{D_{23}}y^{-5}\left|V(p)-V(q)\right|\cdot |F(t,x,y)|\,dtdxdy\\
&+\int_{D_{23}}\frac{|T-t|}{y^{7}}\left|(\p_{1}V)(p)-(\p_{1}V)(q)\right|\cdot |F(t,x,y)|\,dtdxdy\\
&+\int_{D_{23}}\frac{|X-x|}{y^{8}}\left|(\p_{2}V)(p)-(\p_{2}V)(q)\right|\cdot |F(t,x,y)|\,dtdxdy\\
&+\int_{D_{23}}\frac{Y}{y^{6}}\left|(\p_{3}V)(p)-(\p_{3}V)(q)\right|\cdot |F(t,x,y)|\,dtdxdy+3\delta\int_{D_{23}}y^{-8}
|(\partial_2V)(p)|\,|F|\, dtdxdy\\
&=:W_{23,1}+W_{23,2}+W_{23,3}+W_{23,4}+W_{23,5}+W_{23,6}.
\end{split}
\end{equation}
\begin{comment}
where
\begin{equation*}
\mathcal{A}_{\delta}=\left\{(t,x,y):\|(T-t,X-x,Y-y)\|\geq 20\delta^{\frac{1}{3}},\\ \|(T-t,X-x-(T-t)Y,Y-y)\|\leq 5\delta^{\frac{1}{3}}\right\}.
\end{equation*}
\end{comment}
By \eqref{wholespace}, we have
\begin{equation*}
\begin{split}
&|(\p_{y}\widetilde{\Gamma})(T,X+\delta,Y;t,x,y)-(\p_{y}\widetilde{\Gamma})(T,X,Y;t,x,y)|\\
&\lesssim\|(T-t,X-x-(T-t)Y,Y-y)\|^{-5}+\|(T-t,X+\delta-x-(T-t)Y,Y-y)\|^{-5}.
\end{split}
\end{equation*}
Therefore, $W_{23,1}$ can be estimated via Young's inequality
\begin{equation}
\|W_{23,1}\|_{L^{2}_{(T,X,Y)}(\mathbb{H}^-)}\lesssim\left(\int_{\|z\|\leq 10\delta^{\frac{1}{3}}}\|z\|^{-5}dtdxdy\right)\times\|F\|_{L^{2}(\mathbb{H}^-)}\lesssim\delta^{\frac{1}{3}}\|F\|_{L^{2}(\mathbb{H}^-)}.
\end{equation}
To estimate $W_{23,2}$, using the mean--value theorem we have for some $\theta\in(0,1)$,
\begin{equation*}
\begin{split}
&\left|V\Big(\frac{T-t}{y^{2}},\frac{X+\delta-x}{y^{3}},\frac{Y}{y}\Big)-V\Big(\frac{T-t}{y^{2}},\frac{X-x}{y^{3}},\frac{Y}{y}\Big)\right|\\
&= y^{-3}\delta\left|(\p_{2}V)\Big(\frac{T-t}{y^{2}},\frac{X+\theta\delta-x}{y^{3}},\frac{Y}{y}\Big)\right|.
\end{split}
\end{equation*}
Notice that in $D_{23}$, we have
\begin{equation*}
y\gtrsim \|(T-t,X-x,Y-y)\|.
\end{equation*}
Hence, by Young's inequality and the fact that $|\p_{2}V|\lesssim 1$, we have
\begin{equation}
\|W_{23,2}\|_{L^{2}_{(T,X,Y)}(\mathbb{H}^-)}\lesssim\delta\left(\int_{\|z\|\geq 20\delta^{\frac{1}{3}}}\|z\|^{-8}dtdxdy\right)\times\|F\|_{L^{2}(\mathbb{H}^-)}\lesssim\delta^{\frac{1}{3}}\|F\|_{L^{2}(\mathbb{H}^-)}.
\end{equation}
The remaining terms $W_{23,3}-W_{23,6}$ can be estimated as follows. 
For $i=1,2,3$, the mean-value theorem and the uniform bounds on the
derivatives of $V$ give
\[
 |(\partial_iV)(p)-(\partial_iV)(q)|\lesssim \delta y^{-3}.
\]
Writing $s=(T-t,X-x,Y-y)$, on $D_{23}$ we have
$y\gtrsim\|s\|$ and $Y\lesssim y$. Since
$|T-t|\leq\|s\|^2$ and $|X-x|\leq\|s\|^3$, the kernels in
$W_{23,3}$, $W_{23,4}$, $W_{23,5}$ and $W_{23,6}$ are bounded by
$C\delta\|s\|^{-8}$. Therefore, by Young's inequality and the
homogeneous dimension six,
\[
 \sum_{k=3}^6\|W_{23,k}\|_{L^2(\mathbb H^-)}
 \lesssim
 \delta\!\int_{\|s\|\geq20\delta^{1/3}}\!\|s\|^{-8}\,ds\,
 \|F\|_{L^2(\mathbb H^-)}
 \lesssim\delta^{1/3}\|F\|_{L^2(\mathbb H^-)}.
\]

Thus we obtain that
\begin{equation}
\|W_{23}\|_{L^{2}_{(T,X,Y)}(\mathbb{H}^-)}\lesssim\delta^{\frac{1}{3}}\|F\|_{L^{2}(\mathbb{H}^-)}.
\end{equation}
\textit{Case (IV): the bound \eqref{high3} for $W_{24}$.} The treatment of $W_{24}$ is similar to that of $W_{23}$, and we will be somewhat brief. As before, we decompose
\begin{equation}\label{w24}
\begin{split}
W_{24}&\lesssim\int_{D_{24}}|(\p_{y}\widetilde{\Gamma})(T,X+\delta,Y;t,x,y)-(\p_{y}\widetilde{\Gamma})(T,X,Y;t,x,y)|\cdot |F(t,x,y)|\,dtdxdy\\
&+\int_{D_{24}}y^{-5}\left|V(p)-V(q)\right|\cdot |F(t,x,y)|\,dtdxdy\\
&+\int_{D_{24}}\frac{|T-t|}{y^{7}}\left|(\p_{1}V)(p)-(\p_{1}V)(q)\right|\cdot |F(t,x,y)|\,dtdxdy\\
&+\int_{D_{24}}\frac{|X-x|}{y^{8}}\left|(\p_{2}V)(p)-(\p_{2}V)(q)\right|\cdot |F(t,x,y)|\,dtdxdy\\
&+\int_{D_{24}}\frac{Y}{y^{6}}\left|(\p_{3}V)(p)-(\p_{3}V)(q)\right|\cdot |F(t,x,y)|\,dtdxdy+3\delta\int_{D_{24}}y^{-8}
|(\partial_2V)(p)|\,|F|dtdxdy\\
&=:W_{24,1}+W_{24,2}+W_{24,3}+W_{24,4}+W_{24,5}+W_{24,6}.
\end{split}
\end{equation}
\begin{comment}
where
\begin{equation*}
\mathcal{B}_{\delta}=\left\{(t,x,y):\|(T-t,X-x,Y-y)\|\leq 5\delta^{\frac{1}{3}},\\ \|(T-t,X-x-(T-t)Y,Y-y)\|\geq 20\delta^{\frac{1}{3}}\right\}.
\end{equation*}
\end{comment}
In view of \eqref{wholespace}, as well as Young's inequality, denoting $r:=(T-t,X-x-(T-t)Y,Y-y)$ for brevity of notation, we have
\begin{equation}
\begin{split}
&\|W_{24,1}\|_{L^{2}(\mathbb{H}^-)}\lesssim \delta\Big\|\int_{\|r\|\geq10\delta^{\frac{1}{3}}}\|r\|^{-8}\cdot |F(t,x,y)|\,dtdxdy\Big\|_{L^{2}(\mathbb{H}^-)}\lesssim \delta^{\frac{1}{3}}\|F\|_{L^{2}(\mathbb{H}^-)}.
\end{split}
\end{equation}
Notice that in $D_{24}$, we have
\begin{equation*}
y\gtrsim \|(T-t,X-x,Y-y)\|.
\end{equation*}
Moreover, on \(D_{24}\), the triangle inequality gives
\(\bigl|(T-t)Y\bigr|^{1/3}\ge 15\delta^{1/3}\); since
\(\lvert T-t\rvert^{1/2},\lvert Y-y\rvert\le 5\delta^{1/3}\),
it follows that \(y\gtrsim\delta^{1/3}\).
Hence denoting $s:=(T-t,X-x,Y-y)$ for brevity of notation, we have
\begin{equation}
\begin{split}
&\|W_{24,2}\|_{L^{2}(\mathbb{H}^-)}\lesssim \Big\|\int_{\|s\|\leq5\delta^{\frac{1}{3}}}\|s\|^{-5}\cdot |F(t,x,y)|\,dtdxdy\Big\|_{L^{2}(\mathbb{H}^-)}\lesssim \delta^{\frac{1}{3}}\|F\|_{L^{2}(\mathbb{H}^-)}.
\end{split}
\end{equation}
The remaining terms in \eqref{w24} can be handled in the same way as $W_{24,2}$. The proof of \eqref{differentialquo} is now complete.
\end{proof}
\begin{comment}
We also record the following bounds which seem to be of independent interest.
\begin{corollary}
     For $(T,X,Y)\in\mathbb{H}^-$, let
\begin{equation}\label{WexpressC1}
v(T,X,Y)=-\int_{\mathbb{H}^-}(\p_{y}\Gamma)(T,X,Y;t,x,y)F(t,x,y)dtdxdy.
\end{equation}
Then  
for any $0<|\delta|<1$, we have
\begin{equation}\label{boundC2}
\begin{split}
&\left\|v(T,X+\delta,Y)-v(T,X,Y)\right\|_{L^{2}(\mathbb{H}^-)}\lesssim |\delta|^{\frac{1}{3}}\|F\|_{L^{2}(\mathbb{H}^-)},\\
&\left\|v(T+\delta,X+Y\delta,Y)-v(T,X,Y)\right\|_{L^{2}(\mathbb{H}^-)}\lesssim |\delta|^{\frac{1}{2}}\|F\|_{L^{2}(\mathbb{H}^-)},
\end{split}
\end{equation}

\begin{proof}
The first inequality in \eqref{boundC2} is the same as the bound on $W_2$. The second inequality follows from a similar proof. We omit the details.\end{proof}

\end{corollary}
\end{comment}
\subsection{Higher--order hypoelliptic estimates}
In this section we establish higher--order hypoelliptic estimates for equation \eqref{maineqsection6}, provided that $a(t,x,y)$ and $f(t,x,y)$ satisfy suitable quantified regularity assumptions. 

\begin{proposition}\label{highhypo} 
Fix $k\in\Z\cap[1,\infty)$, $\alpha\in(0,1)$, $c\in(0,1/2]$ and a positive $s\leq\min\{1/3,\alpha\}$. Assume that $c\leq a(t,x,y)\leq c^{-1}$, $f(t,x,y)\in \widetilde{H}^{ks}(\mathcal{S}_{1})$ and $[a]_{C^{\alpha}(\mathcal{S}_{1})}\lesssim 1$. Let 
\begin{equation*}
w(t,x,y)\in L^{\infty}(\mathcal{S}_{1})\cap W^{1,1}(\mathcal{S}_{1})\cap W^{2,1}_{y}(\mathcal{S}_{1})
\end{equation*}
be a weak solution to \eqref{maineqsection6}. Assume in addition that 
\begin{equation}
M_k:=\|\p_{y}a\|_{L^{\infty}(\mathcal{S}_{1})}+\|(Id-\Delta_{-1})a\|_{\widetilde{H}^{ks}(\mathcal{S}_{1})}+[a]_{C^{\alpha}(\mathcal{S}_{1})}<\infty.
\end{equation}
Then for $0<y_{0}<1$, 
we have
\begin{equation}\label{hohe1}
\begin{split}
&\|w\|_{\widetilde{H}^{(k+1)s}(\mathcal{S}_{y_{0}})}+\|\p_{y}w\|_{\widetilde{H}^{ks}(\mathcal{S}_{y_{0}})}\lesssim_{c,k,s,y_{0},\alpha, M_k} \|w\|_{L^2(\mathcal{S}_1)}+\|\partial_yw\|_{L^\infty(\mathcal{S}_1)}+\|f\|_{\widetilde{H}^{ks}(\mathcal{S}_{1})}.
\end{split}
\end{equation}
\end{proposition}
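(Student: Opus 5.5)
The proof goes by induction on $k$. Each step gains $s$ derivatives in $x$, using Proposition \ref{zerohypo} as the base engine. Fix $0<y_0<y_1<\dots<y_{k+1}<1$ and cutoffs $\eta_j(y)$ equal to $1$ on $[0,y_{j}]$ and supported in $[0,y_{j+1})$. For $k=0$ the desired bound, namely $\widetilde{H}^{s}$ control of $w$ with $s\le 1/3$ together with $L^2$ control of $\p_yw$, is exactly Proposition \ref{zerohypo} combined with \eqref{high1}. Suppose the estimate holds at level $k-1$ on $\mathcal{S}_{y_1}$, and write $v:=\langle D_x\rangle^{ks}w$. Since the Fourier multiplier in $x$ commutes with $\p_t+y\p_x$ and with $\p_y$, and preserves the Neumann condition, applying $\langle D_x\rangle^{ks}$ to \eqref{maineqsection6} gives
\[
\p_tv+y\p_xv-\p_y(a\p_yv)=\langle D_x\rangle^{ks}f+\p_y\big([\langle D_x\rangle^{ks},a]\p_yw\big),\qquad \p_yv|_{y=0}=0 .
\]
To avoid regularity issues at the start, I would first work with difference quotients or a mollified version $\langle D_x\rangle^{ks}S_N$ and pass to the limit at the end. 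Applying Proposition \ref{zerohypo} to $v\eta$, with the divergence-form source handled exactly as in Step 2 of Lemma \ref{tangentialimprove1} via \eqref{differentialquo}, gives
\[
\|v\|_{\widetilde H^{s}(\mathcal S_{y_0})}+\|\p_yv\|_{L^2(\mathcal S_{y_0})}\lesssim \|v\|_{L^2(\mathcal S_{y_1})}+\|f\|_{\widetilde H^{ks}}+\big\|[\langle D_x\rangle^{ks},a]\p_yw\big\|_{L^2(\mathcal S_{y_1})}.
\]
The first term on the right is bounded by the induction hypothesis, and the conclusion then reads as \eqref{hohe1}. Localizing in $x$ (and in $t$) introduces commutators with the cutoffs. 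These are lower order and are absorbed by the inductive bound, since the cutoffs are smooth.

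The main obstacle is the commutator $[\langle D_x\rangle^{ks},a]\p_yw$ in $L^2$. I would bound it by a Bony paraproduct decomposition in $x$ (Appendix \ref{littlewood}), splitting into three pieces. For low frequencies of $a$ against high frequencies of $\p_yw$, the commutator estimate gains one $x$-derivative on $a$ at the cost of a Hölder norm. Because $a$ is only $C^\alpha$, I would instead gain $\alpha\ge s$ derivatives, which bounds this piece by $[a]_{C^\alpha}\|\p_yw\|_{\widetilde H^{(k-1)s}}$; this is controlled by the induction hypothesis. For high frequencies of $a$ against low frequencies of $\p_yw$, the bound is $\|(Id-\Delta_{-1})a\|_{\widetilde H^{ks}}\|\p_yw\|_{L^\infty}$. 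This is exactly why $\|\p_yw\|_{L^\infty}$ appears on the right of \eqref{hohe1}, and why $M_k$ contains the $\widetilde H^{ks}$ norm of $a$. The high-high resonant piece is handled in the same way using $\|\p_yw\|_{L^\infty}$.

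It remains to justify the boundary terms. The condition $\p_y a\in L^\infty$ ensures that the divergence-form source $[\langle D_x\rangle^{ks},a]\p_yw$ vanishes at $y=0$, because $\p_yw|_{y=0}=0$; this is what allows the representation \eqref{Wexpress}. If it is needed, the same condition also lets the source be written as $\p_y(\cdot)$ plus an $L^2$ remainder. I expect this to be the most delicate step: the high-frequency part of $a$ must be paired with $\p_yw$ in $L^\infty$ rather than $L^2$, and only a fractional gain $s$ per step is available because $a\in C^\alpha$ only.
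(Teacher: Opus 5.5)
Your route differs from the paper's. You apply Proposition \ref{zerohypo} once to $v=\langle D_x\rangle^{ks}w$ and treat $\p_y G$, with $G:=[\langle D_x\rangle^{ks},a]\p_yw$, as a divergence source via \eqref{differentialquo}. Everything then hinges on bounding $\|G\|_{L^2}$ by level-$(k-1)$ quantities.

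Three of your bounds are fine: the one for $[\langle D_x\rangle^{ks},T_a]\p_yw$ (where $T_ah:=\sum_jS_{j-1}a\,\Delta_jh$), the high--low terms, and $\langle D_x\rangle^{ks}\sum_l\Delta_la\,\widetilde\Delta_l\p_yw$. The piece that does not go through ``in the same way'' is the other half of the resonant part, $\sum_l\Delta_la\,\widetilde\Delta_l\langle D_x\rangle^{ks}\p_yw$, where the $ks$ derivatives sit on $\p_yw$ at the frequency of $a$. Its summands are not localized to dyadic annuli, so the direct bound only gives $\|\p_yw\|_{L^\infty}\sum_l2^{lks}\|\Delta_la\|_{L^2}$. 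That is an $\ell^1$ (Besov $B^{ks}_{2,1}$) norm of $a$, which $M_k$ does not contain. Using $[a]_{C^\alpha}$ instead gives $\sum_l2^{l(ks-\alpha)}\|\widetilde\Delta_l\p_yw\|_{L^2}$, which $\|\p_yw\|_{\widetilde H^{(k-1)s}}$ controls only if $s<\alpha$. At the endpoint $s=\alpha$, which the statement allows, neither closes. The bound you assert, $\lesssim\|(\mathrm{Id}-\Delta_{-1})a\|_{\widetilde H^{ks}}\|\p_yw\|_{L^\infty}$, is in fact true. It needs a different tool, though: $L^2$-boundedness of a resonant paraproduct with one factor in $L^\infty\subset BMO$, via a Carleson-measure argument as in the proof of the Kato--Ponce commutator estimate.

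The paper avoids this by splitting your single step in two.
\begin{itemize}
\item In Lemma \ref{nonlocalenergyesti} it tests the $\Delta_j$-localized equation against $2^{2jks}\Delta_jw\,\eta^2$. The dangerous term $(S_{j-6}-\mathrm{Id})a\,\Delta_j\p_yw$ is then paired with $\p_y\Delta_jw$ itself. Since $\|(\mathrm{Id}-S_{j-6})a\|_{L^\infty}\lesssim2^{-\alpha j}[a]_{C^\alpha}$, it is absorbed into the dissipation $\int2^{2jks}a|\p_y\Delta_jw|^2\eta^2$ for $j\ge K$, with no loss at $s=\alpha$.
\item Only afterwards, in Lemma \ref{nonlocaltangential}, does it gain $1/3$ in $x$ from the constant-coefficient operator, with the whole $F=(a-1)\eta^2\p_yw$ as divergence source. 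This needs only a tame product bound on $\|F\|_{\widetilde H^{ks}}$, which is available because $\|\p_yw\|_{\widetilde H^{ks}}$ was just obtained.
\end{itemize}

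You can repair your argument the same way. In the energy part of your step, consider the part of $G$ equal to $-\sum_{j\ge K}(a-S_{j-1}a)\Delta_j\p_yv$. After multiplying by $\eta(y)$, which commutes with these $x$-operations, its $L^2$ norm is $\lesssim2^{-\alpha K}[a]_{C^\alpha}\|\eta\p_yv\|_{L^2}$, so it can be absorbed. Then, for the hypoelliptic part, $\|G\|_{L^2}$ is bounded by known quantities plus $\|\p_yv\|_{L^2}$. This reproduces the paper's ordering.

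Two minor points. First, $G|_{y=0}=0$ follows from $\p_yw|_{y=0}=0$ alone, since $\langle D_x\rangle^{ks}$ acts only in $x$; it does not come from $\p_ya\in L^\infty$. Second, no $(t,x)$-localization is needed on the strip $\mathcal{S}_1$.
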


By linearity, we can assume that $\|w\|_{L^2(\mathcal{S}_1)}+\|\partial_yw\|_{L^\infty(\mathcal{S}_1)}+\|f\|_{\widetilde{H}^{ks}(\mathcal{S}_{1})}\leq 1$.

The conclusion of Proposition \ref{highhypo} follows from the following two lemmas and a standard induction argument. Notice that \eqref{hohe1} when $k=0$ follows from Proposition \ref{zerohypo}.

\begin{lemma}\label{nonlocalenergyesti}
Let $k$ be an integer with $k\ge1$. For $0<y_0<y_1<1$, denote
\begin{equation}\label{M_k}
 N_k:=\|w\|_{\widetilde{H}^{ks}(\mathcal{S}_{y_1})}+\|\p_{y}w\|_{\widetilde{H}^{(k-1)s}(\mathcal{S}_{y_1})}+M_k.
\end{equation}
Under the assumptions of Proposition \ref{highhypo}, we have
\begin{equation}
\begin{split}
\|\p_{y}w\|_{\widetilde{H}^{ks}(\mathcal{S}_{y_{0}})}&\lesssim C_{c,k,s,y_{0},y_1,\alpha,N_k}.
\end{split}
\end{equation}
\end{lemma}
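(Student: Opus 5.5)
The plan is a tangential energy estimate. We apply the fractional operator $\langle D_x\rangle^{ks}$ to \eqref{maineqsection6}, after localizing in $y$, and then test against the result. Fix a cutoff $\eta(y)$ with $\eta\equiv1$ on $[0,y_0]$ and $\mathrm{supp}\,\eta\subset[0,y_1)$. It will be convenient to work with Littlewood--Paley pieces $\Delta_j$, $j\ge0$, in $x$, since $\langle D_x\rangle^{ks}$ and $\Delta_j$ commute with $\p_t+y\p_x$, $\p_y$, $\eta(y)$ and the Neumann condition. Applying $\Delta_j$ to the equation gives
\[
\p_t\Delta_jw+y\p_x\Delta_jw-\p_y(a\p_y\Delta_jw)=\Delta_jf+\p_y\big([\Delta_j,a]\p_yw\big).
\]
We test this identity against $2^{2jks}\Delta_jw\,\eta^2$ over $\mathcal{S}_1$. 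The transport term contributes only the nonnegative trace at $t=0$. The Neumann condition kills the $y=0$ boundary term. The cutoff error is controlled by
\[
\int |a\p_y\Delta_jw\,\Delta_jw\,\eta\eta'|,
\]
which is bounded after summation by $N_k\,\|\p_yw\,\eta\|_{\widetilde H^{ks}}$ and absorbed via Young's inequality. This leaves, with $c$ the ellipticity constant,
\[
c\sum_j 2^{2jks}\|\eta\p_y\Delta_jw\|_{L^2}^2\lesssim N_k^2+\|f\|_{\widetilde H^{ks}}N_k+\Big|\sum_j2^{2jks}\int [\Delta_j,a]\p_yw\;\p_y(\Delta_jw\,\eta^2)\Big|.
\]
The low frequencies ($j$ bounded) are handled by \eqref{high1}.

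The main obstacle is the commutator term. For it we use a Bony paraproduct decomposition
\[
a=\Delta_{-1}a+(Id-\Delta_{-1})a,
\]
and split $[\Delta_j,a]\p_yw$ into three pieces. In the low--high piece $[\Delta_j,S_{j-2}a]\Delta_{\sim j}\p_yw$, the standard commutator bound gives a factor $2^{-j}\|\p_xS_{j-2}a\|_{L^\infty}$. Since $\p_x$ is not controlled in $L^\infty$, we instead use the $C^\alpha$ bound in $x$ (anisotropic exponent $\alpha/3$ in $x$ if needed), which yields $2^{-j\alpha'}$ with $\alpha'>0$. Using $s\le\alpha$, this piece is bounded by
\[
2^{-js}\|\Delta_{\sim j}\p_yw\|_{L^2}\,[a]_{C^\alpha},
\]
so it gains $s$ relative to the target. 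Summing, its contribution is at most $\|\p_yw\|_{\widetilde H^{(k-1)s}}\|\p_yw\|_{\widetilde H^{ks}}\lesssim N_k\|\p_yw\|_{\widetilde H^{ks}}$, which is absorbed.

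In the high--low and high--high pieces, the frequency of $\p_yw$ is $\lesssim2^j$ and the frequency of $a$ is $\gtrsim 2^j$. There we put $\p_yw$ in $L^\infty$ (allowed by the normalization $\|\p_yw\|_{L^\infty}\le1$) and $\Delta_{\gtrsim j}a$ in $\widetilde H^{ks}$. This bounds the sum by $M_k\,\|\p_yw\,\eta\|_{\widetilde H^{ks}}$, up to a harmless $L^2$ tail, which is again absorbed. The high--high part is only summable because $ks>0$, so we use that the output frequency is below the input frequency.

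Terms involving $\p_y\eta$ and $\p_ya$ appear whenever $\p_y$ falls on the cutoff or coefficient. These are lower order because $\|\p_ya\|_{L^\infty}\le M_k$, and they are bounded by $N_k$ times the left side to the first power. Collecting everything and absorbing via Young's inequality gives
\[
\|\p_yw\|_{\widetilde H^{ks}(\mathcal{S}_{y_0})}\le C(c,k,s,y_0,y_1,\alpha,N_k).
\]

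One technical point must be handled first: the a priori finiteness of $\|\p_yw\|_{\widetilde H^{ks}}$, which the absorption requires. We handle it with difference quotients or a frequency truncation $S_J$, proving the estimate uniformly in $J$ and letting $J\to\infty$. We expect the low--high commutator under only H\"older regularity of $a$ to be the delicate step, and there we would first try the $C^\alpha$-in-$x$ bound described above.
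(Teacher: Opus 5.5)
Your proposal is correct and follows essentially the paper's proof. The shared ingredients are Littlewood--Paley localization in $x$, testing against $2^{2jks}\Delta_jw\,\eta^2$, and a Bony decomposition of $[\Delta_j,a]\p_yw$. In it the low--high commutator gains $2^{-j\alpha}\le2^{-js}$ from the $C^\alpha$ bound, and the pieces where $a$ carries frequency $\gtrsim2^j$ are bounded by $\|\p_yw\|_{L^\infty}\|(Id-\Delta_{-1})a\|_{\widetilde H^{ks}}$. The paper differs in two minor choices, both of which your variants also handle: it absorbs the piece $(S_{j-6}-Id)a\,\Delta_j\p_yw$ using $\|(Id-S_{j-6})a\|_{L^\infty}\lesssim2^{-\alpha K}[a]_{C^\alpha}$ for $j\ge K$, and it integrates the cutoff term by parts using $\|\p_ya\|_{L^\infty}$.

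The one thing to drop is your parenthetical fallback to an $x$-H\"older exponent $\alpha/3$. That would give a gain of only $2^{-j\alpha/3}$, which beats $2^{-js}$ only when $s\le\alpha/3$. So the argument genuinely needs the exponent $\alpha$ in $x$ supplied by $[a]_{C^\alpha}$, together with $s\le\alpha$.
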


\begin{lemma}\label{nonlocaltangential}
Let $k$ be an integer with $k\ge1$. For $0<y_0<y_1<1$, denote
\begin{equation}\label{M_k00}
 P_k:=\|w\|_{\widetilde{H}^{ks}(\mathcal{S}_{y_1})}+\|\p_{y}w\|_{\widetilde{H}^{ks}(\mathcal{S}_{y_1})}+M_k.
\end{equation}
Under the assumption of Proposition \ref{highhypo}, we have
\begin{equation}
\begin{split}
\|w\|_{\widetilde{H}^{ks+\frac{1}{3}}(\mathcal{S}_{y_{0}})}&\lesssim C_{c,k,s,y_{0},y_1,\alpha, P_k}.
\end{split}
\end{equation}
\end{lemma}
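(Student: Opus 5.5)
The plan is to rerun the argument of Lemma \ref{tangentialimprove1} with $w$ replaced by a fractional tangential derivative. Fix $y_0<y'<y_1$ and a cutoff $\eta(y)$ with $\eta\equiv1$ on $[0,y_0]$, $\operatorname{supp}\eta\subset[0,y')$ and $\eta'(0)=0$. Apply the tangential operator $\Lambda:=\langle D_x\rangle^{ks}$, using its Littlewood--Paley realization $\sum_j 2^{jks}\Delta_j$ on high frequencies plus a harmless low-frequency part. This operator commutes with $\p_t$, $y\p_x$, $\p_y$ and with the Neumann condition. Set $W:=\eta^2\Lambda w$. On $\mathbb{H}^-$ it satisfies
\begin{equation*}
\p_tW+y\p_xW-\p_y^2W=\eta^2\Lambda f+\eta^2\p_y\big((a-1)\p_y\Lambda w\big)+\eta^2\p_y\big([\Lambda,a]\p_yw\big)+\text{(cutoff terms)},
\end{equation*}
where the cutoff terms involve $\eta\eta'\p_y\Lambda w$, $(\eta\eta''+(\eta')^2)\Lambda w$ and $\eta\eta'[\Lambda,a]\p_y w$. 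As in Step 1 and Step 2 of Lemma \ref{tangentialimprove1}, I split $W=W_1+W_2$. The non-divergence sources go into $W_1$. The two divergence sources $\p_y(\eta^2(a-1)\p_y\Lambda w)$ and $\p_y(\eta^2[\Lambda,a]\p_y w)$ go into $W_2$. After this, \eqref{estiw1} and \eqref{zerow2} give
\begin{equation*}
\|W\|_{\widetilde H^{1/3}}\lesssim \|\Lambda f\|_{L^2}+\|\Lambda w\|_{L^2(\mathcal S_{y'})}+\|\p_y\Lambda w\|_{L^2(\mathcal S_{y'})}+\|[\Lambda,a]\p_yw\|_{L^2(\mathcal S_{y'})}.
\end{equation*}
The first three terms on the right are bounded by $1+P_k$. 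Since $\eta\equiv1$ on $[0,y_0]$, this controls $\|w\|_{\widetilde H^{ks+1/3}(\mathcal S_{y_0})}$ up to a low-frequency piece, and that piece is bounded by $\|w\|_{L^2}$. Note that \eqref{differentialquo} and hence \eqref{zerow2} only need $F\in L^2$ with $F|_{y=0}=0$. This holds because $\p_y w|_{y=0}=0$, and I would check carefully that the commutator term also vanishes at $y=0$, since $\Lambda$ acts only in $x$.

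To justify these manipulations for a merely weak solution, I will work with the difference quotients $\delta^{-ks}(\tau_\delta-I)$ in $x$, or with the truncations $S_N\Lambda$, instead of $\Lambda$ itself. This produces uniform bounds that can then be passed to the limit, the same way \eqref{differentialquo} was used.

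The main obstacle is the commutator estimate
\begin{equation*}
\|[\Lambda,a]\p_y w\|_{L^2(\mathcal S_{y'})}\lesssim M_k\big(\|\p_y w\|_{L^\infty}+\|\p_yw\|_{\widetilde H^{ks}}\big).
\end{equation*}
Here only $C^\alpha$ regularity of $a$ and $\widetilde H^{ks}$ control of $(Id-\Delta_{-1})a$ are available. I will prove it by a Bony paraproduct decomposition in $x$, for each fixed $(t,y)$:
\begin{equation*}
[\Lambda,a]g=[\Lambda,T_a]g+\Lambda(T_g a+R(a,g))-T_{\Lambda g}a-R(a,\Lambda g).
\end{equation*}
For the paraproduct commutator $[\Lambda,T_a]g$, the standard estimate gives the bound $2^{j(ks-\alpha)}[a]_{C^\alpha}\|\Delta_j g\|$, which is at most $[a]_{C^\alpha}\|g\|_{\widetilde H^{ks-\alpha}}$. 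This is acceptable because $s\le\alpha$, so $ks-\alpha\le(k-1)s$. The terms $\Lambda T_g a$ and $\Lambda R(a,g)$ are bounded by $\|g\|_{L^\infty}\|(Id-\Delta_{-1})a\|_{\widetilde H^{ks}}$; this is exactly where the assumption $\|\p_y w\|_{L^\infty}\lesssim1$ enters. The terms $T_{\Lambda g}a$ and $R(a,\Lambda g)$ are bounded by $\|\Lambda g\|_{L^2}\|a\|_{C^\alpha}$. All of these are finite under the normalization together with $P_k$, and square-summing over $j$ as in \eqref{zerow2} closes the estimate. The remaining care points are to keep the time variable as a parameter throughout, and to note that the $\p_ya$ term generated by $\eta$-derivatives is bounded by $\|\p_ya\|_{L^\infty}\le M_k$.
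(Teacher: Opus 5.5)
Your proposal is correct and is essentially the paper's proof. It uses the same splitting $W=W_1+W_2$, handled by \eqref{estiw1} and \eqref{refinebesovesti}--\eqref{zerow2}, with Bony's paraproduct and the bound on $\|\partial_y w\|_{L^\infty}$ controlling the coefficient term. The only cosmetic difference is that you commute $\Lambda$ through first and estimate $[\Lambda,a]\partial_y w$, whereas the paper estimates the product $\Lambda\big((a-1)\partial_y w\,\eta^2\big)$ directly in $\widetilde H^{ks}$; since $(a-1)\partial_y\Lambda w+[\Lambda,a]\partial_y w=\Lambda\big((a-1)\partial_y w\big)$, these are the same estimate.
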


It remains to give the proof of the above key lemmas.
\begin{proof}[Proof of Lemma \ref{nonlocalenergyesti}] Let $\Delta_{j}$ be the standard Littlewood--Paley projection operator in $x$. By \eqref{maineqsection6}, we have for $(t,x,y)\in\mathcal{S}_1$,
\begin{equation}\label{Vjeq}
\begin{cases}
\p_{t}\Delta_{j}w+y\p_{x}\Delta_{j}w-\p_{y}\Delta_{j}\left(a(t,x,y)\p_{y}w\right)=\Delta_{j}f,\\
\p_{y}\Delta_{j}w|_{y=0}=0.
\end{cases}
\end{equation}

Fix $K\gg1$ to be determined below and a standard smooth cutoff function $\eta(y)\in C_0^\infty([0,y_1))$ with $\eta\equiv1$ on $[0,y_0]$. Testing \eqref{Vjeq} against
$2^{2jks}\Delta_{j}w\,\eta^{2}(y)$ with $j\geq K,$
and integrating by parts, we get
\begin{equation}\label{energyide}
\begin{split}
I_{j}:&=\int_{\mathcal{S}_{1}}2^{2jks}\Delta_{j}(a\p_{y}w)\p_{y}\Delta_{j}w\,\eta^{2}(y)\,dtdxdy\\
&\leq\int_{\mathcal{S}_{1}}2^{2jks}\Delta_{j}f\Delta_{j}w\,\eta^{2}(y)\,dtdxdy -2\int_{\mathcal{S}_{1}}2^{2jks}\Delta_{j}(a\p_{y}w)\Delta_{j}w\,\eta(y)\eta'(y)\,dtdxdy\\
&=:J_{j,1}-2J_{j,2}.
\end{split}
\end{equation}
By Bony's paraproduct decomposition \eqref{parapro}, we have
\begin{equation}\label{paraproduct}
\begin{split}
\Delta_{j}(a\p_{y}w)&=\sum_{|l|\leq 5}\Delta_{j}\left(S_{j+l-1}a\,\Delta_{j+l}\p_{y}w\right)+\sum_{|l|\leq 5}\Delta_{j}\left(S_{j+l-1}\p_{y}w\,\Delta_{j+l}a\right)\\
&\quad +\sum_{l> j-5,\,|q-l|\leq 1}\Delta_{j}\left(\Delta_{l}a\,\Delta_{q}\p_{y}w\right)
\end{split}
\end{equation}
Hence, the commutator term $[\Delta_{j},a]\p_{y}w$ can be expressed as 
\begin{equation}\label{commutator}
\begin{split}
\Delta_{j}(a\p_{y}w)-a\Delta_{j}\p_{y}w&=(S_{j-6}-\mathrm{Id})a\,\Delta_{j}\p_{y}w+\sum_{|l|\leq 5}[\Delta_{j}, S_{j-6}a]\Delta_{j+l}\p_{y}w\\
&\quad+\sum_{|l|\leq 5}\Delta_{j}\left(S_{j+l-1}\p_{y}w\,\Delta_{j+l}a\right)+\sum_{l> j-5,|q-l|\leq 1}\Delta_{j}\left(\Delta_{l}a\,\Delta_{q}\p_{y}w\right)\\
&\quad+\sum_{-5<l\leq5}\sum_{q=-6}^{l-2}\Delta_{j}\left(\Delta_{j+q}a\,\Delta_{j+l}\p_{y}w\right).
\end{split}
\end{equation}
Therefore, by \eqref{commutator},
\begin{equation}\label{ijformula}
\begin{split}
I_{j}=I_{j,1}+I_{j,2}+I_{j,3}+I_{j,4}+I_{j,5}+I_{j,6},
\end{split}
\end{equation}
where the terms $I_{j,l},\,l\in\Z\cap[1,6]$ are defined as
\begin{equation*}
\begin{split}
&I_{j,1}:=\int_{\mathcal{S}_{1}}2^{2jks}a(t,x,y)|\p_{y}\Delta_{j}w|^{2}\eta^{2}(y)\,dtdxdy,\\
&I_{j,2}:=\int_{\mathcal{S}_{1}}2^{2jks}\big[(S_{j-6}-\mathrm{Id})a\,\Delta_{j}\p_{y}w\big]\p_{y}\Delta_{j}w\,\eta^{2}(y)\,dtdxdy,\\
&I_{j,3}:=\int_{\mathcal{S}_{1}}2^{2jks}\Big(\sum_{|l|\leq 5}[\Delta_{j}, S_{j-6}a]\Delta_{j+l}\p_{y}w\Big)\p_{y}\Delta_{j}w\,\eta^{2}(y)\,dtdxdy,\\
&I_{j,4}:=\int_{\mathcal{S}_{1}}2^{2jks}\Big(\sum_{|l|\leq 5}\Delta_{j}(S_{j+l-1}\p_{y}w\Delta_{j+l}a)\Big)\p_{y}\Delta_{j}w\,\eta^{2}(y)\,dtdxdy,\\
&I_{j,5}:=\int_{\mathcal{S}_{1}}2^{2jks}\Big(\sum_{-5<l\leq5}\sum_{q=-6}^{l-2}\Delta_{j}\left(\Delta_{j+q}a\Delta_{j+l}\p_{y}w\right)\Big)\p_{y}\Delta_{j}w\,\eta^{2}(y)\,dtdxdy,\\
&I_{j,6}:=\int_{\mathcal{S}_{1}}2^{2jks}\Big(\sum_{l> j-5,|q-l|\leq 1}\Delta_{j}\left(\Delta_{l}a\,\Delta_{q}\p_{y}w\right)\Big)\p_{y}\Delta_{j}w\,\eta^{2}(y)\,dtdxdy.
\end{split}
\end{equation*}

We bound the terms $I_{j,l}, \,l\in \Z\cap[1,6]$ as follows.

\textit{(I): Estimate of $I_{j,1}$.} By the property of $a(t,x,y)$, we have
\begin{equation}
I_{j,1}\geq c\int_{\mathcal{S}_{1}}2^{2jks}|\p_{y}\Delta_{j}w|^{2}\eta^{2}dtdxdy.
\end{equation}

\textit{(II): Estimate of $I_{j,2}$.} Since $j\geq K$, by Bernstein's inequality,
\begin{equation}
\begin{split}
|I_{j,2}|&\leq \Big(\sum_{q\geq K-6}\|\Delta_{q}a\|_{L^{\infty}}\Big)\int_{\mathcal{S}_{1}}2^{2jks}|\p_{y}\Delta_{j}w|^{2}\eta^{2}dtdxdy\\
&\lesssim 2^{-\alpha K}\|a\|_{C^{\alpha}(\mathcal{S}_{1})}\int_{\mathcal{S}_{1}}2^{2jks}|\p_{y}\Delta_{j}w|^{2}\eta^{2}dtdxdy\leq \frac{1}{10}I_{j,1},
\end{split}
\end{equation}
if  $K\gg1$ is chosen sufficiently large, depending on $[a]_{C^{\alpha}(\mathcal{S}_{1})}$ and the ellipticity constant $c$.

\textit{(III): Estimate of $I_{j,3}$.} Recall the standard commutator estimate (see e.g. Lemma 2.97 in \cite{BCD11}),
\begin{equation}\label{standardcomm}
\|[\Delta_{j},b]f\|_{L^{2}(\R)}\lesssim 2^{-j}\|\nabla_x b\|_{L^{\infty}(\R)}\|f\|_{L^{2}(\R)}.
\end{equation}
Hence, by \eqref{standardcomm} and Bernstein's inequality, for $y\in(0,1)$ and $D:=(-\infty,0]\times\R$,
\begin{equation}
\begin{split}
&\left\|[\Delta_{j},S_{j-6}a]\Delta_{j+l}\p_{y}w\right\|_{L^{2}_{t,x}(D)}\lesssim 2^{-j}\|\nabla_x S_{j-6}a\|_{L^{\infty}}\left\|\Delta_{j+l}\p_{y}w\right\|_{L^{2}_{t,x}(D)}\\
&\leq 2^{-j}\Big(\sum_{q<j-6}\|\nabla_x\Delta_{q}a\|_{L^{\infty}}\Big)\left\|\Delta_{j+l}\p_{y}w\right\|_{L^{2}_{t,x}(D)} %\lesssim 2^{-j}\Big(\sum_{q<j-6}2^{q(1-\alpha)}\|a\|_{C^{\alpha}(D)}\Big)\left\|\Delta_{j+l}\p_{y}w\right\|_{L^{2}_{t,x}(D)}\\
\lesssim 2^{-j\alpha}\|a\|_{C^{\alpha}(D)}\left\|\Delta_{j+l}\p_{y}w\right\|_{L^{2}_{t,x}(D)}.
\end{split}
\end{equation}
By the Cauchy--Schwarz inequality,
\begin{equation}
|I_{j,3}|\leq \frac{1}{10}I_{j,1}+C2^{-2j(\alpha-s)}\|a\|_{C^{\alpha}}^{2}\Big(\sum_{|l|\leq 5}\int_{\mathcal{S}_{1}}2^{2j(k-1)s}|\p_{y}\Delta_{j+l}w|^{2}\eta^{2}dtdxdy\Big).
\end{equation}

\textit{(IV): Estimate of $I_{j,4}$.} By the Cauchy--Schwarz inequality, we have
\begin{equation}
\begin{split}
|I_{j,4}|&\lesssim \sum_{|l|\leq5}\left(2^{2jks}\|\p_{y}\Delta_{j}w\cdot \eta(y)\|_{L^{2}(\mathcal{S}_{1})}\cdot\|\p_{y}w\|_{L^{\infty}(\mathcal{S}_{1})}\cdot\|\Delta_{j+l}a\|_{L^{2}(\mathcal{S}_{1})}\right)\\
&\leq \frac{1}{10}I_{j,1}+C\|\p_{y}w\|_{L^{\infty}(\mathcal{S}_{1})}^{2}\sum_{|l|\leq5}2^{2jks}\|\Delta_{j+l}a\|_{L^{2}(\mathcal{S}_{1})}^{2}.
\end{split}
\end{equation}

\textit{(V): Estimate of $I_{j,5}$.} By Bernstein's inequality,
\begin{equation}
\begin{split}
|I_{j,5}|&\lesssim 2^{-j\alpha}[a]_{C^{\alpha}}\Big(\int_{\mathcal{S}_{1}}2^{2jks}|\p_{y}\Delta_{j}w|^{2}\eta^{2}\Big)^{\frac{1}{2}}\Big(\sum_{-5<l\leq 5}\int_{\mathcal{S}_{1}}2^{2jks}|\p_{y}\Delta_{j+l}w|^{2}\eta^{2}dtdxdy\Big)^{\frac{1}{2}}\\
&\leq \frac{1}{10}I_{j,1}+C[a]_{C^{\alpha}}^{2}2^{-2j(\alpha-s)}\Big(\sum_{-5<l\leq 5}\int_{\mathcal{S}_{1}}2^{2j(k-1)s}|\p_{y}\Delta_{j+l}w|^{2}\eta^{2}dtdxdy\Big).
\end{split}
\end{equation}

\textit{(VI): Estimate of $I_{j,6}$.} For $I_{j,6}$, we rewrite this term as
\begin{equation}
I_{j,6}=\sum_{l> j-5,|q-l|\leq 1}\int_{\mathcal{S}_{1}}2^{(j-l)ks}\left(\Delta_{j}\left(2^{lks}\Delta_{l}a\cdot\Delta_{q}\p_{y}w\right)\right)2^{jks}\p_{y}\Delta_{j}w\eta^{2}(y)dtdxdy.
\end{equation}
By the Cauchy--Schwarz inequality,
\begin{equation}
\begin{split}
|I_{j,6}|&\leq\frac{1}{10}I_{j,1}+C\|\p_{y}w\|_{L^{\infty}}^{2}\Big(\sum_{l>j-5}2^{(j-l)ks}\cdot 2^{lks}\|\Delta_{l}a\|_{L^{2}}\Big)^{2}\\
&=\frac{1}{10}I_{j,1}+C\|\p_{y}w\|_{L^{\infty}}^{2}\|(Id-\Delta_{-1})a\|_{\widetilde{H}^{ks}(\mathcal{S}_{1})}^{2}\cdot c_{j},
\end{split}
\end{equation}
where $c_{j}$ is a non--negative sequence with $\sum_{j\geq 10}c_{j}\lesssim 1$. In the above we have applied Young's inequality:
\begin{equation*}
\sum_{j\geq 10}\Big(\sum_{l>j-5}2^{(j-l)ks}\cdot 2^{lks}\|\Delta_{l}a\|_{L^{2}}\Big)^{2}\lesssim \sum_{l\geq 5}2^{2lks}\|\Delta_{l}a\|_{L^{2}}^{2}.
\end{equation*}

We now turn to estimate $J_{j,1}$ and $J_{j,2}$ in \eqref{energyide}. We can bound $J_{j,1}$ straightforwardly as 
\begin{equation}
|J_{j,1}|\leq \left(\int_{\mathcal{S}_{1}}2^{2jks}|\Delta_{j}f|^{2}\eta^{2}(y)dtdxdy\right)^{\frac{1}{2}}\left(\int_{\mathcal{S}_{1}}2^{2jks}|\Delta_{j}w|^{2}\eta^{2}(y)dtdxdy\right)^{\frac{1}{2}}.
\end{equation}

It remains to estimate $J_{j,2}$. We write
\begin{equation}
\begin{split}
J_{j,2}=&\int_{\mathcal{S}_{1}}2^{2jks}\Delta_{j}(a\p_{y}w)\Delta_{j}w\,\eta(y)\eta'(y)dtdxdy\\
=&\int_{\mathcal{S}_{1}}2^{2jks}a(t,x,y)\p_{y}\Delta_{j}w\Delta_{j}w\,\eta(y)\eta'(y)dtdxdy\\
&+\int_{\mathcal{S}_{1}}2^{2jks}[\Delta_{j},a(t,x,y)]\p_{y}w\Delta_{j}w\,\eta(y)\eta'(y)dtdxdy,
\end{split}
\end{equation}
which, by\ \eqref{commutator}, can be further decomposed as
\begin{equation}
   J_{j,2} =:J_{j,2,1}+\cdots+J_{j,2,6},
\end{equation}
where the terms $J_{j,2,l},l\in\Z\cap[1,3]$ are given by
\begin{equation}
\begin{split}
J_{j,2,1}=&\int_{\mathcal{S}_{1}}2^{2jks}a(t,x,y)\p_{y}\Delta_{j}w\Delta_{j}w\,\eta(y)\eta'(y)\,dtdxdy,\\
J_{j,2,2}=&\int_{\mathcal{S}_{1}}2^{2jks}(S_{j-6}-Id)a\,\Delta_{j}\p_{y}w\cdot\Delta_{j}w\,\eta(y)\eta'(y)\,dtdxdy,\\
J_{j,2,3}=&\sum_{|l|\leq 5}\int_{\mathcal{S}_{1}}2^{2jks}[\Delta_{j},S_{j-6}a]\Delta_{j+l}\p_{y}w\cdot\Delta_{j}w\,\eta(y)\eta'(y)\,dtdxdy,
\end{split}
\end{equation}
and the terms $J_{j,2,l},l\in\Z\cap[4,6]$ are given by
\begin{equation}
\begin{split}
J_{j,2,4}=&\sum_{|l|\leq 5}\int_{\mathcal{S}_{1}}2^{2jks}\Delta_{j}(S_{j+l-1}\p_{y}w\Delta_{j+l}a)\cdot\Delta_{j}w\,\eta(y)\eta'(y)\,dtdxdy,\\
J_{j,2,5}=&\sum_{-5<l\leq 5}\sum_{q=-6}^{l-2}\int_{\mathcal{S}_{1}}2^{2jks}\Delta_{j}(\Delta_{j+q}a\Delta_{j+l}\p_{y}w)\cdot \Delta_{j}w\,\eta(y)\eta'(y)\,dtdxdy,\\
J_{j,2,6}=&\sum_{l>j-5,|q-l|\leq1}\int_{\mathcal{S}_{1}}2^{2jks}\Delta_{j}(\Delta_{l}a\Delta_{q}\p_{y}w)\cdot\Delta_{j}w\,\eta(y)\eta'(y)\,dtdxdy.
\end{split}
\end{equation}
For $J_{j,2,1}$, integrating by parts, we get
\begin{equation}
|J_{j,2,1}|\lesssim_{y_{0},y_1}(\|a\|_{L^{\infty}}+\|\p_{y}a\|_{L^{\infty}})\cdot \int_{\mathcal{S}_{y_1}}2^{2jks}|\Delta_{j}w|^{2}dtdxdy.
\end{equation}
The remaining terms $J_{j,2,2}$ to $J_{j,2,6}$ can be estimated in the same fashion as $I_{j,l}$ terms. Indeed, we have 
\begin{equation}
\begin{split}
&|J_{j,2,2}|\leq\frac{1}{10}I_{j,1}+C_{y_{0},y_1}\|2^{jks}\Delta_{j}w\|_{L^{2}(\mathcal{S}_{y_1})}^{2},\\
&|J_{j,2,3}|\lesssim_{y_{0},y_1}\|2^{jks}\Delta_{j}w\|_{L^{2}(\mathcal{S}_{y_1})}^{2}+\|a\|_{C^{\alpha}}^{2}2^{-2j(\alpha-s)}\Big(\sum_{|l|\leq 5}\|2^{j(k-1)s}\p_{y}\Delta_{j+l}w\|_{L^{2}(\mathcal{S}_{y_1})}^{2}\Big),
\end{split}
\end{equation}
as well as
\begin{equation}
\begin{split}
&|J_{j,2,4}|\lesssim_{y_{0},y_1}\|2^{jks}\Delta_{j}w\|_{L^{2}(\mathcal{S}_{y_1})}^{2}+\|\p_{y}w\|_{L^{\infty}}^{2}\Big(\sum_{|l|\leq 5}\|2^{jks}\Delta_{j+l}a\|_{L^{2}(\mathcal{S}_{y_1})}^{2}\Big),\\
&|J_{j,2,5}|\lesssim_{y_{0},y_1}\|2^{jks}\Delta_{j}w\|_{L^{2}(\mathcal{S}_{y_1})}^{2}+[a]_{C^{\alpha}}^{2}2^{-2j(\alpha-s)}\Big(\sum_{-5<l\leq 5}\|2^{j(k-1)s}\p_{y}\Delta_{j+l}w\|_{L^{2}(\mathcal{S}_{y_1})}^{2}\Big),\\
&|J_{j,2,6}|\lesssim_{y_{0},y_1}\|2^{jks}\Delta_{j}w\|_{L^{2}(\mathcal{S}_{y_1})}^{2}+\|\p_{y}w\|_{L^{\infty}}^{2}\|(Id-\Delta_{-1})a\|_{\widetilde{H}^{ks}(\mathcal{S}_{y_1})}^{2}\cdot c_{j}.
\end{split}
\end{equation}

Combining all the above estimates, and summing in $j$, we complete the proof of Lemma \ref{nonlocalenergyesti}.
\end{proof}

\begin{proof}[Proof of Lemma \ref{nonlocaltangential}] The proof of Lemma \ref{nonlocaltangential} is somewhat similar to that of Lemma \ref{tangentialimprove1}, and we will be somewhat brief. We decompose $W$ into two parts, $W_1$ and $W_2$, where $W_{1}$ satisfies \eqref{W1eq} and $W_{2}$ satisfies \eqref{W2eq}.

\textit{$\bullet$ Bounds on $W_{1}$.} By Lemma \ref{h2esti} and \eqref{enhancebound}, we have
\begin{equation}\label{higherw1}
\begin{split}
&\Big\|\p_{y}^{2}|D_{x}|^{ks}W_{1}\Big\|_{L^{2}(\mathbb{H}^-)}^{2}+\Big\||D_{x}|^{\frac{2}{3}}|D_{x}|^{ks}W_{1}\Big\|_{L^{2}(\mathbb{H}^-)}^{2}\\
&\lesssim \left\||D_{x}|^{ks}\left(\eta^{2}f-4\eta(y)\eta'(y)\p_{y}w-\left(2\eta\eta''+2(\eta')^{2}\right)w-2\eta(y)\eta'(y)(a-1)\p_{y}w\right)\right\|_{L^{2}(\mathbb{H}^-)}^{2}\\
&\lesssim \left\|\eta^{2}f(t,x,y)-4\eta(y)\eta'(y)\p_{y}w-\left(2\eta\eta''+2(\eta')^{2}\right)w-2\eta(y)\eta'(y)(a-1)\p_{y}w\right\|_{L^{2}(\mathbb{H}^-)}^{2}\\
&\quad+\sum_{j\geq 10}2^{2jks}\left\|\Delta_{j}\left(\eta^{2}f-4\eta(y)\eta'(y)\p_{y}w-\left(2\eta\eta''+2(\eta')^{2}\right)w-2\eta(y)\eta'(y)(a-1)\p_{y}w\right)\right\|^2_{L^{2}(\mathbb{H}^-)}\\
&\lesssim C_{y_{0},y_1}\big(\|w\|_{\widetilde{H}^{ks}(\mathcal{S}_{y_1})},\|\p_{y}w\|_{\widetilde{H}^{ks}(\mathcal{S}_{y_1})},\|f\|_{\widetilde{H}^{ks}(\mathcal{S}_{y_1})}\big).
\end{split}
\end{equation}
In the last inequality, we used Bony's paraproduct decomposition \eqref{parapro}.

\begin{comment}
By taking Fourier transform with $(t,x)$ in \eqref{W1eq}, we can also obtain 
\begin{equation}
\left\||D_{t}|^{\frac{2}{3}}|D_{t,x}|^{ks}W_{1}\right\|_{L^{2}(\R^{2}_{t,x}\times[0,10])}^{2}\lesssim C_{y_{0}}\Big(\|w\|_{\widetilde{H}^{ks}(\mathcal{S}_{1})},\|\p_{y}w\|_{\widetilde{H}^{ks}(\mathcal{S}_{1})},\|f\|_{\widetilde{H}^{ks}(\mathcal{S}_{1})}\Big).
\end{equation}
\end{comment}

\textit{$\bullet$ Estimates of $W_{2}$.} Write
\begin{equation*}
F(t,x,y)=\left(a(t,x,y)-1\right)\p_{y}w\cdot \eta^{2}.
\end{equation*}
By \eqref{refinebesovesti}, we have for $j\geq 10$,
\begin{equation}\label{higherw2}
    2^{(2ks+\frac{2}{3})j}\|\Delta_{j}W_{2}\|_{L^{2}(\R^{-}\times\R\times[0,10])}^{2}\lesssim\sum_{|l|\leq2}2^{2jks}\|\Delta_{j+l}F\|_{L^{2}(\mathbb{H}^-)}^{2}.
\end{equation}
Applying Bony's paraproduct decomposition \eqref{parapro}, we have
\begin{equation}
\sum_{j\geq 8}2^{2jks}\|\Delta_{j}F\|_{L^{2}(\mathbb{H}^-)}^{2}\lesssim\|\p_{y}w\|_{L^{\infty}\cap\widetilde{H}^{ks}(\mathcal{S}_{y_1})}^{2}\times \Big(\|a\|_{C^{\alpha}(\mathcal{S}_{1})}+\|(Id-\Delta_{-1})a\|_{\widetilde{H}^{ks}(\mathcal{S}_{1})}\Big)^{2}.
\end{equation}
Hence, the conclusion of Lemma \ref{nonlocaltangential} follows by summing \eqref{higherw2} over $j$ and combining with \eqref{higherw1}.
\end{proof}
The analogous statements of Propositions \ref{zerohypo} and \ref{highhypo} for interior estimates are also valid. We present them here for completeness; since the proofs are simpler, we omit them. Here we adopt the notations
\begin{equation}
    \begin{split}
        &\widetilde{S}_{y_{0}}:=(-\infty,0]\times\R\times(-y_{0},y_{0}),\\
        &\|v\|_{\widetilde{H}^{s}_{\rm int}(\widetilde{S}_{y_{0}})}^{2}:=\int_{-y_{0}}^{y_{0}}\|\langle D_{x}\rangle^{s}v(\cdot,\cdot,y)\|_{L^{2}(\R_{t}^{-}\times\R_{x})}^{2}dy.
    \end{split}
\end{equation}
\begin{proposition}\label{interiorhypo1}
    Assume that $c\leq a(t,x,y)\leq c^{-1}$ for some $c\in(0,\frac{1}{2}]$, $f(t,x,y)\in L^{2}(\widetilde{\mathcal{S}}_{1})$, and 
\begin{equation*}
w(t,x,y)\in L^{\infty}(\widetilde{\mathcal{S}}_{1})\cap W^{1,1}(\widetilde{\mathcal{S}}_{1})\cap W^{2,1}_{y}(\widetilde{\mathcal{S}}_{1})
\end{equation*}
is a weak solution to the first equation in $\eqref{maineqsection6}$ in $\widetilde{S}_{1}$. Then for any $y_{0}<1$, we have
\begin{equation}
\|w\|_{\widetilde{H}^{\frac{1}{3}}_{\rm int}(\widetilde{\mathcal{S}}_{y_{0}})}+\|\p_{y}w\|_{L^{2}(\widetilde{\mathcal{S}}_{y_{0}})}\lesssim_{c,y_{0}} \|w\|_{L^{2}(\widetilde{\mathcal{S}}_{1})}+\|f\|_{L^{2}(\widetilde{\mathcal{S}}_{1})}.
\end{equation}
\end{proposition}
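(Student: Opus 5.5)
The argument mirrors the proof of Proposition \ref{zerohypo}, with the half-space fundamental solution $\Gamma$ replaced by the explicit whole-space kernel $\widetilde{\Gamma}$ from \eqref{wholespace}. The estimate splits into two parts: an energy bound for $\p_{y}w$ and a tangential $\frac13$-gain in $x$. No boundary condition enters, so everything is simpler.

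\emph{Energy bound.} Take a cutoff $\eta(y)\in C_c^\infty((-1,1))$ with $\eta\equiv1$ on $[-y_0,y_0]$. Test the equation against $w\eta^2$ and integrate over $\widetilde{\mathcal S}_1$. The transport term $y\p_x(w^2)\eta^2$ integrates to zero in $x$, and the time boundary term at $t=0$ has a good sign. Ellipticity together with Cauchy--Schwarz then gives
\[
\|\p_yw\|_{L^2(\widetilde{\mathcal S}_{y_1})}\lesssim_{c,y_1}\|w\|_{L^2(\widetilde{\mathcal S}_1)}+\|f\|_{L^2(\widetilde{\mathcal S}_1)}
\]
for every $y_1<1$. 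The regularity $w\in L^\infty\cap W^{1,1}\cap W^{2,1}_y$ justifies the integrations by parts after truncation in $(t,x)$, exactly as in the half-space case.

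\emph{Tangential gain.} Set $W=w\eta^2$ with $\eta$ supported in $(-y_1,y_1)$. Then $W$ satisfies the analogue of \eqref{Weq} on $(-\infty,0]\times\R\times\R$, and we split $W=W_1+W_2$ as in \eqref{W1eq}--\eqref{W2eq}.

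For $W_1$, whose right-hand side is in $L^2$, take the Fourier transform in $x$ and apply enhanced dissipation (Proposition \ref{enhanced}) to $\p_t+2\pi \mathrm{i}y\xi-\p_y^2$ on the whole line. Duhamel's formula and Young's inequality then give $\|\widehat{W_1}\|_{L^2_{t,y}}\lesssim|\xi|^{-2/3}\|\widehat{\text{rhs}}\|_{L^2_{t,y}}$, exactly as in \eqref{enhancebound}. Hence $\||D_x|^{2/3}W_1\|_{L^2}$ is controlled, which more than suffices.

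For $W_2$ we write $W_2=-\int\p_y\widetilde\Gamma(\cdot;\zeta)F(\zeta)\,d\zeta$ with $F=(a-1)\eta^2\p_yw\in L^2$. The goal is the difference-quotient bound \eqref{differentialquo}:
\[
\|W_2(\cdot,\cdot+\delta,\cdot)-W_2\|_{L^2}\lesssim|\delta|^{1/3}\|F\|_{L^2}.
\]
Since $\widetilde\Gamma$ is a Galilean convolution kernel, $|\p_y\widetilde\Gamma|\lesssim\|\zeta^{-1}\circ z\|^{-5}$ and $|\p_x\p_y\widetilde\Gamma|\lesssim\|\zeta^{-1}\circ z\|^{-8}$ in the group distance. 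We split the $\zeta$-integral into two regions:
\begin{itemize}
\item the near region $\|\zeta^{-1}\circ z\|\lesssim\delta^{1/3}$, where each kernel is bounded by $\|\cdot\|^{-5}$, an $L^1$ function whose integral over that ball is $O(\delta^{1/3})$;
\item the far region, where the mean-value theorem gives a bound $\delta\|\cdot\|^{-8}$, whose integral outside the ball is again $O(\delta^{1/3})$.
\end{itemize}
Young's inequality on the Kolmogorov group (homogeneous dimension $6$) closes both pieces. This is precisely the $W_{21},W_{22}$ part of the earlier proof; the troublesome boundary-corrector cases $W_{23},W_{24}$, which came from $V$, do not arise. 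The Littlewood--Paley argument in \eqref{besovesti}--\eqref{zerow2} then upgrades the difference-quotient bound to
\[
\sum_j2^{2j/3}\|\Delta_jW_2\|_{L^2}^2\lesssim\|F\|_{L^2}^2\lesssim\|\p_yw\|^2_{L^2(\widetilde{\mathcal S}_{y_1})}.
\]

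Combining the two parts with $y_0<y_1<1$ and the energy bound gives the claim. The only step needing care is the $W_2$ kernel estimate. The half-space analogue required the delicate treatment of the corrector $V$; here the whole-space kernel is explicit and the estimate is routine. One should only check that the exterior truncation in time (the ancient-solution representation on $\R^-_t$) is legitimate. This follows from the $L^2$ integrability of $F$ and uniqueness for the constant-coefficient Kolmogorov equation in $L^2$.
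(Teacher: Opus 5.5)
Your proposal is correct and follows the route the paper intends. The paper omits the interior proof, saying only that it mirrors Proposition~\ref{zerohypo}: the energy bound, the $W_1/W_2$ splitting, and the difference-quotient estimate \eqref{differentialquo} upgraded by Littlewood--Paley. Your observation that, for the explicit Galilean kernel $\widetilde\Gamma$, only the near/far split corresponding to $W_{21},W_{22}$ is needed is exactly why the interior case is simpler.

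The one ingredient not literally covered by the paper's stated results is enhanced dissipation for $\p_t+2\pi\mathrm{i}y\xi-\p_y^2$ on the whole line; Proposition~\ref{enhanced} is stated on the Neumann half-line. This causes no gap, since the same resolvent argument works, or directly, the Fourier-in-$y$ solution satisfies $\|v(t)\|_{L^2}\le e^{-c\xi^2t^3}\|v_0\|_{L^2}$.
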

\begin{proposition}\label{interiorhypo}
Fix $k\in\Z\cap[1,\infty)$, $\alpha\in(0,1)$, $c\in(0,1/2]$ and a positive $s\leq\min\{1/3,\alpha\}$. Assume that $c\leq a(t,x,y)\leq c^{-1}$, $f(t,x,y)\in \widetilde{H}^{ks}_{\rm int}(\widetilde{\mathcal{S}}_{1})$ and $[a]_{C^{\alpha}(\widetilde{\mathcal{S}}_{1})}\lesssim 1$. Let 
\begin{equation*}
w(t,x,y)\in L^{\infty}(\widetilde{\mathcal{S}}_{1})\cap W^{1,1}(\widetilde{\mathcal{S}}_{1})\cap W^{2,1}_{y}(\widetilde{\mathcal{S}}_{1})
\end{equation*}
be a weak solution to $\eqref{maineqsection6}_{1}$ in $\widetilde{S}_{1}$. Assume in addition that 
\begin{equation}
M_k:=\|\p_{y}a\|_{L^{\infty}(\widetilde{\mathcal{S}}_{1})}+\|(Id-\Delta_{-1})a\|_{\widetilde{H}^{ks}_{\rm int}(\widetilde{\mathcal{S}}_{1})}+[a]_{C^{\alpha}(\widetilde{\mathcal{S}}_{1})}<\infty.
\end{equation}
Then for $0<y_{0}<1$, 
we have
\begin{equation}\label{hohe1.01}
\begin{split}
&\|w\|_{\widetilde{H}^{(k+1)s}_{\rm int}(\widetilde{\mathcal{S}}_{y_{0}})}+\|\p_{y}w\|_{\widetilde{H}^{ks}_{\rm int}(\widetilde{\mathcal{S}}_{y_{0}})}\lesssim_{c,k,s,y_{0},\alpha, M_k} \|w\|_{L^2(\widetilde{\mathcal{S}}_1)}+\|\partial_yw\|_{L^\infty(\widetilde{\mathcal{S}}_1)}+\|f\|_{\widetilde{H}^{ks}_{\rm int}(\widetilde{\mathcal{S}}_{1})}.
\end{split}
\end{equation}
\end{proposition}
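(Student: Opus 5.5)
The plan is to mirror the proof of Proposition \ref{highhypo}, reducing \eqref{hohe1.01} to interior analogues of Lemmas \ref{nonlocalenergyesti} and \ref{nonlocaltangential} and then inducting on $k$. The base case $k=0$ is Proposition \ref{interiorhypo1}. For the inductive step we assume the bound at level $k-1$ on a slightly larger strip $\widetilde{\mathcal S}_{y_1}$, $y_0<y_1<1$. We first upgrade $\partial_y w$ to $\widetilde H^{ks}_{\rm int}$, and then upgrade $w$ to $\widetilde H^{ks+1/3}_{\rm int}\supset \widetilde H^{(k+1)s}_{\rm int}$, which is legitimate since $s\le 1/3$. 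We normalize $\|w\|_{L^2}+\|\partial_yw\|_{L^\infty}+\|f\|_{\widetilde H^{ks}_{\rm int}}\le1$ and choose a finite chain of radii $y_0<\dots<y_1$, one step per induction level.

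\emph{Energy step.} Apply $\Delta_j$ (in $x$) to the equation, test against $2^{2jks}\Delta_j w\,\eta^2(y)$ with $\eta\in C_c^\infty(-y_1,y_1)$ and $\eta\equiv1$ on $[-y_0,y_0]$, and take $j\ge K$. The transport term $y\partial_x$ commutes with $\Delta_j$ and integrates to zero, and the $\partial_t$ term gives a favorable sign at $t=0$. Since $\eta$ vanishes near $y=\pm y_1$, no boundary terms arise in $y$, which is simpler than the half-space case. The commutator $[\Delta_j,a]\partial_yw$ is then split by the paraproduct identity \eqref{commutator}, and the six pieces are estimated exactly as $I_{j,1},\dots,I_{j,6}$ and $J_{j,2,\cdot}$ were, using $[a]_{C^\alpha}$, $\|\partial_y a\|_{L^\infty}$ and $\|(Id-\Delta_{-1})a\|_{\widetilde H^{ks}_{\rm int}}$, together with the induction hypothesis $\|\partial_yw\|_{\widetilde H^{(k-1)s}_{\rm int}}$ and the gain $2^{-2j(\alpha-s)}$, which is harmless because $s\le\alpha$. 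The low frequencies $j<K$ are controlled by the $L^2$-type bounds already available. Summing over $j$ yields $\|\partial_y w\|_{\widetilde H^{ks}_{\rm int}(\widetilde{\mathcal S}_{y_0'})}\lesssim C(M_k)$.

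\emph{Tangential gain.} Set $W=w\eta^2$ and split $W=W_1+W_2$ as in \eqref{W1eq}--\eqref{W2eq}, now posed on the whole space $(-\infty,0]\times\R\times\R$ with fundamental solution $\widetilde\Gamma$ from \eqref{wholespace}. For $W_1$, the whole-space analogue of Lemma \ref{h2esti}, namely Fourier transform in $x$ with enhanced dissipation on the full line, gives a $|D_x|^{2/3}$ gain applied to $|D_x|^{ks}$ of the right-hand side. For $W_2=-\int\partial_y\widetilde\Gamma\,F$ with $F=(a-1)\eta^2\partial_yw$, we must prove the difference-quotient bound \eqref{differentialquo}. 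This is the main point, but it is easier here than in the half space: only the regions $D_{21}$ (far) and $D_{22}$ (near) occur, since $\widetilde\Gamma$ has the single singular point in the Galilean-translated variable and there is no $V$ correction. Its derivatives obey $|\partial_x^m\partial_y\widetilde\Gamma|\lesssim\|\zeta^{-1}\circ z\|^{-5-3m}$, so Young's inequality with homogeneous dimension $6$ gives $\delta^{1/3}\|F\|_{L^2}$. Localizing in frequency as in \eqref{refinebesovesti} and estimating $\sum 2^{2jks}\|\Delta_jF\|^2$ by the paraproduct using $\|\partial_yw\|_{L^\infty\cap\widetilde H^{ks}_{\rm int}}$ and $M_k$ completes this step.

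The expected obstacle is bookkeeping rather than a new idea. Specifically, the term $\Delta_j(S_{j+l-1}\partial_yw\,\Delta_{j+l}a)$ forces the appearance of $\|\partial_y w\|_{L^\infty}$, which is exactly why that norm sits on the right of \eqref{hohe1.01}. One must also check that the cutoff errors $\eta'\partial_yw$, which are supported near $|y|\sim y_1$, stay controlled at each induction level through the shrinking radii.
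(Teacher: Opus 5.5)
Your proposal is correct and is essentially the paper's argument. The paper omits this proof, saying only that it mirrors Propositions \ref{zerohypo} and \ref{highhypo}: induction on $k$ through the interior analogues of Lemmas \ref{nonlocalenergyesti} and \ref{nonlocaltangential}, with the whole-space kernel $\widetilde\Gamma$ making the difference-quotient bound \eqref{differentialquo} simpler, exactly as you describe.

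One small slip: the inclusion goes the other way, $\widetilde H^{ks+1/3}_{\rm int}\subset\widetilde H^{(k+1)s}_{\rm int}$. What you actually use is the norm bound $\|w\|_{\widetilde H^{(k+1)s}_{\rm int}}\le\|w\|_{\widetilde H^{ks+1/3}_{\rm int}}$ for $s\le 1/3$, so this does not affect the argument.
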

\subsection{Smoothness of the solution for \eqref{diverformsectionSIX} with smooth coefficients and forcing term}
Combining the regularity estimates in Sections \ref{continuity}-\ref{higher}, we obtain the following up--to--boundary smoothness result for weak solutions to \eqref{diverformsectionSIX} provided that $a$ and $f$ are smooth. The result can be viewed as a generalization of the classical elliptic/parabolic regularity theory and the celebrated De Giorgi-Nash-Moser estimates to the {\it  ultraparabolic} setting.
%Corollary \ref{hypohilbert19} can be viewed as De Giorgi-Nash-Moser  in the hypoelliptic setting,  which is of independent interest. 
\begin{corollary}\label{hypohilbert19}
Assume that $w\in L^{\infty}\cap W^{1,1}\cap W^{2,1}_{y}$ is a weak solution to \eqref{diverformsectionSIX} in $\mathcal{B}_{1}^+$, and that $a$ and $f$ are smooth in $\mathcal{B}_{1}^{+}$, and $a(t,x,y)$ is uniformly elliptic. Then
\begin{equation*}
    w\in C^{\infty}([-r_{0}^2,0]\times[-r_{0}^3,r_{0}^3]\times[0,r_{0}]),\quad \forall r_{0}<1.
\end{equation*}

\end{corollary}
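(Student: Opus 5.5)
The plan is a bootstrap that chains the linear estimates of Sections \ref{continuity}--\ref{higher}. Fix $r_0<r_1<1$ and nested radii between them, and localize with cutoffs $\chi$ satisfying $\partial_y\chi|_{y=0}=0$, so that every step is carried out on a slightly smaller half-ball. The first step is H\"older continuity: apply Proposition \ref{holder} near the boundary and Proposition \ref{interiorholder} in the interior to get $w\in C^{\alpha_0}$ locally up to $y=0$. This uses only ellipticity of $a$, once the source $f$ is absorbed. For the absorption I would subtract a particular solution: take $v$ to be the volume potential of $\chi f$ with Neumann data. By Lemma \ref{potentialembed}, $v$ is H\"older continuous. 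Alternatively, I would check that the Moser and expansion-of-positivity arguments tolerate a bounded right-hand side, which only adds a term $\lesssim r^2\|f\|_\infty$ to the oscillation decay.

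The second step upgrades to Sobolev regularity in $y$. With $a$ H\"older, Proposition \ref{sobolevesti} (taking $g=f$ and its own $f\equiv0$) gives $\partial_yw\in L^p$ for every $p<\infty$, and also $w\in C^{\beta}_\ast$ with $\beta=1-6/p$. To apply Proposition \ref{sob1} I need $a\in C^\beta_\ast$ for all $\beta<1$ and $\partial_ya\in L^q$; both hold because $a$ is smooth. This yields $\partial_y^2w\in L^p$ for all $p$. The anisotropic interpolation (Lemma \ref{aniinterpolation}), or simply the embedding $W^{2,p}_y\cap L^\infty\subset W^{1,\infty}_y$ in $y$ combined with H\"older continuity in $(t,x)$, then gives $\partial_yw\in L^\infty$ locally. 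This is exactly the quantity required on the right side of \eqref{hohe1}.

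The third step gains tangential regularity. After localizing in $(t,x)$ with a cutoff, $w\chi$ solves \eqref{maineqsection6} on $\mathcal{S}_1$ with source $f\chi+$ commutator terms $w(\partial_t+y\partial_x)\chi$. Here $a$ is smooth, so $M_k<\infty$ for every $k$, and $s=\min\{1/3,\alpha\}$. Proposition \ref{highhypo}, applied inductively in $k$, gives $w,\partial_yw\in\widetilde H^{ks}$ for all $k$. The commutator terms need to be controlled at level $ks$ in a slightly larger region, which the induction on shrinking radii provides. This yields all $x$-derivatives: $\partial_x^m w$, $\partial_x^m\partial_yw\in L^2$.

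The fourth step recovers the remaining derivatives. I would differentiate the equation in $x$: $\partial_x^m w$ solves the same equation with a smooth-coefficient source built from lower-order terms, so the previous steps apply to it as well. Then $\partial_t w=f-y\partial_xw+\partial_y(a\partial_yw)$ and the $W^{2,p}_y$ bounds supply $t$-derivatives and further $y$-derivatives. Iterating, and commuting $\partial_t$ through the equation as well, puts all mixed derivatives in $L^p$, and Sobolev embedding gives $C^\infty$.

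I expect the main obstacle to be this last iteration in $t$ and $y$. Differentiating in $y$ breaks the Neumann condition, and $\partial_t w$ may not a priori satisfy the weak-solution hypotheses near the boundary. To get around this, I would work only with $\partial_t,\partial_x$ derivatives, which commute with $y\partial_x$ and preserve the Neumann condition. I would then obtain each $\partial_y^{k}$ from the equation, solving for $\partial_y^2$ via $a\partial_y^2 w=\partial_tw+y\partial_xw-\partial_ya\,\partial_yw-f$. For $\partial_t$ specifically I would use the $t$-analogue of the difference-quotient bound \eqref{differentialquo}, or derive it from the equation as above.
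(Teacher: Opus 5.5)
Your first three steps match the paper's sketch: H\"older continuity, Propositions \ref{sobolevesti} and \ref{sob1} for $\partial_y^2w\in L^p$, Lemma \ref{aniinterpolation} for $\partial_yw\in L^\infty$, and Proposition \ref{highhypo} applied inductively, with localizations, for all $x$-derivatives.

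A small point on step 1. Subtracting the $\mathcal{L}_0$-potential $v$ of $\chi f$ does not reduce you to Proposition \ref{holder}, because $w-v$ still carries the source $(1-\chi)f+\partial_y\big((a-1)\partial_yv\big)$. For smooth $a$, however, step 1 is unnecessary: Proposition \ref{sobolevesti} with $p>6$ already gives the $C^\beta_\ast$ H\"older control that Lemma \ref{aniinterpolation} needs.

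The genuine gap is in your final step. You control only $\partial_t$- and $\partial_x$-derivatives and then recover each $\partial_y^k$ from $a\partial_y^2w=\partial_tw+y\partial_xw-\partial_ya\,\partial_yw-f$. Already $\partial_y^3w$ needs $\partial_y\partial_tw$, and nothing in your argument produces $t$-derivatives.
\begin{itemize}
\item The estimates of Sections \ref{sobolev}--\ref{higher} gain only in $y$ and, through Littlewood--Paley in $x$, in $x$.
\item Applying them to $\partial_tw$ presupposes $\partial_tw\in L^\infty\cap W^{1,1}\cap W^{2,1}_y$, which is exactly what is to be shown.
\item ``Deriving it from the equation'' is circular in your direction, since the equation only trades $\partial_t$ for $\partial_y^2$ and back.
\item The $t$-analogue of \eqref{differentialquo} is not established in the paper. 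Even granted, it yields only a fractional gain along $\partial_t+y\partial_x$, so you would need a separate commutator bootstrap parallel to Proposition \ref{highhypo} in that direction.
\end{itemize}

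The missing idea is to reverse the order, as the paper does (see \eqref{infinityx2}--\eqref{infinityx4} in Section \ref{potmainone}). Once all $\partial_x^m w$ and $\partial_x^m\partial_yw$ are controlled, the term $y\partial_xw$ is just a regular forcing. For a.e.\ $x$, $w$ then solves the uniformly parabolic one-dimensional problem $\partial_tw-\partial_y(a\partial_yw)=f-y\partial_xw$ in $(t,y)$, with Neumann data at $y=0$. The $x$-derivatives satisfy the same kind of equation with sources involving lower $x$-derivatives. Classical parabolic $W^{2,p}$ regularity, applied inductively in this setting, gives all $\partial_x^\beta\partial_y^\gamma w\in L^p$. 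Differentiating in $y$ is harmless there; for instance $\partial_yw$ simply satisfies a homogeneous Dirichlet condition, so your worry about breaking the Neumann condition does not arise. Only afterwards are the $t$-derivatives read off from the equation, which in that direction expresses $\partial_t^j w$ through $(x,y)$-derivatives and is not circular.

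Alternatively, ordinary difference quotients in $t$ preserve the Neumann condition and the weak-solution class. Combined with the uniform bounds of Propositions \ref{sobolevesti}, \ref{sob1} and \ref{highhypo}, they would also supply the missing $t$-regularity.
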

\begin{proof}[Proof (Sketch)]
By the H\"older estimates in Section \ref{continuity} and Propositions \ref{sobolevesti}-\ref{sob1}, we have
\begin{equation*}
    w\in C^{\alpha}_{\rm loc}(\mathcal{B}_{1}^{+}),\ \text{for\ some}\ \alpha>0,
\end{equation*}
and
\begin{equation*}
    \p_{y}^{2}w\in L^{p}_{\rm loc}(\mathcal{B}_{1}^{+}),\ \text{for\ all}\ 2\leq p<\infty.
\end{equation*}
Applying the anisotropic interpolation (see Lemma \ref{aniinterpolation}), we get that
\begin{equation*}
    \p_{y}w\in L^{\infty}_{\rm loc}(\mathcal{B}_{1}^{+}).
\end{equation*}
To bootstrap to higher regularity, let us recall the enhanced dissipation estimate (see Lemma \ref{h2esti}) and \eqref{enhancebound} for a suitable smooth cutoff function $\varphi\in C_c^\infty(\mathcal{B}_1^+)$,
\begin{equation}
    |D_{x}|^{\frac{2}{3}}(\varphi w)\in L^{2}_{\rm loc}(\mathbb{H}^{-}).
\end{equation}
Therefore, applying Proposition \ref{highhypo} inductively (again with suitable localizations), we obtain that
\begin{equation*}
    \p_{x}^{k}w\in L^{2}_{\rm loc}(\mathcal{B}_{1}^{+}),\quad \forall k\geq 1.
\end{equation*}
Finally, the regularity of $w$ in $t$ and $y$ follows from standard parabolic theory. This completes the outline of the proof.
\end{proof}

%%%%%%%%%%%%%Proof of Theorem 1%%%%%%%%%%%%%%

\subsection{Proof of Theorem \ref{mainone} and \ref{th:smo1}}\label{potmainone}
In this section, we present the proof of Theorems \ref{mainone} and \ref{th:smo1}. First we present the proof of Theorem \ref{th:smo1}: up-to-boundary smoothness in the Crocco coordinates. 

By the assumption of Theorem \ref{th:smo1}, $w\in W^{1,1}_{loc}\cap W^{2,1}_{\eta,loc}((0,T]\times (0,X)\times[0,1))$ satisfies
\begin{equation}
\begin{cases}
\p_{\tau}w+\eta\p_{\xi}w-w^{2}\p_{\eta}^{2}w=0,\ (\tau,\xi,\eta)\in(0,T)\times(0,X)\times (0,1),\\
\p_{\eta}w|_{\eta=0} = 0,
\end{cases}
\end{equation}
and $0<c_{\Omega}\leq w \leq C_{\Omega}$ on any $\Omega \Subset (0,T]\times(0,X)\times [0,1)$.

We need to show that for any $M\in\mathbb{N}$,
\begin{equation}\label{prioricrocco}
    \|w\|_{C^{M}(D^{\ast})}\lesssim 1,\ \text{for}\ D^{\ast}\Subset(0,T]\times(0,X)\times[0,1). 
\end{equation}
In the above, the implied constant depends on $M$, $D^\ast$ and its distance to the inflow and initial boundary, as well as the quantitative bounds in the regularity assumptions of $w$. The proof is divided into two steps:

\textit{Step 1: Boundary Regularity.} To simplify notations, for $t_{0}\in(0,T]$ and $x_{0}\in (0,X)$, we set
\begin{equation}
%\begin{cases}
    z_{0} = (t_{0},x_{0},0),\quad
    d_{0} = \frac{1}{2}\min\Big\{\sqrt{t_{0}},\sqrt[3]{x_{0}},\sqrt[3]{X-x_{0}},1\Big\}.\\
%\end{cases}
\end{equation}
In this step we show that for any $t_{0}\in(0,T]$ and $x_{0}\in (0,X)$, and any $M\in\mathbb{N}$, we have
\begin{equation}\label{pfmaingoal1}
\|w\|_{C^{M}(\mathcal{B}^{+}_{\frac{1}{10}d_{0}}(z_{0}))}\lesssim_{M,d_{0},m_{\mathcal{B}_{d_{0}}^{+}(z_{0})},M_{\mathcal{B}_{d_{0}}^{+}(z_{0})}} 1.
\end{equation}
For ease of notation, below we omit the dependence on $M,d_{0},m_{\mathcal{B}_{d_{0}}^{+}(z_{0})},M_{\mathcal{B}_{d_{0}}^{+}(z_{0})}$ which are fixed.

By direct calculation, $v:=w^{-1}$ satisfies the equation 
\begin{equation}\label{vdivereq}
    \begin{cases}
        \p_{\tau}v+\eta\p_{\xi}v-\p_{\eta}(w^{2}\p_{\eta}v)=0,\ (\tau,\xi,\eta)\in (0,T)\times(0,X)\times(0,1),\\
        \p_{\eta}v|_{\eta=0}=0.
    \end{cases}
\end{equation}
Therefore, by Proposition \ref{holder}, there exists $\alpha>0$ such that
\begin{equation}\label{pfmainholder}
\|v\|_{C^{\alpha}\big(\mathcal{B}_{\frac{1}{2}d_{0}}^{+}(z_{0})\big)}+\|w\|_{C^{\alpha}\big(\mathcal{B}_{\frac{1}{2}d_{0}}^{+}(z_{0})\big)}\lesssim 1.
\end{equation}

It follows from Proposition \ref{sobolevesti}, Proposition \ref{sob1}, and a standard (Gagliardo-Nirenberg) interpolation inequality that
\begin{equation}\label{pfmainw2p}
    \|\p_{\eta}^{2}v\|_{L^{p}\big(\mathcal{B}_{\frac{1}{4}d_{0}}^{+}(z_{0})\big)}+\|\p_{\eta}^{2}w\|_{L^{p}\big(\mathcal{B}_{\frac{1}{4}d_{0}}^{+}(z_{0})\big)}\lesssim 1,\quad \forall 1<p<\infty.
\end{equation}

In view of \eqref{pfmainholder}, \eqref{pfmainw2p}, and the anisotropic embedding Lemma \ref{aniinterpolation} (using a standard density argument), we obtain that
\begin{equation}\label{pfmaininfty}
    \|\p_{\eta}v\|_{L^{\infty}\big(\mathcal{B}_{\frac{1}{4}d_{0}}^{+}(z_{0})\big)}+\|\p_\eta w\|_{L^{\infty}\big(\mathcal{B}_{\frac{1}{4}d_{0}}^{+}(z_{0})\big)}\lesssim 1.
\end{equation}

Thanks to \eqref{vdivereq} and \eqref{pfmaininfty}, we can apply Proposition \ref{highhypo} inductively (with a suitable series of localizations). As a consequence, for any $K\in\Z\cap[1,\infty)$, we obtain that
\begin{equation}
\sum_{\ell\leq K}\|\p_{\xi}^{\ell}v\|_{L^{2}\big(\mathcal{B}_{\frac{1}{5}d_{0}}^{+}(z_{0})\big)}\lesssim_{K} 1.
\end{equation}
Using $w=1/v$ and a standard (Gagliardo-Nirenberg) interpolation inequality, we also get
\begin{equation}\label{infinityx1}
\sum_{\ell\leq K}\|\p_{\xi}^{\ell}w\|_{L^{2}\big(\mathcal{B}_{\frac{1}{5}d_{0}}^{+}(z_{0})\big)}\lesssim_{K} 1,\quad \forall\, K\geq 1.
\end{equation}

It remains to demonstrate that $w$ is also regular in the $(t,y)$ variable. To this end, we bootstrap the regularity using standard tools from parabolic equations. By interpolating \eqref{infinityx1} with \eqref{pfmainholder}--\eqref{pfmainw2p}, for any $K\in\Z\cap[1,\infty)$, $1<p<\infty$ and $0<r<\frac{1}{5}d_{0}$, we get
\begin{equation}\label{infinityx2}
\sum_{\ell\leq K}\Big(\|\p_{\xi}^{\ell}w\|_{L^{p}\big(\mathcal{B}_{r}^{+}(z_{0})\big)}+\|\p_{\xi}^{\ell}\p_{\eta}w\|_{L^{p}\big(\mathcal{B}_{r}^{+}(z_{0})\big)}\Big)\lesssim_{p,r,K} 1.
\end{equation}
As a consequence, differentiating \eqref{pfmainweq} with respect to $\eta$ and applying standard parabolic $W^{2,p}$ estimates inductively, we conclude that for any $K\in\Z\cap[1,\infty)$, $1<p<\infty$ and $0<r<\frac{1}{5}d_{0}$, 
\begin{equation}\label{infinityx3}
\sum_{\ell\leq K}\|\p_{\eta}^{\ell}w\|_{L^{p}\left(\mathcal{B}_{r}^{+}(z_{0})\right)}\lesssim_{p,r,K} 1.
\end{equation}
By interpolation, it follows that for any $K\in\Z\cap[1,\infty)$, $1<p<\infty$ and $0<r<\frac{1}{5}d_{0}$,
\begin{equation}\label{infinityx4}
\sum_{\beta+\gamma\leq K}\|\p_{\xi}^{\beta}\p_{\eta}^{\gamma}w\|_{L^{p}\left(\mathcal{B}_{r}^{+}(z_{0})\right)}\lesssim_{p,r,K} 1.
\end{equation}
Finally, the regularity of $w$ with respect to the temporal variable $t$ follows from \eqref{pfmainweq} and interpolation with \eqref{infinityx4}. The estimate of up--to--boundary smoothing \eqref{pfmaingoal1} is then proved.

\textit{Step 2: Interior Regularity.} Next, we consider the interior smoothing estimates of \eqref{pfmainweq}. Similar to the first step, for $t_{1}\in(0,T]$, $x_{1}\in(0,X)$ and $\eta_{1}\in(0,1)$, let
\begin{equation}
   % \begin{cases}
        z_{1}=(t_{1},x_{1},\eta_{1}),\quad
        d_{1}=\frac{1}{2}\min\{\sqrt{t_{1}},\sqrt[3]{x_{1}},\sqrt[3]{X-x_{1}},\eta_{1},1-\eta_{1}\}.
 %   \end{cases}
\end{equation}
It suffices to show that for any $t_{1}\in(0,T]$, $x_{1}\in (0,X)$, $\eta_{1}\in(0,1)$, and any $M\in\mathbb{N}$, we have
\begin{equation}\label{pfmaingoal2}
\|w\|_{C^{M}(\mathcal{B}_{\frac{1}{10}(d_{1})^{2}}(z_{1}))}\lesssim_{M,d_{1},m_{\mathcal{B}_{d_{1}}(z_{1})},M_{\mathcal{B}_{d_{1}}(z_{1})}} 1.
\end{equation}
Indeed, \eqref{pfmaingoal2} follows from (i) rescaling to shift the center of the cylinder to the origin $(0,0,0)$; (ii) applying the interior estimates from Proposition \ref{interiorholder}, Proposition \ref{interiorsobolevesti}, Proposition \ref{interiorsob1} and Proposition \ref{interiorhypo}. 

We complete the proof of smoothing estimates \eqref{prioricrocco} in the Crocco coordinates by combining \eqref{pfmaingoal1} and \eqref{pfmaingoal2}. This completes the proof of Theorem \ref{th:smo1}.

Now we are ready to complete the proof of Theorem \ref{mainone}.

\begin{proof}[Proof of Theorem \ref{mainone}]
Let $u(t,x,y)$ saitsifies the assumptions in Theorem \ref{mainone}. Fix an arbitrary $D\Subset (0,T]\times(0,X)\times[0,\infty)$, we need to show that $(u,v)$ satisfy \eqref{smooeffesti} for any $m\in\mathbb{N}$. Under the hypotheses in Theorem \ref{mainone}, the the Crocco transform \eqref{croccotrans} is well-defined. Recall that in the Crocco coordinates, $w:=\p_{y}u$ satisfies
\begin{equation}\label{pfmainweq}
    \begin{cases}
        \p_{\tau}w+\eta\p_{\xi}w-w^{2}\p_{\eta}^{2}w=0,\ (\tau,\xi,\eta)\in (0,T)\times(0,X)\times(0,1),\\
        \p_{\eta}w|_{\eta=0}=0.
    \end{cases}
\end{equation}
%Here $\eta_{0}>0$ is chosen such that the image of $D$ in Crocco coordinate is properly contained in $(0,T]\times(0,X)\times[0,\eta_{0})$.
%In above, $U(t,x)$ denotes the outer flow
%$$U(t,x):=\lim_{y\uparrow \infty}u(t,x,y),\ (t,x)\in(0,T)\times(0,X).$$ %Without loss of generality, we assume $U(t,x)\equiv 1$ for clarity.

Notice that by the assumptions in Theorem \ref{mainone}, for any compact subset $D^{\ast}$ of $(0,T]\times(0,X)\times[0,1)$, we have
\begin{equation}\label{eqw5.72}
    \begin{split}
        &0<m_{D^{\ast}}\leq w(\tau,\xi,\eta)\leq M_{D^{\ast}},\ a.e.\ \text{in}\ D^{\ast},\\
        &\|\p_{\tau}w\|_{L^{1}(D^{\ast})}+\|\p_{\xi}w\|_{L^{1}(D^{\ast})}+\|\p_{\eta}w\|_{L^{1}(D^{\ast})}+\|\p_{\eta}^{2}w\|_{L^{1}(D^{\ast})}\lesssim_{D^{\ast}} 1.
    \end{split}
\end{equation}
Recall \eqref{prioricrocco}, for any $M\in\mathbb{N}$,
\begin{equation}\label{prioricrocco1}
    \|w\|_{C^{M}(D^{\ast})}\lesssim 1,\ \text{for}\ D^{\ast}\Subset(0,T]\times(0,X)\times[0,1). 
\end{equation}
In the above, the implied constant depends on $M$, $D^\ast$ and its distance to the inflow and initial boundary, as well as the quantitative bounds in \eqref{eqw5.72}. The proof is divided into two steps:

Recall that for $(t,x,y)\in D$ we can represent $u$ via
\begin{equation}
    y=\int_{0}^{u(t,x,y)}\frac{d\eta}{w(t,x,\eta)},
\end{equation}
where
\begin{equation}
    w(t,x,\eta):=\p_{y}u(t,x,y),\quad \text{for}\ u(t,x,y)=\eta.
\end{equation}
By direct calculation, we obtain that
\begin{equation}
(\p_{t},\p_{x})u(t,x,y)=w(t,x,u(t,x,y))\cdot\int_{0}^{u(t,x,y)}\frac{(\p_{t},\p_{x})w}{w^{2}}d\eta.
\end{equation}
We can also represent the higher--order derivatives of $u$ in terms of $w$ using Fa\`a di Bruno's formula. Moreover, for fixed $M\in\mathbb{N}$ and $D\Subset (0,T]\times(0,X)\times[0,\infty)$, we have
\begin{equation}\label{pfmainbasedesti}
    \|u\|_{C^{M}(\widetilde{D})}\lesssim_{M,m_{D^{\ast}},M_{D^{\ast}},\|w\|_{C^{M}(D^{\ast})}} 1,
\end{equation}
where $D^{\ast}$ is the image of $\widetilde{D}$ under the Crocco transform \eqref{croccotrans}. Hence we conclude the proof of Theorem \ref{mainone} in view of \eqref{prioricrocco1}.
\begin{comment}
Now assume $D^{\star}$ is another compact subset of $D_{T,X}$ such that $\widetilde{D}\Subset D^{\star}\Subset D_{T,X}$, and $D^{\#}$ is the image of $D^{\star}$ under Crocco transform \eqref{croccotrans}. Then by \eqref{pfmainbasedesti} and \eqref{prioricrocco}, we have
\begin{equation}\label{basedesti2}
    \|u\|_{C^{M}(\widetilde{D})}\lesssim_{M,\mathrm{dist}(D^{\ast}, D^{\#}),m_{D^{\star}},M_{D^{\star}}} 1.
\end{equation}
We conclude the proof of Theorem \ref{mainone} using \eqref{basedesti2} and the fact that
\begin{equation}
    m_{D^{\star}}\cdot\mathrm{dist}(\widetilde{D},D^{\star})\leq \mathrm{dist}(D^{\ast},D^{\#})\leq M_{D^{\star}}\cdot\mathrm{dist}(\widetilde{D},D^{\star}).
\end{equation}
\end{comment}
\end{proof}

\section{Global theory of weak solutions and proof of Theorem \ref{mainthree}}\label{basic}
\subsection{Global Weak Solutions in Crocco Coordinates }
Recall the dynamical Prandtl equation after the Crocco transform (here we adopt the coordinate notation $(t,x,y)$ instead of $(\tau,\xi,\eta)$):
\begin{equation}\label{prandtlcrocco}
\begin{cases}
\p_{t}w+y\p_{x}w-w^{2}\p_{y}^{2}w=0,\ (t,x,y)\in (0,T]\times (0,X]\times (0,1),\\
w|_{t=0}=w_{0}(x,y),\ w|_{x=0}=w_{1}(t,y),\\
w|_{y=1}=0,\ \p_{y}w|_{y=0}=0.
\end{cases}
\end{equation}
In this section, we will show the global well-posedness for weak solutions of \eqref{prandtlcrocco}. We first introduce the following definition of weak solutions to \eqref{prandtlcrocco}. For ease of notation, we denote
\begin{equation}\label{nota1}
    R_{T,X}:=(0,T]\times (0,X]\times (0,1), \qquad R_{T,X}^{y_{0}}:=(0,T]\times (0,X]\times (0,y_{0}), \,\,\forall0<y_0<1.
\end{equation}

\begin{definition}\label{defweak}
We call $w(t,x,y)$ a weak solution to \eqref{prandtlcrocco}, provided that
\begin{enumerate}

\item  $w\in L^{\infty}(R_{T,X})\cap W^{1,1}(R_{T,X}),\ \partial_y^2w\in L^1(R_{T,X}^{y_{0}})\,\,{\it for\,\,all\,\,} y_0\in(0,1), \,\,{\it and}\,\, w^{2}\p_{y}^{2}w\in L^{1}(R_{T,X});$

\item  For all $0<y_{0}<1$, $w$ has a positive lower bound in $R_{T,X}^{y_{0}}$;

\item $w(t,x,y)$ satisfies \eqref{prandtlcrocco} in $R_{T,X}$ in the sense of distributions, where the boundary conditions are assumed in the trace sense. 
\end{enumerate}
\end{definition}
We note that by interpolation, a weak solution to \eqref{prandtlcrocco} satisfies $\partial_yw\in L^2(R_{T,X}^{y_{0}})$ for any $y_0\in(0,1)$.

It was proved in \cite{XZZ24} that \eqref{prandtlcrocco} admits a unique global weak solution for any $T,X>0$, provided that for $y\in(0,1)$,
\begin{equation}\label{xzzassume}
w_{0}\sim 1-y,\quad\ w_{1}\sim 1-y,
\end{equation}
and certain regularity conditions are satisfied. Indeed, \eqref{xzzassume} is equivalent to the condition that the initial data and inflow data both converge to the outer flow with an exponential rate towards the interior of the fluid. 

In this section, we significantly extend the existence, uniqueness and regularity theory of weak solutions to \eqref{prandtlcrocco}, by allowing a much wider class of initial and inflow conditions than those considered in the seminal work \cite{XZZ24}. As a result, we can treat boundary layer flows that converge to the outer flow $U(x)\equiv 1$ with three distinct and natural rates in physical variables: {\it Gaussian}, {\it exponential}, and {\it polynomial}.

We first establish a uniqueness result for the weak solution to \eqref{prandtlcrocco} under quite general assumptions.
\begin{proposition}[\bf Uniqueness]\label{uniweaksol}
For any $k\geq 2$, \eqref{prandtlcrocco} admits at most one weak solution $w(t,x,y)\in W^{1,1}(R_{T,X})\cap L^{\infty}(R_{T,X})$ satisfying the additional assumption in $R_{T,X}$ that 
\begin{equation}\label{weakrate}
(1-y)^{k}\lesssim w \lesssim (1-y)\log^{\frac{1}{2}}\Big(\frac{10}{1-y}\Big),
\end{equation}
with given $w_{0}(x,y)$ and $w_{1}(t,y)$.
\end{proposition}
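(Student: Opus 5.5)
The plan is an $L^1$-contraction (Kruzhkov-type) argument in the conservative variable $v:=1/w$, with a weight degenerating at $y=1$. A direct computation, as in \eqref{vdivereq}, shows that for a positive weak solution
\begin{equation*}
\p_t v+y\p_x v=\p_y\big(w^2\p_y v\big)=-\p_y^2 w=\p_y^2\varphi(v),\qquad \varphi(v):=-\frac{1}{v},
\end{equation*}
where $\varphi$ is strictly increasing. Let $w_1,w_2$ be two weak solutions with the same data, satisfying \eqref{weakrate}, and set $v_i:=1/w_i$. Test the difference of the two equations against $\mathrm{sgn}(v_1-v_2)\,\psi(t,y)$, using a smooth regularization $\mathrm{sgn}_\epsilon$ of the sign and letting $\epsilon\to0$. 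Since $\varphi$ is monotone, $\mathrm{sgn}(v_1-v_2)=\mathrm{sgn}(\varphi(v_1)-\varphi(v_2))=\mathrm{sgn}(w_2-w_1)$. Kato's inequality then gives, in the sense of distributions,
\begin{equation*}
(\p_t+y\p_x)|v_1-v_2|\leq \p_y^2|w_1-w_2|.
\end{equation*}

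\textbf{Choice of weight.} I would take the weight $\psi(t,y)=(1-y)^{m+\lambda t}$ on a short time interval $[t_0,t_0+T_0]$, with $m>k$ and with $\lambda$ and $T_0$ chosen below. Integrating the inequality over $(0,X)\times(0,1)$ against $\psi$, the terms are handled as follows.
\begin{itemize}
\item The transport term contributes $\int_0^1 y\psi|v_1-v_2|\,dy$. At $x=0$ this vanishes because the data coincide. At $x=X$ it has a favorable sign and can be dropped.
\item The integration by parts in $y$ produces boundary terms at $y=0$ and $y=1$. At $y=0$ we have $\p_y w_i=0$ and $\p_y\psi\le 0$, so the terms there have a good sign. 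At $y=1$ both $w_i$ vanish and $\psi(1)=\p_y\psi(1)=0$, so those terms vanish.
\item The remaining diffusion term is $\int|w_1-w_2|\,\p_y^2\psi$.
\end{itemize}
Writing $|w_1-w_2|=w_1w_2|v_1-v_2|$ and using the upper bound in \eqref{weakrate}, we get
\begin{equation*}
|w_1-w_2|\,\p_y^2\psi\lesssim (m+\lambda T_0)^2\log\Big(\frac{10}{1-y}\Big)\,\psi\,|v_1-v_2|.
\end{equation*}
This logarithm is unbounded, which is exactly why the weight is time dependent. Indeed, $\p_t\psi=-\lambda\log\frac{1}{1-y}\,\psi$, which gives a negative term of the same logarithmic size. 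Choosing $\lambda\ge C(m+\lambda T_0)^2$ is possible once $T_0$ is small. Then
\begin{equation*}
\frac{d}{dt}\iint|v_1-v_2|\,\psi\,dx\,dy\le C\iint|v_1-v_2|\,\psi\,dx\,dy,
\end{equation*}
with $C$ accounting for the bounded part of the logarithm ($\log 10$). Since the initial values agree, this forces $v_1=v_2$ on $[0,T_0]$. Iterating over consecutive time intervals of length $T_0$ (resetting $t_0$) covers all of $[0,T]$.

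The lower bound $w\gtrsim(1-y)^k$ is what makes everything finite: it gives $|v_i|\psi\lesssim(1-y)^{m-k}$, which is integrable and vanishes at $y=1$. This is why $m>k$ is required.

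\textbf{Main obstacle: justification.} The hard step is rigorously justifying the computation at the regularity of the weak solutions. We only know $w\in W^{1,1}\cap L^\infty$, $\p_y^2w\in L^1_{\rm loc}$ away from $y=1$, and $w^2\p_y^2w\in L^1$. I would first perform all integrations by parts on $(0,1-\delta)$, applying Kato's inequality with the $\mathrm{sgn}_\epsilon$ regularization (valid since $v_i\in W^{1,1}$ and $\p_y^2 w_i\in L^1$ there). This produces boundary terms at $y=1-\delta$ of the form
\begin{equation*}
\psi\,\p_y|w_1-w_2|\quad\text{and}\quad \p_y\psi\,|w_1-w_2|.
\end{equation*}
The second is bounded by $\delta^{m+\lambda t-1}\log^{1/2}(10/\delta)$. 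For the first, I would not attempt a pointwise limit. Instead I would average in $\delta\in(\delta_0,2\delta_0)$. This converts the boundary term into a bulk integral of $|\p_yw_i|\psi\delta_0^{-1}$ over a layer near $y=1$. That integral is controlled using $\p_yw\in L^1$ together with $\psi\sim\delta_0^{m}$ in the layer, so it tends to zero as $\delta_0\to0$.

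At $y=0$ the Neumann trace and the $W^{2,1}_y$ regularity justify the computation directly. Once these limits are controlled, the Gronwall step above closes the argument.
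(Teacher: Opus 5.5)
Your argument is correct. Its skeleton is the paper's: an $L^1$ contraction for $v=1/w$, tested against $\mathrm{sgn}(v)$ times a power of $(1-y)$, with the lower bound $(1-y)^k$ serving only to make the weighted norm finite. The difference lies in how you neutralize the unbounded factor $w_1w_2/(1-y)^2\lesssim\log\frac{10}{1-y}$.

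The paper keeps the static weight $(1-y)^{\lambda}$, $\lambda=2k+10$, and splits the $y$-integral at $y_0$. On $(0,y_0)$ it bounds the factor by $\log\frac{1}{1-y_0}$; on $(y_0,1)$ it bounds the tail directly by the upper bound on $w_i$. This gives $F'\le C_1\log\frac{1}{1-y_0}\,F+C_2(1-y_0)^{\lambda-1}$, so by Gronwall $F(t)\lesssim t(1-y_0)^{\lambda-1-C_1t}$, and letting $y_0\to1$ yields $F\equiv0$ for $t<(\lambda-1)/C_1$.

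You instead build the compensation into the weight. The derivative $\partial_t(1-y)^{m+\lambda t}$ produces exactly $-\lambda\log\frac{1}{1-y}$ times the weight, which absorbs the logarithm pointwise. The constraint $\lambda\ge C(m+\lambda T_0)^2$ is met, e.g., by $\lambda=4Cm^2$ and $T_0=m/\lambda$. Your Gronwall inequality then has a bounded rate, and no limit in $y_0$ is needed.

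The mechanism is the same in both: a logarithmically growing rate acting for a short time costs only a power of $(1-y)$. Your version turns this into a one-line differential inequality, at the price of a time-dependent weight and restarts. The paper keeps a fixed weight, at the price of a two-parameter Gronwall and the limit $y_0\to1$.

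Two further points are in your favor. Your justification at $y=1$ (integrating by parts on $(0,1-\delta)$ and averaging in $\delta$, using $\partial_y w\in L^1$ and $m>1$) is more explicit than the paper, which does not address those boundary terms. Regularizing the sign of $\varphi(v_1)-\varphi(v_2)=w_2-w_1$, as you indicate, is the right choice, since it makes the $\mathrm{sgn}_\epsilon'$ term signed.
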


\begin{remark}Compared with the assumptions for the existence result in Proposition \ref{existweaksol}, the uniqueness holds in a wider class of weak solutions. In particular, the uniqueness holds for velocity fields converging to the outer flow at any of the three admissible rates (Gaussian, exponential and arbitrary polynomial) in physical coordinates, which is useful for applications to the study of Navier--Stokes equations and the long time behavior of solutions to \eqref{prandtl}.
\end{remark}

\begin{proof} Assume that $w_{1},w_{2}$ are two weak solutions satisfying the assumptions of Proposition \ref{uniweaksol}. For $i=1,2$, define $v_{i}=w_{i}^{-1}$, and let $v=v_{1}-v_{2}$, $w=w_{1}-w_{2}$. Then in $R_{T,X}$ we have the equation
\begin{equation}\label{reciequ}
\p_{t}v+y\p_{x}v+\p_{y}^{2}w=0.
\end{equation}
Choosing $\lambda=2k+10$, and then testing \eqref{reciequ} by ${\rm sgn}(v)(1-y)^{\lambda}$ and integrating on $y\in[0,1]$, we get that
\begin{equation}\label{uniqueesti}
\begin{split}
&\p_{t}\left(\int_{0}^{1}|v|(1-y)^{\lambda}dy\right)+\p_{x}\left(\int_{0}^{1}|v|y(1-y)^{\lambda}dy\right)-\int_{0}^{1}\p_{y}^{2}w\cdot {\rm sgn}(w)(1-y)^{\lambda}dy=0.
\end{split}
\end{equation}
The above formula can be justified rigorously by replacing ${\rm sgn}(v)$ with $\phi(v/\delta)$ and taking $\delta\to0+$, where the smooth function $\phi(x)$ is odd, equal to $-1$ for $x\leq-1$ and equal to $1$ for $x\ge1$. %See e.g. \cite{XZZ24} for this renormalization argument.

By direct calculations, we obtain
\begin{equation*}
\begin{split}
-\int_{0}^{1}\p_{y}^{2}w\cdot {\rm sgn}(w)(1-y)^{\lambda}dy&\ge-\lambda(\lambda-1)\int_{0}^{1}|w|(1-y)^{\lambda-2}dy.
\end{split}
\end{equation*}
For $t\ge0$, let
\begin{equation*}
F(t):=\iint_{[0,X]\times[0,1]}|v|(1-y)^{\lambda}dxdy.
\end{equation*}
By \eqref{uniqueesti} and the identity $w=\frac{1}{v_1}-\frac{1}{v_2}=-\frac{v_1-v_2}{v_1v_2}=-(v_1-v_2)w_1w_2$, we have
\begin{equation}\label{gia1}
\begin{split}
F'(t)&\leq \lambda(\lambda-1)\int_{0}^{X}\int_{0}^{1}|w|(1-y)^{\lambda-2}dxdy\\
&=\lambda(\lambda-1)\int_{0}^{X}\int_{0}^{1}|v|(1-y)^{\lambda}{\boxed{\frac{w_{1}\cdot w_{2}}{(1-y)^{2}}}}dxdy.
\end{split}
\end{equation}
In view of \eqref{weakrate}, the boxed term in the integrand is not necessarily bounded, and as a consequence we cannot apply Gr\"onwall's inequality directly. To overcome this difficulty, we split the integral on the right-hand side of \eqref{gia1} in $y$ into two parts, one over $(0,y_0)$ and the other over $(y_0,1)$ with $1/2\leq y_0\leq1$, to obtain that
\begin{equation*}
\begin{split}
F'(t)&\leq \lambda(\lambda-1)\int_{0}^{X}\int_{0}^{1}|w|(1-y)^{\lambda-2}dxdy\\
&=\lambda(\lambda-1)\int_{0}^{X}\int_{0}^{y_{0}}|v|(1-y)^{\lambda}\frac{w_{1}\cdot w_{2}}{(1-y)^{2}}dxdy+\lambda(\lambda-1)\int_{0}^{X}\int_{y_{0}}^{1}|w|(1-y)^{\lambda-2}dxdy\\
&\leq C_{1}\log\left(\frac{1}{1-y_{0}}\right)F(t)+C_{2}(1-y_{0})^{\lambda-1}.
\end{split}
\end{equation*}
By Gr\"onwall's inequality, we get
\begin{equation}
\sup_{t\in[0,T^{*}]}F(t)\leq C_{2}\sup_{t\in[0,T^\ast]}\left[t(1-y_{0})^{\lambda-1}\exp\left\{C_{1}t\log\left(\frac{1}{1-y_{0}}\right)\right\}\right]\leq C_{2}T^{*}(1-y_{0})^{\lambda-1-C_{1}T^{*}}.
\end{equation}
By letting $y_{0}\to 1$, we obtain that
\begin{equation}
F(t)\equiv 0,\ 0\leq t\leq T^{*},\ T^{*}:=\frac{\lambda-1}{2C_{1}}.
\end{equation}
Therefore, $F(t)$ vanishes identically by dividing the temporal interval and repeating the above argument, which completes the proof of Proposition \ref{uniweaksol}.
\end{proof}

We now turn to the main result of the section, on the existence and (up--to--boundary) regularity of the unique global weak solution of \eqref{prandtlcrocco}. The global weak solution is constructed in the following space. Define the following functional space
\begin{equation}\label{weaksolspace}
\begin{split}
\mathcal{X}_{T,X}:=&\bigg\{w(t,x,y)\in L^\infty(R_{T,X}): (1-y)^{3/2}\lesssim_{T,X} w(t,x,y)\lesssim_{T,X} (1-y)\log^{\frac{1}{2}}\left(\frac{10}{1-y}\right),\\
&\quad (1-y)^{-1/2}\log^{-\frac{3}{2}}\left(\frac{10}{1-y}\right)\p_{y}w\in L^{2}(R_{T,X}), \frac{(\p_{t},\p_{x})w}{w^{2}}\cdot(1-y) \in L^{\infty}_{t}L^{1}_{x,y}(R_{T,X}),\\
&\quad \p_{yy}w\cdot(1-y)\in L^{\infty}_{t}L^{1}_{x,y}(R_{T,X})\bigg\}.
\end{split}
\end{equation}

\begin{proposition}[\bf Existence and Regularity]\label{existweaksol}
Assume that the initial data $w_{0}(x,y)$ and the inflow data $w_{1}(t,y)$ are bounded functions defined on $D:=[0,\infty)\times[0,1)$, and satisfy the conditions in Assumption \ref{dataassumecrocco4}.
\begin{comment}
Therefore in particular, $w_0, w_1$ satisfy the following pointwise inequality
\begin{equation}\label{initialrate}
\begin{split}
(1-y)^{3/2}\lesssim_{X} w_{0}(x,y)\lesssim_{X} (1-y)\log^{\frac{1}{2}}\Big(\frac{10}{1-y}\Big),\quad \forall (x,y)\in[0,X]\times[0,1),\\
(1-y)^{3/2}\lesssim_{T} w_{1}(t,y)\lesssim_{T} (1-y)\log^{\frac{1}{2}}\Big(\frac{10}{1-y}\Big),\quad \forall (t,y)\in[0,T]\times[0,1),
\end{split}
\end{equation}
as well as the $L^{1}$ bounds for tangential derivatives on the initial and inflow edges
\begin{equation}\label{initialbv}
\begin{split}
&\iint_{[0,X]\times[0,1]}\frac{|\p_{x}w_{0}(x,y)|}{|w_{0}(x,y)|^{2}}\cdot(1-y)dxdy+\iint_{[0,X]\times[0,1]}|\p_{y}^{2}w_{0}(x,y)|\cdot(1-y)dxdy \lesssim_{X} 1,\\
&\iint_{[0,T]\times[0,1]} \frac{|\p_{t}w_{1}(t,y)|}{|w_{1}(t,y)|^{2}}\cdot(1-y)dtdy+\iint_{[0,T]\times[0,1]}|\p_{y}^{2}w_{1}(t,y)|\cdot(1-y)dtdy \lesssim_{T} 1.
\end{split}
\end{equation}
\end{comment}
Then, for \emph{any} $T>0, X>0$, there exists a unique weak solution $w\in\mathcal{X}_{T,X}$ to \eqref{prandtlcrocco}. Moreover, the weak solution $w$ is smooth \emph{up to the boundary}
\begin{equation}\label{smoothwadded}
w(t,x,y)\in C^{\infty}\left((0,\infty)\times (0,\infty)\times [0,1)\right).
\end{equation}
\end{proposition}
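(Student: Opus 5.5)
The plan is to construct the solution as a limit of solutions to the non-degenerate approximation system \eqref{approsys}, whose global well-posedness is established in Appendix \ref{proofapproxia}. These approximate solutions $w^\epsilon$ are taken with compatible approximate data built in Appendix \ref{compatiappendix}. Uniform-in-$\epsilon$ a priori bounds then come from the conservation-law structure of \eqref{prandtlcrocco}, written for $v=1/w$ as $\p_t v+y\p_x v+\p_y^2 w=0$. Compactness, passage to the limit, identification with a weak solution in $\mathcal{X}_{T,X}$, uniqueness via Proposition \ref{uniweaksol} (with $k=3/2$ replaced by $k=2$), and smoothness via Theorem \ref{th:smo1} complete the argument.

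The key steps, in order, are as follows.

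(i) \emph{Pointwise two-sided bounds.} I will build explicit barriers $\underline{w}=c(1-y)^{3/2}e^{-Ct}$-type subsolutions and $\overline{w}=C(1-y)\log^{1/2}(\frac{10}{1-y})$-type supersolutions (suitably time- and $x$-weighted) for $\mathcal{L}w=\p_t w+y\p_x w-w^2\p_y^2w$. Both are compatible with the Neumann condition at $y=0$, which is handled by adding a small correction near $y=0$. The comparison principle for the approximation system then gives \eqref{solutionmatchingcrocco4} uniformly in $\epsilon$. A check shows that $w^2\p_y^2$ applied to these profiles is controlled, e.g.\ $-\overline{w}^2\p_y^2\overline{w}\gtrsim (1-y)\log^{-1/2}$, which is where the exponents $3/2$ and $\log^{1/2}$ are tuned.

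(ii) \emph{$L^1$-type derivative bounds.} Differentiating the $v$-equation in $t$ (resp.\ $x$) gives a linear equation for $\p_t v=-\p_t w/w^2$ of the same conservation form. Testing with $\mathrm{sgn}(\p_t v)(1-y)$ and using Kato's inequality yields $\sup_t\int(1-y)|\p_t w|/w^2\lesssim$ data plus boundary fluxes. These are controlled by \eqref{higherassume4} and the $x=0$ inflow data, giving the two $L^\infty_tL^1_{x,y}$ bounds in \eqref{globalboundcrocco}. The same bounds for the approximate system require the approximation to preserve this structure, which I take from Appendix \ref{compatiappendix}.

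(iii) \emph{$\p_y^2w$ and $\p_y w$ bounds.} The equation gives $w^2\p_y^2 w=\p_tw+y\p_xw$, so $(1-y)|\p_y^2 w|\lesssim (1-y)(|\p_tw|+|\p_xw|)/w^2$ in $L^\infty_tL^1$. The weighted $L^2$ bound on $\p_yw$ follows from testing the $v$-equation by $(1-y)\log^{-3}(\frac{10}{1-y})\cdot$(a function of $w$) and integrating by parts, using the bounds from (i).

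(iv) \emph{Limit.} The bounds from (ii)--(iii) give $W^{1,1}_{\rm loc}$ and $BV$ compactness away from $y=1$. By (i), $w^\epsilon\to w$ in $L^1$ with the bounds preserved, and the distributional equation and the traces (via $W^{1,1}$ trace continuity) pass to the limit. Thus $w\in\mathcal{X}_{T,X}$ is a weak solution in the sense of Definition \ref{defweak}. Uniqueness follows from Proposition \ref{uniweaksol}, since the lower bound $(1-y)^{3/2}$ fits \eqref{weakrate}. For \eqref{smoothwadded}, on each compact $\Omega\Subset(0,T]\times(0,X)\times[0,1)$ the bounds from (i) give $w\sim 1$, and $w\in W^{1,1}\cap W^{2,1}_y$ with the Neumann trace, so Theorem \ref{th:smo1} applies. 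Since $T,X$ are arbitrary, $w$ is smooth on $(0,\infty)^2\times[0,1)$.

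I expect step (i)/(ii) near $y=1$ to be the main obstacle. The barrier construction must produce the weighted bounds uniformly in $\epsilon$ while respecting the degenerate weight. The flux terms at $y\to1$ in the $L^1$ estimates must vanish, which requires the precise rates, since $(1-y)/w^2$ may blow up like $(1-y)^{-2}$. I would first try cutting off at $y<1-\delta$ and showing that the boundary terms are $o(1)$ using the upper bound on $w$ and the weighted $\p_yw\in L^2$ estimate.
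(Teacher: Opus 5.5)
\textbf{The gap is in step (iv).} The bounds from (ii)--(iii) are uniform $L^1$-type bounds: $\sup_t\int(1-y)|\nabla_{\rm tan}w^\epsilon|/(w^\epsilon+\epsilon)^2$ and $\sup_t\int(1-y)|\p_y^2w^\epsilon|$, plus a weighted $L^2$ bound on $\p_yw^\epsilon$. Such bounds give only $BV$ compactness away from $y=1$.

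\emph{Why this fails.} The limit's tangential derivatives and $\p_y^2w$ are a priori Radon measures. Nothing in these bounds excludes, for instance, a jump of $w$ across a moving surface $x=s(t)$. So you do not obtain $w\in W^{1,1}$ with $\p_y^2w\in L^1_{\rm loc}$. You therefore have neither Definition \ref{defweak} nor the hypothesis $W^{1,1}_{\rm loc}\cap W^{2,1}_{\eta,{\rm loc}}$ of Theorem \ref{th:smo1}. The chain ``the limit is a $W^{1,1}$ weak solution, so Theorem \ref{th:smo1} makes it smooth'' is circular. Likewise, traces are not continuous under weak $W^{1,1}$ convergence, so ``$W^{1,1}$ trace continuity'' does not give the data on $\{t=0\}$ and $\{x=0\}$.

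\emph{How the paper avoids this.} It applies the smoothing machinery to the smooth approximate solutions uniformly in $\epsilon$, before passing to the limit. Rescale by $\widetilde w^\epsilon(t,x,y)=w^\epsilon(\frac{\delta^2}{4}t,\frac{\delta^3}{8}x+\frac{\epsilon\delta^2}{4}t,\frac{\delta}{2}y)$, which removes the drift $\epsilon\p_x$ and keeps the Neumann condition. The H\"older, $W^{2,p}_y$ and hypoelliptic estimates of Sections \ref{continuity}--\ref{higher} (boundary and interior versions) then give $C^k$ bounds on compact subsets of $(0,\infty)^2\times[0,1)$. These depend only on the two-sided bounds of your step (i); see \eqref{wepsilonlocalsmooth}. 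Arzel\`a--Ascoli then gives $C^\infty_{\rm loc}$ convergence, so the limit is smooth there and its derivatives are genuine functions. The weighted bounds pass to the limit by Fatou, giving $w\in\mathcal{X}_{T,X}$.

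\emph{Traces.} On $\{t=0\}$ and $\{x=0\}$ you still need a non-concentration estimate.
\begin{itemize}
\item In $t$: $\sup_\epsilon\int_0^\delta\int_K|\p_tw^\epsilon|\lesssim\delta$, which follows from your $\sup_t$ bound.
\item In $x$: the analogous estimate follows by integrating $\p_tA^\epsilon+\p_xB^\epsilon\le0$, where $A^\epsilon=\int_0^1|\nabla_{\rm tan}v^\epsilon|(1-y)\,dy$ and $B^\epsilon=\int_0^1|\nabla_{\rm tan}v^\epsilon|(y+\epsilon)(1-y)\,dy$. Your step (ii) only states the $\sup_t$ version.
\end{itemize}
Only after this can you use test functions that do not vanish on those faces.

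\textbf{Two smaller points.}
\begin{itemize}
\item In (iii) the weight should be $(1-y)^{-1}\log^{-3}(\frac{10}{1-y})$ rather than $(1-y)\log^{-3}$. Test \eqref{approsys} against $(w^\epsilon+\epsilon)^{-1}(1+\epsilon-y)^{-1}\log^{-3}(\frac{10}{1+\epsilon-y})$, equivalently the $v$-equation against $w$ times this weight. The weight on $|\p_yw^\epsilon|^2$ is then exactly the one in $\mathcal{X}_{T,X}$. The exponent $-3$ is what makes $|\log w^\epsilon|$ times the weight, and the cross term, integrable given (i).
\item The obstacle you expect at $y\to1$ in (ii) does not arise at the $\epsilon$-level. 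There $w^\epsilon$ is smooth up to $y=1$ with $\nabla_{\rm tan}w^\epsilon|_{y=1}=0$, so the weight $(1-y)$ kills the flux. Integration by parts gives $-\int_0^1\p_y^2\nabla_{\rm tan}w^\epsilon\,{\rm sgn}(\nabla_{\rm tan}w^\epsilon)(1-y)\,dy\ge|\nabla_{\rm tan}w^\epsilon(t,x,0)|\ge0$, and no cutoff is needed.
\end{itemize}
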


\begin{comment}
\begin{remark}Although the assumption \eqref{initialbv} involves singular weight near $y=1$, it still makes sense. For instance, formally speaking, by \eqref{initialrate},
\begin{equation}
\Bigg[\frac{\p_{x}w_{0}}{w_{0}}\log^{-\frac{3}{2}}\left(\frac{10}{1-y}\right)\Bigg]_{y=1}=\Bigg[\p_{x}\left(\log(w_{0})\log^{-\frac{3}{2}}\left(\frac{10}{1-y}\right)\right)\Bigg]_{y=1}=0.
\end{equation}
Hence, 
\begin{equation}
\frac{\p_{x}w_{0}}{w_{0}}\cdot(1-y)^{-1}\log^{-3}\left(\frac{10}{1-y}\right)=o\left(\log^{-\frac{3}{2}}\left(\frac{10}{1-y}\right)\cdot(1-y)^{-1}\right),\ \text{as}\ y\to 1-,
\end{equation}
which is integrable near $y=1$.
\end{remark}
\end{comment}

\begin{remark}The proof of Proposition \ref{existweaksol} consists of three key ingredients: 
\begin{enumerate}
\item Constructing solutions to a family of approximation systems parametrized by $\epsilon>0$;

\item Deriving uniform-in-$\epsilon$ estimates for the  solutions $\{w^{\epsilon}\}$ to the approximate system;

\item Passing to the limit to get the weak solution.
\end{enumerate}

The proof of the uniform bounds is similar to the argument in \cite{XZZ24}. Since the class of  admissible initial data and inflow data considered here is larger than the class in \cite{XZZ24}, suitable modifications are required which we present in detail below.

%It is worth mentioning that, in contrast to \cite{XZZ24}, before passing to the limit, we can obtain the uniform--in--$\epsilon$ smoothing estimates for the approximation solutions $\{w^{\epsilon}\}$ by adopting the argument in Theorem \ref{mainone}. Therefore, we indeed obtain a global classical solution far away from the initial time $\{t=0\}$ and the inflow boundary $\{x=0\}$.
\end{remark}
\begin{comment}
\begin{remark}In Proposition \ref{existweaksol}, we admit the datas vanishes on the boundary $\{y=1\}$ in various possible rates \eqref{initialrate}. In particular, we admit the velocity convergences to the outer flow $U(t,x)\equiv 1$ in three physical settings: Gaussian convergence rate, exponential convergence rate, and polynomial convergence rate.
\end{remark}
\end{comment}
\begin{proof}[Proof of Proposition \ref{existweaksol}]We divide the proof of Proposition \ref{existweaksol} into six steps: 

\textit{Step 1: Approximation.} Fix $1/200>\epsilon>0$. For any $T>0$ and $X>0$, let $w^{\epsilon}\in C^\infty(\overline{R}_{T,X})$ be the unique smooth solution to the following approximation system
\begin{equation}\label{approsys}
\begin{cases}
\p_{t}w^{\epsilon}+(y+\epsilon)\p_{x}w^{\epsilon}-(w^{\epsilon}+\epsilon)^{2}\p_{y}^{2}w^{\epsilon}=0,\ (t,x,y)\in(0,T]\times(0,X]\times(0,1),\\
w^{\epsilon}|_{t=0}=w_{0}^{\epsilon},\ w^{\epsilon}|_{x=0}=w_{1}^{\epsilon},\\
w^{\epsilon}|_{y=1}=\p_{y}w^{\epsilon}|_{y=0}=0,
\end{cases}
\end{equation}
where $w_{0}^{\epsilon},w_{1}^{\epsilon}$ are suitable smooth modifications of $w_{0},w_{1}$ respectively, satisfying the compatibility conditions corresponding to \eqref{approsys} and certain uniform--in--$\epsilon$ bounds, which are presented in Appendix \ref{compatiappendix} (see (WP1), (WP2) and (WP3) in Appendix \ref{requireappendix}). We refer to Proposition \ref{constructappdata2} for the construction of the compatible data $(w_{0}^{\epsilon},w_{1}^{\epsilon})$, and Appendix \ref{proofapproxia} for the fact that the classical solution $w^\epsilon$ of \eqref{approsys} exists globally and is unique.

\textit{Step 2: Pointwise Estimates.} 
We first derive a uniform--in--$\epsilon$ upper bound on $w^{\epsilon}$. More precisely, we shall prove that for any $T>0,\, X>0$ and $(t,x,y)\in R_{T,X}$,
\begin{equation}\label{boundone}
 w^{\epsilon}(t,x,y)\lesssim_{T,X} (1-y)\log^{\frac{1}{2}}\Big(\frac{10}{1-y}\Big).
 \end{equation}
Let
\begin{equation}
h(t,x,y):=w^{\epsilon}(t,x,y)-C(1-y)\log^{\frac{1}{2}}\Big(\frac{10}{1-y}\Big),
\end{equation}
Consider the operator
\begin{equation}
\mathcal{L}_{\epsilon}=\p_{t}+(y+\epsilon)\p_{x}-(w^{\epsilon}+\epsilon)^{2}\p_{y}^{2}.
\end{equation}
By direct calculations, we get that
\begin{equation}\label{signoperator}
\mathcal{L}_{\epsilon}[h]=-C\frac{(w^{\epsilon}+\epsilon)^{2}}{1-y}\Big(\frac{1}{2}\log^{-\frac{1}{2}}\Big(\frac{10}{1-y}\Big)+\frac{1}{4}\log^{-\frac{3}{2}}\Big(\frac{10}{1-y}\Big)\Big)<0.
\end{equation}
Taking $C>1$ sufficiently large (depending on $T$ and $X$), we also have
\begin{equation}\label{signboundary}
\begin{split}
&h|_{t=0}\leq 0,\quad \forall (x,y)\in[0,X]\times[0,1],\\
&h|_{x=0}\leq 0, \quad\forall (t,y)\in[0,T]\times[0,1],\\
&h|_{y=1}=0,\quad \p_{y}h|_{y=0}>0, \quad \forall (t,x)\in[0,T]\times[0,X].
\end{split}
\end{equation}
Hence $h\leq 0$ for any $(t,x,y)\in[0,T]\times[0,X]\times[0,1]$ by \eqref{signoperator}--\eqref{signboundary} and the maximum principle.

Next we turn to the lower bound of $w^{\epsilon}$. Note that $w^{\epsilon}$ is non--negative by the maximum principle. We claim that for $(t,x,y)\in R_{T,X}$,
\begin{equation}\label{boundtwo}
w^{\epsilon}(t,x,y) \gtrsim_{T,X} (1-y)^{3/2}.
\end{equation}
To this end, we define for some $c>0$ and $M>1$ to be determined below,
 \begin{equation}
 g(t,x,y):=w^{\epsilon}(t,x,y)-ce^{-Mt}e^{3y}(1-y)^{3/2}.
 \end{equation}
 It suffices to prove that $g(t,x,y)$ is non--negative in $[0,T]\times [0,X]\times [0,1]$.
By direct calculation,
 \begin{equation}
 \mathcal{L}_{\epsilon}[g]=ce^{-Mt}e^{3y}(1-y)^{-1/2}\left[M(1-y)^{2}+(w^{\epsilon}+\epsilon)^{2}\left(9(1-y)^{2}-9(1-y)+3/4\right)\right].
 \end{equation}
 Hence,
 \begin{equation}
 \mathcal{L}_{\epsilon}[g]>0,\ \text{in}\  (0,T]\times (0,X]\times (0,1),
 \end{equation}
 by taking $M>1$ large enough, depending on $T>0, X>0$.
 
 Notice that by taking $c>0$ small (depending on $T>0, X>0$), we have
 \begin{equation}\label{signcondi}
 \begin{split}
&g|_{t=0}\geq 0,\ \forall (x,y)\in[0,X]\times[0,1],\\
&g|_{x=0}\geq 0,\ \forall (t,y)\in[0,T]\times[0,1]\\
&g|_{y=1}=0,\quad \p_{y}g|_{y=0}=-\frac{3c}{2}e^{-Mt}<0.
 \end{split}
 \end{equation}
Hence, $g$ is non--negative by the maximum principle. 

Combining \eqref{boundone} and \eqref{boundtwo}, we obtain the uniform-in-$\epsilon$ pointwise bound for $(t,x,y)\in R_{T,X}$,
\begin{equation}\label{ptwe1}
    (1-y)^{3/2}\lesssim_{T,X} w^\epsilon(t,x,y)\lesssim_{T,X}(1-y)\log^{\frac{1}{2}}\left(\frac{10}{1-y}\right).
\end{equation}

\textit{Step 3: Energy Estimates.} Testing \eqref{approsys} by
\begin{equation*}
\frac{1}{w^{\epsilon}+\epsilon}(1+\epsilon-y)^{-1}\log^{-3}\left(\frac{10}{1+\epsilon-y}\right),
\end{equation*}
and integrating by parts, we get the following energy identity
\begin{equation}\label{energyidee}
\begin{split}
&\iint_{[0,X]\times [0,1]}(\log (w^{\epsilon}+\epsilon))(1+\epsilon-y)^{-1}\log^{-3}\left(\frac{10}{1+\epsilon-y}\right)dxdy\bigg|^{t=T}_{t=0}\\
&+\iint_{[0,T]\times [0,1]}(\log (w^{\epsilon}+\epsilon)) (y+\epsilon)(1+\epsilon-y)^{-1}\log^{-3}\left(\frac{10}{1+\epsilon-y}\right)dtdy\bigg|^{x=X}_{x=0}\\
&+\iiint_{[0,T]\times[0,X]\times[0,1]}|\p_{y}w^{\epsilon}|^{2}(1+\epsilon-y)^{-1}\log^{-3}\left(\frac{10}{1+\epsilon-y}\right)dtdxdy\\
&+\iiint_{[0,T]\times[0,X]\times[0,1]}(w^{\epsilon}+\epsilon)\p_{y}w^{\epsilon}(1+\epsilon-y)^{-2}\log^{-3}\left(\frac{10}{1+\epsilon-y}\right)dtdxdy\\
&-3\iiint_{[0,T]\times[0,X]\times[0,1]}(w^{\epsilon}+\epsilon)\p_{y}w^{\epsilon}(1+\epsilon-y)^{-2}\log^{-4}\left(\frac{10}{1+\epsilon-y}\right)dtdxdy\\
&-\log^{-3}\left(\frac{10}{\epsilon}\right)\cdot\iint_{[0,T]\times[0,X]}\p_{y}w^{\epsilon}dtdx\bigg|_{y=1}=0.
\end{split}
\end{equation}
The last term in \eqref{energyidee} is non--negative, since $w^{\epsilon}|_{y=1}=0$ and $w^{\epsilon}$ is non--negative, which implies that $\p_{y}w^{\epsilon}|_{y=1}\leq 0$. Therefore, by \eqref{boundone}, \eqref{boundtwo} and \eqref{energyidee}, we have
\begin{equation}\label{boundthree}
\iiint_{[0,T]\times[0,X]\times[0,1]}|\p_{y}w^{\epsilon}|^{2}(1+\epsilon-y)^{-1}\log^{-3}\left(\frac{10}{1+\epsilon-y}\right)dtdxdy\lesssim_{T,X} 1.
\end{equation}

\textit{Step 4: Tangential derivative estimates.} We also need to estimate $\p_{t}w^{\epsilon}$ and $\p_{x}w^{\epsilon}$. Let $v^{\epsilon}=(w^{\epsilon}+\epsilon)^{-1}$. Then $v^{\epsilon}(t,x,y)$ satisfies the following equation for $(t,x,y)\in R_{T,X}$,
\begin{equation}
\p_{t}v^{\epsilon}+(y+\epsilon)\p_{x}v^{\epsilon}+\p_{y}^{2}w^{\epsilon}=0.
\end{equation}
Denote the tangential gradient by $\nabla_{\text{tan}}=(\p_{t},\p_{x})$. Then we have the following equation for the vector field $\nabla_{\text{tan}}v^{\epsilon}$ in $R_{T,X}$:
\begin{equation}\label{recieq}
\p_{t}\nabla_{\text{tan}}v^{\epsilon}+(y+\epsilon)\p_{x}\nabla_{\text{tan}}v^{\epsilon}-\p_{y}^{2}\left(\frac{\nabla_{\text{tan}}v^{\epsilon}}{(v^{\epsilon})^{2}}\right)=0.
\end{equation}
Denoting $\mathrm{sgn}(\nabla_{\text{tan}}v^{\epsilon}):=\left(\mathrm{sgn}(\p_{t}v^{\epsilon}),\mathrm{sgn}(\p_{x}v^{\epsilon})\right)$, testing \eqref{recieq} against 
$\mathrm{sgn}(\nabla_{\text{tan}}v^{\epsilon})\cdot(1-y),
$ and integrating with respect to $y\in [0,1]$, we obtain that
\begin{equation}\label{bvesti}
\begin{split}
&\p_{t}\int_{0}^{1}|\nabla_{\text{tan}}v^{\epsilon}|(1-y)dy+\p_{x}\int_{0}^{1}|\nabla_{\text{tan}}v^{\epsilon}|(y+\epsilon)(1-y)dy\\
&-\int_{0}^{1}\p_{y}^{2}\left(\frac{\nabla_{\text{tan}}v^{\epsilon}}{(v^{\epsilon})^{2}}\right)\mathrm{sgn}(\nabla_{\text{tan}}v^{\epsilon})(1-y)dy:=I_1+I_2+I_3=0.
\end{split}
\end{equation}
In the above, we used the notation 
$$|\nabla_{\text{tan}}h|=|\partial_th|+|\partial_xh|.$$
By direct calculations,
\begin{equation}\label{bvesti3}
\begin{split}
I_3&=-\int_{0}^{1}\p_{y}^{2}\nabla_{\text{tan}}w^{\epsilon} \cdot \mathrm{sgn}(\nabla_{\text{tan}}w^{\epsilon})(1-y)dy\\
&\geq -\int_{0}^{1}\p_{y}\nabla_{\text{tan}}w^{\epsilon}\cdot {\rm sgn}(\nabla_{\text{tan}}w^{\epsilon})dy\\
&=-\int_{0}^{1}\p_{y}|\nabla_{\text{tan}}w^{\epsilon}|dy=|\nabla_{\text{tan}}w^{\epsilon}(t,x,0)|.
\end{split}
\end{equation}
As discussed just below equation \eqref{uniqueesti}, the calculations above can be justified rigorously through replacing the ${\rm sign}$ function by $\phi(\cdot/\delta)$ where $\phi$ is smooth, odd, equal to $-1$ for $x\leq-1$ and equal to $1$ for $x\ge1$ with $\phi'\ge0$, and taking $\delta\to0+$.

Hence by \eqref{bvesti}, we get that
\begin{equation}\label{boundfour}
\begin{split}
&\sup_{t\in[0,T]}\int_{0}^{X}\int_{0}^{1}|\nabla_{\text{tan}}v^{\epsilon}|(1-y)dxdy\\
&\leq\int_{0}^{X}\int_{0}^{1}|\nabla_{\text{tan}}v^{\epsilon}|(1-y)dxdy\bigg|_{t=0}+\int_{0}^{T}\int_{0}^{1}|\nabla_{\text{tan}}v^{\epsilon}|(y+\epsilon)(1-y)dtdy\bigg|_{x=0}\lesssim_{T,X} 1.
\end{split}
\end{equation}
%We emphasize that by virtue of standard tools in real analysis, for fixed $T,X>0$ we can choose suitable mollification sequence $(w_{0}^{\epsilon},w_{1}^{\epsilon})$ (up--to a subsequence if necessary) such that the quantities in the second inequality of \eqref{boundfour} are uniformly bounded independent with $\epsilon$. 

Using the equation \eqref{approsys} and the bound \eqref{boundfour}, we conclude that
\begin{equation}\label{boundfive}
\sup_{t\in[0,T]}\int_{0}^{X}\int_{0}^{1}|\nabla_{\text{tan}}v^{\epsilon}|(1-y)dxdy+\sup_{t\in[0,T]}\int_{0}^X\int_{0}^{1}|\p_{y}^{2}w^{\epsilon}|(1-y)dxdy\lesssim_{T,X}1.
\end{equation}

\textit{Step 5: Higher regularity estimates.} In this step, we establish the uniform--in--$\epsilon$ (up--to--boundary $\{y=0\}$) smoothness of $w^{\epsilon}$. For any fixed $1>\delta>\epsilon$ and $t_{0}>\delta, x_{0}>\delta$, we first prove the uniform--in--$\epsilon$ regularity of $w^{\epsilon}$ in the ball
\begin{equation*}
\left\{(t,x,y):-\delta^{2}<t-t_{0}\leq0, |x-x_{0}|<\delta^{3}, 0\leq y<\delta\right\}.
\end{equation*}
By the translation invariance of the equation \eqref{approsys} with respect to the $(t,x)$-variables, we can assume that $t_{0}=x_{0}=0$. Define
\begin{equation}
\widetilde{w}^{\epsilon}(t,x,y)=w^{\epsilon}\left(\frac{\delta^{2}}{4}t,\frac{\delta^{3}}{8}x+\frac{\epsilon\delta^{2}}{4}t,\frac{\delta}{2}y\right).
\end{equation}
We have
\begin{equation}
\begin{cases}
\p_{t}\widetilde{w}^{\epsilon}+y\p_{x}\widetilde{w}^{\epsilon}-(\widetilde{w}^{\epsilon}+\epsilon)^{2}\p_{y}^{2}\widetilde{w}^{\epsilon}=0,\\
\p_{y}\widetilde{w}^{\epsilon}|_{y=0}=0,\\
\end{cases}
\quad \text{in}\ \left\{-1<t\leq0,|x|<1,0\leq y<1\right\}.
\end{equation}
By the same procedure as in the derivation of \eqref{pfmaingoal1}, we obtain that
\begin{equation}
\|\widetilde{w}^{\epsilon}\|_{C^{k}(-\frac{1}{4}\leq t\leq0,|x|\leq \frac{1}{8},0\leq y\leq \frac{1}{2})}\leq C(k,\tilde{m},\tilde{M}),\quad \forall k\geq 0,
\end{equation}
where
\begin{equation*}
\tilde{m}=\inf_{-1<t<0,|x|<1,0\leq y<1}\widetilde{w}^{\epsilon}(t,x,y),\quad \tilde{M}=\sup_{-1<t<0,|x|<1,0\leq y<1}\widetilde{w}^{\epsilon}(t,x,y).
\end{equation*}
Therefore,
\begin{equation}\label{wepsilonboundary}
\|w^{\epsilon}\|_{C^{k}(-\frac{\delta^{2}}{100}\leq t-t_{0}\leq 0,|x-x_{0}|\leq \frac{\delta^{3}}{100},0\leq y\leq \frac{\delta}{100})}\leq C(k,\delta,m,M),
\end{equation}
where
\begin{equation*}
m=\inf_{-\delta^{2}<t-t_{0}<0,|x-x_{0}|<\delta^{3},0\leq y<\delta}w^{\epsilon}(t,x,y),\quad M=\sup_{-\delta^{2}<t-t_{0}<0,|x-x_{0}|<\delta^{3},0\leq y<\delta}w^{\epsilon}(t,x,y).
\end{equation*}

Next, for any fixed $1>\kappa>\epsilon$ and $t_{0}>\kappa,x_{0}>\kappa,\kappa<y_{0}<1-\kappa$, we prove the uniform--in--$\epsilon$ (interior) regularity of $w^{\epsilon}$ in the ball
\begin{equation*}
\left\{(t,x,y):-\kappa^{2}<t-t_{0}\leq0,|x-x_{0}|<\kappa^{3},|y-y_{0}|<\kappa\right\}.
\end{equation*}
By translation invariance with respect to $(t,x)$--variables, we can assume $t_{0}=x_{0}=0$. Define
\begin{equation}\label{rescale1}
w_{1}^{\epsilon}(t,x,y):=w^{\epsilon}(t,x+(y_{0}+\epsilon)t,y+y_{0}).
\end{equation}
Then $w_{1}^{\epsilon}(t,x,y)$ satisfies
\begin{equation}
    \p_{t}w_{1}^{\epsilon}+y\p_{x}w_{1}^{\epsilon}-(w_{1}^{\epsilon}+\epsilon)^{2}\p_{y}^{2}w_{1}^{\epsilon}=0,\ \text{in}\ \left\{-\frac{\kappa^{2}}{2}<t\leq 0,\ |x|<\frac{\kappa^{2}}{2},\ |y|<\kappa\right\}.
\end{equation}
Next, define
\begin{equation}
    w_{2}^{\epsilon}(t,x,y):=w_{1}^{\epsilon}\left(\frac{\kappa^{2}}{4}t,\frac{\kappa^{3}}{8}x,\frac{\kappa}{2}y\right);
\end{equation}
then $w_{2}^{\epsilon}$ satisfies
\begin{equation}
    \p_{t}w_{2}^{\epsilon}+y\p_{x}w_{2}^{\epsilon}-(w_{2}^{\epsilon}+\epsilon)^{2}\p_{y}^{2}w_{2}^{\epsilon} = 0,\ \text{in}\ \left\{-1<t\leq 0, |x|<1,\ |y|<1\right\}.
\end{equation}

Applying the interior estimates from Proposition \ref{interiorholder}, Proposition \ref{interiorsobolevesti}, Proposition \ref{interiorsob1} and Proposition \ref{interiorhypo}, we get that
\begin{equation}\label{rescale2}
\|w_{2}^{\epsilon}\|_{C^{k}(-\frac{1}{4}\leq t\leq 0,|x|\leq \frac{1}{8},|y|\leq \frac{1}{2})}\leq C(k,\kappa,\bar{m},\bar{M}),
\end{equation}
where
\begin{equation*}
\bar{m}=\inf_{-1<t<0,|x|<1,|y|<1}w_{2}^{\epsilon}(t,x,y),\quad \bar{M}=\sup_{-1<t<0,|x|<1,|y|<1}w_{2}^{\epsilon}(t,x,y).
\end{equation*}
To obtain the estimate on the non-tilted cylinder in \eqref{wepsiloninter}, we repeat the preceding Galilean argument with its center at each point of that cylinder.
Therefore by \eqref{rescale1} and \eqref{rescale2},
\begin{equation}\label{wepsiloninter}
\|w^{\epsilon}\|_{C^{k}(-\frac{\kappa^{2}}{100}\leq t-t_{0}\leq0,|x-x_{0}|\leq\frac{\kappa^{3}}{100},|y-y_{0}|\leq\frac{\kappa}{100})}\leq C(k,\kappa,m,M),
\end{equation}
where
\begin{equation*}
m=\inf_{-\kappa^{2}<t-t_{0}<0,|x-x_{0}|<\kappa^{2},|y-y_{0}|<\kappa}w^{\epsilon}(t,x,y),\quad M=\sup_{-\kappa^{2}<t-t_{0}<0,|x-x_{0}|<\kappa^{2},|y-y_{0}|<\kappa}w^{\epsilon}(t,x,y).
\end{equation*}
Combining \eqref{wepsilonboundary}, \eqref{wepsiloninter}, as well as the uniform--in--$\epsilon$ upper bound \eqref{boundone}, and the lower bound \eqref{boundtwo}, we conclude that for any $\frac{1}{2}\geq\delta>100\epsilon$, and any $k\geq 0$,
\begin{equation}\label{wepsilonlocalsmooth}
\|w^{\epsilon}\|_{C^{k}(\delta\leq t\leq \delta^{-1},\delta\leq x\leq \delta^{-1},0\leq y\leq 1-\delta)}\lesssim_{k,\delta}1.
\end{equation}

\textit{Step 6: Passing to the limit.} By the smoothness estimate \eqref{wepsilonlocalsmooth}, there exist a subsequence of $w^{\epsilon}$ (denoted by $w^{\epsilon_{n}}$), and a function
\begin{equation}
w(t,x,y)\in C^{\infty}\left((0,+\infty)\times(0,+\infty)\times [0,1)\right),
\end{equation}
such that $w^{\epsilon_{n}}$ and its derivatives converge uniformly to $w$ in any compact subset of $(0,+\infty)\times(0,+\infty)\times [0,1)$. Furthermore, by the estimates \eqref{ptwe1}, \eqref{boundthree}, \eqref{boundfour}, \eqref{boundfive} and \eqref{wepsilonlocalsmooth} on $w^{\epsilon}$, we conclude that $w\in\mathcal{X}_{T,X}$, where the functional space $\mathcal{X}_{T,X}$ is defined in \eqref{weaksolspace}. By weak compactness, there also exists a subsequence of $w^{\epsilon}$ (denoted by $w^{\epsilon_n}$), such that for any $T>0,X>0$,  
\begin{equation}\label{eq6.44}
\p_{t}w^{\epsilon_{n}}, \p_{x}w^{\epsilon_{n}} \rightharpoonup \p_{t}w, \p_{x}w,
\end{equation}
in the sense of distributions in $(0,T)\times(0,X)\times (0,1)$.

Before passing to the initial and inflow traces, we record a uniform equi-integrability property near these two faces.  Set \(v^\epsilon=(w^\epsilon+\epsilon)^{-1}\) and define

$$
 A^\epsilon(t,x):=\int_0^1 |\nabla_{\tan}v^\epsilon|(1-y)\,dy,
 \qquad
 B^\epsilon(t,x):=\int_0^1 |\nabla_{\tan}v^\epsilon|(y+\epsilon)(1-y)\,dy.
$$
It follows from \eqref{bvesti}--\eqref{bvesti3} that
$$
 \partial_t A^\epsilon+\partial_x B^\epsilon\leq 0.
$$
Consequently, using the uniform bounds on the compatible initial and inflow data, we obtain
$$
 \sup_{x\in[0,X]}\int_0^T B^\epsilon(t,x)\,dt
 \leq
 \int_0^T B^\epsilon(t,0)\,dt
 +\int_0^X A^\epsilon(0,x)\,dx
 \lesssim_{T,X}1.
$$

Together with \eqref{ptwe1}, \eqref{boundfour}, and
$$
 |\nabla_{\tan}w^\epsilon|
 =(w^\epsilon+\epsilon)^2|\nabla_{\tan}v^\epsilon|,
$$
this implies that, for any compact sets
$$
 K_t\Subset(0,X)\times(0,1),
 \qquad
 K_x\Subset(0,T)\times(0,1),
$$
and any \(0<\delta<1\),
$$
 \sup_{\epsilon}\int_0^\delta\int_{K_t}
 |\partial_t w^\epsilon|\,dx\,dy\,dt
 \lesssim_{K_t,T,X}\delta,
 \qquad
 \sup_{\epsilon}\int_0^\delta\int_{K_x}
 |\partial_x w^\epsilon|\,dt\,dy\,dx
 \lesssim_{K_x,T,X}\delta.
$$

Hence no tangential derivative can concentrate on the faces \(t=0\) or \(x=0\).  By first inserting cutoffs that vanish in a \(\delta\)-neighborhood of these faces, passing to the limit \(\epsilon=\epsilon_n\to0\) using \eqref{eq6.44}, and then letting \(\delta\to0\), we may use test functions that do not vanish at \(t=0\) or \(x=0\). 

Finally, we are ready to show that $w$ satisfies \eqref{prandtlcrocco} as well as the required boundary conditions. Since $w^{\epsilon_{n}}$ and its derivatives converge uniformly to $w$ in any compact subset of $(0,+\infty)\times(0,+\infty)\times [0,1)$, using also \eqref{ptwe1}, we have
\begin{equation}
\begin{cases}
\p_{t}w+y\p_{x}w-w^{2}\p_{y}^{2}w=0,\ (t,x,y)\in(0,+\infty)\times(0,+\infty)\times (0,1),\\
w|_{y=1}=0,\quad \p_{y}w|_{y=0}=0.
\end{cases}
\end{equation}
Therefore, to verify that $w$ satisfies all boundary conditions in \eqref{prandtlcrocco}, it remains to check
\begin{equation}
w|_{t=0}=w_{0}(x,y),\ w|_{x=0}=w_{1}(t,y).
\end{equation}
Here $w|_{t=0}$ and $w|_{x=0}$ are defined as the boundary traces of $W^{1,1}$ functions.

For convenience denote $S:=(0,\infty)\times(0,\infty)\times(0,1)$ and $\p S:=(0,\infty)\times(0,1)$. For any $\phi(x,y)\in C_{c}^{\infty}\left((0,+\infty)\times(0,1)\right)$ and $\eta(t)\in C_{c}^{\infty}([0,+\infty))$ with $\eta(0)=1$, notice that
\begin{equation}
\begin{split}
&\iiint_{S} \p_{t}w(t,x,y)\phi(x,y)\eta(t)dtdxdy\\
&=\lim_{n\to\infty}\iiint_{S} \p_{t}w^{\epsilon_{n}}(t,x,y)\phi(x,y)\eta(t)dtdxdy\\
&=-\lim_{n\to\infty}\bigg(\iiint_{S} w^{\epsilon_{n}}(t,x,y)\phi(x,y)\eta'(t)dtdxdy+\iint_{\p S} w^{\epsilon_{n}}(0,x,y)\phi(x,y)dxdy\bigg)\\
&=-\iiint_{S} w(t,x,y)\phi(x,y)\eta'(t)dtdxdy-\iint_{\p S} w_{0}(x,y)\phi(x,y)dxdy\\
&=\iiint_{S} \p_{t}w(t,x,y)\phi(x,y)\eta(t)dtdxdy+\iint_{\p S} \left[w(0,x,y)-w_{0}(x,y)\right]\phi(x,y)dxdy.
\end{split}
\end{equation}
Hence $w|_{t=0}=w_{0}(x,y)$. Similarly we can check that $w|_{x=0}=w_{1}(t,y)$. Therefore, we finish the proof of Proposition \ref{existweaksol}.
\end{proof}
\begin{remark}
    Our proof in fact implies the following estimate. Assume that the inflow data $w_{1}$ satisfies the stronger uniform--in--$T$ bounds
    \begin{equation}
        \begin{split}
            &(1-y)^{3/2}\lesssim w_{1}(t,y)\lesssim (1-y)\log^{\frac{1}{2}}\left(\frac{10}{1-y}\right),\ \forall(t,y)\in[0,+\infty)\times[0,1),\\
            &\iint_{[0,+\infty)\times[0,1)}\frac{|\p_{t}w_{1}(t,y)|}{|w_{1}(t,y)|^{2}}\cdot(1-y)dtdy
            \lesssim 1.
        \end{split}
    \end{equation}
Then the solution $w(t,x,y)$ satisfies
\begin{equation}\label{twdecay0.1}
\sup_{x\in[0,X]}\iint_{[0,\infty)\times[0,1)}y\cdot\frac{|\p_{t}w(t,x,y)|}{w^{2}(t,x,y)}\cdot(1-y)dtdy\lesssim_{X}1.
\end{equation}
This global--in--time bound is significant since it indicates the decay of $\partial_tw$ and therefore the convergence of the solution to steady states. We expect \eqref{twdecay0.1} to be useful for the study of asymptotic stability for Prandtl equations.
\end{remark}

\subsection{Proof of Theorem \ref{mainthree}}\label{pgf2}
In this subsection, we prove Theorem \ref{mainthree} on the global well-posedness for weak solutions to \eqref{prandtl}.

\begin{proof}[Proof of Theorem \ref{mainthree}]\label{pfg1} By the continuity and (strict) monotonicity in $y$ of $u_{0}$ and $u_{1}$, we can define $w_{0}(\xi,\eta)$ and $w_{1}(\tau,\eta)$ using
\begin{equation}
\begin{split}
&w_{0}(\xi,\eta)=(\p_{y}u_{0})(\xi,y_{0}),\ \text{where}\ \ u_{0}(\xi,y_{0})=\eta,\\
&w_{1}(\tau,\eta)=(\p_{y}u_{1})(\tau,y_{1}),\ \text{where}\ \ u_{1}(\tau,y_{1})=\eta.
\end{split}
\end{equation}
%Under Assumption \ref{dataassumecrocco4}, one can check that $w_{0}(\xi,\eta)$ and $w_{1}(\tau,\eta)$ satisfy the assumptions in Proposition \ref{existweaksol}. Indeed, the bounds \eqref{initialrate} follow from \eqref{matchingrate4}, and the bounds \eqref{initialbv} follow from \eqref{higherassume4}, and the fact that\begin{equation}  \int_{0}^{\infty}(1-u_{i})dy=\int_{0}^{1}\frac{1-\eta}{w_{i}}d\eta.\end{equation}

By Proposition \ref{uniweaksol} and Proposition \ref{existweaksol}, for each positive integer $N$, the transformed Prandtl system admits a unique weak solution $w^{N}(\tau,\xi,\eta)$ in $[0,N]\times [0,N] \times [0,1]$ with the corresponding initial and boundary conditions. Also, $w^{N}\in \mathcal{X}_{N,N}$ for all $N\in\Z\cap[1,\infty)$ (recall the definition of space $\mathcal{X}_{T,X}$ in \eqref{weaksolspace}). Moreover, we have the up--to--boundary regularity
$$w^{N}\in C^{\infty}\left((0,N]\times(0,N]\times[0,1)\right).$$
It follows from Proposition \ref{uniweaksol} that for any $M\geq N \geq 1$, $w^{M}\equiv w^{N}$ in $[0,N]\times[0,N]\times[0,1]$.
Then for any $(t,x,y)\in[0,+\infty)\times[0,+\infty)\times[0,1]$, we can define
\begin{equation}
    w(t,x,y):=\lim_{N\to\infty}w^{N}(t,x,y).
\end{equation}
Clearly, $w\equiv w^{N}$ in $[0,N]\times [0,N]\times[0,1]$. Furthermore, one can check that
\begin{equation}
    \begin{cases}
        w\in\mathcal{X}_{T,X},\ \forall\, T,X>0,\
        w\in C^{\infty}\left((0,\infty)\times(0,\infty)\times[0,1)\right),\\
        w|_{t=0}=w_{0},\ w|_{x=0}=w_{1},\ \p_{y}w|_{y=0}=0,
    \end{cases}
\end{equation}
and that $w(t,x,y)$ satisfies the transformed Prandtl equation \eqref{prandtlcrocco} in $(0,\infty)\times(0,\infty)\times[0,1)$.

To get back to the physical variables, we define the function $u(t,x,y)\ge0$ through the equation
\begin{equation}\label{invercroccoweak}
        y=\displaystyle \int_{0}^{u(t,x,y)}\frac{d\eta}{w(t,x,\eta)}.\\
\end{equation}
Clearly, $u(t,x,y)$ is strictly monotone in the $y$-direction, with
$$u|_{y=0}\equiv 0,\quad \lim_{y\to\infty}u(t,x,y)\equiv 1$$
in view of the pointwise bound $(1-\eta)^{3/2}\lesssim w\lesssim (1-\eta)\log^{\frac{1}{2}}\left(\frac{10}{1-\eta}\right)$. We note that since $w$ is smooth in $(0,\infty)\times(0,\infty)\times[0,1)$, we can differentiate \eqref{invercroccoweak} up to any order. By direct calculations, we get that
\begin{equation}
    \begin{cases}
        \p_{y}u(t,x,y)=w(t,x,u(t,x,y)),\\
        (\p_{t},\p_{x})u(t,x,y)=w(t,x,u(t,x,y))\displaystyle \int_{0}^{u(t,x,y)}\frac{(\p_{t},\p_{x})w}{w^{2}}d\eta. 
    \end{cases}
\end{equation}
Hence, for any $T,X>0$, $u(t,x,y)$ satisfies \eqref{solutionmatching2}. The global $W^{1,1}$ bounds in \eqref{globalbound} follow from 
\begin{equation}
\int_{0}^{\infty}|(\partial_{t},\partial_{x}) u|dy \leq \int_{0}^{1}\frac{|(\p_{t},\p_{x})w|}{w^{2}}\cdot(1-\eta)d\eta
\end{equation}
 and a change of variable argument in the integral.
The second and third estimates in \eqref{globalbound} follow from the bounds \eqref{globalboundcrocco} on $w$ and the Crocco change of variables. The fourth estimate in \eqref{globalbound} follows from integrating the weighted entropy identity
\begin{equation}\label{entropy1}
    \partial_{\eta\eta}(w\log w)=(1+\log w) w_{\eta\eta}+\frac{|w_{\eta}|^2}{w}
\end{equation}
against $(1-\eta)(\log\frac{10}{1-\eta})^{-2}$, and using 
the pointwise matching bounds and the estimate $(1-\eta)w_{\eta\eta}\in L^\infty_\tau L^1_{\xi\eta}$. The fifth estimate then follows from the identity relating $w_{\eta\eta}$ and $u_{yyy}$. 

Furthermore, $u(t,x,y)\in C^{\infty}\left((0,\infty)\times(0,\infty)\times[0,\infty)\right)$ follows from \eqref{invercroccoweak} and the fact that
\begin{equation*}
    w\in C^{\infty}\left((0,\infty)\times(0,\infty)\times[0,1)\right).
\end{equation*}
 By \eqref{invercroccoweak}, we can check that $u|_{y=0} \equiv 0$, as well as
\begin{equation}\label{physicaltrace}
    u|_{t=0} = u_{0},\qquad u|_{x=0} = u_{1}.
\end{equation}
\begin{comment}
We briefly explain \eqref{physicaltrace} for completeness. By the elementary properties on the boundary trace of $W^{1,1}$ functions, We can choosing a sequence $t_{n}\to 0+$, such that
\begin{equation}\label{convergetotrace}
\begin{split}
    &u(t_{n},x,y)\rightarrow (\gamma_{0} u)(x,y),\ \text{almost\ everywhere}\ \&\ L^{1}_{loc},\\
    &w(t_{n},x,\eta)\rightarrow w_{0}(x,\eta),\ \text{almost\ everywhere}\ \&\ L^{1}_{loc}.
    \end{split}
\end{equation}
By virtue of \eqref{convergetotrace} and $(1-\eta)^{m}\lesssim w\lesssim (1-\eta)\log^{\frac{1}{2}}\left(\frac{10}{1-\eta}\right)$, one can check that
\begin{equation}
    \int_{0}^{u(t_{n},x,y)}\frac{d\eta}{w(t_{n},x,\eta)}\rightarrow \int_{0}^{(\gamma_{0} u)(x,y)}\frac{d\eta}{w_{0}(x,\eta)},\ \text{almost\ everywhere}\ \&\ L^{1}_{loc}.
\end{equation}
Hence, 
\begin{equation}
y=\int_{0}^{(\gamma_{0} u)(x,y)}\frac{d \eta}{w_{0}(x,\eta)}.
\end{equation}
It follows that $(\gamma_{0} u)=u_{0}(x,y)$. Also, similar argument implies $(\gamma_{1}u)(t,y)=u_{1}(t,y)$, hence the validity of \eqref{physicaltrace}.\end{comment}

Defining $v$ using the second equation in \eqref{prandtl}, we can then check that $(u,v)$ satisfies the dynamical Prandtl equation \eqref{prandtl}.%We refer to \cite{OS99} or \cite{XZ04} for more detailed computation. 

\begin{comment}where the divergence--free counterpart is constructed by 
\begin{equation}
\begin{split}
    v(t,x,y)& = \p_{\eta}w - \int_{0}^{u(t,x,y)}\frac{\p_{\tau}w+u(t,x,y)\p_{\xi}w}{w^{2}(t,x,\eta)}d\eta\\
    & =   -\frac{-\p_{t}u-u\p_{x}u+\p_{y}^{2}u}{\p_{y}u}.\\
\end{split}
\end{equation}\end{comment}

Finally, we prove the uniqueness statement in Theorem \ref{mainthree}. Indeed, assume that two weak solutions $u_{1}(t,x,y)$ and $u_{2}(t,x,y)$ constructed in Theorem \ref{mainthree} satisfy the Prandtl equation \eqref{prandtl} with the same initial and inflow data. One can check that after the  Crocco transform, $w_{1}(\tau,\xi,\eta)$ and $w_{2}(\tau,\xi,\eta)$ satisfy the assumptions in Proposition \ref{uniweaksol}; hence we have $w_{1}(\tau,\xi,\eta)\equiv w_{2}(\tau,\xi,\eta)$. Notice that
\begin{equation*}
    y\equiv \int_{0}^{u_{i}(t,x,y)}\frac{d\eta}{w_{i}(t,x,\eta)},\ i\in\{1,2\},
\end{equation*}
hence $u_{1}(t,x,y)\equiv u_{2}(t,x,y)$ since $w_{1}\equiv w_{2}$. 

Therefore, we complete the proof of Theorem \ref{mainthree}.
\end{proof}

%section 8
\section{Local existence of classical solutions and proof of Theorem \ref{maintwo}}\label{local}
In this section, we study the local classical solutions to the Prandtl equation \eqref{prandtl} and prove well-posedness results that generalize the pioneering work of Oleinik \cite{OS99} to a larger class of initial and boundary conditions.

Recall that under the Crocco transform, the Prandtl equation becomes
\begin{equation}\label{prandtlsection8}
\begin{cases}
\p_{\tau}w+\eta\p_{\xi}w-w^{2}\p_{\eta}^{2}w=0,\ (\tau,\xi,\eta)\in (0,T]\times(0,X]\times(0,1),\\
w|_{\tau=0}=w_{0}(\xi,\eta),\ w|_{\xi=0}=w_{1}(\tau,\eta),\\
w|_{\eta=1}=\p_{\eta}w|_{\eta=0}=0.
\end{cases}
\end{equation}
In this section, we first present various useful a priori estimates for system \eqref{prandtlsection8} in subsection \ref{priorisection8}; in subsection \ref{localmain}, we then establish the local-in-$\tau$ and local-in-$\xi$ classical solution theory for \eqref{prandtlsection8}, as well as the corresponding local classical theory for the original Prandtl equation \eqref{prandtl}.
\subsection{A priori estimates for \eqref{prandtlsection8}}\label{priorisection8}
Throughout this subsection, we assume
\begin{equation}\label{ptwiseinitial}
\begin{split}
(1-\eta)^{3/2}\lesssim_{X}w_{0}(\xi,\eta)\lesssim_{X}(1-\eta)\log^{\frac{1}{2}}\left(\frac{10}{1-\eta}\right),\ \forall(\xi,\eta)\in[0,X]\times[0,1),\\
(1-\eta)^{3/2}\lesssim_{T}w_{1}(\tau,\eta)\lesssim_{T}(1-\eta)\log^{\frac{1}{2}}\left(\frac{10}{1-\eta}\right),\ \forall(\tau,\eta)\in[0,T]\times[0,1).\\
\end{split}
\end{equation}
First, we prove the pointwise bounds for the solutions of \eqref{prandtlsection8}. 
\begin{lemma}[\bf Pointwise Bounds]\label{pointwisecontrol}
Assume that $w$ is the classical solution to \eqref{prandtlsection8} with initial and boundary data satisfying \eqref{ptwiseinitial}. Then
\begin{equation}\label{potwb2}
(1-\eta)^{3/2}\lesssim_{T} w(\tau,\xi,\eta)\lesssim_{T}(1-\eta)\log^{\frac{1}{2}}\left(\frac{10}{1-\eta}\right),\ \forall (\tau,\xi,\eta)\in[0,T]\times[0,X]\times[0,1),
\end{equation}
provided $X\leq 1$; and also,
\begin{equation}\label{potwb3}
(1-\eta)^{3/2}\lesssim_{X} w(\tau,\xi,\eta)\lesssim_{X}(1-\eta)\log^{\frac{1}{2}}\left(\frac{10}{1-\eta}\right),\ \forall (\tau,\xi,\eta)\in[0,T]\times[0,X]\times[0,1),
\end{equation}
provided $T\leq 1$.
\end{lemma}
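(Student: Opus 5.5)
We will follow the comparison-principle argument of Step 2 in the proof of Proposition \ref{existweaksol}, now applied directly to the classical solution $w$ of \eqref{prandtlsection8} with the operator
\[
\mathcal{L}_w:=\p_\tau+\eta\p_\xi-w^2\p_\eta^2,
\]
which is linear once the coefficient $w^2$ is frozen. The only new point is to track which of $T$, $X$ the constants depend on.

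\textbf{Upper bound.} Set
\[
h:=w-C\,(1-\eta)\log^{\frac12}\Big(\frac{10}{1-\eta}\Big).
\]
The computation \eqref{signoperator} with $\epsilon=0$ gives $\mathcal{L}_w[h]\le0$ in the interior (strictly negative wherever $w>0$). The boundary conditions are as follows.
\begin{itemize}
\item At $\eta=1$: $h=0$.
\item At $\eta=0$: $\p_\eta h=C\,\p_\eta\big[(1-\eta)\log^{1/2}\big]|_{\eta=0}>0$, a Hopf-type sign that excludes a positive maximum there.
\item At $\tau=0$ and $\xi=0$: $h\le0$ as soon as $C$ exceeds the constants in \eqref{ptwiseinitial}.
\end{itemize}
Those constants are $C_X$ for $w_0$ and $C_T$ for $w_1$. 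If $X\le1$, then $C_X\le C_1$ is an absolute constant, so $C$ depends only on $T$. Symmetrically, if $T\le1$, then $C$ depends only on $X$. The weak maximum principle for $\mathcal{L}_w$ (parabolic in $\tau$, transport in $\xi$ with $\eta\ge0$, so inflow only at $\xi=0$) then gives $h\le0$.

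\textbf{Lower bound.} Set
\[
g:=w-c\,e^{-M\tau}e^{3\eta}(1-\eta)^{3/2}.
\]
As computed in Step 2 of the proof of Proposition \ref{existweaksol},
\[
\mathcal{L}_w[g]=c\,e^{-M\tau}e^{3\eta}(1-\eta)^{-1/2}\Big[M(1-\eta)^2+w^2\big(9(1-\eta)^2-9(1-\eta)+\tfrac34\big)\Big].
\]
The bracket's second term can be negative, of size $\lesssim w^2$. Near $\eta=1$, the upper bound just proved gives $w^2\lesssim (1-\eta)^2\log\frac{10}{1-\eta}$, which is not dominated by $M(1-\eta)^2$ uniformly. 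However, the polynomial $9s^2-9s+\tfrac34$ with $s=1-\eta$ equals $\tfrac34>0$ for $s$ small, so the dangerous region is $s\in[\tfrac1{10},\tfrac9{10}]$ roughly. There $w\le C$ by the upper bound, so choosing $M\gtrsim C^2$ makes $\mathcal{L}_w[g]>0$ everywhere.

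Here $M$ depends on the upper-bound constant, hence only on $T$ (if $X\le1$) or only on $X$ (if $T\le1$). Then $c$ is chosen small using the lower bounds in \eqref{ptwiseinitial} on the faces $\tau=0$ and $\xi=0$. The factor $e^{-M\tau}\ge e^{-MT}$ is harmless when $T\le1$, and when $X\le1$ it costs only a $T$-dependence, which is allowed. The remaining faces satisfy $g|_{\eta=1}=0$ and $\p_\eta g|_{\eta=0}=-\tfrac{3c}{2}e^{-M\tau}<0$, which forbids a negative minimum on $\eta=0$. The minimum principle then yields $g\ge0$.

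\textbf{Main obstacle.} The delicate point is justifying the maximum principle at the degenerate edge $\eta=1$, where $w\to0$ and the operator loses ellipticity. We handle it by applying the principle on $\eta\le1-\delta$ with the barrier perturbed by a small multiple of a function such as $\delta'\,(\text{positive supersolution})$. Continuity of $w$ up to $\eta=1$, with $w|_{\eta=1}=0$, controls the boundary values at $\eta=1-\delta$ as $\delta\to0$, and then we let $\delta,\delta'\to0$.
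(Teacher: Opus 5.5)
Your proposal is correct and matches the paper's proof. The paper simply reruns the comparison argument of Step~2 of Proposition~\ref{existweaksol} with the same barriers $C(1-\eta)\log^{\frac12}(\frac{10}{1-\eta})$ and $c\,e^{-M\tau}e^{3\eta}(1-\eta)^{3/2}$, and your bookkeeping of the $T$- versus $X$-dependence is exactly what ``$\lesssim_T$'' and ``$\lesssim_X$'' require.

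Two small remarks. First, $\p_\eta h|_{\eta=0}=-C\,\p_\eta\big[(1-\eta)\log^{\frac12}(\frac{10}{1-\eta})\big]|_{\eta=0}>0$; you dropped the minus sign. Second, the truncation at $\eta=1-\delta$ is unnecessary: $h$ and $g$ are continuous on the closed domain and vanish at $\eta=1$, so a positive maximum or negative minimum cannot occur there.
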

\begin{proof}The proof is based on the maximum principle, and follows the same argument as that in the second step of the proof of Proposition \ref{existweaksol}.
\end{proof}

Next, we derive a priori bounds for the higher order derivatives using energy methods.
\begin{lemma}[\bf Energy Estimates]\label{highenergyesti}
Assume that $w$ is the classical solution to \eqref{prandtlsection8}. Then we have
\begin{equation}\label{highenergyid}
\begin{split}
&\sup_{\tau\in[0,T]}\sum_{1\leq\alpha+\beta\leq 2}\int_{0}^{X}\int_{0}^{1}\bigg|\frac{\p_{\tau}^{\alpha}\p_{\xi}^{\beta}w}{w}\bigg|^{2}d\xi d\eta+\sup_{\xi\in[0,X]}\sum_{1\leq\alpha+\beta\leq 2}\int_{0}^{T}\int_{0}^{1}\eta\bigg|\frac{\p_{\tau}^{\alpha}\p_{\xi}^{\beta}w}{w}\bigg|^{2}d\tau d\eta\\
&\quad\quad+\sum_{1\leq\alpha+\beta\leq 2}\iiint_{[0,T]\times[0,X]\times[0,1]}|\p_{\tau}^{\alpha}\p_{\xi}^{\beta}\p_{\eta}w|^{2}d\tau d\eta d\xi\\
&\quad\quad\lesssim\mathcal{N}_{0}+(\mathcal{N}_{1}+ \mathcal{N}_{1}^2)\sum_{1\leq \alpha+\beta\leq 2}\iiint_{[0,T]\times[0,X]\times[0,1]}\bigg|\frac{\p_{\tau}^{\alpha}\p_{\xi}^{\beta}w}{w}\bigg|^{2}d\tau d\eta d\xi.
\end{split}
\end{equation}
In the above $\mathcal{N}_{0}$, $\mathcal{N}_{1}$ denote the quantities
\begin{equation}\label{n0k}
\mathcal{N}_{0}:=\sum_{1\leq\alpha+\beta\leq 2}\int_{0}^{X}\int_{0}^{1}\bigg|\frac{\p_{\tau}^{\alpha}\p_{\xi}^{\beta}w(0,\xi,\eta)}{w_{0}(\xi,\eta)}\bigg|^{2}d\xi d\eta+\sum_{1\leq\alpha+\beta\leq 2}\int_{0}^{T}\int_{0}^{1}\eta\bigg|\frac{\p_{\tau}^{\alpha}\p_{\xi}^{\beta}w(\tau,0,\eta)}{w_{1}(\tau,\eta)}\bigg|^{2}d\tau d\eta,
\end{equation}
as well as 
\begin{equation}\label{n1}
\mathcal{N}_{1}:=\sup_{(\tau,\xi,\eta)\in[0,T]\times[0,X]\times[0,1)}\left(\bigg|\frac{\p_{\tau}w}{w}\bigg|+\bigg|\frac{\p_{\xi}w}{w}\bigg|\right).
\end{equation}
\end{lemma}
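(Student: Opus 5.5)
Writing $D=\p_{\tau}^{\alpha}\p_{\xi}^{\beta}$ with $1\leq\alpha+\beta\leq2$ and $w_D:=Dw$, the plan is to run a weighted $L^2$ energy estimate for each $w_D$ with the singular weight $w^{-2}$. The weight is dictated by the structure: dividing \eqref{prandtlsection8} by $w^{2}$ gives
\begin{equation*}
\p_{\tau}v+\eta\p_{\xi}v+\p_{\eta}^{2}w=0,\qquad v=w^{-1}.
\end{equation*}
Since $\p_\tau,\p_\xi$ commute with $\p_\tau+\eta\p_\xi$ and $\p_\eta^2$, we get
\begin{equation*}
(\p_{\tau}+\eta\p_{\xi})Dv+\p_{\eta}^{2}w_D=0,\qquad Dv=-\frac{w_D}{w^{2}}+R_D,
\end{equation*}
where $R_D$ vanishes when $|D|=1$ and equals $2\,\p w\,\p' w/w^{3}$ when $|D|=2$.

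We test this equation against $-w_D$ and integrate over $[0,\tau]\times[0,X]\times[0,1]$. The diffusion term, after integration by parts in $\eta$, gives $\iiint|\p_\eta w_D|^2$. The boundary terms vanish: at $\eta=0$ because $\p_\eta w_D=0$ there, and at $\eta=1$ because $w_D=0$ there ($w|_{\eta=1}\equiv0$). For the transport term,
\begin{equation*}
\frac{w_D}{w^2}(\p_\tau+\eta\p_\xi)w_D=\tfrac12(\p_\tau+\eta\p_\xi)\frac{w_D^2}{w^2}+\frac{w_D^2}{w^3}(\p_\tau+\eta\p_\xi)w .
\end{equation*}
The exact derivative yields the $\sup_\tau\int|w_D/w|^2$ and $\sup_\xi\int\eta|w_D/w|^2$ terms on the left, with the data contributions at $\tau=0$ and $\xi=0$ forming $\mathcal N_0$. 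For the remainder we use $|(\p_\tau+\eta\p_\xi)w|/w\le2\mathcal N_1$, so it is bounded by $\mathcal N_1\iiint|w_D/w|^2$.

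The $\sup_\xi$ term is handled the same way with the roles of $\tau$ and $\xi$ swapped; the factor $\eta$ comes from the transport coefficient.

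The main obstacle is the commutator $R_D$ for second-order $D$, which enters as
\begin{equation*}
\iiint R_D\,(\p_\tau+\eta\p_\xi)w_D .
\end{equation*}
We should not put the transport derivative on $w_D$ (that would be third order). Instead we substitute the equation: $(\p_\tau+\eta\p_\xi)w_D=D(w^2\p_\eta^2w)$, and then integrate by parts once in $\eta$. This leaves terms of the form $\p_\eta(\p w\,\p'w/w)\cdot w\,\p_\eta(\cdots)$ and products like $w_D\,\p w\,\p_\eta^2 w$.

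Each such term we bound by Cauchy--Schwarz. The factors $\p w/w$ are controlled in $L^\infty$ by $\mathcal N_1$. The remaining factors are either $\p\p_\eta w$ and $\p_\eta w_D$, which are absorbed into the dissipation with a small constant, or $w_D/w$ and $\p w/w$, which lie in the weighted $L^2$ energy. Where $\mathcal N_1$ appears twice this produces the $\mathcal N_1^2$ term. When the $\eta$-derivative falls on $w$ through $w^2\p_\eta^2w$, we use $w\,\p_\eta^2 w=-(\p_\tau+\eta\p_\xi)w/w$, which is bounded by $2\mathcal N_1$, so $w^2\p_\eta^2w$ and its contribution are controlled in terms of $\mathcal N_1$.

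The delicate points are exactly these integrations by parts near $\eta=1$. There we must check that no boundary term at $\eta=1$ survives, using $w_D|_{\eta=1}=0$, and that every weight stays $\lesssim w^{-2}$. Finally we sum over $D$ and absorb the dissipation pieces to obtain \eqref{highenergyid}.
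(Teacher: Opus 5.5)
Your first-order step is correct and is the paper's computation in different form. Testing $L(Dv)+\partial_\eta^2 w_D=0$, with $L:=\partial_\tau+\eta\partial_\xi$, against $-w_D$ is the same as testing the differentiated equation against $w_D/w^2$.

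The gap is in your treatment of the commutator $R_D$ when $|D|=2$. In the tested identity it appears as $-\iiint L(R_D)\,w_D$. Rewriting this as $\iiint R_D\,Lw_D$ needs an integration by parts in $(\tau,\xi)$. That produces boundary terms $\iint R_D w_D$ at $\tau=T$ and $\iint\eta R_D w_D$ at $\xi=X$ (and on the data faces), which you do not account for. They naturally give $\mathcal{N}_1^2\sup_\tau\iint|w_\tau/w|^2$, not a spacetime integral.

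More seriously, after substituting $Lw_D=D(w^2\partial_\eta^2 w)$, the top-order piece (e.g. $\iiint 2w_\tau^2w^{-1}\partial_\eta^2 w_{\tau\tau}$ for $D=\partial_\tau^2$) forces an integration by parts in $\eta$. This yields
\[
-\iiint\Big(\frac{4w_\tau\,\partial_\eta w_\tau}{w}-\frac{2w_\tau^2\,\partial_\eta w}{w^2}\Big)\partial_\eta w_{\tau\tau}.
\]
\begin{itemize}
\item The second term contains $\partial_\eta w$ with no tangential derivative. But $\|\partial_\eta w\|_{L^2}$ is neither on the left of \eqref{highenergyid} nor controlled by $\mathcal{N}_0,\mathcal{N}_1$.
\item The first term is of size $\mathcal{N}_1\|\partial_\eta w_\tau\|_{L^2}\|\partial_\eta w_{\tau\tau}\|_{L^2}$, and it cannot be ``absorbed with a small constant'', because $\mathcal{N}_1$ is not small. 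Young's inequality leaves $C\mathcal{N}_1^2\|\partial_\eta w_\tau\|_{L^2}^2$. You could only feed this back through the first-order estimate, obtaining $\mathcal{N}_1^2\mathcal{N}_0+\mathcal{N}_1^3\iiint|w_\tau/w|^2$.
\end{itemize}
That is strictly weaker than the lemma, and the difference matters. In Lemma \ref{bootstrap} it would put $M^2\mathcal{N}_0$ on the right-hand side, and the requirement $M\gg\mathcal{Q}_0$ would become circular.

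The fix is to leave $L$ on $R_D$. Since $R_D$ contains only first-order tangential derivatives, $L(R_D)$ contains only second-order ones, e.g. $Lw_\tau=w_{\tau\tau}+\eta w_{\tau\xi}$. For $D=\partial_\tau^2$,
\[
L(R_D)=\frac{4w_\tau\,(w_{\tau\tau}+\eta w_{\tau\xi})}{w^3}-\frac{6w_\tau^2\,Lw}{w^4},
\]
so directly
\[
\Big|\iiint L(R_D)\,w_D\Big|\lesssim(\mathcal{N}_1+\mathcal{N}_1^2)\iiint\sum_{1\le\alpha+\beta\le2}\Big|\frac{\partial_\tau^\alpha\partial_\xi^\beta w}{w}\Big|^2 ,
\]
with no boundary terms and no $\eta$-integration by parts. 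This is exactly the paper's treatment of $\mathcal{I}_3$. There, $\partial_\eta^2 w_\tau$ is replaced, via the once-differentiated equation, by $(w_{\tau\tau}+\eta w_{\xi\tau})/w^2-2w_\tau(w_\tau+\eta w_\xi)/w^3$, rather than being integrated by parts. (Minor sign slip: $w\,\partial_\eta^2 w=+(\partial_\tau+\eta\partial_\xi)w/w$.)
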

\begin{proof} 
%For clarity of presentation, we provide the full details for $k=2$. The computations for $k>2$ are completely analogous although more complicated.

\textit{Step 1: $H^{1}$ Estimates.} Differentiate \eqref{prandtlsection8} with respect to $\tau$. Then $w_{\tau}$ satisfies the following equation
\begin{equation}\label{wteq}
\begin{cases}
\p_{\tau}w_{\tau}+\eta\p_{\xi}w_{\tau}-2w\p_{\eta}^{2}ww_{\tau}-w^{2}\p_{\eta}^{2}w_{\tau}=0,\\
w_{\tau}|_{\eta=1} = \p_{\eta}w_{\tau}|_{\eta=0}=0.
\end{cases}
\end{equation}
Testing \eqref{wteq} by $w_{\tau}w^{-2}$ and integrating in $[0,T]\times [0,X]\times [0,1]$, we obtain the following energy inequality:
{\small\begin{align}\label{h1energyid}
&\sup_{\tau\in[0,T]}\iint_{[0,X]\times [0,1]}\frac{|w_\tau|^{2}}{w^{2}}d\xi d\eta+\sup_{\xi\in[0,X]}\iint_{[0,T]\times [0,1]}\frac{\eta|w_{\tau}|^{2}}{w^{2}}d\tau d\eta+\iiint_{[0,T]\times [0,X]\times [0,1]}|\p_{\eta}w_{\tau}|^{2}d\tau d\xi d\eta\nonumber\\
&\lesssim \iint_{[0,X]\times [0,1]}\frac{|w_\tau(0,\xi,\eta)|^{2}}{w^{2}(0,\xi,\eta)}d\xi d\eta+\iint_{[0,T]\times [0,1]}\frac{\eta|w_{\tau}(\tau,0,\eta)|^{2}}{w^{2}(\tau,0,\eta)}d\tau d\eta\nonumber\\
&\quad+\iiint_{[0,T]\times[0,X]\times[0,1]}\frac{|w_{\tau}|^{2}}{w^{2}}\frac{|w_{\tau}+\eta w_{\xi}|}{w}d\tau d\xi d\eta+\iiint_{[0,T]\times[0,X]\times[0,1]}\frac{|w_{\tau}|^{2}}{w^{2}}|w\p_{\eta}^{2}w|d\tau d\xi d\eta\nonumber\\
&\lesssim  \iint_{[0,X]\times [0,1]}\left|\frac{w_\tau(0,\xi,\eta)}{w_{0}(\xi,\eta)}\right|^{2}d\xi d\eta+\iint_{[0,T]\times [0,1]}\eta\cdot\left|\frac{w_{\tau}(\tau,0,\eta)}{w_{1}(\tau,\eta)}\right|^{2}d\tau d\eta\nonumber\\
&\quad+\left(\left\|\frac{w_{\tau}+\eta w_{\xi}}{w}\right\|_{L^{\infty}_{\tau,\xi,\eta}}\right)\times\iiint_{[0,T]\times[0,X]\times[0,1]}\frac{|w_{\tau}|^{2}}{w^{2}}d\tau d\xi d\eta\nonumber\\
&\lesssim \mathcal{N}_{0}+\mathcal{N}_{1}\iiint_{[0,T]\times[0,X]\times[0,1]}\frac{|w_{\tau}|^{2}}{w^{2}}d\tau d\xi d\eta
\end{align}}
Similarly, differentiating \eqref{prandtlsection8} with respect to $\xi$ and testing against $w_{\xi}w^{-2}$, we can also obtain
\begin{equation}
\begin{split}
&\sup_{\tau\in[0,T]}\iint_{[0,X]\times [0,1]}\frac{|w_\xi|^{2}}{w^{2}}d\xi d\eta+\sup_{\xi\in[0,X]}\iint_{[0,T]\times [0,1]}\frac{\eta|w_{\xi}|^{2}}{w^{2}}d\tau d\eta\\
&+\iiint_{[0,T]\times [0,X]\times [0,1]}|\p_{\eta}w_{\xi}|^{2}d\tau d\xi d\eta\lesssim \mathcal{N}_{0}+\mathcal{N}_{1}\iiint_{[0,T]\times[0,X]\times[0,1]}\frac{|w_{\xi}|^{2}}{w^{2}}d\tau d\xi d\eta.
\end{split}
\end{equation}

\textit{Step 2: $H^{2}$ Estimates.} Next we estimate $w_{\tau\tau},w_{\tau\xi}$ and $w_{\xi\xi}$. We present the calculations for bounding $w_{\tau\tau}$ only as the other cases are similar. To this end, we differentiate \eqref{wteq} with respect to $\tau$; then $w_{\tau\tau}$ satisfies the following equation
\begin{equation}\label{wtteq}
\begin{cases}
\p_{\tau}w_{\tau\tau}+\eta\p_{\xi}w_{\tau\tau}-2w\p_{\eta}^{2}w\cdot w_{\tau\tau}-2\p_{\eta}^{2}w(w_{\tau})^{2}-4ww_{\tau}\p_{\eta}^{2}w_{\tau}-w^{2}\p_{\eta}^{2}w_{\tau\tau}=0,\\
w_{\tau\tau}|_{\eta=1}=\p_{\eta}w_{\tau\tau}|_{\eta=0}=0.
\end{cases}
\end{equation}
Testing \eqref{wtteq} by $w_{\tau\tau}w^{-2}$ and then integrating in $[0,T]\times[0,X]\times[0,1]$, we get the following energy inequality
\begin{equation}\label{h2energyid}
\begin{split}
&\sup_{\tau\in[0,T]}\iint_{[0,X]\times [0,1]}\frac{|w_{\tau\tau}|^{2}}{w^{2}}d\xi d\eta+\sup_{\xi\in[0,X]}\iint_{[0,T]\times [0,1]}\frac{\eta|w_{\tau\tau}|^{2}}{w^{2}}d\tau d\eta\\
&\quad\quad+\iiint_{[0,T]\times [0,X]\times [0,1]}|\p_{\eta}w_{\tau\tau}|^{2}d\tau d\xi d\eta\lesssim\mathcal{N}_{0}+\mathcal{I}_{1}+\mathcal{I}_{2}+\mathcal{I}_{3},
\end{split}
\end{equation}
where
\begin{equation*}
        \mathcal{I}_{1}:= \iiint_{[0,T]\times[0,X]\times[0,1]}\frac{|w_{\tau\tau}|^{2}\cdot|w_{\eta\eta}|}{w}d\tau d\xi d\eta,
\end{equation*}
\begin{equation*}\mathcal{I}_{2}:=\iiint_{[0,T]\times[0,X]\times[0,1]}\frac{|w_{\tau}|^{2}\cdot|w_{\eta\eta}w_{\tau\tau}|}{w^{2}}d\tau d\xi d\eta,
\end{equation*}
and
\begin{equation*}\mathcal{I}_{3}:=\iiint_{[0,T]\times[0,X]\times[0,1]}\frac{|w_{\tau}w_{\tau\tau}\p_{\eta}^{2}w_{\tau}|}{w}d\tau d\xi d\eta.
\end{equation*}
\textit{Estimate of $\mathcal{I}_{1}$.} By \eqref{prandtlsection8},
\begin{equation}
\mathcal{I}_{1}\lesssim \mathcal{N}_{1}\iiint_{[0,T]\times[0,X]\times[0,1]}\frac{|w_{\tau\tau}|^{2}}{w^{2}}d\tau d\xi d\eta.
\end{equation}
\textit{Estimate of $\mathcal{I}_{2}$.} By the Cauchy--Schwarz inequality, we have
\begin{equation}
\begin{split}
\mathcal{I}_{2}&\lesssim \left\|\frac{w_{\tau}}{w}\right\|_{L^{\infty}_{\tau,\xi,\eta}}\times\left\|w\p_{\eta}^{2}w\right\|_{L^{\infty}_{\tau,\xi,\eta}}\times\left\|\frac{w_{\tau}}{w}\right\|_{L^{2}_{\tau,\xi,\eta}}\times \left\|\frac{w_{\tau\tau}}{w}\right\|_{L^{2}_{\tau,\xi,\eta}}\\
&\lesssim (\mathcal{N}_{1})^{2} \left\|\frac{w_{\tau}}{w}\right\|_{L^{2}_{\tau,\xi,\eta}}\times \left\|\frac{w_{\tau\tau}}{w}\right\|_{L^{2}_{\tau,\xi,\eta}}.
\end{split}
\end{equation}
\textit{Estimate of $\mathcal{I}_{3}$.} The estimate of $\mathcal{I}_{3}$ is a bit complicated since more derivatives are involved. Thanks to \eqref{prandtlsection8} and \eqref{wteq}, we have
\begin{equation}
\begin{split}
\p_{\eta}^{2}w_{\tau}&=\frac{w_{\tau\tau}+\eta w_{\xi\tau}-2w\p_{\eta}^{2}w\cdot w_{\tau}}{w^{2}}=\frac{w_{\tau\tau}+\eta w_{\xi\tau}}{w^{2}}-2\frac{w_{\tau}^{2}+\eta w_{\tau}w_{\xi}}{w^{3}}.
\end{split}
\end{equation}
Hence,
\begin{equation}
\begin{split}
\mathcal{I}_{3}&\lesssim\mathcal{N}_{1}\left(\iiint_{[0,T]\times[0,X]\times[0,1]}\frac{|w_{\tau\tau}|^{2}}{w^{2}}d\tau d\xi d\eta+\iiint_{[0,T]\times[0,X]\times[0,1]}\frac{|w_{\xi\tau}|^{2}}{w^{2}}d\tau d\xi d\eta\right)\\
&\quad\quad+(\mathcal{N}_{1})^{2}\left\|\frac{w_{\tau}+\eta w_{\xi}}{w}\right\|_{L^{2}_{\tau,\xi,\eta}}\times \left\|\frac{w_{\tau\tau}}{w}\right\|_{L^{2}_{\tau,\xi,\eta}}.
\end{split}
\end{equation}
Therefore, the conclusion of Lemma \ref{highenergyesti} follows from \eqref{h1energyid}, \eqref{h2energyid}, and bounds on $\mathcal{I}_1, \mathcal{I}_2$, $ \mathcal{I}_3$.
\end{proof}
\begin{remark}By Lemma \ref{highenergyesti}, we can see that in order to control the higher--order energy bounds, we need the pointwise bounds of $\p_{\tau}w$ and $\p_{\xi}w$. However, a priori we only have the pointwise bounds for $w$ itself (see Lemma \ref{pointwisecontrol}). Therefore, the higher--order energy estimates cannot be closed for {\it arbitrarily large} $T>0$ and $X>0$, and we need either $T$ or $X$ to be small to bound the higher order energy.
\end{remark}

Set
\begin{equation}\label{nbdry}
    \mathcal{N}_{\text{bdry}}:=\sup_{\tau\in [0,T],\eta\in[0,1)}\frac{|(\p_{\tau},\p_{\xi})w(\tau,0,\eta)|}{w_{1}(\tau,\eta)}+\sup_{\xi\in[0,X],\eta\in[0,1)}
   \frac{|(\p_{\tau},\p_{\xi})w(0,\xi,\eta)|}{w_0(\xi,\eta)}.
\end{equation}

The next bootstrap lemma is crucial for closing the energy estimates in Lemma \ref{highenergyesti} when either $T$ or $X$ is sufficiently small.
\begin{lemma}[\bf Bootstrap Lemma]\label{bootstrap}
For any $T>0$, there exist $X>0$ and $M>0$ (depending on the initial data $w_{0}$ and the inflow data $w_{1}$, as well as $T$),  such that under the bootstrap assumption
\begin{equation}\label{bootstrap0.1}
\mathcal{N}_{1}:=\sup_{\tau\in[0,T],\xi\in[0,X],\eta\in[0,1)}\left(\left|\frac{w_{\tau}}{w}\right|+\left|\frac{w_{\xi}}{w}\right|\right)\leq 10M,\\
\end{equation}
we have the improved bound
\begin{equation}\label{finerbd}
\mathcal{N}_{1}\leq 5M.\\
\end{equation}
Furthermore, we have
\begin{equation}\label{higherenergyclose}
\begin{split}
&\sup_{\tau\in[0,T]}\sum_{1\leq\alpha+\beta\leq 2}\int_{0}^{X}\int_{0}^{1}\bigg|\frac{\p_{\tau}^{\alpha}\p_{\xi}^{\beta}w}{w}\bigg|^{2}d\xi d\eta+\sup_{\xi\in[0,X]}\sum_{1\leq\alpha+\beta\leq 2}\int_{0}^{T}\int_{0}^{1}\eta\bigg|\frac{\p_{\tau}^{\alpha}\p_{\xi}^{\beta}w}{w}\bigg|^{2}d\tau d\eta\\
&\quad\quad\quad+\sum_{1\leq\alpha+\beta\leq 2}\int_{0}^{T}\int_{0}^{X}\int_{0}^{1}|\p_{\tau}^{\alpha}\p_{\xi}^{\beta}\p_{\eta}w|^{2}d\tau d\eta d\xi\lesssim \mathcal{N}_{0}.
\end{split}
\end{equation}
In the above, we used the definition \eqref{n0k} for the quantity $\mathcal{N}_{0}$.

Similarly, for any $X>0$ there exists $T>0$ (depending on $X$, the initial data $w_{0}$ and the inflow data $w_{1}$) such that \eqref{higherenergyclose} holds.
\end{lemma}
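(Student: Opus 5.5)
We will prove the Bootstrap Lemma by splitting $[0,T]\times[0,X]\times[0,1)$ into the regions $\eta\le\frac12$ and $\eta\ge\frac12$, and bounding $\mathcal{N}_1$ differently in each; the energy bound \eqref{higherenergyclose} then follows from Lemma \ref{highenergyesti}. We treat the case of fixed $T$ and small $X$; the other case is symmetric, with the roles of the $\tau$- and $\xi$-traces interchanged. Under the bootstrap assumption \eqref{bootstrap0.1}, Lemma \ref{highenergyesti} gives
\[
E:=\sup_\tau\sum_{1\le\alpha+\beta\le2}\Big\|\frac{\p_\tau^\alpha\p_\xi^\beta w}{w}\Big\|^2_{L^2_{\xi,\eta}}+\cdots\lesssim \mathcal{N}_0+(M+M^2)\,X\,E .
\]
We bound the spacetime integral by $X\sup_\xi(\cdot)$; since the $\xi$-trace term carries the weight $\eta$, near $\eta=0$ we instead use $T\sup_\tau(\cdot)$ where necessary. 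A cleaner choice is to bound $\iiint|\cdot|^2\le X\sup_\tau\iint|\cdot|^2$. Choosing $X\le c/(M+M^2)$ absorbs the last term, and \eqref{higherenergyclose} follows with $E\lesssim\mathcal{N}_0$. The constant $M$ is fixed first, depending only on the data (on $\mathcal{N}_0$, $\mathcal{N}_{\rm bdry}$ and the pointwise bounds of Lemma \ref{pointwisecontrol}); $X$ is chosen afterwards.

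\emph{Region $\eta\le\frac12$.} Here $w\sim1$ by Lemma \ref{pointwisecontrol}, so it suffices to bound $|\p_\tau w|+|\p_\xi w|$ in $L^\infty$. We use an anisotropic Sobolev embedding on the box $[0,T]\times[0,X]\times[0,\frac34]$. The energy $E$ controls $\p^{\le2}_{\tau,\xi}w$ in $L^\infty_\tau L^2_{\xi,\eta}$ and $\p^{\le2}_{\tau,\xi}\p_\eta w$ in $L^2$. One $\eta$-derivative gives $L^\infty_\eta$ via $H^1_\eta\subset L^\infty_\eta$ (the $L^2_\eta$ part comes from $w\sim1$). In $\xi$ we use the one-dimensional bound
\[
\|f\|_{L^\infty_\xi}\lesssim X^{-1/2}\|f\|_{L^2_\xi}+\|f\|^{1/2}_{L^2_\xi}\|\p_\xi f\|^{1/2}_{L^2_\xi},
\]
and the $\tau$-direction is handled via the $\sup_\tau$ norm. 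The factor $X^{-1/2}$ is harmful, so we instead use the estimate anchored at the inflow trace:
\[
\p_\tau w(\tau,\xi,\eta)=\p_\tau w(\tau,0,\eta)+\int_0^\xi\p_{\xi\tau}w ,
\]
with the boundary term bounded by $\mathcal{N}_{\rm bdry}$ and the integral bounded by $X^{1/2}$ times $L^2_\xi$ norms controlled by $E$ after the $\eta$ embedding; the $\tau$-direction similarly uses the $\sup_\tau$ control together with $H^1_\tau$ on $\p_{\tau\tau}$. This yields
\[
\mathcal{N}_1\big|_{\eta\le1/2}\le C\mathcal{N}_{\rm bdry}+C(X)\mathcal{N}_0^{1/2}\,\text{(small)} ,
\]
which is $\le 5M$ once $M\ge C\mathcal{N}_{\rm bdry}$ and $X$ is small.

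\emph{Region $\eta\ge\frac12$.} This is the main obstacle, since the weight $w^{-1}$ degenerates at $\eta=1$ and the energy gives no pointwise control there. We use a comparison principle. The function $q=\p_\tau w$ solves the linear equation \eqref{wteq},
\[
\p_\tau q+\eta\p_\xi q-w^2\p_\eta^2q-2w\p_\eta^2w\,q=0 ,
\]
and $w$ itself satisfies the same type of equation, since $\mathcal{L}w:=\p_\tau w+\eta\p_\xi w-w^2\p^2_\eta w=0$ gives $\mathcal{L}'w=-2w^2\p^2_\eta w$. We compare $\pm q$ with a barrier $\Phi=Ke^{\lambda\tau}w\,\psi(\eta)$, where $\psi$ is an explicit positive factor; the linearized operator applied to $\Phi$ produces the term $\lambda\Phi$ plus terms of size $|w\p^2_\eta w|\,\Phi$ and $w^2\psi''$. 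Here $w\p_\eta^2w=(w_\tau+\eta w_\xi)/w$ is bounded by $10M$ under the bootstrap assumption. The choice $\lambda\sim M$ would make the final bound $e^{CMT}$, which is bad for fixed $T$. The remedy is to pick $\lambda$ depending on $M$ but use smallness of $X$: we instead exploit the transport in $\xi$ by taking the barrier growing in $\xi$, $e^{\lambda\xi}$, which contributes $\eta\lambda\ge\lambda/2$ on $\eta\ge\frac12$ and thus dominates the bad zeroth-order term when $\lambda\sim M$, at a cost of $e^{\lambda X}\le2$ for $X\le 1/\lambda$.

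The boundary comparisons run as follows. At $\tau=0$ and $\xi=0$ we have $|q|\le\mathcal{N}_{\rm bdry}w$. At $\eta=1$ both $q$ and $\Phi$ vanish, and near $\eta=1$ we use the ratio form (the maximum principle for $q/w$). At $\eta=\frac12$, $|q|\le C(\mathcal{N}_{\rm bdry}+\text{small})$ by the previous region. Taking $K\sim\mathcal{N}_{\rm bdry}+1$ gives $|q|/w\le 2K\le5M$ for $M$ large. The delicate point is making the constraints consistent: $M$ is determined by $\mathcal{N}_{\rm bdry}$ and $\mathcal{N}_0$ alone, and $X$ is then chosen small relative to $M$. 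The same argument applies to $\p_\xi w$.
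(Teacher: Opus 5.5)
Your treatment of the region $\eta\ge\frac12$ is essentially the paper's Step~2. You compare $\pm w_\tau,\pm w_\xi$ with $Me^{100M\xi}w$, use $\lambda\eta\ge\lambda/2$ to dominate $2w\partial_\eta^2w=2(w_\tau+\eta w_\xi)/w$, and pay $e^{\lambda X}\le 2$. Take $\psi\equiv1$: a nonconstant $\psi$ produces $2w^2w_\eta\psi'$, which the bootstrap does not control. At $\eta=\frac12$ you can use \eqref{independentx1} directly. The gaps are in your other two steps.

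\textbf{Absorption.} This step is wrong as written. One has $\iiint_{[0,T]\times[0,X]\times[0,1]}|\cdot|^2\le T\sup_\tau\iint|\cdot|^2$, not $X\sup_\tau\iint|\cdot|^2$, because the $\sup_\tau$ norm already integrates over $\xi\in[0,X]$. Your inequality is the right one for the small-$T$ case, with $T$ in place of $X$. Your fallback ``use $T\sup_\tau$ near $\eta=0$'' gives no smallness. A factor $X$ can only come from the $\sup_\xi$ norm, which carries the degenerate weight $\eta$. The missing idea is to cut at $\eta=\iota$. Since $\partial_\tau^\alpha\partial_\xi^\beta w$ vanishes at $\eta=1$, $\|\partial_\tau^\alpha\partial_\xi^\beta w\|_{L^\infty_\eta}\le\|\partial_\tau^\alpha\partial_\xi^\beta\partial_\eta w\|_{L^2_\eta}$. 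So the strip $\eta<\iota$ costs $\iota$ times the dissipation part of $E_2$, while $\eta>\iota$ costs $(X/\iota)\sup_\xi\iint\eta|\cdot|^2$. With $\iota=\sqrt X$ you gain only $\sqrt X$, so you need $X\ll_T M^{-4}$, not $X\lesssim(M+M^2)^{-1}$.

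\textbf{The region $\eta\le\frac12$.} Your bound here is not supported by $E_2$. Controlling $\int_0^\xi\partial_{\xi\tau}w\,d\xi'$ uniformly in $(\tau,\eta)$ ``after the $\eta$ embedding'' needs $\sup_\tau\|\partial_\eta\partial_{\xi\tau}w\|_{L^2_{\xi\eta}}$. But $E_2$ contains $\partial_\eta\partial^2_{\tau,\xi}w$ only in $L^2_{\tau\xi\eta}$, and upgrading to $L^\infty_\tau$ via $H^1_\tau$ would require a third tangential derivative. So your claim $\mathcal N_1|_{\eta\le1/2}\le C\mathcal N_{\rm bdry}+o_{X\to0}(1)$ is unproved, and it is exactly what lets you fix $M$ first and shrink $X$ afterwards.

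The paper gets uniform-in-$X$ control only in $L^\infty_\tau L^p_\eta$ with $p<\infty$ (Lemma~\ref{anisotwo}). There it trades $f_{\eta\eta}$ for tangential derivatives via $f_{\eta\eta}=(f_\tau+\eta f_\xi)/w^2-2f(w_\tau+\eta w_\xi)/w^3$, then interpolates in $(\tau,\eta)$ at fixed $\xi$ with the weight $\eta$. It reaches $L^\infty$ only by interpolating with the $X^{-2}$ H\"older bound of Lemma~\ref{aniembed}, which gives $X^{-1/10}\mathcal Q_0$, a bound that \emph{grows} as $X\to0$. That is why the constraints genuinely compete and the paper needs $M=K_T\mathcal D^{5/3}$, $X=c_TM^{-4}$.

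Your inflow anchoring can be rescued, but only with that same equation trick used at fixed $\tau$. On $\eta\le\frac34$ it gives $\sup_\tau\|\partial_\eta^2w_\tau\|_{L^2_{\xi\eta}}\lesssim_T(1+M)E^{1/2}$. Set $g=w_\tau-w_\tau|_{\xi=0}$, which vanishes at $\xi=0$. A sine-series anisotropic Gagliardo--Nirenberg bound then gives $\|g\|_{L^\infty}\lesssim X^{1/4}\|g_\xi\|_{L^2}^{3/4}\|g_{\eta\eta}\|_{L^2}^{1/4}+X^{1/2}\|g_\xi\|_{L^2}$. This is $O(X^{1/4}M^{1/4}\mathcal N_0^{1/2})$ and hence harmless, provided the inflow trace of $\partial_\eta^2w_\tau$ is controlled by the data. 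The same works for $w_\xi$.
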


\begin{proof} 
We only give a detailed proof in the case of arbitrarily large $T>0$, and the proof of the case when $X>0$ is arbitrary follows from a similar (indeed, simpler) argument. Since we only need to obtain a local solution in $\xi$, we assume $X\leq 1$ in the following calculations. 

Since $X\leq1$, let $\overline N_0$ and
$\overline N_{\rm bdry}$ denote the quantities in \eqref{n0k} and
\eqref{nbdry}, respectively, with every initial-data interval $[0,X]$
replaced by $[0,1]$. Thus
\[ N_0\leq\overline N_0,
 \qquad
 N_{\rm bdry}\leq\overline N_{\rm bdry},\]
and these majorants depend only on the prescribed data, independently
of the value of $X$ chosen below. Set
\begin{equation}\label{Q_0}
  \mathcal Q_0:=
 \overline{\mathcal N}_0^{1/2}
 +\overline{\mathcal N}_0^{5/2}.
\end{equation}

The proof is divided into several steps. 

\textit{Step 1. Energy bounds.} By the bootstrap assumption \eqref{bootstrap0.1} and Lemma \ref{highenergyesti}, we have
\begin{equation}\label{2thenergy}
\begin{split}
\|w\|_{E_{2}}^2:=&\sup_{\tau\in[0,T]}\sum_{1\leq\alpha+\beta\leq 2}\int_{0}^{X}\int_{0}^{1}\bigg|\frac{\p_{\tau}^{\alpha}\p_{\xi}^{\beta}w}{w}\bigg|^{2}d\xi d\eta+\sup_{\xi\in[0,X]}\sum_{1\leq\alpha+\beta\leq 2}\int_{0}^{T}\int_{0}^{1}\eta\bigg|\frac{\p_{\tau}^{\alpha}\p_{\xi}^{\beta}w}{w}\bigg|^{2}d\tau d\eta\\
&+\sum_{1\leq \alpha+\beta\leq 2}\int_{0}^{T}\int_{0}^{X}\int_{0}^{1}\big|\p_{\tau}^{\alpha}\p_{\xi}^{\beta}\p_{\eta}w\big|^{2}d\tau d\eta d\xi\\
\lesssim& \mathcal{N}_{0}+M^{2}\sum_{1\leq\alpha+\beta\leq 2}\int_{0}^{T}\int_{0}^{X}\int_{0}^{1}\bigg|\frac{\p_{\tau}^{\alpha}\p_{\xi}^{\beta}w}{w}\bigg|^{2}d\tau d\xi d\eta.
\end{split}
\end{equation}
By Lemma \ref{pointwisecontrol}, for any $0<\iota\ll 1$,
\begin{equation}
\begin{split}
&\int_{0}^{T}\int_{0}^{X}\int_{0}^{1}\bigg|\frac{\p_{\tau}^{\alpha}\p_{\xi}^{\beta}w}{w}\bigg|^{2}d\tau d\xi d\eta\\
&=\int_{0}^{T}\int_{0}^{X}\int_{0}^{\iota}\bigg|\frac{\p_{\tau}^{\alpha}\p_{\xi}^{\beta}w}{w}\bigg|^{2}d\tau d\xi d\eta+\int_{0}^{T}\int_{0}^{X}\int_{\iota}^{1}\bigg|\frac{\p_{\tau}^{\alpha}\p_{\xi}^{\beta}w}{w}\bigg|^{2}d\tau d\xi d\eta\\
&\lesssim_{T} \iota\int_{0}^{T}\int_{0}^{X}\|\p_{\tau}^{\alpha}\p_{\xi}^{\beta}w\|_{L^{\infty}_{\eta}[0,1]}^{2}d\tau d\xi+\frac{X}{\iota}\sup_{\xi\in[0,X]}\int_{0}^{T}\int_{0}^{1}\eta\bigg|\frac{\p_{\tau}^{\alpha}\p_{\xi}^{\beta}w}{w}\bigg|^{2}d\tau d\eta\\
&\lesssim_{T} \iota\int_{0}^{T}\int_{0}^{X}\int_{0}^{1}|\p_{\tau}^{\alpha}\p_{\xi}^{\beta}\p_{\eta}w|^{2}d\tau d\xi d\eta+\frac{X}{\iota}\sup_{\xi\in[0,X]}\int_{0}^{T}\int_{0}^{1}\eta\bigg|\frac{\p_{\tau}^{\alpha}\p_{\xi}^{\beta}w}{w}\bigg|^{2}d\tau d\eta.
\end{split}
\end{equation}
Therefore, choosing $\iota=\sqrt{X}$ and setting
\begin{equation}\label{requireone}
X\ll_{T} M^{-4},
\end{equation}
the second term in the RHS of \eqref{2thenergy} can then be absorbed by the LHS of \eqref{2thenergy}. As a consequence,
\begin{equation}\label{2thenergyineq}
\|w\|_{E_{2}}^2\lesssim \mathcal{N}_{0},
\end{equation}
provided that \eqref{requireone} holds.

\textit{Step 2. The maximum principle.} For $f\in\{w_{\tau},w_{\xi}\}$, one can check that $f$ satisfies the following equation
\begin{equation}\label{fequation}
\begin{cases}
\p_{\tau}f+\eta\p_{\xi}f-w^{2}\p_{\eta}^{2}f-2w\p_{\eta}^{2}w\cdot f=0,\\
f|_{\eta=1}=\p_{\eta}f|_{\eta=0}=0.
\end{cases}
\end{equation}
Define the function $F$ and the differential operator $\mathcal{L}$ as
\begin{equation*}
F(\tau,\xi,\eta):=f(\tau,\xi,\eta)-Me^{100M\xi}w,\qquad \mathcal{L}:=\p_{\tau}+\eta\p_{\xi}-w^{2}\p_{\eta}^{2}-2w\p_{\eta}^{2}w.
\end{equation*}
By direct calculations we have
\begin{equation*}
\mathcal{L}[F]=-Me^{100M\xi}w\left(100\eta M-2w\p_{\eta}^{2}w\right).
\end{equation*}
Therefore it follows from \eqref{prandtlsection8} and \eqref{bootstrap0.1} that
\begin{equation}
\mathcal{L}[F](\tau,\xi,\eta)<0,\quad \forall \tau\in[0,T], \xi\in[0,X], \eta\in[1/2,1).
\end{equation}

By \eqref{2thenergyineq}, Lemma \ref{pointwisecontrol} and Lemma \ref{anisotwo},
\[
 \sup_{\substack{\xi\in[0,X],\,\tau\in[0,T]\\
                  \eta\in[1/4,3/4]}}
 \bigl(|w_\tau|+|w_\xi|\bigr)
 \lesssim_T\mathcal Q_0.
\]
Therefore,
\[
 F|_{\eta=1/2}<0,\qquad
 F|_{\tau=0}<0,\qquad
 F|_{\xi=0}<0,
\]
provided that
\begin{equation} \label{requiretwo}
 M\gg_T\mathcal Q_0+\overline N_{\rm bdry}.
\end{equation}

Therefore by the maximum principle (recall $w\p_{\eta}^{2}w$ is bounded under the bootstrap assumption), we can conclude that 
\begin{equation}\label{oneside0.1}
F\leq 0,\ \forall (\tau,\xi,\eta)\in[0,T]\times[0,X]\times[1/2,1].
\end{equation}

By applying the same argument for $\widetilde{F}=f+Me^{100M\xi}w$, we obtain that $\widetilde{F}\ge0$ on $[0,T]\times[0,X]\times[1/2,1]$. Combining this with \eqref{oneside0.1}, we conclude under the condition \eqref{requiretwo},
\begin{equation}\label{farfieldbd}
|w_{\tau},w_{\xi}|\leq M e^{100M\xi}w,\quad\forall (\tau,\xi,\eta)\in[0,T]\times[0,X]\times[1/2,1].
\end{equation}

Thus the improved bound \eqref{finerbd} holds on $[0,T]\times[0,X]\times[1/2,1]$ provided
\begin{equation}\label{requirefour}
X\leq\frac{1}{200M}.
\end{equation}

\textit{Step 3. Closing the bootstrap.} To fully close the bootstrap, it remains to derive the improved bound \eqref{finerbd} in $[0,T]\times[0,X]\times[0,\frac{1}{2}]$.

By \eqref{2thenergyineq} and Lemma \ref{anisotwo}, for every $1<p<\infty$,
\begin{equation} \label{8.31}
 \sup_{\xi\in[0,X],\,\tau\in[0,T]}
 \left(
 \|w_\tau\|_{L^p(0,1/2)}
 +\|w_\xi\|_{L^p(0,1/2)}
 \right)
 \lesssim_{p,T}\mathcal Q_0.
\end{equation}
Let $\alpha>0$ be as in Lemma \ref{aniembed} and set
\[
 \theta:=\frac{1}{1+\alpha p}.
\]
Choose $p$ sufficiently large that $2\theta\leq1/10$. For
$f\in\{w_\tau,w_\xi\}$, interpolation, Lemma \ref{aniembed}, and \eqref{2thenergyineq}
give
\[
\begin{aligned}
 \|f\|_{L^\infty(0,1/2)}
 &\lesssim
 \|f\|_{L^p(0,1/2)}^{1-\theta}
 \|f\|_{C^\alpha(0,1/2)}^\theta                                      \\
 &\lesssim_{p,T}
 \mathcal Q_0^{1-\theta}
 \left(X^{-2}\overline N_0^{1/2}\right)^\theta                         \\
 &\lesssim_T X^{-1/10}\mathcal Q_0.
\end{aligned}
\]
Consequently,
\[
 \sup_{\substack{\tau\in[0,T],\,\xi\in[0,X]\\
                  \eta\in[0,1/2]}}
 \frac{|w_\tau|+|w_\xi|}{w}
 \lesssim_T X^{-1/10}\mathcal Q_0.
 %\tag{8.33}
\]
Thus the improved bootstrap bound in this region holds provided
that
\begin{equation}\label{requirefive}
 X^{-1/10}\mathcal Q_0\ll_T M.
% \tag{8.34}
\end{equation}

It remains to verify that \eqref{requireone}, \eqref{requiretwo}, \eqref{requirefour}, and \eqref{requirefive}
are compatible. Set
\[
 \mathcal D:=1+\mathcal Q_0+\overline N_{\rm bdry}.
\]
Choose $c_T>0$ sufficiently small and $K_T>0$ sufficiently large,
and set
\[
 M:=K_T\mathcal D^{5/3},
 \qquad
 X:=c_TM^{-4}.
\]
Taking $c_T\leq1/200$ sufficiently small ensures \eqref{requireone} and
\eqref{requirefour}, while \eqref{requiretwo} follows from the choice of $K_T$. Moreover,
\[
 \frac{X^{-1/10}\mathcal Q_0}{M}
 =
 c_T^{-1/10}\mathcal Q_0M^{-3/5}
 \leq c_T^{-1/10}K_T^{-3/5},
\]
because
\[
 M^{3/5}=K_T^{3/5}\mathcal D
 \geq K_T^{3/5}\mathcal Q_0.
\]
Thus, after increasing $K_T$ if necessary, \eqref{requirefive} also holds.
This closes the bootstrap.

\end{proof}

%Since we have different controls for tangential derivatives ($\tau,\xi$ direction) and normal derivative ($\eta$ direction), which suggests us to state the following two anisotropic embedding lemmas, which are useful in this section.
We now turn to the proof of Lemma \ref{aniembed} and Lemma \ref{anisotwo}, which are essentially anisotropic Sobolev type inequalities.
%The first anisotropic inequality implies that $w$ is a \emph{classical solution} provided the higher--order energy of $w$ (presented in Lemma \ref{highenergyesti}) is finite (for $k\geq 2$).
\begin{lemma}[\bf Embedding into H\"older Spaces]\label{aniembed}
Assume that $w$ is a classical solution to \eqref{prandtlsection8} on $[0,T]\times[0,X]\times[0,1]$ with $X\in(0,1]$ and $\|w\|_{E_{2}}<\infty$. Then for some $0<\alpha<1$ we have
\begin{equation}
\|(\p_{\tau},\p_{\xi})w\|_{C^{\alpha}([0,T]\times[0,X]\times[0,1])}\lesssim_{T} X^{-2}\cdot\|w\|_{E_{2}}.
\end{equation}
Recall that $\|\cdot\|_{E_{2}}$ is defined in \eqref{2thenergy}.
\end{lemma}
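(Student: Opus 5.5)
The plan is to obtain the H\"older bound for $f\in\{w_\tau,w_\xi\}$ by controlling separately the H\"older quotients in the tangential directions $(\tau,\xi)$ and in the normal direction $\eta$, and then combining them by the triangle inequality. The only input is the $E_2$ energy, which controls three quantities. First, $\p_\eta f,\ \p_\eta\p_\tau f,\ \p_\eta\p_\xi f\in L^2_{\tau,\xi,\eta}$. Second, $\p^{\le1}_{\tau,\xi}f/w\in L^\infty_\tau L^2_{\xi,\eta}$. Third, the $\eta$-weighted traces $\sqrt{\eta}\,\p^{\le1}_{\tau,\xi}f/w\in L^\infty_\xi L^2_{\tau,\eta}$. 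Since $w\lesssim 1$ by Lemma \ref{pointwisecontrol}, we may drop the weight $w^{-1}$ and get unweighted bounds on $f$, $\p_\tau f$, $\p_\xi f$.

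\emph{Step 1: bounds in $\eta$.} Since $f|_{\eta=1}=0$, the one-dimensional Sobolev embedding $H^1_\eta(0,1)\subset C^{1/2}_\eta$ gives, for each fixed $(\tau,\xi)$,
\[
\|g(\tau,\xi,\cdot)\|_{C^{1/2}_\eta}\lesssim\|\p_\eta g(\tau,\xi,\cdot)\|_{L^2_\eta}
\]
for $g\in\{f,\p_\tau f,\p_\xi f\}$. It therefore suffices to show that $\|\p_\eta g\|_{L^2_\eta}$ is bounded, with the claimed power of $X$, uniformly or at least in a suitable mixed norm in $(\tau,\xi)$.

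\emph{Step 2: bounds in $(\tau,\xi)$.} On the rectangle $[0,T]\times[0,X]$, consider the function $\tau,\xi\mapsto\|\p_\eta f(\tau,\xi,\cdot)\|_{L^2_\eta}$. Its $(\tau,\xi)$-derivatives lie in $L^2_{\tau,\xi}$ by the $E_2$ bound on $\p_\eta\p_{\tau,\xi}f$. The sharper ingredient is the following. Using $\sup_\tau\|f\|_{L^2_{\xi,\eta}}$ together with $\sup_\xi\|\sqrt\eta f\|_{L^2_{\tau,\eta}}$ (and the same for $\p_{\tau,\xi}f$), one obtains $f\in W^{1,2}$ in the two tangential variables with values in $L^2_\eta$. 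Interpolating with $\p_\eta$ control, via a product estimate $\|g\|^2_{L^\infty_\eta}\lesssim\|g\|_{L^2_\eta}\|\p_\eta g\|_{L^2_\eta}$, places $f$ in an anisotropic Sobolev space of total order exceeding the critical dimension $3/2$ in $(\tau,\xi,\eta)$. Morrey's embedding on a rectangle of width $X$ then yields H\"older continuity. The extension constant for a domain of width $X$ is where the loss $X^{-2}$ enters, coming from a cutoff or extension with derivatives of size $X^{-1}$ applied twice.

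\emph{Main obstacle.} The delicate point is that the $E_2$ norm only gives \emph{second} tangential derivatives of $w$, i.e.\ first tangential derivatives of $f$, in $L^\infty_\tau L^2$, plus only the $L^2_{\tau,\xi,\eta}$ bound on $\p_\eta\p_{\tau,\xi}f$. This sits only slightly above the embedding threshold, and the trace term carries the degenerate weight $\eta$.

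To handle this, I would first use the $\sup_\tau$ energy. On each time slice this gives $f(\tau)\in H^1_\xi L^2_\eta$. Combining it with $\p_\eta f(\tau)\in L^2_{\xi,\eta}$ (from the $\tau$-derivative bound through $f(\tau)=f(0)+\int\p_\tau f$) yields $f(\tau)\in H^1(\text{2D})$. Together with $\p_\tau f\in L^\infty_\tau L^2$, this places $f$ in $W^{1,\infty}_\tau L^2\cap L^\infty_\tau H^1_{\xi,\eta}$, which interpolates to $C^{\alpha}$ for small $\alpha$ via the mixed-norm interpolation $\|g\|_{L^\infty}\lesssim\|g\|_{L^2}^{1-\theta}\|g\|_{H^{1+\epsilon}}^{\theta}$.

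If that falls short of the threshold, the fallback is to use the equation \eqref{fequation}, writing $w^2\p_\eta^2f=\p_\tau f+\eta\p_\xi f-2w\p_\eta^2 w\,f$, to gain an extra $\eta$-derivative away from $\eta=1$, and to use the pointwise maximum-principle bound near $\eta=1$ where only H\"older control of $f$ vanishing at $\eta=1$ is needed.
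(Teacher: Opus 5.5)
There is a genuine gap. Every route you propose extracts the bound from information at a fixed $(\tau,\xi)$ or a fixed $\tau$. But $E_2$ controls the mixed derivatives $\p_\eta\p_\tau f,\ \p_\eta\p_\xi f$ (with $f\in\{w_\tau,w_\xi\}$) only in $L^2_{\tau,\xi,\eta}$, and what survives on a single slice is two-dimensionally critical. Concretely:

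(i) Step 1 reduces to $\sup_{\tau,\xi}\|\p_\eta g(\tau,\xi,\cdot)\|_{L^2_\eta}$.
\begin{itemize}
\item For $g=\p_\tau f,\p_\xi f$ this is a third derivative of $w$ controlled pointwise in $(\tau,\xi)$. That is far beyond $E_2$, and it is unnecessary, since only $f$ must be H\"older.
\item For $g=f$, Step 2 only places $(\tau,\xi)\mapsto\|\p_\eta f\|_{L^2_\eta}$ in $H^1$ of a two-dimensional rectangle, which misses $L^\infty$ by a logarithm.
\end{itemize}

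(ii) The ``sharper ingredient'' is false. The $L^2$ information $f,\ \nabla_{\tau,\xi}f,\ \p_\eta f,\ \p_\eta\nabla_{\tau,\xi}f\in L^2$ corresponds to the Fourier weight $(1+|k|^2)(1+\ell^2)$ with $k\in\R^2$. Since $\int\frac{dk\,d\ell}{(1+|k|^2)(1+\ell^2)}=\infty$, this is exactly critical, not ``above $3/2$''; mixed derivatives do not add up like isotropic ones.

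(iii) The fix in your last paragraph discards the mixed derivative entirely. The space $W^{1,\infty}_\tau L^2_{\xi,\eta}\cap L^\infty_\tau H^1_{\xi,\eta}$ contains $\tau$-independent $g(\xi,\eta)\in H^1\setminus L^\infty$ (e.g.\ a $\log\log$ singularity). So no interpolation yields $L^\infty$, and the $H^{1+\epsilon}_{\xi,\eta}$ norm you interpolate against is not available. $E_2$ excludes such $g$ precisely through $\p_\eta\p_\xi f\in L^2$, which your argument never uses.

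(iv) The fallback needs control of $w\p_\eta^2w=(w_\tau+\eta w_\xi)/w$, i.e.\ $\mathcal N_1$; otherwise it gives a bound nonlinear in $\|w\|_{E_2}$. It also needs a pointwise bound on $f$ near $\eta=1$, which the lemma does not provide; the paper proves $|w_\tau|\lesssim w$ only under the bootstrap hypothesis of Lemma \ref{bootstrap}. Even then, a bound $|f|\lesssim 1-\eta$ gives no modulus of continuity between two nearby points with $1-\eta$ small.

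The missing idea, which is how the paper proceeds, is a space--time anisotropic averaging (Campanato-type) argument that never uses the equation. After rescaling $\xi\mapsto\xi/X$ (the source of the power of $X$), compare $f(q)$ with its mean over a box $Q\ni q$ with sides $\kappa,\kappa,\kappa^{3/4}$ in $(\tau,\xi,\eta)$. Write $f(q)-f(z)$ as an integral along $s\mapsto\big(q_1+s(z_1-q_1),\,q_2+s(z_2-q_2),\,q_3+s^{3/4}(z_3-q_3)\big)$ and change variables; the Jacobian is $s^{11/4}$.
\begin{itemize}
\item The tangential increments are bounded by $\kappa^{1/8}\sup_\tau\|(\p_\tau,\p_\xi)f\|_{L^2_{\xi,\eta}}$. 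The $\sup_\tau$ is essential: with only $L^2_{\tau,\xi,\eta}$ control the same computation diverges.
\item The normal increment is bounded by $\kappa^{1/8}\|\p_\eta f\|_{L^2_\eta L^{14}_{\tau,\xi}}\lesssim\kappa^{1/8}\|\p_\eta f\|_{L^2_\eta H^1_{\tau,\xi}}$. Here the two-dimensional Sobolev embedding is applied to $\p_\eta f$ in $(\tau,\xi)$ at the subcritical level $L^{14}$, which is exactly where $\p_\eta\p_{\tau,\xi}f\in L^2$ enters.
\end{itemize}
The elongated normal side $\kappa^{3/4}$ balances the two contributions. Using a common box for $q,\tilde q$ gives a $C^{1/8}$ seminorm linear in $\|w\|_{E_2}$, and the $L^2$ bound plus this seminorm gives the sup bound.
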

\begin{proof} For notational brevity, we assume $T=1$. By the definition of $\|\cdot\|_{E_{2}}$, we only need to prove that there exists $\alpha>0$ such that (with $D_X:=[0,1]\times[0,X]\times[0,1]$)
\begin{equation}\label{embedtwo}
\begin{split}
&\|f(t,x,y)\|_{C^{\alpha}([0,1]\times[0,X]\times[0,1])}\\
&\lesssim X^{-2}\bigg[\sup_{t\in[0,1]}\Big(\big\||f|+|\p_{t}f|+|\p_{x}f|\big\|_{L^{2}([0,X]\times[0,1])}\Big)+\big\||\p_{y}f|+|\p_{t}\p_{y}f|+|\p_{x}\p_{y}f|\big\|_{L^{2}(D_X)}\bigg].
\end{split}
\end{equation}
In our applications, we take $f\in\{\partial_\tau w, \partial_\xi w\}$. By a standard rescaling argument, we only need to show that
\begin{equation}\label{embedone}
\begin{split}
&\|f(t,x,y)\|_{C^{\alpha}([0,1]\times[0,1]\times[0,1])}\\
&\lesssim \bigg[\sup_{t\in[0,1]}\left(\|f\|_{L^{2}_{x,y}}+\|\p_{t}f\|_{L^{2}_{x,y}}+\|\p_{x}f\|_{L^{2}_{x,y}}\right)+\big\||\p_{y}f|+|\p_{t}\p_{y}f|+|\p_{x}\p_{y}f|\big\|_{L^{2}_{t,x,y}}\bigg].
\end{split}
\end{equation}

\textit{Proof of \eqref{embedone}.} First we show that there exists $\alpha>0$ such that
\begin{equation}\label{holderint}
\begin{split}
&|f(q)-f(\widetilde{q})|\\
&\lesssim  |q-\widetilde{q}|^{\alpha}\times\left[\sup_{t\in[0,1]}\left(\|\p_{t}f\|_{L^{2}_{x,y}}+\|\p_{x}f\|_{L^{2}_{x,y}}\right)+\big\||\p_{y}f|+|\p_{t}\p_{y}f|+|\p_{x}\p_{y}f|\big\|_{L^{2}_{t,x,y}}\right],
\end{split}
\end{equation}
for $q, \widetilde{q}\in [0,1]\times[0,1]\times[0,1]$.

Choose a rectangle $Q$ such that $q, \widetilde{q}\in Q$, and the lengths of the sides parallel to the $t$-, $x$- and $y$-axes are $\kappa,\kappa ,\kappa^{\frac{3}{4}}$, respectively. By H\"older's inequality and Sobolev inequality, we have
\begin{equation}\label{controlonoscil}
\begin{split}
&\left|f(q)-\frac{1}{\kappa^{\frac{11}{4}}}\iiint_{Q}f(z)dz\right| \lesssim \frac{1}{\kappa^{\frac{11}{4}}}\iiint_{Q}|f(q)-f(z)|dz\\
&\quad \lesssim \frac{1}{\kappa^{\frac{11}{4}}}\iiint_{Q}\left|\int_{0}^{1}\frac{d}{ds}f\left(q_{1}+s(z_{1}-q_{1}),q_{2}+s(z_{2}-q_{2}),q_{3}+s^{\frac{3}{4}}(z_{3}-q_{3})\right)ds\right|dz\\
&\quad \lesssim \kappa^{\frac{1}{8}}\|\p_{t}f,\p_{x}f\|_{L^{\infty}_{t}L^{2}_{x,y}}+\kappa^{\frac{1}{8}}\|f_{y}\|_{L^{2}_{y}L^{14}_{t,x}}\\
&\quad \lesssim  \kappa^{\frac{1}{8}}\left(\left\|\p_{t}f,\p_{x}f\right\|_{L^{\infty}_{t}L^{2}_{x,y}}+\left\|f_{y}\right\|_{L^{2}_{y}H^1_{t,x}}\right).
\end{split}
\end{equation}
We can see that \eqref{controlonoscil} implies \eqref{holderint} by the triangle inequality. 

Finally, notice that
\begin{equation}\label{pointwisesec8}
    \sup_{q\in[0,1]^{3}}|f(q)|\lesssim_{\alpha}\|f\|_{L^{2}([0,1]^{3})}+[f]_{C^{\alpha}([0,1]^{3})},
\end{equation}
hence we complete the proof of \eqref{embedone} in view of \eqref{holderint} and \eqref{pointwisesec8}.
\begin{comment}
Do the same way as \eqref{holderint} in $x$ direction and $y$ direction, then we deduce that there exists $\alpha>0$ such that
\begin{equation*}
\|f(t,x,y)\|_{C^{\alpha}([0,1]^{3})}\lesssim \sup_{t\in[0,1]}\left(\|f\|_{L^{2}_{x,y}}+\|\p_{t}f\|_{L^{2}_{x,y}}+\|\p_{x}f\|_{L^{2}_{x,y}}+\|\p_{y}^{2}f\|_{L^{2}_{x,y}}\right).
\end{equation*}\end{comment}
\end{proof}
The second anisotropic inequality is slightly weaker than that in Lemma \ref{aniembed} since we obtain only $L^p$ type control. However, the advantage is that the constant in \eqref{independentx} is independent of $X$ when $0<X\leq1$, which is important for closing the bootstrap assumptions in Lemma \ref{bootstrap}.

\begin{lemma}[\bf Uniform-in-$X$ embedding]\label{anisotwo}
Assume that $w$ satisfies the same assumptions as in Lemma \ref{aniembed}.
For any $1<p<\infty$, we have the inequalities
\begin{equation}\label{independentx}
 \sup_{\xi\in[0,X],\,\tau\in[0,T]}
 \left(
 \|w_\tau\|_{L^p(0,1/2)}
 +\|w_\xi\|_{L^p(0,1/2)}
 \right)
 \lesssim_{p,T}
 \|w\|_{E_2}\bigl(1+\|w\|^{4}_{E_2}\bigr),
\end{equation}
\begin{equation}\label{independentx1}
 \sup_{\substack{\xi\in[0,X],\,\tau\in[0,T]\\
                  \eta\in[1/4,3/4]}}
 \bigl(|w_\tau|+|w_\xi|\bigr)
 \lesssim_T
 \|w\|_{E_2}\bigl(1+\|w\|^{4}_{E_2}\bigr).
\end{equation}
The implied constants are independent of $X\in(0,1]$.
\end{lemma}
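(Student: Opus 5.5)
The idea is to use only the energy $\|w\|_{E_2}$, the pointwise bounds of Lemma \ref{pointwisecontrol}, and the equation on the fixed slab $\eta\in[0,3/4]$, where $w\sim_T 1$ by \eqref{potwb2} and so the weight $w^{-1}$ is harmless. Every $\xi$-direction control will come either from the $\sup_\xi$ part of $E_2$ (the term with weight $\eta$) or from pure $(\tau,\eta)$ information at fixed $\xi$. This avoids any Sobolev embedding in $\xi$ over an interval of length $X$, which is what would produce negative powers of $X$.

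\textbf{Step 1: a fixed-$\xi$ $H^1_\tau H^1_\eta$ bound.} Fix $\xi\in[0,X]$ and restrict to $\eta\in[1/8,7/8]$, where $\eta\gtrsim1$ and $w\sim_T1$. The $\sup_\xi$ term of $E_2$ then gives
\[
\sup_\xi\big(\|f\|_{L^2_{\tau,\eta}}+\|\p_\tau f\|_{L^2_{\tau,\eta}}+\|\p_\xi f\|_{L^2_{\tau,\eta}}\big)\lesssim_T\|w\|_{E_2},\qquad f\in\{w_\tau,w_\xi\}.
\]
For $\eta$-regularity at fixed $\xi$, I would use the differentiated equation \eqref{fequation}:
\[
w^2\p_\eta^2 f=\p_\tau f+\eta\p_\xi f-2w\p_\eta^2w\,f .
\]
Here $w\p_\eta^2 w=(w_\tau+\eta w_\xi)/w$ is itself a combination of the $f$'s. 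The right-hand side is therefore bounded in $L^2_{\tau,\eta}$ by $\|w\|_{E_2}$ plus a quadratic term $f^2$. I would control $\|f^2\|_{L^2_{\tau,\eta}}$ at fixed $\xi$ by interpolating $\|f\|_{L^4}$ between $L^2$ and $H^1_{\tau,\eta}$ in the two-dimensional variables $(\tau,\eta)$, after a cutoff in $\eta$, and then absorbing. This gives $\|\p_\eta^2 f\|_{L^2_{\tau,\eta}}$ and hence $\|\p_\eta f\|_{L^2_{\tau,\eta}}\lesssim_T \|w\|_{E_2}(1+\|w\|_{E_2}^{k})$ for some small power $k$, which is where the polynomial factor $1+\|w\|_{E_2}^4$ in \eqref{independentx} comes from. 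Thus $f(\cdot,\xi,\cdot)\in H^1_\tau\cap H^2_\eta$ on $[0,T]\times[1/8,7/8]$ uniformly in $\xi$. By the anisotropic Sobolev embedding in two variables, $H^1_\tau\cap H^2_\eta\subset L^\infty$ (since $1/2+1/4<1$), and this yields \eqref{independentx1} on $[1/4,3/4]$.

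\textbf{Step 2: the region $\eta\in[0,1/2]$, where the weight $\eta$ degenerates at $\eta=0$.} I would instead use the $\sup_\tau$ part of $E_2$, which gives $\|f\|_{L^2_{\xi,\eta}}$ and $\|\p_\xi f\|_{L^2_{\xi,\eta}}$ uniformly in $\tau$, together with $\p_\eta f\in L^2_{\tau,\xi,\eta}$. The target is $L^\infty_{\tau,\xi}L^p_\eta$, so only $\eta$-integrability is needed. Writing
\[
f(\tau,\xi,\eta)=f(\tau,\xi,1/2)-\int_\eta^{1/2}\p_\eta f ,
\]
the boundary value is controlled by Step 1. It then suffices to bound $\sup_{\tau,\xi}\|\p_\eta f\|_{L^q_\eta(0,1/2)}$ for some $q\ge1$ and iterate, or more simply to bound $\|\p_\eta f\|_{L^1_\eta}$ uniformly, which already gives $L^\infty_\eta$ and hence every $L^p$. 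To obtain $\p_\eta f$ pointwise in $(\tau,\xi)$, I would integrate \eqref{fequation} in $\eta$ from $0$, using the Neumann condition $\p_\eta f|_{\eta=0}=0$:
\[
w^2\p_\eta f(\eta)=\int_0^\eta\big(\p_\tau f+\eta'\p_\xi f-2w\p^2_\eta w f+2ww_\eta\p_\eta f\big)\,d\eta'.
\]
This needs $\p_\tau f$ and $\p_\xi f$ in $L^1_\eta$ for fixed $(\tau,\xi)$, which are third-order tangential quantities that $E_2$ does not contain.

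\textbf{Main obstacle.} This last point is the gap, so in Step 2 I would instead accept $L^p$ only in the following way. I would use the $\sup_\tau$ bound $f,\p_\xi f\in L^2_{\xi,\eta}$ together with the $\sup_\xi$ bound $f,\p_\tau f\in L^2_{\tau,\eta}$ with weight $\eta$, and interpolate them via a trace/mixed-norm inequality in $(\tau,\xi)$ at each fixed $\eta$. Combined with $\p_\eta$ control obtained from the equation as in Step 1, but now localized to $\eta\in[0,1/2]$ where $w\sim1$ and the Neumann condition allows an even reflection, I would aim for $H^{1}_{\eta}$-type control uniformly in $(\tau,\xi)$ with a logarithmic loss. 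That suffices for every finite $p$ but not for $p=\infty$, which matches the statement. Making this uniform-in-$\xi$ interpolation work without any $\xi$-Sobolev embedding on $[0,X]$ is the step I expect to be hardest.
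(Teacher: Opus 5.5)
Your Step 1 is sound and is essentially the paper's argument for \eqref{independentx1}. At fixed $\xi$, the $\sup_\xi$ part of $\|w\|_{E_2}$ gives $f,\p_\tau f,\p_\xi f\in L^2_{\tau,\eta}$. The differentiated equation then controls $\p_\eta^2 f$ once the quadratic term is absorbed, and an anisotropic embedding in $(\tau,\eta)$ finishes.

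However, \eqref{independentx} is not proved. Your first attempt in Step 2 fails, as you note, because it needs third-order tangential derivatives. The second attempt is only a plan, and you leave its key step open. That plan also points the wrong way. Bringing in the $\sup_\tau$ bound $f,\p_\xi f\in L^2_{\xi,\eta}$ to get pointwise control in $\xi$ requires a Sobolev embedding on $[0,X]$. That embedding costs a factor $X^{-1/2}$, which is exactly what the lemma must avoid. The even reflection and the ``logarithmic loss'' are not substantiated either.

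The missing idea is that the degeneracy of the weight $\eta$ at $\eta=0$ is harmless. So your fixed-$\xi$ argument from Step 1 extends all the way down to $\eta=0$ with no change of strategy.

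First, the $\sup_\xi$ part of $E_2$ gives, uniformly in $\xi$, $\sqrt\eta f$, $\sqrt\eta\,\p_\tau f$ and $\sqrt\eta\,\p_\xi f$ in $L^2((0,T)\times(0,3/4))$.

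Second, multiply the differentiated equation by $\sqrt\eta$, using $w\sim_T 1$ on $[0,3/4]$. This controls $H:=\sup_\xi\|\sqrt\eta\,\p_\eta^2 f\|_{L^2_{\tau,\eta}}$ up to the quadratic term.

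The one-dimensional tool you need is the weighted inequality \eqref{interrr}: $\|g\|_{L^p(\R^+)}\lesssim\|\sqrt y\, g\|_{L^2}+\|\sqrt y\, g''\|_{L^2}$. It holds because $\int_0^1|g'|^2=[y|g'|^2]_0^1-2\int_0^1 y\,g'g''$ controls $g'$ in $L^2(0,1)$. No boundary condition at $y=0$ is used, so the Neumann condition plays no role here.

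By scaling, this has the form $\|g\|_{L^p}\lesssim\|\sqrt y\, g\|^{\frac{p+1}{2p}}\|\sqrt y\, g''\|^{\frac{p-1}{2p}}$. Apply it to the Fourier coefficients of $f$ in $\tau$ and sum with Cauchy--Schwarz. This gives $\|f\|_{L^\infty_\tau L^p_\eta}\lesssim(\|\sqrt\eta f\|+\|\sqrt\eta\,\p_\tau f\|)^{\frac{p+1}{2p}}(\|\sqrt\eta f\|+\|\sqrt\eta\,\p_\eta^2 f\|)^{\frac{p-1}{2p}}$. The sum $\sum_k\langle k\rangle^{-(p+1)/p}$ converges exactly when $p<\infty$, which is why the statement stops at finite $p$.

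Now take $p=4$ and write $E:=\|w\|_{E_2}$. The quadratic term satisfies $\|\sqrt\eta\, fg\|_{L^2_{\tau,\eta}}\lesssim_T E^{5/4}(E+H)^{3/4}$. Absorbing gives $H\lesssim_T E+E^5$. The same weighted inequality then yields \eqref{independentx}, with no $\xi$-regularity and no dependence on $X$. This is the paper's proof.
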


\begin{proof}
Let $f\in\{w_\tau,w_\xi\}$. Differentiating \eqref{prandtlsection8} in the
corresponding tangential variable gives
\[
 f_{\eta\eta}
 =
 \frac{f_\tau+\eta f_\xi}{w^2}
 -
 2\frac{f(w_\tau+\eta w_\xi)}{w^3}.
\]
Since $w\sim_T1$ on $0\leq\eta\leq3/4$, the linear term is
controlled by $\|w\|_{E_2}$. 

    To estimate the quadratic term, set
\[
 E:=\|w\|_{E_2},\qquad
 H:=\sup_{\xi\in[0,X]}
 \sum_{f\in\{w_\tau,w_\xi\}}
 \|\sqrt{\eta}\,f_{\eta\eta}\|_{L^2((0,T)\times(0,3/4))}.
\]

Denote 
 $I=(0,3/4),\, \cO=(0,T)\times I$. For every sufficiently regular $h$ on $\cO$, we have
\begin{equation}\label{inter0.1}
\begin{split}
 \|h\|_{L^\infty(0,T;L^4(I))}
 \lesssim_T{}&
 \Big(\|\sqrt y\,h\|_{L^2(\cO)}
      +\|\sqrt y\,\partial_t h\|_{L^2(\cO)}\Big)^{5/8}\\
      &\quad\quad\times \Big(\|\sqrt y\,h\|_{L^2(\cO)}
      +\|\sqrt y\,\partial_y^2h\|_{L^2(\cO)}\Big)^{3/8}.
\end{split}
\end{equation}
\eqref{inter0.1} follows from extending $h$ to $\mathbb{T}_{t}\times\R_{y}$ and employing the interpolation inequality \eqref{ineqfourier} below with $p=4$.

Using \eqref{inter0.1} we obtain that
\[
 \sup_{\xi\in[0,X]}
 \|f\|_{L^\infty_\tau L^4_\eta}
 \lesssim_T E^{5/8}(E+H)^{3/8},
 \qquad f\in\{w_\tau,w_\xi\}.
\]
Consequently,
\[
 \sup_{\xi\in[0,X]}
 \|\sqrt{\eta}\,D_1wD_2w\|_{L^2_{\tau,\eta}}
 \lesssim_T E^{5/4}(E+H)^{3/4},
 \qquad D_1,D_2\in\{\partial_\tau,\partial_\xi\}.
\]
Using the differentiated equation and $w\sim_T1$ on
$0\leq\eta\leq3/4$, we obtain
\[
 H\lesssim_T E+E^{5/4}(E+H)^{3/4}
 \lesssim_T E+E^5+\frac12H,
\]
and hence $H\lesssim_T E+E^5$. 

Therefore, to prove Lemma \ref{anisotwo} it suffices to establish the following general functional inequalities 
\begin{equation}\label{lemma8.7goal1}
\Big(\sup_{t\in[0,T]}\|f\|_{L^{p}[0,\frac{1}{2}]}\Big)^{2}\lesssim_{p,T} \int_{0}^{T}\int_{0}^{\frac{1}{2}}y\left(|f|^{2}+|f_{t}|^{2}+|f_{yy}|^{2}\right)dtdy,
\end{equation}

\begin{equation}\label{lemma8.7goal2}
\Big(\sup_{t\in[0,T],y\in[\frac{1}{4},\frac{3}{4}]}|f(t,y)|\Big)^{2}\lesssim_{T} \int_{0}^{T}\int_{\frac{1}{4}}^{\frac{3}{4}}\left(|f|^{2}+|f_{t}|^{2}+|f_{yy}|^{2}\right)dtdy.
\end{equation}
We only focus on \eqref{lemma8.7goal1} since the proof of \eqref{lemma8.7goal2} follows from a similar but slightly simpler argument. For notational simplicity we assume that $T=1$. We can extend $f(t,y)$ to a function on $\mathbb{T}\times \R^{+}$ satisfying
$$\int_{\T}\int_{\R^+}y\left(|f|^{2}+|f_{t}|^{2}+|f_{yy}|^{2}\right)dtdy\lesssim\int_{0}^{T}\int_{0}^{\frac{1}{2}}y\left(|f|^{2}+|f_{t}|^{2}+|f_{yy}|^{2}\right)dtdy.$$
We can expand the extended $f$ using Fourier series in $t$ as
\begin{equation*}
f(t,y)=\sum_{k\in\mathbb{Z}}\widehat{f}_{k}(y)e^{ik\cdot t}.
\end{equation*}
By Plancherel's identity, it suffices to show that
\begin{equation}\label{ineqfourier}
\|f\|_{L^{\infty}_{t}L^{p}_{y}(\T\times\R^+)}\lesssim_{p}\left(\sum_{k\in\mathbb{Z}}\int_{0}^{\infty}y\langle k\rangle^{2}\big|\widehat{f}_{k}(y)\big|^{2}dy\right)^{\frac{p+1}{4p}}\cdot\left(\sum_{k\in\mathbb{Z}}\int_{0}^{\infty}y\big|\p_{y}^{2}\widehat{f}_{k}(y)\big|^{2}dy\right)^{\frac{p-1}{4p}}.
\end{equation}
Using the interpolation inequality \eqref{interrr} and a standard rescaling argument, we have the following estimates
\begin{equation}\label{ineqf4.0}
\begin{split}
    \|f\|_{L^{\infty}_{t}L^{p}_{y}(\T\times\R^+)}&\leq \sum_{k\in\mathbb{Z}}\Big\|\widehat{f}_{k}(y)\Big\|_{L^{p}_{y}}\lesssim\sum_{k\in\mathbb{Z}}\Big\|\sqrt{y}\widehat{f}_{k}(y)\Big\|_{L^{2}_{y}}^{\frac{p+1}{2p}}\Big\|\sqrt{y}\p_{y}^{2}\widehat{f}_{k}(y)\Big\|_{L^{2}_{y}}^{\frac{p-1}{2p}}\\
    &=\sum_{k\in\mathbb{Z}}\langle k\rangle^{-\frac{p+1}{2p}}\Big\|\langle k\rangle\sqrt{y}\widehat{f}_{k}(y)\Big\|_{L^{2}_{y}}^{\frac{p+1}{2p}}\Big\|\sqrt{y}\p_{y}^{2}\widehat{f}_{k}(y)\Big\|_{L^{2}_{y}}^{\frac{p-1}{2p}}.
\end{split}
\end{equation}
Then \eqref{ineqfourier} follows from \eqref{ineqf4.0} and H\"older's inequality, which completes the proof of Lemma \ref{anisotwo}.
\end{proof}

\subsection{Local classical solution theory} \label{localmain}
First we state the well-posedness of classical solutions for system \eqref{prandtlsection8}.
\begin{theorem}\label{localsolcrocco}
Assume that the initial data $w_{0}$ and inflow data $w_{1}$ satisfy Assumption \ref{dataassumecrocco}. Then we have the following conclusions:
\begin{itemize}
\item[(i)] \textit{The local--in--$\xi$ solution.} For \emph{any} $T>0$, there exists $X=X_{T}>0$ such that \eqref{prandtlsection8} has a unique classical solution $w$ in $H_T:=[0,T]\times[0,X_{T}]\times[0,1)$ satisfying on $H_T$ the pointwise bound
\begin{equation}\label{localxproperty1}
w(\tau,\xi,\eta)\sim F(\eta),\ \forall (\tau,\xi,\eta)\in H_{T},
\end{equation}
and the energy estimates
\begin{equation}\label{localxproperty2}
\begin{split}
&\sup_{H_T}\left(\frac{|\p_{\tau}w|}{w}+\frac{|\p_{\xi}w|}{w}\right)+\sup_{\tau\in[0,T]}\sum_{\alpha+\beta\leq 2}\int_{0}^{X_{T}}\int_{0}^{1}\bigg|\frac{\p_{\tau}^{\alpha}\p_{\xi}^{\beta}w}{w}\bigg|^{2}d\xi d\eta\\
&+\sup_{\xi\in[0,X_{T}]}\sum_{\alpha+\beta\leq 2}\int_{0}^{T}\int_{0}^{1}\eta\bigg|\frac{\p_{\tau}^{\alpha}\p_{\xi}^{\beta}w}{w}\bigg|^{2}d\tau d\eta+\sum_{\alpha+\beta\leq 2}\iiint_{H_{T}}|\p_{\tau}^{\alpha}\p_{\xi}^{\beta}\p_{\eta}w|^{2}d\tau d\xi d\eta\lesssim_{T,w_{0},w_{1}}1.\\
\end{split}
\end{equation}
Moreover, for some $\gamma\in(0,1)$ we have
\begin{equation}\label{localxproperty3}
w,\p_{\tau}w,\p_{\xi}w,w^{2}\p_{\eta}^{2}w\in C^{\gamma}\left([0,T]\times[0,X_{T}]\times[0,1)\right).
\end{equation}
\item[(ii)] \textit{The local--in--$\tau$ solution.} For \emph{any} $X>0$, there exists $T=T_{X}>0$ such that \eqref{prandtlsection8} has a unique classical solution in $K_X:=[0,T_{X}]\times[0,X]\times[0,1)$ satisfying on $K_X$ the pointwise bound 
\begin{equation}\label{localtproperty1}
w(\tau,\xi,\eta)\sim F(\eta),\ \forall (\tau,\xi,\eta)\in [0,T_{X}]\times[0,X]\times[0,1),
\end{equation}
and the energy estimates
\begin{equation}\label{localtproperty2}
\begin{split}
&\sup_{K_X}\left(\frac{|\p_{\tau}w|}{w}+\frac{|\p_{\xi}w|}{w}\right)+\sup_{\tau\in[0,T_{X}]}\sum_{\alpha+\beta\leq 2}\int_{0}^{X}\int_{0}^{1}\bigg|\frac{\p_{\tau}^{\alpha}\p_{\xi}^{\beta}w}{w}\bigg|^{2}d\xi d\eta\\
&+\sup_{\xi\in[0,X]}\sum_{\alpha+\beta\leq 2}\bigg[\int_{0}^{T_{X}}\int_{0}^{1}\eta\bigg|\frac{\p_{\tau}^{\alpha}\p_{\xi}^{\beta}w}{w}\bigg|^{2}d\tau d\eta+\iiint_{K_{X}}|\p_{\tau}^{\alpha}\p_{\xi}^{\beta}\p_{\eta}w|^{2}d\tau d\xi d\eta\bigg]\lesssim_{X,w_{0},w_{1}}1.\\
\end{split}
\end{equation}
Moreover, for some $\gamma\in(0,1)$ we have
\begin{equation}\label{localtproperty3}
w,\p_{\tau}w,\p_{\xi}w,w^{2}\p_{\eta}^{2}w\in C^{\gamma}\left([0,T_{X}]\times[0,X]\times[0,1)\right).
\end{equation}
\end{itemize}
\end{theorem}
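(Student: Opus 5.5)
We would construct the local solution as a limit of solutions to the non-degenerate approximation system \eqref{approsys}, using the a priori machinery of subsection \ref{priorisection8} uniformly in $\epsilon$. We present case (i) (arbitrary $T$, small $X$); case (ii) is symmetric, with the roles of $\tau$ and $\xi$ exchanged and with the simpler bootstrap noted at the end of Lemma \ref{bootstrap}.

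\emph{Step 1: compatible data and approximate solutions.} Using Appendix \ref{compatiappendix} (Proposition \ref{constructappdata2}) and Assumption \ref{dataassumecrocco}, we would build compatible smooth data $(w_0^\epsilon,w_1^\epsilon)$ for \eqref{approsys}. These data should satisfy, uniformly in $\epsilon$:
\begin{itemize}
\item the vanishing rate $w^\epsilon_i\sim F(\eta)$;
\item the bound (B1), so that $\overline{\mathcal N}_{\rm bdry}$ is uniformly bounded;
\item the bound (B2), so that the boundary energy $\overline{\mathcal N}_0$ is uniformly bounded. The factor $(1-\eta)^{1/1000}$ in (B2) should be exactly what makes $\int|\partial^2w/w|^2\,d\eta$ finite near $\eta=1$.
\end{itemize}
Appendix \ref{proofapproxia} then gives a global smooth solution $w^\epsilon$.

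\emph{Step 2: uniform estimates.} The comparison argument of Step 2 in Proposition \ref{existweaksol} would be redone with the barriers $C F(\eta)$ and $c\,e^{-Mt}e^{3\eta}F(\eta)$. Here we must check that $F''<0$ (respectively that the lower barrier is a subsolution) for each admissible $F$: the logarithmic profile and $(1-\eta)^m$ with $m\in[1,3/2]$. This yields $w^\epsilon\sim F$ on $[0,T]\times[0,X]$, as in Lemma \ref{pointwisecontrol}.

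Next we would rerun Lemmas \ref{highenergyesti}, \ref{anisotwo}, \ref{aniembed} and the bootstrap Lemma \ref{bootstrap} for the $\epsilon$-system. The extra $\epsilon$ terms, namely $(\eta+\epsilon)\partial_\xi$ and $(w+\epsilon)^2$, only help or are harmless. We would also use the weight $(w^\epsilon+\epsilon)^{-2}$. The choice $M=K_T\mathcal D^{5/3}$, $X=c_TM^{-4}$ depends only on the data majorants, so $X_T$ is independent of $\epsilon$.

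The bootstrap is run as a continuity argument in $\tau$ (or in $\xi$). At $\tau=0$ the bound $\mathcal N_1\le 10M$ holds by (B1) and the choice of $M$, and smoothness of $w^\epsilon$ makes $\mathcal N_1$ continuous. This gives \eqref{localxproperty2} for $w^\epsilon$ uniformly in $\epsilon$. Lemma \ref{aniembed}, together with the equation, then gives uniform $C^\gamma$ bounds on $w^\epsilon$, $\partial_{\tau,\xi}w^\epsilon$ and $(w^\epsilon+\epsilon)^2\partial^2_\eta w^\epsilon$.

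\emph{Step 3: limit and uniqueness.} By Arzel\`a--Ascoli and weak compactness we would pass to the limit $\epsilon\to0$ in $C^{\gamma'}$ on compact subsets of $[0,T]\times[0,X_T]\times[0,1)$. The limit $w$ satisfies \eqref{prandtlsection8} classically, inherits \eqref{localxproperty1}--\eqref{localxproperty3} by lower semicontinuity, and attains the data continuously. Near $\eta=1$, both $w=0$ and the pointwise bound follow from the uniform barriers.

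Uniqueness would follow from Proposition \ref{uniweaksol}. A classical solution with $w\sim F$ and the stated regularity is a weak solution in its class, since $F$ lies between $(1-\eta)^{3/2}$ and $(1-\eta)\log^{1/2}$.

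\emph{Main obstacle.} The delicate point is the $\epsilon$-uniformity of the boundary energy $\mathcal N_0$ and of $\mathcal N_{\rm bdry}$. These involve second tangential derivatives of $w$ on $\{\tau=0\}$ and $\{\xi=0\}$, obtained through the compatibility relations; for example $w_\tau=-\eta w_\xi+w^2w_{\eta\eta}$. They must be controlled with the singular weight $w^{-2}$ near $\eta=1$, including the Gaussian-type profile. This is where the modified data construction and assumption (B2) enter. We would first verify, using (B1)--(B2), that $\partial^{\le2}_{\tau,\xi}w^{(0)}/w^{(0)}\in L^2$ with the weight $(1-\eta)^{-1/500}$ absorbed, and then check that the appendix modification preserves these bounds.
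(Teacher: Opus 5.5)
Your skeleton (compatible approximate data for \eqref{approsyss}, $\epsilon$-uniform energy and bootstrap bounds via Lemmas \ref{highenergyesti}--\ref{bootstrap}, compactness from Lemma \ref{aniembed}, uniqueness from Proposition \ref{uniweaksol}) is the paper's. However, your route to the pointwise bound \eqref{localxproperty1} fails. You require $w_i^\epsilon\sim F(\eta)$, and then $w^\epsilon\sim F(\eta)$, uniformly in $\epsilon$. This is false whenever $F$ is not comparable to $1-\eta$.

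At $\eta=1$ the approximate equation gives $\epsilon^{2}\partial_\eta^2 w^\epsilon=\partial_\tau w^\epsilon+(1+\epsilon)\partial_\xi w^\epsilon=0$, because $w^\epsilon$ vanishes there; for the data this is the compatibility condition of Definition \ref{compatibilitydef}(i). Hence $w^\epsilon=s_\epsilon(1-\eta)+O((1-\eta)^3)$. Moreover $s_\epsilon>0$ is forced, since otherwise even the lower bound $w^\epsilon\gtrsim(1-\eta)^{3/2}$ fails. So for each fixed $\epsilon$, $w^\epsilon/(1-\eta)^m\to\infty$ when $m>1$, and $w^\epsilon/\big[(1-\eta)\log^{1/2}(\frac{10}{1-\eta})\big]\to0$, as $\eta\to1$. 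This is why Appendix \ref{compatiappendix} inserts a linear cap, and why (CP3) only asks for $(1-\eta)^{3/2}\lesssim w_i^\epsilon\lesssim(1-\eta)\log^{1/2}(\frac{10}{1-\eta})$.

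Independently, the check you flag, $F''<0$, is false for $F=(1-\eta)^m$ with $1<m\le\frac32$. There $F''=m(m-1)(1-\eta)^{m-2}>0$, so $w-CF$ is a supersolution and the static barrier gives no upper bound. Also, the data construction you need is Proposition \ref{constructappdata1}. Proposition \ref{constructappdata2} only yields the $L^1$-type bounds (WP3), which do not control $\overline{\mathcal N}_0$ or $\overline{\mathcal N}_{\rm bdry}$.

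The repair is what the paper does. At the $\epsilon$-level, use only $(1-\eta)^{3/2}\lesssim w^\epsilon\lesssim(1-\eta)\log^{1/2}(\frac{10}{1-\eta})$, obtained exactly as in Step 2 of Proposition \ref{existweaksol}. These hold uniformly in $\epsilon$ and are all that Lemmas \ref{pointwisecontrol}--\ref{anisotwo} use. Then prove \eqref{localxproperty1} for the limit $w$, whose diffusion coefficient is $w^2$ rather than $(w+\epsilon)^2$.

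In the Gaussian case the upper bound is already available, and the lower bound comes from $ce^{-M\tau}e^{\eta}F(\eta)$. Verifying the subsolution inequality near $\eta=1$ uses $w\lesssim F$. It would fail at $\epsilon>0$, because $\epsilon^2|F''|\gtrsim\epsilon^2(1-\eta)^{-1}\log^{-1/2}(\frac{10}{1-\eta})$ cannot be dominated by $MF$.

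For $F=(1-\eta)^m$ you need two stages. First obtain $w\lesssim 1-\eta$ from the barrier $C(1-\eta)$. Only then are $Ce^{N\tau}(1-\eta)^m$ and $ce^{-N\tau}e^{2\eta}(1-\eta)^m$ admissible barriers. For instance, $(\partial_\tau+\eta\partial_\xi-w^2\partial_\eta^2)\big[Ce^{N\tau}(1-\eta)^m\big]=Ce^{N\tau}(1-\eta)^{m-2}\big[N(1-\eta)^2-m(m-1)w^2\big]$, which is nonnegative only once $w\lesssim 1-\eta$ is known. With this modification, the rest of your plan goes through as written.
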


\begin{remark}
    We note that the implied constants in \eqref{localxproperty2} and \eqref{localtproperty2} depend, more quantitatively, on the norms $\mathcal{N}_{0}$ and $\mathcal{N}_{\text{bdry}}$ of the data $w_0$ and $w_1$ (see \eqref{n0k}, \eqref{nbdry} in subsection \ref{priorisection8}).  
\end{remark}
\begin{proof}[Proof of Theorem \ref{localsolcrocco}]
The proof of Theorem \ref{localsolcrocco} follows from a priori estimates stated in subsection \ref{priorisection8}, as well as the standard continuation argument. Similarly to the proof of Proposition \ref{existweaksol}, the proof has three main parts: (i) Approximation; (ii) Uniform bounds; (iii) Passing to the limit.

\textit{Step 1: Approximation.} As in \eqref{approsys}, let $w^{\epsilon}$ be the global solution of
\begin{equation}\label{approsyss}
\begin{cases}
\p_{\tau}w^{\epsilon}+(\eta+\epsilon)\p_{\xi}w^{\epsilon}-(w^{\epsilon}+\epsilon)^{2}\p_{\eta}^{2}w^{\epsilon}=0,\ (\tau,\xi,\eta)\in \R^{+}\times\R^{+}\times (0,1),\\
w^{\epsilon}|_{\tau=0}=w_{0}^{\epsilon},\ w^{\epsilon}|_{\xi=0}=w_{1}^{\epsilon},\\
w^{\epsilon}|_{\eta=1}=\p_{\eta}w^{\epsilon}|_{\eta=0}=0,
\end{cases}
\end{equation}
where $w_{0}^{\epsilon},w_{1}^{\epsilon}$ are suitable smooth modifications of $w_{0},w_{1}$ respectively, satisfying the compatibility conditions corresponding to \eqref{approsyss} and certain uniform--in--$\epsilon$ bounds, which are presented in Appendix \ref{compatiappendix} (see (CP1),   (CP2) and (CP3) in Appendix \ref{requireappendix}).  We refer to Proposition \ref{constructappdata1} for the construction of the compatible data $(w_{0}^{\epsilon},w_{1}^{\epsilon})$, and Appendix \ref{proofapproxia} for the global existence and uniqueness of the classical solution $w^\epsilon$ of \eqref{approsyss}.

\textit{Step 2: Uniform bounds.} As in Step 2 of the proof of  Proposition \ref{existweaksol}, for any fixed $T>0$, we have
\begin{equation}\label{approxiaboun}
(1-\eta)^{3/2}\lesssim_{T}w^{\epsilon}(\tau,\xi,\eta)\lesssim_{T} (1-\eta)\log^{\frac{1}{2}}\left(\frac{10}{1-\eta}\right),
\ \forall \tau\in[0,T],\xi\in[0,1],\eta\in [0,1).
\end{equation}
Furthermore, it follows from the same arguments as in the proofs of Lemmas \ref{highenergyesti} and \ref{bootstrap} that for the fixed $T>0$ above there exist $M>0$ and $0<X\leq 1$ depending on both $T$ and the given data $(w_{0},w_{1})$, but \emph{independent} of $\epsilon$, such that
\begin{equation}\label{approxiabound}
\begin{split}
&\sup_{\tau\in[0,T],\xi\in[0,X],\eta\in[0,1]}\left(\left|\frac{\p_{\tau}w^{\epsilon}}{w^{\epsilon}+\epsilon}\right|+\left|\frac{\p_{\xi}w^{\epsilon}}{w^{\epsilon}+\epsilon}\right|\right)+\sup_{\tau\in[0,T]}\sum_{1\leq\alpha+\beta\leq 2}\int_{0}^{X}\int_{0}^{1}\left|\frac{\p_{\tau}^{\alpha}\p_{\xi}^{\beta}w^{\epsilon}}{w^{\epsilon}+\epsilon}\right|^{2}d\xi d\eta\\
&+\sup_{\xi\in[0,X]}\sum_{1\leq\alpha+\beta\leq 2}\int_{0}^{T}\int_{0}^{1}\eta\left|\frac{\p_{\tau}^{\alpha}\p_{\xi}^{\beta}w^{\epsilon}}{w^{\epsilon}+\epsilon}\right|^{2}d\tau d\eta+\sum_{\alpha+\beta\leq 2}\int_{0}^{T}\int_{0}^{X}\int_{0}^{1}|\p_{\tau}^{\alpha}\p_{\xi}^{\beta}\p_{\eta}w^{\epsilon}|^{2}d\tau d\eta d\xi\leq M.
\end{split}
\end{equation}
In \eqref{highenergyid} and \eqref{higherenergyclose}, the sum is over $1\leq\alpha+\beta\leq2$.  The remaining zero-order normal estimate follows from a logarithmic energy estimate by testing \eqref{approsyss} against $(w^\epsilon+\epsilon)^{-1}$. 

\textit{Step 3: Passing to the Limit.} By Lemma \ref{aniembed}, we have
\begin{equation}
\sup_{\epsilon\in(0,\epsilon_0]}\left(\|w^{\epsilon}\|_{C^{\alpha}([0,T]\times[0,X]\times[0,1])}+\|\p_{\tau}w^{\epsilon},\p_{\xi}w^{\epsilon}\|_{C^{\alpha}([0,T]\times[0,X]\times[0,1])}\right)\lesssim M,
\end{equation}
for some generic constant $\alpha>0$. In the above, $\epsilon_0>0$ is a sufficiently small number so that $w^\epsilon$ is defined for $0<\epsilon<\epsilon_0$.  By the Arzel\`a--Ascoli Theorem, there exist a subsequence $w^{\epsilon_{n}}$ and a function $w(\tau,\xi,\eta)\in C^{\alpha}([0,T]\times[0,X]\times[0,1])$ such that
\begin{equation}\label{convergenceunif}
\begin{cases}
w^{\epsilon_{n}}\rightarrow w,\ \text{uniformly\ in}\ [0,T]\times[0,X]\times[0,1],\\
\p_{\tau}w^{\epsilon_{n}}\rightarrow \p_{\tau}w,\ \text{uniformly\ in}\ [0,T]\times[0,X]\times[0,1],\\
\p_{\xi}w^{\epsilon_{n}}\rightarrow \p_{\xi}w,\ \text{uniformly\ in}\ [0,T]\times[0,X]\times[0,1],\\
(w^{\epsilon_{n}}+\epsilon_{n})^{2}\p_{\eta}^{2}w^{\epsilon_{n}}\rightarrow w^{2}\p_{\eta}^{2}w,\ \text{uniformly\ in}\ [0,T]\times[0,X]\times [0,1].
\end{cases}
\end{equation}
By \eqref{approsyss}, we see that $w$ satisfies the original equation \eqref{prandtlsection8}. The desired bounds in Theorem \ref{localsolcrocco} follow from  \eqref{approxiaboun}, \eqref{approxiabound}, \eqref{convergenceunif}, and the lower semi--continuity of norms. 

\textit{Step 4: The refined pointwise bounds.} We also need to show that the classical solution satisfies $w(\tau,\xi,\eta)\sim F(\eta)$. The proof is similar to the second step of Proposition \ref{existweaksol}. 

In the Gaussian case, i.e. $F(\eta)=(1-\eta)\cdot\log^{\frac{1}{2}}\left(\frac{10}{1-\eta}\right)$, we need to show the lower bound $w(\tau,\xi,\eta)\gtrsim F(\eta)$. To this end, we consider the barrier function
\begin{equation}
    c_{T,X}\cdot e^{-M\tau}e^{\eta}(1-\eta)\log^{\frac{1}{2}}\left(\frac{10}{1-\eta}\right),
\end{equation}
for some large constant $M\geq 1$, and then apply the comparison principle as in the proof of Proposition \ref{existweaksol}.

In the exponential and polynomial cases, i.e. $F(\eta)=(1-\eta)^{m}$ for some $m\in[1,\frac{3}{2}]$, we shall first obtain the upper bound $w(\tau,\xi,\eta)\lesssim 1-\eta$ by considering the barrier function $C_{T,X}\cdot(1-\eta)$ as a middle step. Then we can obtain the upper bound and lower bound $w(\tau,\xi,\eta)\sim(1-\eta)^{m}$ by considering the two-sided barrier functions (for some sufficiently large $N\geq 1$)
\begin{equation}
C_{T,X}e^{N\tau}(1-\eta)^{m},\ \text{and}\ c_{T,X}e^{-N\tau}e^{2\eta}(1-\eta)^{m}
\end{equation}
respectively.

Finally, the uniqueness follows from Proposition \ref{uniweaksol}. Therefore, we complete the proof of Theorem \ref{localsolcrocco}.
\end{proof}
By Theorem \ref{localsolcrocco}, using the same arguments as in the proof of Theorem \ref{mainthree}, we can now establish the local existence of classical solutions to \eqref{prandtl} in physical coordinates. The local well-posedness of \eqref{prandtl} has already been studied in several works (see e.g. \cite{OS99,AWXY15,MW15,XZ17}). Nevertheless, it seems that none of them can be applied directly for our purposes. For completeness we provide the following result for \eqref{prandtl} in more general settings.
\begin{theorem}[\bf Local Well-posedness for Classical Solutions]
\label{mainfour}Under Assumption \ref{dataassum}, there exist unique local--in--$t$ and local--in--$x$ classical solutions of \eqref{prandtl}. More precisely, we have the following conclusions.
\begin{itemize}
\item[(i)] \textit{Local--in--$x$ Solution.} For any $T>0$, there exists $X=X_{T}>0$ depending on $T$ and $(u_{0},u_{1})$, such that there exists a unique classical solution to \eqref{prandtl} in $\mathcal{U}_{T}:=[0,T]\times[0, X_T]\times(0,\infty)$, with the following properties:
\begin{enumerate}
\item[(i.1)] $u(t,x,y)$ is continuously differentiable in $\overline{\mathcal{U}}_{T}$, satisfying the initial--boundary conditions in \eqref{prandtl} in the classical sense. Furthermore, $\p_{t}u,\p_{x}u,\p_{y}u,\p_{y}^{2}u$ are H\"older continuous in $\overline{\mathcal{U}}_{T}$ and $u$ satisfies \eqref{prandtl} in the classical sense;
    
\item[(i.2)] $u(t,x,y)$ is strictly monotone in $y$ and satisfies the matching rate
    \begin{equation*}
        \p_{y}u\sim F(u),\ \forall (t,x,y)\in \overline{\mathcal{U}}_{T}.\\
    \end{equation*}
\begin{comment}
\item The Sobolev bounds:
{\small \begin{equation}\label{sobobdlocalx}
\begin{split}
&\sup_{t \in[0,T],x\in[0,X_{T}],y\in \R^{+}}\left(\left|\p_{y}\left(\frac{\p_{t}u}{\p_{y}u}\right)\bigg|+\bigg|\p_{y}\left(\frac{\p_{x}u}{\p_{y}u}\right)\right|\right)+\sup_{t\in[0,T]}\sum_{\alpha+\beta\leq k}\int_{0}^{X_{T}}\int_{0}^{\infty}\frac{|\mathcal{T}_{1}^{\alpha}\mathcal{T}_{2}^{\beta}\p_{y}u|^{2}}{\p_{y}u}dx dy\\
&\quad\quad+\sup_{x\in[0,X_{T}]}\sum_{\alpha+\beta\leq k}\int_{0}^{T}\int_{0}^{\infty}u\cdot\frac{|\mathcal{T}_{1}^{\alpha}\mathcal{T}_{2}^{\beta}\p_{y}u|^{2}}{\p_{y}u}dt dy\lesssim_{T,w_{0},w_{1}}1.\\
\end{split}
\end{equation}}
\end{comment}
\end{enumerate}

\item[(ii)] \textit{Local--in--$t$ Solution.} For any $X>0$, there exists $T=T_{X}>0$ such that there exists a unique classical solution to \eqref{prandtl} in $\mathcal{V}_{X}:=[0,T_{X}]\times[0,X]\times(0,\infty)$, such that
\begin{enumerate}
    \item[(ii.1)] $u(t,x,y)$ is continuously differentiable in $\overline{\mathcal{V}}_{X}$ and satisfies the initial--boundary conditions in \eqref{prandtl} in the classical sense. Furthermore, $\p_{t}u,\p_{x}u,\p_{y}u,\p_{y}^{2}u$ are H\"older continuous in $\overline{\mathcal{V}}_{X}$ and $u$ satisfies \eqref{prandtl} in the classical sense.
    
\item[(ii.2)] $u(t,x,y)$ is strictly monotone in $y$ and satisfies the matching rate
    \begin{equation*}
        \p_{y}u\sim F(u),\ \forall (t,x,y)\in \overline{\mathcal{V}}_{X}.\\
    \end{equation*}
\begin{comment}
\item The Sobolev bounds:
{\small \begin{equation}\label{sobobdlocalt}
\begin{split}
&\sup_{t \in[0,T_{X}],x\in[0,X],y\in \R^{+}}\left(\left|\p_{y}\left(\frac{\p_{t}u}{\p_{y}u}\right)\bigg|+\bigg|\p_{y}\left(\frac{\p_{x}u}{\p_{y}u}\right)\right|\right)+\sup_{t\in[0,T_{X}]}\sum_{\alpha+\beta\leq k}\int_{0}^{X}\int_{0}^{\infty}\frac{|\mathcal{T}_{1}^{\alpha}\mathcal{T}_{2}^{\beta}\p_{y}u|^{2}}{\p_{y}u}dx dy\\
&\quad\quad+\sup_{x\in[0,X]}\sum_{\alpha+\beta\leq k}\int_{0}^{T_{X}}\int_{0}^{\infty}u\cdot\frac{|\mathcal{T}_{1}^{\alpha}\mathcal{T}_{2}^{\beta}\p_{y}u|^{2}}{\p_{y}u}dt dy\lesssim_{X,w_{0},w_{1}}1.\\
\end{split}
\end{equation}}
\end{comment}
\end{enumerate}
\end{itemize}
\end{theorem}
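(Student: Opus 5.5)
The plan is to deduce Theorem \ref{mainfour} from Theorem \ref{localsolcrocco} by inverting the Crocco transform, following the template used in the proof of Theorem \ref{mainthree}. The first step is to pass from the physical data to the Crocco data. Under Assumption \ref{dataassum}, define $w_0(\xi,\eta)=\p_yu_0(\xi,y^\ast)$ with $u_0(\xi,y^\ast)=\eta$, similarly $w_1$, and $w^{(0)}(\tau,\xi,\eta):=\p_yu^{(0)}(\tau,\xi,y)$ with $u^{(0)}(\tau,\xi,y)=\eta$. We then check that Assumption \ref{dataassumecrocco} holds.
\begin{itemize}
\item (H1) follows from \eqref{matchingrate}, since $\p_yu\sim F(u)$ translates to $w\sim F(\eta)$.
\item For (H2), the directional derivatives $\mathcal{T}_i^{[u^{(0)}]}$ are exactly $\p_\tau,\p_\xi,\p_\eta$ in Crocco variables. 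Thus (B1), (B2) in \eqref{oneorderassume}–\eqref{higherassume} become \eqref{oneorderassume2}–\eqref{higherassume2}. For example, $\p_y(\p_tu/\p_yu)=-\p_\tau w/w$ and $\p_y(\p_y^2u/\p_yu)=w\p_\eta^2w$.
\item The regularity count drops by one, from $2\alpha_1+2\alpha_2+\alpha_3\le 9$ for $u$ to $\le 8$ for $w=\p_yu$, because each $\p_\eta=\frac{1}{\p_yu}\p_y$ costs one $y$-derivative and the Jacobian entries are smooth functions of lower derivatives. The same chain rule transports the compatibility conditions on $\{t=0\},\{x=0\},\{y=0\}$.
\end{itemize}

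The second step applies Theorem \ref{localsolcrocco}(i), respectively (ii), to get $w$ on $H_T$ (resp.\ $K_X$) with $w\sim F(\eta)$, the bounds \eqref{localxproperty2}, and the Hölder regularity \eqref{localxproperty3}. Define $u$ by \eqref{invertcrocco}, $y=\int_0^{u}d\eta/w(t,x,\eta)$. Since $F(\eta)\lesssim(1-\eta)\log^{1/2}(10/(1-\eta))$, the integral $\int^1 d\eta/w$ diverges, so $u$ is a well-defined, strictly increasing bijection onto $[0,1)$ with $u|_{y=0}=0$ and $u\to1$ as $y\to\infty$. It then remains to show the following.
\begin{itemize}
\item $\p_yu=w(t,x,u)\sim F(u)$, which gives (i.2)/(ii.2).
\item $\p_y^2u=w\p_\eta w$, evaluated at $\eta=u$. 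The key point here is that $\p_\eta w$ is bounded and continuous: this comes from $w^2\p_\eta^2w\in C^\gamma$ together with $w\sim1$ on compact subsets of $\eta<1$. Near $\eta=1$, one can write $w\p_\eta w=\p_\eta(w^2)/2$ and integrate $w^2\p_\eta^2 w/w$ instead.
\item $(\p_t,\p_x)u=w\int_0^u(\p_\tau,\p_\xi)w/w^2\,d\eta$.
\end{itemize}
The last integral converges, with Hölder dependence, because $|\p_{\tau,\xi}w|/w\lesssim1$ by \eqref{localxproperty2}. Hence the integrand is $\lesssim 1/w$, and $\int_0^{u}d\eta/w=y$ is finite for each finite $y$.

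The third step is to define $v$ via \eqref{introv}, or equivalently $v=(\p_y^2u-\p_tu-u\p_xu)/\p_yu$, and verify \eqref{prandtl} classically. Differentiating the equation \eqref{prandtlsection8} back through the Jacobian in Lemma \ref{basiccrocco} gives $\p_y$ of the momentum equation. Integrating this in $y$ from $0$, with $\p_y^2u|_{y=0}=w\p_\eta w|_{\eta=0}=0$ by the Neumann condition and $v|_{y=0}=0$, recovers the equation itself. The initial and inflow conditions hold because $w|_{\tau=0}=w_0$ and $w|_{\xi=0}=w_1$ invert to $u_0,u_1$ through the same integral formula. Uniqueness follows as at the end of the proof of Theorem \ref{mainthree}: two classical solutions give Crocco solutions satisfying \eqref{weakrate}, which coincide by Proposition \ref{uniweaksol}, and hence $u_1=u_2$.

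The main obstacle I expect is the uniform Hölder continuity of $\p_tu,\p_xu,\p_y^2u$ up to $y=\infty$, i.e.\ on all of $\overline{\mathcal U}_T$, since the integrals $\int_0^u(\cdot)\,d\eta/w$ involve the degenerate region $\eta\to1$. I would first treat bounded $y$, where only $\eta\le1-\delta$ matters and $w\sim1$, so Hölder continuity transfers directly. For large $y$ I would use the weighted bounds: the factor $w(t,x,u)\sim F(u)\to0$ multiplies a quantity growing at most like $y$, and Hölder differences in $(t,x)$ are controlled by interpolating the $L^\infty$ bound on $\p_{\tau,\xi}w/w$ with the $C^\gamma$ bound of $\p_{\tau,\xi}w$. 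If this fails at infinity, I would settle for local Hölder continuity on sets $y\le R$ together with uniform decay as $y\to\infty$, which suffices for the classical sense.
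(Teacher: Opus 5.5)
Your proposal is correct and follows the paper's route. The paper proves Theorem \ref{mainfour} in one line: translate the data to Assumption \ref{dataassumecrocco}, apply Theorem \ref{localsolcrocco}, then invert the Crocco transform and invoke Proposition \ref{uniweaksol} exactly as in the proof of Theorem \ref{mainthree}. That is your argument, and you supply considerably more detail than the paper does.

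Two harmless slips. First, $\p_y(\p_tu/\p_yu)=+\p_\tau w/w$, not $-\p_\tau w/w$. Second, for the residual $G=\p_tu+u\p_xu+v\p_yu-\p_y^2u$, the Crocco equation gives $\p_y(G/\p_yu)=0$ rather than $\p_yG=0$; together with $G|_{y=0}=0$ this still yields $G\equiv0$.
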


\begin{remark}
Our proof also provides the following weighted bounds in $\mathcal{W}\in\{\overline{\mathcal{U}}_{T}, \overline{\mathcal{V}}_{X}\}$:
\begin{equation}\label{sobobdlocal}
\begin{split}
%\begin{cases}
&\Big|\p_{y}\Big(\frac{\p_{t}u}{\p_{y}u}\Big)\bigg|+\bigg|\p_{y}\Big(\frac{\p_{x}u}{\p_{y}u}\Big)\Big|\in L^{\infty}(\mathcal{W}),\\
&\sum_{\alpha+\beta \leq 2}\frac{|\mathcal{T}_{1}^{\alpha}\mathcal{T}_{2}^{\beta}\p_{y}u|}{(\p_{y}u)^{\frac{1}{2}}}\in L^{\infty}_{t}L^{2}_{x,y}(\mathcal{W}),\\
&\sum_{\alpha+\beta\leq 2}\frac{u^{\frac{1}{2}}\cdot|\mathcal{T}_{1}^{\alpha}\mathcal{T}_{2}^{\beta}\p_{y}u|}{(\p_{y}u)^{\frac{1}{2}}}\in L^{\infty}_{x}L^{2}_{t,y}(\mathcal{W}),
%\end{cases}
\end{split}
\end{equation}
where we recall (with a slight abuse of notation) that
\begin{equation*}
    \mathcal{T}_{1}=\p_{t}-\frac{\p_{t}u}{\p_{y}u}\p_{y},\quad \mathcal{T}_{2}=\p_{x}-\frac{\p_{x}u}{\p_{y}u}\p_{y}.
\end{equation*}

\end{remark}

\begin{comment}
We emphasize that we prove above main results with the help of the transformed Prandtl equation \eqref{prandtlcroccosec1}, and formulations in Assumption \ref{dataassum} are indeed more direct regularity bounds for $(w_{0},w_{1})$ in \eqref{prandtlcroccosec1}. Since Prandtl's equation is originally formulated in physical coordinates, it is more natural to impose assumptions on the initial data within the physical coordinate system. We point that similar conclusions stated in Theorem \ref{mainone} to \ref{mainfour} also hold for the transformed Prandtl system \eqref{prandtlcroccosec1}.

In this subsection, we finish the proof of Theorem \ref{mainthree}: local classical solution theory of \eqref{prandtl}.
\end{comment}
%We now complete the proof of Theorem \ref{mainfour}.
\begin{proof}[Proof of Theorem \ref{mainfour}] 
The proof uses Theorem \ref{localsolcrocco} and otherwise follows from the same argument as in the proof of Theorem \ref{mainthree} (see Section \ref{pgf2}).

\end{proof}

%section 9

\subsection{Proof of Theorem \ref{maintwo}}
In this section, we complete the proof of Theorem \ref{maintwo} on the global well-posedness theory for classical solutions to the dynamical Prandtl equation \eqref{prandtl}.

\begin{proof}[Proof of Theorem \ref{maintwo}]  Theorem \ref{maintwo} essentially follows by combining Theorem \ref{mainthree} on the global well-posedness of weak solutions and Theorem \ref{mainfour} on the local existence of classical solutions. We first note that the uniqueness statement in Theorem \ref{maintwo} follows from Proposition \ref{uniweaksol}. Therefore, in the following we focus on the existence part.

Assume the initial data $u_{0}$ and the inflow data $u_{1}$ satisfy Assumption \ref{dataassum}. By Theorem \ref{mainthree}, there exists a unique global weak solution $u(t,x,y)$ satisfying \eqref{prandtl}, with the properties stated in Theorem \ref{mainthree}. In particular,
\begin{equation}\label{pfmainsmooth}
u\in C^{\infty}((0,\infty)\times(0,\infty)\times[0,\infty)).
\end{equation}

On the other hand, for arbitrary $T,X>0$, by Theorem \ref{mainfour} there exist $X_{T}>0$ and $T_{X}>0$ such that the Prandtl equation \eqref{prandtl} admits a local--in--$x$ classical solution $u^{(1)}(t,x,y)$ in $[0,T]\times[0,X_{T}]\times[0,\infty)$ and a local--in--$t$ classical solution $u^{(2)}(t,x,y)$ in $[0,T_{X}]\times[0,X]\times[0,\infty)$. 

By uniqueness of weak solutions, we have
\begin{equation}\label{weaktoclassical}
    \begin{split}
        u(t,x,y)\equiv u^{(1)}(t,x,y)\quad \text{in}\ [0,T]\times[0,X_{T}]\times[0,\infty),\\
        u(t,x,y)\equiv u^{(2)}(t,x,y)\quad \text{in}\ [0,T_{X}]\times[0,X]\times[0,\infty).
    \end{split}
\end{equation}

Therefore, the continuous differentiability of $u(t,x,y)$ and the H\"older continuity of the associated derivatives $\{\p_{t}u,\p_{x}u,\p_{y}u,\p_{y}^{2}u\}$ for $(t,x,y)\in[0,\infty)\times[0,\infty)\times[0,\infty)$ follow from \eqref{pfmainsmooth}, \eqref{weaktoclassical}, and Theorem \ref{mainfour}. 

Finally, the refined bounds \eqref{solutionmatching}, say $\p_{y}u\sim F(u)$, follow from the same argument in the proof of Theorem \ref{localsolcrocco}. Hence, the proof of Theorem \ref{maintwo} is now complete.
\end{proof}

\begin{remark} By combining Proposition \ref{existweaksol} with Theorem \ref{localsolcrocco}, we also obtain the global well-posedness for classical solutions of the transformed Prandtl system \eqref{prandtlcroccosec1}, which is stated in Theorem \ref{thm:gcs}.
\end{remark}

%acknowledgment
\section*{Acknowledgments}
HJ was supported in part by NSF DMS-2245021 and NSF DMS-2453270; ZL was supported in part by NSFC (No.12494544), NSFC (No.12431007), NSFC Excellent Research Group Project (No.62588101) and New Cornerstone Science Foundation through the XPLORER
PRIZE; CY was supported in part by NSFC (No.123B2008).
%appendix

\appendix
\section{Compatibility conditions and construction of approximating data}\label{compatiappendix}
In this appendix, we present the compatibility conditions and the construction of approximating data $(w_{0}^{\epsilon},w_{1}^{\epsilon})$ for
\begin{equation}\label{approsysappendix}
\begin{cases}
\p_{t}w^{\epsilon}+(y+\epsilon)\p_{x}w^{\epsilon}-(w^{\epsilon}+\epsilon)^{2}\p_{y}^{2}w^{\epsilon}=0,\ (t,x,y)\in(0,T]\times(0,X]\times(0,1),\\
w^{\epsilon}|_{t=0}=w_{0}^{\epsilon},\ w^{\epsilon}|_{x=0}=w_{1}^{\epsilon},\\
w^{\epsilon}|_{y=1}=\p_{y}w^{\epsilon}|_{y=0}=0,
\end{cases}
\end{equation}
which is the approximation system of the transformed Prandtl equation
\begin{equation}\label{prandtlcroccoappendix}
\begin{cases}
\p_{t}w+y\p_{x}w-w^{2}\p_{y}^{2}w=0,\ (t,x,y)\in (0,T]\times (0,X]\times (0,1),\\
w|_{t=0}=w_{0}(x,y),\ w|_{x=0}=w_{1}(t,y),\\
w|_{y=1}=0,\ \p_{y}w|_{y=0}=0.
\end{cases}
\end{equation}

Due to the significance of the approximation system \eqref{approsysappendix} in the study of global weak solutions (Section \ref{basic}) and local classical solutions (Section \ref{local}), we present a description of the compatibility conditions and the construction of approximating data for completeness. We emphasize that these issues arise frequently in initial-boundary-value problems (IBVPs) in PDEs, especially for quasilinear problems where the local well-posedness theory does not follow directly from Duhamel's principle.

\subsection{Formal jets and compatibility conditions for \eqref{approsysappendix}}
First we present the compatibility conditions for \eqref{approsysappendix} with fixed $\epsilon>0$. To prove the global well-posedness of \eqref{approsysappendix} in Appendix \ref{proofapproxia}, the approximating data $w_{0}^{\epsilon},w_{1}^{\epsilon}$ should satisfy certain higher-order compatibility conditions as follows. To make the description clearer, we first introduce \emph{formal boundary jets} applicable to the equation \eqref{approsysappendix}.
\begin{definition} For an initial function $f(x,y)$, define
\begin{equation}\label{initialjets}
\mathcal{T}_{\epsilon}^{0}[f]:=f,\ \mathcal{T}_{\epsilon}^{1}[f]:=(f+\epsilon)^{2}\p_{y}^{2}f-(y+\epsilon)\p_{x}f,
\end{equation}
and recursively
\begin{equation}
\mathcal{T}_{\epsilon}^{k+1}[f]:=\p_{t}^{k}\left((f+\epsilon)^{2}\p_{y}^{2}f-(y+\epsilon)\p_{x}f\right)\bigg|_{t=0},
\end{equation}
where each formal lower derivative of $f$ in $t$ is replaced by the previously defined formal initial jets.

For an inflow function $g(t,y)$, define
\begin{equation}\label{inflowjets}
\mathcal{X}_{\epsilon}^{0}[g]:=g,\ \mathcal{X}_{\epsilon}^{1}[g]:=\frac{(g+\epsilon)^{2}\p_{y}^{2}g-\p_{t}g}{y+\epsilon},
\end{equation}
and recursively
\begin{equation}
\mathcal{X}_{\epsilon}^{k+1}[g]:=\p_{x}^{k}\left(\frac{(g+\epsilon)^{2}\p_{y}^{2}g-\p_{t}g}{y+\epsilon}\right)\bigg|_{x=0},
\end{equation}
where each formal lower derivative of $g$ in $x$ is replaced by the previously defined formal inflow jets.
\end{definition}
Now we present the explicit expression of the compatibility conditions for the approximation system \eqref{approsysappendix}.
\begin{definition}\label{compatibilitydef}
We say the initial data and inflow data $(w_{0}^{\epsilon},w_{1}^{\epsilon})$ satisfy the compatibility conditions up to order $N\in\mathbb{N}$, provided that the following hold for $a+b\leq N$:
\begin{itemize}
\item[(i)] Matching the Dirichlet boundary condition on $y=1$:
\begin{equation}
\p_{x}^{b}\mathcal{T}_{\epsilon}^{a}[w_{0}^{\epsilon}](x,1)\equiv 0,\ \p_{t}^{a}\mathcal{X}_{\epsilon}^{b}[w_{1}^{\epsilon}](t,1)\equiv 0.
\end{equation}
\item[(ii)] Matching the Neumann boundary condition on $y=0$:
\begin{equation}
\p_{y}\p_{x}^{b}\mathcal{T}_{\epsilon}^{a}[w_{0}^{\epsilon}](x,0)\equiv 0,\ \p_{y}\p_{t}^{a}\mathcal{X}_{\epsilon}^{b}[w_{1}^{\epsilon}](t,0)\equiv 0.
\end{equation}
\item[(iii)] Matching at the corner:
\begin{equation}
\p_{x}^{b}\mathcal{T}_{\epsilon}^{a}[w_{0}^{\epsilon}]\big|_{x=0}\equiv \p_{t}^{a}\mathcal{X}_{\epsilon}^{b}[w_{1}^{\epsilon}]\big|_{t=0}\quad \text{as functions of}\ y.
\end{equation}
\end{itemize}
\end{definition}
\begin{remark}\label{descriptioncompati}
Applying the notion of formal jets to the transformed Prandtl system \eqref{prandtlcroccoappendix}, we can similarly define the compatibility conditions for \eqref{prandtlcroccoappendix}. The compatibility conditions for the original Prandtl system \eqref{prandtl} can also be made explicit using Definition \ref{defcrocco} and Lemma \ref{basiccrocco} following an analogous approach.
\end{remark}
The following definition of formal wall jets on $y=0$ is useful in the construction of approximating data.
\begin{definition}
Given a smooth positive wall trace $a=a(t,x)$, put $\Gamma_{-1,\epsilon}[a]=0, \Gamma_{0,\epsilon}[a]=a,\ \Gamma_{1,\epsilon}[a]=0$, and set
\begin{equation*}
A_{0,\epsilon}:=a+\epsilon,\ A_{j,\epsilon}:=\Gamma_{j,\epsilon}[a],\ j\geq 1.
\end{equation*}
For $n\geq 0$, define recursively
\begin{equation}\label{walljets}
\begin{split}
\Gamma_{n+2,\epsilon}[a]:=&\frac{1}{(a+\epsilon)^{2}}\Bigg[\p_{t}\Gamma_{n,\epsilon}[a]+\epsilon\p_{x}\Gamma_{n,\epsilon}[a]+n\p_{x}\Gamma_{n-1,\epsilon}[a]\\
&\quad\quad\quad\quad-\sum_{i+j+k=n,\,k<n}\frac{n!}{i!j!k!}A_{i,\epsilon}A_{j,\epsilon}\Gamma_{k+2,\epsilon}[a]\Bigg];
\end{split}
\end{equation}
the sum is empty when $n=0$.
\end{definition}
\begin{remark} We can see that if $w^{\epsilon}(t,x,y)$ is the classical solution of \eqref{approsysappendix}, then it must hold that
\begin{equation}
\p_{y}^{k}w^\epsilon|_{y=0}=\Gamma_{k,\epsilon}[a^{\epsilon}],\ a^{\epsilon}(t,x)=w^{\epsilon}(t,x,0).
\end{equation}
\end{remark}

\subsection{Approximating data}\label{requireappendix} In this section, we describe the construction of the modified data $(w_{0}^{\epsilon},w_{1}^{\epsilon})$ from $(w_0,w_1)$, which are necessary since the compatibility conditions for \eqref{prandtlcroccoappendix} and \eqref{approsysappendix} differ in subtle but essential ways. Recall that the original data $(w_{0},w_{1})$ satisfy Assumption \ref{dataassumecrocco} (classical solution regime) or Assumption \ref{dataassumecrocco4} 
(weak solution regime). Without loss of generality, we assume that $T=1,X=1$ for simplicity of notation. 

For the theory of weak solutions in Section \ref{basic}, the approximating data $(w_{0}^{\epsilon},w_{1}^{\epsilon})$ are required to satisfy the following properties for $i=0,1$:
\begin{itemize}
    \item[(WP1)] $w_{i}^{\epsilon}$ converges to $w_{i}$ uniformly as $\epsilon\to0+$;
    \item[(WP2)] $w_{i}^{\epsilon}$ is sufficiently regular, and satisfies the compatibility conditions for the approximating system \eqref{approsysappendix} up to the twentieth order;
    \item[(WP3)] The following uniform-in-$\epsilon$ bounds hold:
    \begin{equation}\label{approximatedataassume1}
        \begin{split}
            &(1-y)^{3/2}\lesssim w_{i}^{\epsilon}\lesssim (1-y)\log^{\frac{1}{2}}\left(\frac{10}{1-y}\right),\\
            &\int_{0}^{1}\int_{0}^{1}\frac{|\p_{x}w_{0}^{\epsilon}|}{(w_{0}^{\epsilon}+\epsilon)^{2}}\cdot (1-y)dxdy+\int_{0}^{1}\int_{0}^{1}\frac{|\p_{t}w_{1}^{\epsilon}|}{(w_{1}^{\epsilon}+\epsilon)^{2}}\cdot (1-y)dtdy\lesssim 1,\\
            &\int_{0}^{1}\int_{0}^{1}|\p_{y}^{2}w_{0}^{\epsilon}|\cdot(1-y)dxdy+\int_{0}^{1}\int_{0}^{1}|\p_{y}^{2}w_{1}^{\epsilon}|\cdot(1-y)dtdy\lesssim 1.
        \end{split}
    \end{equation}
\end{itemize}

In the classical solution regime in Section \ref{local}, the approximating data $w_{0}^{\epsilon},w_{1}^{\epsilon}$ are required to satisfy the following properties for $i=0,1$:
\begin{itemize}
    \item[(CP1)] $w_{i}^{\epsilon}$ converges to $w_{i}$ uniformly as $\epsilon\to 0+$.
    \item[(CP2)] $w_{i}^{\epsilon}$ is sufficiently regular, and satisfies the compatibility conditions for the approximating system \eqref{approsysappendix} up to the twentieth order.
    \item[(CP3)] $w_{i}^{\epsilon}$ satisfies the following uniform--in--$\epsilon$ bounds:
    \begin{equation}\label{approximatedataassume2}
        \begin{split}
            &(1-y)^{3/2}\lesssim w_{i}^{\epsilon}\lesssim (1-y)\log^{\frac{1}{2}}\bigg(\frac{10}{1-y}\bigg),\\
            &\mathcal{P}_{\epsilon}[w_{0}^{\epsilon},w_{1}^{\epsilon}]:=\left\|\frac{\p_{x}w_{0}^{\epsilon}}{w_{0}^{\epsilon}+\epsilon}\right\|_{L^{\infty}_{x,y}}+\left\|\frac{\mathcal{T}_{\epsilon}^{1}[w_{0}^{\epsilon}]}{w_{0}^{\epsilon}+\epsilon}\right\|_{L^{\infty}_{x,y}}+\left\|\frac{\p_{t}w_{1}^{\epsilon}}{w_{1}^{\epsilon}+\epsilon}\right\|_{L^{\infty}_{t,y}}+\left\|\frac{\mathcal{X}_{\epsilon}^{1}[w_{1}^{\epsilon}]}{w_{1}^{\epsilon}+\epsilon}\right\|_{L^{\infty}_{t,y}}\lesssim 1
    \end{split}
    \end{equation}
and
\begin{equation}\label{approximatedataassume3}
       \mathcal{E}[w_{0}^{\epsilon},w_{1}^{\epsilon}]:=\sum_{a+b= 2}\bigg(\left\|\frac{\p_{x}^{a}\mathcal{T}_{\epsilon}^{b}[w_{0}^{\epsilon}]}{w_{0}^{\epsilon}+\epsilon}\right\|_{L^{2}_{x,y}}+\left\|\frac{\p_{t}^{b}\mathcal{X}_{\epsilon}^{a}[w_{1}^{\epsilon}]}{w_{1}^{\epsilon}+\epsilon}\right\|_{L^{2}_{t,y}}\bigg)\lesssim 1.
\end{equation}
%\item[(CP4)] Higher--order bounds for $k\geq 3$...
\end{itemize}
%\textit{Construction of approximated data.} 

Our main results in this section are the following.
%We now present the construction of approximating data $(w_{0}^{\epsilon},w_{1}^{\epsilon})$ from the original data $(w_{0},w_{1})$ in the following propositions, and we present the detailed proof for completeness.
\begin{proposition}[\bf Construction of approximating Data (Classical Solution Regime)]
\label{constructappdata1}
Under Assumption \ref{dataassumecrocco}, the approximating initial and boundary data $(w_{0}^{\epsilon},w_{1}^{\epsilon})$ can be constructed from $(w_{0},w_{1})$ such that $(w_{0}^{\epsilon},w_{1}^{\epsilon})$ satisfy (CP1)-(CP3).
\end{proposition}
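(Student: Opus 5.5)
The construction proceeds in three stages: regularize, correct the jets, and verify the bounds. Let $w^{(0)}$ be the extension from Assumption \ref{dataassumecrocco}(H2). I will \emph{not} modify $w_0,w_1$ separately. Instead I construct a single function $W^{\epsilon}(t,x,y)$ on $[0,1]^2\times[0,1)$ that solves \eqref{approsysappendix} to high order at the three faces $\{t=0\}$, $\{x=0\}$, $\{y=0\}$ and vanishes at $y=1$. Then I set $w_0^\epsilon:=W^\epsilon|_{t=0}$ and $w_1^\epsilon:=W^\epsilon|_{x=0}$. Because both traces come from one function whose formal jets $\mathcal{T}^a_\epsilon$, $\mathcal{X}^b_\epsilon$ agree with its actual derivatives, the corner condition (iii) of Definition \ref{compatibilitydef} is automatic. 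Conditions (i)--(ii) then reduce to requiring that $W^\epsilon$ vanish identically near $y=1$ to high order and satisfy the wall-jet relations $\partial_y^kW^\epsilon|_{y=0}=\Gamma_{k,\epsilon}[W^\epsilon(\cdot,\cdot,0)]$ from \eqref{walljets}.

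\textbf{Step 1: regularize.} I mollify $w^{(0)}$ in $(t,x)$ at scale $\epsilon^{\kappa}$ and multiply by a cutoff that makes it vanish (with the same rate profile $F$) near $y=1$. Since (B1)--(B2) only involve weighted $\partial_t,\partial_x,\partial_y^2$ derivatives, mollification in $(t,x)$ preserves them uniformly.

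\textbf{Step 2: jet correction.} I add corrections $\sum_{k\le 20}\frac{t^k}{k!}\chi(t/\delta)\,c_k(x,y)$ and the analogous $x$-corrections, plus a $y$-polynomial correction near $y=0$. These are chosen so that the residual $R^\epsilon:=\partial_tW+(y+\epsilon)\partial_xW-(W+\epsilon)^2\partial_y^2W$ and its derivatives up to order $20$ vanish on the three faces. This is solved recursively as a triangular system in $k$: at each order the unknown enters linearly with coefficient $1$ (for $t$), $y+\epsilon\ge\epsilon>0$ (for $x$), or $(a+\epsilon)^2$ (for the wall). The strict positivity of $\epsilon$ is exactly what makes $\mathcal{X}^1_\epsilon$ and $\Gamma_{n+2,\epsilon}$ well defined.

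Near $y=1$ the Dirichlet conditions force every jet to vanish. I handle this by multiplying the corrections by $(1-y)^{N}$, or by cutting them off to $y\le 1-\epsilon^{\kappa'}$ and keeping $W^\epsilon$ equal to the mollified profile there. I also use the compatibility of $w^{(0)}$ for the limit equation (H2) together with the fact that the $\epsilon$-system differs from it by $O(\epsilon)$ terms. Together these give corrections $c_k=O(\epsilon^{\theta})$ times powers of $\epsilon^{-\kappa}$, so with $\delta$ and $\kappa$ tuned the corrections vanish as $\epsilon\to0$. This yields (CP1)--(CP2).

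\textbf{Step 3: the bounds (CP3), which I expect to be the main obstacle.} The pointwise bound $(1-y)^{3/2}\lesssim w_i^\epsilon\lesssim F$ follows if the corrections are $o(F)$. The difficult part is the weighted quantities $\mathcal{P}_\epsilon$ and $\mathcal{E}$, which divide by $w^\epsilon_i+\epsilon$, a quantity that degenerates at $y=1$. For $\mathcal{P}_\epsilon$, note that
\[
\mathcal{T}^1_\epsilon[w_0^\epsilon]=\big[(w+\epsilon)^2\partial_y^2w-(y+\epsilon)\partial_xw\big]_{t=0}.
\]
By (B1), $w\,\partial_y^2w$ and $\partial_{t,x}w/w$ are bounded, so each term is $O(w+\epsilon)$ provided the corrections respect the same structure. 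To guarantee this, I will build the corrections as multiples of $W^{(0)}$ itself, i.e.\ multiplicative corrections $W^\epsilon=W^{(0)}_{\rm moll}\,(1+\text{small})$, so that ratios over $w+\epsilon$ stay controlled. The second-order jets in $\mathcal{E}$ involve $w^3\partial_y^4 w$-type terms; these are controlled in $L^2$ by (B2), and the weight $(1-y)^{1/1000}$ only costs an integrable $(1-y)^{-1/500}$ factor. Finally, the extra $\epsilon$-dependent terms generated by $(y+\epsilon)$ and $(w+\epsilon)^2$ are bounded using $\epsilon/(w+\epsilon)\le1$.
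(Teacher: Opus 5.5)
The genuine gap is at $y=1$, and Step 3 does not yet establish (CP3).

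\textbf{What works.} Your single-function device is sound as far as it goes. If the residual $R^\epsilon$ vanishes to high order on $\{t=0\}$ for \emph{all} $y\in[0,1]$, then $c_k(0,y)$ is exactly the paper's corner defect, and the corner condition follows. Once the $t$-corrections are in place, the $x$-defects vanish to high order at $t=0$. So your $x$-corrections change neither trace and are in fact superfluous.

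\textbf{The gap at $y=1$.} There the approximating system is nondegenerate, since $(w^\epsilon+\epsilon)^2\ge\epsilon^2$. Condition (i) of Definition \ref{compatibilitydef} therefore already reads $\mathcal{T}^1_\epsilon[w_0^\epsilon](x,1)=\epsilon^2\partial_y^2w_0^\epsilon(x,1)=0$ at first order, and higher orders force further normal derivatives to vanish at $y=1$. Equivalently, your $t$-corrections can make $R^\epsilon$ vanish on $\{t=0\}$ up to $y=1$, with $W^\epsilon|_{y=1}=0$, only if $w_0^\epsilon$ already has vanishing formal jets there. A profile with $w\sim F$ and only $w\,\partial_y^2w$ bounded, as in (B1), generically has $\partial_y^2 w$ unbounded at $y=1$. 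For $F$ itself, $|F''|\sim(1-y)^{-1}\log^{-1/2}\big(\frac{10}{1-y}\big)$ in the Gaussian case and $|F''|\sim(1-y)^{m-2}$ for $m\in(1,3/2]$.

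None of your options repairs this:
\begin{itemize}
\item Keeping $W^\epsilon$ equal to the mollified profile near $y=1$ violates (i) and leaves $R^\epsilon\neq0$ on $\{t=0\}$ there, so the corner condition is no longer automatic.
\item Multiplying the corrections by $(1-y)^N$, or using multiplicative corrections $W(1+h)$, leaves the bad jets of the base profile untouched; note that $\partial_y^2(Wh)$ contains the term $h\,\partial_y^2W$.
\item Cutting $W^\epsilon$ off to vanish near $y=1$ destroys the uniform lower bound $(1-y)^{3/2}\lesssim w_i^\epsilon$ required in (CP3).
\end{itemize}

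\textbf{The missing idea} is the paper's linear cap. In the layer $1-y\le\epsilon^{99/100}$ the profile is replaced by $(1-y)\log^{1/2}(1/\epsilon)$, resp.\ by $(1-y)r_\epsilon^{m-1}$ with $r_\epsilon=\epsilon^{\frac1m-\frac1{100}}$ when $F=(1-y)^m$. This is an exact $(t,x)$-independent solution of the $\epsilon$-system, so all Dirichlet jets and the corner defect vanish identically there, while $W_\epsilon\sim F$ is preserved. In the Gaussian case the gluing must use a cutoff on logarithmic scale, $|\chi_\epsilon^{(k)}(z)|\lesssim z^{-k}\log^{-1}(1/\epsilon)$. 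Since $w^{(0)}$ and the linear profile differ by $O(F)$ on the transition layer, an ordinary single-scale cutoff gives $W\,\partial_y^2W\sim\log(1/\epsilon)$ there. That ruins the bound on $\mathcal{T}^1_\epsilon[w_0^\epsilon]/(w_0^\epsilon+\epsilon)$ in $\mathcal{P}_\epsilon$.

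\textbf{Step 3.} Once $y=1$ is handled correctly, the corrections are \emph{not} uniformly small.
\begin{itemize}
\item The corner defect is generically of size $F$ on the cap transition layer (cf.\ \eqref{cesti1}).
\item The wall defects $d_{i,j,\epsilon}$ are small only in low norms for $j\le 7$, and grow like $\epsilon^{8-j-k}$ in $C^k$ for $j\ge8$, because (H2) supplies only eight anisotropic derivatives.
\end{itemize}
Shrinking the support of a correction defeats the large high-order defects, but it amplifies derivatives in the cutoff direction for the low-order ones. For example, $\partial_t^2\big[t\,\chi(t/\delta)\big]c_1\sim\delta^{-1}c_1$ enters $\mathcal{E}$. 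Near $y=0$, the inflow jets $\mathcal{X}^k_\epsilon$ also carry factors $(y+\epsilon)^{-k}$. This is why the paper needs the quantitative bounds \eqref{desti1}--\eqref{desti4} and \eqref{cesti1}--\eqref{cesti2}, shifted Hardy inequalities near $y=0$, and the balanced choice of scales \eqref{cop0.1}. A heuristic about multiplicative corrections does not replace this.

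\textbf{Minor point.} Mollifying only in $(t,x)$ gives no extra normal regularity. Twentieth-order compatibility involves about forty $y$-derivatives, while (H2) gives eight. The paper instead mollifies in all variables with even extensions, which keeps the Neumann condition up to $O(\epsilon^2)$.
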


\begin{proposition}[\bf Construction of approximating Data (Weak Solution Regime)]
\label{constructappdata2}
Under Assumption \ref{dataassumecrocco4}, the approximating initial and boundary data $(w_{0}^{\epsilon},w_{1}^{\epsilon})$ can be constructed from $(w_{0},w_{1})$ such that $(w_{0}^{\epsilon},w_{1}^{\epsilon})$ satisfy (WP1)-(WP3).
\end{proposition}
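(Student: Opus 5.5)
The plan is to build $(w_0^\epsilon,w_1^\epsilon)$ in three stages: regularize, then correct near the walls $\{y=0\}$ and $\{y=1\}$, then correct near the corner $\{t=0,x=0\}$. Each correction is chosen small in the weighted $L^1$ norms of (WP3), and the data are defined through a single extended function so that matching at the corner is automatic.

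\textbf{Step 1 (regularization).} Take the extension $w^{(0)}$ from Assumption \ref{dataassumecrocco4}(H2). Mollify it in $(t,x)$ at scale $\epsilon^{\kappa}$, with $\kappa$ small, after reflecting across $t=0$ and $x=0$. In $y$, mollify in the variable $\log(1-y)$ at a scale tending to zero. This keeps the two-sided vanishing bounds $(1-y)^{3/2}\lesssim \cdot \lesssim (1-y)\log^{1/2}\left(\frac{10}{1-y}\right)$ up to harmless constants. The weighted integrals in (WP3) are convex functionals of $w$ and of $(\p_t,\p_x,\p_y^2)w$, with weights comparable on mollification scales. They are therefore bounded by the corresponding quantities in \eqref{higherassume4}, after replacing $w^{-2}$ by $(w+\epsilon)^{-2}\le w^{-2}$. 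Uniform convergence (WP1) follows from the continuity of $w^{(0)}$.

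\textbf{Step 2 (wall jets).} Let $a(t,x)$ be the regularized wall trace at $y=0$. Replace the profile near $y=0$ by the Taylor polynomial $\sum_{k\le 40}\Gamma_{k,\epsilon}[a]\,y^k/k!$ from \eqref{walljets}, glued in with a cutoff $\chi(y/\delta_\epsilon)$. By construction the formal jets $\mathcal{T}^a_\epsilon$ and $\mathcal{X}^b_\epsilon$ then satisfy the Neumann conditions (ii) up to order $20$.

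Near $y=1$ we need $\mathcal{T}_\epsilon^1[f]=(f+\epsilon)^2f_{yy}-(y+\epsilon)f_x$ and its iterates to vanish at $y=1$. To arrange this, multiply by a factor making $f$ independent of $(t,x)$ in a layer $1-y<\delta_\epsilon$, equal there to a fixed profile $\phi(y)$ with $\phi(1)=0$. Choose $\phi\sim(1-y)$ with $\phi''$ vanishing to high order at $y=1$. Because $w^{(0)}$ and $\phi$ are both comparable to vanishing rates, an interpolation $w^{(0)}\theta+\phi(1-\theta)$ with a cutoff $\theta$ keeps the pointwise bounds.

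The layer costs in (WP3) are at most $\int_{1-\delta}^1(1-y)\,|\p_y^2(\cdot)|\,dy$. This is bounded by $\sup|\p_y(\cdot)|$ on the layer, which is $O(\log^{1/2})$, times $O(1)$, provided the cutoff in $\log(1-y)$ is taken over many dyadic scales. The tangential term gains from the factor $(1-y)$.

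\textbf{Step 3 (corner and higher order).} Define the data as the traces of one function $W^\epsilon(t,x,y)$, corrected so that $\p_t^aW^\epsilon|_{t=0}$ equals $\mathcal{T}_\epsilon^a[W^\epsilon|_{t=0}]$ for $a\le 20$. Do this by a Borel/Whitney construction in $t$: set $W^\epsilon=\sum_a \frac{t^a}{a!}\mathcal{T}^a_\epsilon[W_0]\chi(t/\rho_\epsilon)$ plus a remainder vanishing to order $20$. Using the same $W^\epsilon$ for the inflow trace $w_1^\epsilon=W^\epsilon|_{x=0}$ gives (iii), because (iii) is just the statement that both sides are mixed derivatives of one function satisfying the equation formally at the corner. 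Steps 2 and 3 commute, since the wall jets are built from the same recursion.

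\textbf{Main obstacle.} The hardest step is Step 3 with uniform bounds. The jets $\mathcal{T}^a_\epsilon$ involve $\epsilon$-dependent quantities and high derivatives of the mollified data, which blow up like $\epsilon^{-C\kappa a}$. I would choose the corner scale $\rho_\epsilon$ super-small, e.g.\ $\rho_\epsilon=\exp(-\epsilon^{-1})$, so that the corrections $t^a\mathcal{T}^a_\epsilon\chi(t/\rho_\epsilon)$ for $a\ge1$ have $L^1$ and $L^\infty$ contributions $O(\rho_\epsilon\epsilon^{-C})\to0$. Their $\p_t$-contributions are $O(\epsilon^{-C}\rho_\epsilon^{a-1}\cdot\rho_\epsilon)$, which is also negligible. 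The pointwise lower bound must still be checked near $y=1$, where all corrections are supported away from $y=1$ by Step 2.
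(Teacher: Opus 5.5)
Your scheme has the same architecture as the paper's: mollify, cap near $y=1$ by a $(t,x)$-independent profile, impose the wall jets $\Gamma_{j,\epsilon}$ near $y=0$, then add Taylor terms in $t$ cut off at a tiny scale. Hanging the corner correction on one function that solves the equation to high order in $t$ along the whole face $\{t=0\}$ is a legitimate way to get (iii). This needs two conditions. First, it must be done additively, $W^\epsilon=W^w+\chi(t/\rho_\epsilon)\sum_a\frac{t^a}{a!}\bigl(\mathcal{T}^a_\epsilon[W_0]-\partial_t^aW^w|_{t=0}\bigr)$, with $W^w$ the wall-corrected extension. Second, you need many more wall jets than corner orders. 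With only $40$ jets these differences need not vanish at $y=0$ to the required order; already $\partial_y\mathcal{T}^{20}_\epsilon$ needs $\Gamma_{41,\epsilon}$, and the paper uses $200$ jets against $50$ corner terms.

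The genuine gap is the uniform tangential bound in (WP3). In Step 1 you appeal to convexity, but $|\partial_xw|/(w+\epsilon)^2$ is not a convex function of $(w,\partial_xw)$. For $\Phi(a,b)=|b|/a^2$ one has $\Phi(2,1)=\frac14>\frac12\bigl(\Phi(1,0)+\Phi(3,2)\bigr)=\frac19$. Nor is the weight comparable across a mollification window. The only pointwise information, $(1-y)^{3/2}\lesssim w\lesssim(1-y)\log^{1/2}(\frac{10}{1-y})$, lets $w$ vary by an unbounded factor near $y=1$. If $\partial_xw$ sits on a thin set where $w$ is large, then $\varphi*|\partial_xw|/(\varphi*w)^2$ exceeds $\varphi*(|\partial_xw|/w^2)$ by an unbounded factor.

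The paper instead uses only the qualitative continuity of (H2)(iii) on the compact set $[0,1]^2\times[0,1-\epsilon^2]$. It takes the mollification scale so small that $w^{(0)}$, $\partial_tw^{(0)}$, $\partial_xw^{(0)}$ and $\partial_y^2w^{(0)}$ move by $\lesssim\epsilon^2$ there. Then $w_\epsilon+\epsilon\geq\frac12(w+\epsilon)$ and $|\partial_xw_\epsilon|\leq|\partial_xw|+\epsilon^2$ give the bound at once, and the region $1-y\lesssim\epsilon^2$ is left to the cap.

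Your cap has a related defect. In the transition region, $g=\theta w+(1-\theta)\phi$ has $\partial_xg=\theta\,\partial_xw$, but only $g\gtrsim\max(\theta w,\phi)$. So $\theta|\partial_xw|/g^2$ can exceed $|\partial_xw|/w^2$ by a factor up to $w/\phi$. When $\phi\sim 1-y$ and the data sit at the upper (Gaussian) envelope allowed by Assumption \ref{dataassumecrocco4}, this factor is $\sim\log^{1/2}(1/\delta_\epsilon)$. Finiteness of $\iint|\partial_xw|w^{-2}(1-y)$ does not absorb this loss at an arbitrary scale $\delta_\epsilon$; data oscillating fast in $x$ on sparse dyadic layers show this. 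Spreading the cutoff over many dyadic scales only multiplies the number of affected layers, and the remark that the tangential term gains from the factor $(1-y)$ does not address the issue.

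Your $\partial_y^2$ estimate in the layer also falls short. It yields $O(\log^{1/2})$, which is not uniform in $\epsilon$, and it uses $\sup|\partial_yw^{(0)}|$ near $y=1$, which is not assumed. The paper avoids the tangential loss by capping at the upper envelope: $W_\epsilon=(1-y)\log^{1/2}(1/\epsilon)$ for $1-y\leq\epsilon^2$, with a single-scale cutoff. Then $W_\epsilon\gtrsim w^{(0)}_\epsilon$ across the transition, so the tangential integrand is dominated by that of $w^{(0)}_\epsilon$. The cutoff's contribution to $\int(1-y)|\partial_y^2W_\epsilon|\,dy$ is then controlled by $\epsilon^2\log^{1/2}(1/\epsilon)$ and by $\int|\partial_y^2w^{(0)}|(1-y)\,dy$.
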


The proofs of Propositions \ref{constructappdata1} and \ref{constructappdata2} are surprisingly subtle and complicated. We present the detailed proofs in the next subsection.

\subsection{Proof of the approximation propositions}
%In this subsection, we finish the proof of Proposition \ref{constructappdata1} and \ref{constructappdata2}.
 We start with the detailed proof of Proposition \ref{constructappdata1}. 
\begin{proof}[Proof of Proposition \ref{constructappdata1}]
We first consider the Gaussian case 
$$F(y)=(1-y)\log^{\frac{1}{2}}\left(\frac{10}{1-y}\right).$$

\textbf{Step 1: Smooth mollification.} By a standard mollifying procedure (using suitable extensions of $w^{(0)}$ and convolution with an approximate identity that is even in all variables), we can regularize $w^{(0)}(t,x,y)$ to get a smooth function $w^{(0)}_{\epsilon}(t,x,y)$ defined on $[0,1]\times[0,1]\times[0,1-2\epsilon]$, while preserving the Neumann boundary condition up to a second order error $\|\partial_yw^{(0)}_\epsilon|_{y=0}\|_{C^1}\lesssim\epsilon^2$. We assume that $0<\epsilon\ll1$. Here we choose the regularization scale to be exactly $\epsilon$. By direct calculations, we see that the bounds \eqref{matchingrate2}, \eqref{oneorderassume2}, and \eqref{higherassume2} still hold for $w^{(0)}_{\epsilon}(t,x,y)$ in $[0,1]\times[0,1]\times[0,1-2\epsilon]$. The following elementary estimates are useful in the construction below (denoting $R_{\epsilon}(t,x,y)=[t-\epsilon,t+\epsilon]\times[x-\epsilon,x+\epsilon]\times[y-\epsilon,y+\epsilon]\cap[0,1]^3$)
\begin{equation}\label{molliesti}
\begin{split}
&|f_{\epsilon}(t,x,y)-f(t,x,y)|\lesssim \min_{\alpha\in[0,2]}\epsilon^{\alpha}\|f\|_{C^{\alpha}(R_{\epsilon}(t,x,y))},\\
&|\nabla^{k}f_{\epsilon}(t,x,y)|\lesssim \epsilon^{l-k}\|f\|_{C^{l}(R_{\epsilon}(t,x,y))},\,\,\, 0\leq l<k.
\end{split}
\end{equation}

\textbf{Step 2: Linear Cap.} To match the compatibility conditions for the approximation system \eqref{approsysappendix} on the top wall $\{y=1\}$, we extend the smooth modified data $w^{(0)}_{\epsilon}$ from $y\in[0,1-2\epsilon]$ to $y\in[0,1]$ by a linear function near $\{y=1\}$. More precisely, define
\begin{equation}\label{linearcap}
W_{\epsilon}(t,x,y):=(1-y)\log^{\frac{1}{2}}\left(\frac{1}{\epsilon}\right)(1-\chi_{\epsilon}(1-y))+w_{\epsilon}^{(0)}(t,x,y)\chi_{\epsilon}(1-y).
\end{equation}
In the above, we use the refined cut-off function $\chi_{\epsilon}(\cdot)$ to handle the \emph{logarithmic loss} in the Gaussian case. $\chi_{\epsilon}$ is defined as
\begin{equation}
\chi_{\epsilon}(z):=\chi\left(100\cdot\frac{\log(z/\epsilon^{\frac{99}{100}})}{\log(1/\epsilon)}\right),
\end{equation}
where $\chi(\cdot)\in C^{\infty}(\R)$, with $\chi\equiv 0$ in $(-\infty,0]$ and $\chi\equiv 1$ in $[1,\infty)$. We have the following elementary properties of the cut-off function $\chi_{\epsilon}(z)$:
\begin{itemize}
\item [(i)] $\chi_{\epsilon}(z)\equiv 0$ whenever $z\leq \epsilon^{\frac{99}{100}}$, and $\chi_{\epsilon}\equiv 1$ whenever $z\geq \epsilon^{\frac{49}{50}}$;
\item [(ii)] $\chi_{\epsilon}(z)$ is smooth, with the following bounds for derivatives
\begin{equation}\label{boundcutoff}
|\chi_{\epsilon}^{(k)}(z)|\lesssim \frac{1}{z^{k}}\log^{-1}\frac{1}{\epsilon},\ \forall z\in [\epsilon^{\frac{99}{100}},\epsilon^{\frac{49}{50}}],\ k\geq 1.
\end{equation}
\end{itemize}
We can also check that the linear cap modification $W_{\epsilon}(t,x,y)$ satisfies the following properties:
\begin{itemize}
\item[(i)] $W_{\epsilon}(t,x,y)\equiv w_{\epsilon}^{(0)}(t,x,y)$ whenever $1-y\geq \epsilon^{\frac{49}{50}}$;
\item[(ii)] $\big|W_{\epsilon}(t,x,y)-w_{\epsilon}^{(0)}(t,x,y)\big|\lesssim (1-y)\log^{\frac{1}{2}}\left(\frac{1}{1-y}\right)\cdot\mathbf{1}_{1-y\leq\epsilon^{\frac{49}{50}}}$;
\item[(iii)] $W_{\epsilon}(t,x,y)\sim w_{\epsilon}^{(0)}(t,x,y)$ for all $y\in[0,1-2\epsilon]$;
\item[(iv)] $W_{\epsilon}(t,x,y)$ satisfies the bounds \eqref{oneorderassume2} and \eqref{higherassume2} in $[0,1]\times[0,1]\times[0,1]$.
\end{itemize}
Indeed, (i)-(iii) follow from the expression of $W_{\epsilon}(t,x,y)$ in \eqref{linearcap}, and (iv) follows from \eqref{linearcap}, \eqref{boundcutoff} and standard interpolation inequalities.

\textbf{Step 3: Modification Near the Wall.} Denote
\begin{equation}
F_{j,\epsilon}(x):=\Gamma_{j,\epsilon}[a_{\epsilon}](0,x),\ G_{j,\epsilon}(t):=\Gamma_{j,\epsilon}[a_{\epsilon}](t,0),\ a_{\epsilon}(t,x):=W_{\epsilon}(t,x,0),
\end{equation}
where $\Gamma_{j,\epsilon}[\cdot]$ are the wall jets as defined in \eqref{walljets}. Define also the wall defects
\begin{equation}\label{walldefects}
d_{0,j,\epsilon}(x):=F_{j,\epsilon}(x)-\p_{y}^{j}W_{\epsilon}(0,x,0), \quad
d_{1,j,\epsilon}(t):=G_{j,\epsilon}(t)-\p_{y}^{j}W_{\epsilon}(t,0,0).
\end{equation}
%The wall defects $d_{0,j,\epsilon}, \, d_{1,j,\epsilon}$ measure the extent to which $W_\epsilon$ deviates from the compatibility conditions for equation \eqref{approsysappendix} at $y=0$.

Next, define the wall-modified initial and inflow data
\begin{equation}\label{walldata}
\begin{split}
&f_{\epsilon}^{w}(x,y):=W_{\epsilon}(0,x,y)+\mathcal{R}_{1,\epsilon}(x,y):=W_{\epsilon}(0,x,y)+\sum_{j=1}^{200}\frac{y^{j}}{j!}\theta(\frac{y}{\rho_{j,\epsilon}})d_{0,j,\epsilon}(x),\\
&g_{\epsilon}^{w}(t,y):=W_{\epsilon}(t,0,y)+\mathcal{R}_{2,\epsilon}(t,y):=W_{\epsilon}(t,0,y)+\sum_{j=1}^{200}\frac{y^{j}}{j!}\theta(\frac{y}{\rho_{j,\epsilon}})d_{1,j,\epsilon}(t),
\end{split}
\end{equation}
where $\theta\in C_c^\infty(-2,2)$ is a standard even cut-off function; $\theta\equiv 1$ on $[0,1]$ and $\theta\equiv 0$ on $[2,\infty)$. The positive parameters $\rho_{j,\epsilon},\,1\leq j\leq 200$ depend on $\epsilon$, and will be chosen later. It follows from straightforward calculations that
\begin{equation}\label{wallcompati}
\p_{y}^{j}f_{\epsilon}^{w}(x,0)=F_{j,\epsilon}(x),\quad \p_{y}^{j}g_{\epsilon}^{w}(t,0)=G_{j,\epsilon}(t),\quad 0\leq j\leq 200.
\end{equation}
Hence $(f_{\epsilon}^{w},g_{\epsilon}^{w})$ satisfies compatibility conditions up to at least order $80$ on $\{y=0\}$. Furthermore, we can see that 
\begin{equation*}
f_{\epsilon}^{w}(x,y)=W_{\epsilon}^{w}(0,x,y),\quad g_{\epsilon}^{w}(t,y)=W_{\epsilon}^{w}(t,0,y),
\end{equation*}
where
\begin{equation}\label{jointextension}
W_{\epsilon}^{w}(t,x,y)=W_{\epsilon}(t,x,y)+\sum_{j=1}^{200}\frac{y^{j}}{j!}\theta(\frac{y}{\rho_{j,\epsilon}})\left(\Gamma_{j,\epsilon}[a_{\epsilon}](t,x)-\p_{y}^{j}W_{\epsilon}(t,x,0)\right).
\end{equation}

The following basic estimates for the defects $(d_{0,j,\epsilon},d_{1,j,\epsilon})$ are essential in our construction: For $i\in\{0,1\}$ and $j\geq 1$, using the notation that $\lfloor \sigma\rfloor:=\max\{l\in\Z: l\leq \sigma\}$ for $\sigma\in\R$, we have
\begin{itemize}
\item[(i)] $j=1$:
\begin{equation}\label{desti1}
\|d_{i,1,\epsilon}\|_{C^{1}}\lesssim \epsilon^{2},\quad \|d_{i,1,\epsilon}\|_{C^{k}}\lesssim \epsilon^{\frac{7}{2}-k}\ \text{for}\ k\ge 2.
\end{equation}
\item[(ii)] $2\leq j\leq 6$:
\begin{equation}\label{desti2}
\|d_{i,j,\epsilon}\|_{C^{\lfloor \frac{6-j}{2}\rfloor}}\lesssim \epsilon,\quad \|d_{i,j,\epsilon}\|_{C^{k}}\lesssim \epsilon^{4-\frac{j}{2}-k}\ \text{for}\ k>\lfloor \frac{6-j}{2}\rfloor.
\end{equation}
\item[(iii)] $j=7$:
\begin{equation}\label{desti3}
|d_{i,7,\epsilon}|\lesssim \epsilon^{\frac{1}{2}},\quad \|d_{i,7,\epsilon}\|_{C^{k}}\lesssim \epsilon^{-k}\ \text{for}\ k\geq 1.
\end{equation}
\item[(iv)] $j\geq 8$:
\begin{equation}\label{desti4}
\|d_{i,j,\epsilon}\|_{C^{k}}\lesssim \epsilon^{8-k-j},\ \text{for}\ k\geq 0.
\end{equation}
\end{itemize}
Given their importance in our construction, we will give a detailed proof of \eqref{desti1}-\eqref{desti4} below in Section \ref{sec:pofdersti}.
\begin{comment}
Indeed, the above estimates follow from the expression of the wall jets \eqref{walljets}, the elementary mollifier estimates \eqref{molliesti}, anisotropic regularity assumptions \eqref{oneorderassume2}-\eqref{higherassume2} on $w^{(0)}$, and the fact that for $j\leq 7$, $\Gamma_{j,0}[w^{(0)}(t,x,0)]-\p_{y}^{j}w^{(0)}(t,x,0)=0$ implying 
\begin{equation}
\begin{split}
d_{j,\epsilon}(t,x)&:=\Gamma_{j,\epsilon}[a_{\epsilon}](t,x)-\p_{y}^{j}W_{\epsilon}(t,x,0)\\
&=\left(\Gamma_{j,\epsilon}[a_{\epsilon}]-\Gamma_{j,0}[w^{(0)}(t,x,0)]\right)-\left(\p_{y}^{j}W_{\epsilon}(t,x,0)-\p_{y}^{j}w^{(0)}(t,x,0)\right).
\end{split}
\end{equation}
\end{comment}

\textbf{Step 4: Modification Near the Corner.} So far, we have fulfilled the required compatibility conditions (see Definition \ref{compatibilitydef}) on the walls $\{y=0\}$ and $\{y=1\}$. To fulfill the remaining compatibility at the corner $(t,x)=(0,0)$, we define
\begin{equation}
\Theta_{m,\epsilon}(y):=\mathcal{T}_{\epsilon}^{m}[f_{\epsilon}^{w}(x,y)]\big|_{x=0},\quad 1\leq m\leq 50,
\end{equation}
and the corner compatibility defects 
\begin{equation}
c_{m,\epsilon}(y):=\Theta_{m,\epsilon}(y)-\p_{t}^{m}g_{\epsilon}^{w}(0,y),\quad 1\leq m\leq 50.
\end{equation}
Now define the corrected inflow data
\begin{equation}\label{cornermodify}
g_{\epsilon}^{c}:=g_{\epsilon}^{w}(t,y)+\mathcal{R}_{3,\epsilon}(t,y):=g_{\epsilon}^{w}(t,y)+\sum_{m=1}^{50}\frac{t^{m}}{m!}\theta(\frac{t}{\sigma_{m,\epsilon}})c_{m,\epsilon}(y).
\end{equation}
Here the positive parameters $\sigma_{m,\epsilon}$ for the modification scales depend on $\epsilon$, and will be chosen later. 

We have the following identities, which imply that the modified data $(f_{\epsilon}^{w},g_{\epsilon}^{c})$ satisfy the required compatibility conditions in Definition \ref{compatibilitydef}. Indeed, the first statement below implies that $g_{\epsilon}^{c}$ still satisfies the compatibility conditions on the wall $y=0$; and the second statement implies that the compatibility conditions at the corner are fulfilled.
\begin{itemize}
\item[(i)] For integers $j+2m\leq 200$, we have
\begin{equation}
\p_{y}^{j}\Theta_{m,\epsilon}(0)=\p_{t}^{m}\Gamma_{j,\epsilon}[a_{\epsilon}](0,0).
\end{equation}
In particular,
\begin{equation}
\p_{y}^{j}c_{m,\epsilon}(0)=0,\quad j+2m\leq 200.
\end{equation}
\item[(ii)] For integers $\alpha+\beta\leq 30$,
\begin{equation}\label{a.33}
\p_{x}^{\beta}\mathcal{T}_{\epsilon}^{\alpha}[f_{\epsilon}^{w}]\big|_{x=0} = \p_{t}^{\alpha}\mathcal{X}_{\epsilon}^{\beta}[g_{\epsilon}^{c}]\big|_{t=0}.
\end{equation}
\end{itemize}
The above identities follow from the expressions for the boundary jets \eqref{initialjets}, \eqref{inflowjets}, \eqref{walljets}, and inductive calculations.

We use the following harmless restrictions, which are satisfied by the
choices in \eqref{cop0.1} for all sufficiently small $\eps$:
\begin{equation}\label{a.35a}
  0<\rho_{j,\eps}\leq\frac14,
  \qquad 1\leq j\leq200,
  \qquad
  \sum_{j=1}^{200}\rho_{j,\eps}^{j-4}
       |d_{0,j,\eps}(0)|\leq1.
\end{equation}

We also present the basic estimates for $c_{m,\epsilon}(y)$, which are essential for the choice of parameters. The main estimates we need are the quantitative estimates for $c_{1,\epsilon}(y)$ as follows:
\begin{itemize}
\item[(i)] Estimation near $y=1$:
\begin{equation}\label{cesti1}
\begin{cases}
&|c_{1,\epsilon}(y)|\lesssim (1-y)\log^{\frac{1}{2}}\left(\frac{10}{1-y}\right)\mathbf{1}_{\epsilon^{\frac{99}{100}}\leq 1-y \leq \epsilon^{\frac{49}{50}}}+\epsilon^{\frac{199}{200}},\ y\in[\frac{1}{2},1],\\
&\sum_{k=1}^{6}\|(1-y)^{k-1}\p_{y}^{k}c_{1,\epsilon}(y)\|_{L^{\infty}[\frac{1}{2},1]}\lesssim \epsilon^{-\frac{1}{400}},\\
&\|\p_{y}^{k}c_{1,\epsilon}(y)\|_{L^{\infty}[\frac{1}{2},1]}\lesssim \epsilon^{-k},\ k\geq 7.
\end{cases}
\end{equation}
\item[(ii)] Estimation near $y=0$: for $y\in[0,1/2]$ and $0\leq k\leq 6$,
\begin{align}
 |\partial_y^kc_{1,\eps}(y)|
 \lesssim{}&\beta_{k,\eps}
 +\sum_{j=1}^{200}\rho_{j,\eps}^{\,j-k-2}
 \Bigl(
 |d_{0,j,\eps}(0)|
 +\rho_{j,\eps}^2|d'_{1,j,\eps}(0)|
 +\rho_{j,\eps}^2(\rho_{j,\eps}+\eps)
  |d'_{0,j,\eps}(0)|
 \Bigr)
 \mathbf{1}_{\{0\leq y\leq2\rho_{j,\eps}\}},
\label{cesti2}
\end{align}
\begin{comment}
\begin{equation}\label{cesti2}
|\p_{y}^{k}c_{1,\epsilon}|\lesssim c_{k,\epsilon}+\sum_{j=1}^{200}\rho_{j,\epsilon}^{j-k-2}(|d_{0,j,\epsilon}(0)|+\rho_{j,\epsilon}^{2}|d'_{1,j,\epsilon}(0)|)\mathbb{I}_{0\leq y\leq 2\rho_{j,\epsilon}},
\end{equation}\end{comment}
where
\begin{equation*}
\beta_{k,\epsilon}=
\begin{cases}
\epsilon,\ k\leq 4,\\
\epsilon^{\frac{1}{2}},\ k=5,\\
1,\ k=6.
%,\\
%\epsilon^{-(k-6)},\ k>6.
\end{cases}
\end{equation*}
\end{itemize}

In view of the importance of the bounds \eqref{cesti1}-\eqref{cesti2}, we will present a detailed proof below in Section \ref{sec:pcersti1}.

\textbf{Step 5: Choice of Parameters.} Using the above constructions, we obtain the modified data $(f_{\epsilon}^{w},g_{\epsilon}^{c})$, which satisfy the required compatibility conditions as in Definition \ref{compatibilitydef}. We still need to choose the parameters $\{\rho_{j,\epsilon}\}$ and $\{\sigma_{m,\epsilon}\}$ such that the required bounds \eqref{approximatedataassume2} and \eqref{approximatedataassume3} hold.

Substituting the expressions for $(f_{\epsilon}^{w},g_{\epsilon}^{c})$ into \eqref{approximatedataassume2} and \eqref{approximatedataassume3}, we need
\begin{equation}\label{finalrequire}
\begin{split}
&\|\p_{x}^{2}\mathcal{R}_{1,\epsilon}\|_{L^{2}_{x,y}}+\|\p_{y}^{4}\mathcal{R}_{1,\epsilon}\|_{L^{2}_{x,y}}+\|\p_{t}^{2}\p_{y}^{2}\mathcal{R}_{2,\epsilon}\|_{L^{2}_{t,y}}+\|\p_{y}^{6}\mathcal{R}_{2,\epsilon}\|_{L^{2}_{t,y}}\\
&\quad+\|\p_{t}^{2}\p_{y}^{2}\mathcal{R}_{3,\epsilon}\|_{L^{2}_{t,y}([0,1]\times[0,\frac{1}{2}])}+\|\p_{y}^{6}\mathcal{R}_{3,\epsilon}\|_{L^{2}_{t,y}([0,1]\times[0,\frac{1}{2}])}\\
&\quad+\left\|\frac{\p_{t}^{2}\mathcal{R}_{3,\epsilon}}{(1-y)\log^{\frac{1}{2}}\left(\frac{10}{1-y}\right)}\right\|_{L^{2}_{t,y}}+\left\|(1-y)^{3}\log^{\frac{3}{2}}\left(\frac{10}{1-y}\right)\p_{y}^{4}\mathcal{R}_{3,\epsilon}\right\|_{L^{2}_{t,y}}\\
&\quad+\left\|\frac{\p_{t}\mathcal{R}_{3,\epsilon}}{(1-y)\log^{\frac{1}{2}}\left(\frac{10}{1-y}\right)}\right\|_{L^{\infty}_{t,y}}+\left\|(1-y)\log^{\frac{1}{2}}\left(\frac{10}{1-y}\right)\p_{y}^{2}\mathcal{R}_{3,\epsilon}\right\|_{L^{\infty}_{t,y}}\lesssim 1.
\end{split}
\end{equation}
To justify the above reduction, we use, for $s=x$ or $t$, the
anisotropic Sobolev estimate
\[
 \|\partial_s h\|_{L^\infty}
 +\|\partial_y^2h\|_{L^\infty}
 \lesssim
 \|h\|_{L^2}
 +\|\partial_s^2h\|_{L^2}
 +\|\partial_y^4h\|_{L^2},
\]
together with its versions applied to normal derivatives of $h$ and
standard one-dimensional interpolation and trace estimates.  Near
$y=0$, we also use the shifted Hardy inequalities for $q\in\{2,\infty\}$,
\begin{align*}
 &\left\|\frac{h}{y+\epsilon}\right\|_{L^q(0,1/2)}
 \lesssim
 \|\partial_yh\|_{L^q(0,1/2)}
 +\epsilon^{\frac1q-1}|h(0)|,\\
 &\left\|\p_{y}^{k}\left(\frac{h}{y+\epsilon}\right)\right\|_{L^{q}(0,1/2)}\lesssim |b_{k,\epsilon}|\cdot\epsilon^{\frac{1}{q}}+\|\p_{y}^{k+1}h\|_{L^{q}(0,1/2)},\ k\geq 1,\\
 &b_{k,\epsilon}:=\frac{1}{\epsilon^{k+1}}\sum_{i=0}^{k}(-1)^{k-i}\frac{h^{(i)}(0)\epsilon^{i}}{i!}.
\end{align*}

The boundary traces in these estimates and all lower-order terms are
controlled by \eqref{wallcompati}--\eqref{a.33} and the
preceding uniform smallness estimates. Consequently, the $L^2$
quantities in \eqref{finalrequire} control the required $L^\infty$
bounds for $R_{1,\epsilon}$, $R_{2,\epsilon}$, and the near-wall part of
$R_{3,\epsilon}$, while the two weighted $L^\infty$ bounds near $y=1$
are included explicitly in \eqref{finalrequire}.  Hence
\eqref{finalrequire} implies \eqref{approximatedataassume2}--\eqref{approximatedataassume3}.

\begin{comment}
    
In the above, we have used Hardy's inequality to handle the extra denominator $\frac{1}{y+\epsilon}$ appearing in the inflow data.
\end{comment}
In view of \eqref{desti1}-\eqref{desti4} and \eqref{cesti1}-\eqref{cesti2}, we can choose the parameters as follows:
\begin{equation}\label{cop0.1}
\begin{split}
&\rho_{1,\epsilon}=\epsilon^{0.3},\ \rho_{2,\epsilon}=\epsilon^{0.2},\ \rho_{3,\epsilon}=\epsilon^{0.3},\ \rho_{4,\epsilon}=\epsilon^{0.4};\\
&\rho_{5,\epsilon}=\epsilon^{0.3},\ \rho_{6,\epsilon}=\epsilon^{0.3},\ \rho_{7,\epsilon}=\epsilon^{0.4},\ \sigma_{1,\epsilon}=\epsilon^{0.6}.
\end{split}
\end{equation}
For other parameters $\{\rho_{j,\epsilon}\}_{j\geq 8}$ and $\{\sigma_{m,\epsilon}\}_{m\geq 2}$, since they always contribute positive powers in the expression \eqref{finalrequire}, we can choose them to be sufficiently small so that \eqref{finalrequire} holds. Hence we finish the proof of Proposition \ref{constructappdata1} when $F(\eta)=(1-\eta)\log^{\frac{1}{2}}\left(\frac{10}{1-\eta}\right)$.

For the polynomial and exponential cases $F(\eta)=(1-\eta)^{m}(1\leq m\leq \frac{3}{2})$, the construction follows from the same approach. The main modification is that in the linear cap procedure (Step 2 above), we construct $W_{\epsilon}(t,x,y)$ by defining
\begin{equation}
    W_{\epsilon}(t,x,y):=(1-y)r_{\epsilon}^{m-1}\theta\left(\frac{1-y}{r_{\epsilon}}\right)+w_{\epsilon}^{(0)}(t,x,y)\left[1-\theta\left(\frac{1-y}{r_{\epsilon}}\right)\right],\ r_{\epsilon}:=\epsilon^{\frac{1}{m}-\frac{1}{100}}.
\end{equation}
The parameters $\{\rho_{j,\epsilon}\}$ and $\{\sigma_{m,\epsilon}\}$ are determined in the same way as in the above argument. The proof of Proposition \ref{constructappdata1} is now complete.
\end{proof}

Finally we present the proof of Proposition \ref{constructappdata2}.

\begin{proof}[Proof of Proposition \ref{constructappdata2}]
The proof of Proposition \ref{constructappdata2} is similar to that of Proposition \ref{constructappdata1} but simpler since we only need to fulfill the weaker bounds \eqref{approximatedataassume1}. Hence we only sketch the main points.

First we regularize $w^{(0)}(t,x,y)$ to obtain a smooth function $w^{(0)}_{\epsilon}(t,x,y)$ defined on $[0,1]\times[0,1]\times[0,1-\epsilon^{3}]$, and the vanishing rates in \eqref{approximatedataassume1} also hold for $w^{(0)}_{\epsilon}(t,x,y)$. We can also require that $\p_{y}w^{(0)}_{\epsilon}|_{y=0}$ remains zero by using an even extension of $w^{(0)}$ before the regularization. Notice that even extensions keep the $C^{2}_{y}$ regularity, which is sufficient in the weak solution regime, in contrast to the classical solution case. Furthermore, thanks to \eqref{regular4}, we can choose the mollification scale to be sufficiently small such that for every multi-index $(\alpha_1,\alpha_2,\alpha_3)$ with $2\alpha_1+2\alpha_2+\alpha_3\leq 2$,
\begin{equation}
\Big\|\partial_t^{\alpha_1}\partial_x^{\alpha_2}\partial_y^{\alpha_3}\big(w^{(0)}_{\epsilon}(t,x,y)-w^{(0)}(t,x,y)\big)\Big\|_{L^\infty([0,1]\times[0,1]\times[0,1-\epsilon^{2}])}\lesssim\epsilon^{2}.
\end{equation}
In particular, the initial and inflow traces of $w^{(0)}_{\epsilon}(t,x,y)$ satisfy \eqref{approximatedataassume1}.

Next, for the linear cap procedure, define
\begin{equation}
    W_{\epsilon}(t,x,y)=(1-y)\log^{\frac{1}{2}}\left({\frac{1}{\epsilon}}\right)\theta\left(\frac{1-y}{\epsilon^{2}}\right)+w^{(0)}_{\epsilon}(t,x,y)\left[1-\theta\left(\frac{1-y}{\epsilon^{2}}\right)\right].
\end{equation}
A direct calculation shows that the initial and inflow traces of  $W_{\epsilon}(t,x,y)$ still satisfy the bounds \eqref{approximatedataassume1}.

Finally, as in the proof of Proposition \ref{constructappdata1}, we modify the data near the wall and the corner to fulfill the compatibility conditions (see \eqref{walldata} and \eqref{cornermodify}). Notice that the parameters $\{\rho_{j,\epsilon}\}$ and $\{\sigma_{m,\epsilon}\}$ contribute positive powers in \eqref{approximatedataassume1}. We can shrink $\{\rho_{j,\epsilon}\}$ and $\{\sigma_{m,\epsilon}\}$ sufficiently small to preserve the required bounds \eqref{approximatedataassume1}. We conclude the proof of Proposition \ref{constructappdata2}.
\end{proof}

\subsubsection{Proof of the bounds \eqref{desti1}-\eqref{desti4}}\label{sec:pofdersti}

The extensions in \textbf{Step~1} are chosen so that the wall-jet identities
\[
 \Gamma_{j,0}[w^{(0)}(t,x,0)]
   =\partial_y^j w^{(0)}(t,x,0),
 \qquad 1\leq j\leq7,
\]
continue to hold on the support of the tangential mollification. 
Since the cap modification is inactive near \(y=0\), we have
\(W_\epsilon=w_\epsilon^{(0)}\) there.  Define the joint wall defect
\[
 d_{j,\epsilon}(t,x)
 :=
 \Gamma_{j,\epsilon}[a_\epsilon](t,x)
 -\partial_y^jW_\epsilon(t,x,0).
\]
Then
\[
 d_{0,j,\epsilon}(x)=d_{j,\epsilon}(0,x),
 \qquad
 d_{1,j,\epsilon}(t)=d_{j,\epsilon}(t,0).
\]
When applied to a wall trace below,
\(\varphi_\epsilon*_{t,x}\) denotes convolution only in \(t,x\), with $\varphi_\epsilon$ being the mollifier used in the regularization procedure.
Adding and subtracting the tangential mollification of the preceding
wall-jet identity gives
\setcounter{equation}{41}
\begin{align}
 d_{j,\epsilon}(t,x)
 ={}&
 \Big(
   \Gamma_{j,\epsilon}[a_\epsilon](t,x)
   -\varphi_\epsilon*_{t,x}
      \Gamma_{j,0}[w^{(0)}(\mathord\cdot,\mathord\cdot,0)](t,x)
 \Big)
 \notag\\
 &+
 \Big(
   \varphi_\epsilon*_{t,x}
      \partial_y^jw^{(0)}(\mathord\cdot,\mathord\cdot,0)(t,x)
   -\partial_y^jW_\epsilon(t,x,0)
 \Big).
 \label{defect-splitting}
\end{align}

We first consider \(j=1\). Since
\(\Gamma_{1,\epsilon}[a_\epsilon]=0\) and
\(\partial_yw^{(0)}|_{y=0}=0\), Taylor expansion in \(y\), together with
the vanishing first moment of the normal mollifier, gives
\[
 \|d_{i,1,\epsilon}\|_{C^2}\lesssim\epsilon^2.
\]
Moving the remaining tangential derivatives onto the mollifier gives
\[
 \|d_{i,1,\epsilon}\|_{C^k}
 \lesssim\epsilon^{4-k},\qquad k\geq2,
\]
which is stronger than \eqref{desti1}.

For even \(j\in\{2,4,6\}\), the regularity assumption \eqref{regular2}, the standard
mollifier estimates, and induction in the wall-jet recursion \eqref{walljets} give,
for \(i\in\{0,1\}\),
\begin{equation}
  \|d_{i,j,\epsilon}\|_{C^k}
  \lesssim \epsilon+\epsilon^{4-\frac j2-k},
  \qquad k\geq0.
  \label{even-defect-bound}
\end{equation}
Indeed, \(\Gamma_{j,0}\) has tangential differential order \(j/2\).
If \(k\leq(6-j)/2\), then \(j/2+k\leq3\), so \eqref{regular2} leaves one
tangential derivative for the wall-jet commutator; it also leaves the
normal derivative needed to estimate the normal-mollification error.
Both terms in \eqref{defect-splitting} are therefore \(O(\epsilon)\). For larger \(k\), the
remaining tangential derivatives are placed on the mollifier.

Now let \(j\in\{3,5,7\}\) be odd and put \(m=(7-j)/2\). Since
\(2m+j+1=8\), \eqref{regular2} and the normal mollifier estimate imply
\begin{equation}
 \left\|
  \varphi_\epsilon*_{t,x}\partial_y^j
       w^{(0)}(\mathord\cdot,\mathord\cdot,0)
  -\partial_y^jW_\epsilon(\mathord\cdot,\mathord\cdot,0)
 \right\|_{C^m}
 \lesssim\epsilon.
 \label{odd-normal-mollification}
\end{equation}
On the other hand, \(\Gamma_{j,0}\) has tangential differential order
\((j-1)/2\), and
\[
  \frac{j-1}{2}+m=3.
\]
Hence \eqref{regular2} supplies one further tangential derivative of the wall trace.
The product--mollifier commutator estimates and induction in \eqref{walljets}, using
\(a_\epsilon\gtrsim1\) and the explicit factors of \(\epsilon\) in that
recursion, therefore give
\begin{equation}
 \left\|
  \Gamma_{j,\epsilon}[a_\epsilon]
  -\varphi_\epsilon*_{t,x}
     \Gamma_{j,0}[w^{(0)}(\mathord\cdot,\mathord\cdot,0)]
 \right\|_{C^m}
 \lesssim\epsilon.
 \label{odd-wall-jet}
\end{equation}
Combining \eqref{defect-splitting}, \eqref{odd-normal-mollification}, and
\eqref{odd-wall-jet}, and placing any additional tangential derivatives on
the mollifier, yields
\begin{equation}
 \|d_{i,j,\epsilon}\|_{C^{(7-j)/2}}\lesssim\epsilon,
 \qquad
 \|d_{i,j,\epsilon}\|_{C^k}
 \lesssim\epsilon^{\frac{9-j}{2}-k},
 \quad k>\frac{7-j}{2}.
 \label{odd-defect-bound}
\end{equation}
For \(j=3,5\), \eqref{odd-defect-bound} is stronger than \eqref{desti2}, while for
\(j=7\) it is stronger than \eqref{desti3}. Together with
\eqref{even-defect-bound}, this proves \eqref{desti2}--\eqref{desti3}.

Finally, let \(j\geq8\). Directly from the regularity assumption \eqref{regular2} and
the mollifier estimates,
\[
 \left\|
 \partial_s^k\partial_y^jW_\epsilon(\cdot,\cdot,0)
 \right\|_{L^\infty}
 \lesssim\epsilon^{8-j-k},
 \qquad s=t\ \text{or}\ x.
\]

Moreover, \(\Gamma_{j,\epsilon}[a_\epsilon]\) has tangential differential
order at most \(\lfloor j/2\rfloor\). Hence induction in \eqref{walljets} gives
\[
 \left\|
 \partial_s^k\Gamma_{j,\epsilon}[a_\epsilon]
 \right\|_{L^\infty}
 \lesssim_j
 \epsilon^{4-\lfloor j/2\rfloor-k}
 \lesssim_j
 \epsilon^{8-j-k},
 \qquad j\geq8.
\]
Together with the definition of \(d_{i,j,\epsilon}\), this proves
\eqref{desti4}. The constants may depend on \(j\), which is harmless since
only \(j\leq200\) is used. \hfill\(\Box\)

\subsubsection{Proof of the bounds \eqref{cesti1}-\eqref{cesti2}}\label{sec:pcersti1}

By the first condition in \eqref{a.35a}, all the corrections in~(A.23) vanish
for $y\in[1/2,1]$.  Thus $W_\eps^w=W_\eps$ on this interval, and
definitions~(A.29)--(A.30) give
\begin{equation*}
 c_{1,\eps}(y)
 =\left[
 (W_\eps+\eps)^2\partial_y^2W_\eps
 -(y+\eps)\partial_xW_\eps
 -\partial_tW_\eps
 \right]_{t=x=0},
 \qquad \frac12\leq y\leq1.
 %\tag{A.35b}
\end{equation*}
Set $z=1-y$.  Compatibility of the original data implies
\begin{equation}\label{A.35c}
 \left[
 (w^{(0)})^2\partial_y^2w^{(0)}
 -y\partial_xw^{(0)}-\partial_tw^{(0)}
 \right]_{t=x=0}=0.
\end{equation}
Subtracting the mollification of \eqref{A.35c} from the corresponding expression
for $w_\eps^{(0)}$ gives the exact identity
\begin{align}\label{A.35d}
 &(w_\eps^{(0)}+\eps)^2\partial_y^2w_\eps^{(0)}
 -(y+\eps)\partial_xw_\eps^{(0)}
 -\partial_tw_\eps^{(0)}                                     -\varphi_\eps*
 \left((w^{(0)})^2\partial_y^2w^{(0)}
       -y\partial_xw^{(0)}-\partial_tw^{(0)}\right)             \notag\\
 &=(w_\eps^{(0)})^2\partial_y^2w_\eps^{(0)}
 -\varphi_\eps*\left((w^{(0)})^2\partial_y^2w^{(0)}\right)\notag\\
 &\quad\,\,+\varphi_\eps*(y\partial_xw^{(0)})
 -y\partial_xw_\eps^{(0)}
 -\eps\partial_xw_\eps^{(0)}
 +(2\eps w_\eps^{(0)}+\eps^2)\partial_y^2w_\eps^{(0)}.
\end{align}
The centered mollifier estimate, together with \eqref{matchingrate2}, \eqref{oneorderassume2} and \eqref{higherassume2},
therefore gives, whenever $z\geq\eps^{99/100}$,
\begin{align}
 &\left|
 \left[
 (w_\eps^{(0)}+\eps)^2\partial_y^2w_\eps^{(0)}
 -(y+\eps)\partial_xw_\eps^{(0)}
 -\partial_tw_\eps^{(0)}
 \right]_{t=x=0}
 \right|
 \lesssim
 \eps z^{-1/500}\log^{C_0}\!\left(\frac{10}{z}\right),
 \label{A.35e}\\
 &\left|
 \partial_y^k
 \left[
 (w_\eps^{(0)}+\eps)^2\partial_y^2w_\eps^{(0)}
 -(y+\eps)\partial_xw_\eps^{(0)}
 -\partial_tw_\eps^{(0)}
 \right]_{t=x=0}
 \right|
 \lesssim
 z^{1-k-1/500}\log^{C_0}\!\left(\frac{10}{z}\right),
 \quad 1\leq k\leq6.  \label{A.35f}
\end{align}
Here $C_0$ is a fixed constant. Indeed, (1.25) controls
$\partial_tw^{(0)}$, $\partial_xw^{(0)}$, and
$w^{(0)}\partial_y^2w^{(0)}$ without a power loss in $z$.  \eqref{higherassume2} 
controls the higher even normal derivatives, and interpolation gives the
odd derivatives.  In each Leibniz product of total anisotropic order at
most eight, the loss $z^{-1/1000}$ is needed in at most two factors, which
gives \eqref{A.35f}.  \eqref{A.35e} follows by using one additional
first-order mollifier difference in \eqref{A.35d}.

On the transition region
\[
 \eps^{99/100}\leq z\leq\eps^{49/50},
\]
formula~(A.16) reads
\[
 W_\eps
 =z\log^{1/2}\!\frac1\eps
 +\chi_\eps(z)
 \left(w_\eps^{(0)}-z\log^{1/2}\!\frac1\eps\right).
\]
Since the first term is affine in $y$ and independent of $t,x$, direct
differentiation gives
\begin{align}
 &(W_\eps+\eps)^2\partial_y^2W_\eps
 -(y+\eps)\partial_xW_\eps-\partial_tW_\eps                    \notag\\
&=\chi_\eps
 \left[
 (w_\eps^{(0)}+\eps)^2\partial_y^2w_\eps^{(0)}
 -(y+\eps)\partial_xw_\eps^{(0)}
 -\partial_tw_\eps^{(0)}
 \right]                                                        \notag\\
&\quad+\chi_\eps
 \left[(W_\eps+\eps)^2-(w_\eps^{(0)}+\eps)^2\right]
 \partial_y^2w_\eps^{(0)}                                     \notag\\
&\quad+(W_\eps+\eps)^2
 \left[
 2\chi_\eps'(z)\partial_z
 \left(w_\eps^{(0)}-z\log^{1/2}\!\frac1\eps\right)
 +\chi_\eps''(z)
 \left(w_\eps^{(0)}-z\log^{1/2}\!\frac1\eps\right)
 \right].
 \label{A.35g}
\end{align}
On this region,
\[
 w_\eps^{(0)}\sim W_\eps
 \sim z\log^{1/2}\!\left(\frac{10}{z}\right),
 \qquad
 |\chi_\eps^{(k)}(z)|
 \lesssim\frac{z^{-k}}{\log(1/\eps)}.
\]
Consequently, \eqref{oneorderassume2} and \eqref{boundcutoff} bound the last two lines of \eqref{A.35g} by
\[
 z\log^{1/2}\!\left(\frac{10}{z}\right)
 \mathbf{1}_{\{\eps^{99/100}\leq z\leq\eps^{49/50}\}}.
\]
Together with \eqref{A.35e} and
\[
 \eps z^{-1/500}\log^{C_0}\!\left(\frac{10}{z}\right)
 \lesssim\eps^{199/200},
 \qquad z\geq\eps^{99/100},
\]
this proves the first line of \eqref{cesti1}.

Differentiating \eqref{A.35g} and using \eqref{A.35f} gives
\[
 |\partial_y^kc_{1,\eps}(y)|
 \lesssim
 z^{1-k-1/500}\log^{C_0}\!\left(\frac{10}{z}\right),
 \qquad 1\leq k\leq6,\quad z\geq\eps^{99/100}.
\]
If $z\leq\eps^{99/100}$, then
$W_\eps=z\log^{1/2}(1/\eps)$, and hence $c_{1,\eps}=0$ exactly.  Since
\[
 \sup_{z\geq\eps^{99/100}}
 z^{-1/500}\log^{C_0}\!\left(\frac{10}{z}\right)
 \lesssim\eps^{-1/400},
\]
the second line of \eqref{cesti1} follows.

Finally, after six normal derivatives, each additional derivative can be
placed on the scale-$\eps$ mollifier or on $\chi_\eps$, at a cost of at most
$\eps^{-1}$.  Hence, for $k\geq7$ and $z\geq\eps^{99/100}$,
\[
 |\partial_y^kc_{1,\eps}(y)|
 \lesssim_k
 \eps^{-(k-6)}z^{-5-1/500}
 \log^{C_0}\!\left(\frac{10}{z}\right)
 \lesssim_k\eps^{-k}.
\]
Together with the exact vanishing in the cap core, this proves the third
line of \eqref{cesti1}.

We prove the estimate on \(0\leq y\leq1/2\) for
\(0\leq k\leq6\), which is the range stated in \eqref{cesti2}.  In this region the cap is inactive and
$W_\eps=w_\eps^{(0)}$.  At $(t,x)=(0,0)$, direct expansion gives
\begin{align}
 c_{1,\eps}
={}&
 (W_\eps+\eps)^2\partial_y^2W_\eps
 -(y+\eps)\partial_xW_\eps
 -\partial_tW_\eps                                                \notag\\
&+(W_\eps+\eps)^2\partial_y^2(W_\eps^w-W_\eps)
 -(y+\eps)\partial_x(W_\eps^w-W_\eps)
 -\partial_t(W_\eps^w-W_\eps)                                 \notag\\
&+\left[
 2(W_\eps+\eps)(W_\eps^w-W_\eps)
 +(W_\eps^w-W_\eps)^2
 \right]\partial_y^2W_\eps^w.
 \label{A.36a}
\end{align}
Compatibility at the corner, the anisotropic regularity \eqref{regular2}, and the
centered mollifier estimates imply
\begin{equation}
 \left|
 \partial_y^k\left[
 (W_\eps+\eps)^2\partial_y^2W_\eps
 -(y+\eps)\partial_xW_\eps
 -\partial_tW_\eps
 \right]_{t=x=0}
 \right|
 \lesssim \beta_{k,\eps}.
 \label{A.36b}
\end{equation}
Indeed, the left-hand side is \(O(\epsilon)\) for \(0\leq k\leq4\); it is \(O(1)\) for \(k=6\), and interpolation
between these two bounds gives \(O(\epsilon^{1/2})\) for \(k=5\).

For the cutoff monomials in \eqref{jointextension},
\begin{equation}
 \left|
 \partial_y^\ell\left[
 \frac{y^j}{j!}\theta\left(\frac{y}{\rho_{j,\eps}}\right)
 \right]
 \right|
 \lesssim
 \rho_{j,\eps}^{j-\ell}
 \mathbf{1}_{\{0\leq y\leq2\rho_{j,\eps}\}}.
 \label{A.36c}
\end{equation}
Moreover, at the corner,
\begin{equation}
 \begin{aligned}
 d_{j,\eps}(0,0)
 &=d_{0,j,\eps}(0)=d_{1,j,\eps}(0),\\
 \partial_xd_{j,\eps}(0,0)&=d'_{0,j,\eps}(0),\\
 \partial_td_{j,\eps}(0,0)&=d'_{1,j,\eps}(0).
 \end{aligned}
 \label{A.36d}
\end{equation}
Applying \eqref{A.36c}--\eqref{A.36d} to the three linear correction terms in the second line of 
\eqref{A.36a} gives
\begin{align*}
 &\rho_{j,\eps}^{j-k-2}|d_{0,j,\eps}(0)|,\\
 & (\rho_{j,\eps}+\eps)\rho_{j,\eps}^{j-k}
       |d'_{0,j,\eps}(0)|,\\
 &\rho_{j,\eps}^{j-k}|d'_{1,j,\eps}(0)|,
\end{align*}
respectively. The second bound also covers the terms in which a normal derivative falls on
$y+\eps$.

It remains to control the last line of \eqref{A.36a}. We rewrite the last line of \eqref{A.36a} by
\begin{equation}\label{lastlinea36}
\begin{split}
    &\left[
 2(W_\eps+\eps)(W_\eps^w-W_\eps)
 +(W_\eps^w-W_\eps)^2
 \right]\partial_y^2W_\eps^w\\
 &=2(W_{\epsilon}+\epsilon)\p_{y}^{2}W^{\epsilon}(W_{\epsilon}^{w}-W_{\epsilon})+(W_{\epsilon}^{w}-W_{\epsilon})^{2}\p_{y}^{2}W_{\epsilon}\\
 &\quad+\left[2(W_\eps+\eps)(W_\eps^w-W_\eps)
 +(W_\eps^w-W_\eps)^2
 \right]\partial_y^2(W_\eps^w-W_\eps).
\end{split}
\end{equation}
The first term in \eqref{lastlinea36} is linear with respect to $W_{\epsilon}^{w}-W_{\epsilon}$, which can be estimated similarly to the second line of \eqref{A.36a}.

To estimate the nonlinear terms in \eqref{lastlinea36}, from \eqref{jointextension}, \eqref{a.35a}
and \eqref{A.36c}, we obtain that
\[
 \sum_{\ell=0}^4
 \|\partial_y^\ell(W_\eps^w-W_\eps)(0,0,\cdot)\|_{L^\infty}
 \lesssim
 \sum_{j=1}^{200}\rho_{j,\eps}^{j-4}|d_{0,j,\eps}(0)|
 \lesssim1.
\]

For \(0\leq k\leq6\), the one-dimensional tame product estimate
therefore bounds every nonlinear term (with respect to $W^{w}_{\epsilon} - W_{\epsilon}$) in \eqref{lastlinea36} by
\[
 \sum_{j=1}^{200}
 \rho_{j,\epsilon}^{\,j-k-2}
 |d_{0,j,\epsilon}(0)|
 \mathbf{1}_{\{0\leq y\leq2\rho_{j,\epsilon}\}}.
\]
Combining this estimate with \eqref{A.36b}--\eqref{A.36d} gives
\begin{align*}
 |\partial_y^kc_{1,\eps}(y)|
 \lesssim{}&\beta_{k,\eps}
 +\sum_{j=1}^{200}\rho_{j,\eps}^{j-k-2}
 \Big(
 |d_{0,j,\eps}(0)|
 +\rho_{j,\eps}^2|d'_{1,j,\eps}(0)|\\
 &\hspace{40mm}
 +\rho_{j,\eps}^2(\rho_{j,\eps}+\eps)
       |d'_{0,j,\eps}(0)|
 \Big)
 \mathbf{1}_{\{0\leq y\leq2\rho_{j,\eps}\}},
\end{align*}
which is \eqref{cesti2} in the required range.

For completeness, \eqref{desti1}--\eqref{desti4}, the choices in \eqref{cop0.1}, and
$\rho_{j,\eps}=\eps$ for $j\geq8$ give
\begin{align*}
 \sum_{j=1}^{200}\rho_{j,\eps}^{j-4}|d_{0,j,\eps}(0)|
 \lesssim{}&
 \eps^{11/10}+\eps^{3/5}+\eps^{7/10}+\eps
 +\eps^{13/10}+\eps^{8/5}+\eps^{17/10}+\eps^4
 \lesssim\eps^{3/5}.
\end{align*}
Thus the tame condition in \eqref{a.35a} follows automatically for all
sufficiently small $\eps$. \hfill\(\Box\)

\section{Global existence of the approximation system}\label{proofapproxia}

In this appendix, we establish the global--in--$(t,x)$ existence of the approximation system \eqref{approsys} for each \emph{fixed} $\epsilon>0$, which plays an essential role in our construction of weak solutions (in Section \ref{basic}) and local classical solutions (in Section \ref{local}). We present the detailed proofs for the convenience of the reader and for the sake of completeness. For clarity of presentation, we assume $\epsilon=1$ and consider the following system
\begin{equation}\label{equationappendix}
\begin{cases}
\p_{t}w+(y+1)\p_{x}w-(w+1)^{2}\p_{y}^{2}w=0,\ (t,x,y)\in(0,\infty)\times(0,\infty)\times(0,1),\\
w|_{t=0}=w_{0},\ w|_{x=0}=w_{1},\\
w|_{y=1}=\p_{y}w|_{y=0}=0.
\end{cases}
\end{equation}
We assume that $w_{0},w_{1}$ are non--negative and sufficiently regular, and satisfy the compatibility conditions of the equation \eqref{equationappendix} (described in Appendix \ref{compatiappendix}).

\subsection{Local well-posedness of \eqref{equationappendix}}
We first establish the local-in-time existence of the solution to \eqref{equationappendix} for $x\in[0,X]$ and an arbitrary $X>0$. 

Define for $s\in\Z\cap[1,\infty)$ the energy functional for any function $w$,
\begin{equation}\label{energynormappendix}
\E_{s,w}(t)=\sum_{k=0}^{s}\int_{0}^{1}\int_{0}^{X}|\p_{x}^{k}w|^{2}dxdy+\sum_{k=1}^{s+1}\int_{0}^{1}\int_{0}^{X}|\p^k_{y}w|^{2}\chi_{k}^{2}(y)dxdy,
\end{equation}
and the dissipation functional
\begin{equation}\label{dissi}
\mathcal{D}_{s,w}(t):=\sum_{k=0}^{s}\left(\int_{0}^{t}\int_{0}^{1}\int_{0}^{X}\left(|\p_{y}\p_{x}^{k}w|^{2}+|\p_{y}^{k+2}w\cdot\chi_{k}(y)|^{2}\right)dtdxdy\right)
\end{equation}
In the above, $\{\chi_{k}\}_{k=0}^{s+1}\subseteq C^\infty([0,1])$ is a family of cut-off functions such that $\chi_{0}\equiv\chi_{1} \equiv \chi_{2} \equiv 1$, and for $3\leq k \leq s+1$,
\begin{equation}\label{cut-off}
\begin{split}
&k\ \text{odd}:\ \mathrm{supp}\chi_{k}(y)\subset [\frac{1}{10},1],\ \chi_{k}(y)|_{[\frac{1}{5},1]}\equiv 1,\\
&k\ \text{even}:\ \mathrm{supp}\chi_{k}(y)\subset [0,\frac{9}{10}],\ \chi_{k}(y)|_{[0,\frac{4}{5}]}\equiv 1,\\
&\mathrm{supp}\,\chi_{k}\subset \{y\in[0,1]:\chi_{k-2}(y)=1\}.
\end{split}
\end{equation}

We note the following useful bounds for $t\ge0$,
\begin{equation}\label{enerdissi}
\begin{split}
    &\|w(t)\|_{H^s([0,X]\times[0,1])}^2\lesssim \E_{s,w}(t),\qquad \|\partial_yw(\tau)\|_{L^2_{\tau}H^s_{x,y}([0,t]\times[0,X]\times[0,1])}^2\lesssim \mathcal{D}_{s,w}(t).
\end{split}  
\end{equation}
\begin{remark} The main motivation for the selection of cut--off functions is to isolate boundary terms that involve derivative loss.
\end{remark}

\begin{proposition}\label{localsolappendix}
Fix an even integer $s\in\Z\cap[10,\infty)$. For any $X\geq 1$, there exist $0<\delta<1$ and $K>1$ depending on $X$, the initial data $w_{0}$ and inflow data $w_{1}$, such that there exists a unique non--negative classical solution $w(t,x,y)$ to \eqref{equationappendix} defined on
$[0,\delta]\times [0,X]\times[0,1]$,
satisfying
\begin{equation}\label{las1}
\sup_{t\in[0,\delta]}\E_{s,w}(t)+\mathcal{D}_{s,w}(\delta)\leq K.
\end{equation}
In particular,
\begin{equation}
\begin{split}
&\sup_{t\in[0,\delta]}\|w(t)\|_{H^{s}_{x,y}}^{2}+\|\p_{y}w(t)\|_{L^{2}([0,\delta];H^{s}_{x,y})}^{2}\lesssim K,\\
&\sup_{t\in[0,\delta]}\|\p_{t}^{\frac{s}{2}}w(t)\|_{L^{2}_{x,y}}^{2}+\|\p_{y}\p_{t}^{\frac{s}{2}}w(t)\|_{L^{2}_{t,x,y}}^{2}\lesssim K.
\end{split}
\end{equation}
\end{proposition}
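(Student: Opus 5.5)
We will build the local solution of \eqref{equationappendix} by a linearized iteration scheme. The a priori estimate for the energy $\E_{s,w}$ and dissipation $\mathcal{D}_{s,w}$ closes on a short time interval $[0,\delta]$, and the iterates will be shown to contract in a lower norm.

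\textbf{Scheme and positivity.} Starting from a smooth extension $w^{(0)}$ of the compatible data (constructed as in Appendix \ref{compatiappendix}), define $w^{(n+1)}$ as the solution of the linear problem
\[
\p_t w^{(n+1)}+(y+1)\p_x w^{(n+1)}-(w^{(n)}+1)^2\p_y^2 w^{(n+1)}=0,
\]
with the data and boundary conditions of \eqref{equationappendix}. This is a uniformly parabolic problem in $(t,y)$ with transport in $x$ of speed $y+1\ge1>0$. The inflow boundary $\{x=0\}$ is therefore non-characteristic, and standard linear theory (Galerkin in $y$ together with characteristics in $(t,x)$, or a vanishing-viscosity regularization in $x$) gives a smooth solution. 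The compatibility conditions of Definition \ref{compatibilitydef} must hold up to high order, and this is needed at each step. Non-negativity of every iterate follows from the maximum principle, since the data are non-negative, $w=0$ at $y=1$, and $\p_y w=0$ at $y=0$.

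\textbf{Uniform energy bounds.} We apply $\p_x^k$ for $k\le s$ and test against $\p_x^k w$. The transport term produces inflow and outflow boundary terms with a good sign at $x=X$ and data terms at $x=0$. The diffusion term gives $\int (w+1)^2|\p_y\p_x^k w|^2$ plus commutators $\p_x^j (w+1)^2\,\p_y^2\p_x^{k-j}w$, which are bounded by Moser-type estimates using \eqref{enerdissi} and $s\ge10$.

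Normal derivatives are controlled by testing $\p_y^k w$ against $\p_y^k w\,\chi_k^2$. The parity rule \eqref{cut-off} for the $\chi_k$ is what makes this work:
\begin{itemize}
\item Near $y=0$, the quantity $\p_y^{\mathrm{odd}}w$ is determined by the equation and the Neumann condition. Cutting off odd $k$ away from $y=0$ removes the boundary term $\p_y^{k+1}w\,\p_y^k w|_{y=0}$, which would otherwise lose a derivative.
\item Near $y=1$, the quantity $\p_y^{\mathrm{even}}w$ is recovered from the equation using $w|_{y=1}=0$, and even $k$ are cut off away from $y=1$ for the same reason.
\end{itemize}
In the regions cut off, the missing $\p_y^k w$ is reconstructed from the equation, for example $\p_y^2 w=(\p_t w+(y+1)\p_x w)/(w+1)^2$, and from tangential control. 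The nested supports of the $\chi_k$ handle the cutoff-derivative terms, since $\chi_k'$ lives where $\chi_{k-2}\equiv1$.

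Combining these estimates yields an inequality of the form
\[
\E_{s,w}(t)+\mathcal{D}_{s,w}(t)\le C(\text{data})+C\int_0^t P\big(\E_{s,w}(\tau)\big)\,d\tau
\]
for some polynomial $P$. Choosing $K$ large and then $\delta$ small propagates the bound $\le K$ along the iteration. The bound on $\p_t^{s/2}w$ follows by trading time derivatives through the equation.

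\textbf{Convergence and the main obstacle.} We will show that the differences $w^{(n+1)}-w^{(n)}$ contract in $L^\infty_t L^2\cap L^2_t H^1_y$ for $\delta$ small. The limit solves \eqref{equationappendix}, the bound \eqref{las1} follows by lower semicontinuity, and uniqueness follows from the same $L^2$ difference estimate.

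The step I expect to be hardest is the boundary bookkeeping in the top-order normal estimates. There the derivative-losing boundary terms at $y=0,1$ must be cancelled exactly via the cutoff parity, and the reconstruction of the cut-off normal derivatives from the equation must lose no derivative at order $s+1$.
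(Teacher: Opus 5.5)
Your proposal follows essentially the same route as the paper: a linearized iteration started from an extension $w^{(0)}$ that carries the formal boundary jets (which is what makes every linear iterate compatible), uniform-in-$n$ bounds on $\E_{s,w^{(n)}}+\mathcal{D}_{s,w^{(n)}}$ from $\partial_x^k$ estimates and $\chi_k^2$-weighted $\partial_y^k$ estimates with the parity-adapted cutoffs, and an $L^2$ contraction of successive differences on a short time interval (the paper proves uniqueness by the maximum principle rather than by the $L^2$ difference estimate, and either works). The one inaccuracy is your remark that the normal derivatives missing where $\chi_k=0$ are reconstructed from the equation --- that is circular, since $\E_{s,w}$ contains no $\partial_t$-derivatives --- whereas they are in fact recovered by one-dimensional interpolation from the controlled derivatives of opposite parity (e.g.\ $\partial_y^3w$ near $y=0$ from $\partial_y^2w$ and $\partial_y^4w$), which is what underlies \eqref{enerdissi}.
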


\begin{proof} We assume that $X=1$ for notational simplicity, and our arguments below apply in the same way for all $X>0$. We construct the solution of \eqref{equationappendix} in the following steps:

\textit{Step 1: Iterated system.} We use the following iteration scheme for $n\in \Z\cap[1,\infty)$,
\begin{equation}\label{iterappendix}
\begin{cases}
    \p_{t}w^{(n)}+(y+1)\p_{x}w^{(n)}-(w^{(n-1)}+1)^{2}\p_{y}^{2}w^{(n)}=0,\\
    w^{(n)}|_{t=0}=w_{0},\ w^{(n)}|_{x=0}=w_{1},\\
    w^{(n)}|_{y=1}=\p_{y}w^{(n)}|_{y=0}=0.
\end{cases}
\end{equation}
In the above, $w^{(0)}(t,x,y)$ is a suitable Whitney--type boundary extension of $w_{0}$ and $w_{1}$. We assume that $w^{(0)}\ge0$. Indeed, the compatibility conditions on $(w_{0},w_{1})$ ensure the following extension problem ($g_k, h_k$ are computed through $(w_{0},w_{1})$ and the equation \eqref{equationappendix})
\begin{equation}
\p_{t}^{k}w^{(0)}|_{t=0}=g_{k}(x,y),\quad \p_{x}^{k}w^{(0)}|_{x=0}=h_{k}(t,y)
\end{equation}
can be solved for some sufficiently regular extension function $w^{(0)}$ on $[0,\infty)\times[0,\infty)\times[0,1]$, see Whitney \cite{W34} and Lions \cite{L72}. We emphasize that the iterated system \eqref{iterappendix} admits a unique classical solution by standard linear PDE theory. 

\textit{Step 2: Uniform--in--$n$ energy bounds.} By suitable energy estimates and the Sobolev inequalities, we obtain the key energy inequality
\begin{equation}\label{standardenergyappendix}
\sup_{t\in[0,\delta]}\E_{s,w^{(n)}}(t)+\mathcal{D}_{s,w^{(n)}}(\delta)\leq C(w_{0},w_{1})+\int_{0}^{\delta}\mathcal{P}_{s}\left(\E_{s,w^{(n-1)}}(\tau)\right)\cdot\E_{s,w^{(n)}}(\tau)\,d\tau,
\end{equation}
for any $\delta>0$. Here $\mathcal{P}_{s}$ is a monotonically increasing polynomial depending on $s$. Since the proof of \eqref{standardenergyappendix} overlaps substantially with the more complicated proof for \eqref{finalhighenergy0.1} below, to avoid repetitive details, we skip the detailed proof and only make the following remarks.
\begin{itemize}
    \item[(i)] \eqref{standardenergyappendix} is derived by standard energy estimates, through differentiating \eqref{iterappendix} to derive the equation for $\p_{x}^{k}w$ (respectively $\p_{y}^{k}w$) and testing against $\p_{x}^{k}w$ (respectively $\p_{y}^{k}w\cdot\chi^{2}_{k}(y)$).
    \item[(ii)] The terms where derivatives fall on cut--off functions $\chi_{k}$ are bounded by the lower--level diffusion terms.
    \item[(iii)] We use the inequality 
    \begin{equation}\label{embedtohs}
    \|w(t)\|_{H^{s}_{x,y}}^{2}\lesssim \E_{s,w}(t)
    \end{equation}
    to control the lower--order derivatives in cubic (or higher power) terms appearing in the energy estimates. \eqref{embedtohs} follows from \eqref{energynormappendix}, \eqref{cut-off}, and standard elliptic estimates.
    \item[(iv)] In the energy estimates for $\p_{x}^{k}w$, the boundary conditions on $y=0$ and $y=1$ are identical to those for $w$, and therefore do not contribute to the energy estimates. However, in the energy estimates for $\p_{y}^{k}w$, the boundary conditions become nontrivial and require a careful treatment. Our key observation is that at each level of the energy estimates, thanks to our choice of the cutoff functions $\chi_k$, the boundary terms can be expressed through lower--order terms which are then bounded by the energy norm and diffusion terms. We will illustrate this important point in detail when going through the energy estimates in subsection \ref{local-to-global} to avoid repetitive arguments.
\end{itemize}
%\textit{Proof of Proposition \ref{localsolappendix} (continued).} 
Choose $K>1$ and $0<\delta_{1}<1$ such that
\begin{equation}\label{choosepara}
%\begin{cases}
\sup_{t\in[0,1]}\E_{s,w^{(0)}}(t)\leq K,\quad
C(w_{0},w_{1})+\delta_{1}\cdot\mathcal{P}_{s}(K)\cdot K\leq K.
%\end{cases}
\end{equation}
Then, by induction, we obtain from \eqref{standardenergyappendix} that
\begin{equation}\label{unhighn}
    \sup_{n\geq 1}\bigg(\sup_{t\in[0,\delta_{1}]}\E_{s,w^{(n)}}(t)+\mathcal{D}_{s,w^{(n)}}(\delta_{1})\bigg)\leq K.
\end{equation}

\textit{Step 3. Convergence of the iteration sequence.} For $n\geq 2$, define $\overline{w}^{(n)}=w^{(n)}-w^{(n-1)}$. It is clear that $\overline{w}^{(n)}$ satisfies
\begin{equation}
    \begin{cases}
        \p_{t}\overline{w}^{(n)}+(y+1)\p_{x}\overline{w}^{(n)}-(w^{(n-1)}+1)^{2}\p_{y}^{2}\overline{w}^{(n)}-(w^{(n-1)}+w^{(n-2)}+2)\p_{y}^{2}w^{(n-1)}\overline{w}^{(n-1)}=0,\\
        \overline{w}^{(n)}|_{t=0}=\overline{w}^{(n)}|_{x=0}=0,\\
        \overline{w}^{(n)}|_{y=1}=\p_{y}\overline{w}^{(n)}|_{y=0}=0.
    \end{cases}
\end{equation}
By standard energy estimates over $[0,1]\times[0,1]$, we obtain that
\begin{equation}
\frac{d}{dt}\|\overline{w}^{(n)}(t)\|_{L^{2}_{x,y}}^{2}\lesssim (1+\|w^{(n-1)}(t)\|_{W^{2,\infty}_{y}}^{2})\|\overline{w}^{(n)}(t)\|_{L^{2}_{x,y}}^{2}+\|\overline{w}^{(n-1)}(t)\|_{L^{2}_{x,y}}^{2}.
\end{equation} 
By Gr\"onwall's inequality and Sobolev inequalities, there exists $0<\delta\leq \delta_{1}$ such that
\begin{equation}
\sup_{t\in[0,\delta]}\|\overline{w}^{(n)}\|_{L^{2}_{x,y}}\leq \frac{1}{2}\sup_{t\in[0,\delta]}\|\overline{w}^{(n-1)}\|_{L^{2}_{x,y}}.
\end{equation}
Therefore, together with \eqref{unhighn}, we conclude that the sequence $w^{(n)}$ converges strongly to $w(t,x,y)$ in $C^{2}([0,\delta]\times[0,1]\times[0,1])$ with
\begin{equation}\label{solutionspace}
\bigg(\sup_{t\in[0,\delta]}\E_{s,w}(t)+\mathcal{D}_{s,w}(\delta)\bigg)\leq K.
\end{equation}
Letting $n\to\infty$ in \eqref{iterappendix}, we conclude the existence part. The uniqueness follows from a standard maximum principle argument, which completes the proof of Proposition \ref{localsolappendix}.
\end{proof}
\begin{remark}Arguing in the same fashion, we can also prove the local--in--$x$ existence of a classical solution to \eqref{equationappendix}. More precisely, for any $T>0$ there exist $0<\delta<1$ and $K>1$ depending on the sizes of the initial and inflow data and $T$, such that \eqref{equationappendix} admits a unique classical solution $w(t,x,y)$ defined on
$[0,T]\times[0,\delta]\times[0,1]$, such that
\begin{equation}
\begin{split}
&\sup_{x\in[0,\delta]}\bigg[\sum_{k=0}^{s}\int_{0}^{T}\int_{0}^{1}\left(|\p_{t}^{k}w|^{2}+|\p_{y}^{k+1}w\cdot\chi_{k}(y)|^{2}\right)dydt\bigg]\\
&\ +\sum_{k=0}^{s}\iiint_{[0,\delta]\times[0,T]\times[0,1]}\left(|\p_{t}^{k}\p_{y}w|^{2}+|\p_{y}^{k+2}w\cdot\chi_{k}(y)|^{2}\right)dtdxdy\leq K.
\end{split}
\end{equation}
\end{remark}
\subsection{Global well-posedness of \eqref{equationappendix}}\label{local-to-global}
To extend the local solutions obtained in Proposition \ref{localsolappendix} globally in time, by the standard continuation method, we need to show that the norm $\E_{s,w}(t)$ remains finite over arbitrary time intervals.

We first note the pointwise bound
$$0\leq w\leq C_T(1-y),$$
which follows from a simple comparison argument. The pointwise bound is useful in the interpolation estimates below. 

The first main step is to establish the following refined energy estimate for $T>0$,
\begin{equation}\label{finalhighenergy0.1}
    \sup_{t\in[0,T]}\E_{s,w}(t)\lesssim C(w_{0},w_{1})+\int_{0}^{T}(\|\p_{y}w\|_{L^{\infty}}^{2}+1)\cdot (\E_{s,w}(\tau)+1)d\tau.
\end{equation}

The bound \eqref{finalhighenergy0.1} on $w$ directly follows by testing \eqref{equationappendix} against $w$. The bounds \eqref{finalhighenergy0.1} on $\partial_x^kw$, $1\leq k\leq s$, follow from taking derivatives in \eqref{equationappendix} and using energy estimates (against $\partial_x^kw$). The transport term generates a nonnegative outgoing flux at $x=1$. The incoming flux at $x=0$ is controlled by the compatible boundary jets recursively determined from $w_1$. The initial term is controlled by $w_0$.

More precisely, we have
\begin{equation}\label{ener0.2}
    \begin{split}
&\sup_{t\in[0,T]}\left(\int_{0}^{1}\int_{0}^{1}|\p_{x}^kw|^{2}dxdy\right)+\int_{0}^{T}\int_{0}^{1}\int_{0}^{1}|\p_{y}\partial_x^kw|^{2}dtdxdy\\
&\lesssim C(w_{0},w_{1})+\bigg|\int_{0}^{T}\int_{0}^{1}\int_{0}^{1}\Big[\partial_x^k\big((w+1)^2\p_y^2w\big)-(w+1)^2\partial_y^2\partial_x^kw\Big]\partial_x^kw\,dtdxdy\bigg|\\
&\qquad\qquad\quad\,\,\,\,\,+\bigg|\int_{0}^{T}\int_{0}^{1}\int_{0}^{1}(w+1)\partial_yw\partial_y\partial_x^kw\partial_x^kw\,dtdxdy\bigg|.
\end{split}
\end{equation}

The desired bound \eqref{finalhighenergy0.1} for $\partial_x^kw$ then follows from an integration-by-parts argument (to reduce $\partial_x^j\partial_y^2w$ to $\partial_x^j\partial_yw$), and the following (Gagliardo-Nirenberg) interpolation inequality which holds for integers $l, k$ with $1\leq l\leq k$ (for our application, $f\in\{w,\p_{y}w\}$): 
\begin{equation}\label{embedh2tow14}
\|\partial_x^lf\|_{L^{\frac{2k}{l}}}\lesssim_{k,l} \|f\|_{L^{\infty}}^{\frac{k-l}{k}}\|\partial_x^kf\|_{L^{2}}^{\frac{l}{k}}+\|f\|_{L^{\infty}},\quad \forall f\in H^{k}(0,1).
\end{equation}

We now turn to the bounds \eqref{finalhighenergy0.1} on derivatives in $y$, whose derivation is more delicate due to the presence of nontrivial boundary contributions.
Applying $\partial_y^k$ with $1\leq k\leq s+1$ to \eqref{equationappendix} and integrating by parts against $\partial_y^kw\chi_k^2$, we obtain that
\begin{equation}\label{ener0.3}
    \begin{split}
&\sup_{t\in[0,T]}\left(\int_{0}^{1}\int_{0}^{1}|\p_{y}^kw|^{2}\chi_k^2dxdy\right)+\int_{0}^{T}\int_{0}^{1}\int_{0}^{1}|\p_{y}\partial_y^kw|^{2}\chi_k^2dtdxdy\\
&\lesssim C(w_{0},w_{1})+\bigg|\int_{0}^{T}\int_{0}^{1}\int_{0}^{1}\Big[\partial_y^k\big((w+1)^2\p_y^2w\big)-(w+1)^2\partial_y^2\partial_y^kw\Big]\partial_y^kw\chi_k^2\,dtdxdy\bigg|\\
&+k\bigg|\int_{0}^{T}\int_{0}^{1}\int_{0}^{1}\partial_x\partial_y^{k-1}w\partial_y^kw\chi_k^2\,dtdxdy\bigg|+2\bigg|\int_{0}^{T}\int_{0}^{1}\int_{0}^{1}(w+1)^2\partial_y^{k+1}w\partial_y^kw\chi_k'\chi_k\,dtdxdy\bigg|\\
&+\bigg|\int_{0}^{T}\int_{0}^{1}\int_{0}^{1}(w+1)\partial_yw\partial_y^{k+1}w\partial_y^kw\chi_k^2\,dtdxdy\bigg|+\bigg|\int_{0}^{T}\int_{0}^{1}(w+1)^2\partial_y^{k+1}w\partial_y^kw\chi_k^2\bigg|_{y\in\{0,1\}}\,dtdx\bigg|\\
&:=C(w_0,w_1)+I+II+III+IV+V.
\end{split}
\end{equation}
The terms $I$--$IV$ can be bounded by the right hand side of \eqref{finalhighenergy0.1}, using the following weighted interpolation inequalities:
for any integers $(k,l)$ such that $3\leq k\leq s+1$ and $1<l<k$, we have
\begin{equation}\label{weightedinterpolation}
\|\p_{y}^{l}w\cdot\chi_{k}^{\frac{l-1}{k-1}}\|_{L^{\frac{2(k-1)}{l-1}}_{y}}\lesssim\|\p_{y}w\|_{L^{\infty}_{y}}+\|\p_{y}w\|_{L^{\infty}_{y}}^{\frac{k-l}{k-1}}\times\Big(\sum_{2\leq \lambda\leq k}\|\p_{y}^{\lambda} w\cdot\chi_{\lambda}\|_{L^{2}_{y}}\Big)^{\frac{l-1}{k-1}}.
\end{equation}
We assume \eqref{weightedinterpolation} momentarily and continue with our main line of proof. A sketch of proof for the technical inequality \eqref{weightedinterpolation} will be given at the end of the proof. 

It remains to treat the boundary contribution $V$. We first note the boundary conditions
\begin{equation}\label{bd0.1}
\p_{y}w|_{y=0}=\p_{y}^{2}w|_{y=1}=0.
\end{equation}
Then
from the boundary condition $\p_{y}w|_{y=0}=0$ and the differential equation $\p_{t}w+(y+1)\p_{x}w-(w+1)^{2}\p_{y}^{2}w=0$, we obtain that
\begin{equation}\label{3rden2}
    \big(\p_{x}w-(w+1)^{2}\p_{y}^{3}w\big)|_{y=0}=0,
\end{equation}
and hence
\begin{equation}\label{bd0.2}
\partial_y^3w|_{y=0}=\frac{\partial_xw}{(w+1)^2}\bigg|_{y=0}.
\end{equation}
Thanks to the boundary condition for $w$ and the equation \eqref{equationappendix},
\begin{equation}\label{bd0.3}
\p_{y}^{4}w\big|_{y=1}=2\p_{x}\p_{y}w\big|_{y=1}-4\p_{y}w\p_{y}^{3}w\big|_{y=1}.
\end{equation}
By direct calculation, $\p_{y}^{3}w$ satisfies the equation 
\begin{equation}\label{eqpy3}
\begin{split}
&\p_{t}\p_{y}^{3}w+3\p_{x}\p_{y}^{2}w+(y+1)\p_{x}\p_{y}^{3}w-(w+1)^{2}\p_{y}^{5}w\\
&\quad-6(w+1)\p_{y}w\p_{y}^{4}w-8(w+1)\p_{y}^{2}w\p_{y}^{3}w-6\p_{y}w(\p_{y}^{2}w)^{2}-6(\partial_yw)^2\partial_y^3w = 0.
\end{split}
\end{equation}
Using \eqref{equationappendix}, \eqref{eqpy3} and \eqref{bd0.2}, we obtain that
\begin{equation}
\p_{y}^{5}w|_{y=0}=4\frac{\p_{x}\p_{y}^{2}w}{(w+1)^{2}}\bigg|_{y=0}-8\frac{\p_{x}w\cdot\p_{y}^{2}w}{(w+1)^{3}}\bigg|_{y=0}.
\end{equation}
For higher-order normal derivatives, the boundary conditions and
the equation imply the following boundary-jet identities. For every
$k\ge2$,
\begin{equation}\label{bdreduce1}
\begin{split}
\p_{y}^{2k}w|_{y=1}&=a_{k}\p_{y}w|_{y=1}\cdot\p_{y}^{2k-1}w|_{y=1}+\sum_{l\geq 1}\sum_{\alpha\in\mathcal{A}_{k,l}}c_{k,\alpha}\bigg(\prod_{j=1}^{l}\p_{x}^{\alpha_{j,1}}\p_{y}^{\alpha_{j,2}}w\bigg|_{y=1}\bigg)
\end{split}
\end{equation}
and
\begin{equation}\label{bdreduce2}
\p_{y}^{2k+1}w|_{y=0}=\sum_{l\geq 1}\sum_{\beta\in\mathcal{B}_{k,l}}d_{k,\beta}(w+1)^{\lambda_{k,\beta}}|_{y=0}\cdot\bigg(\prod_{j=1}^{l}\p_{x}^{\beta_{j,1}}\p_{y}^{\beta_{j,2}}w\bigg|_{y=0}\bigg),
\end{equation}
where
\begin{equation}\label{admiss1}
\begin{split}
\mathcal{A}_{k,l}&=\bigg\{\alpha=\left((\alpha_{1,1},\alpha_{1,2}),\cdots,(\alpha_{l,1},\alpha_{l,2})\right):\sum_{j=1}^{l}(3\alpha_{j,1}+\alpha_{j,2})=2k,\\
&\qquad\, \min_{1\leq j\leq l}(\alpha_{j,1}+\alpha_{j,2})\ge 1,\max_{1\leq j\leq l}\alpha_{j,2}\leq 2k-3\bigg\},
\end{split}
\end{equation}
and
\begin{equation}\label{admiss2}
    \begin{split}
    \mathcal{B}_{k,l}&=\bigg\{\beta=\left((\beta_{1,1},\beta_{1,2}),\cdots,(\beta_{l,1},\beta_{l,2})\right):\sum_{j=1}^{l}(3\beta_{j,1}+\beta_{j,2})=2k+1,\\
&\qquad\,\, \min_{1\leq j\leq l}(\beta_{j,1}+\beta_{j,2})\geq 1,\max_{1\leq j\leq l}\beta_{j,1}\geq 1,\max_{1\leq j\leq l}\beta_{j,2}\leq 2k-2\bigg\}.
    \end{split}
\end{equation}
\eqref{bdreduce1} and \eqref{bdreduce2} can be deduced from a standard induction argument.
The main point of \eqref{bdreduce1}-\eqref{bdreduce2} is that the order of differentiation is lowered. 

The boundary contribution term $V$ can then be bounded in view of our selection of cutoff functions \eqref{cut-off}, the formulae \eqref{bdreduce1}-\eqref{bdreduce2}, and using the inequality for any $g\in W^{1,1}([0,1])$:
\begin{equation}\label{bd0.4}
\begin{split}
  |g(0)|\lesssim \int_0^{0.01}|g(y)|+|g'(y)| \,dy,\\
  |g(1)|\lesssim \int_{0.99}^1|g(y)|+|g'(y)| \,dy,
  \end{split}
\end{equation}
also employing the Gagliardo--Nirenberg interpolation inequality \eqref{embedh2tow14}.

Lastly, we sketch the proof of \eqref{weightedinterpolation} for completeness. Denote for $2\leq l\leq k$,
\begin{equation}\label{bd0.5}
R_{l,k}:=\|\p_{y}^{l}w\cdot\chi_{k}^{\frac{l-1}{k-1}}\|_{L^{\frac{2(k-1)}{l-1}}_{y}},\qquad R_{1,k}:=\|\partial_yw\|_{L^\infty_y}.
\end{equation}
 Using integration by parts and applying the classical Gagliardo--Nirenberg inequality \eqref{embedh2tow14}, we obtain the following recursive pattern for $2\leq l\leq k-1$ (here we use the fact that $\mathrm{supp}(\chi_{k}')\subset\{y:\chi_{l}=1\}$ for all $l<k$):
\begin{equation}\label{pattern}
    \begin{split}
    R_{l,k}^{\frac{2(k-1)}{l-1}}&\lesssim \|\p_{y}w\|_{L^{\infty}}^{\frac{2(k-1)}{l-1}}+\|\p_{y}w\|_{L^{\infty}}^{\frac{2(k-l)}{l-1}}\times\|\p_{y}^{k} w\cdot\chi_{k}\|_{L^{2}}^{2}\\
    &\quad+R_{l,k}^{\frac{2(k-l)}{l-1}}\cdot R_{l-1,k}\cdot R_{l+1,k}+R_{l,k}^{\frac{k-1}{l-1}}\cdot R_{l-1,k-1}\cdot R_{l,k-1}^{\frac{k-l}{l-1}}.
    \end{split}
\end{equation}
More precisely, set
\[
p=\frac{2(k-1)}{l-1},\qquad
\theta=\frac{l-1}{k-1}.
\]
Then
\[
R_{l,k}^p
=\int_0^1 |\partial_y^l w|^p\chi_k^2\,dy.
\]
Integrating by parts once gives
\[
\begin{aligned}
R_{l,k}^p
={}&
\left[
\partial_y^{l-1}w\,
|\partial_y^lw|^{p-2}\partial_y^lw\,
\chi_k^2
\right]_{0}^{1}-(p-1)\int_0^1
\partial_y^{l-1}w\,
|\partial_y^lw|^{p-2}
\partial_y^{l+1}w\,
\chi_k^2\,dy\\
&-2\int_0^1
\partial_y^{l-1}w\,
|\partial_y^lw|^{p-2}\partial_y^lw\,
\chi_k\chi_k'\,dy.
\end{aligned}
\]
The boundary term is estimated by the one-dimensional trace
Gagliardo--Nirenberg inequality, while the two integral terms are
estimated by Hölder's inequality using the definitions of
$R_{j,m}$ and the fact that $\chi_{k-1}=1$ on
$\operatorname{supp}\chi_k'$.

 We prove (B.23) using \eqref{pattern} by induction on $k$. The induction hypothesis
controls the terms in (B.36) whose second index is $k-1$.
For fixed $k$, the remaining quantities $R_{l,k}$,
$2\le l\le k-1$, are estimated simultaneously by a discrete
maximum argument.

Indeed, let
\[
M=\|\partial_yw\|_{L^\infty_y},\qquad
H_k=M+\sum_{\lambda=2}^k
\|\partial_y^\lambda w\,\chi_\lambda\|_{L^2_y},
\]
and
\[
S_{l,k}
=M^{1-\frac{l-1}{k-1}}
H_k^{\frac{l-1}{k-1}}.
\]
After applying the induction hypothesis to the last term in
(B.36) and using Young's inequality, we obtain
\[
R_{l,k}^{p_l}
\lesssim_k
S_{l,k}^{p_l}
+
R_{l,k}^{p_l-2}R_{l-1,k}R_{l+1,k},
\qquad
p_l=\frac{2(k-1)}{l-1}.
\]
Since $S_{l-1,k}S_{l+1,k}=S_{l,k}^2$, the normalized
quantities $A_l=R_{l,k}/S_{l,k}$ satisfy
\[
A_l^2\lesssim_k 1+A_{l-1}A_{l+1},
\qquad A_1+A_k\lesssim1.
\]
A finite discrete maximum argument then gives
$\max_{1\le l\le k}A_l\lesssim_k1$, proving (B.23).

We see from \eqref{finalhighenergy0.1} that we can continue the solution (with bounded energy) to the system \eqref{equationappendix} as long as
\begin{equation}
\int_{0}^{T^{*}}\|\p_{y}w\|_{L^{\infty}_{x,y}}^{2}dt<\infty.
\end{equation}
Hence, the boundedness of the norm $\E_{s,w}(t)$ on an arbitrary finite time interval (and consequently global existence of \eqref{equationappendix}) follows from the following up--to--boundary smoothing estimate, which implies that we can control the key quantity $\|\p_{y}w\|_{L^{\infty}}$ just in terms of bounds on the initial and boundary data.
\begin{proposition}\label{bdwy}
Fix $T>0$. Assume $w$ solves \eqref{equationappendix} for $t\in[0,T],\,x\in[0,2],\,y\in[0,1]$. Then for any $\delta>0$, we have
\begin{equation}
|\p_{y}w(t,x,y)|\leq C_{\delta},\ \forall t\in[\delta,T],\,\,x\in[\delta,1],\,\, y\in[0,1],
\end{equation}
where $C_{\delta}$ only depends on $\delta>0,\,T$, and
\begin{equation*}
    \sup_{x\in[0,2],\,y\in[0,1)}\frac{w_{0}}{1-y},\quad \sup_{t\in[0,T],\,y\in[0,1)}\frac{w_{1}}{1-y}.
\end{equation*}
\end{proposition}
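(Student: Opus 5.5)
We split $[0,1]$ in $y$ into the near-boundary region $y\in[0,1-\kappa]$ and the near-top region $y\in[1-\kappa,1]$, for a small fixed $\kappa$ (say $\kappa=1/4$), and treat the two regions by different tools.

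\emph{Step 1: uniform two-sided pointwise bounds.} By the maximum principle, $w\ge0$. Comparing with $C(1-y)$, where $C$ is chosen from the two displayed suprema, gives $0\le w\le C_T(1-y)$ on $[0,T]\times[0,2]\times[0,1]$. Indeed, $\mathcal{L}[C(1-y)]=0$ for the operator $\partial_t+(y+1)\partial_x-(w+1)^2\partial_y^2$. The barrier also has the right boundary behaviour: it dominates at $y=1$, and its Neumann derivative at $y=0$ has the correct sign. Hence the diffusion coefficient satisfies $1\le (w+1)^2\le (1+C_T)^2$, so the equation is uniformly parabolic in $y$ with constants depending only on the allowed quantities. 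This is the key simplification compared with \eqref{prandtlsection8}: no lower bound on $w$ is needed.

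\emph{Step 2: the near-top region.} Near $y=1$ the bound $0\le w\le C_T(1-y)$ already controls $w/(1-y)$. To bound $\partial_y w$ up to $y=1$, we note that $v=1-y$ solves the same linear equation, and $w$ and $C_T(1-y)-w$ are nonnegative solutions vanishing on $y=1$. Since the transport coefficient is $y+1\ge1$ and nondegenerate, the boundary $y=1$ is noncharacteristic for this Kolmogorov-type operator. We apply interior hypoelliptic Schauder/Hölder estimates after rescaling cylinders tangent to $y=1$, after an odd reflection across $y=1$ (the Dirichlet condition makes this admissible for the linearised equation with frozen coefficient $(w+1)^2$). Concretely, we would rerun the divergence-form argument of Section \ref{continuity} with the equation for $v=1/(w+1)$, namely $\partial_t v+(y+1)\partial_x v-\partial_y((w+1)^2\partial_y v)=0$. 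Then Proposition \ref{interiorholder} (interior version, applicable after reflection) gives Hölder continuity. Propositions \ref{interiorsobolevesti}--\ref{interiorsob1} give $\partial_y^2 w\in L^p$, and the anisotropic interpolation Lemma \ref{aniinterpolation} gives $\partial_y w\in L^\infty$ on $[\delta,T]\times[\delta,1]\times[1-\kappa,1]$.

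\emph{Step 3: the region $y\in[0,1-\kappa]$.} This is exactly the setting of the proof of Theorem \ref{th:smo1}. The shift $y+1$ is removed by the Galilean change $x\mapsto x-t$, as in the rescaling used in Step 5 of Proposition \ref{existweaksol}. The bounds $1\le w+1\le C$ and the Neumann condition at $y=0$ then allow us to apply Proposition \ref{holder} to $v=(w+1)^{-1}$. We follow it with Propositions \ref{sobolevesti} and \ref{sob1} to get $\partial_y^2 w\in L^p$ for all $p<\infty$ on half-balls $\mathcal{B}^+_{r}(z_0)$ with $r\sim\delta$. Interpolating with the $C^\alpha$ bound via Lemma \ref{aniinterpolation} yields $|\partial_y w|\le C_\delta$. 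Interior points in between are covered by the interior propositions. A finite covering of $[\delta,T]\times[\delta,1]\times[0,1]$ by cylinders contained in $[0,T]\times[0,2]\times[0,1]$ finishes the proof; the margin $x\le 2$ ensures the cylinders do not leave the domain on the right.

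\emph{Expected obstacle.} The delicate point is Step 2, namely the boundary $y=1$. The paper's machinery is built for the Neumann boundary $y=0$, where the transport velocity vanishes. At $y=1$ the condition is Dirichlet and the transport speed is $2$. Odd reflection breaks the transport term $(y+1)\partial_x$, which is not odd-symmetric. I would first try the Galilean shift centred at $y=1$, so that the drift becomes $(y-1)\partial_x$, which is odd in $y-1$ and hence compatible with odd reflection; the coefficient $(w+1)^2$ extends evenly. If this fails, a fallback is a direct barrier argument: compare $w$ with $\pm(A(1-y)-B(1-y)^2)e^{Mt}$, localised in $(t,x)$, to get $|\partial_y w(\cdot,\cdot,1)|\le C$. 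This would be combined with the maximum principle for $\partial_y w$, which satisfies a linear equation whose zero-order coefficient is controlled once $(w+1)^2\partial_y^2 w$ is expressed through the equation.
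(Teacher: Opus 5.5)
Your Steps 1 and 3 are sound and agree with the paper. The function $A(1-y)$ is an exact solution of the linearised operator, so $0\le w\le A(1-y)$. On $\{y\le 1-\delta\}$ the paper likewise reruns the machinery of Sections~\ref{continuity}--\ref{higher} for $(w+1)^{-1}$ after a Galilean shift.

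\textbf{The gap is Step 2.} No reflection across $y=1$ works for a Kolmogorov operator, in any Galilean frame. A shift $x\mapsto x-ct$ only changes the drift by a constant. Write $s=y-1$ and suppose $u$ solves $\partial_t u+s\partial_x u-a\partial_s^2u=0$ on $s<0$. Then $-u(t,x,-s)$ solves $\partial_t\tilde u-s\partial_x\tilde u-\tilde a\partial_s^2\tilde u=0$ on $s>0$, with drift $-s$ rather than $+s$. Reflection is compatible only with a drift that is even in $s$; an odd drift is exactly what it destroys. The glued function therefore solves an equation with drift $-|s|\partial_x$, which is outside the scope of Propositions~\ref{interiorholder}--\ref{interiorsob1}. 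This is the same obstruction that forces the paper to build the Neumann half-space fundamental solution at $y=0$ by hand rather than by images.

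Your fallback does not close either. The boundary value $|\partial_y w(t,x,1)|\le A$ is already immediate from Step 1, so the barrier gains nothing. Differentiating the equation in $y$ produces the commutator source $[\partial_y,(y+1)\partial_x]w=\partial_x w$. Expressing $(w+1)^2\partial_y^2 w$ through the equation brings in $\partial_t w+(y+1)\partial_x w$. Neither tangential derivative is controlled at this stage; controlling them is precisely what the continuation argument needs $\|\partial_y w\|_{L^\infty}$ for.

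\textbf{The missing idea.} No boundary theory at $y=1$ is needed, because $\partial_y w$ scales like $w/(1-y)$, which Step 1 already bounds. Take $(t_0,x_0,y_0)$ with $t_0,x_0\ge\delta$ and $r_0:=1-y_0<\delta$, and set
$V(t,x,y)=w\bigl(t_0+r_0^2t,\;x_0+r_0^3x+(y_0+1)r_0^2t,\;y_0+r_0y\bigr)$ on $\{-1<t\le 0,\ |x|<1,\ |y|<1\}$.
\begin{itemize}
\item This cylinder maps into $[0,T]\times[0,2]\times(1-2r_0,1)$, so it never touches $y=1$.
\item $V$ solves $\partial_t V+y\partial_x V-(V+1)^2\partial_y^2V=0$.
\item $\|V\|_{L^\infty}\lesssim r_0$ by Step 1.
\item Since $(V+1)^2-1=O(r_0)$, the whole-space $W^{2,p}$ estimate of \cite{BC96b} applies perturbatively, giving $\|(\partial_y V,\partial_y^2V)\|_{L^p}\lesssim r_0$ on the half-size cylinder.
\item Representing $V\chi$ through the whole-space kernel \eqref{wholespace} and using H\"older gives $|\partial_y V(0,0,0)|\lesssim r_0$.
\end{itemize}
Hence $|\partial_y w(t_0,x_0,y_0)|=r_0^{-1}|\partial_y V(0,0,0)|\lesssim 1$, uniformly up to $y=1$.

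The same rescaling controls $\partial_y^2 w$ only like $(1-y)^{-1}$ in $L^p$-average. So your intended route, $\partial_y^2w\in L^p$ up to $y=1$ followed by Lemma~\ref{aniinterpolation}, would genuinely require a Dirichlet boundary theory. That is the alternative mentioned in the paper's closing remark, and reflection does not supply it.
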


\begin{proof}[Proof of Proposition \ref{bdwy}.] We can assume that $0<\delta\ll 1$. Using arguments analogous to those in Proposition \ref{existweaksol}, we have
\begin{equation}
|\p_{y}w(t,x,y)|\lesssim_{\delta} 1,\quad\text{provided\,\,that\,\,}\ t\geq \delta,\,x\geq \delta,\,0\leq y\leq 1-\delta.
\end{equation}

It remains to consider the region $1-\delta< y\leq1$.

For fixed $(t_{0},x_{0},y_{0})$ with $t_{0}\geq \delta, x_{0}\geq \delta$ and $1-\delta<y_{0}<1$, we define
\begin{equation}\label{eq:B40}
V(t,x,y)=w\left(t_{0}+r_{0}^{2}t,x_{0}+r_{0}^{3}x+(y_{0}+1)r_{0}^{2}t,y_{0}+r_{0}y\right),\quad r_{0}=1-y_{0}.
\end{equation}
Since the spatial length is arbitrary, we work on \(0\leq x\leq2\) while deriving the estimate for \(0\leq x\leq1\). In the rescaled variables, we use the backward-time cylinder
$$
Q^-:=\{-1<t\leq0,\ |x|<1,\ |y|<1\}.
$$
Indeed, since \(r_0<\delta\ll1\), \(t_0,x_0\geq\delta\), and \(x_0\leq1\), the three arguments of \(w\) in \eqref{eq:B40} remain in \([0,T]\times[0,2]\times[0,1]\) throughout \(Q^-\). Thus \(V\) is well defined on \(Q^-\).

By direct computation, we see that $V$ satisfies for $-1<t\leq 0,\,|x|<1,\,|y|<1$,
\begin{equation}
\p_{t}V+y\p_{x}V-(V+1)^{2}\p_{y}^{2}V=0,
\end{equation}
and
\begin{equation}
\|V\|_{L^{\infty}_{\{-1<t\leq0,|x|<1,|y|<1\}}}\lesssim r_{0}\ll 1.
\end{equation}
In the above, we used the following fact:
\begin{equation}\label{vanishrate}
    0\leq w(t,x,y)\leq A\cdot (1-y),\quad A:=\left(\sup_{x,y}\frac{w_{0}}{1-y}+\sup_{t,y}\frac{w_{1}}{1-y}\right),
\end{equation}
which can be demonstrated by a standard maximum principle argument.

Using arguments from the proof of Lemma \ref{constantcoeff} and the $W^{2,p}$ estimate in the whole space \cite{BC96b}, we have for all $p\in(1,\infty)$,
\begin{equation}\label{w2pinter}
\|(\p_{y}V,\p_{y}^{2}V)\|_{L^{p}_{\{-1/2<t\leq0,|x|<\frac{1}{2},|y|<\frac{1}{2}\}}}\lesssim \|V\|_{L^{\infty}_{\{-1<t\leq0,|x|<1,|y|<1\}}}\lesssim_{p} r_{0}.
\end{equation}

Let $\chi(t,x,y)$ be a cut--off function supported in $\{-1/2<t\leq0,|x|<\frac{1}{2},|y|<\frac{1}{2}\}$ and
\begin{equation*}
\chi(t,x,y)\equiv 1,\ -1/4<t\leq0,|x|<\frac{1}{4},|y|<\frac{1}{4}.
\end{equation*}

Then we have
\begin{equation}
V(t,x,y)\chi(t,x,y)=\int_{-1/2<\tau\leq0,|\xi|<\frac{1}{2},|\eta|<\frac{1}{2}}\widetilde{\Gamma}(t,x,y;\tau,\xi,\eta)\mathcal{W}(\tau,\xi,\eta)\,d\tau d\xi d\eta,
\end{equation}
where $\widetilde{\Gamma}(t,x,y;\tau,\xi,\eta)$ is the fundamental solution for the operator $\mathcal{L}_{0}=\p_{t}+y\p_{x}-\p_{y}^{2}$ in the whole space (see \eqref{wholespace}), and
\begin{equation*}
\mathcal{W}(\tau,\xi,\eta)=(V^{2}\p_{\eta}^{2}V+2V\p_{\eta}^{2}V)\cdot\chi+V(\p_{\tau}\chi+\eta\p_{\xi}\chi-\p_{\eta}^{2}\chi)-2\p_{\eta}V\p_{\eta}\chi.
\end{equation*}

By \eqref{wholespace}, \eqref{w2pinter} and H\"older's inequality, we obtain that
\begin{equation}
|\p_{y}V(0,0,0)|\lesssim r_{0}.
\end{equation}
Therefore,
\begin{equation}
|\p_{y}w(t_{0},x_{0},y_{0})|\leq r_{0}^{-1}|\p_{y}V(0,0,0)|\lesssim 1.
\end{equation}
The proof of Proposition \ref{bdwy} is then complete.
\end{proof}

\begin{remark}As in Sections \ref{fundamental} -- \ref{higher}, an alternative method to prove Proposition \ref{bdwy} is to establish the regularity theory of \eqref{equationappendix} near the boundary $y=1$. The key step of this method is to study the fundamental solution of the operator $\mathcal{L}_{0}$ up to the boundary with a Dirichlet boundary condition. 
\end{remark}

\section{Analysis toolbox}\label{pre}
\subsection{Tools in Real Analysis}
\subsubsection{Schur's Test} We recall the classical Schur's test for integral operators. The proof can be found in Appendix I of \cite{GTM249}.
\begin{lemma}[\bf Schur's Test]
\label{schur}
Let $(X,\mu)$ be a measure space. Define the integral operator $T$ as
\begin{equation}
T(f)(x)=\int_{X}K(x,y)f(y)d\mu (y).
\end{equation}
If
\begin{equation}
A=\sup_{x\in X}\int_{X}|K(x,y)|d\mu (y)+\sup_{y\in X}\int_{X}|K(x,y)|d\mu(x)<\infty,
\end{equation}
then $T$ can be defined as a bounded operator on $L^{p}(X,\mu)$ for any $1\leq p\leq\infty$ with norm less than or equal to $A$.
\end{lemma}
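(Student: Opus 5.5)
The statement to prove is Schur's test: if $A<\infty$, then $T$ is bounded on $L^p(X,\mu)$ for all $1\le p\le\infty$ with norm at most $A$. The plan is to treat the endpoints directly and handle $1<p<\infty$ by a single Hölder argument. Set
\[
A_1:=\sup_x\int_X|K(x,y)|\,d\mu(y),\qquad A_2:=\sup_y\int_X|K(x,y)|\,d\mu(x),
\]
so that $A=A_1+A_2\ge\max\{A_1,A_2\}$. It will suffice to prove the sharper bound $\|Tf\|_{L^p}\le A_1^{1/p'}A_2^{1/p}\|f\|_{L^p}$, where $1/p+1/p'=1$, since this is at most $A\|f\|_{L^p}$.

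For $p=\infty$ the bound is immediate: $|Tf(x)|\le \|f\|_\infty\int|K(x,y)|\,d\mu(y)\le A_1\|f\|_\infty$. For $p=1$, Tonelli gives
\[
\int|Tf|\,d\mu\le\iint|K(x,y)||f(y)|\,d\mu(y)\,d\mu(x)\le A_2\|f\|_1 .
\]
Tonelli also shows that the defining integral converges absolutely for a.e. $x$ whenever $f\in L^1$, so $T$ is well defined in that case.

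For $1<p<\infty$, split $|K|=|K|^{1/p'}|K|^{1/p}$ and apply Hölder's inequality in $y$:
\[
|Tf(x)|\le\Big(\int|K(x,y)|\,d\mu(y)\Big)^{1/p'}\Big(\int|K(x,y)||f(y)|^p\,d\mu(y)\Big)^{1/p}.
\]
Raising to the $p$-th power, integrating in $x$, and using Tonelli,
\[
\|Tf\|_p^p\le A_1^{p/p'}\,A_2\,\|f\|_p^p .
\]
This yields $\|Tf\|_p\le A_1^{1/p'}A_2^{1/p}\|f\|_p\le A\|f\|_p$. The same computation shows the integral defining $Tf(x)$ is absolutely convergent a.e. for $f\in L^p$, so $T$ is well defined on $L^p$ and not merely on a dense class.

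The only delicate point is measurability and well-definedness. This needs $K$ to be jointly measurable and $\mu$ to be $\sigma$-finite, so that Tonelli applies. I would state these as standing assumptions, as is implicit in the lemma's formulation. Alternatively, Riesz–Thorin interpolation between the $p=1$ and $p=\infty$ cases would also give the result, but the direct Hölder argument avoids needing complex scalars.
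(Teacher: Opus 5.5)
Your proof is correct and takes the same route as the paper, which defers to Appendix I of \cite{GTM249}. That reference treats the endpoints $p=1,\infty$ directly and, for $1<p<\infty$, splits $|K|=|K|^{1/p'}|K|^{1/p}$, applies H\"older and then Tonelli to get the sharper bound $A_1^{1/p'}A_2^{1/p}\le A$, exactly as you do, including your remark that joint measurability of $K$ and $\sigma$-finiteness of $\mu$ are tacit hypotheses.
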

\begin{comment}
\begin{proof}The case of $p=\infty$ is obvious, and $p=1$ is the consequence of Fubini's theorem. For $f\in L^{p}(X,\mu) (1<p<\infty)$, applying H\"older's inequality and Fubini's theorem, we have
\begin{equation*}
\begin{split}
\|Tf\|_{L^{p}(X)}^{p}&\leq\int_{X}\left(\int_{X}|K(x,y)|\cdot|f(y)|d\mu(y)\right)^{p}d\mu(x)\\
&\leq\int_{X}\left(\int_{X} |K(x,y)|d\mu(y)\right)^{\frac{p-1}{p}}\left(\int_{X}|K(x,y)|\cdot|f(y)|^{p}d\mu(y)\right)d\mu(x)\\
&\leq A\int_{X}|f(y)|^{p}d\mu(y).
\end{split}
\end{equation*}
\end{proof}
\end{comment}

\subsubsection{Littlewood--Paley Theory}\label{littlewood} We briefly recall the Littlewood--Paley decompositions, which are important in this paper. Let $\{\Delta_{j}\}_{j\geq -1}$ be the usual Littlewood--Paley projection operators defined on $\mathcal{S}'(\R^{d})$ (for more details we refer to \cite{BCD11}). We have the following Littlewood--Paley decomposition for $f\in\mathcal{S}'(\R^{d})$ as the sum of its dyadic frequency blocks
\begin{equation}
    f=\sum_{j\geq -1}\Delta_{j}f\ \text{in}\ \mathcal{S}'.
\end{equation}
The following equivalent norm holds:
\begin{equation}
\|f\|_{H^{s}(\R^{d})}^{2}\sim \|f\|_{L^2(\R^d)}^2+\sum_{j\geq -1}2^{2js}\|\Delta_{j}f\|_{L^{2}(\R^{d})}^{2},\quad \forall\ s\ge0.
\end{equation}
Here $H^{s}$ is the standard Sobolev space. Therefore, to obtain $H^{s}$ estimates, we only need to obtain the $L^{2}$ estimate for each dyadic block.

The following Bernstein's inequalities are also useful (we adopt the convention that $S_{j}:=\sum_{k< j}\Delta_{k}$):
\begin{equation}\label{bernstein}
\begin{cases}
\|\nabla^{k}\Delta_{j}f\|_{L^{q}(\R^{d})}+\|\nabla^{k}S_{j}f\|_{L^{q}(\R^{d})}\lesssim 2^{jk+jd\left(\frac{1}{p}-\frac{1}{q}\right)}\|f\|_{L^{p}(\R^{d})},\ 1\leq p\leq q\leq \infty,\\
\|\Delta_{j}f\|_{L^{\infty}(\R^{d})}\lesssim 2^{-j\alpha}\|f\|_{C^{\alpha}(\R^{d})},\ 0<\alpha<1.
\end{cases}
\end{equation}

We also need the following paraproduct decomposition of Bony for the product of two functions. For $j\geq 10$, we have
\begin{equation}\label{parapro}
\begin{split}
\Delta_{j}(ab)&=\sum_{|l|\leq 5}\Delta_{j}\left(S_{j+l-1}a\Delta_{j+l}b\right)+\sum_{|l|\leq 5}\Delta_{j}\left(S_{j+l-1}b\Delta_{j+l}a\right)+\sum_{l> j-5,|q-l|\leq 1}\Delta_{j}\left(\Delta_{l}a\Delta_{q}b\right).
\end{split}
\end{equation}
\subsubsection{Interpolation Inequalities}
Next, we state some useful interpolation inequalities in $\R^{+}$.
\begin{lemma}[\bf Interpolation Inequalities]\label{interonline}

\emph{(i)} If $f\in H^{1}(\R^{+})$, then
\begin{equation}\label{spectralgap}
\int_{0}^{\infty}|f|^{2}dy\lesssim \int_{0}^{\infty}|\p_{y}f|^{2}dy+\inf_{c\geq 0}\int_{0}^{\infty}|y-c|\cdot|f|^{2}dy.
\end{equation}

\emph{(ii)} If $f\in H^{2}(\R^{+})$, then for any $1< p\leq \infty$,
\begin{equation}\label{interrr}
\|f\|_{L^{p}(\R^{+})}\lesssim_p \|\sqrt{y}f(y)\|_{L^{2}(\R^{+})}+\|\sqrt{y}f''(y)\|_{L^{2}(\R^{+})}.
\end{equation}

\emph{(iii)} If $f\in W^{2,p}(\R^{+})$ with $1\leq p\leq\infty$, then
\begin{equation}\label{interr}
\|f'(y)\|_{L^{p}(\R^{+})}\lesssim \|f(y)\|_{L^{p}(\R^{+})}^{\frac{1}{2}}\cdot\|f''(y)\|_{L^{p}(\R^{+})}^{\frac{1}{2}},
\end{equation}
\end{lemma}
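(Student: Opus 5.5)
The plan is to prove the three inequalities separately, using only one-dimensional elementary arguments (fundamental theorem of calculus, H\"older's inequality, and scaling). The only genuinely delicate point is the degenerate weight $\sqrt{y}$ near $y=0$ in \eqref{interrr}.

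For \eqref{spectralgap}, I fix $c\ge 0$ and split $\R^+$ into $\{|y-c|\geq 1\}$ and $I:=\{|y-c|<1\}\cap\R^+$. On the first set, $|f|^2\leq |y-c|\,|f|^2$, which is directly controlled. For $I$, I introduce the auxiliary set $J:=\{1\leq |y-c|\leq 2\}\cap\R^+$. This set has length at least $1$ because $c\geq 0$, it satisfies $|y-c|\geq 1$, and every point of $I\cup J$ lies in a common interval of length at most $4$. For $y\in I$ and $z\in J$, the fundamental theorem of calculus and Cauchy--Schwarz give
\begin{equation*}
|f(y)|^2\leq 2|f(z)|^2+8\int_0^\infty|f'|^2\,ds .
\end{equation*}
Averaging in $z\in J$ and integrating in $y\in I$ yields
\begin{equation*}
\int_I|f|^2\lesssim \int_J|f|^2+\int_0^\infty|f'|^2\leq \int_0^\infty|y-c||f|^2+\int_0^\infty |f'|^2 .
\end{equation*}
Taking the infimum over $c$ finishes (i).

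For \eqref{interrr}, I treat $(1,\infty)$ and $(0,1)$ separately. On $(1,\infty)$ the weight is at least $1$, so $f,f''\in L^2(1,\infty)$. By \eqref{interr} applied on the half-line $(1,\infty)$ (or by a direct argument), $f'\in L^2(1,\infty)$ as well, hence $f\in H^2(1,\infty)\subset L^\infty$. This gives the $L^p$ bound for $2\leq p\leq\infty$ by interpolation with $L^2$. For $1<p<2$ I write $|f|^p=(y^{1/2}|f|)^p\,y^{-p/2}$ and apply H\"older with exponents $2/p$ and $2/(2-p)$, which requires
\begin{equation*}
\int_1^\infty y^{-\frac{p}{2-p}}\,dy<\infty .
\end{equation*}
This holds exactly when $p>1$, and it is the reason for the restriction $p>1$.

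On $(0,1)$ the difficulty is that $\sqrt{y}f''\in L^2$ does not give $f''\in L^1$ near $0$. I plan to bypass this as follows. The trace bound on $[1,2]$ controls $|f(1)|+|f'(1)|$ by the $H^2(1,2)$ norm, which is already controlled. Writing $f'(y)=f'(1)-\int_y^1 f''$ and using Cauchy--Schwarz with the weight gives
\begin{equation*}
|f'(y)|\leq |f'(1)|+\|\sqrt{s}f''\|_{L^2}\log^{1/2}(1/y).
\end{equation*}
This is integrable on $(0,1)$, so $|f(y)|\leq|f(1)|+\int_0^1|f'|$ is bounded on $[0,1]$. Together with the bound on $(1,\infty)$ this gives (ii). I expect this near-zero step to be the main (mild) obstacle, and the logarithmic bound above is how I would handle it.

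For \eqref{interr}, I use the standard Landau--Kolmogorov argument. On any interval $I$ of length $h$, Taylor's formula combined with the mean value theorem gives
\begin{equation*}
\|f'\|_{L^p(I)}\lesssim h^{-1}\|f\|_{L^p(I)}+h\|f''\|_{L^p(I)} .
\end{equation*}
Partitioning $\R^+$ into intervals of length $h$, summing the $p$-th powers (or taking the supremum when $p=\infty$), and choosing $h=(\|f\|_{L^p}/\|f''\|_{L^p})^{1/2}$ gives the claim. The degenerate case $f''\equiv 0$ forces $f'\equiv 0$, since $f$ is then affine and lies in $L^p(\R^+)$ (it is constant when $p=\infty$ and identically zero when $p<\infty$).
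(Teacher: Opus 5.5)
Your proposal is correct and follows essentially the same route as the paper: in (i) you run the fundamental theorem of calculus from a region where $|y-c|\ge 1$ controls $f$; in (ii) you split into $(0,1)$ and $(1,\infty)$ and use H\"older with $p>1$ for the range $1<p<2$; and in (iii) you use a unit-scale estimate on subintervals followed by rescaling. The differences are cosmetic. You treat general $c\ge 0$ in (i) explicitly, where the paper writes out only $c=0$. On $(1,\infty)$ you cite (iii), whereas the paper integrates $\int_1^\infty|f'|^2$ by parts directly. Near $y=0$ you use the pointwise bound $|f'(y)|\le |f'(1)|+\|\sqrt{s}f''\|_{L^2}\log^{1/2}(1/y)$, whereas the paper obtains $f'\in L^2(0,1)$ by integrating $\int_0^1|f'|^2$ by parts against the weight $y$ and absorbing.
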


\begin{proof} 
\textit{Proof of \eqref{spectralgap}:} We need to show that for all $c\geq 0$,
\begin{equation}\label{goalofc7}
    \int_{0}^{\infty}|f|^{2}dy\lesssim \int_{0}^{\infty}|\p_{y}f|^{2}dy+\int_{0}^{\infty}|y-c|\cdot|f|^{2}dy.
\end{equation}
For brevity we prove \eqref{goalofc7} for $c=0$, since for general $c\geq 0$ it is proved in a similar way. Clearly,
\begin{equation}\label{obviousbd}
\int_{1}^{\infty}|f|^{2}dy\leq \int_{1}^{\infty}y|f|^{2}dy.
\end{equation}
By the mean--value theorem, there exists $z\in [1,2]$ such that
\begin{equation*}
|f(z)|^{2}\lesssim \int_{1}^{2}y|f|^{2}dy.
\end{equation*}
Hence, for any $y\in [0,1]$,
\begin{equation}\label{ptcontrol}
|f(y)|\lesssim \int_{0}^{2}|f'(s)|ds +|f(z)|\lesssim \left(\int_{0}^{2}|f'(s)|^{2}ds+\int_{1}^{2}s|f(s)|^{2}ds\right)^{\frac{1}{2}}.
\end{equation}
 \eqref{goalofc7} follows from \eqref{obviousbd} and \eqref{ptcontrol}.

\textit{Proof of \eqref{interrr}:}  We have
\begin{equation*}
\|f\|_{L^{\infty}(1,\infty)}\lesssim \|f\|_{L^{2}(1,\infty)}+\|f'\|_{L^{2}(1,\infty)}.
\end{equation*}
Integrating by parts, we get that
\begin{equation*}
\begin{split}
\int_{1}^{\infty}|f'(y)|^{2}dy\leq \|f\|_{L^{2}(1,\infty)}\|f''\|_{L^{2}(1,\infty)}+|f(1)|\cdot|f'(1)|.
\end{split}
\end{equation*}
By the mean--value theorem, there exist $y_{1}\in[\frac{1}{2},1],y_{2}\in[\frac{3}{2},2]$, such that
\begin{equation*}
|f(y_{1})|^{2}+|f(y_{2})|^{2}\lesssim \int_{\frac{1}{2}}^{2}y|f(y)|^{2}dy.
\end{equation*}
Applying the mean--value theorem again, we can conclude that there exists $y_{3}\in[y_{1},y_{2}]$ such that
\begin{equation*}
|f'(y_{3})|^{2}\lesssim \int_{\frac{1}{2}}^{2}y|f(y)|^{2}dy.
\end{equation*}
Thus, by Newton--Leibniz and Cauchy--Schwarz,
\begin{equation}\label{standard1d}
|f'(1)|^{2}\lesssim  \int_{\frac{1}{2}}^{2}y|f(y)|^{2}dy+ \int_{\frac{1}{2}}^{2}y|f''(y)|^{2}dy.
\end{equation}
Therefore, for $p\geq 2$,
\begin{equation}
\|f\|_{L^{p}(1,\infty)}\leq \|f\|_{L^{2}(1,\infty)}^{\frac{2}{p}}\|f\|_{L^{\infty}(1,\infty)}^{\frac{p-2}{p}}\lesssim \|\sqrt{y}f(y)\|_{L^{2}(\R^{+})}+\|\sqrt{y}f''(y)\|_{L^{2}(\R^{+})}.
\end{equation}
For $1<p<2$, by H\"older's inequality,
\begin{equation}
\|f\|_{L^{p}(1,\infty)}\lesssim \|\sqrt{y}f\|_{L^{2}(1,\infty)}.
\end{equation}

Next, we focus on the interval $y\leq 1$. Integrating by parts and applying \eqref{standard1d}, we have
\begin{equation}
\begin{split}
\int_{0}^{1}|f'(y)|^{2}dy&=|f'(1)|^{2}-2\int_{0}^{1}yf'(y)f''(y)dy.\\
&\lesssim \int_{\frac{1}{2}}^{2}y|f(y)|^{2}dy+ \int_{\frac{1}{2}}^{2}y|f''(y)|^{2}dy+\left(\int_{0}^{1}|f'(y)|^{2}dy\right)^{\frac{1}{2}}\left(\int_{0}^{1}y^{2}|f''(y)|^{2}dy\right)^{\frac{1}{2}}.
\end{split}
\end{equation}
Therefore,
\begin{equation}
\|f\|_{L^{\infty}([0,1])}\leq |f(1)|+\int_{0}^{1}|f'(y)|dy\lesssim \|\sqrt{y}f(y)\|_{L^{2}(\R^{+})}+\|\sqrt{y}f''(y)\|_{L^{2}(\R^{+})}.\end{equation}

\textit{Proof of \eqref{interr}:} Notice that \eqref{interr} is obvious if $f''\equiv 0$; hence we may assume that $f''\not\equiv 0$. 

For $p=\infty$, by the same method as in  \eqref{standard1d}, we can obtain
\begin{equation}\label{standard1dd}
\|f'(y)\|_{L^{\infty}(\R^{+})}\lesssim \|f(y)\|_{L^{\infty}(\R^{+})}+\|f''(y)\|_{L^{\infty}(\R^{+})}.
\end{equation}
Then \eqref{interr} follows from a standard rescaling argument.

For $1\leq p<\infty$, by the same method as for \eqref{standard1d}, we can also obtain for each $k\in\Z\cap[0,\infty)$,
\begin{equation*}
    \int_{k}^{k+1}|f'(y)|^{p}dy\lesssim \int_{k}^{k+1}|f(y)|^{p}dy+\int_{k}^{k+1}|f''(y)|^{p}dy.
\end{equation*}
Hence
\begin{equation}\label{standardoned}
    \|f'\|_{L^{p}(\R^{+})}\lesssim \left(\|f\|_{L^{p}(\R^{+})}^{p}+\|f''\|_{L^{p}(\R^{+})}^{p}\right)^{\frac{1}{p}}\leq \|f\|_{L^{p}(\R^{+})}+\|f''\|_{L^{p}(\R^{+})}.
\end{equation}
\eqref{interr} follows from \eqref{standardoned} and a standard rescaling argument.
\end{proof}

We also need the following anisotropic interpolation, which is stated for the domain $\R^{3}$, but it is clear that the same conclusion holds for a finite cube in $\R^{3}$ by standard Sobolev extensions.
\begin{lemma}\label{aniinterpolation}
  Fix $\alpha\in(0,1)$. Assume that $f(t,x,y)\in C_0^{2}(\R^{3})$. Then for any $p$ with $\frac{2}{\alpha}+1<p<\infty$, 
    \begin{equation}\label{HoInt8}
    \|\p_{y}f\|_{L^{\infty}(\R^{3})}\lesssim_{\alpha,p}\|f\|_{C^{\alpha}(\R^{3})}+\|\p_{y}^{2}f\|_{L^{p}(\R^{3})}.
    \end{equation}
\end{lemma}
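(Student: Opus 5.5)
The plan is a Campanato/Morrey-type argument in the $y$ variable alone, using the Hölder continuity of $f$ in all variables to control the lower-order part. Both sides of \eqref{HoInt8} are invariant under translations, so it suffices to bound $|\p_yf(0,0,0)|$. Fix a small length $r\in(0,1]$, to be chosen at the end, and work on the anisotropic box
\[
Q_r=(-r^{2},0]\times(-r^{3},r^{3})\times(-r,r),
\]
whose volume is comparable to $r^{6}$.

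\textbf{Step 1: Taylor expansion in $y$.} For each $(t,x,y)\in Q_r$ and $|h|\le r$, write
\[
f(t,x,y+h)-f(t,x,y)=h\,\p_yf(t,x,y)+\int_0^h(h-s)\,\p_y^2f(t,x,y+s)\,ds .
\]
Taking $h=r$, the left-hand side is at most $2[f]_{C^\alpha}r^{\alpha}$. This uses only Hölder continuity in $y$, which is contained in $C^\alpha(\R^3)$ for either the Euclidean or the anisotropic metric. Solving for the derivative gives
\[
|\p_yf(t,x,y)|\le 2[f]_{C^\alpha}r^{\alpha-1}+\int_{-2r}^{2r}|\p_y^2f(t,x,y')|\,dy' .
\]
This is a pointwise estimate, so no averaging is needed for the first term.

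\textbf{Step 2: Passing from an average to the center.} The integral term cannot be controlled pointwise by an $L^p$ norm. We therefore apply the same identity to the difference $\p_yf(0,0,0)-\p_yf(t,x,y)$, treating it as a function of $y$ along a segment. The $t$ and $x$ directions carry no derivative information beyond $C^\alpha$, so we do not compare across $t$ or $x$. Instead we average only over $y'\in(-r,r)$ at fixed $(t,x)=(0,0)$:
\[
|\p_yf(0,0,0)|\le \frac{1}{2r}\int_{-r}^{r}|\p_yf(0,0,y')|\,dy'+\int_{-r}^{r}|\p_y^2f(0,0,s)|\,ds .
\]
This still involves a line integral of $\p_y^2f$, which an $L^p(\R^3)$ bound does not see.

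To get around this, we replace the fixed point $(0,0)$ by all $(t,x)$ in the small window $(-r^{2},0]\times(-r^{3},r^{3})$ and average. For such $(t,x)$,
\[
\p_yf(0,0,y)=\p_yf(t,x,y)+\bigl[\p_yf(0,0,y)-\p_yf(t,x,y)\bigr],
\]
and the bracket is a tangential difference of $\p_yf$. It is handled by representing $\p_yf$ as a difference quotient in $y$ plus the remainder from Step 1. The difference quotient costs $[f]_{C^\alpha}r^{\alpha}/r$ after a tangential shift, and the remainder is an integral of $|\p_y^2f|$ over $Q_{2r}$.

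Combining, we expect
\[
|\p_yf(0)|\lesssim [f]_{C^\alpha}r^{\alpha-1}+r^{-5}\int_{Q_{2r}}|\p_y^2f|\,dz\cdot r^{?},
\]
with the exponent of $r$ to be fixed by careful bookkeeping.

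\textbf{Step 3: Iterate dyadically.} Run the argument in Step 2 on dyadic shells $r_k=2^{-k}r_0$, in the Morrey--Campanato way, by comparing the averages $m_k$ of $\p_yf$ over $Q_{r_k}$. The remainder terms give
\[
|m_{k+1}-m_k|\lesssim r_k\,\Big(\frac{1}{|Q_{r_k}|}\int_{Q_{r_k}}|\p_y^2f|\Big)\lesssim r_k\,r_k^{-6/p}\|\p_y^2f\|_{L^p}.
\]
This sum converges because $p>6$. The condition $p>\tfrac{2}{\alpha}+1$ together with $\alpha<1$ does not by itself guarantee $p>6$, however. To reach the stated range we will instead use that the oscillation of $\p_yf$ in $(t,x)$ is controlled by interpolating $C^\alpha$ against $\p_y^2f$. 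Concretely, the tangential difference of $\p_yf$ over scale $r$ is bounded by $r^{\alpha}/\ell+\ell\,\|\p_y^2f\|$ along $y$-segments of length $\ell$, and one then optimizes over $\ell$. The starting average $m_0$ at $r_0=1$ is bounded by $\|f\|_{C^\alpha}$ after integrating by parts in $y$.

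\textbf{Main obstacle.} The hardest point is matching exponents so that the dyadic sum converges exactly under $p>\tfrac{2}{\alpha}+1$. This requires choosing the $y$-segment length $\ell=r^{\theta}$ in the tangential comparison with the right $\theta$, and I expect that balancing the $C^\alpha$ loss against the $L^p$ gain, using the scaling $|Q_r|\sim r^{6}$ only partially, produces precisely this threshold.
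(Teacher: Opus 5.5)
Your proposal does not close, and the gap is exactly where you flag it.

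\textbf{The single-scale argument cannot work.} In Step 2, averaging the comparison over nearby $(t,x)$ turns the Taylor remainders at the \emph{other} base points into volume integrals. The remainder at the fixed point $(0,0)$, however, stays a line integral $\int|\p_y^2 f(0,0,s)|\,ds$. The norm $\|\p_y^2 f\|_{L^p(\R^3)}$ does not control such a line integral, and no choice of the missing exponent $r^{?}$ repairs this at a single scale.

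\textbf{Step 3 is false as written.} The bound $|m_{k+1}-m_k|\lesssim r_k\,\frac{1}{|Q_{r_k}|}\int_{Q_{r_k}}|\p_y^2 f|$ would require a Poincar\'e inequality for $\p_y f$ that uses only its $y$-derivative, and this fails. Take $f=y\,h(t,x)$ near the origin: then $\p_y^2 f\equiv 0$, but the averages of $\p_y f=h$ over $Q_{r_k}$ change with $k$. Your proposed repair bounds tangential differences of $\p_y f$ by $r^\alpha/\ell+\ell\,\|\p_y^2 f\|$ along $y$-segments. That again needs a pointwise or line bound on $\p_y^2 f$, which you do not have.

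\textbf{The missing idea.} Your Step 1 with $\ell$ optimized and H\"older applied to the line integral gives the one-dimensional interpolation
$\|g'\|_{L^\infty(\R)}\lesssim\|g\|_{L^\infty}^{\frac{p-1}{2p-1}}\|g''\|_{L^p(\R)}^{\frac{p}{2p-1}}$.
Apply it not to $f$ but to each tangential frequency piece $\Delta_j f$, where $\Delta_j$ is Littlewood--Paley in $(t,x)$ only. Equivalently, apply it to differences of $(t,x)$-averages of $f$ over squares of side $2^{-j}$.
\begin{itemize}
\item Each piece is small: $\|\Delta_j f\|_{L^\infty}\lesssim 2^{-j\alpha}\|f\|_{C^\alpha}$.
\item Bernstein in the two tangential variables converts the $L^p(\R^3)$ bound into the line bound you lack: $\|\p_y^2\Delta_j f\|_{L^\infty_{t,x}L^p_y}\lesssim 2^{2j/p}\|\p_y^2 f\|_{L^p}$.
\item Summing $\|\p_y\Delta_j f\|_{L^\infty}$ over $j$ produces the factor $2^{j(2-\alpha(p-1))/(2p-1)}$. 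This sum converges exactly when $p>\frac{2}{\alpha}+1$.
\end{itemize}
This is the paper's proof. The pointwise value of $\p_y f$ is reached only through this multiscale telescoping, with the H\"older gain at each scale beating the Bernstein loss after interpolation.

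The threshold comes from isotropic windows in the two tangential variables; the $C^\alpha$ norm used here is the Euclidean one. The kinetic scaling $|Q_r|\sim r^6$ plays no role. If you used your boxes $(-r^2,0]\times(-r^3,r^3)$ as the tangential windows, the same bookkeeping would converge only for $2\alpha(p-1)>5$, not $\alpha(p-1)>2$.
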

\begin{proof}
    Let $\Delta_{j}$ be the Littlewood--Paley projection operator on $(t,x)$. We shall use the interpolation inequality
    \begin{equation}\label{newinterpo5}
    \|g'(y)\|_{L^{\infty}(\R)}\lesssim \|g(y)\|_{L^{\infty}(\R)}^{\frac{p-1}{2p-1}}\cdot\|g''(y)\|_{L^{p}(\R)}^{\frac{p}{2p-1}}.
    \end{equation}
    The proof of \eqref{newinterpo5} follows from the same argument as that for \eqref{interr}.

   In view of \eqref{newinterpo5} and Bernstein's inequality, we get that
\begin{equation}
\begin{split}
\|\p_{y}f\|_{L^{\infty}(\R^{3})}&\leq \sum_{j\ge-1}\|\p_{y}\Delta_{j}f\|_{L^{\infty}_{t,x}L^\infty_y(\R^{3})}\lesssim \sum_{j\ge-1}\|\Delta_{j}f\|_{L^{\infty}_{t,x,y}(\R^3)}^{\frac{p-1}{2p-1}}\|\p_{y}^{2}\Delta_{j}f\|_{L^{\infty}_{t,x}L^{p}_{y}(\R^3)}^{\frac{p}{2p-1}}\\
&\lesssim \sum_{j\ge-1}\left(2^{-j\alpha}\|f\|_{C^{\alpha}(\R^3)}\right)^{\frac{p-1}{2p-1}}\left(2^{\frac{2j}{p}}\|\p_{y}^{2}f\|_{L^{p}(\R^3)}\right)^{\frac{p}{2p-1}}\\
&\lesssim_{p,\alpha} \|f\|_{C^{\alpha}(\R^{3})}+\|\p_{y}^{2}f\|_{L^{p}(\R^{3})},
\end{split}
\end{equation}
provided that $p>\frac{2}{\alpha}+1$ (so that $-\alpha\frac{p-1}{2p-1}+\frac{2}{2p-1}<0$).
\end{proof}
\subsection{Enhanced Dissipation}
In this subsection, we prove an enhanced dissipation type estimate. Let $\xi>0$. Consider the following 1--D parabolic equation on $\R^{+}_{t}\times \R^{+}_{y}$:
\begin{equation}\label{enhanceparabolic}
\begin{cases}
\p_{t}v+iy\xi v-\p_{y}^{2}v=0,\\
\p_{y}v|_{y=0}=0,\ v|_{t=0}=v_{0}(y)\in L^{2}(\R^{+};\mathbb{C}).
\end{cases}
\end{equation}
Our main result is the following. 
\begin{proposition}\label{enhanced} Assume that $v$ is the weak solution of \eqref{enhanceparabolic}. Then
\begin{equation}\label{enhancedrate}
\|v(t,\cdot)\|_{L^{2}_{y}}\lesssim e^{-c\xi^{\frac{2}{3}}t}\|v_{0}\|_{L^{2}_{y}},
\end{equation}
for some universal constant $c>0$.
\end{proposition}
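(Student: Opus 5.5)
The plan is a hypocoercivity-type energy argument in the spirit of the now-standard enhanced dissipation proofs for the shear operator $iy\xi-\partial_y^2$, adapted to the half line with the Neumann condition. By rescaling $y\mapsto \xi^{1/3}y$, $t\mapsto \xi^{2/3}t$ (which preserves \eqref{enhanceparabolic} with $\xi=1$ and the Neumann condition), it suffices to prove $\|v(t)\|_{L^2}\lesssim e^{-ct}\|v_0\|_{L^2}$ for $\xi=1$. The basic identity, obtained by testing against $\bar v$ and taking real parts, is $\frac12\frac{d}{dt}\|v\|^2+\|\partial_yv\|^2=0$. The boundary term $\partial_yv\,\bar v|_{y=0}$ vanishes, and the skew term $iy|v|^2$ has zero real part. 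This alone gives no decay, so I will add a correction term.

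I will use the functional
\begin{equation*}
\Phi(t)=\|v\|^2+\alpha\|\partial_yv\|^2+2\beta\,\mathrm{Re}\langle i\partial_yv,\partial_y v\cdot 0\rangle+2\beta\,\mathrm{Re}\langle i v,\partial_yv\rangle ,
\end{equation*}
in which the only genuine extra term is the cross term $2\beta\,\mathrm{Re}\langle iv,\partial_yv\rangle$, and $\beta^2<\alpha$ so that $\Phi\sim\|v\|^2+\alpha\|\partial_yv\|^2$. The key computations are as follows. First, $\frac{d}{dt}\|\partial_yv\|^2=-2\|\partial_y^2v\|^2-2\,\mathrm{Re}\langle iv,\partial_yv\rangle$, using $[\partial_y,iy]=i$; the boundary term disappears because $\partial_yv(0)=0$ and $\partial_t\partial_yv|_{y=0}=0$. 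Second, the commutator in the cross term yields $\frac{d}{dt}\mathrm{Re}\langle iv,\partial_yv\rangle=-\|v\|^2_{\ast}$-type good term, namely $-\int |v|^2$ after integration by parts in $\langle iv,\partial_y(iyv)\rangle$. Integrating $\partial_y(y|v|^2)$ produces a boundary term $y|v|^2|_{y=0}=0$, which conveniently vanishes, plus further terms bounded by $|\langle \partial_y^2v,\partial_yv\rangle|$. Choosing $\alpha=\delta$ and $\beta=\delta^{2}$ with $\delta$ small, Cauchy--Schwarz then absorbs the bad terms into $\|\partial_yv\|^2+\delta\|\partial_y^2v\|^2+\beta\|v\|^2$-type dissipation. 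Since $\Phi\lesssim\|v\|^2+\|\partial_yv\|^2$, this gives $\Phi'\le -c\Phi$, and hence exponential decay at rate $c$ uniform in the data.

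The main obstacle is that the cross-term derivative needs to produce a coercive $\|v\|^2$ (or at least $\|\sqrt{\cdot}\,v\|$) contribution. I will carry out the integration by parts carefully to confirm that the Neumann condition leaves no boundary term. If the good term only comes out as a weighted quantity, the fallback is the spectral-gap inequality \eqref{spectralgap} with $c=0$, namely $\|v\|^2\lesssim\|\partial_yv\|^2+\int y|v|^2$. This lets me control $\|v\|$ provided I add to $\Phi$ a term such as $\gamma\,\mathrm{Im}\langle y v, v\rangle$-like quantities, or simply use the fact that $\frac{d}{dt}\mathrm{Re}\langle iv,\partial_yv\rangle$ contains $-\langle 1, |v|^2\rangle$ exactly. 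A second technical point is regularity: $v_0\in L^2$ only, so I will first argue for smooth data where $\Phi$ is finite, using parabolic smoothing to make $\|\partial_yv(1)\|\lesssim\|v_0\|$. The decay for $t\ge1$ then gives \eqref{enhancedrate}, with the trivial bound $\|v(t)\|\le\|v_0\|$ covering $t\le1$ after rescaling. Density completes the argument.
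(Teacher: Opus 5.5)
\textbf{There is a genuine gap in the cross-term computation.} On the half line the Neumann condition does \emph{not} remove all boundary terms. Take $\xi=1$, $\partial_t v=-iyv+\partial_y^2v$ and $\langle a,b\rangle=\int_0^\infty a\bar b\,dy$. Integrate $\langle iv,\partial_y\partial_t v\rangle$ by parts and use $\partial_t v(0)=\partial_y^2v(0)$. This gives exactly
\[
\frac{d}{dt}\,\mathrm{Re}\langle iv,\partial_yv\rangle=-\|v\|^2+2\,\mathrm{Re}\langle i\partial_y^2v,\partial_yv\rangle+\mathrm{Im}\bigl(v(0)\overline{\partial_y^2v(0)}\bigr).
\]
The rotation part does produce $-\|v\|^2$, and the only boundary term there is the harmless $y|v|^2|_{y=0}=0$ you found. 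However, the diffusion part contains $\mathrm{Re}\langle iv,\partial_y^3v\rangle$, and integrating it by parts leaves the trace term $\mathrm{Im}\bigl(v(0)\overline{\partial_y^2v(0)}\bigr)$. That term would vanish under a Dirichlet condition, but not under Neumann.

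This term is generically nonzero. For a Neumann Airy mode $e^{-\mu t}\psi(y)$ it equals $|\psi(0)|^2\,\mathrm{Im}\,\mu\,e^{-2t\,\mathrm{Re}\,\mu}$, with $\mathrm{Im}\,\mu\neq0$ and $\psi(0)\neq0$. It also cannot be absorbed by your dissipation $\|\partial_yv\|^2+\alpha\|\partial_y^2v\|^2+\beta\|v\|^2$, because the trace $\partial_y^2v(0)$ is not controlled by $\|\partial_y^2v\|$; you would need $\|\partial_y^3v\|$. So $\Phi'\le-c\Phi$ does not follow as written.

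Your spectral-gap fallback addresses a different issue (a weighted good term), not this one. Heuristically, the even extension turns the problem into the shear $|y|$ on $\R$. Its $u''=2\delta_0$ is exactly what the standard hypocoercivity computation cannot tolerate.

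\textbf{The gap is repairable, but it needs an additional ingredient.} One option is to localize the cross term away from the wall. Replace it by $2\beta\,\mathrm{Re}\langle i\phi v,\partial_yv\rangle$ with $\phi(0)=0$, $0\le\phi\le1$, and $\phi\equiv1$ on $[1,\infty)$. Then:
\begin{itemize}
\item the rotation part gives exactly $-\int\phi|v|^2$, with no integration by parts;
\item the diffusion part has no boundary term because $\phi(0)=0$, and it is bounded by $(\|\phi'\|_\infty\|v\|+2\|\partial_yv\|)\|\partial_y^2v\|$;
\item you recover $\|v\|^2\lesssim\int\phi|v|^2+\|\partial_yv\|^2$ by the argument behind \eqref{spectralgap}.
\end{itemize}
The parameters must then satisfy $\alpha^2\ll\beta\ll\alpha$ (e.g.\ $\alpha=\delta$, $\beta=\delta^{3/2}$), so your choice $\beta=\delta^2$ would have to change. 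Alternatively, keep your cross term and add $\alpha_2\|\partial_y^2v\|^2$ to $\Phi$, say with $\alpha_2=\delta^3$. This produces $\alpha_2\|\partial_y^3v\|^2$ dissipation, which controls the trace term via $|g(0)|^2\le2\|g\|\|g'\|$.

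For comparison, the paper avoids the issue entirely. It rescales to $\nu=\xi^{-1}$ and uses Wei's Gearhart--Pr\"uss theorem to reduce to the resolvent bound $\|(i\lambda+iy-\nu\partial_y^2)^{-1}\|\lesssim\nu^{-1/3}$. It then proves that bound by testing the resolvent equation only with $\bar f$ and $\Theta\bar f$, where the Neumann condition kills every boundary term. A corrected version of your energy method would be more self-contained, since it needs no abstract semigroup theorem, at the price of a more delicate functional.
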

\begin{proof}Let $s=\xi t$ and $v(t,y)=V(s,y)$. Then $V$ satisfies the following parabolic equation
\begin{equation}\label{smallvis}
\begin{cases}
\p_{s}V+iyV-\xi^{-1}\p_{y}^{2}V=0,\\
V|_{s=0}=v_{0}(y),\ \p_{y}V|_{y=0}=0.
\end{cases}
\end{equation}
To establish \eqref{enhancedrate}, we only need to show that (with $\nu=\xi^{-1}$)
\begin{equation}\label{goalenhanced}
\|V\|_{L^{2}_{y}}\lesssim e^{-c\nu^{\frac{1}{3}}s}\|v_{0}\|_{L^{2}_{y}}.
\end{equation}
By the Gearhart--Pr\"uss-type theorem proved by Wei \cite{W21} (see also \cite{HS10}, \cite{HS21}, \cite{J23}), we only need to show that
\begin{equation}\label{opbound}
\left\|(i\lambda+iy-\nu\p_{y}^{2})^{-1}\right\|_{L^{2}_{y}(\R^+)\to L^{2}_{y}(\R^+)}\lesssim \nu^{-\frac{1}{3}},
\end{equation}
for each $\lambda\in\R$.
Equivalently, we need to prove that assuming $f$ satisfies the following elliptic equation for $y\ge0$,
\begin{equation}
\begin{cases}\label{ode}
i(y+\lambda)f-\nu\p_{y}^{2}f=g,\\
\p_{y}f|_{y=0}=0,
\end{cases}
\end{equation}
then
\begin{equation}
\|f\|_{L^{2}(\R^+)}\lesssim \nu^{-\frac{1}{3}}\|g\|_{L^{2}(\R^+)}
\end{equation}
holds for any $\lambda\in\R$. By rescaling, we only need to consider $\nu=1$. By linearity, we assume that $\|g\|_{L^{2}(\R^+)}=1$.

\textit{Case 1. $\lambda\geq 0$.} First, testing $\bar{f}$ in \eqref{ode}, integrating by parts, and taking the real part, we get
\begin{equation}\label{H1control}
\int_{0}^{\infty}|\p_{y}f|^{2}dy\leq \|f\|_{L^{2}(\R^+)}
\end{equation}
Next, testing $\bar{f}$ in \eqref{ode} again, integrating by parts, and taking the imaginary part, we obtain that
\begin{equation}\label{L1control}
\int_{0}^{\infty}(y+\lambda)|f|^2dy\leq  \|f\|_{L^{2}(\R^+)}
\end{equation}
Hence we have $\|f\|_{L^{2}(\R^+)}\lesssim 1$ by \eqref{H1control}, \eqref{L1control}, and \eqref{spectralgap} in Lemma \ref{interonline}.

\textit{Case 2. $\lambda<0$.} This case is more complicated as $y+\lambda$ changes sign on $\R^+$. To overcome this difficulty, define the following cut--off function
\begin{equation}
\Theta(y):=
\begin{cases}
1,\quad y\geq -\lambda+\delta,\\
0,\quad -\lambda-\frac{\delta}{2}\leq y \leq -\lambda+\frac{\delta}{2},\\
-1,\quad y\leq -\lambda-\delta,\\
\end{cases}
\quad |\Theta'(y)|\leq \frac{4}{\delta},
\end{equation}
and $\Theta(y)$ is smooth on transition intervals. Here the parameter $\delta>0$ will be chosen later.

First, testing $\bar{f}$ in \eqref{ode}, we obtain \eqref{H1control} as above. Then testing \eqref{ode} by $\Theta(y)\bar{f}$, and taking the imaginary part,  by \eqref{H1control}  we have
\begin{equation}
\begin{split}
\int_{|y+\lambda|\geq \delta}|y+\lambda||f|^{2}dy&\leq \|f\|_{L^{2}}+4\|\p_{y}f\|_{L^{2}}\|f\|_{L^{2}}\delta^{-1}\leq\|f\|_{L^{2}}+4\delta^{-1}\|f\|_{L^{2}}^{\frac{3}{2}}.
\end{split}
\end{equation}
Applying  \eqref{spectralgap} in Lemma \ref{interonline} we get that
\begin{equation}\label{L2control}
\begin{split}
\int_{0}^{\infty}|f|^{2}dy&\lesssim \int_{0}^{\infty}|\p_{y}f|^{2}dy+\int_{0}^{\infty}|y+\lambda|\cdot|f|^{2}dy\\
&\lesssim\delta\int_{0}^{\infty}|f|^{2}dy+\int_{|y+\lambda|\geq \delta, \,y\in\R^+}|y+\lambda||f|^{2}dy+\int_{0}^{\infty}|\p_{y}f|^{2}dy\\
&\lesssim \|f\|_{L^{2}}+4\delta^{-1}\|f\|_{L^{2}}^{\frac{3}{2}}+2\delta\|f\|_{L^{2}}^{2}.
\end{split}
\end{equation}
Choosing $\delta>0$ sufficiently small, by \eqref{L2control} we obtain that $\|f\|_{L^{2}}\lesssim 1$. The proof of Proposition \ref{enhanced} is complete.
\end{proof}

 % \frenchspacing
 %   \vspace{\fill}
	\bibliographystyle{plain}
	\bibliography{classical_prandtl_reference}	

\end{document}